\theoremstyle{plain}
\newtheorem*{theorem*}{Theorem}
\newtheorem{theorem}{Theorem}[section]
\newtheorem{thmA}{Theorem}
\newtheorem{chara}{Characterization}
\newtheorem{D}[theorem]{Definition}
\newtheorem{lemma}[theorem]{Lemma}
\newtheorem{cor}[theorem]{Corollary}
\newtheorem{prop}[theorem]{Proposition}
\theoremstyle{definition}
\newtheorem{rem}[theorem]{Remark}
\newtheorem{rems}[theorem]{Remarks}
\newcommand{\R}{\ensuremath{\mathbb R}}
\newcommand{\N}{\ensuremath{\mathbb N}}
\newcommand{\eps}{\ensuremath{\varepsilon}}
\newcommand{\Ricm}{\ensuremath{\mbox{Ric}_{\mbox{\tiny{-}}}}}
\newcommand{\kato}{\ensuremath{\mbox{k}_t(M^n,g)}}
\DeclareMathOperator{\PI}{PI}
\DeclareMathOperator{\Kato}{k}
\newcommand{\setN}{\mathbb{N}}
\newcommand{\setR}{\mathbb{R}}
\newcommand{\cD}{\mathcal{D}}
\newcommand{\cE}{\mathcal{E}}
\newcommand{\cV}{\mathcal{V}}
\newcommand{\mE}{\mathrm{E}}
\newcommand{\weakto}{\rightharpoonup}
\newcommand{\di}{\mathop{}\!\mathrm{d}}
\newcommand{\restr}{\raisebox{-.1618ex}{$\bigr\rvert$}}
\DeclareMathOperator{\supp}{supp}
\newcommand{\Ch}{{\sf Ch}}
\newcommand{\scal}[2]{\ensuremath{\langle #1 , #2 \rangle}} 
\DeclareMathOperator{\Lip}{Lip}
\newcommand{\measrestr}{%
  \,\raisebox{-.127ex}{\reflectbox{\rotatebox[origin=br]{-90}{$\lnot$}}}\,%
}
\newcommand{\dist}{\mathsf{d}}
\newcommand{\meas}{\mu}
\newcommand{\diam}{\mathrm{diam}}
\DeclareMathOperator{\CD}{CD}
\DeclareMathOperator{\RCD}{RCD}
\newcommand{\vol}{v}
\DeclareMathOperator{\Ric}{Ric}
\newfont{\tmpf}{cmsy10 scaled 2500}
\def\R{\mathbb R}\def\N{\mathbb N}
\def\cC{\mathcal C}
\def\cD{\mathcal D}
\def\cH{\mathcal H}
\def\cM{\mathcal M}
\def\cS{\mathcal S}\def\cE{\mathcal E}
\def\cT{\mathcal T}
\def\cV{\mathcal V}
\def\ra{\rangle}
\def\la{\langle}
\newcommand{\Rad}{\mathrm{Rad}}
\title[Structure of Kato limits]{Limits of manifolds with a Kato bound on the Ricci curvature}
\author{Gilles Carron}
\address{G. Carron, Laboratoire de Math\'ematiques Jean Leray, UMR CNRS 6629, Universit\'e de Nantes,
2, rue de la Houssini\`ere, B.P.~92208, 44322 Nantes Cedex~3, France.} 
\email{Gilles.Carron@univ-nantes.fr}
\author{Ilaria Mondello}
\address{I. Mondello, Université Paris Est Cr\'eteil, Laboratoire d'Analyse et Math\'ematiques appliqu\'es, UMR CNRS 8050, F-94010 Creteil, France.}
\email{ilaria.mondello@u-pec.fr}
\author{David Tewodrose}
\address{D. Tewodrose, Université Libre de Bruxelles, Service d’Analyse, CP 218, Boulevard du Triomphe, B-1050 Bruxelles, Belgique.}
\email{David.Tewodrose@ulb.ac.be}
\date{}
\begin{document}

\maketitle

\begin{abstract}
We study the structure of Gromov-Hausdorff limits of sequences  of Riemannian manifolds $\{(M_\alpha^n,g_\alpha)\}_{\alpha \in A}$ whose Ricci curvature satisfies a uniform Kato bound. We first obtain Mosco convergence of the Dirichlet energies to the Cheeger energy and show that tangent cones of such limits satisfy the $\RCD(0,n)$ condition. When assuming a non-collapsing assumption, we introduce a new family of monotone quantities, which allows us to prove that tangent cones are also metric cones. We then show the existence of a well-defined stratification in terms of splittings of tangent cones. We finally prove volume convergence to the Hausdorff $n$-measure. 
\end{abstract}

\section*{Introduction}

We study the structure of Gromov-Hausdorff or measured Gromov-Hausdorff limits of manifolds whose Ricci curvature satisfy a Kato type bound. Our results extend previous results proven by J. Cheeger and T. Colding for limits of manifolds carrying a uniform lower bound on the Ricci curvature.\\

\textbf{Gromov-Hausdorff convergence of manifolds.} In the 1980s, Mr~Gromov showed a compactness result for Riemannian manifolds satisfying a uniform lower bound on the Ricci curvature. It states that, if $\{(M_\alpha^n,g_\alpha,o_\alpha)\}_\alpha$ is a sequence of pointed complete Riemannian manifolds satisfying uniformly
\begin{equation}\label{eq:riclob}\Ric\ge K g\end{equation} for some $K\in \R$  and $o_\alpha\in M_\alpha$, then up to extracting a sub-sequence, the sequence $\left\{(M_\alpha^n,\dist_{g_\alpha}, o_\alpha)\right\}_\alpha$ converges in the pointed Gromov-Hausdorff topology to a complete proper metric space $(X,\dist,o)$. A natural question was then to describe the structure of such metric spaces arising as limits of smooth manifolds. In the 1990s, a series of results by J. Cheeger and T. Colding \cite{Col97, ChCo96, CheegerColdingI, ChCo00, CheegerColdingIII} made it possible to better understand this problem and their work launched a vast research program. Many recent results have led to a significant understanding of the so-called ``Ricci limit spaces'' \cite{CheegerPisa, CheegerNaber15, JiangNaber, CJN}.  

In the study of limit spaces, there are two different scenarios, depending on whether the sequence of manifolds is \textit{collapsing} or \textit{non-collapsing}. In the first case, the volume of unit balls $B_1(o_\alpha)$ goes to $0$ as $\alpha$ tends to infinity, while in the non-collapsed case there is a uniform lower bound on this volume. Since the work of K.~Fukaya \cite{Fukaya} it is known that, in the collapsed case, the Gromov-Hausdorff topology is not sufficient to recover good geometric information on the limit space, such as, for instance, information about the spectrum of its Laplacian. K.~Fukaya then introduced the \textit{measured} Gromov-Hausdorff topology. For that, one re-scales the Riemannian measure $\di\nu_{g_\alpha}$ and considers sequences of manifolds as sequences of metric measure spaces $(M_\alpha^n,\dist_{g_\alpha}, \di\mu_\alpha,o_\alpha)$, where $\di\mu_\alpha=\nu^{-1}_{g_\alpha}\left(B_1(o_\alpha)\right)\, \di\nu_{g_\alpha}$. Then again, up to extracting a sub-sequence, there is convergence to a metric measure space $(X,\dist, \mu,o)$ in the pointed measured Gromov-Hausdorff topology. This allows for finer results on the structure of the limit space. 

In the 2000s the works of J.~Lott, K.-T.~Sturm and C.~Villani showed that it is possible to define a generalization of a Ricci lower bound in the setting of metric measure spaces. This lead to the notions of $\CD(K,\infty)$ and $\CD(K,n)$ metric measure spaces, that are known to include Ricci limit spaces \cite{LottVillani,sturm2006I,Sturm2006II,Villani}. Later on, L.~Ambrosio, N.~Gigli and G.~Savaré introduced a refinement of the infinite dimensional $\CD(K,\infty)$ condition, the so-called Riemannian curvature dimension condition $\RCD(K,\infty)$, which is also satisfied by manifolds carrying the lower bound \eqref{eq:riclob} and is preserved under measured Gromov-Hausdorff convergence \cite{AmbrosioGigliSavareDuke}.  The finite dimensional $\RCD(K,n)$ condition was subsequently introduced and studied by N.~Gigli in \cite{GigliMAMS}. Under this more restrictive condition, it is possible, for instance, to define a Laplacian operator and thus reformulate classical inequalities of Riemannian geometry in the setting of metric measure spaces. Nowadays, the structure of $\RCD(K,n)$ spaces is fairly well understood \cite{MondinoNaber,BrueSemola,DPG}, and such spaces provide a good conceptual framework comprising Ricci limit spaces. \\

\textbf{Ricci limit spaces and beyond.} The work of J. Cheeger and T. Colding relies on several crucial tools and results. The first one is the Bishop-Gromov volume comparison theorem, which provides a monotone quantity given by the \emph{volume ratio}
$$r\mapsto \frac{\nu_g\left(B_r(x)\right)}{\mathbb{V}_{n,K}(r)}, \footnote{Where $\mathbb{V}_{n,K}(r)$ is the volume of a geodesic balls with radius $r$ in a simply connected space of constant curvature equals to $K/(n-1)$.}$$
Monotone quantities play a crucial role when investigating blow-up phenomena in geometric analysis. In the case of Ricci limit spaces, the monotonicity of the volume ratio is the keystone for understanding the local geometry of limit spaces. Two other very important results are the almost splitting theorem \cite{CheegerColdingI} and the theorem now known as ``almost volume cones implies almost metric cone'' (see \cite[Theorem 3.6]{ChCo96}). Together with the monotonicity of the volume ratio, they imply in particular that tangent cones of non-collapsed Ricci limit spaces are genuine metric cones. Many additional technical results are involved in the study of Ricci limit spaces. For example, the existence of good cut-off functions (with bounded gradient and Laplacian) plays an important role in many proofs for exploiting the Bochner formula. The construction of cut-off functions relies on strong analytic properties of manifolds with Ricci curvature bounded from below, such as Laplacian comparisons, Gaussian heat kernel bounds, the Cheng-Yau gradient estimate.

However, there are some very interesting contexts in which a good understanding of the convergence of smooth manifolds is needed, but a uniform lower bound on the Ricci curvature is not satisfied and the previous tools are not available: for example, in the study of the Ricci flow \cite{Bamler1, Bamler2, BamlerZhang,MSimon1,MSimon2} and of critical metrics \cite{TV1,TV2,TV3}. It is then important to investigate situations in which weaker assumptions on the curvature are made. The case in which one assumes some $L^p$ bound on the Ricci curvature, for $p > n/2$, has been well studied since the end of the 1990s and one gets a number of results about the structure of the limit space under an additional smallness assumption \cite{PW1,PW2, TianZhang, DWZ18}. Very recently, \cite{Chen20} obtained more results about the structure of the limit space. C.~Ketterer \cite{Ketterer} also opened the way to a new interesting perspective in the study of limits of manifolds with the appropriate $L^p$ bound on the Ricci curvature, by showing, among others, that tangent cones of the limit space are $\RCD(0,n)$-spaces. One question he raises in this work is whether the same result holds when assuming a Kato condition on the Ricci curvature: we give a positive answer to this question in one of our results (see Theorem \ref{thm:collapsed}). \\

\textbf{Kato type bound.} It has been recently remarked \cite{C16, Rose} that a Kato type bound on the Ricci curvature makes it possible to use ideas of Q.S. Zhang and M. Zhu \cite{ZZ} and, as a consequence, to obtain a Li-Yau type bound for solutions of the heat kernel. Many geometric estimates, that are known when the Ricci curvature is bounded from below, follow under such less restrictive Kato type bound. 

Let $(M^n,g)$ be a closed Riemannian manifold and introduce the function  $\Ricm\colon M\rightarrow \R_+$ to be the best function such that 
$$\Ric\ge -\Ricm g.$$
Define for all $\tau >0$
$$\mbox{k}_\tau(M^n,g)=\sup_{x\in M} \int_{[0,\tau]\times M}H(t,x,y)\Ricm(y)\di\nu_g(y)\di t,$$
where $H(t,\cdot,\cdot)$ is the Schwartz kernel (the heat kernel) of the heat operator $e^{-t \Delta}$. If for some $T>0$ the following Kato type bound
\begin{equation}\label{eq:Dynkin}
\mbox{k}_T(M^n,g)\le \frac{1}{16n},\end{equation}
holds, then one gets geometric and analytic results similar to the ones implied by the condition \eqref{eq:riclob} with $K=-1/T.$ Moreover, as noticed in \cite{C16},  the set of closed manifolds satisfying \eqref{eq:Dynkin} is pre-compact in the Gromov-Hausdorff topology. As a consequence, it is natural to ask under which extent results on the structure of Ricci limit spaces can be obtained under this weaker assumption. 

The Kato condition was introduced with the aim of studying Schr\"odinger operators in the Euclidean space
$$L=\Delta-V,$$ where $V\ge 0$ and our convention is that $\Delta=\textbf{-}\sum_i \partial^2/\partial x_i^2$ on $\R^n$. A non negative potential $V\colon \R^n\rightarrow \R_+$ is said to be in the Kato class, or to satisfy the Kato condition, if
$$\lim_{T\to 0+} \sup_{x\in \R^n}\iint_{[0,T]\times \R^n} \frac{e^{-\frac{\| x-y\|^2}{4t}}}{(4\pi t)^{\frac n2}} V(y)\di y \di t=0.$$
At the regularity level, this condition only requires that $V$ is the Laplacian of a continuous function. Moreover, if $V$ is in the Kato class,  when $t$ tends to 0 one can compare the semi-groups $e^{-t\Delta}$ and $e^{-t(\Delta-V)}$ and thus recover good properties of $e^{-t(\Delta-V)}$ for $t$ small enough.  We refer to the beautiful survey of B. Simon \cite{BSimon} for a extensive overview on the Kato condition in the Euclidean setting, and to the book of B. Guneysu \cite{Guneysu} for an account of the Kato condition in the Riemannian setting. In our context, the potential $V$ is chosen to be $\Ricm$. 

Now, an assumption in the spirit of the Kato condition in $\R^n$ would require not only that $\mbox{k}_T$ is uniformly bounded along the sequence of manifolds for a fixed $T>0$, but also some uniform control on the way that $\mbox{k}_\tau$ goes to $0$ when $\tau$ goes to $0$. This kind of control is actually required in our analysis in order to be able to compare infinitesimal geometry of limit spaces with Euclidean geometry. In particular, this plays an important role to get the appropriate monotone quantities that we rely on for studying the geometry of tangent cones.\\

\textbf{Main results.}  We begin by illustrating our main results in the non-collapsed case. 

\begin{thmA}
\label{thm:nc} 
Let $(X,\dist, o)$ be the pointed Gromov-Hausdorff limit of a sequence of closed Riemannian manifolds $\left\{(M_\alpha^n,\dist_{g_\alpha}, o_\alpha)\right\}_\alpha$ satisfying the uniform Kato bound \begin{equation}\label{eq:MajKato}
\forall \tau\in (0,1]\colon \quad \Kato_\tau(M_\alpha^n,g_\alpha)\le f(\tau), 
\end{equation}
where $f\colon [0,1]\rightarrow \R_+$ is a non-decreasing function such that
\begin{equation}\label{eq:StrongKato}
\int_0^1 \sqrt{f(\tau)}\,\frac{d\tau}{\tau}<\infty,
\end{equation}
and the non-collapsing condition
\begin{equation}
\label{eq:NCintro}
\nu_{g_\alpha}\left(B_1(o_\alpha)\right)\geq v >0.
\end{equation}
Then the following holds.  
 \begin{enumerate}
 \item[\emph{(i)}] \textbf{Volume convergence:} For any $r>0$ and $x_\alpha\in M_\alpha$ such that $x_\alpha\to x\in X$ we have
 $$\lim_{\alpha\to \infty} \nu_{g_\alpha}\left( B_r(x_\alpha)\right)=\cH^n\left(B_r(x)\right).$$
 \item[\emph{(ii)}]  \textbf{Structure of tangent cones: } For any $x\in X$, tangent cones of $X$ at $x$ are $\RCD(0,n)$ metric cones.
 \item[\emph{(iii)}]  \textbf{Almost everywhere regularity :}  For $\cH^n\text{-a.e. } x\in X$, $\left(\R^n, \dist_{\mathrm{eucl}}\right)$ is the unique tangent cone of $X$ at $x$.
 \item[\emph{(iv)}]  \textbf{Stratification:} Let $\cS^k$ be the set consisting of the  points $x\in X$ such that $X$ does not carry any tangent cone at $x$ that splits isometrically a factor $\R^{k+1}$. Then $\cS^k$ satisfies
$$\dim_{\cH}\cS^k\le k.$$ 
 \end{enumerate}
 \end{thmA}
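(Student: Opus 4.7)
The plan is to mirror the structure of Cheeger--Colding's analysis of Ricci limits but with the heat--kernel--based analytic toolkit that the strong Kato condition \eqref{eq:StrongKato} affords. Under a uniform Kato bound one has Li--Yau type heat kernel estimates in the spirit of Zhang--Zhu and Carron, so one can play the role usually played by the classical Bishop--Gromov inequality and Cheeger--Yau heat kernel comparison. The condition \eqref{eq:StrongKato} is crucial because it quantifies how fast $\Kato_\tau$ tends to $0$ as $\tau \to 0$, which is exactly what is needed to compare infinitesimal geometry at the limit to Euclidean geometry. Together with the Mosco convergence of Dirichlet energies (mentioned earlier in the introduction), this places us in a position to reproduce the Cheeger--Colding scheme: the task reduces to finding the right substitute for the monotonicity of the volume ratio $r\mapsto \nu_g(B_r(x))/\mathbb{V}_{n,K}(r)$ and for the ``almost volume cone implies almost metric cone'' argument.

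For \emph{(ii)}, the key is the new family of monotone quantities announced in the abstract. My strategy would be to define, using the heat kernel $H_\alpha(t,x,y)$, a quantity like a modified ``heat content ratio'' $\theta_\alpha(t,x)$ comparing $\nu_{g_\alpha}(B_{\sqrt t}(x))$ to Euclidean balls, and prove an approximate monotonicity of the form
\[
\theta_\alpha(T,x)\le \theta_\alpha(t,x)+ \varphi(\Kato_T(M_\alpha,g_\alpha)),\qquad t\le T,
\]
where $\varphi\to 0$ at $0$. The non-collapsing condition \eqref{eq:NCintro} combined with \eqref{eq:StrongKato} then implies that, along any blow-up sequence at a point $x\in X$, the normalised metric measure structures converge to a space for which $\theta$ is constant on $(0,\infty)$. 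An adapted version of ``almost volume cones implies almost metric cones'' then forces the limit to be a metric cone. Combined with the earlier collapsed case result identifying tangent cones as $\RCD(0,n)$ spaces, this yields \emph{(ii)}.

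For \emph{(i)}, the plan is to couple the Bishop--Gromov type upper bound coming from the Kato estimate (which gives the easy direction $\limsup \nu_{g_\alpha}(B_r(x_\alpha))\le \cH^n(B_r(x))$) with a lower bound obtained from \emph{(ii)} and \emph{(iii)}. Namely, at an $\cH^n$-a.e.\ point of $B_r(x)$ the tangent cone is Euclidean, so a Vitali covering together with the continuity of the volume along a blow-up sequence gives $\liminf \nu_{g_\alpha}(B_r(x_\alpha))\ge \cH^n(B_r(x))$. For \emph{(iii)}, I would combine the metric cone structure of every tangent cone with the $\RCD(0,n)$ rigidity for cones: a density $1$ point (in the sense that the normalised volumes converge to the Euclidean value) forces the tangent cone to be $\R^n$, and such points have full $\cH^n$-measure via the approximate monotonicity established above.

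For \emph{(iv)}, once \emph{(ii)} and \emph{(iii)} are in hand the stratification follows from a standard dimension reduction argument à la Federer/White and Cheeger--Colding: tangent cones of tangent cones are again metric $\RCD(0,n)$ cones, a cone that splits $\R^k$ has factor structure $\R^k\times C(Z)$, and iterating produces Euclidean factors up to the appropriate rank. A covering argument using the approximate volume monotonicity then bounds $\dim_\cH \cS^k \le k$. The main obstacle is the second paragraph: finding the correct monotone functional and proving its rigidity in the Kato setting, since the usual Bishop--Gromov monotonicity fails and one must absorb errors controlled by $f$ while still being able to iterate to obtain exact cone structure in the limit. The absence of strong pointwise Laplacian comparison and the need to work entirely with heat--kernel based estimates is what makes this genuinely delicate.
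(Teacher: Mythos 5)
Your broad strategy — construct a heat–kernel--based almost-monotone quantity, pass it to tangent cones to force constant volume ratio, deduce cone structure, stratify à la Federer/White, and finally upgrade to volume convergence — is essentially the paper's scaffold, and your recognition of where the difficulty lies (the lack of exact Bishop--Gromov monotonicity and the need to absorb Kato errors) is accurate. There are, however, two places where the proposal either deviates substantially from the paper or has a real gap.

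First, for part \emph{(ii)} you invoke an ``adapted version of almost volume cones implies almost metric cones.'' The paper deliberately does \emph{not} reprove a Cheeger--Colding-style almost splitting or almost rigidity theorem in the Kato setting. Instead, it establishes (in the collapsed case already, by passing the Bakry--Ledoux inequality to the limit) that tangent cones are weakly non-collapsed $\RCD(0,n)$ spaces, and then applies the \emph{exact} volume-cone-implies-metric-cone rigidity of De~Philippis--Gigli on the limit space, once the monotone quantity has forced $r \mapsto \mu(B_r(x))/r^n$ to be constant on the tangent cone. This avoids developing quantitative almost-rigidity in the Kato class, which would be substantially more delicate. The monotone quantity itself is also more intricate than what you describe: it is a two-parameter family
$\theta_x(s,t) = (4\pi s)^{-n/2}\int e^{-U(t,x,y)/4s}\,d\mu$, where $U$ is defined through $H(t,x,y) = (4\pi t)^{-n/2}e^{-U/4t}$, and the corrected almost-monotone functional is $\lambda \mapsto \theta_x(\lambda s,\lambda t)\,e^{c_n\Phi(\lambda t)(t/s - s/t)}$, so the correction is multiplicative, depends on the ratio $t/s$, and is only valid on a restricted range of $\lambda$ — not an additive $\varphi(\Kato_T)$ error as you write.

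Second, and this is the genuine gap, your outline of \emph{(i)} does not close. You propose to get the lower bound $\liminf \nu_{g_\alpha}(B_r(x_\alpha)) \ge \cH^n(B_r(x))$ from a Vitali cover of the set of points with Euclidean tangent cone. But at $\mu$-a.e.\ $x$ the tangent cone is $(\R^n,\dist_e,\vartheta_X(x)\cH^n)$ with density $\vartheta_X(x) \le 1$, and the Vitali covering argument only returns $\mu(B_r) = \int_{B_r}\vartheta_X \, d\cH^n$, which by itself does not preclude $\vartheta_X < 1$ on a set of positive measure. The heart of the paper's proof of volume convergence is precisely showing $\vartheta_X = 1$ almost everywhere, and this requires a separate argument: the construction of harmonic $\eps$-splitting maps $H_\alpha : B_{r_\alpha}(x_\alpha) \to \mathbb{B}(r_\alpha)$ with gradient close to the identity and $L^2$-small Hessian (made possible by the good cut-off functions and Hessian estimates from the Dynkin bound), followed by a degree-theoretic surjectivity argument in the style of Cheeger or Gallot. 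Without an ingredient of this type you cannot conclude \emph{(i)}.

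For \emph{(iv)}, you correctly point to the Federer/White scheme, but the key technical input you omit is the \emph{lower} semi-continuity of $\vartheta_X$ under pmGH convergence, which the paper proves by writing $\vartheta_X^{-1}$ as an infimum of continuous functions via the monotone heat-kernel quantity; this is what makes White's dimension reduction argument run.

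\end{document}
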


The first point is a generalization of the volume continuity showed in \cite{Col97}. The fact that tangent cones are metric cones in the analog of \cite[Theorem 5.2]{CheegerColdingI} and the two last points correspond to \cite[Theorem 4.7]{CheegerColdingI} (see also \cite[Theorem 10.20]{CheegerPisa}). We also conjecture that, under the same assumptions, we have $\cS_{n-1}=\cS_{n-2}$, that is the singular set has codimension at least two. As for the case of Ricci limit spaces, we expect the existence of an open subset that is $n$-rectifiable and bi-Hölder homeomorphic to a manifold. We plan to address these questions in our subsequent work.

Observe that the uniform Kato bound \eqref{eq:MajKato} with a function satisfying \eqref{eq:StrongKato} is guaranteed as soon as one has an appropriate estimate on the $L^p$ norm of the Ricci curvature. This is due to C.~Rose and P.~Stollmann \cite{RoseStollmann}: thanks to their work, it is possible to show that if $p>n/2$ and $\eps(p,n,\kappa)$ is small enough, then the following estimate \footnote{with $x_+=\max\{x,0\}$.}
$$\diam^2(M_\alpha,g_\alpha)\left(\fint_M \left|\Ricm-\kappa^2\right|_+^p\di\nu_{g_\alpha}\right)^{\frac 1p}\le \eps(p,n,\kappa)$$ implies \eqref{eq:MajKato} and \eqref{eq:StrongKato}. 
Similarly, our non-collapsing assumption and the uniform Kato bound is ensured under the assumptions considered by G.~Tian and Z.~Zhang in the study of Kähler-Ricci flow $g(t)$ \cite{TianZhang}, that are an a-priori bound on the $L^p$ norm of Ricci curvature for $p > n/2$ 
$$\forall t \geq 0 \int_M \left|\Ric_{g(t)}\right|^p\di\nu_{g(t)}\le \Lambda$$ 
and a non-collapsing condition 
$$\forall t \geq 0, \ \forall x \in M, \forall r \in (0,1), \nu_{g(t)}(B_r(x)) \geq v r^n.$$

In the collapsed case, our results give less information about the structure of the limit space, but apply with a weaker hypothesis. 

\begin{thmA} 
\label{thm:collapsed}
Let $(X,\dist, \mu,o)$ be the pointed measured Gromov-Hausdorff limit of a sequence  $\left\{(M_\alpha^n,\dist_{g_\alpha}, \mu_\alpha, o_\alpha)\right\}_\alpha$, satisfying the uniform Kato bound \eqref{eq:MajKato} for some non-decreasing positive function  $f\colon [0,1]\rightarrow \R_+$ such that  
 \begin{equation}\label{eq:Kato}
\lim_{\tau\to 0} f(\tau)=0, 
\end{equation}
with the re-scaled measure
\begin{equation}\label{rescaledmu}
 \di\mu_\alpha=\frac{\di \nu_{g_\alpha}}{\nu_{g_\alpha}\!\!\left(B_1(o_\alpha)\right)}.\end{equation}
Then we have:
\begin{enumerate}
\item[\emph{(i)}]  the Cheeger energy is quadratic and  $(X,\dist,\mu)$ is an infinitesimally Hilbertian space in the sense of \cite{GigliMAMS};
\item[\emph{(ii)}]  for any $x\in X$, metric measure tangent cones of $X$ at $x$ are $\RCD(0,n)$;
\item[\emph{(iii)}]  if $X$ is compact, then the spectrum of the Laplace operators of  $(M_\alpha^n,g_\alpha)$ converges to the spectrum of the Laplacian associated  to $(X,\dist,\mu)$.
\end{enumerate}
\end{thmA}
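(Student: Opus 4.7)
All three conclusions flow from the Mosco convergence of the Dirichlet energies $\cE_\alpha(u)=\int_{M_\alpha}|\nabla u|^2\di\mu_\alpha$ to the Cheeger energy $\Ch$ of $(X,\dist,\mu)$, which is announced in the abstract and which I would place at the very start of the proof. To obtain it, I would first leverage \eqref{eq:MajKato} together with $f(\tau)\to 0$ to produce a uniform-in-$\alpha$ analytic package in the spirit of \cite{C16, Rose, ZZ}: two-sided Gaussian bounds on the heat kernels $H_\alpha$, scale-invariant Poincar\'e and Sobolev inequalities, a Li--Yau type gradient estimate, and a family of good cut-off functions with controlled Laplacian. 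Compactness then supplies the $\Gamma$-$\liminf$ half of Mosco convergence, while the cut-offs together with heat-semigroup regularization (made effective by the Kato bound) provide the recovery sequences producing the $\Gamma$-$\limsup$ half.

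\textbf{Proof of (i).} A Mosco limit of quadratic forms is automatically quadratic; since each $\cE_\alpha$ is quadratic, so is $\Ch$, which by definition amounts to the infinitesimal Hilbertianity of $(X,\dist,\mu)$ in the sense of \cite{GigliMAMS}.

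\textbf{Proof of (ii).} A direct computation based on the scaling of heat kernel, volume and Ricci curvature gives $\Kato_\tau(M^n,\lambda^2 g)=\Kato_{\lambda^{-2}\tau}(M^n,g)$. Hence, given any pointed mGH tangent cone $(Y,\dist_Y,\mu_Y,y)$ of $X$ at $x$ obtained along scales $r_\alpha\to 0$, a diagonal extraction yields approximating manifolds $(M_\alpha,r_\alpha^{-1}\dist_{g_\alpha},\tilde\mu_\alpha,x_\alpha)\to (Y,\dist_Y,\mu_Y,y)$ whose Kato constants on any fixed interval $[0,\tau]$ are at most $f(r_\alpha^2\tau)\to 0$. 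Applying Mosco convergence and the analytic package above to this rescaled sequence, I would derive the Bakry--\'Emery condition $\BE(0,n)$ on $Y$ by passing to the limit in the Bochner inequality with potential term $-\Ricm$, whose net contribution is controlled by the vanishing Kato quantity. Combined with the infinitesimal Hilbertianity of $Y$ (itself stable under rescaling and mGH limits by (i) applied to the rescaled sequence), the equivalence of \cite{AmbrosioGigliSavareDuke,GigliMAMS} upgrades $\BE(0,n)$ to $\RCD(0,n)$.

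\textbf{Proof of (iii) and main obstacle.} When $X$ is compact, the uniform Sobolev embedding produced by the Kato bound yields uniform compactness of the resolvents $(\Delta_{g_\alpha}+1)^{-1}$; coupled with the Mosco convergence of (i) and the mGH convergence of the $\mu_\alpha$, the Kuwae--Shioya variational framework immediately delivers spectral convergence for the associated self-adjoint operators. The technical heart and main obstacle of the whole argument is the Mosco convergence, and specifically the recovery-sequence half: in the absence of a two-sided Ricci lower bound, standard heat-flow regularization must be controlled purely through the integrated smallness $f(\tau)\to 0$. This smallness is what renders the Bochner error term $-\int \Ricm|\nabla u|^2$ negligible along approximations, and it is also what legitimizes passing the infinitesimal curvature bound to the tangent-cone limit in (ii).
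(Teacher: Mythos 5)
There is a genuine gap in your proof of (i), and it sits exactly at the point the whole theorem turns on. You assert at the outset that the Dirichlet energies Mosco-converge \emph{to the Cheeger energy} $\Ch$ of $(X,\dist,\mu)$, and then argue that $\Ch$ is quadratic because a Mosco limit of quadratic forms is quadratic. This is circular: what compactness (Theorem \ref{th:improvedKS}, the Kasue/Kuwae--Shioya argument) gives you is Mosco convergence to \emph{some} strongly local regular Dirichlet form $\cE$ on $L^2(X,\mu)$, which is of course quadratic; the entire difficulty is to show that this $\cE$ coincides with $\Ch$. The paper stresses that this identification genuinely fails without the Kato hypothesis \eqref{eq:Kato}: in the Finsler limit example of \cite{AldanaCarronTapie} the Cheeger energy is not quadratic and hence equals no Dirichlet form, even though the sequence is uniformly doubling and satisfies a uniform Poincar\'e inequality, so that your ``compactness for the $\liminf$, cut-offs and heat regularization for the recovery sequences'' scheme would go through verbatim there and prove something false. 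The mechanism that actually rules this out has two ingredients you do not supply: first, the identification $\dist=\dist_\cE$ (Proposition \ref{converge3}), obtained from the Li--Yau inequality and Varadhan's formula; second, the gradient commutation estimate $\int_X\varphi\,\di\Gamma(P_tu)\le e^{4\Kato_t}\int_X P_t\varphi\,\di\Gamma(u)$ on the approximating manifolds (Corollary \ref{cor:inForCheeger}, proved via a gauged Bochner argument with an auxiliary function $J$ controlled by $\Kato_t$), which passes to the limit and, because $f(\tau)\to 0$ makes the constant tend to $1$, triggers the Koskela--Shanmugalingam--Zhou criterion (Proposition \ref{KSZ}) identifying $\cE$ with $\Ch$. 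This is precisely where \eqref{eq:Kato} enters, and it is absent from your argument.

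Your outline of (ii) is the right idea (rescaling sends $\Kato_\tau$ to $\Kato_{r_\alpha^2\tau}\to 0$, and vanishing Kato constants should yield $\RCD(0,n)$ in the limit), but ``passing to the limit in the Bochner inequality with potential term $-\Ricm$'' cannot be taken literally: the Bochner formula is pointwise and involves Hessians and curvature that do not survive mGH convergence. The paper instead integrates the gauged Bochner inequality against the semigroup to get a closed inequality for the functional $A_t(u,\varphi)(s)=\tfrac12\int (P_{t-s}u)^2P_s\varphi\,\di\mu$ (inequality \eqref{id4}), which involves only quantities stable under Mosco--Gromov--Hausdorff convergence, and whose limit is the Bakry--Ledoux estimate \ref{BL(K,n)} characterizing $\RCD(0,n)$. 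Part (iii) as you state it is fine once (i) is in place, since Mosco convergence of the forms plus compactness of $X$ gives spectral convergence directly.
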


The last point extends \cite[Theorem 0.4]{Fukaya}. The second point generalizes C.~Ketterer's result \cite[Corollary 1.7]{Ketterer}: under the same assumptions of \cite{TianZhang} that we recalled above, he proved that tangent cones are $\RCD(0,n)$ spaces. Part of his proof relies on an almost splitting theorem of \cite{TianZhang}. In our case, we do not use an almost splitting theorem. Nonetheless, we point out that our proof shows that whenever the sequence of manifolds $(M_\alpha, g_\alpha)$ is such that, for some $\tau>0$, $\mbox{k}_\tau(M_\alpha, g_\alpha)$ tends to zero as $\alpha$ goes to infinity, then the limit $(X,\dist,\mu)$ is an $\RCD(0,n)$ space. As a consequence, Gigli's splitting theorem for $\RCD(0,n)$ spaces applies \cite{GigliSplitting, GigliOverview}. Moreover, a contradiction argument based on pre-compactness leads to an almost splitting theorem for manifolds with $\mbox{k}_\tau$ small enough. Then we do have an almost splitting theorem in our setting, but in contrast to what happens in the study of Ricci limit and $\RCD$ spaces, where such theorem represents a key tool, we obtain it as a consequence of our results rather than relying on it on our proofs. \\

\textbf{Outline of proofs.} We now describe some of the ideas playing a role in our proofs  and their organization, starting from Theorem \ref{thm:collapsed}. The Kato type bound \eqref{eq:Dynkin} provides very good heat kernel estimates (see for example Proposition \ref{prop:UnifPI}) which imply in particular that a sequence of manifolds satisfying \eqref{eq:Dynkin}, when considered as a sequence of \emph{Dirichlet} spaces, is uniformly doubling and carries a uniform Poincaré inequality. This, together with the results of A.~Kasue \cite{KasueSurvey} and K.~Kuwae and T.~ Shioya \cite{KuwaeShioya}, ensures that the measured Gromov-Hausdorff convergence can be strengthened, in the sense that one additionally obtains Mosco convergence of the Dirichlet energies. More precisely, assume that $(X,\dist,\mu,o)$ is a pointed measured Gromov-Hausdorff limit of a sequence of closed manifolds $\left\{(M_\alpha^n,\dist_{g_\alpha}, \mu_\alpha, o_\alpha)\right\}_\alpha$, where 
 $\di\mu_\alpha$ is either the Riemannian volume in the non-collapsing case, or its re-scaled version \eqref{rescaledmu} in the collapsing one. Up to extraction of a sub-sequence, it is possible to define a closed, densely defined quadratic form $\cE$ on $L^2(X,\mu)$ which is obtained as the Mosco limit of the Dirichlet energies: 
$$u\mapsto \int_{M_\alpha} |du|^2_{g_\alpha}\di\mu_\alpha.$$
A priori, different sub-sequences could lead to different quadratic forms. Moreover, the space $(X,\dist, \mu, o)$ carries both the Dirichlet energy $\cE$ and the Cheeger energy canonically associated to $\dist$ and $\mu$. In general, these two energies do not need to coincide, see for instance \cite[Theorem 7.1]{AldanaCarronTapie}: it gives an example of a limit space such that the distance is a Finsler metric and thus the Cheeger energy, not being quadratic, cannot coincide with any Dirichlet form. 

However, under the Kato bound \eqref{eq:MajKato} together with \eqref{eq:Kato}, we can use the Li-Yau type inequality in order to get estimates for the solutions of the heat equation on the manifolds $(M_\alpha, g_\alpha)$ . We show that such estimates pass to the limit and hold on the Dirichlet limit space $(X,\dist,\mu,\cE)$. As a consequence, we can apply a result due to L.~Ambrosio, N.~Gigli, G.~Savaré \cite{AGS15} and to P.~Koskela, N.~Shanmugalingam, Y.~Zhou \cite{KoskelaShanZhou} and we obtain that the limit Dirichlet energy $\cE$ coincides in fact with the Cheeger energy of the metric measure space $(X,\dist,\mu)$. Hence, under  conditions \eqref{eq:MajKato} and \eqref{eq:Kato}, measured Gromov-Hausdorff convergence implies Mosco convergence of the Dirichlet energies to the Cheeger energy. 

Our proof also applies when for some $\tau>0$
\begin{equation}
\label{eq:KatoTgCone}
\lim_{\alpha\to \infty}  \mbox{k}_\tau(M_\alpha^n,g_\alpha)=0.
\end{equation}
Under this condition, we additionally show that the Bakry-Ledoux gradient estimate holds on the limit space and thus $(X,\dist,\mu)$ is an $\RCD(0,n)$ space. Thanks to the re-scaling properties of the heat kernel, if $(X,\dist,\mu,o)$ is a limit of manifolds satisfying \eqref{eq:MajKato} and \eqref{eq:Kato}, then any tangent cone of $X$ is a limit of re-scaled manifolds for which \eqref{eq:KatoTgCone} holds for all $\tau>0$. Therefore, this implies Theorem \ref{thm:collapsed}(ii). 

As for the non-collapsed case, we prove that the limit measure $\mu$ coincides with the $n$-dimensional Hausdorff measure, so that Gromov-Hausdorff convergence under conditions  \eqref{eq:StrongKato} and \eqref{eq:NCintro}  not only implies Mosco converge of the energies, but also measured Gromov-Hausdorff convergence. To prove this, we introduce a new family of monotone quantities that, when the Ricci curvature is non-negative, interpolates between the Li-Yau's inequality and Bishop-Gromov volume comparison theorem.  Our quantities are modeled on Huisken's entropy for the mean curvature flow \cite{HuiskenMCF}. In order to define them, for a closed manifold $(M^n,g)$ with heat kernel $H$, we define the function $U$ by setting
$$H(t,x,y)=\frac{\exp\left(-\frac{U(t,x,y)}{4t}\right)}{(4\pi t)^{\frac n2}}.$$
We then introduce for any $s,t>0$ the Gaussian's type entropy
$$\theta_x(s,t)=\int_M \frac{\exp\left(-\frac{U(t,x,y)}{4s}\right)}{(4\pi s)^{\frac n2}}\di\nu_g(y).$$
When the Ricci curvature is non-negative, we show that for all $x \in M$ the function 
$$\lambda \mapsto \theta_x(\lambda s, \lambda t),$$
is monotone non-increasing for $s\geq t$, non-decreasing for $s\leq t$. This interpolates between the Bishop-Gromov and Li-Yau's inequalities in the following sense. When $t=0$, we can write
$$\theta_x(s,0)=\int_M \frac{\exp\left(-\frac{\dist^2(x,y))}{4s}\right)}{(4\pi s)^{\frac n2}}\di\nu_g(y)=\frac 12 \int_0^\infty e^{-\frac{\rho^2}{4}}\rho\, \frac{\nu_g\left(B_{\rho\sqrt{s}}(x)\right) }{\mathbb{V}_{n,0}(\rho\sqrt{s})} d\rho.$$
Then, Bishop-Gromov volume comparison implies that for any $s \geq 0$ the function $\lambda \mapsto \theta_x(\lambda s, 0)$ is monotone non-increasing. Moreover, one of the consequences of Li-Yau's inequality is that for all $x \in M$ the map
$$t \mapsto (4\pi t)^{\frac{n}{2}}H(t,x,x)$$
is monotone non-decreasing. When noticing that the semi-group law allows one to write
$$H(2t,x,x)=\int_M H(t,x,y)^2 \di \nu_g(y),$$
a simple computation shows that for any $t>0$ and $s= t/2$ the function $\lambda \mapsto \theta_x(\lambda t/2, \lambda t)$ is monotone non-decreasing. 

Observe that by Varadhan's formula \eqref{eq:varadhan} we have
$$\dist(x,y)^2=\lim_{t\to 0} U(t,x,y),$$
so that, when $t$ tends to zero, our quantities $\theta_x$ tend to 
$$\Theta_x(s)= (4\pi s)^{\frac{n}{2}}\int_M e^{-\frac{d(x,y)^2}{4s}}\di \nu_g(y).$$
This corresponds to Huisken's entropy and to the $\cH_s$ volume considered by W.~Jiang and A.~Naber in \cite{JiangNaber}, where it is shown to be monotone non-increasing if the Ricci curvature is non-negative. Moreover, in the case of a Ricci limit space $(X,\dist,\mu)$, the limit of $\Theta_x$ as $s$ tends to 0 coincides with the \emph{volume density} at $x$, that is
$$\vartheta_X(x)=\lim_{r \to 0}\frac{\mu(B(x,r))}{\omega_n r^n},$$
where $\omega_n$ is the volume of the unit ball in $\R^n$. Bishop-Gromov inequality guarantees that such limit does exist.

In our setting, with the uniform Kato bounds \eqref{eq:Kato} and \eqref{eq:StrongKato}, the Li-Yau type inequality allows us to show that our quantities $\theta_x$ are \emph{almost monotone}, in the sense that there exists a function $F$ of $\lambda$, tending to 1 as $\lambda$ tends to 0, such that the map
$$\lambda \mapsto \theta_x(\lambda s, \lambda t) F(\lambda)$$
has the same monotonicity as $\theta_x$ when the Ricci curvature is non-negative. There is a limitation on the range of parameter where our monotonicity holds, when $t\le s$ we also need $ s\preceq t/\sqrt{f(t)}$. As a consequence, the quantity $\Theta_x$ is not monotone and we do not get a monotone quantity based on the volume ratio. 

Observe that the only bound \eqref{eq:MajKato}, with a function tending to 0 as $t$ goes to zero, is not enough to obtain the above family of monotone quantities: due to the dependence of the Li-Yau type inequality on $\mbox{k}_\tau$, some kind of integral bound on $\mbox{k}_\tau$ is needed. Moreover, for a sequence of smooth manifolds $(M^n_\alpha, g_\alpha)$, the uniform bound \eqref{eq:StrongKato} implies that function $F$ is the same for all $(M_\alpha, g_\alpha)$, so that we get a corresponding family of monotone quantities on the limit space $(X,\dist, \mu)$. 

Thanks to this almost monotonicity, we are able to show that on a tangent cone at $x \in X$ the quantity $\Theta_x$ is constant. Then for all $r >0$ the measure of balls centered at $x$ is equal to $\Theta_x \omega_n r^n$. This, together with the fact that tangent cones are $\RCD(0,n)$ spaces and with the main result of \cite{DePhGi}, allows us to obtain that tangent cones are metric cones.  

We also prove that the almost monotonicity of $\theta_x$ implies that the volume density $\vartheta_X(x)$ is well defined on the limit space, despite the lack of monotonicity of the volume ratio. We then show that the volume density is lower semi-continuous under measured Gromov-Hausdorff convergence. As a consequence, we obtain the stratification result from arguments inspired by B.~White \cite{White} and G.~De Philippis, N.~Gigli \cite{DPG}. In the same proof, we get that $\mu$-almost everywhere tangent cones are Euclidean, with a measure given by $\vartheta_X(x)\cH^n$. In order to prove volume convergence, we then show that 
$$\mu\!-\!\text{a.e. } x\in X\colon \vartheta_X(x)=1.$$

For this purpose, we prove the existence at almost every point $x\in X$ of harmonic $\eps$-splitting maps $H\colon B_r(x)\rightarrow \R^n$. Splitting maps are ``almost coordinates'', in the sense that they are $(1+\eps)$-Lipschitz, $\nabla H$ is close to the identity and the Hessian is close to zero in $L^2$. They have been extensively used in the study of Ricci limit spaces and were recently proven to exist on $\RCD$ spaces too \cite{BPS}. In our case, we obtain a very good control of $\nabla H$ thanks to the Mosco convergence of Dirichlet energy. This is still not enough to prove, as in \cite{CheegerPisa} or \cite{Gallot}, that $\vartheta_X(x)=1$. But we are also able to obtain the Hessian bound thanks to one of our main technical tools, that is the existence of good cut-off functions when just the Kato type bound \eqref{eq:Dynkin} is satisfied. \\

\textbf{Outline of the paper.} 
Section 2 includes the main preliminary tools that we rely on throughout the paper. After introducing the convergence notions that we need, we focus on Dirichlet spaces. We state a compactness result for PI Dirichlet spaces, originally observed in \cite{KasueSurvey}, for which we give a proof in the Appendix, and we collect the assumptions under which a Dirichlet space satisfies the $\RCD$ condition. 

In Section 3, we introduce the different Kato type conditions that we consider in the rest of the paper, we state pre-compactness results and show that under assumptions \eqref{eq:MajKato} and \eqref{eq:StrongKato} the intrinsic distance associated to the Dirichlet energy coincides with the limit distance. In the non-collapsing case, we recall a useful Ahlfors regularity result due to the first author that also holds in the limit. 

Section 4 is devoted to proving some technical tools obtained under assumption \eqref{eq:Dynkin}, in particular the existence of good cut-off functions and the resulting Hessian bound. 

In Section 5 we prove Theorem \ref{thm:collapsed}, first by showing that under assumptions \eqref{eq:MajKato} and \eqref{eq:Kato} the Dirichlet energies converge to the Cheeger energy. This immediately implies convergence of the spectrum when $X$ is compact. We then prove that if $\mbox{k}_\tau(M_\alpha,g_\alpha)$ tends to zero for some fixed $\tau>0$, the limit space is an $\RCD(0,n)$ space.  

In Section 6, we introduce and study the quantity $\theta_x(t,s)$. We show its almost monotonicty and then obtain that, in the non-collapsing case, under assumptions \eqref{eq:MajKato} and \eqref{eq:StrongKato}, tangent cones are metric cones and the volume density is well-defined. Section 7 is devoted to proving Theorem \ref{thm:nc} (iv).  In particular, we obtain that $\mu$-a.e. tangent cones are unique and coincides  with $(\R^n, \dist_e, \vartheta_X(x)\cH^n,0)$. In the last section we show that $\vartheta_X(x)$ is equal to one almost everywhere: we prove existence of harmonic splitting maps and as a consequence we get volume convergence. 

In the Appendix we show the convergence results that are needed in Section 5, for passing to the limit the appropriate estimates on manifolds, and in Section 4, to get the existence of $\eps$-splitting harmonic maps with a good bound on the gradient. We also give an explicit proof of the compactness theorem for PI Dirichlet spaces. \\

\textbf{Acknowledgement.} The first author thanks the Centre Henri Lebesgue ANR-11-LABX-0020-01 for creating an attractive mathematical environment; he was also partially supported by the ANR grants: {\bf ANR-17-CE40-0034}: {\em CCEM} and {\bf ANR-18-CE40-0012}: {\em RAGE}. The second author was partially funded by the ANR grant {\bf ANR-17-CE40-0034}: {\em CCEM}.

\tableofcontents

\section{Preliminaries}

Throughout this paper, $n$ is a positive integer, and $A$ is a countable, infinite, directed set like $\setN$, for instance. We choose to denote sequences with countable infinite sets: this means that if $\{u_\alpha\}_{\alpha \in A}$ is a sequence in a topological space $(X,\cT)$, then $\{u_\alpha\}$ converges to $u$ if and anly if for any neighborhood $U$ of $u$ there exists a finite subset $C \subset A$ such that $\alpha \notin C$ implies $u_\alpha\in U$. Similarly, a sequence $\{u_\alpha\}_{\alpha \in A}$ admits a convergent sub-sequence if and only if there exists an infinite subset $B \subset A$ such that the sequence $\{u_\beta\}_{\beta \in B}$ converges.

All the manifolds we deal with in this paper are smooth and connected, and the Riemannian metrics we consider on these manifolds are all smooth. We often use the notation $M^n$ to specify that a manifold $M$ is $n$-dimensional. We call closed any Riemannian manifold which is compact without boundary. Whenever $(M,g)$ is a Riemannian manifold, we write  $\dist_g$ for its Riemannian distance, $\nu_g$ for its Riemannian volume measure, $\Delta_g$ for its Laplacian operator which we choose to define as a non-negative operator, i.e.
\[
\int_M g(\nabla u,\nabla v) \di\nu_g = \int_M (\Delta_gu) v \di\nu_g
\]
for any compactly supported smooth functions $u,v:M\to \setR$.

We recall that a metric space $(X,\dist)$ is called proper if all closed balls are compact and that it is called geodesic if for any $x,y \in X$ there exists a rectifiable curve $\gamma$ joining $x$ to $y$ whose length is equal to $\dist(x,y)$, in which case $\gamma$ is called a geodesic from $x$ to $y$. We also recall that the diameter of a metric space $(X,\dist)$ is set as $\diam(X):=\sup \{\dist(x,y) : x,y \in X \}$.  If $f:X \to \setR$ is a locally $\dist$-Lipschitz function, we define its local Lipschitz constant $\Lip_\dist f$ by setting
$$
\Lip_\dist f(x) :=
\begin{cases}
\limsup\limits_{y \to x} \frac{|f(x)-f(y)|}{\dist(x,y)} &  \text{if $x \in X$ is not isolated},\\
\qquad \quad 0 & \text{otherwise}.
\end{cases}
$$

We call metric measure space any triple $(X,\dist,\mu)$ where $(X,\dist)$ is a metric space and $\mu$ is a Radon measure finite and non-zero on balls with positive radius, and we write $B_r(x)$ for the open metric ball centered at $x \in X$ with radius $r>0$, and $\bar{B}_r(x)$ for the closed metric ball. We may often implicitly consider a Riemannian manifold $(M,g)$ as the metric measure space $(M,\dist_g,\nu_g)$ in which case metric balls are geodesic balls. 

We use standard notations to denote several classical function spaces: $L^p(X,\mu)$, $L^p_{loc}(X,\mu)$, $\cC(X)$, $\Lip(X)$ or $\Lip(X,\dist)$, $\cC^\infty(M)$ and so on. We use the subscript $c$ to denote the subspace of compactly supported functions of a given function space, like $\cC_c(X)$ for compactly supported functions in $\cC(X)$, for instance.  We write $\cC_0(X)$ for the space of continuous functions converging to $0$ at infinity, which is the $L^\infty(X,\mu)$-closure of $\cC_c(X)$.

We write $\mathbf{1}_A$ for the characteristic function of some set $A \subset X$.  By $\supp f$ (resp.~$\supp \mu$) we denote the support of a function $f$ (resp.~a measure $\mu$).  If $f$ is a measurable map from a measured space $(X,\mu)$ to a measurable one $Y$, we write $f_\#\mu$ for the push-forward measure of $\mu$ by $f$.

For any $s>0$ we write $\omega_s$ for the constant $\pi^{s/2}/\Gamma(s/2+1)$, where $\Gamma$ is the usual gamma function; as well-known, in case $s$ is an integer $k$, then $\omega_k$ coincides with the Hausdorff measure of the unit Euclidean ball in $\setR^k$.

\subsection{The doubling condition}

Let us begin with recalling the definition of a doubling metric measure space.

\begin{D} Given $R\in (0,+\infty]$ and $\upkappa\ge 1$ a  metric measure space  $(X,\dist,\mu)$ is called $\upkappa$-doubling at scale $R$ if for any ball $B_r(x)\subset X$ with $r\le R$ we have
$$\mu\left(B_{2r}(x)\right)\le \upkappa\mu\left(B_{r}(x)\right).$$
When $R=+\infty$, we simply say that $(X,\dist,\mu)$ is doubling.
\end{D}
Doubling metric measure spaces have the  following useful properties.
\begin{prop}\label{prop:doubling} Assume that $(X,\dist,\mu)$ is  $\upkappa$-doubling at scale $R$ for some $\upkappa\ge 1$ and $R \in(0,+\infty]$ and that $(X,\dist)$ is geodesic. Then there exists $c,\lambda,\updelta>0$ depending only on $\upkappa$ such that:
\begin{enumerate}[i)]
\item  $\mu\left(B_{r}(x)\right)\le c e^{\lambda \frac{\dist(x,y)}{r}} \mu\left(B_{r}(y)\right)$ for any $x,y\in X$ and $0<r\le R$,
\item $\mu\left(B_{r}(x)\right)\le c \left(\frac{r}{s}\right)^\lambda \mu\left(B_{s}(x)\right)$ for any $x\in X$ and $0<s\le r\le R$,
\item $ \mu\left(B_{S}(x)\right)\le e^{\lambda \frac{S}{s}} \mu\left(B_{s}(x)\right)$ for any $x\in X$ and $0<s\le R\le S$,
\item $ \mu\left(B_{s}(x)\right) e^{-\lambda} \left(\frac{r}{s}\right)^\updelta \le\mu\left(B_{r}(x)\right)$ for any $x\in X$ and $0<s<r<\min\{R, D/2\}$, where $D=\diam (X,\dist)$,
\item $\mu\left(B_{r}(x)\setminus B_{r-\tau}(x)\right)\le c\left( \frac{\tau}{r}\right)^\updelta 
\mu\left(B_{r}(x)\right)$ for any $x\in X$, $r>0$ and $0<\tau<\min\{r,R\}$.
\end{enumerate}
\end{prop}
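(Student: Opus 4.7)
My plan is to prove the five inequalities in the order they are stated, systematically exploiting the geodesic structure of $X$. For (i), I would connect $x$ and $y$ by a minimizing geodesic and extract a chain $x=x_0,x_1,\dots,x_N=y$ of equally spaced points with $\dist(x_{i-1},x_i)\le r/2$ and $N\le 1+2\dist(x,y)/r$. Then $B_r(x_{i-1})\subset B_{2r}(x_i)$, and the hypothesis $r\le R$ permits applying the doubling inequality at each step, yielding $\mu(B_r(x_{i-1}))\le \upkappa\mu(B_r(x_i))$. Iterating the $N$ inequalities gives (i) with $\lambda=2\log\upkappa$ and $c=\upkappa$. For (ii), iterating the doubling inequality $\lceil\log_2(r/s)\rceil$ times---all intermediate radii remaining $\le r\le R$---gives the polynomial bound with exponent $\lambda=\log_2\upkappa$.

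For (iii), I split the range $[s,S]$ into $[s,R]$ and $[R,S]$: the first is handled directly by (ii), while for the second I cover $B_S(x)$ by balls $B_R(y_j)$ centered on a maximal $R/2$-separated set $\{y_j\}$ in $B_S(x)$. Each $\mu(B_R(y_j))$ is bounded by $c\,e^{\lambda S/R}\mu(B_R(x))$ via (i) applied at radius $R$, and the cardinality of the $y_j$'s is controlled polynomially in $S/R$ by a packing argument using the disjoint $B_{R/4}(y_j)$'s together with the lower bound on $\mu(B_{R/4}(y_j))$ furnished by (i) and (ii). Using that polynomials are dominated by exponentials and combining with the first range produces a bound of the form $e^{\lambda'S/s}\mu(B_s(x))$.

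For (iv), since $r<D/2$ and $X$ is geodesic there exists $y\in X$ with $\dist(x,y)=3r/2$; then $B_{r/2}(y)\subset B_{2r}(x)\setminus B_r(x)$. Combining (i) to compare $\mu(B_{r/2}(y))$ with $\mu(B_{r/2}(x))$ and (ii) to compare the latter with $\mu(B_r(x))$ yields $\mu(B_{r/2}(y))\ge\eta\,\mu(B_r(x))$ for some $\eta=\eta(\upkappa)>0$, hence $\mu(B_{2r}(x))\ge(1+\eta)\mu(B_r(x))$. A dyadic iteration between $s$ and $r$, legal because every intermediate radius stays below $\min\{R,D/2\}$, gives (iv) with $\updelta=\log_2(1+\eta)$.

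Finally, (v) is the most delicate point and the one I expect to be the main obstacle. I would cover the annulus $B_r(x)\setminus B_{r-\tau}(x)$ by balls $B_\tau(y_i)$ centered on a maximal $\tau$-separated set in the annulus and bound each $\mu(B_\tau(y_i))$ by $(\tau/r)^\updelta\,\mu(B_r(y_i))$ via (iv) centered at $y_i$, then use (i) to compare $\mu(B_r(y_i))$ with $\mu(B_r(x))$. The hard part is controlling the number of $y_i$'s: the naive packing count at scale $\tau$ grows polynomially in $r/\tau$ and risks swamping the $(\tau/r)^\updelta$ factor gained from reverse doubling. Overcoming this requires a careful adjustment of exponents---either by iterating (v) on dyadically shrinking sub-annuli so that the estimate propagates, or by sharpening $\updelta$ in (iv) relative to $\lambda$ in (ii)---to produce the final bound with some $\updelta>0$ depending only on $\upkappa$.
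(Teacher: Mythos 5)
The paper does not actually prove this proposition; it points to Hebisch--Saloff-Coste for (i)--(iv) and to Colding and Tessera for (v), so your proposal has to stand on its own. Parts (i) and (ii) are correct, and the dyadic iteration in (iv) is sound in substance, although the opening sentence, that $r<D/2$ guarantees a point $y$ with $\dist(x,y)=3r/2$, is false as stated (take $X=[0,1]$, $x=1/2$, $r=0.4$: then $\sup_z\dist(x,z)=1/2<3r/2$). This is harmless because the dyadic iteration only needs the inner-ball construction at radii $\rho\le r/2$, and $3\rho/2\le 3r/4<D/2\le\sup_z\dist(x,z)$, but the exposition should reflect that.

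The genuine gap is in (iii). The packing count you propose is circular: the disjoint balls $B_{R/4}(y_j)$ lie inside $B_{S+R/4}(x)$, so all you can extract is $N\lesssim e^{4\lambda S/R}\,\mu(B_{S+R/4}(x))/\mu(B_{R/4}(x))$, which is precisely the large-scale volume ratio you set out to bound. And the count is \emph{not} polynomial in $S/R$ in general: on $\mathbb{H}^n$ with $R=1$, a maximal $1/2$-net of $B_S(o)$ has $\asymp e^{(n-1)S}$ points, matching the fact that $\mu(B_S(o))/\mu(B_1(o))\asymp e^{(n-1)S}$. So the exponential in (iii) is essential and a non-circular argument is required. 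One route: build a graph on a maximal $R/2$-net of $B_S(x)$ with edges between points at distance $<3R/2$; geodesics through $x$ show the graph is connected with diameter $O(S/R)$, and the degree is bounded by a constant $C(\upkappa)$ because each neighbourhood packs $B_{R/4}$-balls into a ball of radius $<2R$, where the scale-$R$ doubling hypothesis still applies. This yields $N\le C^{O(S/R)}$ -- exponential, not polynomial, but that is all one needs. Alternatively, cover only the outermost shell of width $O(R)$, slide those balls radially inward, and obtain a recursion of the form $V(S)\le V(R/2)+\upkappa^{3}\,V(S-3R/8)$ for $V(t)=\mu(B_t(x))$, whose solution is exponential in $S/R$. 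As for (v), your diagnosis of the obstruction is accurate, but neither remedy you sketch is a proof: the exponents $\lambda$ and $\updelta$ are already determined by $\upkappa$ and cannot be "sharpened" against one another, and iterating on dyadic sub-annuli does not obviously propagate since the net size increases with each layer. There is also an applicability issue: you invoke (iv) with outer radius $r$ and (i) at radius $r$, but (v) allows $r>R$ and imposes no relation between $r$ and $D$; one must first localize to a shell of width $O(R)$ before those estimates may be used. The cited arguments are genuinely more delicate: they slide the covering balls radially inward along geodesics to produce a bounded-multiplicity packing inside $B_{r-\tau}(x)$, and only then play doubling against reverse doubling.
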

We refer to \cite[Subsection 2.3]{HS} for the first four properties and to \cite[Lemma 3.3]{colding1998liouville} or \cite{tessera2007volume} for the last one.

\subsection{Dirichlet spaces} Let us recall now some classical notions from the theory of Dirichlet forms; we refer to \cite{FOT} for details.

Let $H$ be a Hilbert space of norm $|\cdot|_H$. We recall that a quadratic form $Q:H\to[0,+\infty]$ is called closed if its domain $\mathcal{D}(Q)$ equipped with the norm $|\cdot|_Q:=(|\cdot|_{H}^2 + Q(\cdot))^{1/2}$ is a Hilbert space. 

Let $(X,\cT)$ be a locally compact separable topological space equipped with a $\sigma$-finite Radon measure $\mu$ fully supported in $X$. A Dirichlet form on $L^2(X,\mu)$ with dense domain $\cD(\cE) \subset L^2(X,\meas)$ is a non-negative definite bilinear map $\cE : \cD(\cE) \times \cD(\cE) \to \setR$ such that $\cE(f):=\cE(f,f)$ is a closed quadratic form satisfying the Markov property, that is for any $f \in \cD(\cE)$, the function $f_{0}^{1} = \min ( \max(f,0),1)$ belongs to $\cD(\cE)$ and $\cE(f_{0}^{1}) \le \cE(f)$; we denote by $\langle \cdot, \cdot \rangle_\cE$ the scalar product associated with $|\cdot|_\cE$.
We call such a quadruple $(X,\cT,\mu,\cE)$ a Dirichlet space. When $\cT$ is induced by a given distance $\dist$ on $X$,  we write $(X,\dist,\mu,\cE)$ instead of $(X,\cT,\mu,\cE)$ and call $(X,\dist,\mu,\cE)$ a metric Dirichlet space.

Any Dirichlet form $\cE$ is naturally associated with a non-negative definite self-adjoint operator $L$ with dense domain $\cD(L) \subset L^2(X,\mu)$ defined by
$$
\cD(L):=\left\{f \in \cD(\cE) \, : \, \exists h=:Lf \in L^2(X,\mu)\, \, \text{s.t.}\, \, \cE(f,g)=  \int_X h g \di \mu \, \, \, \forall g \in \cD(\cE)\right\}.
$$
The spectral theorem implies that $L$ generates an analytic sub-Markovian semi-group $(P_t:=e^{-tL})_{t>0}$ acting on $L^2(X,\mu)$ where for any $f \in L^2(X,\mu)$, the map $t \mapsto P_tf$ is characterized as the unique $C^1$ map $(0,+\infty)\to L^2(X,\mu)$, with values in $\cD(L)$, such that
$$
\begin{cases}
\frac{\di}{\di t} P_tf = -L(P_t f) \qquad \forall t>0,\\
\lim\limits_{t \to 0} \|P_t f - f\|_{L^2(X,\mu)}=0.
\end{cases}
$$
Moreover, we get the property that when $0\le f\le 1$ then $0\le P_tf\le 1$.
Standard functional analytic theory shows that $(P_t)_{t>0}$ extends uniquely for any $p \in [1,+\infty)$ to a strongly continuous semi-group of linear contractions in $L^p(X,\meas)$. Moreover, the spectral theorem yields a functional calculus which justifies the following estimate: for any $t>0$ and $f \in \cD(\cE)$,
\begin{equation}\label{eq:functcal}
\|f - P_tf\|_{L^2} \le \sqrt{t} \cE(f).
\end{equation}

\emph{Heat kernel.} We call heat kernel of $\cE$ any function $H:(0,+\infty) \times X \times X \to \setR$ such that for any $t>0$ the function $H(t,\cdot,\cdot)$ is $(\mu \otimes \mu)$-measurable and
\begin{equation}\label{eq:defH}
P_t f (x) = \int_X H(t,x,y) f(y) \di \mu(y)  \qquad \text{for $\mu$-a.e.~$x \in X$},
\end{equation}
for all $f\in L^2(X,\mu)$. If $\cE$ admits a heat kernel $H$, then it is non negative and  symmetric with respect to its second and third variable, and for any $t>0$ the function $H(t,\cdot,\cdot)$ is uniquely determined up to a $(\mu\otimes \mu)$-negligible subset of $X\times X$. Moreover, the semi-group property of $(P_t)_{t>0}$ results into the so-called Chapman-Kolmogorov property for $H$:
\begin{equation}\label{eq:ChapmanKolmogorov}
\int_X H(t,x,z)H(s,z,y) \di \mu(z) = H(t+s,x,y), \qquad \forall x,y \in X, \, \, \forall s,t>0.
\end{equation}
The space $(X,\cT,\mu,\cE)$ -- or the heat kernel $H$ -- is called stochastically complete whenever for any $x \in X$ and $t>0$ it holds
\begin{equation}\label{eq:stocom}
\int_X H(t,x,y) \di \mu(y) = 1.
\end{equation}

\emph{Strongly local, regular Dirichlet spaces.} Let us recall now an important definition.

\begin{D}
A Dirichlet form $\cE$ on $L^2(X,\mu)$ is called \textit{strongly local} if $\cE(f,g)=0$ for any $f, g \in \cD(\cE)$ such that $f$ is constant on a neighborhood of $\supp g$, and \textit{regular} if $\cC_c(X) \cap \cD(\cE)$ contains a core, that is a subset which is both dense in $\cC_c(X)$ for $\|\cdot\|_{\infty}$ and in $\cD(\cE)$ for $|\cdot|_\cE$. If $(X,\cT,\mu,\cE)$ is a Dirichlet space where $\cE$ is strongly local and regular, we say that $(X,\cT,\mu,\cE)$ is a strongly local, regular Dirichlet space.
\end{D}

By a celebrated theorem from A.~Beurling and J.~Deny \cite{BeurlingDeny}, any strongly local, regular Dirichlet form $\cE$ on $L^2(X,\mu)$ admits a \textit{carré du champ}, that is a non-negative definite symmetric bilinear map $\Gamma : \cD(\cE) \times \cD(\cE) \to \mathrm{Rad}$, where $\mathrm{Rad}$ is the set of signed Radon measures on $(X,\cT)$, such that
$$
\cE(f,g) = \int_X \di \Gamma(f,g) \qquad \forall f,g \in \cD(\cE),
$$
where $\int_X \di \Gamma(f,g)$ denotes the total mass of the measure $\Gamma(f,g)$.  Moreover, $\Gamma$ is local, meaning that
$$
\int_A \di \Gamma(u,w) = \int_A \di \Gamma(v,w)
$$
holds for any open set $A\subset X$ and any $u,v,w \in \cD(\cE)$ such that $u=v$ on $A$. Thanks to this latter property, $\Gamma$ extends to any $\mu$-measurable function $f$ such that for any compact set $K\subset X$ there exists $g \in \cD(\cE)$ such that $f=g$ $\mu$-a.e.~on $K$; we denote by $\cD_{loc}(\cE)$ the set of such functions. Then $\Gamma$ satisfies the Leibniz rule and the chain rule. If we set $\Gamma(f):=\Gamma(f,f)$, this implies
\begin{equation}\label{eq:Leibniz}
\Gamma(fg) \le 2 (\Gamma(f)+\Gamma(g))
\end{equation}
for any $f,g \in \cD_{loc}(\cE) \cap L^{\infty}_{loc}(X,\mu)$ and
\begin{equation}\label{eq:chain}
\Gamma(\eta \circ h) = (\eta' \circ h)^2 \Gamma(h)
\end{equation}
for any $h \in \cD_{loc}(\cE)$ and $\eta \in C^1(\setR)$ bounded with bounded derivative.

A final consequence of strong locality and regularity is that the operator $L$ canonically associated to $\cE$ satisfies the classical chain rule:
\begin{equation}\label{eq:chainrule}
L(\phi \circ f) =  (\phi' \circ f) Lf - (\phi''\circ f) \Gamma(f) \qquad \forall f \in \mathbb{G}, \,\, \, \forall \phi \in C^\infty([0,+\infty),\setR),
\end{equation}
where $\mathbb{G}$ is the set of functions $f \in \cD(L)$ such that $\Gamma(f)$ is absolutely continuous with respect to $\mu$ with density also denoted by $\Gamma(f)$. In particular:
\begin{equation}\label{eq:chainrulesquare}
Lf^2 = 2fLf - 2 \Gamma(f) \qquad \forall f \in \mathbb{G}.
\end{equation}

\emph{Intrinsic distance.} The carré du champ operator of a strongly local, regular Dirichlet form $\cE$ provides an extended pseudo-metric structure on $X$ given by the next definition.

\begin{D}The \textit{intrinsic} extended pseudo-distance $\dist_\cE$ associated with $\cE$ is defined by
\begin{equation}\label{eq:defdist}
\dist_\cE(x,y):=\sup \{|f(x)-f(y)| \, : \, f \in \cC(X) \cap \cD_{loc}(\cE) \, \, \, \text{s.t.} \, \, \Gamma(f) \le \mu\}
\end{equation}
for any $x, y \in X$, where $\Gamma(f) \le \mu$ means that $\Gamma(f)$ is absolutely continuous with respect to $\mu$ with density lower than $1$ $\mu$-a.e.~on $X$.
\end{D}
A priori, $\dist_\cE(x,y)$ may be infinite, hence we use the word ``extended''. Of course the case where $\dist_\cE$ does provide a metric structure on $X$ is of special interest.  In this regard, if $(X,\cT,\mu,\cE)$ is a strongly local, regular Dirichlet space where $\dist_\cE$ is a distance inducing $\cT$, we denote it by $(X,\dist_\cE,\mu,\cE)$. 

\medskip

\subsection{The Poincaré inequality and PI Dirichlet spaces} Given $R \in (0,+\infty]$, we say that a strongly local, regular Dirichlet space $(X,\dist_\cE,\mu,\cE)$ satisfies a $R$-scale- invariant Poincaré inequality  if there exists $\upgamma>0$ such that
\begin{equation}\label{eq:Poincaré}
\|u-u_B\|^2_{L^2(B)} \le \upgamma r^2 \int_B \di\Gamma(u)
\end{equation}
for any $u \in \cD(\cE)$ and any ball $B$ with radius $r \in (0,R]$. 
When $R=+\infty$, we simply say that $(X,\dist_\cE,\mu,\cE)$ satisfies a Poincaré inequality. The next definition is central in our work.

\begin{D}\label{def:PI} Given $R \in (0,+\infty]$,  $\upkappa \ge 1$ and $\upgamma>0$,  we say that a strongly local, regular Dirichlet space $(X,\dist_\cE,\mu,\cE)$ is $\mathrm{PI}_{\upkappa,\upgamma}(R)$ if it satisfies the following conditions:
\begin{itemize}
\item $(X,\dist_\cE,\mu)$ is $\upkappa$-doubling at scale $R$,
\item $(X,\dist_\cE,\mu,\cE)$ satisfies a $R$-scale- invariant Poincaré inequality \eqref{eq:Poincaré} with constant $\upgamma$.
\end{itemize}
\end{D}

We may use the terminology $\mathrm{PI}(R)$ if no reference to the doubling or Poincaré constant is required, or even $\mathrm{PI}$ if we do not need to mention the scale $R$.

\medskip
\emph{Geometry and analysis of $\mathrm{PI}$ Dirichlet spaces.} Assume that $(X,\dist_\cE,\mu,\cE)$ is a $\mathrm{PI}_{\upkappa,\upgamma}(R)$ Dirichlet space for some given $R \in (0,+\infty]$,  $\upkappa \ge 1$ and $\upgamma>0$. According to \cite{sturm1996analysis},  the strong locality and regularity assumptions on $\cE$ imply that the metric space $(X,\dist_\cE)$ is geodesic and that it satisfies the Hopf-Rinow theorem: it is proper if and only if it is complete.  Moreover, there is a relationship between the local Lipschitz constant and the carré du champ of $\dist_\cE$-Lipschitz functions, see \cite[Theorem 2.2]{KZ} and \cite[Lemma 2.4]{KoskelaShanZhou}:  when $u\in \Lip(X,\dist_\cE)$, then $u\in \cD_{loc}(\cE)$ and the Radon measure $\Gamma(u)$ is absolutely continuous with respect to $\mu$; moreover, there exists a constant $\eta \in (0,1]$ depending only on $\upkappa,\upgamma$ such that 
\begin{equation}\label{EnergyLip} \eta (\Lip_{\dist_\cE} u)^2 \le \frac{\di\Gamma(u)}{\di\mu}\le (\Lip_{\dist_\cE} u)^2 \qquad \text{$\mu$-a.e.~on $X$}.\end{equation}
In addition, it follows from \cite[Theorem 2.2]{KZ} that $\Lip_c(X,\dist_\cE)$ is dense in $\cD(\cE)$ and that for any $u\in \cD(\cE)$ the Radon measure $\Gamma(u)$ is absolutely continuous with respect to $\mu$ with density $\rho_u\in L^2_{loc}(X,\mu)$ comparable to the approximate Lipschitz constant of $u$. 

For a strongly local, regular Dirichlet space $(X,\dist_\cE,\mu,\cE)$,  to be $\mathrm{PI}_{\upkappa,\upgamma}(R)$ implies to have a Hölder continuous heat kernel $H$ satisfying Gaussian upper and lower bounds: there exists $C_1,C_2>0$ depending only on $\upkappa,\upgamma$ such that
\begin{equation}\label{eq:HKbound}
\frac{C_2^{-1}}{\meas(B_{\sqrt{t}}(x)} e^{- C_2 \frac{\dist_{\cE}^2(x,y)}{t}} \le H(t,x,y) \le \frac{C_1}{\meas(B_{\sqrt{t}}(x))}e^{- \frac{\dist_{\cE}^2(x,y)}{5t}}
\end{equation}
for all $t \in (0,R^2)$ and $x,y \in X$. This implication is actually an equivalence: see Theorem \ref{thm:eq} in the Appendix where we provide references. In fact such a Dirichlet space satisfies the Feller property: the heat semi-group extends to a continuous semi-group on $\cC_0(X)$. 

Moreover, a $\mathrm{PI}(R)$ Dirichlet space is necessary stochastically complete: this was proved on Riemannian manifolds by A. Grigor'yan \cite[Theorem 9.1]{GriBAMS} and extended to Dirichlet spaces by K-T. Sturm \cite[Theorem 4]{sturm1994analysis}. 

The above Gaussian upper bound can be improved to get the optimal Gaussian rate decay.
\begin{prop}\label{est:Gaussian14}
Let $(X,\dist_\cE,\mu,\cE)$ be a $\mathrm{PI}_{\upkappa,\upgamma}(R)$ Dirichlet space.  Then there exist $C, \nu>0$ depending only on $\upkappa,\upgamma$ such that for any $x,y \in X $ and $t \in (0,R^2)$,
\begin{equation}\label{eq:optimalgaussian}
H(t,x,y)\le \frac{C}{\mu (B_R(x))} \frac{R^\nu}{t^{\frac \nu 2}} \left(1+\frac{\dist_\cE^2(x,y)}{t}\right)^{\nu+1} \ e^{-\frac{\dist_\cE^2(x,y)}{4t}}
\end{equation}
Moreover,  Varadhan's formula holds:  for any $x,y\in X$,
\begin{equation}\label{eq:varadhan}
\dist_\cE^2(x,y)=-4\lim_{t\to 0+} t\log H(t,x,y).\end{equation}
\end{prop}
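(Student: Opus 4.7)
The plan is to first upgrade the sub-Gaussian exponent $1/5$ in the upper bound of \eqref{eq:HKbound} to the sharp $1/4$ via Davies' perturbation method, then deduce Varadhan's formula from the resulting two-sided Gaussian estimates.

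For the sharp upper bound, I would fix a bounded $\dist_\cE$-Lipschitz function $\psi$ with $\Gamma(\psi)\le \mu$ and a parameter $\alpha\in\R$. Differentiating $F(t):=\int_X (P_tf)^2 e^{2\alpha\psi}\di\mu$ along the heat flow, using strong locality together with the chain rule for $\Gamma$, and absorbing the cross-term via Cauchy--Schwarz yields $F'(t)\le 2\alpha^2 F(t)$, hence $F(t)\le e^{2\alpha^2 t}F(0)$. Applied to $f$ approximating a point mass at $x$ (legitimate thanks to the H\"older continuity of $H$ implied by the $\PI$ hypothesis) and combined with the on-diagonal bound $H(t,x,x)\le C/\mu(B_{\sqrt t}(x))$ coming from \eqref{eq:HKbound}, this produces the weighted $L^2$ estimate $\int_X H(t,x,z)^2 e^{2\alpha(\psi(z)-\psi(x))}\di\mu(z)\le C e^{2\alpha^2 t}/\mu(B_{\sqrt t}(x))$. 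Plugging this into the Chapman--Kolmogorov identity via Cauchy--Schwarz with weight $e^{\pm\alpha\psi}$, choosing $\psi(z)=\min(\dist_\cE(x,z),K)$ for large $K$, and optimizing in $\alpha\simeq\dist_\cE(x,y)/(2t)$ gives the sharp Gaussian factor $e^{-\dist_\cE^2(x,y)/(4t)}$. The polynomial prefactor $(R/\sqrt t)^\nu(1+\dist_\cE^2/t)^{\nu+1}$ in \eqref{eq:optimalgaussian} then arises from Proposition \ref{prop:doubling}(iii) -- used to replace $\mu(B_{\sqrt t}(x))$ by $\mu(B_R(x))$ -- together with additional doubling at scale $\dist_\cE(x,y)$ to handle the symmetrization between the $x$- and $y$-volumes and the truncation error of $\psi$.

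For Varadhan's formula, the inequality $\liminf_{t\to 0^+}(-4t\log H(t,x,y))\ge \dist_\cE^2(x,y)$ is immediate from the sharp upper bound just established, since for fixed $x,y$ all polynomial and volume factors contribute only $O(t\log(1/t))\to 0$. For the reverse inequality, I would run Davies' method in its sharp dual form: for any bounded Lipschitz $\psi$ with $\Gamma(\psi)\le \mu$, a matching lower bound of the form $H(t,x,y)\ge (\text{volume factor})\cdot e^{\alpha(\psi(y)-\psi(x))-\alpha^2 t}$ -- obtained by combining the perturbation argument above with the chaining consequence of the parabolic Harnack inequality, which controls the unavoidable constants arising from the volume factors -- implies after optimizing in $\alpha$ that $\limsup_{t\to 0^+}(-4t\log H(t,x,y))\le (\psi(y)-\psi(x))^2$; taking the supremum over such $\psi$ via the characterization \eqref{eq:defdist} of $\dist_\cE$ yields the claim. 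I expect the main obstacle to be obtaining the sharp Gaussian constant $1/4$ simultaneously in the upper and lower estimates, which requires a careful bookkeeping of the polynomial corrections accumulated during the iteration of Davies' method and a precise control of the Harnack constants in the chaining step.
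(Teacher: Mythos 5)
The paper itself offers no proof here: it quotes the sharp Gaussian upper bound from Grigor'yan and Varadhan's formula from ter Elst--Robinson--Sikora, so you are supplying an argument of your own rather than reconstructing the paper's. Your upper-bound sketch is a reasonable outline of Davies' perturbation method, and with the caveats you yourself flag (the polynomial corrections from doubling, the truncation of $\psi$, and the need to start the Gronwall iteration from a positive time since $F(0)=+\infty$ for a point mass, which forces you to use the full off-diagonal decay from \eqref{eq:HKbound} and not merely the on-diagonal bound) it can be made to work.

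The gap is in the lower-bound half of Varadhan. The inequality you posit,
\[
H(t,x,y)\ge(\text{volume factor})\cdot e^{\alpha(\psi(y)-\psi(x))-\alpha^{2}t},
\]
cannot hold: maximizing the exponent over $\alpha$ at $\alpha=(\psi(y)-\psi(x))/(2t)$ turns the right-hand side into $(\text{volume factor})\cdot e^{+(\psi(y)-\psi(x))^{2}/(4t)}$, which blows up as $t\to 0^{+}$ whenever $\psi(x)\neq\psi(y)$, contradicting the upper bound you have just established. The quantifier also runs the wrong way: a bound $\limsup_{t\to 0^{+}}(-4t\log H)\le(\psi(y)-\psi(x))^{2}$ valid for \emph{every} admissible $\psi$ would let you pass to the \emph{infimum} over $\psi$, which is $0$ (take $\psi$ constant). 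Davies' perturbation is intrinsically a one-sided, upper-bound device; there is no ``sharp dual form'' producing a matching lower estimate. The correct route to $\limsup_{t\to 0^{+}}(-4t\log H)\le\dist_{\cE}^{2}(x,y)$ in the PI setting is genuine chaining: take a $\dist_\cE$-geodesic from $x$ to $y$ (available since a PI Dirichlet space is proper and geodesic), divide it into $N$ equal arcs, write $H(t,x,y)$ as an $N$-fold Chapman--Kolmogorov integral over small balls centred at the division points, and apply the near-diagonal lower bound furnished by the parabolic Harnack inequality to each factor; letting $N\to\infty$ yields $H(t,x,y)\ge c_N(t)\,e^{-(1+\eps_N)\dist_{\cE}^{2}(x,y)/(4t)}$ with $\eps_N\to 0$, which closes the argument. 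Your mention of chaining points in the right direction, but folding it into a $\psi$-weighted estimate with $\alpha$-optimization, as you propose, does not produce a valid proof.
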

The Gaussian upper bound can be found in \cite[Theorem 5.2 ]{GriRev} (see also \cite{S,C} for optimal versions) and Varadhan's formula is due to ter Elst,  D.~Robinson, and A.~Sikora \cite{varadhan} (see also \cite{MR1809739} for an earlier result).

\subsection{Notions of convergence} We provide now our working definitions of convergence of spaces and of points, functions, bounded operators and Dirichlet forms defined on varying spaces.

\subsubsection{Convergence of spaces} Let us start with some classical definitions.

\emph{Pointed Gromov-Hausdorff convergence.} For any $\eps>0$, an $\eps$-isometry between two metric spaces $(X,\dist)$ and $(X',\dist')$ is a map $\Phi:X\to X'$ such that $|\dist(x_0,x_1) - \dist'(\Phi(x_0),\Phi(x_1))|<\eps$ for any $x_0, x_1 \in X$ and $X'=\bigcup_{x \in X} B_\eps(\Phi(x))$. A sequence of  pointed metric spaces $\{(X_\alpha,\dist_\alpha,o_\alpha)\}_\alpha$ converges in the pointed Gromov-Hausdorff topology (pGH for short) to another pointed metric space $(X,\dist,o)$ if there exist two sequences $\{R_\alpha\}_\alpha, \{\eps_\alpha\}_\alpha \subset (0,+\infty)$ such that $R_\alpha \uparrow +\infty$, $\eps_\alpha \downarrow 0$, and, for any $\alpha$, an $\eps_\alpha$-isometry $\Phi_{\alpha}:B_{R_\alpha}(o_\alpha)\to B_{R_\alpha}(o)$ such that $\Phi_{\alpha}(o_\alpha)=o$. We denote this by $$(X_\alpha,\dist_\alpha,o_\alpha) \stackrel{pGH}{\longrightarrow} (X,\dist,o).$$

\emph{Pointed measured Gromov-Hausdorff convergence.} Let us assume that the spaces $\{(X_\alpha,\dist_\alpha,o_\alpha)\}_\alpha, (X,\dist,o)$ are equipped with Radon measures $\{\mu_\alpha\}_\alpha, \mu$ respectively. Then the sequence of pointed metric measure spaces $\{(X_\alpha,\dist_\alpha,\mu_\alpha,o_\alpha)\}_\alpha$ converges to $(X,\dist,\mu,o)$ in the pointed measured Gromov-Hausdorff topology (pmGH for short) if there exist two sequences $\{R_\alpha\}_\alpha, \{\eps_\alpha\}_\alpha \subset (0,+\infty)$ such that $R_\alpha \uparrow +\infty$, $\eps_\alpha \downarrow 0$, and, for any $\alpha$, an $\eps_\alpha$-isometry $\Phi_{\alpha}:B_{R_\alpha}(o_\alpha)\to B_{R_\alpha}(o)$ such that $\Phi_{\alpha}(o_\alpha)=o$ and $$(\Phi_{\alpha})_{\#}\mu_\alpha \weakto \mu,$$ where we recall that 
  $(\Phi_{\alpha})_{\#}\mu_\alpha \weakto \mu$ means that for any $\varphi\in \cC_c(X)$,
  $$\lim_{\alpha} \int_{X_\alpha} \varphi\circ\Phi_\alpha\di\mu_\alpha= \int_{X} \varphi\di\mu.$$
   We denote this by $(X_\alpha,\dist_\alpha,\mu_\alpha,o_\alpha) \stackrel{pmGH}{\longrightarrow} (X,\dist,\mu,o)$.

  \emph{Precompactness results.} Gromov's well-known precompactness theorem yields the following.
  \begin{prop}\label{precompmGH} For any $R>0$ and $\upkappa,\upeta \ge 1$, the space of pointed proper geodesic metric measure spaces  $(X,\dist,\mu,o)$ satisfying 
  \begin{equation}\label{condcompact1}
(X,\dist,\mu)\text{ is $\upkappa$-doubling at scale }R,
\end{equation}
\begin{equation}\label{condcompact1'}
 \upeta^{-1}\le \mu\left(B_R(o)\right)\le \upeta
\end{equation}
is compact in the pointed measured Gromov-Hausdorff topology, i.e.~for every sequence of pointed proper geodesic metric measure spaces $\left\{(X_\alpha, \dist_\alpha, \meas_\alpha, o_\alpha) \right\}_\alpha$ satisfying \eqref{condcompact1} and \eqref{condcompact1'}, there is a subsequence $B\subset A$ and  a pointed proper geodesic metric measure space  $(X,\dist,\mu,o)$ satisfying \eqref{condcompact1} and \eqref{condcompact1'} such that 
$$(X_\beta, \dist_\beta, \meas_\beta, o_\beta) \stackrel{pmGH}{\longrightarrow} (X,\dist,\mu,o).$$ 
\end{prop}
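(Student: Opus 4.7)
The plan is the classical two-step route: first establish pointed Gromov--Hausdorff sub-convergence via Gromov's precompactness theorem, then promote it to pmGH by extracting a vague limit of the pushed-forward measures. The key input is that the doubling condition at scale $R$, combined with Proposition~\ref{prop:doubling}, upgrades to uniform mass and covering bounds at \emph{every} scale, with constants depending only on $\upkappa$, $\upeta$, $R$ and the scale. More precisely, Proposition~\ref{prop:doubling}(iii) applied to $\mu_\alpha(B_R(o_\alpha)) \le \upeta$ gives a uniform bound $\mu_\alpha(B_S(o_\alpha)) \le \upeta\, e^{\lambda S/R}$ for every $S \ge R$; then iterating the doubling condition at scale $R$ with a greedy covering (possible because $(X_\alpha,\dist_\alpha)$ is geodesic) yields, for each $S>0$ and $\eps>0$, a uniform upper bound $N=N(\upkappa,\upeta,R,S,\eps)$ on the number of $\eps$-balls needed to cover $B_S(o_\alpha)$. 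Gromov's theorem then extracts a subsequence $\{(X_\beta,\dist_\beta,o_\beta)\}_\beta$ pGH-converging to a pointed complete geodesic metric space $(X,\dist,o)$, together with $\eps_\beta$-isometries $\Phi_\beta: B_{R_\beta}(o_\beta) \to B_{R_\beta}(o)$.

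To extract the limit measure, set $\tilde\mu_\beta := (\Phi_\beta)_\#\mu_\beta$. These are finite Radon measures on $B_{R_\beta}(o)$ with the uniform bound $\tilde\mu_\beta(\overline{B}_k(o)) \le C(k,\upkappa,\upeta,R)$ coming from the previous step. By Banach--Alaoglu applied in the dual of $C_c(X)$, followed by a diagonal extraction over $k \in \setN$, one finds a further subsequence such that $\tilde\mu_\beta \weakto \mu$ for some non-negative Radon measure $\mu$ on $X$; this is exactly the definition of pmGH convergence.

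Finally I would verify that $(X,\dist,\mu,o)$ still belongs to the class. The upper bound $\mu(B_R(o)) \le \upeta$ follows from upper semicontinuity applied to closed balls via continuous cutoffs approximating $\mathbf{1}_{\overline{B}_R(o)}$; the doubling inequality at scale $R$ is obtained by pinching test functions between $\mathbf{1}_{B_r(x)}$ and $\mathbf{1}_{B_{r+\tau}(x)}$ at both scales $r$ and $2r$, then absorbing the boundary discrepancy via the annulus estimate Proposition~\ref{prop:doubling}(v) as $\tau \downarrow 0$; the lower bound $\mu(B_R(o)) \ge \upeta^{-1}$ is obtained similarly on a slightly smaller concentric ball. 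The main technical obstacle is precisely this last step, since vague convergence does not commute \emph{a priori} with characteristic functions of balls; but Proposition~\ref{prop:doubling}(v) quantifies the boundary discrepancy uniformly, which is exactly what is needed to push the doubling property to the limit. Once that is established, properness of $X$ follows automatically from completeness, and the proof is complete.
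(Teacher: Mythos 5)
Your argument is correct and follows the standard Gromov--Fukaya two-step proof (uniform covering and mass bounds from Proposition~\ref{prop:doubling}, pGH precompactness via Gromov's theorem, weak-$*$ extraction of the pushed-forward measures, and verification that the doubling and normalization constraints survive in the limit), which is exactly what the paper invokes by citing \cite[Proposition 5.2]{Gromov} without spelling out the details. The only slip is the concluding claim that properness of the limit ``follows automatically from completeness'': completeness alone does not imply properness even for geodesic spaces, and what actually delivers properness is the uniform total boundedness of balls, which is already built into Gromov's compactness theorem, so the limit is proper by construction rather than by completeness.
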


We point out that Gromov's precompactness theorem is usually stated for complete, locally compact, length metric spaces \cite[Proposition 5.2]{Gromov}, but the Hopf-Rinow theorem ensures that these assumptions are equivalent to being proper and geodesic.

Remark that the condition \eqref{condcompact1'} is stated for the radius $R$ but the doubling condition implies that when $\underline{R}>0$ then there is some $\upeta \ge 1$ such that  \eqref{condcompact1'} holds if and only if there is some $\underline{\upeta}\ge 1$, depending only on $\upeta, R,\underline{R}$ and $\upkappa$ such that :
$$\underline{\upeta}^{-1}\le \mu\left(B_{\underline{R}}(o)\right)\le\underline{\upeta}.$$
  
Note that the doubling condition is stable with respect to multiplication of the measure by a constant factor. Therefore,  if $\left\{(X_\alpha, \dist_\alpha, \meas_\alpha, o_\alpha) \right\}_\alpha$  is a sequence of pointed proper geodesic metric measure spaces satisfying \eqref{condcompact1} but not \eqref{condcompact1'}, we may rescale each measure $\mu_\alpha$ into $m_\alpha \mu_\alpha$ for some $m_\alpha>0$ in such a way that the sequence $\left\{ m_\alpha^{-1}\meas_\alpha(B_R(o_\alpha))+ m_\alpha\meas^{-1}_\alpha(B_R(o_\alpha))\right\}_\alpha$  is bounded; then $\left\{(X_\alpha, \dist_\alpha, m_\alpha^{-1}\meas_\alpha, o_\alpha) \right\}_\alpha$ admits a pmGH convergent subsequence. We can choose $m_\alpha=\meas_\alpha(B_R(o_\alpha))$, for instance.

\emph{Tangent cones of doubling spaces.} We recall the classical definition of a tangent cone.

\begin{D} 
Let $(X, \dist,\mu)$ be a metric measure space and $x \in X$. The pointed metric space $(Y,\dist_Y, x)$ is a \textbf{tangent cone} of $X$ at $x$ if there exists a sequence $\{\eps_\alpha\}_{\alpha \in A} \subset(0,+\infty)$ such that $\eps_\alpha \downarrow 0$ and 
$$(X, \eps_\alpha^{-1}\dist, x) \stackrel{pGH}{\longrightarrow} (Y, \dist_Y, x);$$
it can always be equipped with a limit meaure $\mu_Y$ such that, up to a subsequence,
\begin{equation}\label{eq:convtc}
(X, \eps_\alpha^{-1}\dist,\mu(B^\dist_{\eps_\alpha}(y))^{-1}\mu, x) \stackrel{pmGH}{\longrightarrow} (Y, \dist_Y,\mu_Y, x).
\end{equation}
The pointed metric measure space $(Y, \dist_Y,\mu_Y, x)$ is then called a \textbf{measured tangent cone} of $X$ at $x$. If $(Y,\dist_Y, \mu_Y,y)$ is a measured tangent cone of $X$ at $x$ and $y\in Y$, we refer to a measured tangent cone $(Z,\dist_Z, \mu_Z,y)$ of $Y$ as $y$ as an \textbf{iterated measured tangent cone} of $X$. 
\end{D}

\begin{rem}
We often use $(X_x,\dist_x,\mu_x,x)$ to denote a measured tangent cone of $(X,\dist,\mu)$ at $x$.
\end{rem}

As well-known,  on a geodesic proper metric measure space $(X,\dist,\mu)$ that is $\upkappa$-doubling at scale $R$ for some $\upkappa \ge 1$ and $R \in (0,+\infty)$,  the existence of measured tangent cones at any point $x$ is guaranteed and any of these measured tangent cones is $\upkappa$-doubling. Indeed, for any $\eps >0$, the rescaled space $(X,\eps^{-1}\dist,\mu)$ is $\upkappa$-doubling at scale $R/\eps$. Hence when $\eps\le R$, the   the rescaled space $(X,\eps^{-1}\dist,\mu)$ is $\upkappa$-doubling at scale $1$. Hence Proposition \ref{precompmGH} applies to the rescaled spaces $\{(X,\eps^{-1}\dist,\mu(B_\eps(x))^{-1}\mu,x)\}_{\eps >0}$ and yields the existence of measured tangent cones which are $\upkappa$-doubling at any scale $S\ge 1$.

When for some $m>0$ the space $(X,\dist,\mu)$ additionally satisfies a (local) $m$-Ahlfors regularity condition,  i.e.~for each $\rho>0$ there exists $c_\rho>0$ such that for any $x \in X$, any $r \in (0,1)$ and $y\in B_\rho(x)$,
$$ c_\rho r^m\le \mu(B_r(y))\le r^m/c_\rho,$$
then it is convenient to rescale the measure by $\eps^{-m}$ to study measured tangent cones.  In this case,  the tangent measures  are only changed by a multiplicative constant positive factor. 

\subsubsection{Convergence of points and functions}A natural way to formalize the notions of convergence of points and functions defined on varying spaces is the following.  We let $\{(X_\alpha, \dist_\alpha, \meas_\alpha, o_\alpha)\}_\alpha, (X,\dist,\mu,o)$ be proper pointed metric measure spaces such that 
\begin{equation}\label{eq:conv}(X_\alpha, \dist_\alpha, \meas_\alpha, o_\alpha) \stackrel{pmGH}{\longrightarrow} (X,\dist,\mu,o).\end{equation} As the $\eps_\alpha$-isometries between $X_\alpha$ and $X$ are not unique (they can be composed for instance with isometries of $X_\alpha$ or $X$), we make a specific choice by using the following characterization:

\begin{chara}\label{chara}
The pmGH convergence \eqref{eq:conv} holds if and only if there exist $\{R_\alpha\}_\alpha, \{\eps_\alpha\}_\alpha \subset (0,+\infty)$ with $R_\alpha \uparrow +\infty$, $\eps_\alpha \downarrow 0$ and $\eps_\alpha$-isometries $\Phi_\alpha:B_{R_\alpha}(o_\alpha) \to B_{R_\alpha}(o)$  such that:
\begin{enumerate}
\item $\Phi_\alpha(o_\alpha)=o$,
\item $(\Phi_{\alpha})_{\#}\mu_\alpha \weakto \mu$.
\end{enumerate}
\end{chara}
From now on and until the end of this section, we work with the notations provided by this characterization.\\
 
\emph{Convergence of points.}
Let $x_\alpha \in X_\alpha$ for any $\alpha$ and $x \in X$ be given. We say that the sequence of points $\{x_\alpha\}_\alpha$ converges to $x$ if $\dist(\Phi_\alpha(x_\alpha),x) \to 0$.  We denote this by $x_\alpha \to x$.

\emph{Uniform convergence.} Let $u_\alpha \in \cC(X_\alpha)$ for any $\alpha$ and $u \in \cC(X,\mu)$ be given. We say that the sequence of functions $\{u_\alpha\}_\alpha$ converges uniformly on compact sets to $u$ if $\|u_\alpha - u \circ \Phi_\alpha\|_{L^\infty(B(o_\alpha,R))} \to 0$ for any $R>0$. It is easy to show the following useful criterion for uniform convergence on compact sets.
\begin{prop}\label{prop:criUC} Let $u_\alpha  \in \cC(X_\alpha)$ for any $\alpha$ and $u\in \cC(X)$ be given.  Then $\left\{u_\alpha\right\}_\alpha$ converges uniformly on compact sets to $u$ if and only if $u_\alpha(x_\alpha) \to u(x)$ whenever $x_\alpha \to x$.
\end{prop}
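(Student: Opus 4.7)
The plan is to verify the two implications by unpacking the definitions, using the properness of $X$ for the non-trivial direction.

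For the forward implication, assume $\{u_\alpha\}_\alpha$ converges uniformly on compact sets to $u$ and let $x_\alpha \to x$, i.e.\ $\dist(\Phi_\alpha(x_\alpha),x) \to 0$. Pick $R>0$ large enough that $x \in B_{R/2}(o)$; then $\Phi_\alpha(x_\alpha) \in B_R(o)$ for $\alpha$ large, and correspondingly $x_\alpha \in B_{R'}(o_\alpha)$ for some $R'>R$ once $\alpha$ is large, because $\Phi_\alpha$ is an $\eps_\alpha$-isometry with $\Phi_\alpha(o_\alpha)=o$ and $\eps_\alpha \to 0$. Writing
\[
|u_\alpha(x_\alpha) - u(x)| \le |u_\alpha(x_\alpha) - u(\Phi_\alpha(x_\alpha))| + |u(\Phi_\alpha(x_\alpha)) - u(x)|,
\]
the first term tends to $0$ by the hypothesis of uniform convergence on $B_{R'}(o_\alpha)$, while the second tends to $0$ by continuity of $u$ at $x$.

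For the reverse implication, I would argue by contradiction. Suppose that uniform convergence on compact sets fails. Then there exist $R>0$, $\delta>0$, an infinite sub-index set, and points $x_\alpha \in B_R(o_\alpha)$ such that
\[
|u_\alpha(x_\alpha) - u(\Phi_\alpha(x_\alpha))| \ge \delta
\]
along that sub-sequence. Here is where I use properness of $(X,\dist)$: the closed ball $\overline{B_R(o)}$ is compact, and since $\Phi_\alpha(x_\alpha)$ eventually lies in $\overline{B_{R+\eps_\alpha}(o)}$, a further sub-sequence satisfies $\Phi_\alpha(x_\alpha) \to y$ for some $y \in X$. By definition this means $x_\alpha \to y$, so the standing hypothesis gives $u_\alpha(x_\alpha) \to u(y)$, while continuity of $u$ gives $u(\Phi_\alpha(x_\alpha)) \to u(y)$; subtracting contradicts the lower bound $\delta$.

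The only slightly delicate point is checking that one can extract a sub-sequence of $\{\Phi_\alpha(x_\alpha)\}_\alpha$ converging in $X$, which relies on the properness of $X$ inherited from the pmGH convergence of proper spaces (and on the fact that $\Phi_\alpha(x_\alpha)$ stays in a fixed bounded region of $X$ because $\Phi_\alpha$ almost preserves the distance to the basepoint). I do not expect any serious obstacle; this is essentially a bookkeeping argument using the characterization of pmGH convergence provided by the $\eps_\alpha$-isometries $\Phi_\alpha$.
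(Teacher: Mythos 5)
Your proof is correct, and it is the standard argument the authors evidently had in mind: the paper states this proposition without proof (``It is easy to show\ldots''), so there is nothing to compare against. Both directions are handled properly — the triangle inequality plus the $\eps_\alpha$-isometry bookkeeping for the forward implication, and properness of $(X,\dist)$ to extract a convergent subsequence of $\{\Phi_\alpha(x_\alpha)\}$ for the converse — and the only points you leave implicit (choosing $x_\alpha$ within, say, $\delta/2$ of the unattained supremum, and extending the extracted subsequence to a full sequence converging to $y$ before invoking the hypothesis) are routine.
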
  
In case $\varphi_\alpha\in \cC_c(X_\alpha)$ for any $\alpha$ and $\varphi\in \cC_c(X)$, we write 
$$\varphi_\alpha\stackrel{\cC_c}{\longrightarrow} \varphi$$ if there is $R>0$ such that
$\supp \varphi_\alpha\subset B_R(o_\alpha)$ for any $\alpha$ large enough and if $\{\varphi_\alpha\}_\alpha$ converges uniformly to $\varphi$.  When the spaces $\{(X_\alpha, \dist_\alpha, \meas_\alpha)\}_\alpha, (X,\dist,\mu)$ are all $\upkappa$-doubling at scale $R$, then for every $\varphi\in \cC_c(X)$ we can build functions  $\varphi_\alpha\in \cC_c(X_\alpha)$ such that $\varphi_\alpha\stackrel{\cC_c}{\longrightarrow} \varphi$: see Proposition \ref{prop:approx} in the Appendix. 

\emph{Weak $L^p$ convergence.} Let $p\in (1,+\infty)$.  Let $f_\alpha\in L^p(X_\alpha,\mu_\alpha)$ for any $\alpha$ and $f \in L^p(X,\mu)$ be given.  We say that the sequence of functions $\{f_\alpha\}_{\alpha}$ converges weakly in $L^p$ to $f$, and we note $$f_\alpha \stackrel{L^p}{\weakto}f,$$ if $\sup_\alpha \|f_\alpha\|_{L^p}<+\infty$ and
$$\varphi_\alpha\stackrel{\cC_c}{\longrightarrow} \varphi\quad  \Longrightarrow \quad \int_{X_\alpha}\varphi_\alpha f_\alpha\di\mu_\alpha=\int_{X}\varphi f\di\mu.$$
We have the following compactness result:
\begin{prop}If $\sup_\alpha \|f_\alpha\|_{L^p}<+\infty$, then there exists a subsequence $B \subset A$ and $f \in L^p(X,\mu)$ such $f_\beta \stackrel{L^p}{\weakto}f$.\end{prop}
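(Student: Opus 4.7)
The plan is to push forward the measures $f_\alpha\,\di\mu_\alpha$ to $X$ via the $\eps_\alpha$-isometries $\Phi_\alpha$, extract a vaguely convergent subsequence by a diagonal argument, and identify the limit with an $L^p$-density through Riesz duality. For $\varphi\in\cC_c(X)$ and $\alpha$ large enough that $\supp\varphi\subset B_{R_\alpha}(o)$, set
$$L_\alpha(\varphi):=\int_{X_\alpha}(\varphi\circ\Phi_\alpha)\,f_\alpha\,\di\mu_\alpha.$$
Although $\varphi\circ\Phi_\alpha$ is merely bounded Borel, H\"older's inequality yields
$$|L_\alpha(\varphi)|\le \|f_\alpha\|_{L^p(\mu_\alpha)}\,\|\varphi\|_{L^q((\Phi_\alpha)_\#\mu_\alpha)},\qquad 1/p+1/q=1.$$
Since $|\varphi|^q\in\cC_c(X)$, the pmGH hypothesis $(\Phi_\alpha)_\#\mu_\alpha\weakto\mu$ gives $\|\varphi\|_{L^q((\Phi_\alpha)_\#\mu_\alpha)}\to\|\varphi\|_{L^q(\mu)}$, so $\{L_\alpha(\varphi)\}_\alpha$ is eventually bounded for each fixed $\varphi$.

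Since $(X,\dist)$ is proper and thus second countable, for each $k\in\setN$ the subspace $\cC_c^k(X):=\{\varphi\in\cC_c(X):\supp\varphi\subset\overline{B}_k(o)\}$ is separable in the sup norm; taking a countable union provides a countable family $\{\varphi^{(j)}\}_{j\in\setN}\subset\cC_c(X)$ along which a Cantor diagonal extraction produces a subsequence $B\subset A$ with $L_\beta(\varphi^{(j)})$ converging for every $j$. By the uniform bound above, each $L_\alpha$ is uniformly continuous on $\cC_c^k(X)$ in the sup norm, with operator norm at most $C\sup_\alpha\mu_\alpha(B_{k+1}(o_\alpha))^{1/q}$, so convergence on the dense set propagates to all $\varphi\in\cC_c(X)$, defining a linear functional $L\colon\cC_c(X)\to\setR$ with
$$|L(\varphi)|\le C\,\|\varphi\|_{L^q(\mu)},\qquad C:=\sup_\alpha\|f_\alpha\|_{L^p(\mu_\alpha)}.$$
By density of $\cC_c(X)$ in $L^q(X,\mu)$, $L$ extends uniquely to a bounded linear functional on $L^q(X,\mu)$, and the Riesz representation theorem delivers $f\in L^p(X,\mu)$, with $\|f\|_{L^p}\le C$, such that $L(\varphi)=\int_X\varphi f\,\di\mu$ for every $\varphi\in L^q(X,\mu)$.

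To conclude $f_\beta\Weakto{L^p} f$, take any $\varphi_\beta\stackrel{\cC_c}{\longrightarrow}\varphi$ with common support in some $B_R(o_\beta)$ and decompose
$$\int_{X_\beta}\varphi_\beta f_\beta\,\di\mu_\beta-\int_X\varphi f\,\di\mu=\int_{X_\beta}(\varphi_\beta-\varphi\circ\Phi_\beta)\,f_\beta\,\di\mu_\beta+\bigl(L_\beta(\varphi)-L(\varphi)\bigr).$$
The second summand vanishes by construction, and the first is bounded by $\|\varphi_\beta-\varphi\circ\Phi_\beta\|_{L^\infty(B_R(o_\beta))}\,\mu_\beta(B_R(o_\beta))^{1/q}\,\|f_\beta\|_{L^p}$, which tends to zero since the very definition of uniform convergence on compact sets is the vanishing of the first factor, while the latter two are uniformly bounded. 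The main delicate point is estimating $\|\varphi\circ\Phi_\alpha\|_{L^q(\mu_\alpha)}$ from pmGH convergence despite $\varphi\circ\Phi_\alpha$ being discontinuous; this is resolved by observing that only the continuous compactly supported function $|\varphi|^q$ actually enters after the change of variables $\mu_\alpha\mapsto(\Phi_\alpha)_\#\mu_\alpha$, so the vague convergence provided by pmGH suffices.
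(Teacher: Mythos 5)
The paper states this proposition without proof, treating it as a known compactness fact in the Kuwae--Shioya framework of $L^p$-convergence on varying spaces, so there is no paper proof to compare against. Your argument is the standard one: build a linear functional on $\cC_c(X)$ by pushing forward against the $\eps_\alpha$-isometries, bound it via H\"older and the weak convergence $(\Phi_\alpha)_\#\mu_\alpha\weakto\mu$, extract a convergent subsequence on a countable dense set using separability of $\cC_c(X)$ and a diagonal argument, extend by density to a bounded functional on $L^q(X,\mu)$, apply Riesz representation, and close the loop by the decomposition $\int\varphi_\beta f_\beta = \int(\varphi_\beta - \varphi\circ\Phi_\beta)f_\beta + L_\beta(\varphi)$, the first term being killed by uniform convergence on compact sets. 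This is correct in essence. One small point worth spelling out: the uniform bound $\sup_\alpha\mu_\alpha(B_{k+1}(o_\alpha))<\infty$ that enters your equicontinuity estimate should be justified; it follows from $(\Phi_\alpha)_\#\mu_\alpha\weakto\mu$ by dominating the indicator of $B_{k+1}(o_\alpha)$ by a fixed $\psi\in\cC_c(X)$ via the $\eps_\alpha$-isometry property of $\Phi_\alpha$ and the normalization $\Phi_\alpha(o_\alpha)=o$. With that remark your proof is complete.
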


\emph{Strong $L^p$ convergence and duality.} Let $p\in (1,+\infty)$.  Let $f_\alpha\in L^p(X_\alpha,\mu_\alpha)$ for any $\alpha$ and $f \in L^p(X,\mu)$ be given.  We say that the sequence of functions $\{f_\alpha\}_{\alpha}$ converges strongly in $L^p$ to $f$, and we note $$f_\alpha \stackrel{L^p}{\to}f,$$ if $f_\alpha \stackrel{L^p}{\weakto}f$ and $\|f_\alpha\|_{L^p} \to \|f\|_{L^p}$. For every $f\in L^p(X,\mu)$, we can build functions $f_\alpha\in L^p(X_\alpha,\mu_\alpha)$ converging to $f$ strongly in $L^p$: this follows from approximating $f$ with functions $\{f_i\} \subset \cC_c(X)$,  approximating each $f_i$ with functions $f_{i,\alpha} \subset \cC_c(X_\alpha)$ as mentioned before, and using a diagonal argument. 

Moreover there is a duality between weak convergence in $L^p$ and strong convergence in $L^q$ when $p$ and $q$ are conjugate exponent, as detailed in the next proposition.
\begin{prop}Let $p,q \in (1,+\infty)$ be satisfying $1/p+1/q=1$. Consider  $f_\alpha\in L^p(X_\alpha,\mu_\alpha)$ for any $\alpha$ and $f\in L^p(X,\mu)$. Then
\begin{itemize}
\item $f_\alpha \stackrel{L^p}{\to}f$ if and only if $\varphi_\alpha \stackrel{L^q}{\weakto}\varphi\Longrightarrow  \int_{X_\alpha}\varphi_\alpha f_\alpha\di\mu_\alpha=\int_{X}\varphi f\di\mu,$
\item $f_\alpha \stackrel{L^p}{\weakto}f$ if and only if $\varphi_\alpha \stackrel{L^q}{\to}\varphi\Longrightarrow  \int_{X_\alpha}\varphi_\alpha f_\alpha\di\mu_\alpha=\int_{X}\varphi f\di\mu.$
\end{itemize} 
\end{prop}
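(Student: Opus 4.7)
The plan is to adapt the classical $L^p$--$L^q$ duality to the varying-spaces framework via approximation arguments and Hölder's inequality. The key inputs are (i) the approximation procedure preceding the statement, whereby every $f\in L^p(X,\mu)$ admits a strongly $L^p$-converging sequence $f_\alpha\in L^p(X_\alpha,\mu_\alpha)$ of approximants built from $\cC_c$ test functions, and (ii) the implicit fact, which follows from Clarkson-type inequalities together with weak lower semicontinuity of the norm, that strong $L^p$ convergence passes to linear combinations: if $f_\alpha\stackrel{L^p}{\to}f$ and $g_\alpha\stackrel{L^p}{\to}g$, then $\|f_\alpha-g_\alpha\|_{L^p}\to\|f-g\|_{L^p}$.

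For the first equivalence, the forward direction proceeds by approximation and splitting. Given $f_\alpha\stackrel{L^p}{\to}f$ and $\varphi_\alpha\stackrel{L^q}{\weakto}\varphi$, fix $\eps>0$, pick $\psi\in\cC_c(X)$ with $\|\psi-f\|_{L^p}<\eps$, and select approximants $\psi_\alpha\stackrel{\cC_c}{\longrightarrow}\psi$ (which in particular give $\psi_\alpha\stackrel{L^p}{\to}\psi$). One then writes
\[\int_{X_\alpha}\varphi_\alpha f_\alpha\di\mu_\alpha - \int_X\varphi f\di\mu = \int_{X_\alpha}\varphi_\alpha(f_\alpha-\psi_\alpha)\di\mu_\alpha + \Bigl(\int_{X_\alpha}\varphi_\alpha\psi_\alpha\di\mu_\alpha - \int_X\varphi\psi\di\mu\Bigr) + \int_X\varphi(\psi-f)\di\mu.\]
The middle term vanishes: $\psi_\alpha$ is eventually compactly supported, hence admissible as a test for the weak $L^q$ convergence of $\{\varphi_\alpha\}_\alpha$. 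The two remaining terms are controlled by Hölder's inequality combined with $\sup_\alpha\|\varphi_\alpha\|_{L^q}<\infty$ (from weak convergence) and $\|f_\alpha-\psi_\alpha\|_{L^p}\to\|f-\psi\|_{L^p}<\eps$. Letting $\eps\to 0$ concludes.

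For the reverse direction of the first item, specializing the hypothesis to approximants $\varphi_\alpha\stackrel{\cC_c}{\longrightarrow}\varphi$ with $\varphi\in\cC_c(X)$ (which a fortiori satisfy $\varphi_\alpha\stackrel{L^q}{\weakto}\varphi$) immediately produces $f_\alpha\stackrel{L^p}{\weakto}f$; the uniform bound $\sup_\alpha\|f_\alpha\|_{L^p}<\infty$ is obtained by contradiction using suitably rescaled dual functions $c_k|f_{\alpha_k}|^{p-1}\sign f_{\alpha_k}$, which go to $0$ strongly in $L^q$ but pair with $f_{\alpha_k}$ to produce a divergent quantity if $\|f_{\alpha_k}\|_{L^p}\to\infty$. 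Upgrading to norm convergence $\|f_\alpha\|_{L^p}\to\|f\|_{L^p}$ reduces, via weak lower semicontinuity, to bounding $M:=\limsup\|f_\alpha\|_{L^p}$ from above by $\|f\|_{L^p}$. Along a subsequence realizing $M$, the dual functions $\phi_{\alpha_k}:=|f_{\alpha_k}|^{p-1}\sign f_{\alpha_k}$ form a bounded sequence in $L^q$ with $\|\phi_{\alpha_k}\|_{L^q}\to M^{p/q}$; extracting a weak limit $\phi_{\alpha_k}\stackrel{L^q}{\weakto}\psi$, weak lower semicontinuity gives $\|\psi\|_{L^q}\le M^{p/q}$, and the hypothesis yields $M^p=\lim\int\phi_{\alpha_k}f_{\alpha_k}\di\mu_{\alpha_k}=\int\psi f\di\mu\le M^{p/q}\|f\|_{L^p}$; since $p-p/q=1$, this gives $M\le\|f\|_{L^p}$.

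The second equivalence is symmetric and actually somewhat easier. The forward implication follows by applying the splitting of the first equivalence with the roles of $f$ and $\varphi$ interchanged. The reverse implication is immediate: testing against $\varphi_\alpha\stackrel{\cC_c}{\longrightarrow}\varphi$ (which is strongly $L^q$ convergent) recovers the very definition of weak $L^p$ convergence, and the uniform bound $\sup_\alpha\|f_\alpha\|_{L^p}<\infty$ is already part of the assumption. The main technical nuance throughout is ensuring that strong $L^p$ convergence behaves well under linear operations in the varying-spaces setting; once this compatibility is secured, the proof reduces to classical duality.
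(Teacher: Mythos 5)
The paper states this proposition without proof, so I am evaluating your argument on its own terms. The overall strategy (split via $\cC_c$-approximation, control the error terms by H\"older and boundedness of the dual norms, and for the converse test against rescaled dual functions $c_k|f_{\alpha_k}|^{p-1}\sign f_{\alpha_k}$) is sound, and the dual-norm contradiction argument for the norm convergence in the reverse direction of item~1 is clean and correct. However, there is one genuine gap in the justification, and one small misstatement.

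The gap is in the ``implicit fact'' that $f_\alpha\stackrel{L^p}{\to}f$ and $g_\alpha\stackrel{L^p}{\to}g$ imply $\|f_\alpha-g_\alpha\|_{L^p}\to\|f-g\|_{L^p}$, which you claim follows from Clarkson plus weak lower semicontinuity. The lemma itself is true (indeed it is a corollary of the proposition you are trying to prove, via the same dual-norm argument), but it does \emph{not} follow from Clarkson + WLSC as stated. For $p\ge 2$, the first Clarkson inequality together with WLSC applied to $\|(f_\alpha+g_\alpha)/2\|_{L^p}$ yields only
\[
\limsup_\alpha\Bigl\|\tfrac{f_\alpha-g_\alpha}{2}\Bigr\|_{L^p}^p\ \le\ \tfrac12\|f\|_{L^p}^p+\tfrac12\|g\|_{L^p}^p-\Bigl\|\tfrac{f+g}{2}\Bigr\|_{L^p}^p,
\]
and this right-hand side can strictly exceed $\|(f-g)/2\|_{L^p}^p$: take $f$ and $g$ with disjoint supports and $\|f\|_{L^p}=\|g\|_{L^p}=1$, so the right-hand side is $1-2^{1-p}$ while $\|(f-g)/2\|_{L^p}^p=2^{1-p}$, and $1-2^{1-p}>2^{1-p}$ for all $p>2$. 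The analogous computation for $1<p<2$ (using the $q$-power Clarkson inequality) runs into the same obstruction. So the desired equality cannot be extracted from Clarkson + WLSC alone; to argue for it independently one needs either a realization of all the spaces in a common ambient space together with a plan-based characterization of strong convergence, or the proposition itself, which would be circular here.

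Fortunately the gap is repairable within your own framework, because the application only needs the lemma for $g=\psi$ with $\|f-\psi\|_{L^p}<\eps$. In that regime the triangle inequality gives $\|\psi\|_{L^p}\le\|f\|_{L^p}+\eps$ and $\|(f+\psi)/2\|_{L^p}\ge\|f\|_{L^p}-\eps/2$, so the Clarkson + WLSC bound above becomes $\limsup_\alpha\|f_\alpha-\psi_\alpha\|_{L^p}^p\le C(p,\|f\|_{L^p})\,\eps + O(\eps^p)$, which tends to $0$ as $\eps\to 0$. This weaker estimate is exactly what the proof needs (it feeds into the first term of your splitting and is then absorbed by sending $\eps\to 0$), so simply replacing the exact-limit claim by this modulus-of-continuity bound, and stating the Clarkson computation explicitly, closes the gap.

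The small misstatement: in the reverse direction of item~2 you write that the uniform bound $\sup_\alpha\|f_\alpha\|_{L^p}<\infty$ ``is already part of the assumption.'' It is not -- the only hypothesis is the convergence of the pairings. The bound has to be derived, but the very same contradiction argument with normalized dual functions that you used for the reverse direction of item~1 works verbatim here (taking $\varphi_{\alpha_k}=c_k|f_{\alpha_k}|^{p-1}\sign f_{\alpha_k}$ with $c_k\|f_{\alpha_k}\|_{L^p}^{p/q}\to 0$ gives strongly $L^q$-null test functions pairing to $+\infty$ with $f_{\alpha_k}$).
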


\emph{Convergence of bounded operators.}
When $B_\alpha\colon L^2(X_\alpha,\mu_\alpha)\rightarrow L^2(X_\alpha,\mu_\alpha)$ for any $\alpha$ and $B\colon L^2(X,\mu)\rightarrow L^2(X,\mu)$ are bounded linear operators, we say that $\{B_\alpha\}_\alpha$ converges weakly to $B$ if 
$$f_\alpha \stackrel{L^2}{\weakto}f \Longrightarrow B_\alpha f_\alpha \stackrel{L^2}{\weakto}Bf$$
and that $\{B_\alpha\}_\alpha$ converges strongly to $B$ if $$f_\alpha \stackrel{L^2}{\to}f \Longrightarrow B_\alpha f_\alpha \stackrel{L^2}{\to}Bf.$$
By duality, $\{B_\alpha\}_\alpha$ converges weakly to $B$ if and only if the sequence of the adjoint operators $\{B^*_\alpha\}_\alpha$ converges strongly to the adjoint operator $B^*$. In particular, if the operators $B_\alpha$ and $B$ are all self-adjoint, weak and strong convergences are equivalent.

\emph{Convergence in energy.} When each metric measure space is endowed with a Dirichlet form so that $\{(X_\alpha,\dist_\alpha,\mu_\alpha,\cE_\alpha)\}_\alpha, (X,\dist,\mu,\cE)$ are Dirichlet spaces, we can similarly  define convergence in energy of functions.  Let $f_\alpha\in \cD(\cE_\alpha)$ for any $\alpha$ and $f \in \cD(\cE)$ be given.  We say that the sequence $\{f_\alpha\}_\alpha$ converges weakly in energy to $f$, and we note 
$$
f_\alpha \stackrel{\mE}{\weakto}f,
$$
if  $f_\alpha \stackrel{L^2}{\weakto}f$ and $\sup_\alpha \cE_\alpha(f_\alpha)<+\infty$.  We say that $\{f_\alpha\}_\alpha$ converges strongly in energy to $f$, and we note
$$
f_\alpha \stackrel{\mE}{\to}f
$$
if  it converges weakly in energy to $f$ and additionally $$f_\alpha \stackrel{L^2}{\to}f\text{ and }\lim_\alpha \cE_\alpha(f_\alpha)=\cE(f).$$ Using the non negative selfadjoint operator $L_\alpha$ (resp. $L$) associated to $\cE_\alpha$ (resp.  to $\cE$), we have
$$f_\alpha \stackrel{\mE}{\weakto}f \quad \Longleftrightarrow\quad (1+L_\alpha)^{\frac 12} f_\alpha \stackrel{L^2}{\weakto}(1+L)^{\frac 12}f$$
and $$f_\alpha \stackrel{\mE}{\to}f  \quad \Longleftrightarrow \quad (1+L_\alpha)^{\frac 12} f_\alpha \stackrel{L^2}{\to}(1+L)^{\frac 12}f.$$

\begin{rem}\label{convLploc}
All the above definitions have also a localized version where each function $f_\alpha$ is defined only on a ball centered at $o_\alpha$ with a fixed radius. For instance for a given $\rho>0$, if $f_\alpha\in L^p(B_\rho(o_\alpha))$ for any $\alpha$ and $f\in  L^p(B_\rho(o))$, we say that the sequence $\{f_\alpha\}$ converges weakly to $f$ in $L^p(B_\rho)$, and we note
$$
f_\alpha \stackrel{L^p(B_\rho)}{\weakto} f,
$$
provided
\begin{itemize}
\item $\sup_\alpha \int_{B_\rho(o_\alpha)} |f_\alpha|^p\di\mu_\alpha<\infty,$
\item for any sequence $\{\varphi_\alpha\}_\alpha$ where $\varphi_\alpha\in \cC_c\left(B_\rho(o_\alpha)\right)$ for any $\alpha$ and any $\varphi\in \cC_c\left(B_\rho(o)\right)$, 
$$
\varphi_\alpha \stackrel{\cC_c}{\to} \varphi \quad \Rightarrow \quad \lim_\alpha  \int_{X_\alpha}\varphi_\alpha f_\alpha\di\mu_\alpha=\int_{X}\varphi f\di\mu.$$
\end{itemize}
Similarly, we define $L^p_{loc}$ convergence of functions through pmGH convergence of spaces in the following way: if $f_\alpha \in L_{loc}^p(X_\alpha,\mu_\alpha)$ for any $\alpha$ and  $f \in L_{loc}^p(X,\mu)$, we say that the sequence $\{f_\alpha\}_\alpha$ converges weakly to $f$ in $L_{loc}^p$, and we note $$f_\alpha \stackrel{L_{loc}^p}{\weakto}f,$$ if for any $\rho>0$, $$\left.f_\alpha\right|_{B_\rho(o_\alpha)} \stackrel{L^p(B_\rho)}{\weakto}\left.f\right|_{B_\rho(o)}.$$
 \end{rem}

\subsubsection{Mosco convergence}

We recall the following notion of convergence that was introduced by U.~Mosco in \cite{Mosco} for quadratic forms. We formulate it if for Dirichlet forms as this is sufficient for our purposes.

\begin{D}
Let $\{(X_\alpha,\cT_\alpha,\mu_\alpha,\cE_\alpha)\}_\alpha, (X,\cT,\mu,\cE)$ be Dirichlet spaces. We say that the sequence of Dirichlet forms $\{\cE_\alpha\}_\alpha$ converges to $\cE$ in the Mosco sense if the two following properties hold:\begin{enumerate}
\item for any sequence $\{u_\alpha\}_\alpha$ where $u_\alpha \in \mathcal{D}(\cE_\alpha)$ for any $\alpha$ and any $u \in \mathcal{D}(\cE)$,
$$
u_\alpha \stackrel{L^2}{\weakto} u \quad \Longrightarrow \quad \cE(u) \le \liminf_\alpha \cE(u_\alpha),
$$
\item for any $u \in \mathcal{D}(\cE)$ there exists $u_\alpha \in \mathcal{D}(\cE_\alpha)$ for any $\alpha$ such that $u_\alpha  \stackrel{\mE}{\to} u.$\end{enumerate}
\end{D}
Mosco convergence of Dirichlet forms is equivalent to the convergence of many related objects: this follows from \cite[Theorem 2.4]{KuwaeShioya}. Recall that for a sequence of self-adjoint operators $\{B_\alpha\}_\alpha$ weak and strong convergence are equivalent.

\begin{prop}\label{prop:equivmosco}
Let $\{(X_\alpha,\dist_\alpha,\mu_\alpha,\cE_\alpha)\}_\alpha, (X,\dist,\mu,\cE)$ be metric Dirichlet spaces.  For any $\alpha$ let $L_\alpha$ (resp.~$L$) be the non-negative self-adjoint operator associated with $\cE_\alpha$ (resp.~$\cE$) and let $(P_t^\alpha)_{t>0}$ (resp. ~$(P_t)_{t>0}$) be the generated semi-group.Then the following statements are equivalent:
\begin{enumerate}
\item $\cE_\alpha \to \cE$ in the Mosco sense,

\item there exists $t>0$ such that the sequence of bounded operators $\{P_t^\alpha\}_\alpha$ strongly/
weakly converges to $P_t$,

\item for all $t>0$ the sequence of bounded operators $\{P_t^\alpha\}_\alpha$ strongly/
weakly converges to $P_t$,

\item the sequence of operators $\{\xi(L_\alpha)\}$ strongly converges to $\xi(L)$ for any smooth function $\xi : [0,+\infty) \to \setR$ with $\supp \xi \subset [0,R]$ for some $R>0$,

\item the sequence of operators $\{\xi_\alpha(L_\alpha)\}$ strongly converges to $\xi(L)$ for any sequence $\{\xi_\alpha : [0, +\infty) \rightarrow \R$\} of continuous functions vanishing at infinity which converges to a continuous function $\xi:[0,+\infty)\rightarrow \R$ vanishing at infinity. 
\end{enumerate}
\end{prop}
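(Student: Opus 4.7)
My approach is to arrange the five statements in a cycle of implications, pivoting on the classical characterization of Mosco convergence in terms of strong resolvent convergence of the associated non-negative self-adjoint operators. Since all of $P_t^\alpha$, $P_t$, $\xi(L_\alpha)$, $\xi(L)$ are bounded self-adjoint on their respective $L^2$-spaces, the equivalence between weak and strong operator convergence for self-adjoint sequences (recorded right after the definition of convergence of bounded operators) means the ``strong/weak'' wording in (2)--(3) and the strong convergence in (4)--(5) are truly the same notion, and I may work exclusively with strong operator convergence.

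For $(1)\Rightarrow(5)\Rightarrow(4)\Rightarrow(3)\Rightarrow(2)$: assuming Mosco convergence, I would invoke Mosco's theorem in the form appropriate to varying spaces (\cite[Thm.~2.4]{KuwaeShioya}) to obtain strong resolvent convergence $(z+L_\alpha)^{-1}f_\alpha \to (z+L)^{-1}f$ whenever $f_\alpha \xrightarrow{L^2} f$, for every $z>0$. Functions in $C_0([0,+\infty))$ can be uniformly approximated on $[0,+\infty)$ by finite linear combinations of $s\mapsto (z_i+s)^{-1}$ thanks to Stone--Weierstrass on the one-point compactification, and combined with the contraction bound $\|\xi(L_\alpha)\|_{L^2\to L^2}\le \|\xi\|_\infty$ this yields $\xi(L_\alpha)\to \xi(L)$ strongly for every $\xi\in C_0([0,+\infty))$. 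For varying symbols as in (5), I would use the triangle inequality with $\|\xi_\alpha(L_\alpha)-\xi(L_\alpha)\|_{L^2\to L^2}\le \|\xi_\alpha-\xi\|_\infty \to 0$. Specializing $\xi(s)=e^{-ts}$ gives (3), and (4) and (2) are immediate weakenings.

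The main step is $(2)\Rightarrow(1)$, which is where the argument has substance, since a one-time semigroup statement must be promoted to an infinitesimal statement on the forms. Starting from strong convergence $P_{t_0}^\alpha\to P_{t_0}$ at some $t_0>0$, the identity $P_{kt_0}^\alpha=(P_{t_0}^\alpha)^k$ promotes convergence to every integer multiple $kt_0$. The family $\{e^{-kt_0 s}\}_{k\ge 1}$ separates points of $[0,+\infty)$ and generates, after adjoining the constants at infinity, a dense subalgebra of $C_0([0,+\infty))$ by Stone--Weierstrass; together with the uniform contraction bound this upgrades to $\xi(L_\alpha)\to \xi(L)$ strongly for every $\xi\in C_0([0,+\infty))$. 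In particular $(1+L_\alpha)^{-1}\to(1+L)^{-1}$ strongly, and Mosco's theorem then delivers (1). Equivalently, from strong resolvent convergence the two Mosco conditions can be checked directly: the $\liminf$ inequality follows from the identity $\cE(u)=\lim_{t\to 0}t^{-1}\langle u-P_tu,u\rangle_{L^2}$ combined with weak lower semicontinuity of the $L^2$-norm, while a recovery sequence for $u\in\cD(\cE)$ is $u_\alpha:=(1+L_\alpha)^{-1}v_\alpha$ with $v_\alpha\xrightarrow{L^2}(1+L)u$.

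The hard part is the passage from single-time to infinitesimal information in $(2)\Rightarrow(1)$. Once Stone--Weierstrass is used to convert semigroup convergence at a single time into strong resolvent convergence at every positive parameter, Mosco's original characterization closes the loop; all other implications are essentially formal uses of functional calculus and the duality between weak and strong convergence for self-adjoint operators.
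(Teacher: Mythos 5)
The paper's own ``proof'' of this proposition is a bare citation to \cite[Theorem 2.4]{KuwaeShioya}, together with the remark that weak and strong operator convergence coincide for self-adjoint operators; you go further and reconstruct the argument. Your reconstruction is sound: from the Kuwae--Shioya resolvent characterization of Mosco convergence, Stone--Weierstrass and the uniform contraction bound $\|\eta(L_\alpha)\|_{L^2\to L^2}\le\|\eta\|_\infty$ yield $(5)$ and its specializations, and for $(2)\Rightarrow(1)$ your semigroup iteration $P^\alpha_{kt_0}=(P^\alpha_{t_0})^k$ combined with Stone--Weierstrass again produces strong resolvent convergence and hence Mosco convergence. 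This is indeed the essential content of the cited theorem, and your identification of $(2)\Rightarrow(1)$ as the substantive direction is correct.

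There is, however, a logical loose end. You announce the cycle $(1)\Rightarrow(5)\Rightarrow(4)\Rightarrow(3)\Rightarrow(2)\Rightarrow(1)$, but your sketch never establishes any implication \emph{out of} $(4)$: the step ``specializing $\xi(s)=e^{-ts}$'' uses a $C_0$ function that is not compactly supported, so it is an instance of $(5)$, not of $(4)$. What you actually deliver is $(1)\Rightarrow(5)\Rightarrow(4)$ together with $(5)\Rightarrow(3)\Rightarrow(2)\Rightarrow(1)$, leaving $(4)$ dangling and the equivalence closed only on $\{1,2,3,5\}$. The fix is short but must be stated: to get $(4)\Rightarrow(3)$, pick $\chi_R\in C_c^\infty([0,+\infty))$ with $0\le\chi_R\le 1$ and $\chi_R\equiv 1$ on $[0,R]$, set $\xi_R(s)=e^{-ts}\chi_R(s)$; then $\|P_t^\alpha-\xi_R(L_\alpha)\|_{L^2\to L^2}\le\sup_{s\ge R}e^{-ts}=e^{-tR}$ uniformly in $\alpha$ (and likewise for the limit operator), so $(4)$ applied to $\xi_R$ plus a $3\varepsilon$-argument as $R\to\infty$ gives $P_t^\alpha\to P_t$ strongly. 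A secondary point: statement $(5)$ leaves the mode of convergence of $\xi_\alpha\to\xi$ unspecified, and your triangle-inequality step $\|\xi_\alpha(L_\alpha)-\xi(L_\alpha)\|\le\|\xi_\alpha-\xi\|_\infty$ presumes uniform convergence; that is the natural reading and the one under which the implication $(1)\Rightarrow(5)$ goes through.
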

\begin{D}\label{def:MGH} Let  $\{(X_\alpha,\dist_\alpha,\mu_\alpha,\cE_\alpha,o_\alpha)\}_\alpha, (X,\dist,\mu,\cE,o)$ be pointed metric Dirichlet spaces. We say that the sequence $\{(X_\alpha,\dist_\alpha,\mu_\alpha,\cE_\alpha,o_\alpha)\}_\alpha$ converges to $(X,\dist,\mu,\cE,o)$ in the  \textbf{pointed Mosco-Gromov-Hausdorff} sense if
$$(X_\alpha,\dist_\alpha,\mu_\alpha,o_\alpha)\stackrel{pmGH}{\rightarrow}(X,\dist,\mu,o)\quad \text{and} \quad \text{$\cE_\alpha \to \cE$ in the Mosco sense}.$$ We note
$$(X_\alpha,\dist_\alpha,\mu_\alpha,\cE_\alpha,o_\alpha)\stackrel{pMGH}{\to}(X,\dist,\mu,\cE,o).$$
\end{D}
\subsection{A compactness result for Dirichlet spaces}
The next theorem is a key tool in our analysis. It was already observed by A. Kasue \cite[Theorem 3.4]{KasueSurvey} and extended \cite[Theorem 5.2]{KuwaeShioya} in showing that the limit space is regular and strongly local.  We refer to Section D in the Appendix for the proof.

\begin{theorem}\label{th:improvedKS} Let $\upkappa,\eta \ge 1$, $\upgamma>0$ and $R\in (0,+\infty]$ be given.  Assume that $\{(X_\alpha,\dist_{\cE_\alpha},\mu_\alpha,o_\alpha,\cE_\alpha)\}_{\alpha \in A}$ is a sequence of complete $\mathrm{PI}_{\upkappa, \upgamma}(R)$ pointed Dirichlet spaces such that for all $\alpha$,
$$ \eta^{-1}\le \mu_\alpha\left(B_R(o_\alpha)\right)\le \eta.$$ Then there exist a complete pointed metric Dirichlet space $(X,\dist,\mu,o,\cE)$ and a subsequence $B\subset A$ such that $\{(X_\beta,\dist_{\cE_\beta},\mu_\beta,o_\beta, \cE_\beta)\}_{\beta\in B}$ Mosco-Gromov-Hausdorff converges to $(X,\dist,\mu,o,\cE)$; moreover, $\cE$ is regular, strongly local, the intrinsic pseudo-distance $\dist_\cE$ is a distance, and there is a constant $c\in (0,1]$ depending only on $\upkappa$ and $ \upgamma$ such that:
$$c \dist_\cE\le \dist\le \dist_\cE.$$ Moreover, the space $(X,\dist_\cE,\mu,\cE)$ is $\mathrm{PI}_{\upkappa,\upgamma'}(R)$ for some constant $\upgamma'\ge 1$.  Furthermore, if $H_\beta$ (resp.~$H$) is the heat kernel of $(X_\beta,\dist_\beta,\mu_\beta,\cE_\beta)$ (resp.~$(X,\dist,\mu,o,\cE)$) for any $\beta$, then for any $t>0$ \begin{equation}\label{IKS2}
H_\beta(t,\cdot,\cdot) \to H(t,\cdot,\cdot) \qquad \text{uniformly on compact sets,}
\end{equation}
where we make an implicit use of the obvious convergence $$(X_\beta \times X_\beta,\dist_\beta\otimes \dist_\beta,(o_\beta,o_\beta)) \stackrel{pGH}{\longrightarrow} (X\times X,\dist\times \dist,(o,o)).$$
\end{theorem}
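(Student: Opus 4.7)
The plan is to combine Gromov's pointed measured Gromov--Hausdorff precompactness with uniform heat kernel estimates, so that compactness can be upgraded from the metric measure level to the Dirichlet form level. Concretely, I would proceed as follows.

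\emph{Step 1: pmGH extraction and kernel equicontinuity.} Since the $(X_\alpha,\dist_{\cE_\alpha},\mu_\alpha)$ are uniformly $\upkappa$-doubling at scale $R$ with $\eta^{-1} \le \mu_\alpha(B_R(o_\alpha)) \le \eta$, Proposition \ref{precompmGH} yields a subsequence converging in the pmGH sense to a pointed proper geodesic metric measure space $(X,\dist,\mu,o)$ that is again $\upkappa$-doubling at scale $R$. Fix $\eps_\alpha$-isometries $\Phi_\alpha$ as in Characterization \ref{chara}. The $\mathrm{PI}_{\upkappa,\upgamma}(R)$ hypothesis gives each $(X_\alpha,\dist_{\cE_\alpha},\mu_\alpha,\cE_\alpha)$ a heat kernel $H_\alpha$ satisfying the Gaussian bounds \eqref{eq:HKbound} and parabolic Hölder/Harnack estimates with constants depending only on $\upkappa,\upgamma$. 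Pulling $H_\alpha$ back by $\Phi_\alpha\times\Phi_\alpha$ on $B_S(o_\alpha)\times B_S(o_\alpha)$ for each $S$, these uniform estimates combined with pmGH convergence of volumes give equicontinuity and local uniform boundedness of the pulled-back kernels on compact subsets of $(0,+\infty)\times X\times X$. An Arzelà--Ascoli/diagonal argument then extracts a further subsequence for which $H_\beta \to H$ uniformly on compact sets, which establishes \eqref{IKS2} and shows $H$ satisfies the same Gaussian bounds.

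\emph{Step 2: limit semigroup and Mosco convergence.} Using the kernel $H$, define $P_t f(x):=\int_X H(t,x,y)f(y)\di\mu(y)$ on $L^2(X,\mu)$. Positivity, symmetry, sub-Markovianity and Chapman--Kolmogorov \eqref{eq:ChapmanKolmogorov} all pass to the limit from $H_\beta$ by the uniform convergence; moreover stochastic completeness carries over via $\mathrm{PI}(R)$. This gives a strongly continuous sub-Markovian symmetric semigroup, to which the standard theory associates a Dirichlet form $\cE$. To test the pair of conditions for Mosco convergence, the uniform heat kernel convergence together with the approximation of any $\varphi \in \cC_c(X)$ by $\varphi_\beta \stackrel{\cC_c}{\to}\varphi$ (Proposition \ref{prop:approx}) yields $P_t^\beta f_\beta \stackrel{L^2}{\to} P_t f$ whenever $f_\beta \stackrel{L^2}{\to} f$. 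By Proposition \ref{prop:equivmosco}, this is equivalent to $\cE_\beta \to \cE$ in the Mosco sense, so $(X_\beta,\dist_{\cE_\beta},\mu_\beta,\cE_\beta,o_\beta) \stackrel{pMGH}{\to} (X,\dist,\mu,\cE,o)$.

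\emph{Step 3: regularity, strong locality, and distance comparison.} This is the most delicate part. Strong locality of $\cE$ is inherited from that of the $\cE_\beta$: using strongly local Dirichlet recovery sequences given by the second Mosco condition together with the chain/Leibniz rules for the approximating $\Gamma_\beta$, one shows that truncated test functions with disjoint effective supports produce zero energy in the limit. For regularity and for the identification of $\dist_\cE$ with $\dist$, the key input is the uniform Lipschitz--energy comparison \eqref{EnergyLip}: on each $(X_\beta,\dist_{\cE_\beta})$ every $\dist_{\cE_\beta}$-Lipschitz function satisfies $\eta(\Lip_{\dist_{\cE_\beta}} u)^2 \le \di\Gamma_\beta(u)/\di\mu_\beta \le (\Lip_{\dist_{\cE_\beta}} u)^2$. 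Pulling back distance functions $y\mapsto \dist(\cdot,y)$ via $\Phi_\beta$ and truncating to compact supports produces functions whose local Lipschitz constants, and hence $\Gamma_\beta$-densities, are uniformly bounded. Passing to the limit and using the definition \eqref{eq:defdist} then yields $\dist \le \dist_\cE$. Conversely, if $f \in \cC(X)\cap \cD_{\loc}(\cE)$ satisfies $\Gamma(f) \le \mu$, approximating $f$ by $f_\beta$ with $\Gamma_\beta(f_\beta)$-density uniformly controlled and invoking the lower bound in \eqref{EnergyLip} gives $|f(x)-f(y)| \le \eta^{-1/2}\dist(x,y)$, whence $\dist_\cE \le c^{-1}\dist$ with $c=\sqrt{\eta}$. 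This comparison immediately promotes $\dist_\cE$ from extended pseudo-distance to a genuine distance inducing the same topology, and it furnishes a core of $\dist_\cE$-Lipschitz compactly supported functions in $\cC_c(X)\cap \cD(\cE)$, yielding regularity.

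\emph{Step 4: $\mathrm{PI}$ property of the limit.} Doubling at scale $R$ is stable under pmGH limits so $(X,\dist,\mu)$, hence $(X,\dist_\cE,\mu)$, is $\upkappa'$-doubling at scale $R$ with $\upkappa'$ depending only on $\upkappa$ and $c$. For the Poincaré inequality at scale $R$, take $u \in \cD(\cE)$ and any ball $B \subset X$ of radius $r \in (0,R]$; approximate $u$ by $u_\beta \stackrel{\mE}{\to}u$ given by the Mosco convergence, and apply \eqref{eq:Poincaré} on balls in $X_\beta$ obtained by pulling $B$ back through $\Phi_\beta$. Using weak $L^2$-lower semicontinuity on the left-hand side, strong $L^2$ convergence of averages, and lower semicontinuity of $\int_{B}\di\Gamma(u_\beta)$ (which follows from Mosco convergence, strong locality and a localization argument), one obtains the Poincaré inequality with constant $\upgamma'$ depending only on $\upkappa,\upgamma,c$, completing the proof that $(X,\dist_\cE,\mu,\cE)$ is $\mathrm{PI}_{\upkappa,\upgamma'}(R)$.

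The principal obstacle is Step 3, in particular the lower comparison $c\dist_\cE \le \dist$ and the accompanying proof of regularity and strong locality of the limit form; everything downstream (notably the localized Poincaré inequality) rests on having this quantitative equivalence of distances with constant depending only on the PI parameters.
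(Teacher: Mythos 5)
Your Steps 1 and 2 match the paper's strategy (pmGH extraction, Arzel\`a--Ascoli on the kernels via the uniform Gaussian and H\"older bounds, construction of the limit semi-group from $H$, and upgrade to Mosco convergence via Proposition \ref{prop:equivmosco} -- the paper does this first for a single $t$ and then invokes \cite[Theorem 2.1]{KuwaeShioya} to cover all $t$). The genuine gap is in Step 3, and it is not merely ``delicate'': the argument as sketched is circular. To even write down $\dist_\cE$ via \eqref{eq:defdist}, and to invoke the Lipschitz--energy comparison \eqref{EnergyLip}, you need the carr\'e du champ $\Gamma$ of the limit form, which by Beurling--Deny exists only once you know $\cE$ is regular and strongly local; yet you propose to deduce regularity \emph{from} the distance comparison. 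Moreover, your inequality $\dist_\cE \le c^{-1}\dist$ requires approximating a limit function $f$ with $\Gamma(f)\le\mu$ by functions $f_\beta$ whose $\Gamma_\beta$-\emph{densities} are pointwise controlled by $1+o(1)$; Mosco convergence only provides recovery sequences with convergent total energy, and the carr\'e-du-champ convergence results of the Appendix (Propositions \ref{prop:cvenergyI}, \ref{prop:convfaibleenergie}) themselves presuppose that the limit is already a PI Dirichlet space. The same issue affects Step 4, where lower semicontinuity of $\int_B\di\Gamma(u_\beta)$ is used before the limit space is known to be PI.

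The paper breaks this circle with the standalone Theorem \ref{propo:hk2DS}: once the limit kernel is shown to satisfy stochastic completeness, two-sided Gaussian bounds and the H\"older estimate (all of which do pass to the limit, as in your Step 1), one proves \emph{from the kernel alone} that $\cD(\cE)$ coincides with the Besov space $B_{2,\infty}(X)$ with comparable seminorms. This gives $\Lip_c(X)\subset\cD(\cE)$ and hence regularity; strong locality follows from the $\cE_t$-approximation of the form (the integrand $H(t,x,y)(u(x)-u(y))(v(x)-v(y))$ vanishes in the relevant region for separated supports); the bound $\dist_\cE\ge C\dist$ follows from the Besov comparison applied to truncated distance functions, while $\dist_\cE\le\sqrt{\upbeta/2}\,\dist$ comes from the Gaussian \emph{lower} bound via an exponential-moment argument, not from \eqref{EnergyLip}; and the PI property of the limit is then immediate from the equivalence (c1)$\Leftrightarrow$(c2) of Theorem \ref{thm:eq} rather than from passing the Poincar\'e inequality to the limit. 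Finally, note the paper proves $\dist\le\dist_\cE$ separately, using the optimal Gaussian upper bound \eqref{eq:optimalgaussian} and Varadhan's formula. You would need to replace your Step 3 by an argument of this intrinsic type (or an equivalent one) for the proof to close.
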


If $(X,\dist_\cE,\mu,\cE)$ is a strongly local and regular Dirichlet space satisfying a $R$-scale-invariant Poincaré inequality for some $R>0$, it is not difficult to check that for any $x \in X$ and any $\rho>0$, the rescaled quadratic form $\cE_\rho:=\rho^2\mu(B_\rho(x))^{-1} \cE$ is a strongly local and regular Dirichlet form on $(X,\dist_\cE,\mu_\rho:=\mu(B_\rho(x))^{-1}\mu)$ such that $\dist_{\cE_\rho}=\rho^{-1}\dist_\cE$, and the space $(X,\dist_{\cE_\rho},\mu_\rho,\cE_\rho)$ satisfies a $(R/\rho)$-scale-invariant Poincaré inequality. This observation coupled with the previous theorem leads to the next result.

\begin{cor}\label{cor:Diritangent}
Let $(X,\dist_\cE,\mu,\cE)$ be a complete $\mathrm{PI}_{\upkappa, \upgamma}(R)$ Dirichlet space and $x\in X$. If  $(X_x,\dist_x,\mu_x,x)$ is a measured tangent cone of $(X,\dist_\cE,\mu)$ at $x$.  Then it can endowed with a strongly local and regular Dirichlet form $\cE_x$ such that $\dist_{\cE_x}$ is a distance bi-Lispchitz equivalent to $\dist_x$ and the space $(X_x,\dist_{\cE_x},\mu_x,\cE_x)$ is $\mathrm{PI}_{\upkappa, \upgamma'}(\infty)$ for some $\upgamma'$. Moreover, there exists a sequence $\{\rho_\alpha\} \subset (0,+\infty)$ such that $\rho_\alpha \to 0$ and
\[
(X,\dist_{\cE_{\rho_\alpha}},\mu_{\rho_\alpha},x,\cE_{\rho_\alpha}) \stackrel{pMGH}{\longrightarrow} (X_x,\dist_{\cE_x},\mu_x,x,\cE_x)
\]
where $\mu_{\rho_\alpha}:=\mu(B_{\rho_\alpha}(x))^{-1}\mu$ and $\cE_{\rho_\alpha}:=\rho_\alpha^2\mu(B_{\rho_\alpha}(x))^{-1} \cE$ for any $\alpha$.
\end{cor}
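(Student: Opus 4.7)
The strategy is to apply the compactness result Theorem \ref{th:improvedKS} along the very sequence that realizes the prescribed measured tangent cone. Fix a sequence $\{\rho_\alpha\} \subset (0,+\infty)$ with $\rho_\alpha\downarrow 0$ such that
\[
(X,\rho_\alpha^{-1}\dist_\cE,\mu(B_{\rho_\alpha}(x))^{-1}\mu,x)\stackrel{pmGH}{\longrightarrow}(X_x,\dist_x,\mu_x,x).
\]
For each $\alpha$, set $\mu_{\rho_\alpha}:=\mu(B_{\rho_\alpha}(x))^{-1}\mu$ and $\cE_{\rho_\alpha}:=\rho_\alpha^2\mu(B_{\rho_\alpha}(x))^{-1}\cE$. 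By the scaling observation recorded just before the statement, each rescaled quadruple $(X,\rho_\alpha^{-1}\dist_\cE,\mu_{\rho_\alpha},\cE_{\rho_\alpha})$ is a strongly local and regular Dirichlet space whose intrinsic distance is exactly $\rho_\alpha^{-1}\dist_\cE$, and it is $\mathrm{PI}_{\upkappa,\upgamma}(R/\rho_\alpha)$. By choice of normalization, $\mu_{\rho_\alpha}(B_1^{\rho_\alpha^{-1}\dist_\cE}(x))=1$, which provides the volume bound required by Theorem \ref{th:improvedKS}.

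Now fix any $R_0>0$. For $\alpha$ large enough one has $R/\rho_\alpha\ge R_0$, so the rescaled space is in particular $\mathrm{PI}_{\upkappa,\upgamma}(R_0)$. Applying Theorem \ref{th:improvedKS} (with parameter $R_0$) to this tail sequence and extracting, I get a complete pointed metric Dirichlet space $(Y,\dist_Y,\mu_Y,y,\cE_Y)$ which is the pMGH limit of a subsequence and satisfies $c\dist_{\cE_Y}\le\dist_Y\le\dist_{\cE_Y}$ with $c$ depending only on $\upkappa,\upgamma$, together with $\mathrm{PI}_{\upkappa,\upgamma'}(R_0)$ where the constant $\upgamma'$ depends only on $\upkappa,\upgamma$. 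The pMGH convergence contains in particular pmGH convergence of the underlying pointed metric measure spaces, so by uniqueness of pmGH limits (up to pointed isometry) the space $(Y,\dist_Y,\mu_Y,y)$ is canonically identified with $(X_x,\dist_x,\mu_x,x)$. Transporting $\cE_Y$ along this identification yields a strongly local and regular Dirichlet form $\cE_x$ on $X_x$ whose intrinsic distance $\dist_{\cE_x}$ is bi-Lipschitz equivalent to $\dist_x$, with constants depending only on $\upkappa,\upgamma$.

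It remains to upgrade $\mathrm{PI}_{\upkappa,\upgamma'}(R_0)$ to $\mathrm{PI}_{\upkappa,\upgamma'}(\infty)$. Since $\upgamma'$ furnished by Theorem \ref{th:improvedKS} depends only on $\upkappa,\upgamma$ and not on the chosen $R_0$, one can run the previous extraction along an exhausting sequence $R_0=2^k$, $k\in\setN$, via a diagonal argument, and deduce that $(X_x,\dist_{\cE_x},\mu_x,\cE_x)$ is $\mathrm{PI}_{\upkappa,\upgamma'}(2^k)$ for every $k$, hence $\mathrm{PI}_{\upkappa,\upgamma'}(\infty)$. The main obstacle in this plan is purely bookkeeping: the measured tangent cone is specified by a given sequence $\{\rho_\alpha\}$, while Theorem \ref{th:improvedKS} only delivers pMGH convergence along some subsequence; one must therefore invoke uniqueness of pmGH limits to make sure the Mosco-limiting Dirichlet form truly lives on the originally prescribed tangent cone, and then argue that any two such Mosco limits (obtained along different $R_0$ extractions) agree on that common underlying space.
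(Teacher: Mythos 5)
Your proof is correct and follows exactly the route the paper has in mind: rescale the Dirichlet structure as in the paragraph preceding the statement, feed the rescaled sequence into Theorem~\ref{th:improvedKS}, identify the pmGH limit with the prescribed tangent cone by uniqueness of pmGH limits, and let $R/\rho_\alpha \to \infty$ carry the PI scale to infinity. The one point you flag at the end---that the Mosco limits obtained at different scales $R_0$ must agree---is handled by the elementary fact that Mosco convergence passes to subsequences (clear from the semigroup characterization in Proposition~\ref{prop:equivmosco}), so a single extraction actually suffices: once $\cE_{\rho_\beta}\to\cE_x$ in the Mosco sense along some subsequence $B$, for every $R_0$ the tail of $B$ is $\mathrm{PI}_{\upkappa,\upgamma}(R_0)$ with the same Mosco limit $\cE_x$, which Theorem~\ref{th:improvedKS} then certifies as $\mathrm{PI}_{\upkappa,\upgamma'}(R_0)$ with $\upgamma'$ independent of $R_0$.
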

Note that different sequences could lead to different Dirichlet form on the same  measured tangent cone.

\subsection{Dirichlet spaces satisfying an $\RCD$ condition} Let us conclude these preliminaries with some facts concerning the Riemannian Curvature Dimension condition $\RCD^*(K,n)$, where $K \in \setR$, in the setting of Dirichlet spaces.

The Cheeger energy \cite{Cheeger} of a metric measure space $(X,\dist,\mu)$ is the convex and $L^2(X,\mu)$-lower semicontinuous functional $\Ch : L^2(X,\mu) \to [0,+\infty]$ defined by
\begin{equation}\label{eq:relax2}
\Ch(f)=\inf_{f_n \to f} \left\{ \liminf\limits_{n \to +\infty} \int_X |\nabla f_n|^2 \di \meas \right\}
\end{equation}
for any $f \in L^2(X,\meas)$, where the infimum is taken over the set of sequences $\{f_n\}_n \subset L^2(X,\meas) \cap \Lip(X)$ such that $\|f_n - f\|_{L^2(X,\meas)} \to 0$. We set
\[
H^{1,2}(X,\dist,\meas):=\cD(\Ch)=\{\Ch<+\infty\}
\]
and call $H^{1,2}(X,\dist,\meas)$ the Sobolev space of $(X,\dist,\meas)$.  A suitable diagonal argument shows that for any $f \in H^{1,2}(X,\dist,\meas)$ there exists a unique $L^2$-function $|df|$ called \textit{minimal relaxed slope} of $f$ such that
$$
\Ch(f)=\int_X |d f|^2 \di \meas
$$
and $|df| = |dg|$ $\meas$-a.e.~on $\{f = g\}$ for any $g \in H^{1,2}(X,\dist,\meas)$.  Moreover, this function $|d f|$ coincides $\mu$-a.e.~with the local Lipschitz function of $f$ in case $f$ is locally Lipschitz.

There is no reason a priori for the Cheeger energy to be a Dirichlet form or even a quadratic form.  In this respect,  we provide the next definition and the subsequent proposition which are taken from  \cite{GigliMAMS} and \cite{AmbrosioGigliSavareDuke}.

\begin{D}
A Polish metric measure space $(X,\dist,\mu)$ is called infinitesimally Hilbertian if $\Ch$ is a quadratic form.
\end{D}

\begin{prop}
Let $(X,\dist,\mu)$ be an infinitesimally Hilbertian space.  Then $H^{1,2}(X,\dist,\meas)$ endowed with the norm $\|\cdot\|_{H^{1,2}} = \sqrt{ \|\cdot\|_{L^2} + \Ch(\cdot)}$ is a Hilbert space.  Moreover, the Cheeger energy $\Ch$ is a strongly local and regular Dirichlet form; its carré du champ operator takes values in the set of absolutely continuous Radon measures, and for any $f_1, f_2 \in H^{1,2}(X,\dist,\meas)$,
\[
\langle d f_1, d f_2 \rangle := \frac{\di \Gamma(f_1,f_2)}{\di \meas}=\lim_{\epsilon\to 0}\frac{|d (f_1+\epsilon f_2)|^2-|d f_1|^2}{2\epsilon}
\]
in $L^1(X,\meas)$. In particular,  $\di \Gamma(f)=|d f|^2 \di \mu$ for any $f \in H^{1,2}(X,\dist,\meas)$.
\end{prop}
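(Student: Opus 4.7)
The plan is to verify the claims in sequence, leveraging the relaxation definition of $\Ch$ together with the infinitesimal Hilbertianity assumption and standard polarization techniques.

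First, I would establish the Hilbert space structure. Since $\Ch$ is assumed quadratic, it satisfies the parallelogram identity, so by polarization
\[
\cE_\Ch(f,g) := \tfrac{1}{4}\bigl(\Ch(f+g) - \Ch(f-g)\bigr)
\]
defines a symmetric bilinear form; coupled with the $L^2$-inner product, it induces $\|\cdot\|_{H^{1,2}}$. Completeness follows because any Cauchy sequence $\{f_n\} \subset H^{1,2}$ is Cauchy in $L^2$, converges to some $f \in L^2$, and the $L^2$-lower semicontinuity of $\Ch$ built into the definition \eqref{eq:relax2} gives $\Ch(f-f_n) \le \liminf_m \Ch(f_m-f_n) \to 0$.

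Second, I would verify that $\Ch$ is a Dirichlet form. The Markov property follows by approximation: for $f \in H^{1,2}$ choose a recovery sequence of Lipschitz functions $f_k \to f$ in $L^2$ with $\int |\nabla f_k|^2 \di\meas \to \Ch(f)$; since $(f_k)_0^1$ is Lipschitz with $\Lip\bigl((f_k)_0^1\bigr) \le \Lip(f_k) \mathbf{1}_{\{0<f_k<1\}}$, lower semicontinuity yields $\Ch(f_0^1) \le \Ch(f)$. Strong locality reduces to the locality of the minimal relaxed slope: if $f$ is constant on an open set $U$ containing $\supp g$, one shows $|df|=0$ $\meas$-a.e.~on $U$ using a chain-rule argument on $|df|$, and then polarization gives $\cE_\Ch(f,g)=0$. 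Regularity is proved by combining the density of Lipschitz functions in $\cD(\Ch)$ (which is the content of the relaxation definition) with a cutoff argument based on the Lipschitz function $x \mapsto (1 - \dist(x,K))_+$ to pass to compactly supported Lipschitz functions, whose uniform density in $\cC_c(X)$ follows from Stone-Weierstrass.

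Third, for the identification of the carré du champ, I would argue as follows. Since $\Ch(f) = \int |df|^2 \di\mu$ with $|df| \in L^2$, the function $\epsilon \mapsto |d(f_1 + \epsilon f_2)|^2$ is pointwise $\meas$-a.e.\ given by a quadratic polynomial in $\epsilon$ (as a consequence of $\Ch$ being quadratic, applied to suitable test functions obtained by localization), so the difference quotient in the statement converges to $\langle df_1, df_2 \rangle$ pointwise and is controlled by Cauchy-Schwarz in $L^1$. Integrating against any $\varphi \in L^\infty$ and using the bilinear form $\cE_\Ch$ identifies this limit with the density of $\Gamma(f_1,f_2)$ with respect to $\meas$. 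In particular $\di\Gamma(f) = |df|^2 \di\meas$, so the carré du champ is absolutely continuous.

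The main obstacle I expect is the passage from the \emph{global} quadratic structure of $\Ch$ to the \emph{pointwise} $\meas$-a.e.\ quadratic behavior of $|df|^2$ required to justify the polarization limit in $L^1$; this is precisely the content of \cite[Theorem 4.18]{AmbrosioGigliSavareDuke}-type results, and its proof requires delicate use of the locality of the minimal relaxed slope together with a truncation/approximation scheme to legitimate the differentiation under the integral defining $\Ch$.
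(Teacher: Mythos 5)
The paper itself offers no proof of this proposition: immediately before the definition of infinitesimal Hilbertianity, the authors say explicitly that the definition and the proposition ``are taken from \cite{GigliMAMS} and \cite{AmbrosioGigliSavareDuke}.'' There is therefore no in-house argument for me to compare yours against; the right benchmark is the proof in those references. Your sketch is, in substance, a faithful reconstruction of that route, and you correctly isolate the genuine crux: upgrading the \emph{global} quadratic identity $\Ch(f+g)+\Ch(f-g)=2\Ch(f)+2\Ch(g)$ to a $\meas$-a.e.\ \emph{pointwise} quadratic identity for the minimal relaxed slope, which is what legitimates the $L^1$-limit of the polarized difference quotient. That is indeed the content of \cite[Sec.~4.3]{AmbrosioGigliSavareDuke} and \cite[Sec.~4.3]{GigliMAMS}, proved there via the strong locality of $|d\cdot|$ combined with a truncation scheme, exactly as you anticipate.

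Two places where your sketch compresses a real step. In the locality argument, the fact that $|df|=0$ $\meas$-a.e.\ on an open set where $f$ is constant is not a chain-rule computation on $|df|$; it is a consequence of the locality property $|df|=|dg|$ $\meas$-a.e.\ on $\{f=g\}$ of the minimal relaxed slope, which one first has to establish (again by truncation). Moreover, after polarization you also need $|dg|=0$ $\meas$-a.e.\ on the open set $X\setminus\supp g$ to kill the integrand there; it is only the combination of the two vanishing statements, together with a pointwise Cauchy--Schwarz inequality for the yet-to-be-constructed bilinear $\langle df,dg\rangle$, that gives $\cE_{\Ch}(f,g)=0$. This makes the logical ordering delicate: the cleanest path is to establish the pointwise carré du champ first, and derive strong locality from it, rather than the other way around. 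In the regularity step, the cutoff $(1-\dist(\cdot,K))_+$ handles sup-norm density of $\Lipbs(X)$ in $\cC_c(X)$, but density of $\cC_c(X)\cap\cD(\Ch)$ in $\cD(\Ch)$ for the energy norm requires more: from the recovery sequences in \eqref{eq:relax2} one only gets $\Ch(f_n)\to\Ch(f)$, and upgrading this to $\|f_n-f\|_{H^{1,2}}\to 0$ uses the Hilbert structure (weak + norm convergence implies strong), and then a further multiplicative cutoff argument to gain compact support. None of this is a fatal flaw, since you are correctly pointing at the references where it is carried out, but the text as written does not close these gaps.
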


When $(X,\dist,\meas)$ is infinitesimally Hilbertian,  we call Laplacian of $(X,\dist,\meas)$ the non-negative, self-adjoint operator associated to $\Ch$, and we denote it by $\Delta$.  We also write $(e^{-t\Delta})_{t\ge 0}$ for the semi-group generated by $\Delta$.

For the scope of our work, we must know under which conditions does the Dirichlet form $\cE$ of a Dirichlet space $(X,\dist,\mu,\cE)$ coincide with the Cheeger energy of $(X,\dist,\mu)$.  The next result brings us such a condition in the context of PI Dirichlet spaces; it follows from \cite[Th.~4.1]{KoskelaShanZhou}.

\begin{prop}\label{KSZ}
Let $(X,\dist_\cE,\mu,\cE)$ be a $\PI$ Dirichlet space.  Assume that for some $T>0$ there exists a locally bounded function $\kappa : [0,T] \to [0,+\infty)$ such that $\liminf_{t \to 0} \kappa(t)=1$ and
\begin{equation}
\int_X \varphi \di \Gamma(P_tu) \le \kappa(t) \int_X P_t\varphi \di \Gamma(u)
\end{equation}
for all $u \in \cD(\cE)$,  nonnegative $\varphi \in \cD(\cE) \cap \cC_c(X)$ and $t \in [0,T]$.  Then $\Ch=\cE$.
\end{prop}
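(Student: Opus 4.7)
The goal is to identify $\Ch$ with $\cE$ as quadratic forms on $L^2(X,\mu)$. Under the $\mathrm{PI}$ assumption alone the two energies are only comparable up to a multiplicative constant; the gradient estimate is what produces the exact equality.

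\emph{Easy direction} $\cE \le \Ch$. By Cheeger's theorem on $\mathrm{PI}$ Dirichlet spaces, the minimal relaxed slope $|df|$ of a Lipschitz function $f$ coincides $\mu$-a.e.~with its local Lipschitz constant $\Lip_{\dist_\cE} f$. Combined with the pointwise bound $\di\Gamma(f)/\di\mu \le (\Lip_{\dist_\cE} f)^2$ from \eqref{EnergyLip}, this yields $\cE(f)\le \Ch(f)$ for any Lipschitz $f$. The density of $\Lip_c(X,\dist_\cE)$ in $\cD(\Ch)$ and the lower semicontinuity of $\cE$ then extend the inequality to all of $\cD(\Ch)$.

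\emph{Harder direction} $\Ch \le \cE$. First I would derive the pointwise form of the hypothesis: by approximating nonnegative $L^1$ densities with test functions $\varphi \in \cD(\cE) \cap \cC_c(X)$, the integrated bound upgrades to
$$\Gamma(P_t u) \le \kappa(t)\,P_t\Gamma(u) \qquad \mu\text{-a.e.}$$
for every $u \in \cD(\cE)$. Applying this to $u \in \Lip_c(X,\dist_\cE)$, so that $\Gamma(u)/\mu$ is bounded, one gets a corresponding bound on $\Gamma(P_t u)/\mu$. Combined with the intrinsic-distance characterization \eqref{eq:defdist} of $\dist_\cE$, this shows that $P_t u$ admits a $\dist_\cE$-Lipschitz representative satisfying the sharp pointwise estimate
$$\bigl(\Lip_{\dist_\cE} P_t u\bigr)^2 \le \kappa(t)\,P_t(\Gamma(u)/\mu) \qquad \mu\text{-a.e.}$$
Invoking Cheeger's identity $|d(P_t u)| = \Lip_{\dist_\cE} P_t u$ and the stochastic completeness $P_t 1 = 1$ available on $\mathrm{PI}$ spaces, integration over $X$ yields $\Ch(P_t u)\le \kappa(t)\,\cE(u)$.

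\emph{Conclusion and main obstacle.} Since $P_t u \to u$ in $L^2(X,\mu)$ as $t \to 0^+$ by strong continuity of $(P_t)$, the lower semicontinuity of $\Ch$ and the hypothesis $\liminf_{t\to 0}\kappa(t)=1$ deliver $\Ch(u)\le \cE(u)$ for every $u\in\Lip_c(X,\dist_\cE)$. Density of $\Lip_c(X,\dist_\cE)$ in $\cD(\cE)$ with respect to the $\cE$-norm, together with lower semicontinuity of $\Ch$, propagates this inequality to all of $\cD(\cE)$, giving $\Ch=\cE$ on $L^2(X,\mu)$; polarization then also shows a posteriori that $\Ch$ is quadratic. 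The main obstacle is the sharpness of the pointwise Lipschitz bound on $P_t u$: the generic $\mathrm{PI}$ estimate $(\Lip_{\dist_\cE} f)^2 \le \eta^{-1}\di\Gamma(f)/\di\mu$ with $\eta<1$ is not enough, and one must use the intrinsic-distance definition \eqref{eq:defdist}—noting that any continuous $g$ with $\Gamma(g)/\mu\le M^2$ is automatically $M$-Lipschitz with respect to $\dist_\cE$—to obtain the constant $1$, so that the passage $t\to 0$ in $\Ch(P_t u)\le \kappa(t)\cE(u)$ produces exactly $\Ch(u)\le\cE(u)$.
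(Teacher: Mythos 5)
The proof cannot be checked against a detailed argument in the paper itself: the authors deduce Proposition~\ref{KSZ} from \cite[Th.~4.1]{KoskelaShanZhou} without reproducing the proof. Your easy direction $\cE\le\Ch$ is correct, and your reduction of the hypothesis to the $\mu$-a.e.\ inequality $\Gamma(P_tu)\le\kappa(t)P_t\Gamma(u)$ (via self-adjointness of $P_t$ and density of test functions) is also fine, as is the concluding lower-semicontinuity argument once the estimate $\Ch(P_tu)\le\kappa(t)\cE(u)$ is in hand.

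The gap is exactly at the ``sharp pointwise estimate''
\[
\bigl(\Lip_{\dist_\cE}P_tu\bigr)^2\ \le\ \kappa(t)\,P_t\bigl(\Gamma(u)/\mu\bigr)\qquad \mu\text{-a.e.}
\]
You justify this by noting that a continuous $g$ with $\Gamma(g)\le M^2\mu$ is globally $M$-Lipschitz for $\dist_\cE$. That is true, but it only yields the \emph{uniform} bound $\sup_X \Lip_{\dist_\cE}(P_tu)\le\sqrt{\kappa(t)\,\|P_t(\Gamma(u)/\mu)\|_\infty}$; it does not produce a bound on $\Lip_{\dist_\cE}(P_tu)(x)$ by the \emph{local} value of the (continuous) density $\kappa(t)P_t(\Gamma(u)/\mu)$ at $x$. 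Passing from the uniform to the pointwise version is precisely the improvement of the constant $\eta<1$ in \eqref{EnergyLip} that you flag as ``the main obstacle,'' and appealing to the intrinsic-distance definition does not by itself perform that localization. Indeed, if one could localize in this way for an arbitrary Lipschitz $g$, then one would obtain $(\Lip_{\dist_\cE} g)^2\le\Gamma(g)/\mu$ a.e.\ at Lebesgue points of $\Gamma(g)/\mu$, hence $\Ch\le\cE$ without any hypothesis on the heat flow; yet the paper explicitly records (citing \cite[Th.~7.1]{AldanaCarronTapie}) that $\Ch\ne\cE$ can occur without the gradient commutation, so such an unconditional localization cannot be available.

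What would be needed, concretely, is a statement of the form: if $g\in\Lip(X,\dist_\cE)$ and $\Gamma(g)/\mu\le h$ a.e.\ for a \emph{continuous} function $h$, then $\Lip_{\dist_\cE}g\le\sqrt{h}$ everywhere. Establishing this requires knowing that the global intrinsic pseudo-metric $\dist_\cE(x,y)$ for nearby $x,y$ can be recovered from the Dirichlet form restricted to a small ball (equivalently, that $\sqrt{\Gamma(g)/\mu}$ behaves as an upper gradient along geodesics lying in that ball). This is a genuine geometric/analytic step, and it is not supplied in your proposal; you should either prove it in the setting at hand or locate it explicitly in the reference \cite{KoskelaShanZhou}. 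Until that step is closed, the inequality $\Ch(P_tu)\le\kappa(t)\cE(u)$ — and hence the whole hard direction — does not follow from what you have written.
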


To perform our analysis in Sections 5 and 6,  we need some results from the theory of spaces satisfying a Riemannian Curvature Dimension condition $\RCD^*(K,n)$, where $K \in \setR$ is fixed from now on.  In our setting, the original formulation of the $\RCD(K,n)$ and $\RCD^{*}(K,n)$ conditions based on optimal transport \cite{Sturm2006II,LottVillani,BacherSturm,AmbrosioGigliSavareDuke, GigliMAMS} is less relevant than the one provided by a suitable combination of \cite{AGS15} and \cite[Section 5]{EKS}.  In a general framework which covers our needs, these two articles discuss on how to recover a distance $\dist$  from a measure space $(X,\mu)$ equipped with a suitable Dirichlet energy $\cE$ in such a way that $(X,\dist,\mu)$ is an $\RCD^*(K,n)$ space.  When particularized to our context, these works provide us with the following definition.

\begin{D}
A $\PI$ Dirichlet space $(X,\dist_\cE,\mu,\cE)$ is called an $\RCD^*(K,n)$ space if and only if there exists $T>0$ such that for any $t \in (0,T)$ and $f \in \cD(\cE)$,
\begin{equation}\label{BL(K,n)}
\tag{BL(K,n)}
\frac 12 (P_t f^2 - (P_tf)^2) \geq  I_K(t) |\nabla P_tf|^2 +J_K(t)\frac{(\Delta P_t f)^2}{n}
\end{equation}
holds in a weak sense,  namely against any non-negative test function $\varphi \in \cC^0_c(X)\cap \cD(\cE)$,
where $I_K(t):=(e^{2K}t-1)/2K$ and $J_K(t):= (e^{2Kt}-2Kt-1)/4K^2$ for any $K\neq 0$ and $I_0(t)=t$, $J_0(t)=t^2/2$.
\end{D}

\begin{rem}
Inequality \ref{BL(K,n)} is one of many equivalent forms of an estimate due to Bakry and Ledoux \cite{BakryLedoux} which is equivalent to the well-known Bakry-\'Emery condition \cite[Cor.~2.3]{AGS15}.
\end{rem}

To conclude this section, let us consider an $\RCD(0,n)$ space $(X,\dist,\mu)$ -- we point out that in case $K=0$, the $\RCD(K,n)$ and $\RCD^*(K,n)$ conditions are equivalent.  The Bishop-Gromov theorem for $\RCD(0,n)$ spaces (known even for the broader class of $\CD(0,n)$ spaces, see \cite[Th.~30.11]{Villani}) ensures that for any $x \in X$ the volume ratio $r \mapsto \mu(B(x,r))/r^n$ is non-increasing, hence we can define the \emph{volume density} at a point $x$ as follows. 

\begin{D}\label{volumedensity}
Let $(X, \dist, \mu)$ be an $\RCD(0,n)$ space. Then the volume density at $x \in X$ is defined as
\[
\vartheta_X(x):=\lim\limits_{r \to 0} \frac{\mu(B(x,r))}{\omega_n r^n} \in (0,+\infty]\, \cdot
\]
\end{D}

Note that without any particular assumption $\vartheta_X(x)$ may be infinite. On this matter, N. ~Gigli and G. ~De Philippis introduced in \cite{DPG} an important definition.

\begin{D}
\label{def:wnc}
We say that an $\RCD(0,n)$ space $(X,\dist,\mu)$ is weakly non-collapsed if the volume density $\vartheta_X(x)$ is finite for all $x\in X$. 
\end{D}

Weakly non-collapsed $\RCD(0,n)$ spaces are important to us because they enjoy the so-called volume-cone-implies-metric-cone property.  To state this latter,  we must recall a couple of definitions.

If $(Z,\dist_Z)$ is a metric space, then the metric cone over $Z$ is the metric space $(C(Z),\dist_{C(Z)})$ defined in the usual way, see e.g.~\cite[Section 3.6.2.]{BuragoBuragoIvanov}.

\begin{D}
We say that a metric measure space $(X,\dist,\mu)$ is a \textbf{$\alpha$-metric measure cone} with vertex $x \in X$ for some $\alpha \ge 1$ if there exists a metric measure space $(Z,\dist_Z,\mu_Z)$ and an isometry $\varphi:X \to C(Z)$ sending $x$ to the vertex of the cone $C(Z)$ and such that
\[
\di(\varphi_\# \mu)(r,z) = \di r \otimes r^{\alpha-1}\di \mu_Z.
\]
\end{D}

Here is the property of weakly non-collapsed $\RCD(0,n)$ spaces \cite[Theorem 1.1]{DPG} we use in a crucial way in our analysis.

\begin{prop}
\label{prop:DPG}
Let $(X,\dist,\meas)$ be a weakly non-collapsed $\RCD(0,n)$ space such that the function $(0,+\infty) \ni r \mapsto \meas(B_r(x))/r^n$ is constant for some $x \in X$. Then $(X,\dist,\meas)$ is a $n$-metric measure cone with vertex $x$.
\end{prop}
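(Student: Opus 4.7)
The plan is to argue via rigidity of the squared distance function $f(y) := \frac{1}{2}\dist(x,y)^2$. In the smooth Ricci-nonnegative setting, the Bishop-Gromov monotonicity of $r \mapsto \meas(B_r(x))/r^n$ is an integrated form of the Laplacian comparison $\Delta f \leq n$, so constancy of the ratio forces the pointwise equality $\Delta f = n$; combined with the eikonal identity $|\nabla f|^2 = 2f$, the Bochner formula becomes saturated and yields $\mathrm{Hess}(f) = g$, which is precisely the condition producing a metric cone structure with vertex $x$. My goal is to reproduce each ingredient in the $\RCD$ framework, replacing the smooth calculus with Gigli's calculus on infinitesimally Hilbertian spaces.

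First, I would use the Laplacian comparison theorem on $\RCD(0,n)$ spaces applied to $f$ to pass from the assumed constancy of the volume ratio to the identity $\Delta f = n$ on all of $X$, interpreted distributionally. The inequality $\Delta f \leq n$, tested against radial cutoffs and integrated via the coarea formula applied to $\dist(x,\cdot)$, reduces to an inequality between derivatives of $r \mapsto \meas(B_r(x))$ and of $r^n$; the volume cone assumption eliminates all slack for every $r$, forcing pointwise equality. The weakly non-collapsed hypothesis guarantees that this coarea-based computation is non-degenerate, since $\meas$ is then locally comparable to $\haus^n$.

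Next, I would establish the eikonal equation $|\nabla f|^2 = 2f$, equivalently $|\nabla \dist(x,\cdot)| = 1$ almost everywhere, using the properness and geodesic character of the space together with the identification of weak gradients with metric slopes on infinitesimally Hilbertian spaces, since $\dist(x,\cdot)$ is $1$-Lipschitz and achieves slope one along any minimizing geodesic emanating from $x$. With both $\Delta f = n$ and $|\nabla f|^2 = 2f$ available, I would apply the self-improved Bochner formula that follows from \textnormal{BL}$(0,n)$: its saturation forces, in Gigli's second-order calculus, the Hessian identity $\mathrm{Hess}(f) = g$, where $g$ denotes the abstract metric tensor of the $\RCD$ space.

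Finally, I would reconstruct the cone structure from the rigidity triple $(f, \Delta f = n, \mathrm{Hess}(f) = g)$. The gradient flow of $\log \dist(x,\cdot)$ acts as an $e^t$-dilation of $(X,\dist)$ centered at $x$, as one reads off the Hessian identity; setting $\Sigma := \{\dist(x,\cdot) = 1\}$ with the induced distance and measure, one obtains an isomorphism $(X,\dist,\meas) \cong C(\Sigma)$, and the radial disintegration $\di r \otimes r^{n-1}\di\meas_\Sigma$ of $\meas$ is forced by the constancy of the volume ratio. The main obstacle is the passage from Bochner saturation to both $\mathrm{Hess}(f) = g$ and the cone isomorphism: both rely on delicate tools from the $\RCD$ literature, namely Gigli's Hessian calculus and the metric-cone characterization via homothetic gradient flows, which I would invoke as established black boxes.
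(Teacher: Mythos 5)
The paper does not prove this proposition: it is cited verbatim as \cite[Theorem 1.1]{DPG} (De Philippis--Gigli, ``From volume cone to metric cone in the nonsmooth setting'') and used as a black box, so there is no internal argument to compare against.

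Your sketch tracks the broad strategy of the De Philippis--Gigli proof: pass from the constancy of $r\mapsto \meas(B_r(x))/r^n$ to the distributional rigidity $\Delta f = n$ for $f=\tfrac12\dist(x,\cdot)^2$ via Laplacian comparison and coarea, combine with the eikonal identity $|\nabla f|^2=2f$, saturate the improved Bochner inequality to extract $\mathrm{Hess}(f)=g$, and then reconstruct the cone structure from the homothetic flow. One inaccuracy worth flagging: the $e^t$-dilation you want is the gradient flow of $f=\tfrac12\dist(x,\cdot)^2$ itself, \emph{not} of $\log\dist(x,\cdot)$. Indeed, along the flow of $\nabla\log\dist(x,\cdot)$ one has $\frac{\di}{\di t}\dist^2 = 2$, so $\dist^2$ grows linearly rather than exponentially; it is the flow of $\nabla f$ for which $\frac{\di}{\di t} f = |\nabla f|^2 = 2f$, giving $\dist(x,\gamma(t))=e^t\dist(x,\gamma(0))$. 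Beyond that, the steps you defer to black boxes --- the nonsmooth Hessian calculus, the regular Lagrangian flow of the radial vector field, the identification of the cross-section $\Sigma$ and the radial disintegration of $\meas$ --- are precisely where the technical substance of \cite{DPG} lives, and it is appropriate to cite them rather than reprove them here; but one should be aware that making rigorous sense of $\Delta f = n$ as a measure-valued Laplacian and of $\mathrm{Hess}(f)=g$ in $W^{2,2}$ are nontrivial and occupy most of that paper.
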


\section{Kato limits}

Let $(M^n, g)$ be a closed Riemannian manifold. The $\|\cdot\|_{1,2}$-closure $H^{1,2}(M)$ of the space of smooth functions and the distributional Sobolev space $W^{1,2}(M)$ coincide and do not depend on the metric $g$, see e.g.~\cite[Chapter 2]{Hebey}, hence we indifferently use both notations in the rest of the article. Moreover, since the space of smooth functions is $\|\cdot\|_{1,2}$-dense in the one of Lipschitz functions, we also have $H^{1,2}(M,\dist_g,\nu_g)=H^{1,2}(M)$, and the Cheeger energy $\Ch_g$ of $(M,\dist_g,\nu_g)$ coincide with the usual Dirichlet energy defined by
$$\cE (u,v)= \int_M g(\nabla u, \nabla v) \di \nu_g, \quad \cE(u)= \int_M |\nabla u|^2 \di \nu_g,$$ for any $u,v \in W^{1,2}(M)$. 
As well-known, $\Ch_g$ is a strongly local and regular Dirichlet form with core $C_0^{\infty}(M)$ and associated operator the Laplacian $\Delta_g$. Moreover, $\dist_{\Ch_g}$ is a distance that coincides with $\dist_g$. We denote by $H:  \R_+ \times M \times M \rightarrow \R$ the heat kernel of $\Ch_g$ which we call heat kernel of $(M,g)$, and by $(P_t)_{t>0}$ the associated semi-group. For any $x \in M$, we define
$$ \rho(x)= \inf_{\substack{v \in T_xM,\\ g_x(v,v)=1}} \Ric_x(v,v) \quad \text{and} \quad \Ricm(x)=  \max\{ -\rho(x), 0\}.$$
For all $t>0$ we introduce the following quantity:
\begin{equation}
\label{def:KatoL1}
\kato= \sup_{x \in M}\int_0^t\int_M H(s,x,y)\Ricm(y) \di \nu_g(y) \di s.
\end{equation}
This quantity is defined more generally as $\mbox{k}_t(V)$ for a Borel function $V$, where $\Ricm$ is replaced by $V$. Then $V$ is said to be in the \emph{contractive Dynkin class} when $\Kato_t(V) < 1$ and in the \emph{Kato class} if $\mbox{k}_t(V)$ tends to 0 as $t$ goes to zero (see for example \cite[Chapter VI]{Guneysu}). In our case, since the manifold is compact, $\Ricm$ always belongs to the Kato class. 

We point out that $\mbox{k}_t(M^n,g)$ has a useful scaling property given by
\begin{equation}
\label{scalingKatoL1}
\forall \eps, t>0, \quad \mbox{k}_t(M^n, \eps^{-2}g)= \mbox{k}_{\eps^2 t}(M^n,g),
\end{equation}
It is an easy consequence of the scaling property of the heat kernel: if $H_\eps$ is the heat kernel of $(M^n, \eps^{-2}g)$, then $H_\eps(s,x,y)=\eps^nH(\eps^2s,x,y)$ for all $s>0$ and $x,y \in M$.

In the following, we consider the next uniform bounds for sequences of closed smooth manifolds. 

\begin{D}
\label{def:bounds}
Let $\{(M_\alpha^n, g_\alpha)\}_{\alpha \in A}$ be a sequence of closed manifolds.  We say that $\{(M_\alpha^n, g_\alpha)\}_{\alpha \in A}$ satisfies
\begin{itemize}
\item[•] a \textbf{uniform Dynkin bound} if there exists $T >0$ such that 
\begin{equation}
\label{UD}
\tag{UD}
\sup_{\alpha} \Kato_T(M_\alpha, g_\alpha) \leq \frac{1}{16n};
\end{equation}
\item[•] a \textbf{uniform Kato bound} if there exists a non-decreasing function $f:(0,T] \to \R_+$ such that $f(t) \to 0$ when $t \to 0$ and for all $t \in (0,T]$
\begin{equation}
\label{UniformKato}
\tag{UK}
\sup_\alpha \Kato_t(M_\alpha, g_\alpha) \leq f(t);
\end{equation}
\item[•] a \textbf{strong uniform Kato bound} if there exist a non-decreasing function $f:(0,T] \to \R_+$, $T, \Lambda>0$ such that for all $t \in (0,T]$
\begin{equation}
\label{StrongKato}
\tag{SUK}
\sup_{\alpha} \Kato_t(M_\alpha, g_\alpha) \leq f(t), \quad \int_{0}^T \frac{\sqrt{f(s)}}{s}\di s \leq \Lambda. 
\end{equation}
\end{itemize}
\end{D}

We observe that obviously a Kato bound implies a Dynkin bound, and a strong Kato bound implies a Kato bound. Indeed, if $f$ is as in \eqref{StrongKato}, then for any $t \in (0,T]$ we have
$$\sqrt{f(t)}\leq \Lambda \log\left(\frac{T}{t}\right)^{-1}.$$
Therefore $f$ tends to zero when $t$ goes to zero. Without loss of generality, we can always assume that the function $f$ is bounded by $(16n)^{-1}$. 

\begin{rem}
\label{rem:scaling}
The scaling property of $\mbox{k}_t(M,g)$ ensures that the previous bounds are preserved when rescaling the metrics by a factor $\eps^{-2}$ for $\eps \in (0,1)$.
Indeed, for any $t >0$ and $\eps \in (0,1)$
$$\mbox{k}_t(M^n, \eps^{-2}g) = \mbox{k}_{\eps^2 t}(M^n,g) < \kato.$$
\end{rem}

\subsection{Dynkin limits}
In this section, we prove a pre-compactness result for sequences of manifolds satisfying a uniform Dynkin bound. We start by proving that a uniform Dynkin bound leads to  a uniform volume estimate and a uniform Poincaré inequality.

\begin{prop}
\label{prop:UnifPI}
Let $(M^n,g)$  be a closed Riemannian manifold, and $T>0$. Assume
\begin{equation*}
\Kato_T(M^n,g) \leq \frac{1}{16n}
\end{equation*}
and set $\nu:=e^2n$. Then there exists $\uptheta\ge 1$ and $\upgamma>0$ depending only on $n$ such that 
\begin{itemize}
\item[1.] for any $x\in M$ and $0<s<r\le \sqrt{T}$, 
$$\frac{\nu_g(B_r(x))}{\nu_g(B_s(x))}\le \uptheta \left(\frac{r}{s}\right)^\nu,$$
\item[2.] for any ball $B\subset M$ with radius $r\le \sqrt{T}$  and any $\varphi\in \cC^1(B)$,
$$\int_B (\varphi-\varphi_B)^2 \di \nu_g\le \upgamma r^2 \int_B |d\varphi|^2 \di \nu_g.$$
\end{itemize}
In particular, $(M^n,\dist_g,\nu_g,\Ch_g)$ is a $\mbox{PI}_{\upkappa, \upgamma}(\sqrt{T})$ Dirichlet space for $\upkappa=2^{\nu}\uptheta$. 
\end{prop}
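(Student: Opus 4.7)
The plan is to deduce both the volume-comparison estimate and the Poincaré inequality from Gaussian-type heat kernel bounds, which under a Dynkin bound are available thanks to the Li-Yau-type inequality for Schrödinger semi-groups established (in the Kato setting) by Q.S.~Zhang-M.~Zhu and extended by the first author and C.~Rose. The strategy is first to control the heat kernel $H(t,x,y)$ at scales $t\le T$, then to read off the doubling behaviour from the time-evolution of the on-diagonal heat kernel, and finally to invoke Saloff-Coste's equivalence ``Gaussian bounds $\Longleftrightarrow$ doubling $+$ Poincaré'' to conclude. Since all the ingredients are standard modulo the tracking of constants, the proof is essentially a quantitative bookkeeping.

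For the first point, under the Dynkin bound $\Kato_T\le \frac{1}{16n}$, the semi-group domination $P_t^{\Delta-\Ricm}\ge P_t^\Delta$ together with the perturbation formula yields an on-diagonal upper bound $H(t,x,x)\le C_1/\nu_g(B_{\sqrt t}(x))$ for $t\in(0,T]$, as well as a matching lower bound $H(t,x,x)\ge c_1/\nu_g(B_{\sqrt t}(x))$, with $C_1,c_1$ depending only on $n$. The key step is then a Li-Yau type inequality of the form
\[
-\partial_t \log H(t,x,x) \le \frac{\alpha(n)}{t}\qquad \text{for } t\in(0,T],
\]
where the constant $\alpha(n)$ comes out equal to $\nu/2=e^2n/2$ once one plugs in the Dynkin smallness $\frac{1}{16n}$; integrating this between $s^2$ and $r^2$ with $0<s<r\le\sqrt T$ gives $H(s^2,x,x)\le(r/s)^\nu H(r^2,x,x)$, and combining with the two-sided on-diagonal bounds produces
\[
\frac{\nu_g(B_r(x))}{\nu_g(B_s(x))}\le \uptheta\left(\frac{r}{s}\right)^{\nu},
\]
with $\uptheta=C_1/c_1$. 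This is the first conclusion and yields in particular $\upkappa$-doubling at scale $\sqrt T$ with $\upkappa=2^\nu\uptheta$.

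For the Poincaré inequality, I would upgrade the on-diagonal control to a full Gaussian upper bound
\[
H(t,x,y)\le \frac{C_2}{\sqrt{\nu_g(B_{\sqrt t}(x))\nu_g(B_{\sqrt t}(y))}}\exp\!\left(-\frac{\dist_g^2(x,y)}{C_2 t}\right)
\]
by a Davies-Gaffney/integrated maximum-principle argument (with weight $e^{-\dist_g^2/Ct}$), which under the Dynkin bound is unaffected by $\Ricm$ up to adjusting constants, and a matching Gaussian lower bound by the standard chaining argument of Fabes-Stroock applied to the lower on-diagonal bound. Once such two-sided Gaussian bounds are in force at scales $t\le T$, Saloff-Coste's theorem (or Sturm's version for Dirichlet spaces, as quoted around \eqref{eq:HKbound}) produces a $\sqrt T$-scale-invariant $L^2$ Poincaré inequality with a constant $\upgamma$ depending only on $n$, giving the second conclusion and hence the $\mathrm{PI}_{\upkappa,\upgamma}(\sqrt T)$ property.

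The main obstacle is pinning down the constant $\nu=e^2n$ rather than a generic dimension-dependent constant. This requires choosing optimally the parameters in the Li-Yau inequality under the Dynkin bound: the Grigor'yan-Hansen/Zhang-Zhu iteration produces exponents of the shape $n\cdot e^{2C\Kato_T}$, and the specific threshold $\frac{1}{16n}$ is tuned precisely so that this exponential factor collapses to $e^2$. Everything else is qualitative; only this exponent tracking is delicate, and I would handle it by performing the Li-Yau derivation once from scratch under $\Kato_T\le\frac{1}{16n}$ rather than quoting a general statement.
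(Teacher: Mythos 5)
Your plan takes a genuinely different route from the paper. You aim to read off the doubling inequality directly from the time-monotonicity of $t\mapsto t^{\nu/2}H(t,x,x)$ (obtained from the Li-Yau inequality) sandwiched between two-sided on-diagonal heat kernel estimates, and then to deduce the Poincar\'e inequality from two-sided Gaussian bounds via Saloff-Coste. The paper instead first proves a local Sobolev inequality from the on-diagonal \emph{upper} heat kernel bound (via Varopoulos), then deduces doubling by iterating the Sobolev inequality applied to a family of cut-off functions, and finally invokes a heat-kernel characterization of the Poincar\'e inequality once doubling is in hand.

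The problem with your route is the on-diagonal \emph{lower} bound $H(t,x,x)\ge c_1/\nu_g(B_{\sqrt{t}}(x))$, which you claim follows from semi-group domination and the perturbation formula. It does not: the domination $P_t^{\Delta-\Ricm}\ge P_t^\Delta$ compares two semigroups on the same manifold and gives no control of $H(t,x,x)$ from below by $\nu_g(B_{\sqrt{t}}(x))^{-1}$. In fact, an on-diagonal lower bound of this form is essentially equivalent (together with the upper bound) to the volume doubling you are trying to prove, so deriving doubling from it is circular unless the lower bound is produced by independent means. The paper's argument does use a near-diagonal lower bound --- item (ii) of the heat-kernel characterization of the Poincar\'e inequality, quoted from \cite[Prop.~3.11]{C16} and relying on the Coulhon--Grigor'yan method of \cite{CG} --- but only \emph{after} doubling has already been established through the Sobolev iteration. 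In your scheme the lower bound is the \emph{input} to doubling, and you give no mechanism for establishing it without already having doubling (the usual argument, via stochastic completeness plus the Gaussian tail, needs to compare $\nu_g(B_{C\sqrt t}(x))$ with $\nu_g(B_{\sqrt t}(x))$, which is doubling again). The Fabes--Stroock chaining you invoke for the Gaussian lower bound likewise presupposes doubling or a parabolic Harnack inequality, so the circularity resurfaces at the Poincar\'e step. The Li-Yau monotonicity of $t^{\nu/2}H(t,x,x)$ and the identification of the exponent $\nu=e^2 n$ from the threshold $\frac{1}{16n}$ are both correct, but they cannot carry the argument without the missing lower bound. By contrast, the Sobolev~$\to$~doubling iteration in the paper uses only the upper heat kernel bound, which is available unconditionally under the Dynkin hypothesis, and so sidesteps this issue entirely.
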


\begin{rem} The previous proposition is a minor variation of \cite[Propositions 3.8 and  3.11]{C16} where similar estimates were shown for balls with radii lower than $\diam(M)/2$ but with constants that additionally depended on $\diam(M)$.
\end{rem}

\begin{proof}
\textbf{Step 1.} Observe that $\nu>2$. We begin with proving the following Sobolev inequality: there exists $\uplambda>0$ depending only on $n$ such that for any ball $B\subset M$ with radius $r\le  \sqrt{T}$ and any $\varphi \in \cC_c^1(B)$,
\begin{equation}\label{eq:Sobeta}
\left(\int_B |\varphi|^{\frac{2\nu}{\nu-2}}\di \nu_g\right)^{1-\frac 2\nu}\le \uplambda\frac{ r^2}{(\nu_g(B))^{\frac2\nu}}\left[ \int_B |d\varphi|^2 \di \nu_g+\frac{1}{r^2} \int_B |\varphi|^2 d \nu_g\right].
\end{equation}
To this aim, take $r \in (0,\sqrt{T})$, $x \in M$ and $y \in B_r(x)$. From \cite[Theorem  3.5]{C16}, we know that there exists  $c_n>0$ depending only on $n$ such that for any $s \in (0,r^2/2]$,
$$e^{-s/r^2}H(s,y,y)\le H(s,y,y) \le \frac{c_n}{\nu_g(B_r(x))} \left(\frac{r^2}{s}\right)^{\nu/2}  \,\cdot$$
Moreover, since the function $s\mapsto H(s,y,y)$ is non-increasing, for any $s > r^2/2$,
$$e^{-s/r^2} H(s,y,y)\le e^{-s/r^2} H(r^2/2,y,y)\le e^{-s/r^2} \frac{c_n}{\nu_g(B_r(x))} \, 2^{\nu/2}.$$
As the function $\xi \mapsto e^{-\xi}\xi^{-\nu/2}$ is bounded from above on $[1/2,+\infty)$ by some constant $c_n'>0$ depending only on $n$, we get
$$e^{-s/r^2}H(s,y,y)\le c_n' \left(\frac{r^2}{s}\right)^{\nu/2} \frac{c_n}{\nu_g(B_r(x))} \, 2^{\nu/2} .$$
Setting $c_n'':=\max(c_n,c_n')$, we obtain for any $s>0$
$$e^{-s/r^2}H(s,y,y)\le c_n'' \frac{r^\eta}{\nu_g(B_r(x))} \frac{1}{s^{\eta/2}}\, \cdot$$
In particular the heat kernel of the operator $\Delta+1/r^2$ acting on $L^2(B,\mu)$ with Dirichlet boundary condition satisfies the same estimate and one deduces the Sobolev inequality \eqref{eq:Sobeta} from a famous result of Varopoulos \cite[Section 7]{V}.

\textbf{Step 2.} Let us prove 1. To this aim, we follow the argument of \cite{A,Csmf}: for given $0<s<r\le \sqrt{T}$, apply the Sobolev inequality \eqref{eq:Sobeta} in the case $B=B_r(x)$ and $\varphi=\mathrm{dist}(\cdot, M\setminus B_s(x))$ to obtain
\begin{equation}\label{eq:iter}
\left(\frac{s}{2}\right)^2\left(\nu_g(B_{\frac s2}(x))\right)^{1-\frac2\nu}\le 2 \uplambda\frac{ r^2}{(\nu_g(B_r(x))^{\frac2\nu}}\nu_g(B_s(x)).
\end{equation}
Set $\Theta(\tau):= \nu_g(B_{\tau}(x))/\tau^\eta$ for any $\tau>0$ and use elementary manipulations to turn \eqref{eq:iter} into
\begin{equation}\label{eq:iter2}
\Theta(s/2)^{1-2/\nu} \le \Lambda \Theta(s) \qquad \text{with $\Lambda=\frac{2^{\nu+1}\uplambda}{\Theta(r)^{2/\nu}}$} \, \cdot
\end{equation}
Iterating, we get for any positive integer $\ell$
\[
\Theta(s/2^\ell)^{(1-2/\nu)\ell} \le \Lambda^{\sum_{k=0}^{\ell-1} (1-2/\nu)^k}\Theta(s).
\]
As $\lim_{\ell\to \infty}\Theta(s/2^\ell)^{(1-\frac 2\nu)^\ell}=1$ and $\sum_{k=0}^{+\infty} (1-2/\nu)^k=\nu/2$ we obtain
\[
1 \le \Lambda^{\nu/2}\Theta(s)= \frac{[2^{\nu+1}\uplambda]^{\nu/2}}{\Theta(r)}\Theta(s)
\]
which is $1.$~with $\uptheta=[2^{\nu+1}\uplambda]^{\nu/2}$.

Let us now prove 2. We recall a classical result (see e.g.~\cite[Th.~2.7]{HS}): a metric measure space $(X,\dist,\mu)$ doubling at scale $\sqrt{T}$ equipped with a strongly local and regular Dirichlet form $\cE$ with heat kernel $H$ satisfies a Poincaré inequality at scale $\sqrt{T}$ if and only if there exists $c,C,\eps_1,\eps_2>0$ such that for all $x \in M$ and $t \in (0,\eps_1 \sqrt{T})$,
\begin{itemize}
\item[(i)] $H(t,x,x)\le C \mu(B_{\sqrt{t}}(x))^{-1}$,
\item[(ii)] $c\mu(B_{\sqrt{t}}(x))^{-1} \le \inf\{ H(t,x,y) : y \in B_{\eps_2 \sqrt{t}}(x)\}$.
\end{itemize}
In our context, (i) and (ii) hold with $\eps_1=\eps_2=1$ and $C,c$ depending only on $n$: indeed, (i) is a direct consequence of \cite[Theorem  3.5]{C16} while (ii) follows from the same argument as in the proof of \cite[Proposition 3.11]{C16} based on a method from \cite{CG}.
\end{proof}

The previous proposition ensures that we can apply Theorem \ref{th:improvedKS} to a sequence of manifolds satisfying a uniform Dynkin bound \eqref{UD}, and obtain the following pre-compactness result. 

\begin{cor}
\label{converge1}
Let $T>0$ and $\{(M_\alpha, g_\alpha)\}_{\alpha \in A}$ be a sequence of closed manifolds satisfying the uniform Dynkin bound \eqref{UD}. For all $\alpha \in A$ let $o_\alpha \in M_\alpha$ and set 
$$\mu_{g_\alpha}=\frac{\nu_{g_\alpha}}{\nu_g(B_{\sqrt{T}}(x))}, \ \cE_{g_\alpha} = \int_{M_\alpha}|du|^2 \di \mu_\alpha,$$
for all $u \in \cC^1(M_\alpha)$. Then there exist a $\mbox{PI}_{\upkappa,\upgamma}(\sqrt{T})$ Dirichlet space $(X,\dist,\mu,o,\cE)$ and a subsequence $B \subset A$ such that $\{(M_\beta, \dist_{g_\beta}, \mu_\beta, o_\beta, \cE_\beta)\}_{\beta \in B}$ Mosco-Gromov-Hausdorff converges to $(X,\dist,\mu,o,\cE)$.
\end{cor}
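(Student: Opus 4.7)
The plan is to reduce the corollary to a direct invocation of the compactness Theorem \ref{th:improvedKS}. First, I would apply Proposition \ref{prop:UnifPI} to each manifold $(M_\alpha, g_\alpha)$: under the uniform Dynkin bound \eqref{UD}, this yields constants $\upkappa \geq 1$ and $\upgamma > 0$ depending only on $n$ such that every $(M_\alpha, \dist_{g_\alpha}, \nu_{g_\alpha}, \Ch_{g_\alpha})$ is a complete $\PI_{\upkappa, \upgamma}(\sqrt{T})$ Dirichlet space whose intrinsic distance coincides with the Riemannian distance $\dist_{g_\alpha}$.

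Next, I would verify that passing from the Riemannian measure $\nu_{g_\alpha}$ to the normalized measure $\mu_\alpha = c_\alpha \nu_{g_\alpha}$, with $c_\alpha := \nu_{g_\alpha}(B_{\sqrt{T}}(o_\alpha))^{-1}$, and from $\Ch_{g_\alpha}$ to the rescaled Dirichlet form $\cE_\alpha = c_\alpha \Ch_{g_\alpha}$, preserves the PI structure with the \emph{same} constants. The key point is that when the measure and the Dirichlet form are multiplied by the same positive constant, the associated carré du champ measure rescales by that same constant; hence the density $d\Gamma_{\cE_\alpha}/d\mu_\alpha$ is unchanged, and so are (i) the intrinsic distance $\dist_{\cE_\alpha} = \dist_{g_\alpha}$, (ii) the doubling constant at scale $\sqrt{T}$, and (iii) the scale-invariant Poincaré constant at scale $\sqrt{T}$. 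Consequently each $(M_\alpha, \dist_{g_\alpha}, \mu_\alpha, o_\alpha, \cE_\alpha)$ is a complete $\PI_{\upkappa, \upgamma}(\sqrt{T})$ pointed Dirichlet space. Moreover, by construction $\mu_\alpha(B_{\sqrt{T}}(o_\alpha)) = 1$ for every $\alpha$, so the uniform volume hypothesis of Theorem \ref{th:improvedKS} is satisfied with $\eta = 1$.

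Finally, I would invoke Theorem \ref{th:improvedKS} with $R = \sqrt{T}$, $\upkappa$ and $\upgamma$ as above and $\eta = 1$. It produces a subsequence $B \subset A$ and a complete pointed metric Dirichlet space $(X, \dist, \mu, o, \cE)$ that is $\PI_{\upkappa, \upgamma'}(\sqrt{T})$ for some $\upgamma' \geq 1$, with $\cE$ strongly local and regular and $\dist$ bi-Lipschitz equivalent to $\dist_\cE$, such that $\{(M_\beta, \dist_{g_\beta}, \mu_\beta, o_\beta, \cE_\beta)\}_{\beta \in B}$ converges to $(X, \dist, \mu, o, \cE)$ in the pointed Mosco-Gromov-Hausdorff sense. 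The main (and essentially only) point to check carefully is the scale-invariance of the $\PI_{\upkappa, \upgamma}(\sqrt{T})$ property under simultaneous rescaling of the measure and the Dirichlet form; every other ingredient has already been established in Proposition \ref{prop:UnifPI} and Theorem \ref{th:improvedKS}, so no further analytic obstacle is expected.
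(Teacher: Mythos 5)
Your proposal is correct and follows exactly the paper's route: the paper's proof is precisely to combine Proposition \ref{prop:UnifPI} with Theorem \ref{th:improvedKS}, with the normalization $\mu_\alpha(B_{\sqrt{T}}(o_\alpha))=1$ supplying the volume hypothesis. Your careful check that simultaneous rescaling of measure and energy leaves the intrinsic distance and the $\PI_{\upkappa,\upgamma}(\sqrt{T})$ constants unchanged is the (implicit) glue the paper leaves to the reader, and it is correct.
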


\begin{D}
A metric Dirichlet space $(X,\dist,\mu,o,\cE)$ is called a \textbf{Dynkin limit} space if it is obtained as the Mosco-Gromov-Hausdorff limit of a sequence of closed Riemannian manifolds satisfying a uniform Dynkin bound. 
\end{D}

\begin{rem}\label{rem:tgConeDynkin}
For any $T>0$, the class of Dynkin limit spaces obtained as limits of manifolds satisfying \eqref{UD} is closed under pointed Mosco-Gromov-Hausdorff convergence: this follows from a direct diagonal argument.

As a consequence, tangent cones of Dynkin limit spaces equipped with their intrinsic distance are Dynkin limit spaces too. Indeed, if $(X,\dist, \mu, o,\cE)$ is the pointed Mosco-Gromov-Hausdorff limit of pointed manifolds $\{(M_\alpha, g_\alpha,o_\alpha)\}_{\alpha \in A}$ satisfying \eqref{UD} and $(X_x, \dist_x, \mu_x, x, \cE_x)$ is a tangent cone at $x \in X$ provided by Corollary \ref{cor:Diritangent},  then this latter can be written as the limit of a sequence of rescaled manifolds $\{(M_\beta, \eps_\beta^{-1}\dist_{g_\beta}, \nu_{g_\beta}(B_{\eps_\beta}(x_\beta))^{-1}\nu_\beta, x_\beta, \eps_{\beta}^2\nu_{g_\beta}(B_{\eps_\beta}(x_\beta))^{-1} \cE_\beta)\}_{\beta \in B}$ where $B \subset A$, $x_\beta\in M_\beta$ and $\eps_\beta>0$ for any $\beta \in B$,  and $x_\beta \to x$, $\eps_\beta\to0$. 

Another consequence is that if $\{z_\alpha\}_{\alpha \in A}$ belongs to a compact set of $X$ and $\{\eps_\alpha\} \subset (0,+\infty)$ satisfies $\eps_\alpha \to 0$, then there exists a subsequence $B \subset A$ such that the sequence $\{(X, \eps_\beta^{-1}\dist,\mu(B_{\eps_{\beta}}(z_{\beta}))^{-1}\mu,z_\beta,\eps_\beta^2 \mu(B_{\eps_\beta}(z_\beta))^{-1}\cE)\}_\beta$ converges to a Dynkin limit space $(Z,\dist_Z,\mu_Z,z,\cE_Z)$ which may also be written as the limit of a sequence of rescaled manifolds $\{(M_{\beta}, \eps_{\beta}^{-1}\dist_{g_{\beta}}, \nu_{g_{\beta}}(B_{\eps_{\beta}}(x_{\beta}))^{-1}\nu_{g_\beta}, x_{\beta}, \nu_{g_\beta}(B_{\eps_\beta}(x_\beta))^{-1}\cE_{\beta})\}_{\beta}$, with $x_{\beta} \in M_{\beta}$ for any $\beta \in B$ and $\dist(z_{\beta},\Phi_{\beta}(x_{\beta})) \to 0$, where $\Phi_{\beta}$ is as in Characterization \ref{chara}.
\end{rem}

\subsection{Kato limits}

In this section we consider manifolds with a uniform Kato bound. In this case, some better properties can be proved for the distance in the limit and for tangent cones (see the next remark and Proposition \ref{converge3}). Thanks to the previous pre-compactness result we can give the following definition. 

\begin{D}
A Dirichlet space $(X,\dist,\mu,o,\cE)$ is called a \textbf{Kato limit} space if it is obtained as a Mosco-Gromov-Hausdorff limit of manifolds with a uniform Kato bound \eqref{UniformKato}.  
\end{D}

\begin{rem}
\label{rem:KatoPI}
A Kato limit space is obviously a $\mbox{PI}_{\upkappa,\upgamma}(\sqrt{T})$ Dirichlet space for any $T>0$ such that $f(T) \le 1/(16n)$.
\end{rem}

\begin{rem}
\label{rem:tgConeKato}
As in the case of Dynkin limits, tangent cones of Kato limits are Kato limits as well. Not only, if $(X,\dist, \mu, o, \cE)$ is a Kato limit and $(X_x,\dist_x,\mu_x,x,\cE_x)$ is a tangent cone at $x\in X$ provided by  Corollary \ref{cor:Diritangent}, then this latter is a limit of rescaled manifolds $\{(M_\alpha, \eps_\alpha^{-1}\dist_{g_\alpha},\nu_{g_\alpha}(B_{\eps_\alpha}(x_\alpha))^{-1}\nu_\alpha, x_\alpha, \eps_\alpha^2 \nu_{g_\alpha}(B_{\eps_\alpha}(x_\alpha))^{-1}\cE_\alpha))\}$ such that for all $t >0$
$$\mbox{k}_t(M_\alpha, \eps_\alpha^{-2}g_\alpha) \to 0 \mbox{ as } \alpha \to \infty.$$
Indeed, we have $\mbox{k}_t(M_\alpha, \eps_\alpha^{-2}g_\alpha) = \mbox{k}_{\eps_\alpha^2 t}(M_\alpha, g_\alpha)\leq f(\eps_\alpha^2 t) \to 0 \mbox{ as } \alpha \to \infty.$ This observation also applies to spaces $(Z,\dist_Z,\mu_Z,z,\cE_Z)$ obtained as limits of rescalings of $X$ centered at varying but convergent points, as considered in Remark \ref{rem:tgConeDynkin}.
\end{rem}

As a consequence of Theorem \ref{th:improvedKS}, any Dynkin limit space $(X,\dist,\mu,o,\cE)$ satisfies $\dist \le \dist_\cE$.  But for Kato limit spaces, this inequality turns out to be an equality. To prove this fact, we need the following Li-Yau inequality which was proved in  \cite[Proposition 3.3]{C16}.

\begin{prop}
Let $(M^n,g)$  be a closed Riemannian manifold, and $T>0$.  Assume
\begin{equation*}
\Kato_T(M^n,g) \leq \frac{1}{16n}\, \cdot
\end{equation*}
If $u$ is a positive solution of the heat equation on $[0, T]\times M$, then for any $(t,x)\in [0,T]\times M$,
\begin{equation}\label{eq:LYC}
e^{-8\sqrt{n\Kato_{t}(M,g)}}\,\frac{|du|^{2}}{u^{2}}-\frac{1}{u}\frac{\partial u}{\partial t} \le \frac{n}{2t}e^{8\sqrt{n\Kato_{t}(M,g)}}.
\end{equation} 
\end{prop}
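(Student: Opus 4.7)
The plan is to adapt the classical Li--Yau argument to the Kato setting, following the strategy of Zhang--Zhu as used by the first author in \cite{C16}. Set $v = \log u$, which with the present sign convention satisfies the nonlinear equation $\partial_t v + \Delta v = |\nabla v|^2$. Introduce the Harnack quantity
$$F = |\nabla v|^2 - \alpha \, \partial_t v,$$
where $\alpha = \alpha(t) \ge 1$ is a parameter depending on $\Kato_t(M,g)$ to be tuned at the end. Applying the Bochner formula to $|\nabla v|^2$ and combining with the equation satisfied by $\partial_t v$, I would derive a pointwise parabolic inequality of the shape
$$(\partial_t + \Delta) F \; \le \; -2\langle \nabla v, \nabla F\rangle - \frac{2}{n\alpha^2}\bigl(F - (\alpha-1)|\nabla v|^2\bigr)^2 + 2\,\Ricm \, |\nabla v|^2,$$
where the first quadratic term comes from $|\nabla^2 v|^2 \ge (\Delta v)^2/n$ and the last term is the only place where the Ricci curvature enters.

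The next step is to turn this differential inequality into an integrated bound using the heat semigroup. Multiplying $F$ by an appropriate time-dependent weight $\varphi(t)$ (a power of $t$ times an exponential in $\Kato_t$) and applying the semigroup Duhamel formula to $\varphi F$, the contribution of $\Ricm$ is controlled by
$$\int_0^t P_{t-s}\bigl(\Ricm \, |\nabla v|^2\bigr)(x)\,ds,$$
and the crucial point is that the sup--norm of $\int_0^t P_{t-s}\Ricm \, ds$ is precisely $\Kato_t(M,g)$. Combined with a Cauchy--Schwarz or Gr\"onwall--type argument applied to the ``good'' quadratic term $-\tfrac{2}{n\alpha^2}(F-(\alpha-1)|\nabla v|^2)^2$, this allows one to absorb the Ricci contribution into an exponential factor. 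Choosing $\alpha = e^{16 \sqrt{n \Kato_t(M,g)}}$ (or a similar expression) and optimising yields exactly the symmetric correction factors $e^{\pm 8\sqrt{n \Kato_t(M,g)}}$ appearing in the statement.

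The main obstacle is handling the Ricci term: in the classical Li--Yau setting $\Ric \ge -K g$ gives pointwise control $-2\Ric(\nabla v,\nabla v) \ge -2K |\nabla v|^2$ that can be directly absorbed into the squared term via a maximum principle on a cut-off function. Here only the integrated quantity $\Kato_t$ is finite, so the pointwise maximum principle must be replaced by an argument based on the heat semigroup and Duhamel's formula, which is the technical heart of the proof. Matching the resulting exponential correction with $e^{\pm 8\sqrt{n\Kato_t}}$ requires a careful tuning between $\alpha$, the weight $\varphi(t)$, and the constant $16n$ appearing in the Dynkin bound \eqref{UD} (which ensures $\Kato_t \le 1/(16n)$ so that $\sqrt{n \Kato_t} \le 1/4$ and all exponential factors stay of controlled size).
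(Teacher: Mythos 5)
You have identified the right overall strategy: this is the Zhang--Zhu scheme \cite{ZZ}, and in fact the paper does not reprove the statement at all but quotes it from \cite[Proposition 3.3]{C16}. The problem is that the step you yourself call ``the technical heart'' is precisely where your sketch does not go through as written, for two reasons. First, a Duhamel representation with the plain heat semigroup $P_{t-s}=e^{-(t-s)\Delta}$ does not absorb the drift term $-2\langle\nabla v,\nabla F\rangle$: in the classical argument this term is killed by evaluating at a spatial maximum of $F$, and if you abandon the maximum principle in favour of an integrated identity you must either work with the drift semigroup generated by $\Delta+2\nabla v\cdot\nabla$ (equivalently, the $u^2$-weighted Laplacian) or reinstate a maximum point somewhere. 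Second, and more seriously, controlling $\int_0^tP_{t-s}\bigl(\Ricm\,|\nabla v|^2\bigr)\,ds$ by $\Kato_t(M,g)\cdot\sup|\nabla v|^2$ is circular: a quantitative sup bound on $|\nabla v|^2$ (after multiplication by $t$) is exactly the conclusion of the proposition. An absorption argument of the type ``$\sup tF\le C+c\,\Kato_t\,\sup tF$'' can be salvaged, but it requires an a priori finiteness statement and bookkeeping that the sketch does not supply.

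The device that actually converts the integrated Kato hypothesis into pointwise information is a gauge function, exactly as in Lemma \ref{lemmaJ} of the paper (used there for the Bakry--Ledoux estimate): one solves \eqref{eqJ}, i.e.\ $\Delta J+\dot J+\tfrac{2}{\eps}|\nabla J|^2/J+2J\Ricm=0$ with $J(0,\cdot)=1$, obtaining two-sided bounds on $J$ by a Neumann-series/fixed-point argument --- this is where the Dynkin bound $\Kato_T\le 1/(16n)$ genuinely enters. One then applies the parabolic maximum principle to $t\,(J|\nabla v|^2-\alpha\,\partial_tv)$: the term $-2J\Ricm|\nabla v|^2$ produced by the equation for $J$ cancels the Bochner term $+2\Ricm|\nabla v|^2$, the cross term $\langle\nabla J,\nabla|\nabla v|^2\rangle$ is absorbed into $|\nabla^2v|^2$ and $|\nabla J|^2/J$ by Cauchy--Schwarz, and balancing the loss of order $\eps$ against the gain of order $\Kato_t/\eps$ is what produces the square root and hence the symmetric factors $e^{\pm8\sqrt{n\Kato_t}}$ in \eqref{eq:LYC}. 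Your proposal omits this construction and offers no workable substitute, so as it stands there is a genuine gap at the decisive step.
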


We are now in a position to prove the following.

\begin{prop}\label{converge3}
Let $(X,\dist,\mu,o,\cE)$ be a Kato limit space. Then $\dist = \dist_\cE$.
\end{prop}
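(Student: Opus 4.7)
Theorem \ref{th:improvedKS} already gives the inequality $\dist \le \dist_\cE$, so the task is to establish $\dist_\cE \le \dist$. The plan is to combine Varadhan's formula \eqref{eq:varadhan} with a sharp Gaussian \emph{lower} bound on the limit heat kernel. Since the Kato limit space is $\mathrm{PI}$ (see Remark \ref{rem:KatoPI}), Proposition \ref{est:Gaussian14} gives
\[
\dist_\cE^2(x,y) = -4 \lim_{t \to 0^+} t \log H(t,x,y),
\]
so it suffices to show that for every $x,y \in X$,
\[
\limsup_{t \to 0^+} \bigl(-t \log H(t,x,y)\bigr) \le \frac{\dist^2(x,y)}{4}.
\]

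The main work is on the approximating manifolds $(M_\alpha^n,g_\alpha)$. For each $\alpha$, I would apply the Li--Yau inequality \eqref{eq:LYC} to the positive solution $u(s,z) = H_\alpha(s,x_\alpha,z)$, setting $f = \log u$ and obtaining
\[
\partial_s f \ge e^{-c(s)}|d_z f|_{g_\alpha}^2 - \frac{n}{2s} e^{c(s)}, \qquad c(s) := 8\sqrt{n\,\Kato_s(M_\alpha,g_\alpha)}.
\]
Pick a minimizing geodesic $\gamma:[0,1] \to M_\alpha$ with $\gamma(0)=x_\alpha$, $\gamma(1)=y_\alpha$, $|\gamma'|_{g_\alpha} = \dist_{g_\alpha}(x_\alpha,y_\alpha)$, and, for $0<s_0<T$, evaluate $\phi(\tau) = f(s_0 + (T-s_0)\tau, \gamma(\tau))$. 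Bounding $|\langle df,\gamma'\rangle| \le |df|\,\dist_{g_\alpha}(x_\alpha,y_\alpha)$ and completing the square in $|df|^2$ absorbs the gradient term. Integrating over $[0,1]$ and using the monotonicity of $\Kato_t$ in $t$ leads to
\[
\log H_\alpha(T,x_\alpha,y_\alpha) \ge \log H_\alpha(s_0,x_\alpha,x_\alpha) - \frac{\dist_{g_\alpha}^2(x_\alpha,y_\alpha)\,e^{c(T)}}{4(T-s_0)} - \frac{n}{2}\log(T/s_0)\,e^{c(T)}.
\]
Combining with the diagonal lower bound $H_\alpha(s_0,x_\alpha,x_\alpha) \ge c_1 / \nu_{g_\alpha}(B_{\sqrt{s_0}}(x_\alpha))$ that comes from the $\mathrm{PI}_{\upkappa,\upgamma}(\sqrt{T})$ property (Proposition \ref{prop:UnifPI} together with \eqref{eq:HKbound}) yields an explicit Gaussian lower bound depending only on $n$, $\upkappa$, $\upgamma$ and $f$.

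Next I would pass to the limit. Thanks to \eqref{IKS2} and the pmGH convergence of the approximating manifolds to $(X,\dist,\mu,\cE)$, choosing sequences $x_\alpha \to x$ and $y_\alpha \to y$ with $\dist_{g_\alpha}(x_\alpha,y_\alpha) \to \dist(x,y)$ and $\nu_{g_\alpha}(B_{\sqrt{s_0}}(x_\alpha))^{-1} \to \mu(B_{\sqrt{s_0}}(x))^{-1}$, the above inequality descends to
\[
\log H(T,x,y) \ge -\log\mu(B_{\sqrt{s_0}}(x)) + \log c_1 - \frac{\dist^2(x,y)\,e^{c(T)}}{4(T-s_0)} - \frac{n}{2}\log(T/s_0)\,e^{c(T)},
\]
where now $c(T) \le 8\sqrt{n\,f(T)} \to 0$ by the uniform Kato bound \eqref{UniformKato}. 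Finally, choose $s_0 = T^{1+\eta}$ for some fixed $\eta \in (0,1)$ and multiply by $-T$: the doubling property (Proposition \ref{prop:doubling}) bounds $|\log \mu(B_{\sqrt{s_0}}(x))|$ logarithmically in $s_0$, so $T\log\mu(B_{\sqrt{s_0}}(x)) \to 0$, $T\log(T/s_0) \to 0$, and $T/(T-s_0) \to 1$ as $T \to 0^+$. Taking the limsup gives exactly $-T\log H(T,x,y) \le \dist^2(x,y)/4 + o(1)$, which by Varadhan's formula yields $\dist_\cE^2(x,y) \le \dist^2(x,y)$.

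The main obstacle is the asymptotic sharpness of the lower bound: the constant in front of $\dist^2(x,y)/(4t)$ must be shown to tend to $1$ as $t \to 0$. This is where the Kato assumption (rather than just a Dynkin bound) is crucial, since it forces $c(T)=8\sqrt{n\,\Kato_T} \to 0$ and hence $e^{c(T)} \to 1$. The rest is a careful bookkeeping of the error terms and a choice of the auxiliary parameter $s_0$ that is small enough for the diagonal term to be negligible after multiplication by $T$, but not so small that $\log(T/s_0)$ becomes problematic.
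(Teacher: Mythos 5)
Your proposal is correct and follows essentially the same path as the paper's proof: apply the Li--Yau inequality \eqref{eq:LYC} to $\log H_\alpha$ along a minimizing geodesic, integrate to get a lower bound on the off-diagonal heat kernel in terms of a diagonal value, pass to the limit, and conclude via Varadhan's formula. The only cosmetic difference is in the treatment of the diagonal term: the paper fixes $\theta$, writes $s_0=\theta t$, and disposes of $-t\log H(\theta t,x,x)$ by a second application of Varadhan's formula (letting $t\to 0$ then $\theta\to 0$), whereas you use the diagonal lower bound $H(s_0,x,x)\ge c_1/\mu(B_{\sqrt{s_0}}(x))$ together with doubling and the choice $s_0=T^{1+\eta}$ — both yield the same sharp constant in the limit.
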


\begin{proof}
We only need to prove $\dist \ge \dist_\cE$. Let $\{(M_\beta^n,g_\beta,o_\beta)\}_{\beta \in B}$ be a sequence of closed Riemannian manifolds satisfying a uniform Kato bound and such that the sequence $\{(M_\beta,\dist_\beta,\mu_\beta,o_\beta,\cE_\beta)\}$ converges in the Mosco-Gromov-Hausdorff sense to $(X,\dist,\mu,o,\cE)$. In particular, there exists  $T>0$ such that $\{(M_\beta^n,g_\beta)\}_{\beta \in B}$ satisfies the uniform Dynkin bound \eqref{UD}. We claim that for any $x,y \in X$, $t\in(0,T)$ and $\theta \in (0,1)$,
\begin{equation}\label{eq:estHeatKernel}
\log\left( \frac{H(\theta t,x,x)}{H(t, x,y)}\right) \le \frac{n}{2}\log(1/\theta)e^{2} +  \frac{\dist^2(x,y)}{4(1-\theta)t}e^{8\sqrt{n f(t)}}\, \cdot
\end{equation}
Let us explain how to conclude from there. Multiply by $-4t$ and apply Varadhan's formula \eqref{eq:varadhan} as $t \to 0$ to get
$$\dist_{\cE}^2(x,y) \leq \frac{\dist^2(x,y)}{1-\theta}\,\cdot$$
The desired inequality follows from $\theta \downarrow 0$.

In the following we thus prove \eqref{eq:estHeatKernel}. Take $\beta \in B$. Let $u$ be a positive solution of the heat equation on $[0, T]\times M_\beta$. Take $x,y \in M_\beta$, $t \in (0,T]$ and $s \in (0,t)$. Let $\gamma:[0,t-s]\rightarrow M_\beta$ be a minimizing geodesic from $y$ to $x$.  For any $\tau \in [0,t-s]$ set
$$\phi(\tau):=\log u\left(t-\tau,\gamma(\tau)\right)$$
and note that $\phi(0)=\log u\left(t,y\right)$ and $\phi(t-s)=\log u\left(s,x\right)$.  Differentiate $\phi$ at $\tau$ and apply the Li-Yau inequality \eqref{eq:LYC} and the simple fact $\Kato_{t-\tau}\leq \Kato_t$ to get the first inequality in the following calculation,  where $u$ and its derivatives are implicitly evaluated at $(t-\tau,\gamma(\tau))$ and where we omit $(M_\beta,g_\beta)$ in the notation $\Kato_{t}(M_\beta,g_\beta)$ for the sake of simplicity:
\begin{equation*}
\begin{split} 
\dot\phi(\tau)&=-\frac{1}{u}\frac{\partial u}{\partial t}+\la \dot\gamma(\tau),d \log u \ra\\
&\le\frac{ne^{8\sqrt{n\Kato_{t-\tau}}}}{2(t-\tau)}-e^{-8\sqrt{n\Kato_{t}}}\frac{|du|^{2}}{u^{2}}+\la \dot\gamma(\tau),d\log u\ra\\
&=\frac{ne^{8\sqrt{n\Kato_{t-\tau}}}}{2(t-\tau)}- \left|e^{-4\sqrt{n\Kato_t}}d\log u - \frac{e^{4\sqrt{n \Kato_t}}}{2}\dot{\gamma}(\tau) \right|^2 +\frac{e^{8\sqrt{n\mbox{k}_{t}}}}{4}|\dot\gamma(\tau)|^{2}\\
&\le\frac{ne^{8\sqrt{n\Kato_{t-\tau}}}}{2(t-\tau)}+\frac{e^{8\sqrt{n\Kato_{t}}}}{4}|\dot\gamma(\tau)|^{2}\\
&\le \frac{ne^{8\sqrt{n\Kato_{t-\tau}}}}{2(t-\tau)}+\frac{e^{8\sqrt{n\Kato_{t}}}\dist_\beta^{2}(x,y)}{4(t-s)^{2}} \,\cdot
\end{split}
\end{equation*}
Hence, when integrating between $0$ and $t-s$ and changing variables in the first term, we obtain:
$$\log\left(\frac{u(s,x)}{u(t,y)}\right) \le \frac{n}{2}\int_s^t e^{8\sqrt{n\Kato_{\tau}}}\frac{d\tau}{\tau}+\frac{e^{8\sqrt{n\Kato_{t}}}\dist_\beta^{2}(x,y)}{4(t-s)} \, \cdot$$ 
Write $s=\theta t$ for some $\theta \in (0,1)$. The uniform Dynkin bound \eqref{UD} allows us to bound $e^{8\sqrt{n\Kato_{\tau}}}$ in the first term of the right-hand side by $e^2$, while the uniform Kato bound lets us bound $e^{8\sqrt{n\Kato_{t}}}$ by $e^{8\sqrt{nf(t)}}$ in the second term. Thus
\[
\log\left(\frac{u(s,x)}{u(t,y)}\right) \le \frac{n}{2} e^{2}\log(t/s)+\frac{\dist_\beta^{2}(x,y)}{4(1-\theta)t} e^{8\sqrt{nf(t)}} .
\]
Choose $s=\theta t$ and $u(\tau,z)=H_\beta(\tau,x,z)$ for any $(\tau,z) \in (0,T) \times M$ to get
\[
\log\left( \frac{H_\beta(\theta t,x,x)}{H_\beta(t, x,y)}\right) \le \frac{n}{2}\log(1/\theta)e^{2} +  \frac{\dist_\beta^2(x,y)}{4(1-\theta)t}e^{8\sqrt{n f(t)}}\, \cdot
\]
The Mosco-Gromov-Hausdorff convergence $(M_\beta,\dist_\beta,\mu_\beta,o_\beta,\cE_\beta) \to (X,\dist,\mu,o,\cE)$ eventually yields \eqref{eq:estHeatKernel}.
\end{proof}

\begin{rem}
\label{rem:plusgen1}
Observe that the previous proof also applies more generally in the case of a sequence of manifolds $\{(M_\alpha,g_\alpha)\}_{\alpha \in A}$ such that there exists a non-decreasing function $f: (0,T) \to \R_+$, tending to $0$ as $t$ goes to 0 and for which
$$\limsup_{\alpha \to \infty} \Kato_t(M_\alpha, g_\alpha) \leq f(t) \ \mbox{ for all } t \in (0,T].$$
In particular, we have $\dist = \dist_{\cE}$ whenever $(X,\dist, \mu,o,\cE)$ is the Mosco-Gromov-Hausdorff limit of a sequence $\{(M_\alpha,g_\alpha)\}_{\alpha \in A}$ such that for some $T>0$ 
$$\lim_{\alpha \to \infty}\Kato_T(M_\alpha, g_\alpha)=0;$$
in this case $f$ is constantly equal to $0$. As a consequence of Remark \ref{rem:tgConeKato}, Proposition \ref{converge3} applies to tangent cones (and rescalings centered at convergent points) of Kato limits.
\end{rem}

\subsection{Ahlfors regularity}
We now discuss volume  estimates for closed Riemannian manifolds $(M^n,g)$ satisfying the strong Kato condition integral bound 
\begin{equation}
\label{eq:hyp_intKato}
\Kato_T(M^n,g)\le \frac{1}{16 n}\text{ and }
\int_0^{ T}\frac{\sqrt{\Kato_s(M^n,g)}}{s} \di s \leq \Lambda, 
\end{equation}
for some $ T, \Lambda >0$. This condition was also considered in \cite{C16}. 

The proof of \cite[Proposition 3.13]{C16} gives the following volume estimate which improves the one given in Proposition \ref{prop:UnifPI}. 

\begin{prop}
\label{prop:IntKatoBG}
Let $(M^n,g)$ be a closed Riemannian manifold satisfying \eqref{eq:hyp_intKato} for some $ T, \Lambda>0$. Then there exists a constant $C_n>0$ depending only on $n$ such that for all $x \in X$ and  $0\le r\le s\le \sqrt{T}$ then 
\begin{equation}\label{AR1}
 \nu_g(B_r(x))\le C_n^{\Lambda+1} r^n \text{ and } \frac{\nu_g(B_s(x))}{\nu_g(B_r(x))}\le C_n^{\Lambda+1} \left(\frac{s}{r}\right)^n.
\end{equation}
\end{prop}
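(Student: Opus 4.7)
\medskip

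\noindent\textbf{Proof plan.} The strategy is to extract an almost monotonicity for the on-diagonal heat kernel $t \mapsto t^{n/2} H(t,x,x)$ from the Li-Yau inequality \eqref{eq:LYC}, and then convert this analytic information into the geometric volume comparison through the matching two-sided heat kernel bounds available under the Dynkin bound (Proposition \ref{prop:UnifPI} and \cite[Theorem 3.5]{C16}).

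First I would apply \eqref{eq:LYC} to the positive solution $u(s,y) = H(s,x,y)$ of the heat equation, evaluated at $y = x$. Dropping the non-negative gradient term yields, for every $t \in (0,T]$,
\[
-\partial_t \log H(t,x,x) \;\le\; \frac{n}{2t} e^{8\sqrt{n\,\Kato_t(M^n,g)}}.
\]
Under \eqref{eq:hyp_intKato} the quantity $\sqrt{n\,\Kato_t}$ stays uniformly bounded, so a first order Taylor expansion gives $e^{8\sqrt{n\,\Kato_t}} \le 1 + c_n \sqrt{\Kato_t(M^n,g)}$ for an absolute constant $c_n$. Plugging this back in, I would rewrite the inequality as
\[
-\partial_t \log\!\bigl(t^{n/2} H(t,x,x)\bigr) \;\le\; \frac{c_n' \sqrt{\Kato_t(M^n,g)}}{t}\,\cdot
\]

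Next I would integrate this differential inequality between $r^2$ and $s^2$ for arbitrary $0<r\le s\le \sqrt{T}$, and use the integral assumption in \eqref{eq:hyp_intKato} to control the right-hand side by $c_n' \Lambda$. This yields the almost-monotonicity
\[
r^n H(r^2,x,x) \;\le\; e^{c_n' \Lambda}\, s^n H(s^2,x,x).
\]
The second step is to feed this into the Gaussian heat kernel bounds for PI Dirichlet spaces available here: Proposition \ref{prop:UnifPI} (via the PI$_{\upkappa,\upgamma}(\sqrt{T})$ property) together with \cite[Theorem 3.5]{C16} provide constants $c_n, C_n$ depending only on $n$ such that
\[
\frac{c_n}{\nu_g(B_r(x))} \le H(r^2,x,x) \quad \text{and}\quad H(s^2,x,x) \le \frac{C_n}{\nu_g(B_s(x))}
\]
for all $0<r\le s\le \sqrt T$. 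Combining these two-sided bounds with the almost-monotonicity immediately delivers the volume ratio estimate
\[
\frac{\nu_g(B_s(x))}{\nu_g(B_r(x))} \;\le\; C_n^{\Lambda+1}\left(\frac{s}{r}\right)^n,
\]
after absorbing $c_n, C_n$ into a single constant $C_n^{\Lambda+1}$.

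Finally, to obtain the absolute bound $\nu_g(B_r(x)) \le C_n^{\Lambda+1} r^n$, I would let $r \downarrow 0$ in the ratio estimate and use the Euclidean asymptotics $\nu_g(B_r(x))/r^n \to \omega_n$ valid on any smooth Riemannian manifold. The main technical point — already carried out in \cite[Proposition 3.13]{C16} — is the bookkeeping required to pass from the exponential correction $e^{8\sqrt{n\Kato_t}}$ to the linearized form integrable against $\di t/t$; here the strong Kato integral bound in \eqref{eq:hyp_intKato} is crucial, whereas the mere Dynkin bound would only yield $C_n^{|\log(s/r)|}$-type growth. No additional obstacle is expected beyond matching the constants.
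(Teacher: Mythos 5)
Your proposal is correct and takes essentially the same route as the paper: both hinge on the almost-monotonicity of $t^{n/2}H(t,x,x)$, which you derive from the Li--Yau inequality \eqref{eq:LYC} exactly as in \cite[Page 3144]{C16} (the paper simply cites this rather than redoing it), and both then combine it with the two-sided on-diagonal heat kernel bounds underlying Proposition \ref{prop:UnifPI}. The only cosmetic difference is that you obtain the absolute bound $\nu_g(B_r(x))\le C_n^{\Lambda+1}r^n$ by letting $r\downarrow 0$ in the ratio estimate and invoking the Euclidean asymptotics of $\nu_g(B_r(x))/r^n$, whereas the paper cites \cite{C16} directly for that inequality; the two are equivalent.
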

\begin{proof}
The upper bound is proven in \cite[Page 3144]{C16}. The other estimate is a consequence of the proof of Proposition \ref{prop:UnifPI} and of the estimate (see again \cite[Page 3144]{C16}): $0<s<t\le\sqrt{T}:$
$$ s^{\frac n 2} H(s,x,x) \le  C_n^{\Lambda+1} t^{\frac n 2} H(t,x,x),$$
 that holds for any $x\in M$ and $0<s<t\le\sqrt{T}.$
\end{proof}
Then using the doubling properties [Proposition \ref{prop:doubling}-ii)], we get the following uniform local Ahlfors regularity result:
\begin{cor}\label{cor:IntKatoBG}Let $(M^n,g)$ be a closed Riemannian manifold satisfying \eqref{eq:hyp_intKato} for some $ T, \Lambda>0$. Then there exists a constant $C_n>0$ depending only on $n$ such that for all $o,x \in X$ and  $0\le r\le \sqrt{T}$ then 
\begin{equation}\label{AR2}
 \frac{ \nu_g\left(B_{\sqrt{T}}(o)\right) }{T^{\frac n 2}} \le C_n^{(\Lambda+1)\frac{\dist(x,o)}{\sqrt{T}}}\  \frac{\nu_g(B_{r}(x))}{r^n}\end{equation}
\end{cor}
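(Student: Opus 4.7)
The plan is to derive \eqref{AR2} by chaining two estimates: the local Ahlfors regularity at the basepoint $x$ provided by Proposition \ref{prop:IntKatoBG}, and the exponential change-of-basepoint comparison that is a consequence of the doubling property, recorded in Proposition \ref{prop:doubling}-i).

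First, by Proposition \ref{prop:UnifPI}, the metric measure space $(M^n,\dist_g,\nu_g)$ is $\upkappa$-doubling at scale $\sqrt{T}$ with $\upkappa$ depending only on $n$. Consequently, the constants $c,\lambda>0$ appearing in Proposition \ref{prop:doubling}-i) depend only on $n$. Applying that proposition at radius $\sqrt{T}$ between the centers $o$ and $x$ yields
\[
\nu_g(B_{\sqrt{T}}(o))\le c\, e^{\lambda\dist(x,o)/\sqrt{T}}\,\nu_g(B_{\sqrt{T}}(x)).
\]
On the other hand, applying Proposition \ref{prop:IntKatoBG} at the point $x$ for the pair of radii $r\le \sqrt{T}$ gives the Ahlfors-type bound
\[
\frac{\nu_g(B_{\sqrt{T}}(x))}{T^{n/2}}\le C_n^{\Lambda+1}\,\frac{\nu_g(B_r(x))}{r^n}.
\]

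Chaining these two inequalities produces
\[
\frac{\nu_g(B_{\sqrt{T}}(o))}{T^{n/2}}\le c\,C_n^{\Lambda+1}\, e^{\lambda\dist(x,o)/\sqrt{T}}\,\frac{\nu_g(B_r(x))}{r^n},
\]
which is \eqref{AR2} once one repackages the multiplicative prefactor $c\,C_n^{\Lambda+1}$ and the exponential factor $e^{\lambda\dist(x,o)/\sqrt{T}}$ into a single expression of the form $C_n^{(\Lambda+1)\dist(x,o)/\sqrt{T}}$ by enlarging $C_n$ to a constant depending only on $n$. This final algebraic bookkeeping---combining a $\Lambda$-dependent constant and a $\dist(x,o)/\sqrt{T}$-linear exponential factor into one exponent---is the only slightly delicate point, though entirely routine. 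The substantive input is already present in the two cited propositions: the local Ahlfors control at a fixed basepoint from Proposition \ref{prop:IntKatoBG}, and the change-of-center estimate from the doubling property at scale $\sqrt{T}$, which is itself a consequence of the Dynkin-type heat kernel bound behind Proposition \ref{prop:UnifPI}.
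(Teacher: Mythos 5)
Your proof is correct and follows the same route the paper intends: the corollary is stated there with only the one-line justification ``using the doubling properties,'' and your chaining of the change-of-center estimate (Proposition \ref{prop:doubling}-i), with constants depending only on $n$ via Proposition \ref{prop:UnifPI}) with the ratio bound of Proposition \ref{prop:IntKatoBG} at the point $x$ is exactly that argument. One caveat on the step you call ``routine'': the prefactor $c\,C_n^{\Lambda+1}e^{\lambda \dist(x,o)/\sqrt{T}}$ cannot be absorbed into $C_n^{(\Lambda+1)\dist(x,o)/\sqrt{T}}$ as literally written, since the latter equals $1$ when $x=o$ while the former does not; what your computation actually yields (and what the paper can actually prove, and uses later) is the bound with exponent $(\Lambda+1)\bigl(1+\dist(x,o)/\sqrt{T}\bigr)$, a harmless imprecision already present in the statement of \eqref{AR2} itself.
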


\begin{rem}
A metric measure space $(X,\dist,\mu)$ for which there exists $C_1,C_2>0$ such that $C_1 \le \mu(B_r(x))/r^n\le C_2$ for any $x \in X$ and $r>0$ is usually called Ahlfors $n$-regular. Thus \eqref{AR1} and  \eqref{AR2} tell that for any $R>0$  $(B_R(o),\dist_g,\nu_g)$ is Ahlfors $n$-regular with constants depending on $n,\Lambda, R, T$ and $\nu_g(B_{\sqrt{T}}(o))/T^{\frac n2}$.\end{rem}

\subsection{Non-collapsed strong Kato limits}
We introduce a last class of limit spaces that we are going to deal with, that is strong Kato limits with a non-collapsing assumption. In this case, the limit measure carries the local Ahlfors regularity described above. This will be important in proving that tangent cones are metric cones and for our stratification result.  

\begin{D}\label{def:NC} A Dirichlet space $(X,\dist,\mu, o,\cE)$ is called a \textbf{strong Kato limit} if it is obtained as a Mosco-Gromov-Hausdorff limit of pointed manifolds $(M_\alpha, g_\alpha, o_\alpha)$ with a strong uniform Kato bound. It is called \textbf{non-collapsed strong Kato limit} if moreover there exists $v>0$ such that for all $\alpha$
\begin{equation}
\label{NC}
\tag{NC}
\vol_{g_\alpha}(B_{\sqrt{T}}(o_\alpha))\geq vT^{\frac n2}, 
\end{equation}
where $T$ is given in Definition \ref{def:bounds}.
\end{D}

\begin{rem}
\label{rem-AR-ncStrongKatolim}
The convergence of the measure ensures that if the manifolds $(M_\alpha,g_\alpha)$ satisfy a strong uniform Kato bound, then inequalities \eqref{AR1} and \eqref{AR2} passes to the limit. In particular, if $(X,\dist,\mu, o,\cE)$ is a non-collapsed strong Kato limit, then there exists constants $C, \lambda$ such that we have for all $0 < r \leq s \leq \sqrt{T}$ and $x \in X$
$$\mu(B_r(x))\leq C r^n, \quad \frac{\mu(B_s(x))}{\mu(B_r(x))}\leq C \left(\frac s r \right)^{n}, $$
and the lower bound
$$\mu(B_r(x))\geq  v e^{-\lambda \frac{\dist(x,o)}{\sqrt{T}}}r^n.$$
As a consequence, for any $R>0$, $(B_R(o),\dist, \mu)$ is Ahlfors $n$-regular, with constants depending on $n,\Lambda, R, T$ and $v$. 
\end{rem}

\begin{rem}
\label{rem-TgConesSKL}
As in the previous cases, tangent cones of strong Kato limits are strong Kato limits. Under the non-collapsing assumption \eqref{NC}, the previous remark ensures a local Ahlfors $n$-regularity. Then as observed in Section 2, we can consider tangent cones as limits of re-scaled manifolds $\{(M_\alpha, \eps_\alpha^{-1}\dist_{g_\alpha}, \eps_\alpha^{-n}\nu_{g_\alpha},x_\alpha \}$, that is to say that we can replace the re-scaling factor $\nu_{g_\alpha}(B_{\sqrt{T}}(x_\alpha))$ of the measures  by $\eps_\alpha^{-n}$. This sequence of re-scaled manifolds also satisfies the non-collapsing condition. 

This also applies to limits $(Z,\dist_Z,\mu_Z,z,\cE_Z)$ of rescalings of non-collapsed strong Kato manifolds centered at varying but convergent points.
\end{rem}

\subsection{$L^p$-Kato condition}The strong Kato condition is implied by a uniform on the $L^p$-Kato constant for $p >1$.
Introduce
\begin{equation}\label{def:KatoLp}
\Kato_{p,T}(M,g):=\left(\sup_{x \in M}T^{p-1} \int_0^T\int_M H(s,x,y){\Ricm}(y)^p \di\nu_{g}(y) ds\right)^{\frac 1p}.
\end{equation}

When $p>1$, and using H\"older inequality, we obtain
$$\Kato_{t}(M,g)\le\left( \frac{t}{T}\right)^{1-\frac 1p} \Kato_{p,T}(M,g).$$

Hence a sequence  $\{(M_\alpha^n, g_\alpha)\}_{\alpha \in A}$  of closed manifolds satisfying 
$$\sup_{\alpha \in A} \Kato_{p,T}(M,g)<\infty$$  satisfies a strong uniform Kato bound.

As noticed in \cite[Proposition 3.15]{C16}, we can estimate the $L^2$ Kato constant in terms of the $\mathbf{Q}-$curvature.

Recall that if $(M,g)$ is Riemannian manifold of dimension $n\ge 4$, its $\mathbf{Q}-$curvature is defined by:
$$\mathbf{Q}_{g}=\frac{1}{2(n-1)}\Delta \mathrm{Scal}_{g}-\frac{2}{(n-2)^{2}}|\Ric|^{2}+c_{n}\mathrm{Scal}_{g}^{2},$$
where $c_{n}=\frac{n^{3}-4n^2+16n-16}{8n(n-1)^{2}(n-2)^{2}}$.

\begin{prop} Let $(M^{n},g)$ be a closed Riemannian manifold of dimension $n\ge 4$ such that :
$$-\kappa^4\le\mathbf{Q}_{g} \ \mathrm{and}\left|\mathrm{Scal}_{g}\right| \le \kappa^{2},$$
where $\kappa>0$.
Then 
$$\Kato_{2,T}(M,g)\le C(n) \kappa\sqrt{T} \left(1+ \kappa\sqrt{T}\right).$$
\end{prop}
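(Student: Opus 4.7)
The plan is to use the hypothesis $\mathbf{Q}_g \ge -\kappa^4$ to extract a pointwise upper bound on $|\Ric|^2$ (hence on $\Ricm^2$), then integrate against the heat kernel, treating the resulting $\Delta\mathrm{Scal}_g$ term by integration by parts.

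First, rewriting the definition of $\mathbf{Q}_g$, the lower bound $\mathbf{Q}_g\ge -\kappa^4$ gives the pointwise inequality
\[
\frac{2}{(n-2)^2}|\Ric|^2 \le \frac{1}{2(n-1)}\Delta\mathrm{Scal}_g + c_n \mathrm{Scal}_g^2 + \kappa^4,
\]
so that, since $\Ricm^2\le |\Ric|^2$, there is a constant $C_n$ depending only on $n$ with
\[
\Ricm^2 \le C_n\bigl(\Delta\mathrm{Scal}_g + \mathrm{Scal}_g^2 + \kappa^4\bigr).
\]

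Next, I would fix $x\in M$ and integrate this pointwise bound against $H(s,x,\cdot)\di\nu_g$ over $M\times[0,T]$. The three terms are handled separately. For the scalar curvature squared and the constant terms, the bounds $|\mathrm{Scal}_g|\le\kappa^2$ and the stochastic completeness $\int_M H(s,x,y)\di\nu_g(y)=1$ (which holds on any closed manifold) give
\[
\int_0^T\!\!\int_M H(s,x,y)\bigl(\mathrm{Scal}_g^2+\kappa^4\bigr)\di\nu_g\di s \le 2\kappa^4 T.
\]
The crucial term is the one involving $\Delta\mathrm{Scal}_g$. By self-adjointness of $\Delta$ and symmetry of the heat kernel in its spatial variables,
\[
\int_M H(s,x,y)\Delta\mathrm{Scal}_g(y)\di\nu_g(y) = \Delta_x\!\int_M H(s,x,y)\mathrm{Scal}_g(y)\di\nu_g(y) = -\frac{\partial}{\partial s}P_s\mathrm{Scal}_g(x),
\]
using the backward heat equation $\partial_s H = -\Delta_x H$. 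Integrating in $s$ telescopes this into $\mathrm{Scal}_g(x)-P_T\mathrm{Scal}_g(x)$, whose absolute value is bounded by $2\kappa^2$.

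Combining these estimates yields
\[
\int_0^T\!\!\int_M H(s,x,y)\Ricm^2(y)\di\nu_g(y)\di s \le C_n'\bigl(\kappa^2 + \kappa^4 T\bigr),
\]
uniformly in $x$. Multiplying by $T$, taking the square root and using $\sqrt{a+b}\le\sqrt{a}+\sqrt{b}$ produces
\[
\Kato_{2,T}(M,g) \le C(n)\bigl(\kappa\sqrt{T}+\kappa^2 T\bigr)= C(n)\kappa\sqrt{T}\bigl(1+\kappa\sqrt{T}\bigr),
\]
as desired. The only delicate point is keeping track of signs in the integration by parts against the heat kernel (and confirming that the boundary term at $s=0$ really evaluates to $\mathrm{Scal}_g(x)$, which follows from the strong continuity of $(P_s)$ on continuous functions on a closed manifold); once these are settled, the rest is routine.
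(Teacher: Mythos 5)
Your argument is correct and matches the natural (and almost certainly the paper's) strategy: isolate $|\Ric|^2$ from the lower bound on $\mathbf{Q}_g$, integrate the pointwise bound against the heat kernel, control the $\mathrm{Scal}_g^2+\kappa^4$ contribution via stochastic completeness and $|\mathrm{Scal}_g|\le\kappa^2$, and convert $P_s(\Delta\mathrm{Scal}_g)=\Delta P_s\mathrm{Scal}_g=-\partial_s P_s\mathrm{Scal}_g$ so the $s$-integral telescopes to $\mathrm{Scal}_g-P_T\mathrm{Scal}_g$, bounded by $2\kappa^2$. The bookkeeping of signs, the strong continuity at $s=0$, and the final passage from $T(\kappa^2+\kappa^4T)$ to $\kappa\sqrt{T}(1+\kappa\sqrt{T})$ are all handled correctly.
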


\section{Analytic properties of manifolds with a Dynkin bound}

In this section we develop some analytic tools in the setting of manifolds with Dynkin bound on $\Kato_T(M^n,g)$, that is for which there exists $T>0$ such that 
\begin{equation}
\label{DB}
\tag{D}
\Kato_T(M^n,g) \leq \frac{1}{16n}.
\end{equation}

\subsection{Good cut-off functions} The existence of cut-off functions with suitably bounded gradient and Laplacian is a key technical tool in the theory of Ricci limit spaces (\cite{CheegerColdingI}) and $\RCD^*(K,N)$ spaces (\cite{MondinoNaber}). Our next proposition tells that such functions also exist in the context of manifolds with a Dynkin bound; it also provides an alternative proof to the one given by Cheeger and Colding on manifolds with  Ricci curvature bounded from below.

\begin{prop}\label{prop:coupure}
Let $(M^n,g)$ be a closed Riemannian manifold satisfying \eqref{DB} for some $T>0$. Then for any ball $B_{r+s}(x) \subset M$ there exists a function $\chi \in C^\infty(M)$ such that $0 \le \chi \le 1$ and
\begin{enumerate}
\item $\chi = 1$ on $B_r(x)$,
\item $\chi = 0$ on $M\backslash B_{r+s}(x)$,
\item there exists a constant $C=C(n)>0$ such that
\[
|\nabla \chi| \le \frac{C(n)}{\min(s,\sqrt{T})} \quad \text{and} \quad |\Delta \chi| \le \frac{C(n)}{\min(s^2,T)}\, \cdot
\]
\end{enumerate}
\end{prop}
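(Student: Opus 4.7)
The strategy is to smooth an indicator function via the heat semigroup and then compose with a scalar cut-off. Set $\tau := c(n) \min(s^2, T)$ for a small constant $c(n) > 0$ to be fixed, and let $u := P_\tau \mathbf{1}_{B_{r+s/2}(x)}$. By Proposition~\ref{prop:UnifPI}, the Dirichlet space $(M,\dist_g,\nu_g,\Ch_g)$ is $\PI_{\upkappa,\upgamma}(\sqrt{T})$ with constants depending only on $n$, so Proposition~\ref{est:Gaussian14} yields two-sided Gaussian heat kernel bounds on $(0,T]$ (in particular $u > 0$ everywhere) and the space is stochastically complete. For $y \notin B_{r+s}(x)$, every $z \in B_{r+s/2}(x)$ satisfies $\dist_g(y,z) \ge s/2$, so integrating the upper Gaussian bound over dyadic annuli around $y$ and using doubling gives $u(y) \le 1/4$ once $c(n)$ is small enough. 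Symmetrically, for $y \in B_r(x)$ one has $\dist_g(y, M\setminus B_{r+s/2}(x)) \ge s/2$, hence writing $1 - u(y) = \int_{M \setminus B_{r+s/2}(x)} H(\tau, y, z) \,\di \nu_g(z)$ and running the same Gaussian tail argument yields $u(y) \ge 3/4$ after possibly shrinking $c(n)$.

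Fix a smooth $\psi : \R \to [0,1]$ with $\psi \equiv 0$ on $(-\infty, 1/4]$, $\psi \equiv 1$ on $[3/4, +\infty)$ and $\|\psi'\|_\infty + \|\psi''\|_\infty \le C$, and set $\chi := \psi \circ u$. Properties (1) and (2) hold by construction. The chain rule \eqref{eq:chainrule} gives
\[
\nabla \chi = \psi'(u) \nabla u, \qquad \Delta \chi = \psi'(u) \Delta u - \psi''(u) |\nabla u|^2,
\]
so property (3) reduces to the pointwise bounds $|\nabla u|^2 \le C(n)/\tau$ and $|\Delta u| \le C(n)/\tau$ on the set $\{u \in [1/4, 3/4]\}$. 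The Laplacian bound follows from $\Delta u = -\partial_\tau P_\tau \mathbf{1}_{B_{r+s/2}(x)}$ together with the analyticity-type estimate $\|\partial_t P_t\|_{L^\infty \to L^\infty} \le C(n)/t$ on $(0,T]$, which is classical on PI Dirichlet spaces with Gaussian bounds via Davies--Gaffney and Grigor'yan--Saloff-Coste theory.

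For the gradient bound, since $u > 0$ we may directly apply the Li--Yau inequality \eqref{eq:LYC} to $u$ at time $\tau$ and rearrange:
\[
|\nabla u|^2 \le e^{8\sqrt{n\Kato_\tau}} u\, \partial_\tau u + \tfrac{n}{2\tau} e^{16\sqrt{n\Kato_\tau}} u^2.
\]
When $\partial_\tau u \le 0$ the first term is nonpositive and can be dropped, giving $|\nabla u|^2 \le C(n) u^2/\tau$; when $\partial_\tau u > 0$ we estimate $u\,\partial_\tau u \le |\partial_\tau u| \le C(n)/\tau$ by the Laplacian bound just derived. In either case the Dynkin bound $\Kato_\tau \le 1/(16n)$ absorbs the exponentials into absolute constants and, combined with $u \le 1$, yields $|\nabla u|^2 \le C(n)/\tau$. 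Substituting into the chain-rule formulas above gives $|\nabla \chi| \le C(n)/\sqrt{\tau} = C(n)/\min(s,\sqrt T)$ and $|\Delta \chi| \le C(n)/\tau = C(n)/\min(s^2, T)$, as required. The main subtlety is to ensure that all constants depend only on $n$; this boils down to the $L^\infty$-analyticity estimate for $(P_t)$, which is the only ingredient beyond the tools explicitly gathered in Section~2.
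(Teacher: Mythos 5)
Your proposal is correct in outline but takes a genuinely different route from the paper's proof, and one step is glossed over in a way that matters.

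The paper mollifies the Lipschitz function $\rho_0=\dist_g(x,\cdot)$ by the heat flow, setting $\rho_t=P_t\rho_0$: the gradient bound $|\nabla\rho_t|\le 2$ is immediate from the domination $|\nabla P_t u|\le e^{-t(\Delta-\Ricm)}|\nabla u|$ and the contraction $\|e^{-t(\Delta-\Ricm)}\|_{L^\infty\to L^\infty}\le 2$ under \eqref{DB}, while the Laplacian bound $|\Delta\rho_t|\le C(n)/\sqrt{t}$ follows from Lemma~\ref{lem:estintnableH} applied to $\Delta\rho_t=\int\langle\nabla_zH(t,\cdot,z),\nabla\rho_0(z)\rangle\di\nu_g(z)$; the error $|\rho_t-\rho_0|\le C(n)\sqrt{t}$ then handles the support property directly and one composes with a smooth cut-off of $(\rho_t-r)/s$. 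Your proof instead mollifies an indicator function, uses Gaussian tail estimates for the support property, and uses Li--Yau together with an $L^\infty\to L^\infty$ analyticity bound for the derivative estimates. Both structures are sound, but the paper's is entirely self-contained in the tools of Sections~2--4, whereas yours introduces an additional ingredient.

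That ingredient is the estimate $\|\partial_tP_t\|_{L^\infty\to L^\infty}\le C(n)/t$, which you invoke citing ``Davies--Gaffney and Grigor'yan--Saloff-Coste theory.'' This is not quite the right attribution: Davies--Gaffney is an $L^2$ off-diagonal estimate and Grigor'yan--Saloff-Coste concerns the equivalence of PI with parabolic Harnack and Gaussian bounds; neither directly produces time-derivative bounds on the heat kernel. The result you need is true on PI spaces with constants depending only on the doubling and Poincar\'e data, but the argument is the complex-time extension of the Gaussian upper bound followed by the Cauchy integral formula (Davies, \emph{Heat Kernels and Spectral Theory}, \S 3.4; or Coulhon--Sikora's Phragm\'en--Lindel\"of approach). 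As written, you have a citation gap rather than a wrong step. Note, however, that the estimate you need has a short internal proof from the same Li--Yau input the paper already exploits in Lemma~\ref{lem:estintnableH}: applying \eqref{eq:LYC} to $u(\cdot,z)=H(\cdot,y,z)$ gives $\partial_tH(t,y,z)\ge -\tfrac{ne^2}{2t}H(t,y,z)$ under \eqref{DB}, and since $\int_M\partial_tH(t,y,z)\,\di\nu_g(z)=0$ by stochastic completeness, one has
\[
\int_M|\partial_tH(t,y,z)|\,\di\nu_g(z)=2\int_M(\partial_tH)_-(t,y,z)\,\di\nu_g(z)\le\frac{ne^2}{t},
\]
whence $|\Delta u(y)|=|\partial_\tau u(y)|\le\int_M|\partial_\tau H(\tau,y,z)|\mathbf{1}_{B_{r+s/2}(x)}(z)\,\di\nu_g(z)\le ne^2/\tau$. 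Inserting this derivation (or the precise Davies/Coulhon--Sikora reference) would make your proof complete; as it stands, the analyticity claim is the only substantive gap, and the rest of the argument, including the case split in the Li--Yau step and the Gaussian tail estimate for the support property, is correct.
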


We start with the following useful consequence of a Kato bound:
\begin{lemma} If $(M^n,g)$ be a closed Riemannian manifold satisfying \eqref{DB} for some $T>0$. Assume that $u\colon M\rightarrow \R$ is a $\Lambda$-Lipschitz function then for any $t\in (0,T]$ and $x\in M$:
$$\left|\nabla e^{-t\Delta}u\right|(x)\le 2\Lambda.$$
\end{lemma}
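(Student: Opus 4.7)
The plan is to control $F(t,x):=|\nabla P_t u|^2(x)$ via a Bochner-plus-Duhamel argument, using the Dynkin bound $\Kato_T(M^n,g)\le (16n)^{-1}$ to absorb the Ricci error term into a small fraction of $\sup F$. I would first establish the estimate for smooth $u$ and recover the Lipschitz case at the end by approximation.

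For smooth $u$, set $w(t,\cdot):=P_tu$, so that $\partial_tw=-\Delta w$. The Bochner identity combined with $-\Ric\le\Ricm\,g$ yields
$$(\partial_t+\Delta)F=-2|\mathrm{Hess}\,w|^2-2\Ric(\nabla w,\nabla w)\le 2\,\Ricm\,F.$$
Setting $G(s,x):=P_{t-s}(F(s,\cdot))(x)$ for $s\in[0,t]$ and using $\partial_sP_{t-s}=\Delta P_{t-s}$ gives $\partial_sG=P_{t-s}((\partial_s+\Delta)F)\le 2P_{t-s}(\Ricm\,F(s,\cdot))$; integrating from $0$ to $t$ produces the Duhamel-type inequality
$$F(t,x)\le P_t(|\nabla u|^2)(x)+2\int_0^t\int_M H(t-s,x,y)\,\Ricm(y)\,F(s,y)\,d\nu_g(y)\,ds.$$
Since $|\nabla u|^2\le\Lambda^2$ and $P_t1=1$ on the closed manifold, the first term is at most $\Lambda^2$. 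Writing $N(t):=\sup_{s\in[0,t],\,y\in M}F(s,y)$ and changing variables $r=t-s$, the double integral is bounded by $N(t)\,\Kato_t(M^n,g)\le N(t)/(16n)$ via \eqref{DB}. Taking the supremum in $x$ and in $s\in[0,t]$ yields $N(t)\le\Lambda^2+N(t)/(8n)$, hence $N(t)\le 8n\Lambda^2/(8n-1)\le 2\Lambda^2$ for every $n\ge 1$, and extracting the square root gives $|\nabla P_tu|\le\sqrt{2}\,\Lambda<2\Lambda$.

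For merely Lipschitz $u$, I would approximate it uniformly by smooth $u_k$ with $\|u_k\|_{\Lip}\to\Lambda$, for instance via local Friedrichs mollifiers glued by a partition of unity; the parabolic smoothing of the heat semigroup guarantees $\nabla P_tu_k\to\nabla P_tu$ uniformly on $M$ for every fixed $t>0$, so the bound passes to the pointwise limit. The main technical subtlety is the Duhamel step, which requires $F$ to be smooth in $(s,x)$ and the ability to commute $P_{t-s}$ with $\Delta$ and $\partial_s$; once $u$ is smooth everything is automatic, which is precisely why I prefer the approximation route rather than a direct treatment of the non-smooth case.
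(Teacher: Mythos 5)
Your argument is correct and reaches a slightly sharper constant than the paper's ($\sqrt{8n/(8n-1)}\,\Lambda$ rather than $2\Lambda$). It is, however, a genuinely different route. The paper's proof is a two-line composition of two cited facts: (i) the Kato--Simon domination $|\nabla e^{-t\Delta}u| = |e^{-t\vec\Delta}\nabla u| \le e^{-t(\Delta-\Ricm)}|\nabla u|$ at the level of $1$-forms, and (ii) the $L^\infty\to L^\infty$ operator norm bound $\|e^{-t(\Delta-\Ricm)}\|\le 16n/(16n-1)\le 2$, which follows from the Dynkin bound \eqref{UD} via a Neumann series. You instead work directly with the scalar $F=|\nabla P_tu|^2$, derive the parabolic inequality $(\partial_t+\Delta)F\le 2\,\Ricm\,F$ from Bochner, convert it to a Duhamel inequality, and then absorb the $\Ricm$-term into $\sup F$ using \eqref{UD}. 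In effect you are inlining the proofs of both cited ingredients in a single self-contained bootstrap, which is more elementary and avoids the vector-bundle semigroup entirely; the cost is a slightly longer argument and the need to be explicit about the approximation of Lipschitz $u$ by smooth data (which the paper's operator-theoretic route sidesteps). One small point to make airtight in the absorption step: the Duhamel inequality gives $F(t,x)\le \Lambda^2 + 2N(t)\,\Kato_t$ with $t$ fixed; to conclude $N(t)\le\Lambda^2+N(t)/(8n)$ you must apply it for each $s\in(0,t]$ and then use the monotonicity $\Kato_s\le\Kato_t\le 1/(16n)$ before taking the supremum over $s$. This is immediate, but worth saying. The a priori finiteness of $N(t)$, which the bootstrap needs, is automatic here because $u$ is smooth and $M$ is compact.
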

\begin{proof} It is well known (see \cite[Remark 1.3.2]{CarronRose}, or \cite[Proof of Theorem 1, step i)]{Voigt}, the Kato bound \eqref{DB} implies that for any $t\in (0,T]$
$$\left\|e^{-t(\Delta-\Ricm)}\right\|_{L^\infty\to L^\infty} \le \frac{16n}{16n-1}\le 2.$$
Then the result follows by the domination properties for the Hodge-Laplacian $\vec \Delta=d^*d+dd^*=\nabla^*\nabla+\Ric$ on $1$ forms:
$$\left|\nabla e^{-t\Delta}u\right|=\left|e^{-t\vec\Delta}\nabla u\right|\le e^{-t(\Delta-\Ricm)}|\nabla u|.$$
\end{proof}
The Li-Yau inequality have the following consequence which will be very useful.
\begin{lemma}\label{lem:estintnableH}
 If $(M^n,g)$ be a closed Riemannian manifold satisfying \eqref{DB} for some $T>0$. The heat kernel of $(M^n,g)$ satisfies for any $x\in M$ and $t\in (0,T]:$
  $$\int_M | \nabla_z H(t,x,z)\di\nu_g(z)\le  \sqrt{\frac{e^4n}{2t}}$$ and
  $$ \int_M  \frac{\left|\nabla_z H(t,x,z)\right|^2}{H(t,y,z)}\di\nu_g(z) \le \frac{e^4n}{2t}.$$
\end{lemma}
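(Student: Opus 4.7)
The plan is to derive both bounds directly from the Li--Yau inequality \eqref{eq:LYC} applied to the heat kernel, exploiting that the Dynkin hypothesis \eqref{DB} gives $8\sqrt{n\,\Kato_t(M,g)} \le 8\sqrt{n/(16n)} = 2$ for every $t \in (0,T]$, so the Li--Yau prefactors are controlled by $e^{\pm 2}$.

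First I would fix $x \in M$ and apply \eqref{eq:LYC} to the positive solution $u(t,z) := H(t,x,z)$. After multiplying through by $u$ and rearranging, this gives the pointwise estimate
\[
\frac{|\nabla_z H(t,x,z)|^{2}}{H(t,x,z)} \;\le\; e^{2}\,\partial_{t} H(t,x,z) \;+\; \frac{e^{4}n}{2t}\,H(t,x,z)
\]
on $(0,T] \times M$. Integrating in $z$ against $\di\nu_g$ and using the stochastic completeness of a closed Riemannian manifold, namely $\int_M H(t,x,z)\,\di\nu_g(z) = 1$ for all $t>0$, which in particular forces $\int_M \partial_t H(t,x,z)\,\di\nu_g(z) = 0$, the $\partial_t H$ term vanishes and we obtain the second asserted bound
\[
\int_M \frac{|\nabla_z H(t,x,z)|^{2}}{H(t,x,z)}\,\di\nu_g(z) \;\le\; \frac{e^{4}n}{2t}\, \cdot
\]
(I read the $H(t,y,z)$ in the denominator of the statement as a typo for $H(t,x,z)$, which is what makes the inequality go through.)

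The first bound then follows by Cauchy--Schwarz: writing $|\nabla_z H| = (|\nabla_z H|/\sqrt{H})\cdot \sqrt{H}$ and using stochastic completeness once more,
\[
\int_M |\nabla_z H(t,x,z)|\,\di\nu_g(z) \;\le\; \left(\int_M \frac{|\nabla_z H|^{2}}{H}\,\di\nu_g\right)^{\!1/2}\!\left(\int_M H\,\di\nu_g\right)^{\!1/2} \;\le\; \sqrt{\frac{e^{4}n}{2t}}\, \cdot
\]

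There is essentially no obstacle here once Li--Yau is in hand; the only subtlety worth noting is the legitimacy of interchanging the time derivative with the integral over $M$, which is justified by the smoothness of $H$ on $(0,T]\times M\times M$ and the compactness of $M$, so no tail control is needed. The two ingredients doing all the work are the Li--Yau estimate \eqref{eq:LYC} and the normalization $\int_M H\,\di\nu_g = 1$; the role of the Dynkin bound \eqref{DB} is only to turn the Li--Yau constants into the universal numbers $e^{2}$ and $e^{4}$.
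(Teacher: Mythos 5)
Your proof is correct and follows essentially the same route as the paper's: integrate the Li--Yau inequality applied to $u=H(t,x,\cdot)$, kill the $\partial_t H$ term via stochastic completeness, and deduce the $L^1$ bound by Cauchy--Schwarz. You also correctly identified that the $H(t,y,z)$ in the denominator of the statement is a typographical slip for $H(t,x,z)$.
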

\begin{proof}
 Using Hölder's inequality and the stochastic completeness of $H$, we get 
\begin{align*}\int_M | \nabla_z H(t,x,z)\di\nu_g(z)&\le  \left( \int_M  \frac{\left|\nabla_z H(t,y,z)\right|^2}{H(t,y,z)}\di \nu_g(z) \right)^{1/2} \left(\int_M H(t,y,z) \di  \nu_g(z)\right)^{\frac 12}\\
&= \left( \int_M  \frac{\left|\nabla_z H(t,y,z)\right|^2}{H(t,y,z)}\di \nu_g(z) \right)^{1/2}.\end{align*} Hence the first estimate follows from the second one's.
By the Li-Yau estimate \eqref{eq:LYC},
\begin{equation}\label{eq:tointegrate}
\frac{\left|\nabla_z H(t,y,z)\right|^2}{H(t,y,z)}\le \frac{e^4n}{2t} H(t,y,z)+e^2\frac{\partial H(t,y,z)}{\partial t} \, \cdot
\end{equation}
Since $$\int_M \frac{\partial H(t,y,z)}{\partial t} \di \nu_g(z)= \frac{\partial}{\partial t}\underbrace{\int_M H(t,y,z) \di \nu_g(z)}_{=1} =0,$$ the second estimate follows.

\end{proof}
\begin{proof}[Proof of Proposition \ref{prop:coupure}]
Take $x \in M$ and $r,s>0$. Let $\rho_0$ be the distance function to $x$ (i.e.~$\rho_0(\cdot)=\dist_g(x,\cdot)$) and set
\[
\rho_t := e^{-t\Delta} \rho_0
\]
for any $t \in (0,T]$. 
Note that from the previous lemma, $\rho_t$ is $2$-Lipschitz.
Then for any $y \in M$,
\begin{align*}
\left|\frac{\partial \rho_t}{\partial t}(y)\right| &=\left|\Delta \rho_t(y)\right| \\
&=\left|\int_M \langle \nabla_z H(t,y,z),\nabla \rho_0(z)\rangle \di z\right|\\
& \le \int_M | \nabla_z H(t,y,z)|\underbrace{|\nabla \rho_0(z)|}_{=1 \, \text{$\nu_g$-a.e.}} \di z \\& \le  e^2 \sqrt{\frac{n}{2t}}
\end{align*}  
Where we used Lemma \ref{lem:estintnableH}. This estimate implies
\[
\left| \rho_t(y)-\rho_0(y)\right|\le e^2\sqrt{2n}\,\sqrt{t}.
\]
Therefore, if $y \in B_r(x)$ then
$$\rho_t(y)\le r+e^2\sqrt{2n}\,\sqrt{t}$$ while if $y \in $ $M\setminus B_{r+s}(x)$ then
$$\rho_t(y)\ge r+s-e^2\sqrt{2n}\,\sqrt{t}.$$
Hence defining $t_0=\left( \frac{s}{4e^2\sqrt{2n}}\right)^2$, we choose $t=t_o$
if $t_o\le T$ and  $t=T$ if $t_o>T$. 
So that \[
\rho_t(y) \le r+\frac{s}{4} \quad \text{if $y \in B_r(x)$}
\]
and
\[\rho_t(y) \ge r+\frac{3s}{4} \quad \text{if $y \in M\backslash B_{r+s}(x)$};
\]

Let $u\colon\R_+\rightarrow \R_+$ be a smooth function such that
\[
u = \begin{cases}
1 & \text{on $[0,1/4]$},\\
0 & \text{on $[3/4,+\infty)$}.
\end{cases}
\]
Set $$\chi(y):=u\left(\frac{\rho_t(y)-r}{s}\right)$$
for any $y \in M$.
Then 
\[
\chi = \begin{cases}
1 & \text{on $B_r(x)$},\\
0 & \text{on $M\backslash B_{r+s}(x)$}.
\end{cases}
\]
Since $$d\chi(y)=\frac{1}{r}u'\left(\frac{\rho_t(y)-r}{s}\right)d\rho_t(y)$$ and
$$\Delta \chi(y)=\frac{1}{s}u'\left(\frac{\rho_t(y)-r}{s}\right)\Delta\rho_t(y)-\frac{1}{s^2}u''\left(\frac{\rho_t(y)-r}{s}\right) |d\rho_t|^2(y)$$
for any $y \in M$, setting $L:=\sup_{\R} |u'|+|u''|$), we get
$$\| d\chi\|_\infty\le  \frac{2L}{s}$$
and
$$\| \Delta d\chi\|_\infty\le L\left( \frac{e^2\sqrt{n}}{s\sqrt{2t}}+ \frac{4}{s^2}\right).$$
\end{proof}

\begin{rem}[Complete Kato manifolds] 
Note that the above proof makes use of an integrated version of the Li-Yau inequality only. In this regard, it would be interesting to study whether the assumption \eqref{DB}  on a complete Riemannian manifold implies
$$\left(\int_M  \frac{\left|\nabla_z H(t,y,z)\right|^2}{H(t,y,z)}\di\nu_g(z)\right)^{\frac 12}\le \frac{C(n)}{\sqrt{t}} \,\cdot$$
This would ensure the existence of good cut-off functions which would in turn provide the Li-Yau inequality, and then make possible the study of limits of complete Riemannian manifolds satisfying the uniform bound \eqref{UD}.
\end{rem}

\subsection{Hessian estimates} Good cut-off functions are particularly relevant to deduce the following powerful Hessian  estimates, that we will use in  Section 8. 

\begin{prop}
\label{prop:HessEst}
Let $(M^n,g)$ be a closed Riemannian manifold satisfying \eqref{DB} for some $T>0$. Then there exists a constant $C=C(n)>0$ such that for any $u \in \cC^\infty(M)$ and any ball $B_r(x) \subset M$,
\begin{equation}\label{eq:hessest}\int_{B_{\frac r2}(x)} |\nabla d u|^2\di \nu_g\le C(n) \int_{B_r(x)}\left[(\Delta u)^2+\frac{1}{\min(r^2,T)} |du|^2\right]\di \nu_g.
\end{equation}
If $u$ is additionally harmonic, then
\begin{equation}\label{eq:hessest2}
\int_{B_{\frac r2}(x)} |\nabla d u|^2\di \nu_g\le \frac{C(n)}{\min(r^2,T)} \int_{B_r(x)}\left| |du|^2-\fint_{B_r(x)}|du|^2\di \nu_g\right|\di \nu_g.
\end{equation}
\end{prop}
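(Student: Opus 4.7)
My plan is to combine the Bochner formula with the good cut-off of Proposition \ref{prop:coupure} and a form bound on $\Ricm$ coming from the Kato assumption \eqref{DB}, absorbing the Hessian term that appears on the right-hand side using the smallness $\Kato_T\le 1/(16n)$. Bochner (in the $\Delta=-\Delta_{LB}$ convention) together with the pointwise bound $-\Ric\le \Ricm\, g$ yields
\begin{equation*}
|\nabla d u|^2 \le -\tfrac{1}{2}\Delta |d u|^2 + \langle \nabla u, \nabla \Delta u\rangle + \Ricm\,|d u|^2.
\end{equation*}
I would multiply by $\chi^2$, with $\chi$ as in Proposition \ref{prop:coupure} on $B_r(x)$ (so $\chi\equiv 1$ on $B_{r/2}$ and $\supp\chi\subset B_r$), and integrate. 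Two integrations by parts, together with the cut-off bounds $|\nabla\chi|\le C/\min(r,\sqrt T)$, $|\Delta\chi|\le C/\min(r^2,T)$ and Cauchy--Schwarz applied to the cross term coming from $\langle\nabla u,\nabla\Delta u\rangle$, transform the inequality into
\begin{equation*}
\int \chi^2 |\nabla d u|^2\, \di\nu_g \le C\int_{B_r}\Bigl[(\Delta u)^2 + \tfrac{|d u|^2}{\min(r^2,T)}\Bigr]\di\nu_g + \int \chi^2 \Ricm |d u|^2\di\nu_g.
\end{equation*}

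To bound the Ricci term, I would invoke the form inequality
\begin{equation*}
\int_M \Ricm\,\phi^2\, \di\nu_g \le C\Kato_T\Bigl(\int_M |\nabla \phi|^2\di\nu_g + \tfrac{1}{T}\int_M \phi^2\di\nu_g\Bigr) \qquad \forall\phi\in \Lip(M),
\end{equation*}
a classical consequence of the $L^\infty\to L^\infty$ domination $\|e^{-t(\Delta-\Ricm)}\|\le 2$ already used in the proof of Proposition \ref{prop:coupure}. Applied to $\phi=\chi|d u|$, and combined with the refined Kato inequality $|\nabla|d u||^2\le|\nabla d u|^2$, this yields
\begin{equation*}
\int \chi^2 \Ricm |d u|^2 \le 2C\Kato_T \int \chi^2 |\nabla d u|^2 + \tfrac{C'}{\min(r^2,T)}\int_{B_r}|d u|^2.
\end{equation*}
Since $2C\Kato_T<1$, the Hessian term can be absorbed into the left-hand side, proving \eqref{eq:hessest}.

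For the harmonic estimate \eqref{eq:hessest2} I would use $\Delta u=0$ to drop the middle term of Bochner. Because $\int_M\Delta\chi^2\,\di\nu_g=0$, one can replace $|d u|^2$ by $|d u|^2-c$ for any constant $c$ in the integration by parts of $-\tfrac12\int\chi^2\Delta|d u|^2$, and the choice $c=\fint_{B_r}|d u|^2$ gives exactly the oscillation integral on the right-hand side of \eqref{eq:hessest2}. The hard part will be to replace the $\int|d u|^2$ bulk terms produced by the Ricci contribution by the same oscillation $\int\bigl||d u|^2-c\bigr|$: I would split $|d u|^2 = c + (|d u|^2-c)$ inside $\int \chi^2 \Ricm|d u|^2$ and apply the form bound both to $\phi=\chi|d u|$ and to $\phi=\chi$, arranging the linear combination so that the $c\int\chi^2\Ricm$ contributions cancel. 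The main obstacle is precisely to carry out this cancellation so that only the oscillation $\int \bigl||d u|^2-c\bigr|$ survives on the right, exploiting the harmonicity-based identity $|\nabla|d u|^2|\le 2|d u|\,|\nabla d u|$ which is what allows the oscillation structure to propagate through the Kato form bound without reintroducing a bulk $\int|d u|^2$ term.
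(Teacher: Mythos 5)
Your proof of the first estimate \eqref{eq:hessest} is correct and essentially identical to the paper's: Bochner, multiply by $\chi^2$ with $\chi$ from Proposition \ref{prop:coupure}, integrate by parts, then bound $\int \chi^2 \Ricm |du|^2$ by a form inequality for $\Ricm$ (the paper uses the spectral lower bound of $\Delta-2\Ricm$ coming from $\|e^{-T(\Delta-2\Ricm)}\|_{L^1\to L^1}\le 2$, you invoke a Khasminskii-type bound with a $\Kato_T$ prefactor; both give a coefficient strictly less than $1$ in front of $\int\chi^2|\nabla du|^2$ after using $|\nabla|du||\le|\nabla du|$, so the absorption goes through). Minor remark: the paper avoids the extra factor of $2$ you incur from $|\nabla(\chi|du|)|^2\le 2\chi^2|\nabla|du||^2+2|du|^2|\nabla\chi|^2$ by instead writing $\int|\nabla(\chi|du|)|^2=\int\chi^2|\nabla|du||^2+\int(\Delta\chi)\chi|du|^2$, but since your coefficient $2C\Kato_T$ is still $<1$ under the Dynkin bound, your variant is also fine.

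For the harmonic estimate \eqref{eq:hessest2} there is a genuine gap, and you have correctly located where it lies but proposed a mechanism that cannot work. After replacing $\Delta|du|^2$ by $\Delta(|du|^2-c)$ on the left (legitimate since $\Delta c=0$) and applying the $\Ricm$ form bound to $\phi=\chi|du|$, the remaining terms on the right are $\tfrac12\int(\Delta\chi)\chi|du|^2$ and $\tfrac{\log 2}{2T}\int\chi^2|du|^2$. Writing $|du|^2=(|du|^2-c)+c$ in these, the $c$-parts give the \emph{positive} bulk contributions $\tfrac{c}{2}\int|\nabla\chi|^2+\tfrac{c\log 2}{2T}\int\chi^2\ \sim\ \tfrac{c\,\nu_g(B_r)}{\min(r^2,T)}=\tfrac{1}{\min(r^2,T)}\int_{B_r}|du|^2$, which is a bulk term and not the oscillation. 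Your idea of "applying the form bound both to $\phi=\chi|du|$ and $\phi=\chi$ and arranging a cancellation" does not close this: both applications bound their respective quantities \emph{from above} in the same direction, and one cannot subtract two upper bounds to obtain a new valid upper bound, so there is no linear combination in which the $c\int\chi^2\Ricm$ contribution is made to disappear. Applying the form bound to $\phi=\chi\|du|^2-c|^{1/2}$ is also unavailable, since $|\nabla\phi|$ contains a factor $\chi|du||\nabla du|/||du|^2-c|^{1/2}$ that is not controlled by $\int\chi^2|\nabla du|^2$. What one does obtain, if one uses the $\Kato_T$-weighted form inequality $\int\Ricm v^2\le 2\Kato_T\int|\nabla v|^2+\tfrac{C\Kato_T}{T}\int v^2$ instead of the one with coefficient $\tfrac12$, is that all the residual bulk terms carry a $\Kato_T$ prefactor, yielding
\begin{equation*}
\int_{B_{r/2}(x)}|\nabla du|^2\,\di\nu_g\ \le\ \frac{C(n)}{\min(r^2,T)}\int_{B_r(x)}\left||du|^2-\fint_{B_r(x)}|du|^2\,\di\nu_g\right|\di\nu_g\ +\ \frac{C(n)\,\Kato_T(M^n,g)}{\min(r^2,T)}\int_{B_r(x)}|du|^2\,\di\nu_g.
\end{equation*}
This is strictly weaker than \eqref{eq:hessest2} as stated (which has no bulk error) but it is what the argument actually gives, and it is all that the later application to splitting maps (Theorem \ref{thm-splitting-maps}, where $\Kato_T(M_\alpha,\tilde g_\alpha)\to 0$) requires. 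You should therefore either prove the weaker statement above rather than \eqref{eq:hessest2}, or supply the missing argument that removes the $\Kato_T\int_{B_r}|du|^2$ remainder; the cancellation you sketched does not do it.
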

 
\begin{proof}
\textbf{Step 1.} Using again \cite[Remark 1.3.2]{CarronRose}, or \cite[Proof of Theorem 1, step i)]{Voigt}, we obtain the following estimate:
\begin{equation}\label{eq:estimate1-1}
\left\|e^{-T(\Delta-2\Ricm)}\right\|_{L^1\to L^1} \le \frac{8n}{8n-1}\le 2.
\end{equation}
\hfill

\textbf{Step 2.} All the integrals in these steps are taken with respect to $\nu_g$, hence we skip the notation $\di \nu_g$ for the sake of brevity. Take $u \in C_c(B_r(x))$. The estimate \eqref{eq:estimate1-1} implies that the bottom of the spectrum of $\Delta-2\Ricm$ is bounded from below by $-\frac{\log 2}{T}$. Thus for any $v \in C^\infty(M)$,
\begin{equation}\label{eq:bottomspectrum}
\int_M \Ricm v^2  \le \frac 12 \int_M \left[ |dv|^2-\frac{\log(2)}{T}v^2 \right].
\end{equation}
Let $\chi$ be a cut-off function as built in Proposition \ref{coupure} such that $\chi = 1$ on $B_{\frac r2}(x)$ and $\chi=0$ on $M \backslash B_r(x)$. Apply Bochner's formula to $u$:
$$|\nabla d u|^2+\frac{1}{2}\Delta |du|^2+\Ric(du,du)=\langle d\Delta u,du\rangle,$$
multiply by $\chi^2$ and integrate over $M$. This gives
\begin{equation}\label{eq:Bochnerchi}
\int_M \chi^2|\nabla d u|^2+\frac12 \int_M \chi^2\Delta |du|^2\le \int_M \Ricm |du|^2\chi^2+\int_M \chi^2\langle d\Delta u,du\rangle .
\end{equation}
We control the second term in the right-hand side of \eqref{eq:Bochnerchi} as follows, using successively integration by parts, the Cauchy-Schwarz inequality, and the elementary fact $2ab \le a^2 + b^2$:
\begin{align}\label{eq:inter1}\int_M \langle d\Delta u,\chi^2du\rangle &=\int_M \chi^2(\Delta u)^2 -\int_M \Delta u \langle d\chi^2,du\rangle \nonumber \\
&\le \int_M \chi^2(\Delta u)^2 +\int_M 2\, \chi |\Delta u |\, | d\chi| |du| \nonumber\\
&\le  2\int_M \chi^2(\Delta u)^2 +\int_M | d\chi|^2 |du|^2 .
\end{align}
Now we control the first term in the right-hand side of \eqref{eq:Bochnerchi} as follows. Thanks to \eqref{eq:bottomspectrum} we have
\[ \int_M \Ricm |du|^2\chi^2\le \frac12 \left(\int_M |\nabla (\chi |du|)|^2 +\frac{\log 2}{T} \int_M  (\chi |du|)^2\right).\]
Since
\begin{align*}
\int_M |\nabla (\chi |du|)|^2 & = \int_M \langle \chi \nabla |du|, \nabla (\chi |du|)\rangle  +  \langle |du|\nabla \chi, \nabla (\chi |du|)\rangle\\
 & = \int_M  \chi^2 | \nabla |du||^2 + \chi |du| \langle \nabla \chi, \nabla |du|\rangle  +  |du|\langle \nabla \chi, \nabla (\chi |du|)\rangle\\
 & = \int_M  \chi^2 | \nabla |du||^2 + \int_M \langle \nabla \chi, \underbrace{\chi |du|  \nabla |du| + |du| \nabla (\chi |du|)}_{=\nabla (\chi |du|^2)} \rangle
\end{align*}
and $| \nabla |du||^2 \le | \nabla du|^2$, we get
\begin{equation}\label{eq:inter2}
\int_M \Ricm |du|^2\chi^2 \le  \frac12 \left(\int_M \chi^2 |\nabla du|^2 +\int_M (\Delta \chi) \chi |du|^2 + \frac{\log 2}{T} \int_M  (\chi |du|)^2\right).
\end{equation}
Combining \eqref{eq:Bochnerchi} with \eqref{eq:inter1} and \eqref{eq:inter2}, we get
$$\frac12 \int_M \chi^2|\nabla d u|^2\le  2 \int_M \chi^2 (\Delta u)^2+\int_M  \left[\, |d\chi|^2+\frac{\log 2}{2T}\chi^2+\frac12 |\Delta \chi|\chi - \frac{1}{2}(\Delta \chi^2)\right]|du|^2$$
which eventually leads to \eqref{eq:hessest} thanks to the properties of $\chi$.

The second estimate \eqref{eq:hessest2} is obtained in a similar way by replacing $\Delta |du|^2$ with $\Delta( |du|^2 - c)$ in \eqref{eq:Bochnerchi}, where $c=\fint_{B_r(x)}|du|^2 \di \nu_g$.
\end{proof}

\subsection{Gradient estimates for harmonic functions} We conclude with the following gradient estimates, that will also be useful in Section 8. 

\begin{lemma}
\label{lem:gradientEst_harmonic}
Let $(M^n,g)$ be a closed Riemannian manifold satisfying \eqref{DB} for some $T>0$, and let $h : B_r(x) \to \setR$ be a harmonic function. Then for some constant $c_n>0$ depending only on $n$:
\begin{enumerate}
\item $\displaystyle \sup_{B_{\frac r2}(x)} |\nabla h|\le c_n^{1+r/\sqrt{T}} \left(  \fint_{B_r(x)} |\nabla h|^2 \di \nu_g   \right)^{1/2}$,
\item $\displaystyle \sup_{B_{\frac r2}(x)} |\nabla h|\le \frac{c_n^{1+r/\sqrt{T}}}{r} \sup_{B_{\frac r2}(x)} |h|$.
\end{enumerate}
\end{lemma}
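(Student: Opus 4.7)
The proof rests on the observation that, for $h$ harmonic, the function $u := |\nabla h|^2$ is a non-negative subsolution of a Schrödinger-type operator. Indeed, Bochner's formula gives
\[
\tfrac12 \Delta u = - |\nabla^2 h|^2 - \Ric(\nabla h, \nabla h) \le \Ricm \, u \qquad (\text{with our sign convention on } \Delta),
\]
i.e.\ $(\Delta - 2\Ricm) u \le 0$ in the weak sense. Hence $u$ is a non-negative subsolution of an operator whose potential $-2\Ricm$ is controlled by the Dynkin assumption.

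For (1), the plan is to run Moser iteration on $u$. Testing the subsolution inequality against $\phi^2 u^{p-1}$ for $p\ge 1$ and a cut-off $\phi$ produces, after standard manipulations, an energy estimate of the form
\[
\int \phi^2 |\nabla u^{p/2}|^2 \le C(p) \Bigl( \int |\nabla \phi|^2 u^p + \int \Ricm \phi^2 u^p \Bigr).
\]
The delicate term involving $\Ricm$ is absorbed via the spectral bound \eqref{eq:bottomspectrum} from the proof of Proposition~\ref{prop:HessEst}, which under \eqref{DB} reads
\[
\int \Ricm v^2 \le \tfrac12 \int |\nabla v|^2 + \tfrac{\log 2}{2T}\int v^2, \qquad v = \phi u^{p/2}.
\]
Coupling this with the local Sobolev inequality \eqref{eq:Sobeta} from Proposition~\ref{prop:UnifPI} and iterating in the standard Moser way on a sequence of nested balls $B_{r/2} \subset \cdots \subset B_r$ of radius $\le \sqrt{T}$ yields
\[
\sup_{B_{r/2}(x)} u \le c_n \fint_{B_r(x)} u \, \di \nu_g.
\]
When $r > \sqrt{T}$, the estimate is obtained by covering $B_{r/2}(x)$ by a chain of $\sim r/\sqrt{T}$ balls of radius $\sqrt{T}$ and iterating the local mean-value inequality along the chain, using the uniform doubling property of Proposition~\ref{prop:UnifPI} (and its corollary Proposition~\ref{prop:doubling}-(i)) to compare the average over successive balls; this produces the exponential factor $c_n^{1+r/\sqrt{T}}$. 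Taking square roots gives (1).

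For (2), the plan is to combine (1) with a Caccioppoli-type reverse Poincaré estimate for harmonic functions. Using a cut-off $\phi$ from Proposition~\ref{prop:coupure} with $\phi\equiv 1$ on $B_{r/2}(x)$, $\phi\equiv 0$ outside $B_r(x)$, and $|\nabla \phi|\le C(n)/\min(r,\sqrt{T})$, one tests $\Delta h = 0$ against $\phi^2 h$ and integrates by parts to obtain
\[
\int_{B_{r/2}(x)} |\nabla h|^2 \, \di \nu_g \le \frac{C(n)}{\min(r^2,T)} \int_{B_r(x)} h^2 \, \di \nu_g \le \frac{C(n)}{\min(r^2,T)} \nu_g(B_r(x)) \sup_{B_r(x)} h^2.
\]
Plugging this into (1) (applied on $B_{r/2}(x) \subset B_r(x)$ after a trivial rescaling of radii) and using the doubling property to compare $\nu_g(B_r(x))$ with $\nu_g(B_{r/2}(x))$ yields (2) after absorbing constants into $c_n^{1+r/\sqrt{T}}$.

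The main technical point, and the only place where \eqref{DB} enters non-trivially, is the Moser iteration of Step~1: the potential $\Ricm$ is not bounded, but the Dynkin bound provides precisely the spectral estimate \eqref{eq:bottomspectrum} needed to absorb it, while the $\mathrm{PI}_{\upkappa,\upgamma}(\sqrt{T})$ structure from Proposition~\ref{prop:UnifPI} supplies the Sobolev inequality that drives the iteration. Everything else is a standard scaling/chaining argument on balls of radius $\le \sqrt{T}$.
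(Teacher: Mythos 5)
Your plan for part (1) has a genuine gap precisely where the Dynkin bound enters. You propose to run Moser iteration directly on the subsolution inequality $(\Delta - 2\Ricm)u \le 0$ for $u = |\nabla h|^2$, and to handle the term $\int \Ricm \phi^2 u^p$ via the spectral bound $\int_M \Ricm v^2 \le \tfrac12 \int_M |\nabla v|^2 + \tfrac{\log 2}{2T}\int_M v^2$. The trouble is that the Caccioppoli step already costs you: testing against $\phi^2 u^{p-1}$ and writing $w = u^{p/2}$ yields
\[
\int \phi^2 |\nabla w|^2 \;\le\; \frac{p^2}{(p-1)^2}\int |\nabla\phi|^2 w^2 \;+\; \frac{p^2}{p-1}\int \Ricm\,\phi^2 w^2,
\]
so the coefficient in front of the potential term grows like $p$. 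The spectral bound, on the other hand, only gives a \emph{fixed} absorption constant (essentially $\tfrac12$ of $\int|\nabla(\phi w)|^2$, or at best $\tfrac{1}{8n}$ using the full strength of $\Kato_T \le \tfrac{1}{16n}$). Multiplying these two constants, you are asked to absorb roughly $\tfrac{p^2}{p-1}\cdot\tfrac{1}{2}\int\phi^2|\nabla w|^2$ into the single $\int\phi^2|\nabla w|^2$ on the left; this is already lost at $p=2$ and gets worse along the Moser chain $p_k = 2\chi^k \to \infty$. The Dynkin bound alone does not provide the family of bounds $\int\Ricm v^2 \le \eps\int|\nabla v|^2 + C_\eps\int v^2$ with $\eps \downarrow 0$ that would be needed to salvage this, so the iteration cannot close.

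What the paper actually does is different in a structurally important way: it constructs a bounded gauge $J$ with $1 \le J \le 2$ solving $(\Delta_B + \tfrac1T - \Ricm)J = \tfrac1T$ on $B = B_r(x)$, and then conjugates by $J$. For $\Psi = |\nabla h|/J$ this turns the Schr\"odinger-type inequality into $\Delta_{J^2}\Psi \le \tfrac1T\Psi$, where $\Delta_{J^2}$ is the self-adjoint operator of the weighted form $\int |d\cdot|^2 J^2\,\di\nu_g$. The unbounded potential $\Ricm$ has been absorbed entirely into the measure, leaving only a bounded zeroth-order term $\tfrac1T$, so De Giorgi--Nash--Moser for $\Delta_{J^2}$ runs with no absorption issue. (The Sobolev inequality carries over to the weighted measure since $J$ is bounded between $1$ and $2$.) This gauge step is the genuine new idea you are missing; everything else in your outline --- the chaining argument for $r > \sqrt T$ via doubling, and the Caccioppoli/reverse-Poincar\'e reduction of part (2) to part (1) --- is correct and matches the paper.
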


\begin{proof}
We first proof the result when $r \leq \sqrt{T}$.
Consider the operator 
$$A= \left(\Delta_g +\frac 1T\right)^{-1} \Ricm \, .$$
Assumption \eqref{DB} ensures that 
$$\left\| A \right\|_{L^{\infty} \to L^{\infty}} \leq \frac{1}{16n}\frac{1}{e-1}\leq \frac{1}{8n}.$$
See for example \cite[Lemma 3.18]{C16}. 

The same is true when replacing the Laplacian on $M$ by the Laplacian $\Delta_B$ on the ball $B=B_r(x)$ with the Dirichlet boundary conditions : that is introducing $$A_B= \left(\Delta_B +\frac 1T\right)^{-1} \Ricm \, ,$$ we get $\left\| A_B \right\|_{L^{\infty} \to L^{\infty}}\le  1/(8n).$  As a consequence, if $f=A_B(1)$ then we find a unique  $\varphi \in L^{\infty}(B)$ solving
$$\varphi=A_B\varphi+f.$$ Note that $A_B$ preserves the positivity: $
v\ge 0\Rightarrow A_B v\ge 0.$ Then 
 it is not difficult to show that $\varphi$ satisfies the inequality
$$0 \leq \varphi \leq \frac{1}{8n}\frac{1}{1-\frac{1}{8n}}\leq 1.$$
Moreover by construction $\varphi\in W^{1,2}(B)$ and is zero along $\partial B$.

Consider $J=1+\varphi$. By definition, $J$ solves the equation 
$$\left(\Delta_B+\frac 1T - \Ricm \right)J=\frac 1T,$$
and $1 \leq J \leq 2$. 
Now consider the Laplacian $\Delta_{J^2}$ associated to the quadratic form
$$\cE_{J^2}(\Psi)= \int_B |d\Psi|^2J^2\di \nu_g,$$
on the space $L^2(B,J^2\di \nu_g)$. Then for any $\Psi \in L^2(B,J^2\di \nu_g)$ we have
$$J^{-1}\left(\Delta+\frac{1}{T}-\Ricm\right)(J\Psi)=\Delta \Psi-2J^{-1}\langle dJ,d\Psi\rangle + \frac{\Psi}{J T}=\Delta_{J^2}\Psi + \frac{\Psi}{J T}.$$
When choosing $\displaystyle \Psi= \frac{|dh|}{J}$ we obtain
$$\Delta_{J^2}\left(\frac{|dh|}{J}\right) + \frac{|dh|}{J^2T}=J^{-1}\left(\Delta|dh|+\frac{|dh|}{T}-\Ricm |dh| \right) \leq \frac{|dh|}{JT},$$
where we used that $\Delta |dh|\leq \Ricm |dh|$ because of Bochner inequality. 
We then conclude that 
\begin{equation}
\label{eq-DeltaJ^2}
\Delta_{J^2}\Psi \leq \frac{1}{T}\Psi.
\end{equation}
Now for $\nu=e^2n$, the following Sobolev inequality holds for all $\upvarphi\in C_0^{\infty}(B)$
$$\left(\int_B |\upvarphi|^\frac{2\nu}{\nu-2}\di \nu_g\right)^{1-\frac 2\nu}\le \frac{C(n)r^2}{\nu_g(B)^{\frac 2\nu}}\left[ \int_B|d\upvarphi|^2\di \nu_g+\frac{1}{r^2}\int_B|\upvarphi|^2\di \nu_g\right],$$
this implying the analog Sobolev inequality for the measure $J^2\di \nu_g$:
$$\left(\int_B |\upvarphi|^\frac{2\nu}{\nu-2}J^2\di \nu_g\right)^{1-\frac 2\nu}\le \frac{C(n)r^2}{\nu_g(B)^{\frac 2\nu}}\left[ \int_B|d\upvarphi|^2J^2\di \nu_g+\frac{1}{r^2}\int_B|\upvarphi|^2J^2\di \nu_g\right].$$
Together with inequality \eqref{eq-DeltaJ^2} and De Giorgi-Nash-Moser iteration, this leads to 
$$\sup_{B_{\frac r2}(x)} \Psi\le C(n) \sqrt{ \frac{1}{\nu_g(B)}\int_{B_{3r/4}(x)} \Psi^2 J^2 \di \nu_g}.$$
Since $J$ is bounded between 1 and 2, we then obtain
\begin{equation}\label{eq:J12}
\sup_{B_{\frac r2}(x)}|dh| \le C(n) \sqrt{ \frac{1}{\nu_g(B)}\int_{B_{3r/4}(x)} |dh|^2 \di \nu_g},
\end{equation}
thus the first inequality. 

As for the second inequality, take $\xi \in C_c^\infty(M)$ such that $\xi \equiv 1$ on $B_{3r/4}(x)$ and $|d \xi|^2 \le C/r^2$ for some $C>0$.
The integration by parts formula applied to $\xi h$, that is
$$\int_B |d(\xi h)|^2\di \nu_g=\int_B |d\xi|^2h^2\di \nu_g+\int_B \xi^2h\Delta h \di \nu_g,$$
and the fact that $h$ is harmonic imply
$$\int_B |d(\xi h)|^2 \di \nu_g \leq \frac{C}{r^2}\int_B|h|^2\di \nu_g \le \frac{C}{r^2}\nu_g(B)\sup_B|h|^2.$$
The left-hand side is bounded from below by $\int_{B_{3r/4}(x)} |dh|^2 \di \nu_g$, hence the right-hand side of \eqref{eq:J12} is bounded from above by the previous right-hand side.

The proof when $r >  \sqrt{T}$ follows. Indeed the doubling property implies that 
when $y \in B_{\frac r2}(x)$ we have
$$\nu_g(B_r(x) \leq C_n^{\frac{r}{\sqrt{T}}}\nu_g(B_{\sqrt{T}}(y)).$$

 As a consequence we have:
\begin{align*}
\fint_{B_{\sqrt{T}}(y)}|dh|^2\di \nu_g&  \leq \left(\nu_g(B_{\sqrt{T}}(y)\right)^{-1}\int_{B_r(x)} |dh|^2\di \nu_g \\
& \leq  C_n^{\frac{r}{\sqrt{T}}}\fint_{B_r(x)} |dh|^2\di \nu_g.
\end{align*}
But we have already proved that 
$$|dh|^2(y)\le C\fint_{B_{\sqrt{T}}(y)}|dh|^2\di \nu_g.$$
\end{proof}

\begin{rem}
Notice that whenever $r \leq \sqrt{T}$ we do not need to consider the exponent $r/{\sqrt{T}}$ in the previous estimates. 
\end{rem}

\section{Curvature-dimension condition for Kato limits}

In this section we prove that the Cheeger energy built from the metric measure structure of a Kato limit space $(X,\dist,\mu,o,\cE)$ always coincides with the limit Dirichlet energy $\cE$.  Moreover, we show that tangent cones of a Kato limit space are all $\RCD(0,n)$ spaces, and that they are additionally weakly non-collapsed in case the space is a non-collapsed strong Kato limit. We obtain these two latter statements by establishing the Bakry-Ledoux gradient estimate $\mathrm{BL}(0,n)$ on a specific class of Kato limit spaces, namely those obtained from a sequence of closed Riemannian manifolds $\{(M^n_\alpha, g_\alpha)\}$ for which there exists $T>0$ such that $\Kato_T(M_\alpha,g_\alpha)$ tends to zero as $\alpha \to \infty$.

For these purposes and following \cite[Subsection 2.2]{AGS15},  we define the following quantity $A$ on a strongly local, regular Dirichlet space $(X,\dist, \mu, \cE)$. For all $t>0$ and $u \in \cD(\cE), \varphi \in L^2(X)\cap L^{\infty}(X)$ with $\varphi \geq 0$, we set
\begin{equation}
\label{def_A}
A_t(u, \varphi)(s):= \frac 12 \int_X (P_{t-s}u)^2 P_s \varphi \di \mu
\end{equation}
for any $s\in[0,t]$. As it is shown in \cite[Lemma 2.1]{AGS15}, the function $s \mapsto A_t(u,\varphi)(s)$ is continuous on $[0,t]$ and continuously differentiable on $(0,t]$ with derivative given by
\begin{equation}
\label{def_DerA}
\frac{\di}{\di s} A_t(u,\varphi)(s) = \int_X P_s\varphi \di \Gamma(P_{t-s} u).
\end{equation} 
for any $s \in (0,t]$. Whenever $\varphi$ additionally belongs to $\cD(\cE)$, the map $s \mapsto A_t(u,\varphi)(s)$ is in $C^1([0,t])$ and the previous formula is valid for $s=0$.

\subsection{Differential inequalities}

We first need to prove some differential inequalities on closed manifolds $(M^n,g)$ with a smallness condition on $\kato$.

\begin{theorem}\label{thm:Diffeq}Let $(M^n,g)$ be a closed Riemannian manifold satisfying
\begin{equation}
\label{eq:SK}\tag{SK}
\mbox{\emph{k}}_t(M^n,g) < \frac{1}{8}
\end{equation}
for some $t>0$.  Then for all $v \in W^{1,2}(M)$ with $\Delta_g v\in L^2(M) \cap L^{\infty}(M)$, for any non-negative $\varphi \in W^{1,2}(M)\cap L^\infty(M)$, we have the inequality 
{\small \begin{equation}\label{id3}
\frac 12 \int_M (P_t\varphi v^2 -\varphi (P_tv)^2)\di\nu_g \geq e^{-12\mbox{\tiny{\emph{k}}}_t(M,g)}\left(t\, \int_M\varphi |dP_tv|^2\di\nu_g + \frac{t^2}{n}  \int_M\varphi(\Delta_g P_tv)^2\di\nu_g \right).\end{equation}}
\end{theorem}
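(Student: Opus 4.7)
The plan is to establish \eqref{id3} as the integrated Bakry--\'Emery--Ledoux interpolation along the heat semigroup, with a careful treatment of the Ricci error through the Kato constant. Set $A(s):=A_t(v,\varphi)(s)$ as in \eqref{def_A}: the left-hand side of \eqref{id3} equals $A(t)-A(0)=\int_0^t \Phi(s)\di s$ with $\Phi(s):=\int_M P_s\varphi\,|dP_{t-s}v|^2\di\nu_g$ by \eqref{def_DerA}. Similarly introduce $\Psi(s):=\int_M P_s\varphi\,(\Delta_g P_{t-s}v)^2\di\nu_g$; note that $\Phi(0)$ and $\Psi(0)$ are precisely the two quantities appearing on the right-hand side of \eqref{id3}.

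First, I would differentiate $\Phi$ using Bochner's identity applied to $u_s:=P_{t-s}v$ (which solves $\partial_s u_s=\Delta_g u_s$), integration by parts against $P_s\varphi$, the Ricci bound $\Ric_g\ge -\Ricm\, g$, and the dimensional trace inequality $|\nabla du_s|^2\ge (\Delta_g u_s)^2/n$. This yields
\[
\Phi'(s)\;\ge\; \frac{2}{n}\Psi(s)\;-\;2R(s),\qquad R(s):=\int_M P_s\varphi\cdot\Ricm\cdot |du_s|^2\di\nu_g.
\]
A direct computation shows that $\Psi$ is already non-decreasing at this level: since $w_s:=\Delta_g P_{t-s}v$ satisfies the same equation $\partial_s w_s=\Delta_g w_s$, applying \eqref{def_DerA} to the quantity $\tfrac12\int_M w_s^2 P_s\varphi\di\nu_g$ gives $\tfrac{d}{ds}\Psi(s)/2=\int_M P_s\varphi|dw_s|^2\di\nu_g\ge 0$, whence $\Psi(s)\ge \Psi(0)$. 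Multiplying the inequality for $\Phi'$ by $(t-s)$, integrating on $[0,t]$, and using $\int_0^t(t-s)\Phi'(s)\di s=\int_0^t\Phi(s)\di s-t\Phi(0)$ produces
\[
\int_0^t\Phi(s)\di s \;\ge\; t\Phi(0)+\frac{t^2}{n}\Psi(0)\;-\;2\int_0^t(t-s) R(s)\di s.
\]

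The main obstacle is to absorb the Ricci error $\int_0^t(t-s)R(s)\di s$ into a factor of the form $e^{-12\Kato_t(M^n,g)}$, using only the smallness assumption $\Kato_t(M^n,g)<1/8$. The strategy is to combine the pointwise semigroup domination $|dP_\tau v|\le e^{-\tau(\Delta_g-\Ricm)}|dv|$ for the Hodge Laplacian on $1$-forms with the $L^\infty\!\to\!L^\infty$ bound $\|e^{-\tau(\Delta_g-\Ricm)}\|\le 2$ (which follows from the Kato class structure and is used, for instance, in the proof of the gradient estimate preceding Lemma \ref{lem:estintnableH}). Rewriting $R(s)=\int_M \varphi\cdot P_s(\Ricm|du_s|^2)\di\nu_g$ via self-adjointness of $P_s$ and invoking the integral Kato control $\sup_x\int_0^t P_s\Ricm\di s\le \Kato_t(M^n,g)$ should yield a linearized bound of the form $\int_0^t(t-s)R(s)\di s\le C\,\Kato_t(M^n,g)\cdot\bigl[t\Phi(0)+\tfrac{t^2}{n}\Psi(0)\bigr]$ for some universal constant $C\le 6$. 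A Gronwall-type argument, applied to $\Phi' + 2R \ge \tfrac2n \Psi(0)$ with an appropriate integrating factor, then upgrades this linear bound to the exponential factor $e^{-12\Kato_t(M^n,g)}$ of \eqref{id3}. The crux lies precisely in this Kato control of $R$: the pointwise factor $|du_s|^2$ is not uniformly bounded by any quantity controlled by $\Phi(0)$ and $\Psi(0)$, so the parabolic smoothing of $P_s$ must be coupled delicately with the integral Kato structure to prevent losing powers of $t$ or of $\Kato_t(M^n,g)$ along the way.
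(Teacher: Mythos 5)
Your reduction of the left-hand side to $\int_0^t\Phi(s)\,\di s$ with $\Phi(s)=\int_M P_s\varphi\,|dP_{t-s}v|^2\di\nu_g$, the Bochner inequality $\Phi'(s)\ge \frac{2}{n}\Psi(s)-2R(s)$, and the monotonicity $\Psi(s)\ge\Psi(0)$ are all correct and match the skeleton of the paper's argument (they correspond to the paper's inequality for $\frac{\di}{\di s}B_J$ with $J\equiv 1$). The genuine gap is exactly where you locate it: the absorption of the Ricci error $R(s)=\int_M P_s\varphi\,\Ricm|dP_{t-s}v|^2\di\nu_g$. The bound you posit, $\int_0^t(t-s)R(s)\di s\le C\,\Kato_t(M,g)\bigl[t\Phi(0)+\frac{t^2}{n}\Psi(0)\bigr]$, does not follow from the tools you invoke. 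The Kato constant controls $\sup_x\int_0^tP_s\Ricm(x)\di s$, i.e.\ an $L^\infty\to L^\infty$ bound on the time-integrated semigroup applied to $\Ricm$; to exploit it you must bound the companion factor $|dP_{t-s}v|^2$ in $L^\infty$, and the only such bound available (via the domination $|dP_\tau v|\le e^{-\tau(\Delta-\Ricm)}|dv|$ together with $\|e^{-\tau(\Delta-\Ricm)}\|_{L^\infty\to L^\infty}\le 2$) is in terms of $\||dv|\|_{L^\infty}$, which is neither finite for general $v\in W^{1,2}$ with $\Delta_gv\in L^2\cap L^\infty$ nor comparable to $\Phi(0)=\int_M\varphi|dP_tv|^2\di\nu_g$. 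Likewise, the Gronwall step would require a fiberwise bound $R(s)\le\rho(s)\Phi(s)$ with $\int_0^t\rho\le 6\,\Kato_t(M,g)$, i.e.\ a form bound of $\Ricm$ against the moving weight $P_s\varphi\,|dP_{t-s}v|^2$ at each fixed time $s$; the Kato hypothesis is an integrated-in-time condition and gives no such pointwise-in-$s$ control.

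The paper closes this gap with a different device: it gauges the quantity, setting $B_J(u,\varphi)(s)=\int_M|dP_{t-s}u|^2(P_s\varphi)J_{t-s}\di\nu_g$, and chooses $J$ to solve $\Delta_gJ+\dot J+\frac{2}{\eps}\frac{|dJ|^2}{J}+2J\Ricm=0$ with $J(0,\cdot)=1$ and $\eps=4\Kato_t(M,g)$. The Ricci term is then cancelled exactly in the derivative of $B_J$ (at the cost of the loss $2(1-\eps)|\nabla du|^2$ coming from Cauchy--Schwarz on the cross term $4\nabla du(dJ,du)$), and the hypothesis $\Kato_t(M,g)<1/8$ is used to solve for $I=J^{-\delta}$, $\delta=2/\eps-1$, by a convergent Neumann series, yielding $e^{-4\Kato_t(M,g)}\le J\le 1$; the factor $e^{-12\Kato_t(M,g)}$ then results from combining $J\ge e^{-4\Kato_t(M,g)}$ with $1-4\Kato_t(M,g)\ge e^{-8\Kato_t(M,g)}$. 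To complete your argument you should incorporate this gauge (or an equivalent Duhamel/Feynman--Kac treatment of the perturbed semigroup generated by $\Delta-2\Ricm$) rather than trying to estimate $R(s)$ directly.
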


In the next subsection,  under the assumption $\Kato_T(M_\alpha,g_\alpha)\to 0$, we aim to pass inequality \eqref{id3} to a limit space in order to get the Bakry-Ledoux gradient estimate  $\mathrm{BL}(0,n)$.  To this aim, it is useful to rewrite this inequality in terms of $A$ and its derivative with respect to $s \in (0,t)$. 
Since on a closed manifold $(M^n,g)$, the derivative of $A$ is simply
$$\frac{\di}{\di s} A_t(u, \varphi)(s)= \int_M |dP_{t-s}u|^2P_s\varphi \di \nu_g,$$
we can rephrase Theorem \ref{thm:Diffeq} as follows. 
\begin{theorem}\label{thm:Aineq}Let $(M^n,g)$ be a closed Riemannian manifold satisfying
\eqref{eq:SK} for some $t>0$. Then for all $v\in W^{1,2}(M)$ with $\Delta_g v\in L^2(X) \cap L^{\infty}(X)$, for any non-negative $ \varphi \in W^{1,2}(M)\cap L^\infty(M)$ and any $s \in (0,t)$,
{ \small \begin{equation}
\label{id4}
A_t(v,\varphi)(t)-A_t(v,\varphi)(s)   \geq e^{-12\mbox{\tiny{k}}_t(M,g)}\left((t-s) \frac{\di}{\di s}A_t(v,\varphi)(s)  + \ \frac{2}{n}  (t-s)^2A_t(\Delta v,\varphi)(s) \right). 
\end{equation}}
\end{theorem}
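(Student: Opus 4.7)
The plan is to deduce Theorem \ref{thm:Aineq} from Theorem \ref{thm:Diffeq} via the Markov property of the heat semigroup, after first observing that the two statements coincide when $s = 0$. Indeed, using \eqref{def_A} and \eqref{def_DerA}, one computes directly $A_t(v,\varphi)(t) - A_t(v,\varphi)(0) = \frac{1}{2}\int_M(v^2 P_t\varphi - \varphi(P_tv)^2) \, \di\nu_g$, $\frac{\di}{\di s}A_t(v,\varphi)(s)\big|_{s=0} = \int_M \varphi|dP_tv|^2 \, \di\nu_g$, and $A_t(\Delta v,\varphi)(0) = \frac{1}{2}\int_M \varphi(\Delta P_tv)^2 \, \di\nu_g$, where the last identity uses the commutation of $\Delta$ with $P_\tau$ on the closed manifold $M$. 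Thus, setting $s = 0$ in \eqref{id4} recovers \eqref{id3} term by term, so the case $s=0$ is tautological.

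For $s \in (0,t)$ I would apply Theorem \ref{thm:Diffeq} to the shifted data $(v, P_s\varphi)$ with time parameter $t - s$ in place of $t$. The hypotheses transfer directly: the heat semigroup $(P_\tau)_\tau$ preserves non-negativity and the class $W^{1,2}(M) \cap L^\infty(M)$, so $P_s\varphi$ is admissible, and $\mbox{k}_{t-s}(M^n,g) \leq \kato < 1/8$ by monotonicity of $\tau \mapsto \mbox{k}_\tau(M^n,g)$. The semigroup identity $P_{t-s} \circ P_s = P_t$ rewrites the left-hand side of the resulting inequality as $A_t(v,\varphi)(t) - A_t(v,\varphi)(s)$. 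On the right-hand side, formula \eqref{def_DerA} identifies the gradient term with $(t-s)\frac{\di}{\di s}A_t(v,\varphi)(s)$, while the commutation $\Delta P_{t-s} = P_{t-s}\Delta$ yields $\int_M P_s\varphi(\Delta P_{t-s}v)^2 \, \di\nu_g = 2A_t(\Delta v,\varphi)(s)$. Since both terms on the right-hand side are manifestly non-negative, the prefactor $e^{-12\mbox{k}_{t-s}(M^n,g)}$ can be further bounded from below by $e^{-12\kato}$, producing exactly \eqref{id4}.

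This reduction is completely formal once Theorem \ref{thm:Diffeq} is in hand, the only point requiring a remark being the monotonicity of $\tau \mapsto \mbox{k}_\tau(M^n,g)$, which is transparent from its definition as a sup of an integral over $[0,\tau]$. The genuine difficulty is therefore concentrated in the proof of \eqref{id3} itself, which I expect to proceed by analyzing $\phi(s) := A_t(v,\varphi)(s)$ through Bochner's identity applied to $u_s := P_{t-s}v$: the relation $\partial_s u_s = \Delta u_s$ combined with integration by parts gives
\[
\phi''(s) = 2\int_M P_s\varphi\bigl[|\nabla du_s|^2 + \Ric(\nabla u_s,\nabla u_s)\bigr] \di\nu_g,
\]
and the trace Cauchy-Schwarz inequality $n|\nabla du_s|^2 \geq (\Delta u_s)^2$ supplies the dimensional $t^2/n$ term. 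The main obstacle in proving Theorem \ref{thm:Diffeq} is controlling the Kato error $-2\int_M P_s\varphi\,\Ricm|du_s|^2 \, \di\nu_g$ by a Gronwall-type device that absorbs it into an exponentially dampened first-order quantity, which is precisely what generates the correction factor $e^{-12\kato}$.
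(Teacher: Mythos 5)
Your reduction is exactly the argument in the paper: both apply Theorem \ref{thm:Diffeq} to the data $(v,P_s\varphi)$ with time parameter $t-s$, then use the semigroup law $P_{t-s}\circ P_s=P_t$, the commutation $\Delta P_{t-s}=P_{t-s}\Delta$, the identification via \eqref{def_DerA}, the monotonicity of $\tau\mapsto \Kato_\tau(M,g)$, and the nonnegativity of the right-hand side to pass from the prefactor $e^{-12\Kato_{t-s}}$ to $e^{-12\Kato_t}$. Your closing sketch of the proof of \eqref{id3} is tangential to the statement at hand and differs from the paper's gauging argument via the auxiliary function $J$, but the deduction of Theorem \ref{thm:Aineq} from Theorem \ref{thm:Diffeq}, which is what is being asked, is correct and identical in approach.
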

\noindent Indeed,  with $A$ and its derivative, inequality \eqref{id3} writes as
$$ A_t(v,\varphi)(t)-A_t(v,\varphi)(0)\geq e^{-12\mbox{\tiny{k}}_t(M,g)}\left( t \int_M\varphi |dP_tv|^2\di\nu_g + \ \frac{2}{n}  t^2A_t(\Delta v,\varphi)(t) \right). $$
Now for any $s \in [0,t]$ we also have $\mbox{{k}}_{t-s}(M,g)\le \mbox{{k}}_{t}(M,g)<1/8$ so the previous holds with $t-s$ instead of $t$:
\begin{equation*}\begin{split} A_{t-s}(v,P_s\varphi)(t-s)-A_{t-s}(v,P_s\varphi)(0)&\geq e^{-12\mbox{\tiny{k}}_{t}(M,g)}\left[ (t-s)  \int_M P_s\varphi |dP_{t-s}v|^2\di\nu_g \right.\\
&\left. + \ \frac{2}{n}  (t-s)^2 A_{t-s}(\Delta v,P_s\varphi)(t-s) \right] \\
\end{split}\end{equation*}
and this rewrites easily as inequality \eqref{id4}. 

The proof of Theorem  \ref{thm:Diffeq} relies on a modified version of the function $s \mapsto \frac{\di}{\di s}A_t(u, \varphi)(s)$.  For any closed Riemannian manifold $(M^n,g)$, any $t>0$ and any positive ``gauging'' function $J: [0, t] \times M \rightarrow \R$, we  define
$$B_J(u,\varphi)(s):=\int_M |dP_{t-s}u|^2(P_s\varphi) J_{t-s}\di \nu_g$$
for any $s,u,\varphi$ as above, where $J_{t-s}(\cdot)=J(t-s,\cdot)$. For the sake of simplicity, from now on in this section we denote $P_{\tau}u,P_{\tau}\varphi$ by $u_\tau,\varphi_\tau$ for any $\tau>0$.

\begin{lemma}
Let $(M^n,g)$ be a closed Riemannian manifold, $t>0$ and $J: [0, t] \times M \rightarrow (0,+\infty)$ be a smooth function.  Then for any $\eps>0$,
\begin{equation}
\label{id1}
\begin{split}
\frac{\di}{\di s} B_J(u,\varphi)(s) & \geq \int_M \varphi_s \left( \left(-\Delta_g J - \dot{J}  -\frac{2}{\eps}\frac{|dJ|^2}{J}- 2J\Ric_{\mbox{\tiny{-}}} \right)|du_{t-s}|^2  \right. \\ & \left.  \qquad \qquad \qquad  \qquad \qquad \qquad +2(1-\eps)J\frac{(\Delta_g u_{t-s})^2}{n}\right) \di\nu_g
\end{split}
\end{equation}
for any $s \in [0,t]$, where $\dot{J}$ is a shorthand for $\frac{\di}{\di s} J$.
\end{lemma}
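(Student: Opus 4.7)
The plan is to differentiate $B_J$ under the integral, invoke Bochner's formula, and then integrate by parts \emph{twice} -- the second integration by parts being the crucial step that flips the sign of $\Delta_g J$ -- before absorbing the leftover cross-gradient term via Kato's inequality and a Young inequality with parameter $\eps$.

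First, using $\partial_s P_{t-s}u = \Delta_g P_{t-s}u$, $\partial_s P_s\varphi = -\Delta_g P_s\varphi$, and $\partial_s J_{t-s}= -\dot J_{t-s}$, differentiation under the integral yields
\begin{align*}
\frac{\di}{\di s}B_J(u,\varphi)(s) &= \int_M 2\langle du_{t-s},d\Delta_g u_{t-s}\rangle\,\varphi_s J_{t-s}\,\di\nu_g \\
&\quad - \int_M |du_{t-s}|^2 J_{t-s}\Delta_g\varphi_s\,\di\nu_g - \int_M |du_{t-s}|^2\varphi_s\dot J_{t-s}\,\di\nu_g.
\end{align*}
Bochner's formula in the paper's convention (positive Laplacian) reads $2\langle du,d\Delta_g u\rangle = \Delta_g|du|^2 + 2|\nabla du|^2 + 2\Ric(du,du)$. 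Substituting and transferring $\Delta_g$ off $|du_{t-s}|^2$ by self-adjointness, the Leibniz identity $\Delta_g(\varphi_s J_{t-s}) = \varphi_s\Delta_g J_{t-s} + J_{t-s}\Delta_g\varphi_s - 2\langle d\varphi_s,dJ_{t-s}\rangle$ produces a term $J_{t-s}\Delta_g\varphi_s|du_{t-s}|^2$ that exactly cancels the second summand above.

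The key step is a second integration by parts on the remaining cross term $-2\int_M|du_{t-s}|^2\langle d\varphi_s,dJ_{t-s}\rangle\,\di\nu_g$. Viewing $|du_{t-s}|^2\nabla J_{t-s}$ as a vector field and moving $d\varphi_s$ off via the divergence theorem gives
\[
-2\int_M \langle d\varphi_s, |du_{t-s}|^2\nabla J_{t-s}\rangle\,\di\nu_g = -2\int_M\varphi_s|du_{t-s}|^2\Delta_g J_{t-s}\,\di\nu_g + 2\int_M\varphi_s\langle d|du_{t-s}|^2, dJ_{t-s}\rangle\,\di\nu_g.
\]
Combined with the $+\int\varphi_s|du_{t-s}|^2\Delta_g J_{t-s}$ contribution from Bochner, the net coefficient of $\varphi_s|du_{t-s}|^2$ becomes $-\Delta_g J_{t-s}-\dot J_{t-s}$, matching the sign in the claimed inequality.

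It remains to control the new gradient-gradient term and close the Bochner estimate. Kato's pointwise inequality $|d|du|^2|\le 2|du||\nabla du|$ followed by Young's inequality $2ab\le a^2/\eps+\eps b^2$ with $a=|du_{t-s}||dJ_{t-s}|/\sqrt{J_{t-s}}$ and $b=|\nabla du_{t-s}|\sqrt{J_{t-s}}$ yields
\[
\bigl|2\langle d|du_{t-s}|^2, dJ_{t-s}\rangle\bigr|\le \frac{2|du_{t-s}|^2|dJ_{t-s}|^2}{\eps J_{t-s}}+2\eps J_{t-s}|\nabla du_{t-s}|^2,
\]
which generates the $-\tfrac{2}{\eps}|dJ|^2/J$ coefficient of $|du|^2$ and absorbs $2\eps J|\nabla du|^2$ into the Bochner Hessian term $2J|\nabla du|^2$, leaving $2(1-\eps)J|\nabla du|^2$. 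Together with $\Ric(du,du)\ge -\Ricm|du|^2$ and the trace inequality $|\nabla du|^2\ge (\Delta_g u)^2/n$, this yields \eqref{id1}. The main subtlety is the double integration by parts: a single IBP leaves $+\Delta_g J$ alongside an uncontrollable $\langle d\varphi_s,dJ_{t-s}\rangle$, whereas the second IBP simultaneously flips the sign of $\Delta_g J$ and replaces the awkward $d\varphi_s$ factor by $d|du_{t-s}|^2$, which is pointwise dominated by $|\nabla du_{t-s}|$ and therefore absorbable into the Hessian via the $\eps$-weighted Young inequality.
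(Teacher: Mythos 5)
Your proof is correct and takes essentially the same route as the paper's: both reduce to the identity $\frac{\di}{\di s} B_J(u,\varphi)(s)=\int_M \varphi_s\bigl[(-\Delta_g J-\dot J)|du_{t-s}|^2+2J\bigl(|\nabla du_{t-s}|^2+\Ric(du_{t-s},du_{t-s})\bigr)+4\nabla du_{t-s}(dJ,du_{t-s})\bigr]\di\nu_g$ and then conclude with $\Ric\ge-\Ricm$, the $\eps$-weighted Young inequality on the cross term, and $|\nabla du|^2\ge(\Delta_g u)^2/n$. The ``double integration by parts'' you highlight is just a reorganization of the paper's single integration by parts combined with the Leibniz expansion of $\Delta_g(J|du_{t-s}|^2)$; the two computations are algebraically identical.
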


\begin{proof}
When deriving $B_J(u,\varphi)$ with respect to $s$ we obtain: 
\begin{align*}
\frac{\di}{\di s} B_J(u,\varphi)(s) & = \int_M \varphi_s \left( -\Delta_g (J |du_{t-s}|^2) -  \dot{J} |du_{t-s}|^2 + 2J\langle d \Delta_g u_{t-s}, d u_{t-s} \rangle \right) \di\nu_g \\
& = \int_M \varphi_s \left(-(\Delta_g J) |du_{t-s}|^2 - J \Delta_g |du_{t-s}|^2 + 2\langle d J, \nabla |du_{t-s}|^2 \rangle \right. \\ & \left.- \dot{J} |du_{t-s}|^2 +2J\langle d \Delta_g u_{t-s}, d u_{t-s} \rangle \right) \di\nu_g, \end{align*}where we have used  the Leibniz formula
\begin{align*}
 \Delta_g (J |du_{t-s}|^2)&= (\Delta_g J)  |du_{t-s}|^2+J\Delta_g ( |du_{t-s}|^2)-2\langle\nabla |du_{t-s}|^2,dJ\rangle\\
 &= (\Delta_g J)  |du_{t-s}|^2+J\Delta_g ( |du_{t-s}|^2)-4\nabla du_{t-s}( du_{t-s},dJ).
\end{align*}
Then by using Bochner's formula 
$$ -\Delta_g |df|^2 +2\langle d \Delta_g f, df \rangle = 2(|\nabla d f |^2 +\Ric(df, df)) $$
with $f =u_{t-s}$ we obtain
\begin{equation*}
\begin{split}
\frac{\di}{\di s} B_J(u,\varphi)(s) & = \int_M \varphi_s \left( (-\Delta_g J - \dot{J})|du_{t-s}|^2 + 2J (|\nabla d u_{t-s}|^2 + \Ric(d u_{t-s},d u_{t-s})) \right.
\\ & \left. +4\nabla du_{t-s} (d J, du_{t-s} ) \right)\di\nu_g. 
\end{split}
\end{equation*}
By using the fact that, for all $x\in M$, $\Ric_x \geq -\Ricm(x)$ we get the lower bound 
\begin{equation*}
\begin{split}
\frac{\di}{\di s} B_J(u,\varphi)(s) & \geq \int_M \varphi_s \left[ (-\Delta_g J - \dot{J} -2J \Ricm)|du_{t-s}|^2 + 2J |\nabla d u_{t-s}|^2 \right. \\  &\left. + 4\sqrt{J}\nabla du_{t-s}\left( \frac{d J}{\sqrt{J}}, d u_{t-s} \right)  \right]
\end{split}
\end{equation*}
Now, for any $\eps >0$ we have: 
$$ 2\sqrt{J}\nabla du_{t-s}\left( \frac{d J}{\sqrt{J}}, d u_{t-s} \right)    \geq - \left(\eps J|\nabla du_{t-s}|^2 +\frac{1}{\eps}\frac{|d J|^2}{J}|du_{t-s}|^2\right),$$
Therefore we get
\begin{equation*}\begin{split}
\frac{\di}{\di s} B_J(u,\varphi)(s) \geq& \int_M \varphi_s \left[ \left(-\Delta_g J - \dot{J}  -\frac{2}{\eps}\frac{|d J|^2}{J}-2J\Ricm \right)|du_{t-s}|^2\right.\\
&\left. +2(1-\eps)J|\nabla du_{t-s}|^2 \right] \di\nu_g.
\end{split}\end{equation*}
We conclude by using $|\nabla du_{t-s}|^2 \geq (\Delta_g u_{t-s})^2/n$. 
\end{proof}

\begin{lemma}
\label{lemmaJ}
Let $(M^n,g)$ be a closed Riemannian manifold satisfying \eqref{eq:SK} for some $t>0$.  Set $\eps:=4\Kato_t(M,g)$. Then there exists a unique solution $J\colon [0,t]\times M\rightarrow (0,+\infty)$ to the problem
\begin{equation}
\label{eqJ}
\tag{EJ}
\begin{cases}
\displaystyle \Delta_g J + \dot{J}+\frac{2}{\eps}\frac{|d J|^2}{J}+ 2J \Ric_{\mbox{\tiny{-}}} = 0 \\
J(0,x)=1 
\end{cases}
\end{equation}
which satisfies
\begin{equation*}
e^{-4 \Kato_t(M,g)} \leq J \leq 1. 
\end{equation*}
\end{lemma}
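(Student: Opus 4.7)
The plan is to linearize \eqref{eqJ} via a Hopf--Cole-type substitution, reducing the problem to the analysis of a Schr\"odinger heat equation with a non-negative potential whose Kato norm is controlled by the assumption $\eps = 4\Kato_t(M,g) < 1/2$.

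First I would set $\lambda := (2-\eps)/\eps > 0$ and introduce $w := J^{-\lambda}$, equivalently $J = w^{-1/\lambda}$. Using $\Delta_g(f^\alpha) = \alpha f^{\alpha-1}\Delta_g f - \alpha(\alpha-1) f^{\alpha-2}|df|^2$, a direct computation shows that this particular value of $\lambda$ is tailored precisely to cancel the quadratic gradient term in \eqref{eqJ}, which then becomes the linear Schr\"odinger heat equation
\[
\dot w + (\Delta_g - V)\,w = 0, \qquad w(0,\cdot) \equiv 1, \qquad V := 2\lambda\, \Ricm \ge 0.
\]
Because $\Kato_t(V) = 2\lambda \Kato_t(M,g) \le (2-\eps)/2 < 1$, the Voigt-type bound on the Schr\"odinger semigroup already invoked in the paper (cf.~Step 1 of the proof of Proposition \ref{prop:HessEst}, based on \cite{Voigt, CarronRose}) ensures that the family $(e^{-s(\Delta_g - V)})_{s \in [0,t]}$ is well-defined and bounded on $L^\infty(M)$. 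Hence $w := e^{-s(\Delta_g - V)}\mathbf{1}$ is the unique smooth positive solution, and reversing the substitution produces the unique smooth positive $J$ solving \eqref{eqJ}. The lower bound $w \ge 1$, equivalently $J \le 1$, is then immediate from $V \ge 0$ by the minimum principle, or via the Feynman--Kac formula $w(s,x) = \mathbb{E}_x[\exp(\int_0^s V(X_\tau)\di\tau)]$.

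The remaining point is the upper bound $w \le e^{2-\eps}$, which translates through $J = w^{-1/\lambda}$ to the claimed $J \ge e^{-\eps} = e^{-4\Kato_t(M,g)}$. This I would obtain by iterating the Duhamel formula for $e^{-s(\Delta_g - V)}\mathbf{1}$ and carefully tracking the Kato norm of $V$ at each step, exploiting that $V$ itself scales linearly in $\eps$. I expect this sharp upper bound to be the main technical obstacle: the crude Khasminskii estimate $w \le 1/(1 - \Kato_t(V)) = 2/\eps$ is only algebraic in $\eps$, whereas for small $\eps$ the target $e^{2-\eps} \approx e^2$ is much stronger (for instance, $2/\eps > e^{2-\eps}$ already when $\eps < 0.4$). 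A refinement in the spirit of Simon's improvement of Khasminskii, exploiting the precise linear-in-$\eps$ scaling of $V$ rather than merely the qualitative information $\Kato_t(V) < 1$, is required to upgrade the algebraic bound to the needed exponential one; once this is in hand, the two-sided bound on $J$ follows at once from the substitution.
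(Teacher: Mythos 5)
Your reduction is exactly the one the paper uses: with $\lambda=\delta:=2/\eps-1$, the substitution $w=J^{-\delta}$ (the paper's $I$) linearizes \eqref{eqJ} into $\dot I+\Delta_g I-2\delta\,\Ricm\, I=0$ with $I(0,\cdot)=1$, and existence, uniqueness and the bound $I\ge 1$ (i.e.\ $J\le 1$) follow from the Volterra formulation $I=\mathbf{1}+TI$, where $T$ is positivity-preserving with $\|T\|_{L^\infty\to L^\infty}\le 2\delta\,\Kato_t(M,g)=1-2\Kato_t(M,g)<1$. Up to that point your argument and the paper's coincide.

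The genuine gap is the step you yourself flag and then defer: the upper bound $I\le e^{2-\eps}$, equivalently $J\ge e^{-4\Kato_t(M,g)}$, carries the entire content of the lemma, and you do not prove it. Your diagnosis of the obstruction is accurate: iterating Duhamel gives $\|T^k\mathbf{1}\|_\infty\le(1-\eps/2)^k$ with no factorial gain, because the time-ordering of the simplex is already consumed by the convolution structure of the kernel $s\mapsto\sup_x\int_M H(s,x,y)\Ricm(y)\,\di\nu_g(y)$; the series therefore only returns the Khasminskii bound $\|I\|_{L^\infty}\le (1-\|T\|)^{-1}\le 2/\eps$, which for small $\eps$ is weaker than $e^{2-\eps}$. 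No "refinement in the spirit of Simon" is exhibited, and "carefully tracking the Kato norm at each step" of the plain iteration cannot produce one, since the only hypothesis available here is the single-time bound $\Kato_t(M,g)\le 1/8$ (no information on how $\Kato_s$ decays as $s\to0$). For comparison, the paper does not iterate to all orders: it feeds a sup-norm bound \emph{once} into the identity $I=\mathbf{1}+TI$ to get $I\le 1+2\delta\Kato_t(M,g)\,\|I\|_{L^\infty}$, and then invokes the a priori estimate $\|I\|_{L^\infty}\le 2$ to conclude $I\le 1+4\delta\Kato_t(M,g)\le e^{4\delta\Kato_t(M,g)}=e^{2-\eps}$. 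Note that this a priori estimate is itself strictly stronger than what the Neumann series provides, so the crux of the matter is concentrated in that single bound on $\|I\|_{L^\infty}$ by an absolute constant; your proposal, as written, supplies no substitute for it and therefore does not establish the lower bound on $J$.
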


\begin{proof}
Consider $\delta = \frac{2}{\eps}-1\ge 1$,  we have 
$$\Delta_g(J^{-\delta})=- \delta J^{-\delta -1}\left( \Delta_g J+  (\delta +1)\frac{|d J|^2}{J}\right)= -\delta J^{-\delta -1}\left( \Delta_g J + \frac{2}{\eps}\frac{|d J|^2}{J}\right).$$
Define $I:=J^{-\delta}$. Then $J$ solves \eqref{eqJ} if and only if $I$ is a solution of 
$$ 
\begin{cases}\Delta_g I + \dot{I} - 2\delta I \Ricm =0 \\
I(0,x)=1
\end{cases}$$
This latter equation is equivalent to the following integral equation:
\begin{equation}
\label{intEq}
I(t,x)=1 +2\delta \int_0^t\int_M H(t-s,x,y)\Ricm(y)I(s,y)\di\nu_g(y)\di s.
\end{equation}
Consider the map $f\in L^{\infty}([0,t]\times M)\mapsto Tf\in L^{\infty}([0,t]\times M)$ defined by 
$$Tf(s,x)=  2\delta \int_0^t\int_M H(t-s,x,y)\Ricm(y)f(s,y)\di\nu_g(y)\di s.
$$

By definition of $\Kato_t(M,g)$ and of $\delta$, the operator norm of $T$ satisfies:
$$\left\| T\right\|_{L^\infty\to L^\infty}\le 2\delta \Kato_t(M,g)=2\left(\frac{2}{\eps}-1\right)\Kato_{t}(M,g).
$$ 
Recall that $\eps=4\Kato_t(M,g)$,  so
$$\left\| T\right\|_{L^\infty\to L^\infty}\le 1 -2\Kato_t(M,g) <  1.$$
As a consequence, the operator $\mathrm{Id}-T$ is invertible and  $I=(\mathrm{Id}-T)^{-1}\mathbf{1}$ is the unique solution of the equation \eqref{intEq} that satisfies 
$$\|I\|_{L^\infty}\le 2.$$  Then the integral equation (\ref{intEq}) implies that 
$$ 1 \leq I  \leq 1+2\delta \Kato_t(M,g)\|I\|_{L^\infty}  \leq 1+4\delta \Kato_t(M,g)\leq e^{4\delta \mbox{\tiny k}_t(M,g)}.$$
Therefore we get $ e^{-4\mbox{\tiny{k}}_t(M,g)} \leq J \leq 1$ for $J = I^{-1/\delta}$,  as we wished. \end{proof}

\begin{cor}
\label{cor:inForCheeger}
Let $(M^n,g)$ be a closed Riemannian manifold satisfying \eqref{eq:SK}  for some $t>0$. Then for all $u \in C^1(M)$, $\varphi \in C^{0}(M)$ with $\varphi \geq 0$ and $\tau\in (0,t]$: ,
\begin{equation}
\label{eq:inForCheeger}
\int_M \varphi \left| d P_\tau u\right|^2 \di\nu_g \leq e^{4\mbox{\tiny{k}}_t(M,g)} \int_M P_\tau\varphi\,  |du|^2\di\nu_g. 
\end{equation}
\end{cor}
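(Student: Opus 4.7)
My plan is to use the gauging function $J$ from Lemma \ref{lemmaJ} together with the monotonicity encoded in the differential inequality \eqref{id1}. With the choice $\eps := 4\Kato_t(M,g)$ built into Lemma \ref{lemmaJ}, the assumption \eqref{eq:SK} forces $\eps < 1/2$. Since $J$ solves equation \eqref{eqJ}, the first bracketed term on the right-hand side of \eqref{id1} vanishes identically on $[0,t] \times M$, while the remaining term $2(1-\eps) J (\Delta_g u_{t-s})^2/n$ is non-negative because $J>0$ and $\eps < 1$. I would therefore obtain that $s \mapsto B_J(u,\varphi)(s)$ is non-decreasing on $[0,t]$.

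The next step is to compare the endpoint values of $B_J(u,\varphi)$. At $s = t$, the initial condition $J_0 \equiv 1$ from \eqref{eqJ} gives
\[
B_J(u,\varphi)(t) = \int_M |du|^2 (P_t\varphi) \di\nu_g,
\]
while at $s = 0$, the lower bound $J_t \geq e^{-4\Kato_t(M,g)}$ from Lemma \ref{lemmaJ} yields
\[
B_J(u,\varphi)(0) \geq e^{-4\Kato_t(M,g)} \int_M |dP_t u|^2 \, \varphi \, \di\nu_g.
\]
The monotonicity $B_J(u,\varphi)(0) \leq B_J(u,\varphi)(t)$ then produces \eqref{eq:inForCheeger} in the case $\tau = t$. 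For a general $\tau \in (0,t)$, I would repeat the construction on the interval $[0,\tau]$: since $\Kato_\tau(M,g) \leq \Kato_t(M,g) < 1/8$, the hypothesis \eqref{eq:SK} holds at scale $\tau$, and the resulting constant $e^{4\Kato_\tau(M,g)}$ is controlled by $e^{4\Kato_t(M,g)}$.

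The principal technical point I expect to address is justifying the derivation of \eqref{id1} and the regularity of $B_J$ when $u$ is merely $C^1(M)$ and $\varphi$ merely $C^0(M)$. On a closed manifold this should be routine: the heat semigroup smooths both functions instantly, so all the integrations by parts behind \eqref{id1} are valid on any subinterval $[\delta, t-\delta]$, and the endpoint identities above are recovered by continuity as $\delta \to 0$, using that $\tau \mapsto P_\tau u$ is continuous into $C^1$ and $\tau \mapsto P_\tau\varphi$ is continuous into $C^0$. A standard density argument approximating $(u,\varphi)$ by smooth data then delivers the corollary in the stated generality.
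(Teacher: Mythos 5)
Your proposal is correct and follows exactly the same route as the paper: derive monotonicity of $s\mapsto B_J(u,\varphi)(s)$ from \eqref{id1} with the choice of gauging function $J$ from Lemma \ref{lemmaJ}, compare the endpoint values using $J_0\equiv 1$ and $J\ge e^{-4\Kato_t(M,g)}$, and reduce $\tau<t$ to $\tau=t$ by monotonicity of $\Kato_\tau$. The regularity remark at the end is a sensible addition that the paper leaves implicit.
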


\begin{proof} We only need to prove \eqref{eq:inForCheeger} for $\tau=t$ as condition \eqref{eq:SK} and our proof remains true if $t$ is replaced by any $\tau \in (0,t]$. First observe that inequality \eqref{id1} and the lower bound for $J$ given by Lemma \ref{lemmaJ} imply
\begin{align}
\label{ineq-proffDiffIn}
\frac{\di}{\di s} B_J(u,\varphi)(s) & \geq e^{-4 \mbox{\tiny{k}}_t(M,g)} \frac{2(1-4 \Kato_t(M,g))}{n}\int_M \varphi_s (\Delta_g u_{t-s})^2 \di\nu_g  \nonumber \\ 
& \geq \frac{e^{-12\mbox{\tiny{k}}_t(M,g)}}{n} \int_M \varphi_s (\Delta_g u_{t-s})^2 \di\nu_g, 
\end{align}
where we used that $1-x \geq e^{-2x}$ on $\left[ 0, \frac 12 \right]$, with $x=4\Kato_t(M,g)$. In particular the latter inequality yields that 
\begin{equation*}
\frac{\di}{\di s}B_J(u,\varphi)(s) \geq  0. 
\end{equation*}
Therefore, when integrating between $0$ and $t$, we get
\begin{equation*}
\int_M \varphi (P_t|du|^2-J|dP_t u|^2)\di\nu_g \geq 0, 
\end{equation*}
which leads to 
$$\int_M P_t\varphi |du|^2\di\nu_g \geq \int_M J |dP_tu|^2\di\nu_g.$$
Inequality \eqref{eq:inForCheeger} then immediately follows by using the lower bound $J \geq e^{-4\mbox{\tiny{k}}_t(M,g)}$. 
\end{proof}

\begin{rem}\label{eq:lipcontrol}
Corollary \ref{cor:inForCheeger} can be rephrased in the following way: if $(M^n,g)$ is a closed Riemannian manifold satisfying \eqref{eq:SK}, then for any $u \in W^{1,2}(M)$ and any $t>0$,
\[
|d P_tu|^2  \leq e^{4\mbox{\tiny{k}}_t(M,g)} P_t(|du|^2)
\]
holds in the weak sense. Of course, the right-hand side can be bounded from above by $e^{1/4n} P_t(|du|^2)$. Thus, as a direct consequence of $P_t$ being non-negative and sub-Markovian, if $u$ is $\kappa$-Lipschitz, then $P_tu$ is $e^{1/8n}\kappa$-Lipschitz.
\end{rem}

We are now in position to prove Theorem \ref{thm:Diffeq}.

\begin{proof}[Proof of Theorem \ref{thm:Diffeq}]
We consider again inequality \eqref{ineq-proffDiffIn}. By definition of $A$ and since $P_{t-s}\Delta_g u =\Delta_g P_{t-s}u$, we can write it as 
$$\frac{\di}{\di s} B_J(u,\varphi)(s) \geq  e^{-12 \mbox{\tiny{k}}_t(M,g)} \frac{2}{n} A(\Delta_g u,\varphi)(s).$$
Since $A$ is monotone non decreasing in $s$, $A(\Delta_g u, \varphi)(s)$ is bounded from below by its value in $s=0$. Then we get
$$\frac{\di}{\di s} B_J(u,\varphi)(s) \geq e^{-12 \mbox{\tiny{k}}_t(M,g)}\frac{2}{n} \int_M \varphi (\Delta_g u_{t})^2 \di\nu_g.$$
We integrate this latter inequality between $0$ and $t$, so that we get
$$ \int_M (\varphi_t |du|^2-\varphi J_t |du_t|^2) \di\nu_g \geq e^{-12\mbox{\tiny{k}}_t(M,g)}  \frac{2}{n} t\int_M \varphi(\Delta_g u_t)^2\di\nu_g.$$
Using the lower bound of Lemma \ref{lemmaJ} for $J$, we get 
\begin{align*}
\int_M \varphi_t |du|^2 \di\nu_g &\geq e^{-4\mbox{\tiny{k}}_t(M,g)}\int_M\varphi |du_t|^2\di\nu_g +\ e^{-12\mbox{\tiny{k}}_t(M,g)}  \frac{2}{n} t\int_M \varphi(\Delta_g u_t)^2\di\nu_g\\
&\geq e^{-12\mbox{\tiny{k}}_t(M,g)}\left( \int_M\varphi |du_t|^2\di\nu_g +\frac{2}{n} t\int_M \varphi(\Delta_g u_t)^2\di\nu_g\right)\cdot\end{align*}
We also have $\mbox{{k}}_s(M,g)\le \mbox{{k}}_t(M,g)<1/8$ for any $s\in (0,t]$. Hence if $s\in (0,t]$  then 
$$\int_M \varphi_s |du|^2 \di\nu_g \geq e^{-12\mbox{\tiny{k}}_t(M,g)}\left( \int_M\varphi |du_s|^2\di\nu_g +\frac{2}{n} s\int_M \varphi(\Delta_g u_s)^2\di\nu_g\right)\cdot$$
Apply this with $t=s$ and $u =P_{t-s}v= v_{t-s}$ to get
\begin{equation}\label{eq:th1.1fin}
\int_M \varphi_s |dv_{t-s}|^2\di\nu_g \geq e^{-12\mbox{\tiny{k}}_t(M,g)}\left(\int_M\varphi |dv_t|^2\di\nu_g + \frac{2}{n} s \int_M\varphi(\Delta_g v_t)^2\di\nu_g \right).
\end{equation}
 Observe that the left-hand side of the previous inequality can be rewritten as the following derivative with respect to $s$:
$$\int_M \varphi_s |dv_{t-s}|^2\di\nu_g = \frac{\di}{\di s}\int_M \varphi_s \frac{v_{t-s}^2}{2}\di\nu_g.$$
Taking this into account while integrating \eqref{eq:th1.1fin} between $0$ and $t$ yields \eqref{id3}. 
\end{proof}

\subsection{Convergence of the Energy}

Let us prove now that the Cheeger energy of a Kato limit space $(X,\dist,\mu,o,\cE)$ coincides with the limit Dirichlet energy $\cE$. 

\begin{theorem}
\label{thm:int_ChE}
Let $(X,\dist, \mu, \cE, o)$ be a Kato limit space. Then $\cE=\Ch_\dist$.
\end{theorem}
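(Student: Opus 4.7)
The plan is to apply Proposition \ref{KSZ}. Since a Kato limit $(X,\dist,\mu,o,\cE)$ is in particular a $\mathrm{PI}$ Dirichlet space (Remark \ref{rem:KatoPI}), it suffices to produce, for some $T>0$, a locally bounded function $\kappa:[0,T]\to [0,+\infty)$ with $\liminf_{t\to 0}\kappa(t)=1$ such that
\begin{equation}\label{eq:target}
\int_X \varphi\,\di\Gamma(P_t u)\le \kappa(t)\int_X P_t\varphi\,\di\Gamma(u)
\end{equation}
holds for every $u\in\cD(\cE)$ and every non-negative $\varphi\in\cC_c(X)\cap\cD(\cE)$, where $(P_t)_{t>0}$ denotes the semi-group associated with $\cE$.

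First, I would transfer \eqref{eq:target} from the approximating manifolds. Let $\{(M_\alpha^n,g_\alpha,o_\alpha)\}$ be the sequence whose Mosco-Gromov-Hausdorff limit is $(X,\dist,\mu,o,\cE)$, and let $f:(0,T_0]\to\R_+$ be the non-decreasing function from \eqref{UniformKato}. Since $f(t)\to 0$ as $t\to 0$, I may choose $T\in(0,T_0]$ with $f(T)<1/8$, so that the smallness assumption \eqref{eq:SK} of Corollary \ref{cor:inForCheeger} is satisfied by every $(M_\alpha,g_\alpha)$ uniformly in $\alpha$. Applying \eqref{eq:inForCheeger} yields, for every $\tau\in(0,T]$, every $u_\alpha\in\cC^1(M_\alpha)$ and every non-negative $\varphi_\alpha\in\cC^0(M_\alpha)$,
\begin{equation}\label{eq:sketchMan}
\int_{M_\alpha}\varphi_\alpha|dP_\tau^{\alpha} u_\alpha|^2\,\di\nu_{g_\alpha}\le e^{4f(\tau)}\int_{M_\alpha}P_\tau^{\alpha}\varphi_\alpha |du_\alpha|^2\,\di\nu_{g_\alpha},
\end{equation}
where $(P_t^{\alpha})_{t>0}$ is the heat semi-group on $(M_\alpha,g_\alpha)$.

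Next, I would pass \eqref{eq:sketchMan} to the limit. Given $u\in\cD(\cE)$ and a non-negative $\varphi\in\cC_c(X)\cap\cD(\cE)$, the definition of Mosco convergence provides recovery sequences $u_\alpha\xrightarrow{\mathrm{E}} u$ and $\varphi_\alpha\xrightarrow{\mathrm{E}}\varphi$, which by truncation and mollification may be chosen so that $\varphi_\alpha$ is non-negative, compactly supported in a ball of fixed radius, and converges uniformly on compact sets. By Proposition \ref{prop:equivmosco}, Mosco convergence is equivalent to strong semi-group convergence, hence $P_\tau^\alpha u_\alpha \xrightarrow{\mathrm{E}} P_\tau u$ and $P_\tau^\alpha\varphi_\alpha\to P_\tau\varphi$ strongly in $L^2$; since the spaces are all $\mathrm{PI}$ and hence Feller with uniform Gaussian estimates \eqref{eq:HKbound}, the convergence of $P_\tau^\alpha\varphi_\alpha$ is in fact uniform on compact sets. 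Combining strong energy convergence $u_\alpha\xrightarrow{\mathrm{E}} u$ with the uniform convergence of $P_\tau^\alpha\varphi_\alpha$ to the bounded continuous function $P_\tau\varphi$, the right-hand side of \eqref{eq:sketchMan} converges to $e^{4f(\tau)}\int_X P_\tau\varphi\,\di\Gamma(u)$. For the left-hand side, strong energy convergence $P_\tau^\alpha u_\alpha\xrightarrow{\mathrm{E}} P_\tau u$ coupled with the lower semicontinuity of $v\mapsto \int_X\varphi\,\di\Gamma(v)$ along sequences of functions converging in energy on varying spaces, against the converging test $\varphi_\alpha$, gives
\[
\int_X \varphi\,\di\Gamma(P_\tau u)\le \liminf_{\alpha}\int_{M_\alpha}\varphi_\alpha|dP_\tau^\alpha u_\alpha|^2\,\di\nu_{g_\alpha}.
\]
Together these two estimates produce \eqref{eq:target} with $\kappa(\tau):=e^{4f(\tau)}$, and since $f(\tau)\to 0$ as $\tau\to 0$, $\liminf_{\tau\to 0}\kappa(\tau)=1$. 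Proposition \ref{KSZ} then yields $\cE=\Ch_\dist$.

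The main obstacle is the last step, namely the lower-semicontinuity of the localized Dirichlet energy $v\mapsto\int\varphi\,\di\Gamma(v)$ under pointed Mosco-GH convergence against a variable, converging test function $\varphi_\alpha$. This is the place where the analytic structure inherited from the $\mathrm{PI}$ assumption (absolute continuity of $\Gamma(v)$ with respect to $\mu$, comparison with approximate Lipschitz constants \eqref{EnergyLip}, and Gaussian control on the semi-group) is truly needed, and it is precisely the kind of statement gathered in the Appendix and used throughout the paper to transfer Dirichlet-form-level inequalities from the smooth approximations to the limit.
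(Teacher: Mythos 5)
Your proposal is correct in outline and reaches the target inequality of Proposition \ref{KSZ} from the same starting point as the paper (Corollary \ref{cor:inForCheeger} applied uniformly along the approximating manifolds), but the limit-passage step is genuinely different. The paper does \emph{not} pass the carr\'e-du-champ inequality to the limit directly. Instead it integrates the manifold inequality $a_\alpha'(s)\le e^{4f(t)}a_\alpha'(t-s)$ in $s$, where $a_\alpha(s)=A^\alpha_t(u_\alpha,\varphi_\alpha)(s)=\tfrac12\int (P^\alpha_{t-s}u_\alpha)^2P^\alpha_s\varphi_\alpha\,\di\mu_\alpha$, so that only the zeroth-order quantities $a_\alpha(s)$ need to converge; this follows from strong $L^2$ convergence of $P^\alpha_{t-s}u_\alpha$ and uniform convergence of $P^\alpha_s\varphi_\alpha$ (Gaussian bounds plus Propositions \ref{prop:convdom} and \ref{prop:cvproduit}). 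The derivative inequality at $s=0$ is then recovered by taking difference quotients of the $C^1$ limit function $a$ and using \eqref{def_DerA}. Your route instead passes the first-order inequality itself, using lower semicontinuity of $v\mapsto\int\varphi\,\di\Gamma(v)$ on the left and convergence of localized energies on the right; this is legitimate and is exactly what Propositions \ref{prop:convfaibleenergie} and \ref{prop:cvenergyI} of the Appendix provide, but it is heavier machinery and requires extra care with their hypotheses: the recovery sequence $u_\alpha$ of a general $u\in\cD(\cE)$ need not satisfy $\sup_\alpha\|u_\alpha\|_{L^\infty}<\infty$ (fix by truncating $u$ via the Markov property), and $P^\alpha_\tau\varphi_\alpha$ is not compactly supported (fix by the Gaussian decay of Remark \ref{rem:decroissance} and a cut-off). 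The paper's integrated argument buys you freedom from all of these $L^\infty$ and support issues at the cost of one integration by parts in $s$; your argument is more direct conceptually but only as strong as the convergence theory for energy measures you invoke, which you correctly identify as the crux.
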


\begin{rem} Theorem \ref{thm:int_ChE} implies that for Kato limit spaces $(X,\dist,\mu,o)$, the pmGH convergence of the approximating sequence of manifolds implies the Mosco convergence of the associated energies. As a consequence, if $X$ is compact then we have convergence of the spectrum of the rescaled Laplacians of the approximating manifolds to the spectrum of the Laplacian associated with the Cheeger energy. This generalizes results of J.~Cheeger and T.~Colding \cite[Section 7]{CheegerColdingIII}, where a uniform lower bound on the Ricci curvature is assumed, and of K.~Fukaya \cite{Fukaya}, under a uniform bound on the curvature.
\end{rem}

\begin{proof}[Proof of Theorem \ref{thm:int_ChE}] Let $\{(M_\alpha,g_\alpha,o_\alpha)\}_\alpha$ be a sequence of pointed Riemannian manifolds satisfying a uniform Kato bound and such that $\{(M_\alpha,\dist_{g_\alpha},\mu_\alpha,o_\alpha,\cE_\alpha)\}_{\alpha \in A}$ converges in the Mosco-Gromov-Hausdorff sense to $(X,\dist, \mu, \cE, o)$. Let $T>0$ and $f:(0,T] \to [0,+\infty)$ be the non-decreasing function in Definition \ref{def:bounds}.  We know from Proposition \ref{converge3} that $\dist=\dist_\cE.$ Moreover, by Remark \ref{rem:KatoPI}, we get that $(X,\dist_\cE,\mu,\cE)$ is a PI$(R)$ Dirichlet space.  Therefore, thanks to Proposition \ref{KSZ}, we are left with showing that for any $u\in \cD(\cE)$, any non-negative $\varphi\in \cD(\cE)\cap \cC_c(X)$  and any $t\in [0,T],$
\begin{equation}\label{lip}\int_X \varphi\di \Gamma (P_tu)\le e^{4f(t)}\int_X P_t\varphi\di \Gamma (u).\end{equation}

Let $u, \varphi$ be as above. Set $L:=\|\varphi\|_{L^\infty}$ and let $R>0$ be such that $\supp \varphi\subset B_R(o)$.
Let $\{u_\alpha\}_\alpha, \{\varphi_\alpha\}$ be two sequences, where $u_\alpha \in \cD(\cE_\alpha)$ and $\varphi_\alpha\in \cC_c(X_\alpha)\cap \cD(\cE_\alpha)$ for any $\alpha$, such that
\begin{itemize}
\item $u_\alpha\stackrel{\mE}{\longrightarrow}u$,
\item the sequence $\{\varphi_\alpha\}_\alpha$ converges uniformly to $\varphi$,
\item  $0\le \varphi_\alpha\le L$ and $\supp \varphi_\alpha\subset B_{R+1}(o_\alpha)$ for any $\alpha$.
\end{itemize}
The Mosco convergence $\cE_\alpha\to \cE$ guarantees the existence of $\{u_\alpha\}$ while Proposition \ref{prop:approx} ensures the existence of $\{\varphi_\alpha\}$. Let $(P^\alpha_t)_{t\ge 0}$ (resp.~$(P_t)_{t\ge 0}$) be the heat semi-group of the Dirichlet space $(M_\alpha,\dist_\alpha,\mu_\alpha,\cE_\alpha)$ (resp.~$(X,\dist,\mu,\cE)$) for any $\alpha$.  Fix $t\in (0,T]$ and $s \in [0,t]$. Set 
\[
a(s):=A_t(u,\varphi)(s) \qquad \text{and} \qquad a_\alpha(s) :=A^{\alpha}_t(u_\alpha,\varphi_\alpha)(s) 
\]
for any $\alpha$. We claim that
\begin{equation}\label{claim}
\lim_\alpha a_\alpha(s)= a(s).
\end{equation}
Indeed, by (3) in Proposition \ref{prop:equivmosco}; the sequence $\{P^\alpha_{t-s}u_\alpha\}_\alpha$ converges strongly in $L^2$ to $P_{t-s}u$.  Moreover,  let us prove that $\{P^\alpha_{s}\varphi_\alpha\}_\alpha$ converges uniformly on compact sets to $P_{s}\varphi$.  Since $P_s\varphi$ is continuous, this follows from showing that for any $x \in X$ and any given sequence $\{x_\alpha\}$ such that $M_\alpha \ni x_\alpha \to x \in X$, 
\begin{equation}\label{lim}
\lim_\alpha P^\alpha_{s}\varphi_\alpha(x_\alpha)=P_s \varphi (x).
\end{equation}
Let $H_\alpha$ (resp.~$H$) be the heat kernel of $(M_\alpha,\dist_\alpha,\mu_\alpha,\cE_\alpha)$ (resp.~$(X,\dist,\mu,\cE)$) for any $\alpha$.  We have
\[
P_s^{\alpha} \varphi_\alpha (x_\alpha) = \int_{M_\alpha} H_\alpha(s,x_\alpha,y) \varphi_\alpha(y) \di \mu_\alpha(y)
\]
for any $\alpha$, and similarly
\[
P_s \varphi (x) = \int_X H(s,x,y) \varphi(y) \di \mu(y).
\]
The heat kernel estimate \eqref{eq:HKbound} ensures that we can apply  Proposition \ref{prop:convdom} to the continuous functions $\{H_\alpha(s,x_\alpha,\cdot) \varphi_\alpha(\cdot)\}_\alpha,H(s,x,y) \varphi(y)$: this directly establishes \eqref{lim}. We are then in a position to apply Proposition \ref{prop:cvproduit} to make the integrals
\[
a_\alpha(s)=\int_X (P^\alpha_{t-s}u_\alpha)^2 P_s^{\alpha} \varphi_\alpha
\]
converge to $a(s)$, as claimed in \eqref{claim}.

From \eqref{def_DerA}, we have $$a'_\alpha(s)= \int_{M_\alpha}  P^\alpha_{s}\varphi_\alpha \left|dP^\alpha_{t-s}u_\alpha\right|^2\di\mu_\alpha.$$
Assume from now on that $s\in (0,t/2)$. According to Corollary \ref{cor:inForCheeger}, we know that
$$a'_\alpha(s)\le e^{4f(t)}a'_\alpha(t-s).$$
Integrating this inequality between $0$ and $s$ and dividing by $s$ yields
$$\frac{a_\alpha(s)-a_\alpha(0)}{s}\le e^{4f(t)}\frac{a_\alpha(t)-a_\alpha(t-s)}{s}.$$
Letting $\alpha\to \infty$, then $s\to 0$, and using expression \eqref{def_DerA} for $a'(s)=A'_t(u,\varphi)(s)$ imply inequality \eqref{lip}.
\end{proof}

\begin{rems}\label{rem:plusgen}

Similarly to what we pointed out in Remark \ref{rem:plusgen1}, it is clear that Theorem \ref{thm:int_ChE} also holds when we assume the following more general assumption: there exists a non-decreasing function $f: (0,T] \to \R_+$ such that $\lim_0 f=0$ and
$$\limsup_{\alpha \to \infty} \Kato_t(M_\alpha, g_\alpha) \leq f(t) \ \mbox{ for all } t \in (0,T].$$
In particular, when $\lim_{\alpha} \Kato_T(M_\alpha, g_\alpha)=0$ we have both $\dist =\dist_{\cE}$ and $\cE=\Ch$. 

According to Remark \ref{rem:tgConeKato}, tangent cones (and rescalings $(Z,\dist_Z,\mu_Z,z,\cE_Z)$ centered at convergent points) of a Kato limit are obtained as limits of manifolds such that for all $t>0$, $\Kato_t(M_\alpha, g_\alpha) \to 0$ as $\alpha \to \infty$. Thus Theorem \ref{thm:int_ChE} then applies to such spaces.

\end{rems}

\subsection{The $\RCD$ condition for a certain class of Kato limit spaces}
 
In the next key result we prove that any Kato limit space obtained from manifolds $\{(M_\alpha,g_\alpha)\}$ with $\mbox{\emph{k}}_T(M_\alpha^n,g_\alpha) \to 0$ for some $T>0$ satisfies the Riemannian Curvature Dimension condition $\RCD(0,n)$. 

\begin{theorem}
Let $(X,\dist,\mu,o,\cE)$ be a Dirichlet space obtained as pointed Mosco-Gromov-Hausdorff limit of spaces $\{(M_\alpha, \dist_{g_\alpha},\mu_{\alpha},o_\alpha,\cE_\alpha)\}$, where $\{(M_\alpha^n,g_\alpha,o_\alpha)\}$ is a sequence of closed pointed Riemannian manifolds such that 
\begin{equation}
\label{eq:HypGen}
\tag{H}
\mbox{\emph{k}}_T(M_\alpha^n,g_\alpha) \to 0
\end{equation}
for some $T>0$.  Then $(X, \dist, \mu)$ is an $\RCD(0,n)$ space.
\end{theorem}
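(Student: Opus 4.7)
The plan is to verify the weak Bakry-Ledoux inequality $\mathrm{BL}(0,n)$ on $(X,\dist,\mu,\cE)$, which by the definition given above yields the $\RCD(0,n)$ condition. We first observe that the assumption $\Kato_T(M_\alpha,g_\alpha)\to 0$ implies $\limsup_\alpha \Kato_t(M_\alpha, g_\alpha)=0$ for every $t\in(0,T]$, so Remarks \ref{rem:plusgen1} and \ref{rem:plusgen} give $\dist=\dist_\cE$ and $\cE=\Ch_\dist$. Combined with Corollary \ref{converge1}, this makes $(X,\dist,\mu,\cE)$ a PI Dirichlet space, infinitesimally Hilbertian with Cheeger energy $\cE$ and associated Laplacian $L$, so only the inequality $\mathrm{BL}(0,n)$ remains to be established.

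The plan is to pass the inequality of Theorem \ref{thm:Diffeq} to the limit. By density, it suffices to prove $\mathrm{BL}(0,n)$ for test functions $v\in \cD(L)\cap L^\infty(X)$ with $Lv\in L^\infty(X)$ and nonnegative $\varphi\in \cC_c(X)\cap \cD(\cE)$. Using Mosco convergence together with Proposition \ref{prop:approx}, we will build approximating sequences: $v_\alpha\in \cD(L_\alpha)\cap L^\infty(M_\alpha)$ with $v_\alpha \to v$ and $L_\alpha v_\alpha \to Lv$ strongly in $L^2$, and $\varphi_\alpha \in \cC_c(M_\alpha) \cap \cD(\cE_\alpha)$ non-negative, uniformly bounded, with $\varphi_\alpha \stackrel{\cC_c}{\to}\varphi$ and $\varphi_\alpha \stackrel{\mE}{\to}\varphi$. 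For $\alpha$ large enough that $\Kato_t(M_\alpha,g_\alpha)<1/8$, Theorem \ref{thm:Diffeq} yields
$$\tfrac{1}{2}\int_{M_\alpha}(P_t^\alpha\varphi_\alpha\, v_\alpha^2-\varphi_\alpha(P_t^\alpha v_\alpha)^2)\di\mu_\alpha \ge e^{-12\Kato_t(M_\alpha,g_\alpha)}\left[t\int_{M_\alpha}\varphi_\alpha|dP_t^\alpha v_\alpha|^2\di\mu_\alpha+\tfrac{t^2}{n}\int_{M_\alpha}\varphi_\alpha(\Delta_{g_\alpha} P_t^\alpha v_\alpha)^2\di\mu_\alpha\right].$$
The factor $e^{-12\Kato_t(M_\alpha,g_\alpha)}$ tends to $1$. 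The LHS converges to the analogous quantity on $X$ using the strong $L^2$-convergence $P_t^\alpha v_\alpha\to P_tv$ from Proposition \ref{prop:equivmosco} and the uniform convergence $P_t^\alpha\varphi_\alpha\to P_t\varphi$ on the support of $\varphi$ provided by Proposition \ref{prop:convdom}. The Laplacian term on the RHS converges because $\Delta_{g_\alpha}P_t^\alpha v_\alpha = P_t^\alpha L_\alpha v_\alpha\to P_tLv = LP_tv$ strongly in $L^2$ and $\varphi_\alpha\to\varphi$ uniformly.

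The hardest step will be the lower semicontinuity of the weighted gradient term, namely
$$\liminf_\alpha \int_{M_\alpha}\varphi_\alpha |dP_t^\alpha v_\alpha|^2\di\mu_\alpha \ge \int_X \varphi|\nabla P_tv|^2\di\mu.$$
We plan to tackle this by integration by parts: since $P_tv\in \cD(L)$ and $\varphi\in \cD(\cE)\cap L^\infty$, the chain rule in the strongly local Dirichlet form yields
$$\int_X\varphi|\nabla P_tv|^2\di\mu = \int_X\varphi\,P_tv\,(LP_tv)\di\mu - \int_X P_tv\,\langle \nabla\varphi,\nabla P_tv\rangle\di\mu,$$
with the analogous identity on each $M_\alpha$. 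The first term converges as above. For the second, the strong energy convergences $P_t^\alpha v_\alpha\stackrel{\mE}{\to} P_tv$ and $\varphi_\alpha\stackrel{\mE}{\to}\varphi$, combined with a polarization argument applied to the bilinear carré du champ, will give convergence of the cross term $\int_{M_\alpha} P_t^\alpha v_\alpha\,\langle \nabla\varphi_\alpha,\nabla P_t^\alpha v_\alpha\rangle\di\mu_\alpha$. Assembling these convergences produces an inequality on $X$ which is even stronger than $\mathrm{BL}(0,n)$ (by a factor $2$ in the Laplacian coefficient, as $J_0(t)/n = t^2/(2n)$ while our limit coefficient is $t^2/n$), completing the verification that $(X,\dist,\mu)$ is $\RCD(0,n)$.
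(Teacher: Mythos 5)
Your proposal is correct and shares the paper's overall architecture --- pass the manifold inequality of Theorem \ref{thm:Diffeq} to the limit to obtain \ref{BL(K,n)} with $K=0$ --- but it handles the one genuinely delicate term by a different mechanism. The paper deliberately avoids proving convergence of the weighted gradient term $\int\varphi_\alpha\,\di\Gamma_\alpha(P_t^\alpha v_\alpha)$: it rewrites \eqref{id3} as the one-parameter family \eqref{id4} involving $A_t(v,\varphi)(s)$ and its $s$-derivative, multiplies by a test function $\xi\in C_c^\infty((0,t))$ and integrates by parts in $s$, so that only the pointwise convergence $a_\alpha(s)\to a(s)$ is needed, and then recovers the pointwise inequality at $s=0$ from the continuity of $a$, $a'$ and $c$ on $[0,t)$. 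You instead confront the gradient term directly via the identity $\int_X\varphi\,\di\Gamma(P_tv)=\cE(\varphi P_tv,P_tv)-\tfrac12\cE(\varphi,(P_tv)^2)$; note that your cross term equals $\tfrac12\cE(\varphi,(P_tv)^2)$ by the chain rule, so the convergence you need is really a weak--strong pairing in energy rather than bare polarization. This works, and is in fact already packaged in the paper's Appendix as Propositions \ref{prop:cvenergyI} and \ref{prop:convfaibleenergie}; the ingredients are the strong energy convergence $P_t^\alpha v_\alpha\stackrel{\mE}{\to}P_tv$ (which follows from Proposition \ref{prop:equivmosco} applied to $\lambda\mapsto\lambda e^{-2t\lambda}$) together with uniform $L^\infty$ bounds on $v_\alpha$ so that $(P_t^\alpha v_\alpha)^2$ and $\varphi_\alpha P_t^\alpha v_\alpha$ have bounded energy. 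Your route is more direct; the paper's buys independence from any convergence statement for weighted carrés du champ at the price of the regularity discussion for $a$, $a'$ and $c$. Two simplifications you could make: you do not need $L_\alpha v_\alpha\to Lv$ strongly in $L^2$ (only $v_\alpha\to v$ strongly, since $\Delta_\alpha P_t^\alpha=\xi(L_\alpha)$ with $\xi(\lambda)=\lambda e^{-t\lambda}$ already converges strongly as a bounded operator by Proposition \ref{prop:equivmosco}), and since the gradient term sits on the right-hand side with a positive coefficient, the $\liminf$ lower bound of Proposition \ref{prop:convfaibleenergie} would already suffice.
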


\begin{proof}

Our goal is to establish the Bakry-Ledoux $\mathrm{BL}(0,n)$ estimate: for $t \in (0,T)$ and $u\in \mathcal{D}(\mathcal{E})$ and $\varphi \in \cD(\cE)\cap \cC^0_c(X)$ with $\varphi \ge 0$,  we aim at showing
\begin{equation}
\label{intBL}
\frac 12 \int_X \varphi (P_t u^2 - (P_tu)^2 )\di\mu \geq \int_X \varphi \left( t \frac{\di \Gamma \left( P_tu\right)}{\di\mu} +\frac{t^2}{n}(\Delta P_t u)^2 \right) \di\mu,
\end{equation}
starting from inequality \eqref{id4} in Theorem \ref{thm:Aineq}.  As in the proof of  Theorem \ref{thm:int_ChE},  we let $\{u_\alpha\}_\alpha, \{\varphi_\alpha\}$ be two sequences where $u_\alpha \in \cD(\cE_\alpha)$, $\varphi_\alpha\in \cC_c(X_\alpha)\cap \cD(\cE_\alpha)$ for any $\alpha$ such that
\begin{itemize}
\item $u_\alpha\stackrel{\mE}{\longrightarrow}u$,
\item the sequence $\{\varphi_\alpha\}_\alpha$ converges uniformly to $\varphi$,
\item  $0\le \varphi_\alpha\le L$ and $\supp \varphi_\alpha\subset B_{R+1}(o_\alpha)$ for any $\alpha$, for some $L,R>0$.
\end{itemize}
Take $t \in (0,T]$ and $0\le s\le t$.  Like in the proof of the previous Theorem \ref{thm:int_ChE},  we set $a_\alpha(s):=A^{\alpha}_t(u_\alpha,\varphi_\alpha)(s)$ for any $\alpha$ and $a(s):=A_t(u,\varphi)(s)$. We know from there that
$$\lim_\alpha a_\alpha(s)=a(s).$$
For any $\alpha$,  we let $(P_t^\alpha)_{t\ge 0}$ be the heat semi-group associated with $\cE_\alpha$ and we set
$$c_\alpha(s)=\int_{M_\alpha}  P^\alpha_s\varphi_\alpha \left(\Delta_{\alpha} P^\alpha_{t-s}u_\alpha\right)^2 \di\mu_\alpha \quad \text{and}\quad
 c(s)=\int_{X}  P_s\varphi \left(\Delta
 P_{t-s}u\right)^2 \di\mu.$$
Thanks to (5) in Proposition \ref{prop:equivmosco}, we know that $\Delta_\alpha P_t^\alpha$ strongly converges to $\Delta P_t$ in the sense of bounded operators. 
In particular for any $s\in [0,t)$, we have the strong convergence $$\Delta_\alpha P_{t-s}^\alpha u_\alpha\stackrel{L^2}{
\longrightarrow}\Delta P_{t-s} u ,$$ hence the same argument to get the convergence $a_\alpha(s)\to a(s)$ gives
 $$\lim_\alpha c_\alpha(s)=c(s).$$
Now observe that dividing  inequality \eqref{id4} by $\nu_{g_\alpha}(B_{\sqrt{T}}(o_\alpha)$  implies, for any $0 < s < t < T$, 
 \begin{equation*}
 a_\alpha(0)-a_\alpha(s)\geq  e^{-12\mbox{\tiny{k}}_t(M_\alpha,g_\alpha)}\left( (t-s) \frac{\di a_\alpha}{\di s}(s) + \frac{2(t-s)^2}{n}  c_\alpha(s)\right).
\end{equation*}
Consider a non-negative test function $\xi \in C^{\infty}_0((0,t))$. When multiplying the previous inequality by $\frac{\xi}{t-s}$ and integrating over $[0,t]$ we obtain: 
\begin{equation*}\begin{split}
\int_0^t\frac{\xi(s)}{(t-s)}\di s\, a_\alpha(0)-\int_0^t\frac{\xi(s)}{(t-s)}a_\alpha(s)\di s&  \geq  e^{-12\mbox{\tiny{k}}_t(M_\alpha,g_\alpha)}\left( \int_0^t\xi(s) \frac{da_\alpha}{ds}(s) \di s\right.\\
&\left.+\int_0^t\xi(s) \frac{2(t-s)}{n}  c_\alpha(s)\di s \right).\end{split}
 \end{equation*}
Integrating by parts, we get 

\begin{equation*}\begin{split}
\int_0^t\frac{\xi(s)}{(t-s)}\di s\, a_\alpha(0)-\int_0^t\frac{\xi(s)}{(t-s)}a_\alpha(s)\di s&  \geq  e^{-12\mbox{\tiny{k}}_t(M_\alpha,g_\alpha)}\left( -\int_0^t\xi'(s) a_\alpha(s) \di s\right.\\
&\left.+\int_0^t\xi(s) \frac{2(t-s)}{n}  c_\alpha(s)\di s \right).\end{split}
 \end{equation*}

We remark that for any $s\in [0,t]\colon$
$$a_\alpha(s)\le L \|u_\alpha\|^2_{L^2}\text{ and } c_\alpha(s)\le  \frac{L}{(t-s)^2}\|u_\alpha\|^2_{L^2}.$$

Recall that $\xi$ has compact support in the open interval $(0,t)$ hence we can use the dominate convergence theorem to get that 
\begin{equation*}
\int_0^t\frac{\xi(s)}{(t-s)}ds\, a(0)-\int_0^t\frac{\xi(s)}{(t-s)}a(s)ds  \geq  -\int_0^t\xi'(s) a(s) ds
\int_0^t\xi(s) \frac{2(t-s)}{n}  c(s)ds.
 \end{equation*}
The functions $a,a'$ and $c$ are continuous on $[0,t)$. The continuity of $a$ and $c$ follows from \cite[Lemma 2.1]{AGS15}. The continuity of $a'$ follows similarly from the same lemma. Indeed, we  assumed that $\varphi$ is continuous with compact support, hence the function $s\in [0,t] \mapsto P_s\varphi\in \cC_0^0(X)$ is continuous.  Moreover, $s\in [0,t)\mapsto P_{t-s}u\in \cD(\cE)$ is then $s\in [0,t) \mapsto \di\Gamma\left(P_{t-s}u\right)\in \Rad(X)=\left(\cC_0^0(X)\right)'$  is also continuous, therefore $a'$ is continuous.

As a consequence, the function $a$ is $C^1$ on $[0,t)$, thus by integrating by parts we obtain
 \begin{equation*}
\int_0^t\frac{\xi(s)}{(t-s)}ds\, a(0)-\int_0^t\frac{\xi(s)}{(t-s)}a(s)ds  \geq  \int_0^t\xi(s) a'(s) ds
\int_0^t\xi(s) \frac{2(t-s)}{n}  c(s)ds.
 \end{equation*}
Moreover, since $a$ and $c$ are continuous on $[0,t)$, for any $s\in [0,t)$ we get
$$  a(0)-a(s)\geq   (t-s) \frac{\di a}{\di s}(s) + \frac{2(t-s)^2}{n}  c(s).$$
that is to say for any $s \in [0,t)$,  we obtain
\begin{equation*}\begin{split}
 \frac{1}{2}\left(\int_{X} \varphi P_t(u^2) \di\mu -\int_{X} P_s\varphi\, \left(P_{t-s}u\right)^2\di\mu\right)  \geq& (t-s) \int_{X} P_s\varphi\,\di \Gamma\left(P_{t-s}u\right)\\
+& \frac{(t-s)^2}{n} \int_{X} P_s\varphi (\Delta P_{t-s}u)^2 \di\mu. \end{split}
\end{equation*}
Inequality \eqref{intBL} coincides with this latter inequality when taking $s=0$.
\end{proof}

Thanks to Remarks \ref{rem:plusgen1} and \ref{rem:plusgen}, the two previous theorems allow us to get the following result for tangent cones of Kato limit spaces.

\begin{cor}
\label{cor:TgConeRCD}
Let $(X,\dist,\mu,o,\cE)$ be a (resp.~non-collapsed strong) Kato limit space. Then for any $x \in X$, any measured tangent cone $(X_x,\dist_x,\mu_x,x)$  is a (resp.  weakly non-collapsed) $\RCD(0,n)$ metric measure space. 
\end{cor}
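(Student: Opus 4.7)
The strategy is to exhibit any measured tangent cone as a Mosco-Gromov-Hausdorff limit of rescaled manifolds for which the Kato constant at every fixed scale tends to zero, and then to invoke the previous theorem.

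First, I would apply Corollary \ref{cor:Diritangent} to equip the measured tangent cone $(X_x,\dist_x,\mu_x,x)$ with a regular, strongly local Dirichlet form $\cE_x$ such that $\dist_{\cE_x}$ is bi-Lipschitz equivalent to $\dist_x$, and such that for a suitable sequence $\rho_\alpha\downarrow 0$,
\[
(X,\dist_{\cE_{\rho_\alpha}},\mu_{\rho_\alpha},x,\cE_{\rho_\alpha}) \stackrel{pMGH}{\longrightarrow} (X_x,\dist_{\cE_x},\mu_x,x,\cE_x).
\]
Next, I would combine this convergence with the one expressing $(X,\dist,\mu,o,\cE)$ as a Mosco-GH limit of pointed manifolds $(M_\alpha,g_\alpha,o_\alpha)$ satisfying \eqref{UniformKato} (resp.~\eqref{StrongKato}+\eqref{NC}). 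A standard diagonal argument, as made explicit in Remark \ref{rem:tgConeKato} (resp.~Remark \ref{rem-TgConesSKL}), produces a sequence of rescaled pointed manifolds $(M_{\alpha(k)},\eps_k^{-1}\dist_{g_{\alpha(k)}},m_k\nu_{g_{\alpha(k)}},x_k)$ Mosco-GH converging to $(X_x,\dist_{\cE_x},\mu_x,x,\cE_x)$, with $\eps_k\to 0$ and $m_k$ the appropriate rescaling constant.

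The key observation is that the scaling property \eqref{scalingKatoL1} of the Kato constant, together with $\eps_k \to 0$ and the uniform bound $\mbox{k}_t(M_{\alpha(k)},g_{\alpha(k)})\le f(t)$ with $f(0^+)=0$, yields
\[
\mbox{k}_t(M_{\alpha(k)},\eps_k^{-2}g_{\alpha(k)})=\mbox{k}_{\eps_k^2 t}(M_{\alpha(k)},g_{\alpha(k)})\le f(\eps_k^2 t)\xrightarrow[k\to\infty]{} 0
\]
for every fixed $t>0$. Hence hypothesis \eqref{eq:HypGen} holds for the rescaled sequence, and the previous theorem applies to give that $(X_x,\dist_{\cE_x},\mu_x)$ is an $\RCD(0,n)$ space. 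By the generalized versions of Proposition \ref{converge3} and Theorem \ref{thm:int_ChE} recorded in Remarks \ref{rem:plusgen1} and \ref{rem:plusgen}, we have $\dist_x=\dist_{\cE_x}$ and $\cE_x=\Ch_{\dist_x}$, so the $\RCD(0,n)$ condition descends to the original metric measure structure $(X_x,\dist_x,\mu_x)$.

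For the non-collapsed case, I would additionally invoke Remark \ref{rem-TgConesSKL}: the rescaled manifolds producing the tangent cone may be taken with measures $\eps_k^{-n}\nu_{g_{\alpha(k)}}$ and still satisfy the non-collapsing assumption \eqref{NC}. By Remark \ref{rem-AR-ncStrongKatolim} applied to the tangent cone, we then obtain a uniform Ahlfors $n$-regularity bound $\mu_x(B_r(y))\le C r^n$ on bounded subsets of $X_x$. This immediately gives $\vartheta_{X_x}(y)<\infty$ for every $y\in X_x$, proving that the tangent cone is weakly non-collapsed in the sense of Definition \ref{def:wnc}. The main delicate point is essentially bookkeeping: ensuring via the diagonal argument that the two layers of Mosco-GH convergence (limit and tangent cone) can be combined into a single convergence of rescaled manifolds to which the previous $\RCD$ theorem directly applies.
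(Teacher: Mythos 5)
Your proof is correct and follows precisely the route the paper itself takes: the paper's argument for this corollary is essentially a pointer to Remark \ref{rem:tgConeKato} (tangent cones of Kato limits arise from rescaled manifolds with $\Kato_t\to 0$), Remarks \ref{rem:plusgen1} and \ref{rem:plusgen} (identification of $\dist_x$ with $\dist_{\cE_x}$ and of $\cE_x$ with $\Ch_{\dist_x}$), the theorem immediately preceding (the $\RCD(0,n)$ conclusion under $\Kato_T\to 0$), and Remarks \ref{rem-TgConesSKL}/\ref{rem-AR-ncStrongKatolim} for the Ahlfors regularity giving weak non-collapsedness. You have simply spelled out that chain of references explicitly, including the correct use of the scaling identity $\Kato_t(M,\eps^{-2}g)=\Kato_{\eps^2 t}(M,g)$, so there is nothing to add.
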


The fact that tangent cones of non-collapsed strong Kato limits are weakly non-collapsed follows from the local Ahlfors regularity given in Remark \ref{rem-AR-ncStrongKatolim}.

\begin{rem}
\label{rem-RCDmoreGeneral}
The previous Corollary applies in particular to iterated tangent cones and to convergent rescalings (not necessarily centered at a same point) of a Kato limit.  Indeed, let $(X,\dist, \mu,o,\cE)$ be a Kato limit and assume that for some sequence $\{\eps_\alpha\} \subset (0,+\infty)$ such that $\eps_\alpha \downarrow 0$ and some points $\{x_\alpha\} \subset X$,  the sequence of pointed rescalings $\{(X, \eps_\alpha^{-1} \dist, \eps_\alpha^{-n}\mu, x_\alpha)\}$ converges to some pointed metric measure space $(Z, \dist_Z, \mu_Z,z)$. Thanks to Remark \ref{rem-TgConesSKL}, we know that $(Z, \dist_Z, \mu_Z)$ has the same properties as a tangent cone. As a consequence, $(Z,\dist_Z,\mu_Z)$ is a weakly non-collapsed $\RCD(0,n)$ space. 
\end{rem}

\section{Tangent cones are metric cones}

In this section, we prove that the tangent cones of a non-collapsed strong Kato limit space are all metric cones. To this aim, we introduce two crucial quantities. Let $(X,\dist,\mu,\cE)$ be a metric Dirichlet space admitting a heat kernel $H$. We recall that $n$ is a given positive integer. For any $x \in X$ and $s,t>0$, we define the \textbf{$\bf \Theta$-volume} by
$$\Theta_x(s):=(4\pi s)^{-\frac n2}\int_X e^{-\frac{\dist^2(x,y)}{4s}}\di \mu(y)$$
and similarly the \textbf{$\bf \theta$-volume} by
$$\theta_x(s,t):=(4\pi s)^{-\frac n2}\int_X e^{-\frac{U(t,x,y)}{4s}}d \mu(y)$$
where $U$ is defined by \begin{equation}\label{eq:HU}H(t,x,y)=(4\pi t)^{-\frac n2}e^{-\frac{U(t,x,y)}{4t}}
\end{equation}
for any $y \in X$. 
\begin{rem}
\label{rem-theta-vol} 
We point out the following properties of $\theta_x$ and $\Theta_x$.
\begin{itemize}
\item[(i)] $\theta_x(t,t)=\int_XH(t,x,y)\di\mu(y)$ is identically $1$ if the Dirichlet space is stochastically complete, for instance for $\mathrm{PI}$ Dirichlet space.
\item[(ii)]  The  Chapman-Kolmogorov property yields
\begin{equation}\label{rem:Htheta}
\theta_x\left(\frac{t}{4},\frac{t}{2}\right)=(4\pi t)^{\frac n2}H(t,x,x).
\end{equation}
\item[(iii)] We always have the lower bound: \begin{equation}\label{eq:minTheta}\Theta_x(s)\geq (4\pi s)^{-\frac n2}e^{-\frac{r^2}{4s}}\mu(B_r(x)),\end{equation} hence if $\mu$ is non trivial then $\Theta_x$ is always positive (but perhaps infinite).
\end{itemize}
\end{rem}

When it is necessary, we specify the space $X$ to which the previous quantities are associated by writing $\Theta^X_x(s), \theta^X_x(s,t)$. 

We start by recalling an elementary property of the $\Theta$-volume that relates it to the density of volume at a given point. 
\subsection{On a doubling space}
\begin{lemma}
\label{lem-Thetadoubling}
Let $(X,\dist,\mu)$ be $\upkappa$-doubling at scale $R$.  \begin{enumerate}[i)]
\item The function $(s,x)\in \setR_+^*\times X \mapsto \Theta_x(s)$ is continuous.
\item  For all $x \in X$ and $s>0$ we have that $\Theta_x(s)$ is finite. More precisely if $t\in (0,R^2]$, there is a constant $A$ depending only $\upkappa$ such that for any $x\in X$ and $s\in (0,R^2]$:
\begin{equation}\label{eq:majTheta}\Theta_x(s)\le A\, \frac{\meas (B_{\sqrt{s}}(x))}{s^{\frac n 2}}.\end{equation} 
\item If there exists $\vartheta \in (0, +\infty)$ such that
$$\lim_{r \rightarrow 0^+}\frac{\meas(B_r(x))}{\omega_n r^n}=\vartheta,$$
then $\displaystyle \lim_{s\rightarrow 0^+}\Theta_x(s)=\vartheta$.
\end{enumerate}
\end{lemma}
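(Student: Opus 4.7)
The proof splits naturally into three parts that should be handled in the order (ii), (i), (iii), since the finiteness bound will underlie the other two arguments. For (ii), the plan is to decompose the integral defining $\Theta_x(s)$ dyadically: writing $X = B_{\sqrt{s}}(x) \cup \bigcup_{k \ge 0} (B_{2^{k+1}\sqrt{s}}(x) \setminus B_{2^k\sqrt{s}}(x))$, the Gaussian factor is bounded by $1$ on the central ball and by $e^{-4^{k-1}}$ on the $k$-th annulus, while $\mu(B_{2^{k+1}\sqrt{s}}(x))$ is controlled via Proposition~\ref{prop:doubling}: for those $k$ with $2^{k+1}\sqrt{s} \le R$ one uses the polynomial doubling bound ii), and for the remaining $k$ the exponential bound iii), yielding a majorant of the shape $C\mu(B_{\sqrt{s}}(x))\,(2^{(k+1)\lambda} + e^{\lambda 2^{k+1}})$ with $C,\lambda$ depending only on $\upkappa$. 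Since $\sum_k e^{-4^{k-1}}(2^{(k+1)\lambda}+e^{\lambda 2^{k+1}})$ converges (the double exponential Gaussian beats the single exponential), summing and absorbing $(4\pi)^{-n/2}$ gives \eqref{eq:majTheta} with $A=A(\upkappa)$.

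For (i), continuity follows from dominated convergence. Fix $(s_0,x_0)$ and restrict to a neighborhood on which $s$ lies in a compact subinterval $[s_1,s_2]\subset (0,+\infty)$ and $x \in B_1(x_0)$. For each such $(s,x)$ the integrand $(4\pi s)^{-n/2}e^{-\dist^2(x,y)/(4s)}$ is continuous in $(s,x)$ for every $y$. Using $\dist^2(x,y) \ge \tfrac12 \dist^2(x_0,y) - \dist^2(x,x_0)$, one sees that the integrand is dominated by a fixed multiple of $e^{-\dist^2(x_0,y)/(8s_2)}$, whose integrability is provided by part~(ii) applied at $x_0$ with parameter $2s_2$. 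Dominated convergence then yields joint continuity.

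For (iii), rewrite $e^{-\dist^2(x,y)/(4s)} = \tfrac12\int_{\dist(x,y)/\sqrt{s}}^\infty \rho e^{-\rho^2/4}d\rho$ and apply Fubini, which gives
\[
\Theta_x(s) = \frac{\omega_n}{2(4\pi)^{n/2}}\int_0^\infty \rho^{n+1}e^{-\rho^2/4}\,\frac{\mu(B_{\rho\sqrt{s}}(x))}{\omega_n (\rho\sqrt{s})^n}\,d\rho.
\]
The hypothesis makes the ratio inside the integral converge pointwise in $\rho$ to $\vartheta$ as $s\to 0^+$, and the explicit computation $\frac{\omega_n}{2(4\pi)^{n/2}}\int_0^\infty \rho^{n+1}e^{-\rho^2/4}\,d\rho = 1$ (which just records $\Theta\equiv 1$ on Euclidean space) identifies the prospective limit as $\vartheta$. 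The step I expect to be the main obstacle is producing a uniform integrable majorant of the integrand valid for all small $s$: one must piece together the polynomial bound ii) of Proposition~\ref{prop:doubling} (applicable while $\rho\sqrt{s}\le R$) with the exponential bound iii) (applicable when $\rho\sqrt{s}>R\ge\sqrt{s}$), producing a dominating function of the form $C\rho(1+\rho^\lambda+e^{\lambda \rho})e^{-\rho^2/4}$ which is integrable against the Gaussian. Dominated convergence then delivers $\Theta_x(s)\to \vartheta$.
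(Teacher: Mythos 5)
Your proof is correct, and the underlying mechanism is the same as in the paper: the Gaussian kernel's super-exponential decay beats the (at worst) exponential growth of ball volumes supplied by Proposition~\ref{prop:doubling}, and everything else is dominated convergence plus the Cavalieri rewriting of $\Theta_x(s)$ as an integral of $\rho^{n+1}e^{-\rho^2/4}\,\mu(B_{\rho\sqrt{s}}(x))/(\omega_n(\rho\sqrt s)^n)$. The one genuine (if small) difference is in (ii) and (iii): the paper splits $X$ once into $B_{\sqrt s}(x)$ and its complement (resp.\ $B_R(x)$ and its complement) and handles the outer part by Cavalieri plus the exponential doubling bound, whereas you decompose into dyadic annuli (resp.\ construct a single global dominating function piecing together the polynomial and exponential regimes). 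Both routes deliver the same constants, depending only on $\upkappa$ (and $n$). One small inaccuracy to fix in your write-up: in the outer regime for (ii), the doubling machinery yields the bound $\mu(B_{2^{k+1}\sqrt s}(x)) \le c\,(R/\sqrt s)^\lambda e^{\lambda 2^{k+1}\sqrt s/R}\,\mu(B_{\sqrt s}(x))$, so the majorant is really a product $2^{(k+1)\lambda}e^{\lambda 2^{k+1}}$ rather than a sum $2^{(k+1)\lambda}+e^{\lambda 2^{k+1}}$ --- but since $e^{-4^{k-1}}$ annihilates the product as well, the conclusion is unaffected; likewise in (iii) the majorant should carry a factor $\rho^{n+1}$ (not $\rho$) to match the integrand, again harmless.
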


\begin{proof}

For any $x \in X$, by Cavalieri's formula we can write
$$\Theta_x(s)=\int_0^{+\infty}e^{-\frac{r^2}{4s}}\frac{r}{2s}\mu(B_r(x)) \frac{\di r }{(4\pi s)^{\frac n2}}$$
A simple computation using Proposition \ref{prop:doubling}-iii) ensures that this integral converges,  hence $\Theta_x(s)$ is well defined for any $s>0$. \par Moreover, Lebesgue's dominated convergence theorem implies that $\Theta_x(s)$ depends continuously on $(s,x)\in \setR_+^*\times X $. 
In addition, when $0<s\le R^2$, the estimate (\ref{eq:majTheta}) follows from
\begin{align*}
\Theta_x(s)&\le \frac{\meas (B_{\sqrt{s}}(x))}{(4\pi s)^{\frac n2}}+\int_{X\setminus B_{\sqrt{s}}(x)} \frac{e^{-\frac{\dist^2(x,y)}{4s}}}{(4\pi s)^{\frac n2}}\di \mu(y)\\
&\le \frac{\meas (B_{\sqrt{s}}(x))}{(4\pi s)^{\frac n2}}+\int_{\sqrt{s}}^{+\infty} e^{-\frac{\rho^2}{4s}}\frac{\rho}{2s}\mu(B_\rho(x)) \frac{\di\rho }{(4\pi s)^{\frac n2}}\\
&\le \frac{\meas (B_{\sqrt{s}}(x))}{(4\pi s)^{\frac n2}}\left(1+\int_{\sqrt{s}}^{+\infty} e^{-\frac{\rho^2}{4s}+\lambda\frac{\rho}{\sqrt{s}} }\frac{\rho}{2s} \di\rho \right) \\
&\le  \frac{\meas (B_{\sqrt{s}}(x))}{(4\pi s)^{\frac n2}}\left(1+\int_{1}^{+\infty} e^{-\frac{\rho^2}{4}+\lambda\rho }\,\frac{\rho}{2} \di\rho \right).
\end{align*}

\noindent Now assume that 
$$\lim_{r \rightarrow 0^+}\frac{\mu(B_r(x))}{\omega_n r^n}=\vartheta.$$
We have similarly 
$$\Theta_x(s)=\int_{B_{R}(x)} \frac{e^{-\frac{\dist^2(x,y)}{4s}}}{(4\pi s)^{\frac n2}}\di \mu(y)+\int_{M\setminus B_{R}(x)} \frac{e^{-\frac{\dist^2(x,y)}{4s}}}{(4\pi s)^{\frac n2}}\di \mu(y).$$
The same estimate yields
\begin{align*}
\int_{M\setminus B_{R}(x)} \frac{e^{-\frac{\dist^2(x,y)}{4s}}}{(4\pi s)^{\frac n2}}\di \mu(y) & \le  \frac{\meas (B_{\sqrt{R}}(x))}{(4\pi s)^{\frac n2}}\int_{R}^\infty  e^{-\frac{\rho^2}{4s}+\lambda\frac{\rho}{R} }\frac{\rho}{2s} \di\rho\\
& = \frac{\meas (B_{\sqrt{R}}(x))}{(4\pi s)^{\frac n2}}\int_{\frac{R}{\sqrt{s} }}^\infty  e^{-\frac{\rho^2}{4}+\lambda\frac{\rho \sqrt{s}}{R} }\frac{\rho}{2} \di\rho.
\end{align*}
We clearly have
$$\lim_{s \rightarrow 0^+}\int_{M\setminus B_{R}(x)} \frac{e^{-\frac{\dist^2(x,y)}{4s}}}{(4\pi s)^{\frac n2}}\di \mu(y)=0.$$
Moreover we have 
\begin{equation*}
\int_{B_{R}(x)} \frac{e^{-\frac{\dist^2(x,y)}{4s}}}{(4\pi s)^{\frac n2}}\di \mu(y)=
 \int_0^{\frac{R}{\sqrt{s} }}
\frac{e^{-\frac{\rho^2}{4}}\omega_n \rho^{n+1}}{2 (4\pi )^{\frac n2}} \frac{\mu(B_{\rho\sqrt{s}}(x))}{\omega_n (\rho \sqrt{s})^n}\di \rho. 
\end{equation*}
Note that there is some constant $C$ (depending on $x$) such that for $r\le R$:
$$\mu(B_r(x))\le C r^n.$$ Hence the dominated convergence theorem and the fact that$$ \int_0^{+\infty} 
\frac{e^{-\frac{\rho^2}{4}}\omega_n \rho^{n+1}}{2 (4\pi )^{\frac n2}}d\rho=\Theta^{\R^n}_{0}(1)=1$$ imply that 
$$\lim_{s \rightarrow 0^+}  \int_{B_{R}(x)} \frac{e^{-\frac{\dist^2(x,y)}{4s}}}{(4\pi s)^{\frac n2}}\di \mu(y))=\vartheta.$$
\end{proof}
We also have the following assertion about the comparison between the measure of balls and the $\Theta$-volume:
\begin{lemma}\label{lem:simple}
Let $(X,\dist,\mu)$ be a metric measure space, and $n>0$. 
\begin{enumerate}[i)]\item let  $x \in X$ and $c>0$, then
 $\Theta_x(s) =c$ for all $s>0$ if and only if $\meas(B_r(x)) = c \omega_n r^n$ for all $r>0$.
 \item If $(X,\dist,\mu)$ is $\upkappa$-doubling at scale $R$, then for any $s\in (0,R^2]$:
 \begin{equation}\label{eq:AhlforsTheta}
a\frac{\meas (B_{\sqrt{s}}(x))}{(\sqrt{s})^n}\le  \Theta_x(s)\le A\frac{\meas (B_{\sqrt{s}}(x))}{(\sqrt{s})^n}.
 \end{equation} where the constant $a>0$ depend only on 
 $n$ and the constant $A$ depends only on $\upkappa$.

\item Assume that for for some  $R>0$ and $x\in X$:$$ \forall s\in (0,R^2]\colon c\le \Theta_x(s)\le C$$ then for any $r\in (0,R]$:
 $$vr^n\le \meas(B_r(x)) \le V r^n.$$
where the constant $v,V$ depends only on $n,c,C$.
 \end{enumerate}
\end{lemma}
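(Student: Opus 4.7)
The approach is to reduce each of the three parts to the Cavalieri representation
\begin{equation*}
\Theta_x(s) \;=\; (4\pi s)^{-n/2}\int_0^{\infty} e^{-r^2/(4s)}\,\frac{r}{2s}\,\mu(B_r(x))\,dr,
\end{equation*}
already derived at the start of the proof of Lemma~\ref{lem-Thetadoubling}. Combined with the change of variables $\rho = r^2$, this exhibits $\Theta_x(s)$ as an explicit multiple of the Laplace transform at $\tau = 1/(4s)$ of the function $\rho \mapsto \mu(B_{\sqrt{\rho}}(x))$, a viewpoint that will be especially useful for (i).

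For (i), the easy direction is a direct computation: inserting $\mu(B_r(x)) = c\omega_n r^n$ and changing variables via $u = r^2/(4s)$ reduces the integral to $c\omega_n \pi^{-n/2}\Gamma(n/2+1) = c$, using $\omega_n = \pi^{n/2}/\Gamma(n/2+1)$. For the converse, I would set $g(r) := \mu(B_r(x)) - c\omega_n r^n$ and rewrite the identity $\Theta_x(s) \equiv c$ as $\int_0^\infty e^{-\tau\rho}g(\sqrt{\rho})\,d\rho = 0$ for all $\tau > 0$; injectivity of the Laplace transform then forces $g(\sqrt{\rho}) = 0$ for a.e.\ $\rho$, and left-continuity of $r \mapsto \mu(B_r(x))$ upgrades this to $g \equiv 0$. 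For (ii), the upper bound is exactly \eqref{eq:majTheta}, while the lower bound is immediate from \eqref{eq:minTheta} applied with $r = \sqrt{s}$, giving $a = e^{-1/4}(4\pi)^{-n/2}$.

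The upper bound in (iii) follows similarly from \eqref{eq:minTheta} at $s = r^2$, yielding $V = Ce^{1/4}(4\pi)^{n/2}$. The lower bound is the real work: fixing $r \in (0,R]$, setting $r_0 = r/A$ for a parameter $A \ge \sqrt{2}$ to be chosen, and writing the assumption $c \le \Theta_x(r_0^2)$ through the Cavalieri integral split at $\rho = A r_0 = r$, one bounds the head by $\mu(B_r(x))(1 - e^{-A^2/4})$. The key device for the tail is the inequality $e^{-\rho^2/(4r_0^2)} \le e^{-A^2/8}\,e^{-\rho^2/(8r_0^2)}$ valid for $\rho \ge A r_0$, which recasts the tail as a multiple of $\Theta_x(2r_0^2)$; invoking $\Theta_x(2r_0^2) \le C$ (legal since $2r_0^2 \le R^2$ whenever $A \ge \sqrt{2}$) produces a bound of the form $\kappa_n C e^{-A^2/8} r_0^n$ with $\kappa_n$ depending only on $n$. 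Choosing $A = A(n,c,C)$ large enough that this tail contribution is at most $\tfrac12 c(4\pi)^{n/2} r_0^n$ finally delivers $\mu(B_r(x)) \ge v r^n$ with $v = v(n,c,C)$. The hard part will be this tail estimate: it forces simultaneous use of the two-sided bounds on $\Theta_x$ at two different scales ($r_0^2$ and $2r_0^2$) and demands careful dimensional tracking so that neither $v$ nor $V$ depends on $R$.
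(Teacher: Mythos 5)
Your proposal is correct and follows essentially the same route as the paper: Cavalieri's formula plus injectivity of the Laplace transform for (i), the two bounds \eqref{eq:minTheta} and \eqref{eq:majTheta} for (ii), and for the lower bound in (iii) a split of the Gaussian integral whose tail is absorbed using the upper bound on $\Theta_x$ at a second, comparatively larger scale (the paper compares $\Theta_x(\kappa s)$ with $\Theta_x(s)$ for $\kappa$ small, you compare $\Theta_x(r_0^2)$ with $\Theta_x(2r_0^2)$ — the same mechanism). The only cosmetic point is that your tail bound "$\kappa_n C e^{-A^2/8} r_0^n$" refers to the integral before dividing by $(4\pi r_0^2)^{n/2}$; once normalized it is a constant to be made $\le c/2$, exactly as in the paper.
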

\begin{proof}\par
\noindent \textit{The first equivalence}  follows from Cavalieri's principle and some properties of the Laplace transform, see e.g.~the proof of \cite[Lem.~3.2]{CT19}.\\
\noindent\textit{The  second assertion}  is a consequence of the two inequalities (\ref{eq:minTheta}) and (\ref{eq:majTheta}).\\
\noindent\textit{For the third assertion :} The upper bound is a consequence of (\ref{eq:minTheta}). The upper bound follows from an argument used in the proof of \cite[Lem.~5.1]{CT19}. 
The estimate (\ref{eq:minTheta}) also implies that for any $\rho\ge \sqrt{t}$:
$$\mu( B_{\rho}(x))\le (4\pi s)^{\frac n 2} e^{\frac{\rho^2}{4s}}\,  C.$$
Hence for any $\kappa\in (0,1)$,
\begin{align*}
\Theta_x(\kappa s)&\le \frac{\meas (B_{\sqrt{s}}(x))}{(4\pi \kappa s)^{\frac n2}}+\int_{X\setminus B_{\sqrt{s}}(x)} \frac{e^{-\frac{\dist^2(x,y)}{4\kappa s}}}{(4\pi \kappa s)^{\frac n2}}\di \mu(y)\\
&\le \frac{\meas (B_{\sqrt{s}}(x))}{(4\pi \kappa s)^{\frac n2}}+\int_{\sqrt{s}}^{+\infty} e^{-\frac{\rho^2}{4\kappa s}}\frac{\rho}{2\kappa s}\mu(B_\rho(x)) \frac{\di\rho }{(4\pi \kappa s)^{\frac n2}}\\
&\le \frac{\meas (B_{\sqrt{s}}(x))}{(4\pi \kappa s)^{\frac n2}}+\frac{C}{\kappa^{\frac n2}} \int_{\sqrt{s}}^{+\infty} e^{-\left(\frac1\kappa -1\right)\frac{\rho^2}{4s} }\frac{\rho}{2\kappa s} \di\rho  \\
&\le \frac{\meas (B_{\sqrt{s}}(x))}{(4\pi \kappa s)^{\frac n2}}-\frac{C}{\kappa^{\frac n2}} \left[\frac{ e^{-\left(\frac1\kappa -1\right)\frac{\rho^2}{4s}}} {1-\kappa} \right]_{\sqrt{s}}^{+\infty} \\
&\le  \frac{\meas (B_{\sqrt{s}}(x))}{(4\pi  \kappa s)^{\frac n2}}+\frac{Ce^{-\left(\frac1\kappa -1\right)\frac{1}{4} }}{(1-\kappa) \kappa^{\frac n2}} .\end{align*}
Choosing $\kappa$ small enough so that $$\frac{e^{-\left(\frac1\kappa -1\right)\frac{1}{4} }}{(1-\kappa) \kappa^{\frac n2}} \le \frac{1}{2}\frac{c}{C}$$ yields that 
$$\frac{c}{2} (4\pi  \kappa s)^{\frac n2}\le \meas (B_{\sqrt{s}}(x)).$$
\end{proof}
The $\Theta$-volume  is continuous with respect to pointed measured Gromov-Hausdorff convergence: 
\begin{prop}
Let $\{(X_\alpha,\dist_\alpha,\meas_\alpha,o_\alpha)\}_\alpha,(X,\dist,\meas,o)$ be proper geodesic pointed metric measure spaces  $\upkappa$-doubling at scale $R$ such that  $(X_\alpha,\dist_\alpha,\meas_\alpha,o_\alpha) \to (X,\dist,\meas,o)$  in the pmGH sense.
 Let $x_\alpha\in X_\alpha\to x\in X$. Then for any $s>0$:
 $$\lim_{\alpha} \Theta^{X_\alpha}_{x_\alpha}(s)=\Theta^{X}_{x}(s).$$
 \end{prop}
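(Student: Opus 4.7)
The plan is to split the integral defining $\Theta_{x_\alpha}^{X_\alpha}(s)$ into a core part supported on a large ball $B_\rho(x_\alpha)$, where we will invoke pmGH convergence, and a tail part on $X_\alpha \setminus B_\rho(x_\alpha)$, which we will control uniformly in $\alpha$ using the uniform doubling assumption. Fix $s>0$ and $\varepsilon>0$.

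\textbf{Uniform tail estimate.} Since $x_\alpha \to x$, the distances $\dist_\alpha(x_\alpha,o_\alpha)$ are uniformly bounded. Combining the pmGH convergence with the $\upkappa$-doubling at scale $R$ gives a uniform upper bound on $\mu_\alpha(B_\rho(x_\alpha))$ for every $\rho>0$: more precisely, by Proposition \ref{prop:doubling}(iii), there exist constants $C,\lambda>0$ depending only on $\upkappa$ such that $\mu_\alpha(B_r(x_\alpha))\le C e^{\lambda r/R}\mu_\alpha(B_R(x_\alpha))$ for $r\ge R$, and the right-hand side is uniformly bounded in $\alpha$ thanks to the weak convergence of measures. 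Arguing as in the proof of Lemma \ref{lem-Thetadoubling} via Cavalieri's formula, the Gaussian factor $e^{-r^2/(4s)}$ dominates the exponential volume growth, so one can choose $\rho=\rho(s,\varepsilon,\upkappa,R)$, independent of $\alpha$, such that
\[
\int_{X_\alpha\setminus B_\rho(x_\alpha)} e^{-\dist_\alpha^2(x_\alpha,y)/(4s)}\,\di\mu_\alpha(y)\le \varepsilon (4\pi s)^{n/2},
\]
and the analogous bound holds on $X$.

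\textbf{Core convergence.} Pick a cutoff $\chi\in \cC_c(X)$ with $0\le \chi\le 1$, $\chi\equiv 1$ on $B_\rho(x)$ and $\supp \chi\subset B_{\rho+1}(x)$. By Proposition \ref{prop:approx} one can construct a sequence $\chi_\alpha\in \cC_c(X_\alpha)$ with $\supp\chi_\alpha\subset B_{\rho+2}(x_\alpha)$ and $\chi_\alpha\stackrel{\cC_c}{\longrightarrow}\chi$. Define
\[
f_s(y):=e^{-\dist^2(x,y)/(4s)},\qquad f_{s,\alpha}(y):=e^{-\dist_\alpha^2(x_\alpha,y)/(4s)}.
\]
If $y_\alpha\to y$ in the pmGH sense, then $\dist_\alpha(x_\alpha,y_\alpha)\to \dist(x,y)$ by the defining $\varepsilon_\alpha$-isometries together with $x_\alpha\to x$, so $f_{s,\alpha}(y_\alpha)\to f_s(y)$. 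By Proposition \ref{prop:criUC}, $f_{s,\alpha}$ converges uniformly to $f_s$ on compact sets; multiplying by the cutoffs gives $\chi_\alpha f_{s,\alpha}\stackrel{\cC_c}{\longrightarrow}\chi f_s$.

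\textbf{Conclusion.} The pmGH convergence applied to the continuous, compactly supported sequence $\chi_\alpha f_{s,\alpha}$ yields
\[
\int_{X_\alpha}\chi_\alpha f_{s,\alpha}\,\di\mu_\alpha \;\xrightarrow[\alpha]{}\; \int_X \chi f_s\,\di\mu.
\]
Combining this with the two uniform tail estimates via the triangle inequality gives
\[
\limsup_\alpha \big|\Theta^{X_\alpha}_{x_\alpha}(s)-\Theta^X_x(s)\big|\le 2\varepsilon,
\]
and since $\varepsilon>0$ was arbitrary, the claim follows. The main technical point is the uniform tail bound, which would be immediate for $s\in(0,R^2]$ from Lemma \ref{lem-Thetadoubling} but requires the explicit exponential-versus-Gaussian comparison above to be valid for all $s>0$.
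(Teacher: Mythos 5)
Your proof is correct and follows essentially the same route as the paper: the paper simply invokes Proposition \ref{prop:convdom} (applied to $u_\alpha(y)=e^{-\dist_\alpha^2(x_\alpha,y)/(4s)}$, which satisfies the required Gaussian bound \eqref{Cb} since $\dist_\alpha(o_\alpha,x_\alpha)$ is uniformly bounded), and that proposition's proof is precisely your tail-plus-core decomposition, with the uniform tail control coming from Lemma \ref{lemma:estimeeVol}. You have unpacked the cited proposition rather than citing it, but the underlying argument — doubling gives at-most-exponential volume growth which is beaten by the Gaussian weight, while the compactly supported part passes to the limit by pmGH convergence and uniform convergence on compacts — is the same.
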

 \begin{proof} This is a direct consequence of the Proposition \ref{prop:convdom}.  \end{proof}

\subsection{On a Dirichlet space}
The following gives a relationship between the $\Theta$- and $\theta$-volume on a $\mathrm{PI}$ Dirichlet space. 
\begin{prop}\label{prop:theta(s,t)} If $(X,\dist_\cE,\mu,\cE)$ is a  $\mathrm{PI}(R)$-Dirichlet space then when the $\Theta$-volume is defined with the intrinsic distance then for any $s>0$:
$$\Theta_x(s)=\lim_{t\to 0+} \theta_x(s,t).$$
\end{prop}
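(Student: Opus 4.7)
The plan is to prove the equality by a dominated convergence argument, with Varadhan's formula providing the pointwise convergence and the Gaussian upper bound providing a uniform dominating function.

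First I would establish pointwise convergence of the integrands. Since $U(t,x,y) = -4t\log H(t,x,y) - 2nt\log(4\pi t)$ and the second term tends to $0$ as $t \to 0^+$, Varadhan's formula \eqref{eq:varadhan} gives $U(t,x,y) \to \dist_\cE^2(x,y)$ as $t\to 0^+$ for every $y\in X$. Hence $e^{-U(t,x,y)/(4s)} \to e^{-\dist_\cE^2(x,y)/(4s)}$ pointwise.

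Next I would produce a $\mu$-integrable dominating function, independent of sufficiently small $t$. Using the Gaussian upper bound \eqref{eq:HKbound} combined with Proposition \ref{prop:doubling}-ii) applied to the ball $B_{\sqrt{t}}(x) \subset B_R(x)$ (yielding $\mu(B_{\sqrt{t}}(x))^{-1} \le C_{x,R}\, t^{-\nu/2}$ for some $\nu>0$ depending only on $\upkappa$), one obtains
\begin{equation*}
e^{-U(t,x,y)/(4t)} = (4\pi t)^{n/2} H(t,x,y) \le A(t)\, e^{-\dist_\cE^2(x,y)/(5t)},
\end{equation*}
with $A(t) = C'_{x,R}\, t^{-(\nu-n)/2}$. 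Raising to the power $t/s$ gives
\begin{equation*}
e^{-U(t,x,y)/(4s)} \le A(t)^{t/s}\, e^{-\dist_\cE^2(x,y)/(5s)}.
\end{equation*}
Since $t\log A(t) \to 0$ as $t\to 0^+$, for $t$ small enough (depending on $s$) we have $A(t)^{t/s} \le 2$, so the integrands are dominated by $2\, e^{-\dist_\cE^2(x,y)/(5s)}$.

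The dominating function $y\mapsto e^{-\dist_\cE^2(x,y)/(5s)}$ is $\mu$-integrable: indeed, its integral equals $(5\pi s)^{n/2}\Theta_x(5s/4)$ (with the $\Theta$-volume defined using $\dist_\cE$), and this is finite by Lemma \ref{lem-Thetadoubling}, which applies because $\mathrm{PI}(R)$ Dirichlet spaces are doubling at scale $R$. Lebesgue's dominated convergence theorem then yields
\begin{equation*}
\lim_{t\to 0^+}\theta_x(s,t) = (4\pi s)^{-n/2}\int_X e^{-\dist_\cE^2(x,y)/(4s)}\di\mu(y) = \Theta_x(s),
\end{equation*}
as claimed. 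The only non-routine step is verifying that the prefactor $A(t)^{t/s}$ stays bounded as $t\to 0^+$, which is where the doubling bound on $\mu(B_{\sqrt{t}}(x))$ is essential.
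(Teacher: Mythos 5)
Your proof is correct and uses a genuinely cleaner route than the paper. You apply Lebesgue's dominated convergence theorem directly: Varadhan's formula supplies pointwise convergence of the integrand, and the non-optimal Gaussian upper bound \eqref{eq:HKbound} (with decay $e^{-\dist_\cE^2(x,y)/(5t)}$) raised to the power $t/s$, combined with a doubling bound on $\mu(B_{\sqrt t}(x))^{-1}$, gives a uniform integrable dominating function of the form $2e^{-\dist_\cE^2(x,y)/(5s)}$ once you observe that the prefactor $A(t)^{t/s}$ tends to $1$. The paper instead splits the argument in two: Fatou's lemma gives $\Theta_x(s)\le\liminf_t\theta_x(s,t)$, and for the reverse inequality it invokes the \emph{optimal} Gaussian upper bound \eqref{eq:optimalgaussian} (with exponent $1/(4t)$, plus a polynomial correction) to get $\theta_x(s,t)\le (e^C/t)^{t/s}\,\Theta_x(s/(1-\eps))$, then lets $\eps\downarrow 0$ using the continuity of $\Theta_x$ in the second argument. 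Your DCT argument buys simplicity: you only need the crude bound \eqref{eq:HKbound} rather than the refined Proposition \ref{est:Gaussian14}, and you do not need a separate continuity argument for $\Theta$. The paper's approach, while more roundabout, has the merit of producing the quantitative intermediate estimate \eqref{eq:limsup} comparing $\theta_x(s,t)$ to $\Theta_x$ at a nearby scale, which records more information than is strictly needed for the limit; but for the purpose of this proposition your argument is complete and correct.
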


\begin{proof} According to the Varadhan's formula \eqref{eq:varadhan}, we get that for any $x,y\in X$
$$\dist_\cE^2(x,y)=\lim_{t\to 0+} U(t,x,y).$$ The Fatou's lemma implies that 
\begin{equation}\label{eq:liminfx}
\Theta_x(s)\le \liminf_{t\to 0} \theta_x(s,t).
\end{equation}

To get the limsup inequality,  we will use the heat kernel upper bound  \eqref{est:Gaussian14} (see \cite[Theorem 5.2]{GriRev}):
$$H(t,x,y)\le \frac{C}{\mu (B_R(x))} \frac{R^\nu}{t^{\frac \nu 2}} \left(1+\frac{\dist_\cE^2(x,y)}{t}\right)^{ \nu +1} \ e^{-\frac{\dist_\cE^2(x,y)}{4t}}$$
for any $x,y \in X $ and $t \in (0,R^2)$.

Since there exists $C>0$ such that $\xi \mapsto \xi^{\nu+1}e^{-\xi} \le C$ for any $\xi>0$, then choosing $\xi=\eps(1 + \dist_\cE^2(x,y)/t)$ where $\eps \in (0,1)$ leads to
$$H(t,x,y)\le \frac{C}{\mu (B_R(x))} \frac{R^\nu}{t^{\frac \nu 2}}  \eps^{- \nu -1} \ e^{\eps+(\eps-1)\frac{\dist_\cE^2(x,y)}{4t}},$$ which implies that there is a constant $C$ not depending  on $t\in (0,R^2]$ and $y\in X$ such that 
\begin{align}\label{eq:esti}
-\frac{U(t,x,y)}{4} \le t (C+\log(1/t))- (1-\eps)\frac{\dist_\cE^2(x,y)}{4}
\end{align}
and then, for any $s>0$,
\begin{equation}\label{eq:limsup}
\theta_x(s,t) \le \left(\frac{e^C}{t}\right)^{\frac{t}{s}} \Theta_x(s/(1-\eps)).
\end{equation}
So that for any $\eps\in (0,1):$
$$\limsup_{t \to 0} \theta_x(s,t) \le \Theta_x\left(\frac{s}{1-\eps}\right).$$
Then the assertion follows from the continuity of $\Theta$ with respect to $s$.
\end{proof}

\begin{rem}\label{eq:bound}
It is worth pointing out that if   $(X,\dist_\cE,\mu,\cE)$ is a PI$_{\upkappa, \upgamma}(R)$-Dirichlet space
then there is a constant $\upalpha$ such that for any $t\in (0,R^2)$ and any $x,y\in X$, we get the heat kernel bound
$$\frac{t^{\frac n2}}{\mu(B_{\sqrt{t}}(x))}\, 
\frac{1}{\upalpha t^{\frac n2}}e^{-\upalpha\frac{\dist^2(x,y)}{ t}}\le H(t,x,y)\le \frac{t^{\frac n2}}{\mu(B_{\sqrt{t}}(x))} \frac{\upalpha}{t^{\frac n2}}e^{-\frac{\dist^2(x,y)}{\upalpha t}}.$$

This easily implies that 
$$\left(\frac{(4\pi)^{\frac n2}}{\upalpha}\right)^{\frac t s}\Theta_x\left(\frac{s}{4\upalpha}\right)\le\left(\frac{t^{\frac n2}}{\mu(B_{\sqrt{t}}(x))}\right)^{-\frac ts}\theta_x(s,t)\le \left((4\pi)^{\frac n2}\upalpha \right)^{\frac t s}\Theta_x\left(\frac\upalpha 4s\right).$$
In particular, using Lemma \ref{lem:simple},  there is a positive constant $\upeta>0$ depending only $\upkappa, \upgamma,n$  such that for any $x\in X$ and $t,s>0$ with $t\le R^2$ and $s\le \frac{16}{\upalpha^2}R^2$ then 

\begin{equation}\label{eq:compathetavolu}
\upeta\, \frac{\mu(B_{\sqrt{s}}(x))}{s^{\frac n2}} \left(\frac{\mu(B_{\sqrt{t}}(x))}{t^{\frac n2}}\right)^{-\frac ts}\le \theta_x(s,  t)\le \upeta^{-1} \frac{\mu(B_{\sqrt{s}}(x))}{s^{\frac n2}}\left(\frac{\mu(B_{\sqrt{t}}(x))}{t^{\frac n2}}\right)^{-\frac ts} .\end{equation}
\end{rem}

The heat kernel upper bound implies similarly the continuity of the $\theta$-volume under the pointed Mosco Gromov-Hausdorff convergence of PI($R$)-Dirichlet space. 

\begin{prop}\label{prop:thetacont}Let $\{(X_\alpha,\dist_\alpha,\meas_\alpha,o_\alpha,\cE_\alpha)\}_\alpha, (X,\dist,\meas,o,\cE)$ be pointed PI$_{\upkappa, \upgamma}(R)$-Dirichlet spaces such that $(X_\alpha,\dist_\alpha,\meas_\alpha,o_\alpha,\cE_\alpha)\to (X,\dist,\meas,o,\cE)$ in the pointed Mosco-Gromov-Hausdorff sense.  Let $x \in X$ and $\{x_\alpha\}$ be such that $x_\alpha \in X_\alpha$ for any $\alpha$ and $x_\alpha \to x$. Then for any $s,t>0$:
 $$\lim_{\alpha} \theta^{X_\alpha}_{x_\alpha}(s,t)=\theta^{X}_{x}(s,t).$$
 \end{prop}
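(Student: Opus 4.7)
The strategy is to combine pointwise convergence of the integrand defining $\theta_x(s,t)$, coming from the uniform convergence of heat kernels, with a uniform Gaussian dominator coming from the heat-kernel upper bound \eqref{est:Gaussian14}, and then to pass to the limit by an integration result along pmGH convergence in the spirit of Proposition \ref{prop:convdom}.

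First, I would establish pointwise convergence of the integrand along convergent sequences. By Theorem \ref{th:improvedKS}, the heat kernels $H_\alpha(t,\cdot,\cdot)$ converge to $H(t,\cdot,\cdot)$ uniformly on compact sets of $X_\alpha \times X_\alpha \to X \times X$; hence $H_\alpha(t,x_\alpha,y_\alpha) \to H(t,x,y)$ whenever $y_\alpha \to y$. Since the uniform doubling at scale $R$ bounds $\mu_\alpha(B_{\sqrt{t}}(x_\alpha))$ uniformly from above on compact sets, the Gaussian lower bound in \eqref{eq:HKbound} keeps $H_\alpha(t,x_\alpha,y_\alpha)$ uniformly bounded away from zero for $y_\alpha$ in a compact set. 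Taking logarithms in the defining relation \eqref{eq:HU} yields $U^{X_\alpha}(t,x_\alpha,y_\alpha) \to U^X(t,x,y)$, and consequently
\[
e^{-U^{X_\alpha}(t,x_\alpha,y_\alpha)/(4s)} \longrightarrow e^{-U^X(t,x,y)/(4s)}
\]
uniformly on compacts.

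Second, I would produce a uniform Gaussian dominator. The upper bound \eqref{est:Gaussian14} depends only on $\upkappa,\upgamma$, so for every $\eps \in (0,1)$ the argument leading to \eqref{eq:esti} in the proof of Proposition \ref{prop:theta(s,t)} furnishes a constant $C = C(\upkappa,\upgamma,R,\eps)$ with
\[
e^{-U^{X_\alpha}(t,x_\alpha,y)/(4s)} \le \Bigl(\tfrac{e^C}{t}\Bigr)^{t/s} e^{-(1-\eps)\dist_\alpha^2(x_\alpha,y)/(4s)}
\]
uniformly in $\alpha$ and $y \in X_\alpha$. Integrating this Gaussian bound against $\mu_\alpha$ yields an $\alpha$-uniform control by the $\Theta$-volume at scale $s/(1-\eps)$, which is itself uniformly bounded by \eqref{eq:majTheta} and the uniform doubling constant.

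Third, I would conclude by a dominated convergence argument along the pmGH convergence. Given $\delta>0$, the uniform Gaussian tail from the previous step lets us choose $\rho>0$ large enough that the contribution of $X_\alpha \setminus B_\rho(x_\alpha)$ (and of $X \setminus B_\rho(x)$) to the integral defining $\theta_{x_\alpha}^{X_\alpha}(s,t)$ (resp.~$\theta_x^X(s,t)$) is at most $\delta$, uniformly in $\alpha$. On $B_\rho(x_\alpha)$, the uniform convergence of the integrand together with the weak convergence $(\Phi_\alpha)_\#\mu_\alpha \weakto \mu$ (applying Proposition \ref{prop:convdom} to the bounded continuous integrand) gives convergence of the truncated integrals. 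Letting $\delta \to 0$ concludes the proof. The main technical point is ensuring that the tail estimate is genuinely uniform in $\alpha$, which is why we need both the uniform doubling constant $\upkappa$ and the uniform PI constants, so that the constants in \eqref{est:Gaussian14} do not degenerate along the sequence.
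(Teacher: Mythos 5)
Your proposal is correct and follows essentially the same three-step route as the paper: uniform convergence of the integrand from the uniform convergence of heat kernels on compacts, a uniform Gaussian dominator from the PI heat-kernel upper bounds, and conclusion by Proposition~\ref{prop:convdom}. The only cosmetic difference is that you pass through logarithms to identify $U^{X_\alpha}\to U^X$ (requiring the heat kernel to be uniformly bounded away from zero), whereas the paper writes the integrand directly as $(4\pi s)^{-n/2}\bigl((4\pi t)^{n/2}H_\alpha(t,x_\alpha,\cdot)\bigr)^{t/s}$ so that uniform convergence of $H_\alpha$ immediately gives uniform convergence of the integrand without any positivity considerations.
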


\begin{proof} Let $s,t>0$, we know that the sequence $f_\alpha(y)=H_\alpha(t,x_\alpha,y)$ converge uniformly on compact set to $f(y)=H(t,x,y)$. Hence the same is true for the integrand
$$h_\alpha(y)=(4\pi s)^{-\frac{n}{2}} \bigg( (4\pi t)^{\frac{n}{2}}H_\alpha(t,x_\alpha,y)\bigg)^{\frac{t}{s}}
=(4\pi s)^{-\frac{n}{2}}e^{-\frac{U_{X_\alpha}(t,x,y)}{4s} }$$ uniformly on compact set to $$h(y)=(4\pi s)^{-\frac{n}{2}}e^{-\frac{U_{X}(t,x,y)}{4s} }.$$
The uniform doubling and the uniform Poincaré inequality yields that there are positive constants $C,\nu$ depending only on $\upkappa$ and $\upgamma$ such that 

$$H_\alpha(t,x_\alpha,y)\le \frac{C}{\mu(B_{R^2}(x_\alpha))} \max\left\{1, \frac{R^\nu}{t^\frac{\nu}{2}} \right\} \, e^{-\frac{\dist_\alpha^2(x_\alpha,y)}{Ct}},$$ for any $y\in X$.
But $\lim_{\alpha}\mu_\alpha(B_{R^2}(x_\alpha))=\mu(B_{R^2}(x))$, hence we find a constant such that for any $\alpha$:
$$h_\alpha(y)\le C  e^{-\frac{\dist_\alpha^2(x_\alpha,y)}{Cs}}.$$ Hence the result follows also from Proposition \ref{prop:convdom}.
 \end{proof}

\subsection{A differential inequality}

We now study the properties of the $\theta$-volume on manifolds satisfying a Dynkin bound. Whenever this bound is improved to be an upper bound on the integral quantity
$$\int_0^T \frac{\sqrt{\Kato_t(M,g)}}{s}\di s \leq \Lambda,$$
for some $T, \Lambda >0$, we obtain a monotone quantity that will be crucial in the remainder of this section. 

\begin{prop}\label{prop:derivetheta}
Let $(M^n,g)$  be a closed Riemannian manifold satisfying $$\mbox{\emph{k}}_T(M^n,g)\le \frac{1}{16\, n}.$$
 for some $T>0$.  
For $\tau\le T$, set  $\Gamma_\tau(M^n,g):=e^{8\sqrt{n\Kato_\tau(M^n,g)}}-1$. Then for any $x \in M$, $t \in (0,\tau)$ and $s\le t/(2\Gamma_\tau(M^n,g))$

\begin{equation}\label{derive1}
t\frac{\partial \theta}{\partial t}+s\frac{\partial \theta}{\partial s}
+n \Gamma_\tau(M^n,g)\left(\frac{t}{s}-\frac{s}{t}\right)\theta\end{equation} has the same sign as $t-s$.

\end{prop}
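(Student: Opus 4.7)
The strategy is to first obtain a closed-form identity for $s\partial_s\theta + t\partial_t\theta$ using only the heat equation for $H$, and then to control the single remaining integral via the Li--Yau inequality \eqref{eq:LYC}. Setting $\phi(y):=-U(t,x,y)/(4s)$, differentiation under the integral sign gives
\[
s\partial_s\theta \;=\; -\tfrac{n}{2}\theta - (4\pi s)^{-n/2}\!\!\int_M \phi\, e^\phi\, d\nu_g,\qquad t\partial_t\theta \;=\; -\tfrac{t}{4s}(4\pi s)^{-n/2}\!\!\int_M (\partial_t U)\, e^\phi\, d\nu_g.
\]
Combining \eqref{eq:HU} with the heat equation $\partial_tH=-\Delta_yH$ and computing $\partial_t\log H$ in two ways yields the pointwise identity $\partial_t U = U/t - 2n - |\nabla_y U|^2/(4t) - \Delta_y U$. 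Substituting this into $t\partial_t\theta$, the $\int\phi\, e^\phi$ term cancels against the corresponding term of $s\partial_s\theta$, and integration by parts on the closed manifold $M$, using $\nabla\phi = -\nabla U/(4s)$, converts $\int(\Delta_yU)\,e^\phi\,d\nu_g$ into $-(4s)^{-1}\int|\nabla_yU|^2 e^\phi\,d\nu_g$. Straightforward bookkeeping then yields the \emph{exact} identity
\begin{equation}\label{plan:exact}
s\partial_s\theta + t\partial_t\theta \;=\; \frac{t-s}{2s}\!\left(n\theta - \frac{Q(s,t)}{8s}\right),\quad Q(s,t):= (4\pi s)^{-n/2}\!\!\int_M |\nabla_y U|^2 e^{-U/(4s)}\,d\nu_g.
\end{equation}
Adding $n\Gamma_\tau(t/s-s/t)\theta = n\Gamma_\tau(t-s)(t+s)\theta/(st)$ and factoring out $(t-s)/(2s)$ reduces the proposition to the single sign-independent estimate
\begin{equation}\label{plan:bracket}
Q(s,t) \;\leq\; 8ns\,\theta\!\left(1 + 2\Gamma_\tau + 2\Gamma_\tau \tfrac{s}{t}\right).
\end{equation}

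To prove \eqref{plan:bracket} I apply the Li--Yau inequality \eqref{eq:LYC} to $u(t,\cdot)=H(t,x,\cdot)$. Using $\nabla_yH=-(H/4t)\nabla_yU$ and $\Delta_yH/H = -|\nabla_yU|^2/(16t^2) - \Delta_yU/(4t)$, \eqref{eq:LYC} for this particular $u$ becomes, after the monotonicity $\Kato_t(M,g)\leq\Kato_\tau(M,g)$, the pointwise Ricci correction
\[
-\Delta_y U \;\leq\; 2n\gamma + \frac{\Gamma_\tau}{4t\gamma}\,|\nabla_y U|^2,\qquad \gamma := e^{8\sqrt{n\Kato_\tau(M,g)}} = 1+\Gamma_\tau.
\]
Multiplying by $e^\phi$, integrating and using the same integration by parts as above give $Q(s,t)\bigl(\tfrac{1}{4s} - \tfrac{\Gamma_\tau}{4t\gamma}\bigr) \leq 2n\gamma\,\theta$; the hypothesis $s\leq t/(2\Gamma_\tau)$ implies the bracket is strictly positive, whence $Q(s,t)\leq 8ns\gamma^2 t\,\theta/(\gamma t - s\Gamma_\tau)$.

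Plugging this bound into \eqref{plan:bracket} reduces the proposition to the purely algebraic inequality $(1+2\Gamma_\tau+2\Gamma_\tau r)(\gamma-r\Gamma_\tau) \geq \gamma^2$ with $r:=s/t\in(0,1/(2\Gamma_\tau)]$. Using $\gamma=1+\Gamma_\tau$, the difference factors as $\Gamma_\tau(\gamma+r-2r^2\Gamma_\tau)$, and $r\leq 1/(2\Gamma_\tau)$ forces $2r^2\Gamma_\tau \leq r$, so the difference is at least $\Gamma_\tau\gamma > 0$. The main obstacle is the exact identity \eqref{plan:exact}: the cancellations between the $\int \phi e^\phi$ contributions and between the $-|\nabla_y U|^2/(4t)$ and $\Delta_y U$ terms after integration by parts are delicate bookkeeping but purely algebraic. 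Once \eqref{plan:exact} is in place the rest of the argument is Li--Yau plus a short algebraic check, with the hypothesis $s\leq t/(2\Gamma_\tau)$ appearing naturally to make both the denominator $\gamma t - s\Gamma_\tau$ and the final algebraic inequality strict.
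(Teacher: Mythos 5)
Your proof is correct, and it is worth noting that it streamlines the paper's bookkeeping in a way that makes the structure clearer. Both proofs rest on the same three ingredients — the heat-equation identity $\partial_t U = U/t - 2n - |\nabla_y U|^2/(4t) - \Delta_y U$ (the paper's \eqref{eq:heat2}), the Li--Yau bound \eqref{eq:LYC} converted into a pointwise estimate on $-\Delta_y U$, and one integration by parts — but you organize them differently. The paper's proof keeps $\partial_t\theta$ and $\partial_s\theta$ separate, introduces the auxiliary quantity $J=\int_M U e^{-U/4s}\,\di\nu_g/(4\pi s)^{n/2}$ and a sign-carrying term $\uppi\ge 0$, and arrives at the implicit ODE \eqref{eq:set} with the prefactor $1-\Gamma s/t$, from which the sign statement is read off. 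You instead combine the two partial derivatives \emph{first}, observe that the $\int\phi e^\phi$ contributions cancel exactly, and obtain the closed-form identity
\[
s\partial_s\theta + t\partial_t\theta = \frac{t-s}{2s}\Bigl(n\theta - \frac{Q(s,t)}{8s}\Bigr),\qquad Q(s,t)=(4\pi s)^{-n/2}\!\int_M |\nabla_y U|^2 e^{-U/4s}\,\di\nu_g,
\]
which already exhibits the factor $(t-s)$ explicitly without any sign-splitting. The sign statement then becomes a single scalar inequality $Q\le 8ns\theta\bigl(1+2\Gamma_\tau + 2\Gamma_\tau s/t\bigr)$, which you prove by integrating the Li--Yau pointwise bound $-\Delta_y U\le 2n\gamma + \tfrac{\Gamma_\tau}{4t\gamma}|\nabla_y U|^2$ against $e^{-U/4s}$ (keeping the slightly sharper coefficient $\Gamma_\tau/\gamma$ that the paper discards), using the same integration by parts, and finishing with the elementary polynomial check $(1+2\Gamma_\tau+2\Gamma_\tau r)(\gamma - r\Gamma_\tau)-\gamma^2=\Gamma_\tau(\gamma+r-2r^2\Gamma_\tau)\ge 0$ for $r=s/t\le 1/(2\Gamma_\tau)$. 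The upshot: the paper's $\uppi$-device tracks the sign case-by-case, whereas your exact identity \eqref{plan:exact} makes the $(t-s)$-factor manifest from the outset, so the whole argument reduces to a single quantitative estimate on $Q$. Both give the same constants and the same range of validity; yours avoids the case distinction and the auxiliary function $\uppi$, which is a genuine simplification.
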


\begin{rem}
Our proof shows that, when the Ricci curvature is non-negative (in which case $\Gamma_\tau(M^n,g)=0$), the map $\lambda \mapsto \theta_x(\lambda s, \lambda t)$ is monotone non-increasing for $s\geq t$ and monotone non-decreasing for $s \leq t$. 
\end{rem}

\begin{proof} Our proof will use the Li-Yau estimate (\cite[Proposition 3.3]{C16}): when $v\colon[0,\tau]\times M\rightarrow \R_+$ is a solution of the heat equation then 
\begin{equation}\label{eq:LY}
e^{-8\sqrt{n\Kato_\tau(M^n,g)}} \frac{|dv|^2}{v^2}-\frac{1}{v}\frac{\partial v}{ \partial t}\le e^{8\sqrt{n\Kato_\tau(M^n,g)} }\frac{n}{2t}.\end{equation}
For simplicity, let us write $\theta$, $\frac{\partial \theta}{\partial t}$, $\frac{\partial \theta}{\partial s}$, $U$, $H$, $\Kato$, $\Gamma$ instead of $\theta_x(s,t)$, $\frac{\partial \theta}{\partial t}(s,t)$, $\frac{\partial \theta}{\partial s}(s,t)$, $U(t,x,y)$, $H(t,x,y)$, $\Kato_\tau(M^n,g)$, $\Gamma_\tau(M^n,g)$, respectively. 

A direct computation implies
\begin{equation}\label{eq:derivet}
\frac{\partial \theta}{\partial t}=-\frac{1}{4s}\int_M \frac{\partial U}{\partial t}\,e^{-\frac{U}{4s}}\frac{\di\nu_g}{(4\pi s)^{\frac n2}}\end{equation}

\begin{equation}\label{eq:derives}
\frac{\partial \theta}{\partial s}=-\frac{n}{2s} \theta+\frac{1}{4s^2} J\end{equation}
where
$$J:=\int_M Ue^{-\frac{U}{4s}}\frac{\di\nu_g}{(4\pi s)^{\frac n2}} \, \cdot$$
From \eqref{eq:HU} and the fact that $H$ solves the heat equation, another computation gives
\begin{equation}\label{eq:heat1}
-\frac{n}{2t}- \frac{\partial }{\partial t}\left(\frac{U}{4t}\right)-\frac{1}{4t}\Delta U-\frac{1}{16 t^2} |\nabla U|^2=0
\end{equation}
\begin{equation}\label{eq:heat2}
-\frac{n}{2}-\frac 14 \frac{\partial U}{\partial t}+\frac{U}{4t}-\frac{1}{4}\Delta U-\frac{1}{16 t} |\nabla U|^2=0
\end{equation} where here and in the rest of the proof Laplacian and gradient are taken with respect to the $y$ variable.
Moreover the Li-Yau estimate provides (when $t\le \tau$):
\begin{equation}\label{ineq:LiYau1}
e^{-8\sqrt{n\Kato_t}}\frac{ |\nabla U|^2}{16t^2}+ \frac{\partial }{\partial t}\left(\frac n 2\log t+\frac{U}{4t} \right)  \le e^{8\sqrt{n\Kato_t}}\frac{n}{2t} \end{equation}
Adding (\ref{eq:heat1}) and (\ref{ineq:LiYau1}), together with the fact that $e^{-8\sqrt{n\mbox{\tiny{k}}_t}}-1=-e^{-8\sqrt{n\Kato_t}}\Gamma\ge -\Gamma$,  yields the estimate

\begin{equation}\label{ineq:Delta}
-\frac{\Delta U}{4}-\Gamma \frac{|\nabla U|^2}{16 t}\le \frac{n}{2}+\Gamma\frac{n}{2}.
\end{equation}

Combine \eqref{eq:derivet} and \eqref{eq:heat2},  to get
\begin{equation}\label{eq:prem}
\frac{\partial \theta}{\partial t}=\frac{n}{2s}\theta-\frac{1}{4ts} J+\frac{ 1}{4s}\int_M \Delta U\,e^{-\frac{U}{4s}}\frac{\di\nu_g}{(4\pi s)^{\frac n2}}+\int_M \frac{|\nabla U|^2 }{16ts}\,e^{-\frac{U}{4s}}\frac{\di\nu_g}{(4\pi s)^{\frac n2}}\end{equation}

Integration by parts implies:
$$\frac{ 1}{4s}\int_M \Delta U\,e^{-\frac{U}{4s}}\frac{\di\nu_g}{(4\pi s)^{\frac n2}}=-\int_M \frac{|\nabla U|^2 }{16s^2}\,e^{-\frac{U}{4s}}\frac{\di\nu_g}{(4\pi s)^{\frac n2}},$$
Hence from (\ref{eq:prem}) we get the two equalities:
\begin{equation}\label{eq:second}
\frac{\partial \theta}{\partial t}=\frac{n}{2s}\theta-\frac{1}{4ts} J+\left(\frac{ 1}{s}-\frac{1}{t}\right)\int_M  \frac{ \Delta U}{4}\,e^{-\frac{U}{4s}}\frac{\di\nu_g}{(4\pi s)^{\frac n2}}\, ,\end{equation}
\begin{equation}\label{eq:ter}
\frac{\partial \theta}{\partial t}=\frac{n}{2s}\theta-\frac{1}{4ts} J-\left(\frac{ 1}{s}-\frac{1}{t}\right)\int_M \frac{|\nabla U|^2 }{16s}\,e^{-\frac{U}{4s}}\frac{\di\nu_g}{(4\pi s)^{\frac n2}} \cdot\end{equation}


We use now inequality (\ref{ineq:Delta}) in (\ref{eq:second}) . Then if $t\geq s$ we obtain
$$\frac{\partial \theta}{\partial t}-\frac{n}{2s}\theta+\frac{J}{4ts}\geq \left( \frac 1s -\frac 1t\right)\left( -\frac{n}{2}(\Gamma +1)\theta-\Gamma \int_M\frac{|\nabla U|^2}{16t}\,e^{-\frac{U}{4s}}\frac{\di\nu_g}{(4\pi s)^{\frac n2}}\right),$$
and we have the opposite inequality if $t \leq s$. As a consequence, there exists a non-negative function $\uppi\colon \R_+\times [0,\tau]\times M\rightarrow\R_+$ such that:
\begin{equation}\label{eq:quart}
\frac{\partial \theta}{\partial t}=\frac{n}{2t}\theta-\frac{1}{4ts} J+\left(\frac{ 1}{s}-\frac{1}{t}\right)\uppi -\frac{n}{2}\left(\frac{ 1}{s}-\frac{1}{t}\right)\Gamma\theta-\Gamma\left(\frac{ 1}{s}-\frac{1}{t}\right) \frac{s}{t}\int_M \frac{|\nabla U|^2 }{16s}\,e^{-\frac{U}{4s}}\frac{\di\nu_g}{(4\pi s)^{\frac n2}} \cdot\end{equation}

We use now (\ref{eq:ter}) to express in a different way the last term of the right hand side of (\ref{eq:quart}):
\begin{equation}\label{eq:quint}\begin{split}
\frac{\partial \theta}{\partial t}&=\frac{n}{2t}\theta-\frac{1}{4ts} J+\left(\frac{ 1}{s}-\frac{1}{t}\right)\uppi -\frac{n}{2}\left(\frac{ 1}{s}-\frac{1}{t}\right)\Gamma\theta\\&
+\Gamma\frac{s}{t}\left(\frac{\partial \theta}{\partial t}-\frac{n}{2s}\theta+\frac{J}{4ts} \right)\cdot\end{split}\end{equation}
Using (\ref{eq:derives}) we get
$$-\frac{n}{2s}\theta+\frac{J}{4ts}=\frac{n}{2}\left(\frac1t-\frac 1s\right)\theta+\frac{s}{t}\frac{\partial \theta}{\partial s},$$
hence 
\begin{equation}\label{eq:six}\begin{split}
\frac{\partial \theta}{\partial t}&=-\frac{s}{t}\frac{\partial \theta}{\partial s}+\left(\frac{ 1}{s}-\frac{1}{t}\right)\uppi -\frac{n}{2}\left(\frac{ 1}{s}-\frac{1}{t}\right)\Gamma\theta\\&
+\frac{n}{2}\frac{ \Gamma}{t}\left(\frac{s}{t}-1\right)\theta+\Gamma\frac{s}{t}\left(\frac{\partial \theta}{\partial t}+\frac{s}{t}\frac{\partial \theta}{\partial s} \right)\cdot\end{split}\end{equation}
And we get the following differential equation:
\begin{equation}\label{eq:set}
\left(1-\Gamma\frac{ s}{t} \right)\left(t\frac{\partial \theta}{\partial t}+s\frac{\partial \theta}{\partial s}\right)=\left(\frac{ t}{s}-1\right)\uppi 
-\frac{n}{2} \Gamma\left(\frac{t}{s}-\frac{s}{t}\right)\theta\end{equation}

When we notice that 
$$s<\frac{t}{2 \Gamma}\Rightarrow \frac12\le 1-\Gamma\frac{ s}{t},$$ we get that 
$$t\frac{\partial \theta}{\partial t}+s\frac{\partial \theta}{\partial s} 
+n \Gamma\left(\frac{t}{s}-\frac{s}{t}\right)\theta$$ has the same sign as $t-s$.
\end{proof}

The above proposition has the following immediate consequence under a stronger integral bound. 
 
\begin{cor}\label{cor:monoKato} Let  $(M^n,g)$ be a closed manifold satisfying :
$$\int_0^{T}\frac{\sqrt{\Kato_s(M^n,g)}}{s} \di s \leq \Lambda \cdot$$
For any $\tau \in (0,T]$ we set  $$\Phi(\tau)=\int_0^{\tau}\frac{\sqrt{\Kato_s(M^n,g)}}{s} \di s.$$
Then there exists a positive constant $c_n$, depending only on $n$, such that the following holds. Let $t\in (0,T]$ and $s>0$.  Set
 $\overline{\lambda}:=\overline{\lambda}(s,t)=\min\left\{e^{-c_n\Lambda \frac{s}{t}}, e^{-4\sqrt{n}\Lambda}\right\}$. Then the  function 
 $$\lambda\in [0,\overline{\lambda}]\rightarrow \theta_x(\lambda s,\lambda t)\ e^{c_n\Phi(\lambda t)\left(\frac{t}{s}-\frac{s}{t}\right)}$$ is monotone:
 \begin{enumerate}[i)]
 \item it is non decreasing if $t\ge s$,
 \item it is non increasing if $t\le s$.
 \end{enumerate}
 \end{cor}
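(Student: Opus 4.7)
The plan is to reduce this to Proposition \ref{prop:derivetheta} by treating the scaling direction via an integrating-factor argument. Set $F(\lambda):=\theta_x(\lambda s,\lambda t)$, so that
$$\lambda F'(\lambda)=(\lambda s)\,\partial_s\theta(\lambda s,\lambda t)+(\lambda t)\,\partial_t\theta(\lambda s,\lambda t).$$
Apply Proposition \ref{prop:derivetheta} at the point $(\lambda s,\lambda t)$ with $\tau=\lambda t$: provided $\lambda s\le \lambda t/(2\Gamma_{\lambda t}(M^n,g))$, the quantity
$$\lambda F'(\lambda)+n\,\Gamma_{\lambda t}(M^n,g)\Bigl(\tfrac{t}{s}-\tfrac{s}{t}\Bigr) F(\lambda)$$
has the same sign as $t-s$ (note the ratios are scale-invariant).

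Next I will bound $\Gamma_u$ in terms of $\sqrt{\Kato_u(M^n,g)}$. Since $\Kato_u(M^n,g)\le 1/(16n)$ on $(0,T]$, the argument of the exponential in $\Gamma_u=e^{8\sqrt{n\Kato_u(M^n,g)}}-1$ stays in $[0,2]$, hence $e^x-1\le \tfrac{e^2-1}{2}x$ on that interval gives
$$n\,\Gamma_u \le C_n\sqrt{\Kato_u(M^n,g)}$$
for a constant $C_n$ depending only on $n$; we choose this $C_n$ as our $c_n$. I will then set
$$G(\lambda):=\exp\!\left[c_n\,\Phi(\lambda t)\Bigl(\tfrac{t}{s}-\tfrac{s}{t}\Bigr)\right]$$
and compute
$$\lambda\frac{(FG)'(\lambda)}{G(\lambda)}=\lambda F'(\lambda)+c_n\sqrt{\Kato_{\lambda t}(M^n,g)}\Bigl(\tfrac{t}{s}-\tfrac{s}{t}\Bigr) F(\lambda).$$
Combining with the previous display and the bound $n\Gamma_{\lambda t}\le c_n\sqrt{\Kato_{\lambda t}}$, the extra term dominates the $\Gamma_{\lambda t}$ contribution in the appropriate direction in each of the two cases $t\ge s$ and $t\le s$ (they are both proportional to $(t/s-s/t)$ with the same sign), which yields $(FG)'\ge 0$ when $t\ge s$ and $(FG)'\le 0$ when $t\le s$.

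The main (and only) real obstacle is verifying the admissibility condition $\Gamma_{\lambda t}(M^n,g)\le t/(2s)$ required for Proposition \ref{prop:derivetheta} at the rescaled point. For this I will use the elementary observation that, since $u\mapsto \Kato_u(M^n,g)$ is non-decreasing,
$$\sqrt{\Kato_{\lambda t}(M^n,g)}\,\log\!\Bigl(\tfrac{T}{\lambda t}\Bigr)\le \int_{\lambda t}^{T}\frac{\sqrt{\Kato_u(M^n,g)}}{u}\,du\le \Lambda,$$
whence $\sqrt{\Kato_{\lambda t}}\le \Lambda/\log(T/(\lambda t))$. Then $\Gamma_{\lambda t}\le C_n\sqrt{n\Kato_{\lambda t}}$ forces the desired inequality $\Gamma_{\lambda t}\le t/(2s)$ as soon as $\log(T/(\lambda t))\ge c_n\Lambda\, s/t$, which (using $t\le T$) is guaranteed by $\lambda\le e^{-c_n\Lambda s/t}$; the second factor $e^{-4\sqrt{n}\Lambda}$ enters to cover the regime $s\le t$, where the bound $\Gamma_{\lambda t}\le 1$ (say) is what is really needed and amounts to $\sqrt{\Kato_{\lambda t}}\lesssim 1/\sqrt{n}$, giving the same type of logarithmic constraint independent of $s/t$. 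Once both conditions are checked, the monotonicity of $\lambda\mapsto F(\lambda)G(\lambda)$ on $[0,\overline{\lambda}]$ is the claim.
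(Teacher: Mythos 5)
Your proof is correct and follows the same route as the paper: you apply Proposition \ref{prop:derivetheta} at the rescaled point with $\tau=\lambda t$, introduce the same integrating factor $G(\lambda)=\exp\bigl[c_n\Phi(\lambda t)(\tfrac ts-\tfrac st)\bigr]$, use the linearization $n\Gamma_u\le c_n\sqrt{\Kato_u}$ valid on the Dynkin range $\Kato_u\le 1/(16n)$, and invoke the logarithmic bound $\sqrt{\Kato_{\lambda t}}\le \Lambda/\log(1/\lambda)$ to verify the admissibility threshold $\Gamma_{\lambda t}\le t/(2s)$. The only flaw is your closing informal remark: the two constraints in $\overline\lambda$ are not segregated by the regimes $s\le t$ versus $s\ge t$. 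For every $s,t$ you need $\lambda\le e^{-4\sqrt n\Lambda}$ to place $\Kato_{\lambda t}$ in the Dynkin range so that $\Gamma_{\lambda t}\le b_n\sqrt{\Kato_{\lambda t}}$ holds, and then you need $\lambda\le e^{-c_n\Lambda s/t}$ to convert this into $\Gamma_{\lambda t}\le t/(2s)$; the Dynkin bound alone only gives $\Gamma_{\lambda t}\le e^2-1\approx 6.4$, which does not reach $t/(2s)$ even when $s=t$. Since the derivation you actually wrote imposes both conditions in tandem, this mis-description does not affect the validity of the argument.
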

 \begin{proof}
For any $c\in \R$, $\lambda, s>0$ and $t\in (0,T]$, a direct computation of the derivative $\frac{\di}{\di \lambda} \left( \theta_x(\lambda s,\lambda t)e^{c\Phi(\lambda t)\left(\frac{t}{s}-\frac{s}{t} \right)}\right)$ shows that it has same sign as
\[
\lambda t\frac{\partial \theta_x(\lambda s,\lambda t)}{\partial t}+\lambda s\frac{\partial \theta(\lambda s,\lambda t)}{\partial s} +c\left(\frac{t}{s}-\frac{s}{t} \right)\sqrt{\Kato_{\lambda t}}\theta(\lambda s,\lambda t).
\]
If $\lambda \le e^{-4\sqrt{n} \Lambda}$, then $\Kato_{\lambda t}\le (16n)^{-1}$, and since there exists $b_n>0$ such that $e^{8\sqrt{nr}}-1\le b_n \sqrt{r}$ for any $r \in (0,(16n)^{-1}]$, this yields
\begin{equation}\label{eq:contr}
\Gamma_{\lambda t}\le b_n \sqrt{\Kato_{\lambda t}} \le b_n\frac{1}{\log(1/\lambda) }\int_{\lambda t}^t \frac{\sqrt{\Kato_s}}{s} \di s \le b_n\frac{\Lambda}{\log(1/\lambda) } \,\cdot\end{equation}
When choosing $c=c_n:=n b_n$ we get that $\frac{\di}{\di \lambda} \left( \theta_x(\lambda s,\lambda t)e^{c_n\Phi(\lambda t)\left(\frac{t}{s}-\frac{s}{t} \right)}\right)$ has same sign as 
\begin{align}
\label{concl}
& \lambda t\frac{\partial \theta_x(\lambda s,\lambda t)}{\partial t}+\lambda s\frac{\partial \theta(\lambda s,\lambda t)}{\partial s} + n\Gamma_{\bar{\lambda} t} \left(\frac{t}{s}-\frac{s}{t} \right)\theta_x(\lambda s, \lambda t) \nonumber\\
& \quad + \left(\frac{t}{s}-\frac{s}{t} \right)\left(c_n\sqrt{\Kato_{\lambda t}}-n\Gamma_{\bar{\lambda} t} \right)\theta_x(\lambda s,\lambda t).\end{align}
In addition, if $\lambda \le e^{-\frac{c_n}{n}\Lambda \frac{2s}{t}}$ in \eqref{eq:contr}, we get $\Gamma_{\lambda t} \le\frac{t}{2s}$.  In particular,  we have $s\le\frac{t}{2\Gamma_{\bar{\lambda} t}}$. Hence by Proposition \ref{prop:derivetheta} when $\lambda \le \bar{\lambda}$ the first three summands in the right-hand side of \eqref{concl} give a term of the same sign as $t-s$. Moreover,  with our choice of $c_n$ we have
$$c_n\sqrt{\Kato_{\lambda t}}-n\Gamma_{\bar{\lambda} t} >0,$$
hence the last term of \eqref{concl} also has the same sign as $t-s$. This concludes the proof.

 \end{proof}

\subsection{Consequences on non-collapsed strong Kato limits}

We are now in position to prove that tangent cones of strong non-collapsed Kato limits are measure metric cones. 

Throughout this section, we fix constants $T, \Lambda, v >0$ and a non-decreasing function $f : (0,T] \to \R_+$ such that 
$$ \int_0^T \frac{\sqrt{f(s)}}{s}\di s \leq \Lambda.$$
Without loss of generality, we assume that $f(T) \leq 1/(16n)$ and set 
$$\Phi(\tau)=\int_0^\tau \frac{\sqrt{f(t)}}{t}dt.$$
According to Definition \ref{def:NC}, a non-collapsed strong Kato limit $(X,\dist,\mu,o)$ is obtained as 
$$(M_\alpha, \dist_\alpha, \nu_\alpha, o_\alpha) \stackrel{pmGH}{\longrightarrow}(X, \dist, \mu,o),$$
where $\{(M_\alpha, g_\alpha)\}_\alpha$ are closed manifold satisfying the uniform estimates 
$$\mbox{for all } t \in (0,T], \ \sup_\alpha \Kato_t(M_\alpha, g_\alpha) \leq f(t),$$
$$\inf_\alpha \nu_{g_{\alpha}}(B_{\sqrt{T}}(o_\alpha))\geq vT^{\frac n2}.$$

\begin{theorem}\label{thm:metriccone}
Let $(X,\dist,\mu,o)$ be a non-collapsed strong Kato limit in the sense of Definition \ref{def:NC}, and $x \in X$. Then the following holds.
\begin{enumerate}
\item[\emph{(i)}] Any tangent cone of $X$ at $x$ is a measured metric cone.
\item[\emph{(ii)}] The volume density 
$$\vartheta_X(x)=\lim_{r\to 0} \frac{\mu(B_r(x))}{\omega_nr^n}$$ is well defined.
\item[\emph{(iii)}] We have the following relationship between the behavior of the $\theta$-volume and the volume density :
$$\lim_{\lambda\to 0} \theta_x(\lambda s,\lambda t)=\vartheta_X^{1-\frac t s}(x).$$
\end{enumerate}
\end{theorem}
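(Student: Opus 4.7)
My plan is to derive (i), (ii), and (iii) together, starting from the almost monotonicity of Corollary \ref{cor:monoKato}. On each approximating manifold $\lambda \mapsto \theta_x(\lambda s,\lambda t) e^{c_n \Phi(\lambda t)(t/s - s/t)}$ is monotone for $\lambda$ sufficiently small, and this property passes to the Kato limit $X$ by Proposition \ref{prop:thetacont}. Since $\Phi(\lambda t) \to 0$ as $\lambda \to 0$ and the comparison \eqref{eq:compathetavolu} combined with the local Ahlfors regularity of $\mu$ (Remark \ref{rem-AR-ncStrongKatolim}) provides uniform two-sided bounds on $\theta_x(\lambda s,\lambda t)$, monotonicity with vanishing correction yields existence of the finite limit
\[
L(s,t) := \lim_{\lambda \to 0} \theta_x(\lambda s,\lambda t).
\]

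I would then identify $L(s,t)$ with the $\theta$-volume at the vertex of any measured tangent cone of $X$ at $x$. By Remark \ref{rem-TgConesSKL}, any such tangent cone $(X_x,\dist_x,\mu_x,x)$ arises as a pointed Mosco-Gromov-Hausdorff limit of rescalings $(X,\eps_\alpha^{-1}\dist,\eps_\alpha^{-n}\mu,x)$ satisfying $\Kato_t \to 0$ for every $t>0$. The heat-kernel scaling identity $\theta^{(\eps^{-1}\dist,\eps^{-n}\mu)}_x(s,t) = \theta^X_x(\eps^2 s, \eps^2 t)$ together with Proposition \ref{prop:thetacont} yields $\theta^{X_x}_x(s,t) = L(s,t)$, independent of the chosen tangent cone. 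Applying the same argument with $X_x$ in place of $X$ — iterated tangent cones of $X$ at $x$ are themselves tangent cones of $X$ at $x$ — we obtain $\theta^{X_x}_x(\lambda s,\lambda t) = L(s,t)$ for every $\lambda>0$. On the $\RCD(0,n)$ tangent cone (Corollary \ref{cor:TgConeRCD}), Corollary \ref{cor:monoKato} becomes genuine monotonicity since $\Phi \equiv 0$, so constancy in $\lambda$ forces $\theta^{X_x}_x(s,t) = F(t/s)$ to depend only on the ratio $t/s$.

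Letting $t \to 0$ and invoking Proposition \ref{prop:theta(s,t)} gives $\Theta^{X_x}_x(s) \equiv F(0)$, and Lemma \ref{lem:simple}(i) then yields $\mu_x(B_r(x)) = F(0)\omega_n r^n$ for every $r > 0$. Since $X_x$ is also a weakly non-collapsed $\RCD(0,n)$ space, Proposition \ref{prop:DPG} proves (i). Because $F(0)$ does not depend on the choice of tangent cone, $\vartheta_{X_x}(x) = F(0)$ for every tangent cone $X_x$; by compactness any sequence $r_\alpha \downarrow 0$ admits a subsequence along which $\mu(B_{r_\alpha}(x))/r_\alpha^n \to \mu_x(B_1(x)) = F(0)\omega_n$, proving (ii) with $\vartheta_X(x) = F(0)$.

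For (iii), there remains to identify $F(u) = \vartheta_X(x)^{1-u}$. The strategy is to establish that on the tangent cone the heat kernel from the vertex is Gaussian,
\[
H^{X_x}(t,x,y) = \vartheta_X(x)^{-1}(4\pi t)^{-n/2} e^{-\dist_x^2(x,y)/(4t)},
\]
so that $U^{X_x}(t,x,y) = \dist_x^2(x,y) + 4t \log \vartheta_X(x)$, and substituting into the definition of $\theta$ gives
\[
\theta^{X_x}_x(s,t) = e^{-t \log \vartheta_X(x)/s}\, \Theta^{X_x}_x(s) = \vartheta_X(x)^{1-t/s}.
\]
The main obstacle is this Gaussian rigidity: it corresponds to equality in inequality \eqref{ineq:Delta} on the tangent cone, and should follow by making Proposition \ref{prop:derivetheta} rigorous in the weak $\RCD(0,n)$ setting and exploiting that the constancy of $\theta^{X_x}_x(\lambda s,\lambda t)$ in $\lambda$ forces the auxiliary nonnegative term $\uppi$ in equation \eqref{eq:set} to vanish identically, hence $\Delta_y U^{X_x}(t,x,\cdot) \equiv -2n$ on $X_x$.
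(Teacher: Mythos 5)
Your derivation of (i) and (ii) follows the paper's proof essentially step by step: almost monotonicity from Corollary \ref{cor:monoKato}, passage to the limit via Proposition \ref{prop:thetacont}, uniform two-sided bounds from local Ahlfors regularity, identification of $L(s,t)$ with $\theta^{X_x}_x(s,t)$ via the scaling identity, constancy of $\Theta^{X_x}_x$, Lemma \ref{lem:simple}, Proposition \ref{prop:DPG}, and finally the subsequential-limit-point argument for (ii). The only cosmetic variation is in how you obtain $0$-homogeneity of $L$: the paper reads it off directly from the definition of the limit (substitute $\lambda'' = \lambda'\lambda$), whereas you invoke iterated tangent cones. Both work; the direct route is shorter.

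Part (iii) is where the proposal departs from the paper and where it has a genuine gap. You propose to derive the Gaussian form $H^{X_x}(t,x,\cdot) = \vartheta_X(x)^{-1}(4\pi t)^{-n/2}e^{-\dist_x^2/4t}$ by a rigidity argument: redo Proposition \ref{prop:derivetheta} on the $\RCD(0,n)$ tangent cone, argue that constancy of $\theta^{X_x}_x(\lambda s,\lambda t)$ in $\lambda$ forces $\uppi\equiv 0$ in \eqref{eq:set}, and hence $\Delta_y U^{X_x}(t,x,\cdot)\equiv -2n$. Two problems. First, making the Bochner-type computation in the proof of Proposition \ref{prop:derivetheta} rigorous in the nonsmooth $\RCD(0,n)$ setting is a substantial project (one needs weak second-order calculus for $U$ and the Li-Yau machinery on the singular limit space), far beyond what is established in the paper. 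Second, and more seriously, even granting $\Delta_y U^{X_x}(t,x,\cdot)\equiv -2n$, this equation alone does \emph{not} determine $U$: any perturbation of the form $U+h$ with $h$ harmonic also satisfies it, and one needs additional structural input (vanishing at the vertex, behavior at infinity, symmetry $U(t,x,y)=U(t,y,x)$, or the conical geometry) to conclude $U(t,x,y)=\dist_x^2(x,y)+4t\log\vartheta_X(x)$. The paper avoids all of this by exploiting (i): once $(X_x,\dist_x,\mu_x)$ is known to be an $n$-metric measure cone at $x$, the heat kernel from the vertex is Gaussian by an explicit, purely radial computation — the conical Laplacian restricted to radial functions matches the radial part of the Euclidean Laplacian, the fundamental solution of the radial heat equation is Gaussian, and stochastic completeness fixes the constant to be $\vartheta_X(x)^{-1}$. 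That is the argument you should use in place of the rigidity detour.
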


The proof of this theorem and Remark \ref{rem-theta-vol} (ii) 
will imply:
\begin{cor}\label{cor:monoHeat} Let $(X, \dist, \mu,o)$ be a non-collapsed strong Kato limit. Then for any $x \in X$ we have
\begin{equation*}
\lim_{t \rightarrow 0} (4\pi t)^{\frac n2} H(t,x,x)= \frac{1}{\vartheta_X(x)}
\end{equation*}
Moreover there is a positive constant $\upeta$ and a increasing function $\Phi\colon (0,\upeta T]\rightarrow \R_+$ satisfying $\lim_{t\to 0+} \Phi(t)=0$ such that 
$$t\in (0,\upeta T]\mapsto \exp\left(\Phi(t) \right)(4\pi t)^{\frac n2} H(t,x,x)$$ is non decreasing. \end{cor}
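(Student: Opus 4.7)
The plan is to use the identity $(4\pi t)^{n/2} H(t,x,x) = \theta_x(t/4, t/2)$ from Remark \ref{rem-theta-vol}(ii), which is valid on the limit space since it is a stochastically complete $\mathrm{PI}$ Dirichlet space (Remark \ref{rem:KatoPI}). This reduces both claims of the corollary to properties of the $\theta$-volume evaluated along the ratio $t/s = 2$. The first assertion follows directly by applying Theorem \ref{thm:metriccone}(iii) with $(s,t) = (1/4, 1/2)$, which gives
\[
\lim_{t \to 0} (4\pi t)^{n/2} H(t,x,x) \;=\; \lim_{\lambda \to 0} \theta_x(\lambda/4,\, \lambda/2) \;=\; \vartheta_X(x)^{1-2} \;=\; \frac{1}{\vartheta_X(x)}.
\]

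For the almost-monotonicity, let $\{(M_\alpha, g_\alpha, o_\alpha)\}$ be a sequence realizing the pointed Mosco-Gromov-Hausdorff convergence to $(X,\dist,\mu,o,\cE)$, with $x_\alpha \to x$. Set $\Phi_\alpha(\tau) := \int_0^\tau \sqrt{\Kato_s(M_\alpha, g_\alpha)}/s \, ds$, so that $\Phi_\alpha \le \Phi_f$ where $\Phi_f(\tau) := \int_0^\tau \sqrt{f(s)}/s \, ds$. Applying Corollary \ref{cor:monoKato} to each $(M_\alpha, g_\alpha)$ with time parameters $s = T/2$ and $t = T$ (so $t/s - s/t = 3/2$), and substituting $\tau := 2\lambda T$ together with Remark \ref{rem-theta-vol}(ii), one obtains that the function
\[
\tau \;\longmapsto\; (4\pi \tau)^{n/2}\, H_\alpha(\tau, x_\alpha, x_\alpha)\, \exp\!\left(\tfrac{3c_n}{2}\, \Phi_\alpha(\tau/2)\right)
\]
is non-decreasing on $(0,\, 2\bar{\lambda}\, T]$, where $\bar{\lambda} := \min\{e^{-c_n \Lambda/2},\, e^{-4\sqrt{n}\Lambda}\}$ is independent of $\alpha$.

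The heat kernels converge pointwise via \eqref{IKS2} in Theorem \ref{th:improvedKS}; the main obstacle is that the modulating factor $\Phi_\alpha$ depends on $\alpha$ and is only known to be dominated by $\Phi_f$. I would resolve this by applying Helly's selection theorem to the uniformly bounded non-decreasing family $\{\Phi_\alpha\}$ to extract a subsequence along which $\Phi_\alpha \to \Phi^*$ pointwise, off the countable set of discontinuities of the monotone limit $\Phi^*$; this $\Phi^*$ is non-decreasing, satisfies $\Phi^* \le \Phi_f$, and obeys $\lim_{\tau \to 0+} \Phi^*(\tau) = 0$. Passing to the right-continuous version of $\Phi^*$ if necessary transfers the almost-monotonicity to the limit space, concluding the proof with $\upeta := 2\bar{\lambda}$ and the choice $\Phi(\tau) := (3c_n/2)\, \Phi^*(\tau/2)$.
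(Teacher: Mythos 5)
Your proof of the first assertion is exactly the paper's route: combine the identity $\theta_x(t/4,t/2)=(4\pi t)^{n/2}H(t,x,x)$ from Remark \ref{rem-theta-vol}(ii) with Theorem \ref{thm:metriccone}(iii) at $(s,t)=(1/4,1/2)$, giving $\vartheta_X(x)^{1-2}=\vartheta_X(x)^{-1}$. That part is fine.

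For the almost-monotonicity, you and the paper both start from Corollary \ref{cor:monoKato} and Remark \ref{rem-theta-vol}(ii), but diverge on how to deal with the $\alpha$-dependence of the modulating exponential. You keep $\Phi_\alpha(\tau)=\int_0^\tau \sqrt{\Kato_s(M_\alpha,g_\alpha)}/s\,\di s$ as supplied by the stated corollary and invoke Helly's selection theorem to extract a pointwise limit $\Phi^*\le \Phi_f$. This is correct, but it forces a subsequence extraction, a right-continuous modification, and a mildly fiddly argument to transfer monotonicity off the countable exceptional set. The paper's route (implicit in the proof of Theorem \ref{thm:metriccone}) is shorter: it observes that the derivative computation underlying Corollary \ref{cor:monoKato} remains valid if one replaces $\Phi_\alpha$ by the $\alpha$-independent $\Phi_f(\tau)=\int_0^\tau\sqrt{f(s)}/s\,\di s$. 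Indeed, the only place the Kato quantity enters is the bound $\Gamma_{\lambda t}(M_\alpha,g_\alpha)\le b_n\sqrt{\Kato_{\lambda t}(M_\alpha,g_\alpha)}\le b_n\sqrt{f(\lambda t)}$, so with $c_n=nb_n$ the crucial term $c_n\sqrt{f(\lambda t)}-n\Gamma_{\lambda t}(M_\alpha,g_\alpha)$ stays nonnegative; note that a naive comparison argument (``$\Phi_\alpha\le\Phi_f$ hence replace'') would not suffice, since multiplying a nondecreasing function by a larger nondecreasing factor need not preserve monotonicity, so the rederivation is genuinely needed. With the uniform $\Phi_f$, the monotone functions $\tau\mapsto(4\pi\tau)^{n/2}H_\alpha(\tau,x_\alpha,x_\alpha)\exp\bigl(\tfrac{3c_n}{2}\Phi_f(\tau/2)\bigr)$ converge pointwise by \eqref{IKS2} without any subsequence selection, and the limit is manifestly monotone. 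Both approaches reach the stated conclusion; yours buys nothing over the paper's and introduces some avoidable technicalities, though it has the advantage of using Corollary \ref{cor:monoKato} literally as stated rather than in a silently strengthened form.
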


\begin{proof}[Proof of Theorem \ref{thm:metriccone}]
Let $(X,\dist, \mu)$ be a non-collapsed strong Kato limit obtained as above. According to Theorem \ref{thm:int_ChE}, we know that we have Mosco convergence of the quadratic form
$$u\mapsto \int_{M_\alpha} |du|^2_{g_\alpha}\di\nu_{g_\alpha}$$ to the Cheeger energy $\Ch$ of $(X, \dist, \mu)$. As a consequence, $(X, \dist, \mu,\Ch)$ is a PI$_{\upkappa, \upgamma}(\sqrt{T})$ Dirichlet space. Therefore, the $\Theta$ and $\theta$-volume are well defined on $X$. Moreover we know that if $x_\alpha\to x$ then 
 $$\Theta^X_x(s)=\lim_{\alpha\to+\infty} \Theta^{M_\alpha}_{x_\alpha}(s)\text{ and } \theta^X_x(s,t)=\lim_{\alpha\to+\infty} \theta^{M_\alpha}_{x_\alpha}(s,t).$$
According to Corollary \ref{cor:monoKato}, we also know that for any $t,s>0$ and $t>0$ with $t\le T$ there is a constant $\eps>0$ and a constant $\kappa$ both depending on $s$ and $t$, such that the function
 $$\lambda\in (0,\eps]\mapsto \theta^{M_\alpha}_{x_\alpha}(\lambda s,\lambda t)e^{\kappa \Phi(\lambda t)}$$ is monotone.
Hence the same is true for the function
 $$\lambda\in (0,\eps]\mapsto \theta^{X}_{x}(\lambda s,\lambda t)e^{\kappa \Phi(\lambda t)}.$$
Recall that, as we observed in Remark \ref{rem-AR-ncStrongKatolim} and since $(X,\dist, \mu)$ is a non-collapsed strong Kato limit, the measure $\mu$ has a local Ahlfors regularity and satisfies for all $x \in X$ and $0\le r\le s\le \sqrt{T}/2$
 $$\mu(B_s(x))\le C s^n\text{ and } \frac{\mu(B_s(x))}{\mu(B_r(x))}\le C\left(\frac{s}{r}\right)^n.$$
We also have the uniform lower bound:
$$vs^n\, e^{-\lambda\frac{\dist(o,x)}{\sqrt{T}}}\le \mu(B_s(x)).$$
Hence by the estimate (\ref{eq:compathetavolu}), there exist positive constants $c,C$ depending on $d(o,x)$ and on $t/s$ such that
$$c\le \theta_x(\lambda s,\lambda t)\le C.$$ As a consequence, this monotone function has a well defined limit at $\lambda=0$, that we denote by $\vartheta_x(s,t)$. Moreover when $t\le s$:
 $$\vartheta_x(s,t):=\lim_{\lambda\to 0+} \theta^{X}_{x}(\lambda s,\lambda t)=\sup_{\lambda\in (0,\eps)} \theta^{X}_{x}(\lambda s,\lambda t)e^{\kappa \Phi(\lambda t)},
 $$
 when $t\ge s$:
 $$\vartheta_x(s,t):=\lim_{\lambda\to 0+} \theta^{X}_{x}(\lambda s,\lambda t)=\inf_{\lambda\in (0,\eps)} \theta^{X}_{x}(\lambda s,\lambda t)e^{\kappa \Phi(\lambda t)}\cdot
 $$
 By construction the function $(s,t)\mapsto \vartheta_x(s,t)$ is $0$-homogeneous :
 $$\forall \lambda\in (0,1)\colon  \vartheta_x( \lambda s, \lambda t)= \vartheta_x(s,t).$$
 
Let $x \in X$ and $(Y,\dist_Y,\mu_Y, y)$ be a tangent cone of $(X,\dist,\mu)$ at $x$. Then there exist a sequence $\{\eps_\beta\}_\beta \in (0,+\infty)$ with $\eps_\beta \downarrow 0$ and a limit measure $\mu_Y$ on $Y$ such that
\begin{equation}\label{eq:notat}
(X_\beta=X,\dist_\beta:=\eps_\beta^{-1} \dist, \mu_\beta:=\eps_\beta^{-n} \mu,x) \stackrel{pmGH}{\longrightarrow} (Y,\dist_Y,\mu,y).
\end{equation}
Moreover the sequence  $\left( (X,\dist_\beta:=\eps_\beta^{-1} \dist, \mu_\beta:=\eps_\beta^{-n} \mu, \Ch_\beta=\eps_\beta^{2-n} \Ch, x)\right)_\beta$ Mosco-Gromov-Hausdorff converges to $(Y,\dist_Y,\mu,\Ch, y)$.
Let $H_{\beta}$ be the heat kernel of the scaled Dirichlet spaces $(X_{\beta}, \dist_{\beta}, \mu_{\beta},\Ch_\beta )$ and $U_{\beta}$ the corresponding function such that
$$H_{\beta}(t,x,y)=(4\pi t)^{-\frac{n}{2}} e^{-\frac{U_\beta(t,x,y)}{4t}}.$$
The scaling property of the heat kernel leads to
$$U_\beta(t,x,y)=\eps_\beta^{-2}U(\eps_\beta^2t,x,y),$$
so that we have
$$\theta_x^{X_\beta}(s,t)=\theta_x^{X}(\eps_\beta^2s,\eps_\beta^2 t).$$
Moreover, Proposition \ref{prop:thetacont} ensures that for any $s,t>0$ we have
$$\theta_y^{Y}(s,t)=\lim_{\beta\to +\infty} \theta_x^{X_\beta}(s,t)=\lim_{\beta\to+\infty}\theta_x^{X}(\eps_\beta^2s,\eps_\beta^2 t)=\vartheta_x(s,t).$$
Hence the function $(s,t)\to \theta_y^{Y}(s,t)$ is also $0$-homogeneous, but according to 
Proposition \ref{prop:theta(s,t)}, we get that 
$$\Theta^Y_y(s)=\lim_{t\to 0} \theta^Y_y(s,t).$$ Hence
$s\mapsto \Theta_y^Y(s)$ is $0$-homogeneous hence there is some $c=\Theta_y^Y(1)>0$ such that for any $s>0$:
$$\Theta_y^Y(s)=c.$$

 As a consequence, Lemma \ref{lem:simple} implies for any $r>0$
$$\mu_Y(B(y,r))=c \omega_n r^n.$$
From Corollary \ref{cor:TgConeRCD}, we know that $(Y,\dist_Y,\mu_Y)$ is a weakly non collapsed $\RCD(0,n)$ space, then according to Proposition \ref{prop:DPG},
we know that $(Y,\dist_Y,\mu_Y)$ is a measured metric cone at $y$. This shows the first assertion (i). 
 
We have also shown that for any point $x \in X$ and for any tangent cone $(Y, \dist_Y, \mu_Y, y)$ of $X$ at $x$, the functions $\Theta_y^Y$ and $\theta_y^Y$ do not depend on the tangent cone $Y$.  

Recall that, as it was pointed out earlier in the proof, for any fixed $x\in X$ the function $r\in (0,\sqrt{T}/2]\, \mapsto \mu(B_r(x))/\omega_n r^n$ is bounded above and below by positive constants, hence it admits limit points as $r \downarrow 0$. Let $\varpi$ be one of these limits points and $(r_\alpha)_{\alpha} \subset (0,+\infty)$ a sequence such that $r_\alpha \downarrow 0$ and $\varpi = \lim_\alpha \mu(B(x,r_\alpha))/\omega_n r_\alpha^n$. 
We can assume, extracting a subsequence, that the sequence of rescaled space $(X,\dist_\alpha:=r_\alpha^{-1} \dist, \mu_\alpha:=r_\alpha^{-n} \mu,  x)$ converges for the pointed measure Gromov-Hausdorff topology to some tangent cone $(Y, \dist_Y, \mu_Y, y)$. In particular
$$\varpi=\frac{\mu_Y(B_1(y))}{\omega_n}= \Theta^Y_y(1)=\lim_{t\to 0} \vartheta_x(s,t).$$
All limit points are hence equals and 
 we obtain that the volume density is well defined.
 
It remains to show that 
$$\lim_{\lambda\to 0} \theta_x(\lambda s,\lambda t)=\vartheta_X^{1-\frac t s}(x),$$
that is to  verify that $$\vartheta_x(s,t)=\vartheta_X^{1-\frac t s}(x).$$ If $(Y, \dist_Y, \mu_Y, y)$ is a tangent cone of $X$ at $x$, then we have shown that
 $$\theta_y^Y(s,t)=\vartheta_x(s,t).$$
Since $(Y, \dist_Y, \mu_Y, y)$ is a  measure metric cone at $y$, we get that for any $z\in Y$ and $t>0$:
\begin{equation*}
H_{Y}(t,y,z)=\frac{1}{\vartheta_Y(y) (4\pi t)^{\frac n2}}e^{\frac{-\dist^2_{Y}(y,z)}{4t}}, 
\end{equation*}
where we recall that, since $Y$ is a measure metric cone at $y$ and a tangent cone of $X$ at $x$, for any $r>0$: $$\vartheta_Y(y)=\frac{\mu(B_r(y))}{\omega_n r^n}=\vartheta_X(x).$$
Therefore the function $U_{Y}$ associated to $H_{Y}$ equals 
$$U_{Y}(t,y,z)=\dist^2_{Y}(y,z) +4t\vartheta_Y(y)=\dist^2_{Y}(y,z) +4t\vartheta_X(x).$$
 
When using this equality in the definition of $\theta^{Y}_y(s,t)$ we obtain
\begin{align*}
\theta^{Y}_y(s,t)&= \int_{Y}e^{-\frac{U_{Y}(t,y,z)}{4s}}\frac{\di\mu_{Y}(z)}{(4\pi s)^{\frac n2}} \\
& = \vartheta_Y(y)^{-\frac{t}{s}} \int_{Y}e^{-\frac{\dist^2_{Y}(y,z) }{4s}}\frac{\di\mu_{Y}(z)}{(4\pi s)^{\frac n2}} \\
&=\vartheta_Y(y)^{1-\frac{t}{s}} \int_{Y} H_Y(s,y,z)\di\mu_{Y}(z)\\
&=\vartheta_Y(y)^{1-\frac{t}{s}}=\vartheta_X(x)^{1-\frac{t}{s}},
\end{align*}
where we have used the stochastic completeness of $Y$.
 \end{proof}
 
 This theorem has also the following useful consequence. 
 \begin{cor}\label{cor:Hausdorffvsmu1}
 Let $(X,\dist,\mu,o)$ be a non-collapsed strong-Kato limit in the sense of Definition \ref{def:NC}. Then at every point $x\in X$ the volume density satisfies:
 $$\vartheta_X(x)\le 1.$$
 \end{cor}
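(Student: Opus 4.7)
The plan is to exploit the fact that on each approximating smooth manifold $M_\alpha$ the volume density is identically $1$, combined with the uniform monotonicity of the on-diagonal heat kernel supplied by Corollary~\ref{cor:monoKato}, and then to pass to the limit using the heat kernel convergence of Theorem~\ref{th:improvedKS}. Let $\{(M_\alpha, g_\alpha, o_\alpha)\}$ be a sequence realizing $(X,\dist,\mu,o)$ as a non-collapsed strong Kato limit, and pick $x_\alpha \in M_\alpha$ with $x_\alpha \to x$.

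First, I apply Corollary~\ref{cor:monoKato} to each $M_\alpha$ with the specific parameters $s=1/4$ and $t=1/2$, so that $t \ge s$ and $t/s - s/t = 3/2$. Using the identity $\theta_x(t/4,t/2) = (4\pi t)^{n/2} H(t,x,x)$ from Remark~\ref{rem-theta-vol}(ii), the corollary gives the non-decreasingness of
$$\lambda \longmapsto e^{(3c_n/2)\Phi_\alpha(\lambda/2)}\,(4\pi\lambda)^{n/2} H_\alpha(\lambda,x_\alpha,x_\alpha)$$
on some interval $(0,\bar\lambda]$, where $\Phi_\alpha(\tau) = \int_0^\tau \sqrt{\Kato_s(M_\alpha,g_\alpha)}/s\, \di s$. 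The strong uniform Kato bound \eqref{StrongKato} yields $\Phi_\alpha(\tau) \le \Psi(\tau) := \int_0^\tau \sqrt{f(s)}/s\, \di s$, with $\Psi(\tau)\to 0$ as $\tau\to 0$, and the constant $\bar\lambda$ depends only on $n$ and $\Lambda$; this uniformity is the key point. On each smooth $M_\alpha$ the classical Minakshisundaram--Pleijel short-time asymptotic gives $(4\pi\lambda)^{n/2} H_\alpha(\lambda,x_\alpha,x_\alpha) \to 1$ as $\lambda\to 0$, so the non-decreasing function reaches its infimum $1$ at $0^+$, and for every $\lambda\in (0,\bar\lambda]$ I obtain
$$(4\pi\lambda)^{n/2} H_\alpha(\lambda,x_\alpha,x_\alpha) \ge e^{-(3c_n/2)\Phi_\alpha(\lambda/2)} \ge e^{-(3c_n/2)\Psi(\lambda/2)}.$$

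Next I pass to the limit $\alpha \to \infty$: by the uniform convergence of heat kernels on compact sets in \eqref{IKS2}, $H_\alpha(\lambda,x_\alpha,x_\alpha) \to H(\lambda,x,x)$ for every fixed $\lambda>0$, so the lower bound persists on $X$ in the form
$$(4\pi\lambda)^{n/2} H(\lambda,x,x) \ge e^{-(3c_n/2)\Psi(\lambda/2)}, \qquad \lambda \in (0,\bar\lambda].$$
Letting $\lambda \to 0$ and invoking Corollary~\ref{cor:monoHeat}, the left-hand side tends to $1/\vartheta_X(x)$ while the right-hand side tends to $1$; hence $\vartheta_X(x) \le 1$. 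The only real subtlety is ensuring that the correction function from Corollary~\ref{cor:monoKato} and the range of monotonicity can be controlled uniformly in $\alpha$: this is precisely what the integrability in the strong uniform Kato bound enforces, via $\Phi_\alpha\le\Psi$ with $\Psi(\tau)\to 0$ and a uniform $\bar\lambda$. Without the integral condition in \eqref{StrongKato} the function $\Psi$ would not vanish at $0$ and the limiting inequality would degenerate.
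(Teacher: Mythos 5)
Your proof is correct and follows essentially the same route as the paper: you use the almost-monotonicity of $\lambda\mapsto(4\pi\lambda)^{n/2}H_\alpha(\lambda,x_\alpha,x_\alpha)$ supplied by Corollary~\ref{cor:monoKato} (which is exactly what underlies Corollary~\ref{cor:monoHeat}, the reference used in the paper), the short-time normalization $(4\pi\lambda)^{n/2}H_\alpha(\lambda,x_\alpha,x_\alpha)\to 1$ on each smooth $M_\alpha$, the uniform control $\Phi_\alpha\le\Psi$ furnished by the strong Kato bound, and then pass to the limit in $\alpha$ and $\lambda$. The one cosmetic slip is that $t=1/2$ may exceed the fixed $T$ in Definition~\ref{def:bounds}; this is remedied by taking $s=T/4$, $t=T/2$, which leaves everything else unchanged.
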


 \begin{proof} Let $(X,\dist, \mu,o)$ be a non-collapsed strong Kato limit defined as above and recall that we defined for all $t \in (0,T]$
$$\Phi(\tau)= \int_0^t \frac{\sqrt{f(s)}}{s} \di s.$$
Let $x\in X$.  We only need to show that $$\lim_{t\to 0} \theta^X_x(t/4,t/2)=\vartheta_X(x)^{-1}\ge 1.$$ 
Using Corollary \ref{cor:monoHeat} we know that for some $\upeta>0$, the function
$$t\in (0,\upeta T]\mapsto \exp\left( \frac{\Phi(t)}{\upeta} \right)(4\pi t)^{\frac n2} H_{M_\alpha}X(t,x_\alpha,x_\alpha) $$ is non decreasing.
But $$\lim_{t\to 0+} (4\pi t)^{\frac n2} H_{M_\alpha}X(t,x_\alpha,x_\alpha) =1$$ hence for any $t\in [0,\upeta T]$:
$$\theta_{x_\alpha}^{M_\alpha}(t/4,t/2)=(4\pi t)^{\frac n2} H_{M_\alpha}X(t,x_\alpha,x_\alpha)\ge   \exp\left(-\frac{\Phi(t)}{\upeta}\right).$$
By Proposition \ref{prop:thetacont}, we also have
$$\theta_{x}^{X}(t/4,t/2)\ge   \exp\left( -\frac{\Phi(t)}{\upeta} \right).$$
Hence the result. \end{proof}
Our next result concerns the measure of balls in a non-collapsed strong Kato limit and the comparison between the $n$-Hausdorff measure and $\mu$.
  \begin{cor}\label{cor:Hausdorffvsmu2} 
 Let $(X,\dist,\mu,o)$ be a non-collapsed strong-Kato limit in the sense of \ref{def:NC}, then for any $\rho>0$ and $\eps>0$ there is some $\delta>0$ such that $r\le \delta$ and $x\in B_\rho(o)$:
$$ \mu(B_r(x))\le \omega_nr^n(1+\epsilon).$$
As a consequence, $\mu$ is absolutely continuous with respect to the $n$-Hausdorff measure and $$\mu\le \cH^n.$$
\end{cor}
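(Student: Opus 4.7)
I would prove the first (ball-volume) statement by a contradiction/compactness argument that blows up along a sequence of balls violating the inequality, and then apply the already established density bound $\vartheta_X(x)\le 1$ (Corollary \ref{cor:Hausdorffvsmu1}) to the limit. The second assertion is a standard covering consequence of the first.

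\textbf{Step 1 (blow-up along a violating sequence).} Suppose the first assertion fails. Then there exist $\epsilon_0>0$, $\rho>0$, a sequence $r_k\downarrow 0$ and points $x_k\in B_\rho(o)$ with
$$\mu(B_{r_k}(x_k))\ge \omega_n r_k^n(1+\epsilon_0).$$
I consider the rescaled pointed spaces $(X_k,\dist_k,\mu_k,x_k):=(X,r_k^{-1}\dist,r_k^{-n}\mu,x_k)$. The local Ahlfors regularity from Remark \ref{rem-AR-ncStrongKatolim}, applied uniformly for base points in the fixed ball $B_\rho(o)$, yields a uniform doubling constant and uniform two-sided bounds on $\mu_k(B_1(x_k))$; Proposition \ref{precompmGH} then extracts along a subsequence a pmGH limit $(Y,\dist_Y,\mu_Y,y)$.

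\textbf{Step 2 (identify $Y$ and conclude).} The diagonal/rescaling procedure summarised in Remarks \ref{rem-TgConesSKL} and \ref{rem-RCDmoreGeneral} shows that $Y$ is itself a non-collapsed strong Kato limit. Indeed, by Remark \ref{rem:scaling} the strong uniform Kato bound is preserved by rescaling with the same constant $\Lambda$, and the Ahlfors lower bound of Remark \ref{rem-AR-ncStrongKatolim} provides, uniformly in $k$ (since $x_k\in B_\rho(o)$), a non-collapsing lower bound $\mu_k(B_1(x_k))\ge v'>0$ for a constant $v'$ depending only on $v,\rho,\Lambda,T$. Passing the inequality $\mu_k(B_1(x_k))\ge \omega_n(1+\epsilon_0)$ to the limit (through a continuous approximation of $\mathbf{1}_{\bar B_1(y)}$) I get
$$\mu_Y(\bar B_1(y))\ge \omega_n(1+\epsilon_0).$$
Since $Y$ is $\RCD(0,n)$ (Corollary \ref{cor:TgConeRCD} / Remark \ref{rem-RCDmoreGeneral}), the Bishop-Gromov monotonicity of $r\mapsto \mu_Y(B_r(y))/r^n$ forces $\vartheta_Y(y)\ge 1+\epsilon_0$. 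But Corollary \ref{cor:Hausdorffvsmu1}, applied to $Y$ as a non-collapsed strong Kato limit, gives $\vartheta_Y(y)\le 1$, a contradiction.

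\textbf{Step 3 (Hausdorff comparison) and main obstacle.} With the uniform bound $\mu(B_r(x))\le \omega_n r^n(1+\epsilon)$ in hand for $r\le\delta$ and $x\in B_\rho(o)$, the absolute continuity $\mu\ll\cH^n$ follows by the standard covering argument: any Borel set of vanishing $\cH^n$-measure in $B_\rho(o)$ is covered by arbitrarily fine families of small-diameter sets, each contained in a ball whose $\mu$-measure is controlled by the first assertion, so its $\mu$-measure vanishes; letting $\rho\to\infty$ gives $\mu(A)=0$. The estimate $\mu\le\cH^n$ is obtained analogously from the definition of the Hausdorff measure using balls containing the covering sets. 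The real difficulty lies in Step 2 — namely, verifying that the blow-up limit at a \emph{varying} sequence of base points $x_k$ (which need not converge in the rescaled distance) still falls into the class to which Corollary \ref{cor:Hausdorffvsmu1} applies. This is precisely where the scale-invariance of the strong Kato bound and the locally-uniform (rather than only pointwise) Ahlfors lower bound of Remark \ref{rem-AR-ncStrongKatolim} play a crucial role.
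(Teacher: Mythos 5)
Your proposal is correct and follows essentially the same route as the paper: a contradiction/blow-up argument along a violating sequence of balls, identification of the limit as a non-collapsed strong Kato limit via Remarks \ref{rem-TgConesSKL} and \ref{rem-RCDmoreGeneral}, and a clash between the Bishop–Gromov monotonicity for the resulting weakly non-collapsed $\RCD(0,n)$ space and the bound $\vartheta\le 1$ of Corollary \ref{cor:Hausdorffvsmu1}, followed by the standard covering argument for $\mu\le\cH^n$. The concern you flag about varying base points is exactly what Remark \ref{rem-TgConesSKL} is designed to handle, and your resolution via the scale-invariance of the Kato bound and the locally uniform Ahlfors lower bound matches the paper's.
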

To prove this corollary we will use, as in \cite{CheegerPisa}, the spherical Hausdorff measure defined  for any $s>0$ and any Borel set $A$ in a metric space $(Z,\dist_Z)$ by
$$\cH^s(A):=\lim_{\delta\to 0+} \cH_\delta^s(A),$$ where for any $\delta \in (0,+\infty]$, $$\cH^s_\delta(A):=\inf\left\{\sum_i \omega_s r_i^s : A\subset \cup_i B_{r_i}(x_i) \text{ and } \forall i\colon r_i<\delta\right\}.$$Following \cite[Theorem 3.6]{Simon} or \cite[Theorem 6.6]{Mattila}, we have 
the following result : if $\cH^s(A)<\infty$ then 
\begin{equation}\label{Hausdorff}
\limsup_{r\to 0} \frac{\cH^s(B_r(x)\cap A)}{\omega_s r^s}\le 1
\end{equation}
for $\cH^s-\text{a.e. }x\in A$.

\begin{proof}[Proof of Corollary \ref{cor:Hausdorffvsmu2}]
Indeed if the estimate were not true, then we would find $\eps>0$ and sequences $r_\alpha \downarrow  0$, $x_\alpha\in B_\rho(o)$, such that the sequence of re-scaled spaces $(X, r_\alpha^{-1} \dist, r_\alpha^{-n}\mu, x_\alpha)$ converges to some pointed metric measure space $(Z, \dist_Z, \mu_Z,z)$ with $$\mu_Z(B_1(z))\ge \omega_n(1+\eps).$$ 
By Remark \ref{rem-TgConesSKL}, we know that $(Z, \dist_Z, \mu_Z,z)$ is a non-collapsed strong Kato limit as well, then by Corollary \ref{cor:Hausdorffvsmu1} its volume density is smaller than 1. Moreover, $(Z, \dist_Z, \mu_Z,z)$ is obtained as a limit of re-scaled manifolds $(M_\alpha, \tilde{g}_\alpha)$ such that for all $t>0$
$$\lim_{\alpha \to \infty} \Kato_t(M_\alpha,\tilde g_\alpha) =0.$$
Then according to Remark \ref{rem-RCDmoreGeneral}, $(Z, \dist_Z, \mu_Z,z)$ is a weakly non collapsed $\RCD(0,n)$ space. Thus the Bishop-Gromov comparison theorem holds on $(Z, \dist_Z, \mu_Z,z)$ and we get
$$\mu_Z(B_1(z))\le \vartheta_Z(z)\omega_n,$$ hence a contradiction.
The comparison with  the Hausdorff measure is then straightforward, because if $A\subset B_\rho(o)$ and $\eps>0$ we find $\delta\in (0,1)$ such that $x\in B_{\rho+1}(o)$ and $r<\delta$ yields $ \mu(B_r(x))\le \omega_nr^n(1+\epsilon),$ so that $$\mu(A)\le (1+\epsilon) \cH^n(A).$$
 \end{proof}
 \begin{rem} The above volume estimate can in fact be quantified on smooth closed manifolds. 
\label{rem:estiBishop} Let $v,T,\Lambda>0$ and $f:(0,T] \to \R_+$ be  a non-decreasing function such that
$$f(T)\le \frac{1}{16n}\text{ and }\int_{0}^T \frac{\sqrt{f(s)}}{s}\di s \leq \Lambda.$$
 Then for any $\rho>0$ and $\eps>0$, there exists  $\delta>0$ depending only on $n$, $f$, $v$, $T$ such that if $(M^n,g)$ is a closed Riemannian manifold such that
 $$v\le \frac{\nu_g\left(B_{\sqrt{T}}(o)\right)}{T^{\frac{n}{2}}}\text{ and }\forall t\in (0,T], \ \Kato_t(M, g) \leq f(t),$$
 then for any $x\in B_{\rho}(o)$ and $r<\delta$:
 $$\nu_g(B_r(x))\le \omega_nr^n(1+\epsilon).$$
\end{rem}

\section{Stratification}

In this section, we prove a stratification theorem for non-collapsed strong Kato limit spaces.  To state this result, we first give a useful definition.  From now on we equip $\setR^k$ with the classical Euclidean distance which we write $\dist_e$, whatever $k \in \setN \backslash \{0\}$.

\begin{D} For $k \in \N \backslash \{0\}$, a pointed metric measure space $(Y,\dist_Y,\mu_Y,y)$ is called \textbf{metric measure $k$-symmetric} (mm $k$-symmetric for short) if there exists a metric measure cone $(Z,\dist_Z,\mu_Z)$ with vertex $z$ such that
\[
(Y,\dist_Y,\mu_Y,y) = (\setR^k\times Z, \dist_{\setR^k \times Z}, \mathcal{H}^k \otimes \mu_Z,(0_k,z)),
\]
where $\dist_{\setR^k \times Z}$ is the classical Pythagorean product distance,  $0_k$ is the origin of $\setR^k$, and the equality sign means that there exists an isometry $\varphi:Y \to \setR^k \times Z$ such that $\varphi_\# \mu_Y =  \cH^n \otimes \mu_Z$ and $\varphi(y)=(0_k,z)$.
\end{D}

Let $\dim_\cH A$ be the Hausdorff dimension of a subset $A$ of a metric space $(X,\dist)$. Then our stratification theorem writes as follows.

\begin{theorem}\label{thm:stratification}
Let $(X,\dist,\mu,o)$ be a non-collapsed strong Kato limit.  We set for any $x \in X$
\[
d(x):=\sup\{k \in \setN : \text{one tangent cone at $x$ is mm $k$-symmetric}\} \in \{0,\ldots,n\}
\]
and $S^k:=\{x \in X : d(x) \le k\}$ for any $k \in \{0,\ldots,n\}$. Then the sets $S^k$ define a filtration of $X$
$$S^0 \subset S^1 \subset \ldots \subset S^{n-1} \subset S^n,$$
and the following holds. 
\begin{itemize}
\item[(i)] The set $S_0$ is countable. 
\item[(ii)] For any $k \in \{1,\ldots,n\}$ we have
$$\dim_\cH S^k \le k.$$
\item[(iii)] For $\mu$-a.e.~$x \in X$ the set of tangent cones of $(X,\dist,\mu)$ at $x$ is reduced to $\{(\setR^n,\dist_e,\vartheta_X(x)\cH^n,x)\}$.
\end{itemize}
\end{theorem}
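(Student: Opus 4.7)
The plan is to adapt to our setting the stratification arguments of B.~White \cite{White} and De~Philippis-Gigli \cite{DPG}, using as key ingredients the almost-monotone quantity from Corollary \ref{cor:monoKato} and the well-defined volume density $\vartheta_X$ from Theorem \ref{thm:metriccone}. The three statements (i), (ii), (iii) will be proved together.

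A preliminary step is to show that $\vartheta_X \colon X \to (0,1]$ is lower semi-continuous (the upper bound comes from Corollary \ref{cor:Hausdorffvsmu1}). Indeed, by Theorem \ref{thm:metriccone}(iii) one has $\vartheta_X(x)^{1-t/s} = \lim_{\lambda \to 0^+} \theta_x(\lambda s, \lambda t)$, and for $s < t$ the corrected $\theta$-quantity from Corollary \ref{cor:monoKato} approaches this limit monotonically from below. Combined with continuity of $\theta$ under pointed Mosco-Gromov-Hausdorff convergence (Proposition \ref{prop:thetacont}), this yields $\liminf_\alpha \vartheta_X(x_\alpha) \ge \vartheta_X(x)$ whenever $x_\alpha \to x$.

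Next comes the quantitative stratification. For $k \in \{0,\ldots,n-1\}$ and $\eps > 0$, I would declare $x$ to be $(k+1,\eps)$-symmetric at scale $r$ if the pmGH-distance between $(X, r^{-1}\dist, r^{-n}\mu, x)$ and some mm $(k+1)$-symmetric pointed space is less than $\eps$. Letting $S^k_\eps$ be the set of $x$ failing to be $(k+1,\eps)$-symmetric at every scale $r \in (0,\eps]$, a standard compactness argument based on Remark \ref{rem-TgConesSKL} gives $S^k = \bigcup_{\eps>0} S^k_\eps$. The key quantitative ingredient is a \emph{symmetric-or-pinching dichotomy}: there exists $\delta = \delta(n,\eps) > 0$ such that if $x$ is not $(k+1,\eps)$-symmetric at scale $r$, then the almost-monotone corrected $\theta_x$-quantity drops by at least $\delta$ between scales $r$ and $\eps r$. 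This should follow by contradiction and compactness: any blow-up limit would be a weakly non-collapsed $\RCD(0,n)$ metric measure cone (Corollary \ref{cor:TgConeRCD}, Proposition \ref{prop:DPG}), and constancy of $\theta$ across cone directions would, together with the Gigli splitting theorem for $\RCD(0,n)$ spaces, force an additional Euclidean factor. A Vitali-type covering scheme à la \cite{DPG} using this $\eps r$-scale drop at each point of $S^k_\eps$ then yields $\dim_\cH S^k_\eps \le k$, whence $\dim_\cH S^k \le k$, proving (ii).

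For (i), the same dichotomy shows that $S^0_\eps$ is discrete: at an accumulation point $x_\infty$ of $S^0_\eps$, lower semi-continuity of $\vartheta_X$ combined with repeated $\delta$-drops at nearby points in $S^0_\eps$ would drive $\vartheta_X(x_\infty)$ below any prescribed value, contradicting positivity of the density; hence $S^0 = \bigcup_{m \in \N} S^0_{1/m}$ is a countable union of countable sets and thus countable. For (iii): (ii) combined with $\mu \le \cH^n$ from Corollary \ref{cor:Hausdorffvsmu2} yields $\mu(S^{n-1})=0$, so at $\mu$-a.e.~$x$ one has $d(x) = n$, and the mm $n$-symmetric tangent cone is $(\R^n, \dist_e, \vartheta_X(x)\cH^n, 0)$, the $Z$-factor being forced to be zero-dimensional. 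Uniqueness of the tangent cone at such $x$ follows by re-applying the argument of Theorem \ref{thm:metriccone}: every tangent cone at $x$ is a weakly non-collapsed $\RCD(0,n)$ metric measure cone with density $\vartheta_X(x)$ and with $\theta^Y$-volume equal to $\vartheta_X(x)^{1-t/s}$ for all $s,t > 0$; a direct heat-kernel computation identifies this with the Euclidean cone, and a further splitting argument using the $n$-symmetric tangent identifies the link as a point. The main obstacle will be implementing the symmetric-or-pinching dichotomy cleanly, since the exponential correction factor $\exp(c_n\Phi(\lambda t)(t/s - s/t))$ in the almost-monotonicity must be controlled uniformly across the relevant scales in the compactness argument.
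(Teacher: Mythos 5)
Your preliminary step (lower semi-continuity of $\vartheta_X$ via the almost-monotone corrected $\theta$-quantity and Proposition \ref{prop:thetacont}) matches the paper's Proposition \ref{prop:lowerSemicontinuity}, and your use of Proposition \ref{prop:splitting} and of $\mu\le\cH^n$ is in the right spirit. However, the core of your argument for (ii) rests on a dichotomy that is false as stated. You claim: if $x$ is not $(k+1,\eps)$-symmetric at scale $r$, then the corrected $\theta_x$-quantity drops by at least $\delta(n,\eps)$ between scales $r$ and $\eps r$. Already for $k=0$ this fails: at the vertex $x$ of a non-splitting metric measure cone, $\lambda\mapsto\theta_x(\lambda s,\lambda t)$ is exactly constant (no drop at any scale), yet no rescaling at $x$ is mm $1$-symmetric. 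Pinching of a monotone quantity \emph{at a single point} only detects $0$-symmetry (cone structure at that point); to force an extra $\R$-factor one needs density pinching at a \emph{second} point, i.e.\ the cone-splitting principle encoded in Proposition \ref{prop:splitting} ($\vartheta_Y(y')=\vartheta_Y(y)$ at $y'\neq y$ implies splitting along the ray through $y'$). Your justification of the dichotomy conflates these two. Moreover, even with a correct cone-splitting input, the step ``a Vitali-type covering scheme then yields $\dim_\cH S^k_\eps\le k$'' is where all the work of a quantitative stratification lives (good/bad scale decompositions, coverings at many scales), and it is not supplied. The paper avoids this entirely by the softer Federer/White dimension reduction: assuming $\cH^s_\infty(S^k)>0$, it decomposes $S^k$ into closed pieces with quantitative non-symmetry and uniform density pinching, picks a point of positive $\cH^s_\infty$ upper density, and blows up to obtain a limit cone containing a compact set $K$ with $\dim_\cH K\ge s$ on which the volume ratio is constant; Proposition \ref{prop:splitting} then makes $K$ sit inside a Euclidean factor $\R^{k'}$ with $k'\ge s$, while the quantitative non-symmetry forces $k'\le k$.

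Two further gaps. For (i), your discreteness argument for $S^0_\eps$ again relies on the flawed dichotomy and on an unclear accumulation of ``$\delta$-drops at nearby points''; the paper's route is White's observation that $S^0\subseteq\{x:\vartheta_X(x)<\liminf_{y\to x}\vartheta_X(y)\}$, which is countable as a union of uniformly separated sets, and the inclusion is proved by blowing up at scale $\dist(x,y_\ell)$ for points $y_\ell\to x$ with $\vartheta_X(y_\ell)\to\vartheta_X(x)$ and applying Proposition \ref{prop:splitting} in the limit. For (iii), deducing from (ii) that $\mu$-a.e.\ point has \emph{some} Euclidean tangent cone is fine, but your upgrade to uniqueness is not: knowing that all tangent cones at $x$ are metric measure cones with the same vertex density $\vartheta_X(x)$ does not identify them with $\R^n$ (distinct cones can share the vertex density), and there is no direct way to transfer $n$-symmetry from one tangent cone to another taken along a different sequence of scales. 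The paper instead works at a Lebesgue point of $\vartheta_X$: the averaged density over $B_1$ in \emph{any} tangent cone is $\le\vartheta_X(x)$ by almost-monotonicity of $(4\pi t)^{n/2}H(t,z,z)$, while Proposition \ref{prop:splitting} gives the pointwise lower bound $\vartheta_{X_x}\ge\vartheta_{X_x}(x)$, forcing $\vartheta_{X_x}$ to be constant on $B_1$ and hence the whole tangent cone to be $\R^n$ with measure $\vartheta_X(x)\cH^n$. You would need to incorporate this Lebesgue-point mechanism (or an equivalent) to obtain uniqueness.
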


Our proof of Theorem \ref{thm:stratification} is based on two intermediary results and an argument originally due to B.~White \cite{White}. The key point in this argument consists in dealing with an appropriate upper or lower semi-continuous function, hence we begin with showing that the volume density $\vartheta_X$, that is well defined at any point of a non-collapsed strong Kato limit thanks to Theorem \ref{thm:metriccone}, is lower semi-continuous. 

For the sake of clarity,  like in the previous subsection, we fix constants $T, \Lambda, v >0$ and a non-decreasing function $f : (0,T] \to \R_+$ such that $f(T) \leq 1/(16n)$ and 
$$\int_0^T \sqrt{f(\tau)}\di \tau/\tau \leq \Lambda.$$
Any non-collapsed strong Kato limit space $(X,\dist,\mu,o)$ considered in this section is the pmGH limit of a sequence of pointed Riemannian manifolds $\{(M_\alpha,g_\alpha,o_\alpha)\}$ satisfying 
$$\mbox{for all } t \in (0,T], \quad \Kato_t(M_\alpha, g_\alpha) \leq f(t)$$
$$\mbox{for all }  \alpha, \quad \nu_{g_{\alpha}}(B_{\sqrt{T}}(o_\alpha))\geq vT^{\frac n2}.$$

\begin{prop}
\label{prop:lowerSemicontinuity}
Let $(X,\dist,\mu,o)$ be a non-collapsed strong Kato limit.  Then the function $\vartheta_X$ is lower semicontinuous.  Moreover, if $\{(X_\alpha,\dist_\alpha,\mu_\alpha,o_\alpha)\}_\alpha$  is a pmGH convergent sequence of non-collapsed strong Kato limit spaces with limit $(X,\dist,\mu,o)$, then for any $x \in X$ and any sequence $\{x_\alpha\}$ where $x_\alpha \in X_\alpha$ for any $\alpha$ such that $x_\alpha \to x$,
\[
\vartheta_{X}(x) \le \liminf_{\alpha \to +\infty} \vartheta_{X_\alpha}(x_\alpha).
\]
\end{prop}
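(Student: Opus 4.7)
The plan is to represent the volume density as a supremum of continuous functions using the almost-monotonicity of the heat kernel. By Corollary \ref{cor:monoHeat}, on a non-collapsed strong Kato limit there exist $\upeta>0$ and a non-decreasing function $\Phi\colon(0,\upeta T]\to\R_+$ with $\Phi(t)\to 0$ as $t\to 0$ such that $t\mapsto e^{\Phi(t)}(4\pi t)^{n/2}H(t,x,x)$ is non-decreasing and converges to $1/\vartheta_X(x)$ as $t\to 0^+$. Taking reciprocals, the map
\[
t\mapsto \frac{e^{-\Phi(t)}}{(4\pi t)^{n/2}H(t,x,x)}
\]
is non-increasing on $(0,\upeta T]$ with limit $\vartheta_X(x)$ at $0^+$, which shows
\[
\vartheta_X(x)=\sup_{t\in(0,\upeta T]}\frac{e^{-\Phi(t)}}{(4\pi t)^{n/2}H(t,x,x)}.
\]

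Next, since $(X,\dist,\mu,\cE)$ is a $\mathrm{PI}$ Dirichlet space (Remark \ref{rem:KatoPI}), its heat kernel $H(t,\cdot,\cdot)$ is continuous for each fixed $t>0$. Thus for each $t\in(0,\upeta T]$ the function $x\mapsto e^{-\Phi(t)}/[(4\pi t)^{n/2}H(t,x,x)]$ is continuous, and $\vartheta_X$, as a supremum of continuous functions, is lower semi-continuous. This yields the first statement.

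For the pmGH stability, the natural interpretation is that the approximating sequence $\{(X_\alpha,\dist_\alpha,\mu_\alpha,o_\alpha)\}_\alpha$ shares the same strong Kato parameters $T,\Lambda,v,f$, so that a common pair $(\upeta,\Phi)$ governs all the spaces. Theorem \ref{th:improvedKS} then yields uniform convergence on compact sets of the associated heat kernels $H_\alpha\to H$, hence $H_\alpha(t,x_\alpha,x_\alpha)\to H(t,x,x)$ whenever $x_\alpha\to x$ and $t>0$ is fixed. Applying the supremum representation to each $X_\alpha$ gives, for every $t\in(0,\upeta T]$,
\[
\frac{e^{-\Phi(t)}}{(4\pi t)^{n/2}H_\alpha(t,x_\alpha,x_\alpha)}\le \vartheta_{X_\alpha}(x_\alpha),
\]
and passing to the liminf in $\alpha$ followed by the supremum in $t$ yields $\vartheta_X(x)\le\liminf_\alpha\vartheta_{X_\alpha}(x_\alpha)$.

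The main subtlety lies in justifying that the gauge $\Phi$ and scale $\upeta$ can be chosen uniformly along the sequence $\{X_\alpha\}$; once this is granted the proof reduces to the pointwise convergence of continuous functions (the heat kernels) and the standard fact that a supremum of continuous functions is lower semi-continuous. Extracting such uniform parameters from the hypothesis is the only point requiring care, and it follows from the fact that each $X_\alpha$ is itself obtained as a limit of manifolds with the prescribed strong Kato bound, so a diagonal extraction places the whole family under a common $f,\Lambda,v,T$.
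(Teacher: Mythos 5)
Your proof is correct and follows essentially the same route as the paper's. The paper writes $\vartheta_X^{-1}(x)=\inf_{t\in(0,\upeta T]}\exp(\Phi(t))\,\theta_x^X(t/4,t/2)$ and invokes upper semi-continuity of an infimum of continuous functions together with Proposition \ref{prop:thetacont}; you take the reciprocal and write $\vartheta_X$ as a supremum of continuous functions, which is of course equivalent. Two small remarks. First, your closing paragraph speculates that a ``diagonal extraction'' is needed to find a common $f,\Lambda,v,T$; in fact the paper fixes these parameters at the start of Section~7, so all non-collapsed strong Kato limits considered there already share a single gauge $(\upeta,\Phi)$, and no extraction is required. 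Second, both Proposition \ref{prop:thetacont} (which the paper cites) and Theorem \ref{th:improvedKS} (which you cite) are stated for pointed Mosco--Gromov--Hausdorff convergence; passing from the pmGH convergence assumed in the statement to Mosco--GH convergence of the Cheeger energies does require a word (each $X_\alpha$ carries the Cheeger energy, and by Theorems \ref{th:improvedKS} and \ref{thm:int_ChE} together with uniqueness of the limit Dirichlet form, pmGH convergence upgrades to pMGH convergence of the whole sequence), but this is implicit in the paper as well, so you are not missing anything the original proof supplies.
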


\begin{proof}
Recall that the infimum of a family of continuous functions is upper semi-continuous. The proposition is then a consequence of the fact that $\vartheta^{-1}$ is the infinimum of a family of continuous function. 
Indeed from Corollary \ref{cor:monoHeat} and Remark \ref{rem-theta-vol} (ii) we know that there exists $\upeta>0$ and an increasing function $\Phi\colon (0,\upeta T]\rightarrow \R_+$ satisfying $\lim_{t\to 0+} \Phi(t)=0$ such that the function
$$t\in (0,\upeta T]\mapsto \exp\left(\Phi(t) \right)(4\pi t)^{\frac n2} H(t,x,x)$$  is non decreasing. We also know that
$$\vartheta^{-1}_X(x)=\inf_{t\in (0,\upeta]}  \exp\left( \Phi(t) \right) \theta_x^X(t/4,t/2).$$
The result then follows from Proposition \ref{prop:thetacont}.
\end{proof}

The next additional result deals with the volume density of weakly non-collapsed  $\RCD(0,n)$ measure metric cones and was implicitly present in \cite[Lem.~2.9]{DPG}.

\begin{prop}
\label{prop:splitting}
Let $(Y,\dist,\mu)$ be a weakly non-collapsed $\RCD(0,n)$ space which is a $n$-metric measure cone with vertex $y \in Y$. Then $\vartheta_Y(y') \ge \vartheta_Y(y)$ for any $y' \in Y$. Moreover there exists $k \in \N$ such that the level set $\{\vartheta_Y(\cdot) = \vartheta_Y(y)\}$ is isometric to the Euclidean space $\setR^k$ and $(Y,\dist,\mu,y)$ is mm $k$-symmetric but not mm $(k+1)$-symmetric.
\end{prop}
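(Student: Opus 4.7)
My plan is to establish the inequality first, then identify the equality set, and finally extract the $\setR^k$ splitting from $\RCD(0,n)$ rigidity results.

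For the inequality $\vartheta_Y(y') \ge \vartheta_Y(y)$, the cone structure at $y$ gives $\mu(B_r(y)) = \omega_n \vartheta_Y(y)\, r^n$ for every $r>0$, so the volume ratio at the vertex is a constant (equal to $\vartheta_Y(y)$). For any other point $y'$ at distance $s:=\dist(y',y)$, the inclusions $B_{r-s}(y) \subset B_r(y') \subset B_{r+s}(y)$ valid for $r>s$ imply
\begin{equation*}
\lim_{r\to +\infty} \frac{\mu(B_r(y'))}{\omega_n r^n} \;=\; \vartheta_Y(y).
\end{equation*}
Since $(Y,\dist,\mu)$ is $\RCD(0,n)$, the Bishop--Gromov theorem yields that $r\mapsto \mu(B_r(y'))/(\omega_n r^n)$ is non-increasing, so its value at $r=0^+$, which is $\vartheta_Y(y')$, dominates its limit at infinity. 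Hence $\vartheta_Y(y')\ge \vartheta_Y(y)$.

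Next, I analyse the level set $V:=\{\vartheta_Y(\cdot)=\vartheta_Y(y)\}$. If $y'\in V$, then the previous paragraph shows that the non-increasing volume ratio at $y'$ has the same value at $r=0^+$ and as $r\to +\infty$; it must therefore be constant. The weak non-collapsing hypothesis ensures $\vartheta_Y(y')<+\infty$, so Proposition \ref{prop:DPG} applies and $(Y,\dist,\mu)$ is a $n$-metric measure cone with vertex $y'$. Now suppose $y_1,y_2\in V$ are distinct. As each is a cone vertex, the ray from $y_1$ through $y_2$ extends to a half-line, and the ray from $y_2$ through $y_1$ extends to another half-line; concatenating them produces a bi-infinite geodesic line in $Y$. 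Gigli's splitting theorem for $\RCD(0,n)$ spaces (applicable by Corollary~\ref{cor:TgConeRCD}-type considerations) then furnishes an isomorphism $(Y,\dist,\mu)\cong (\setR\times Y_1,\dist_{\setR\times Y_1}, \cH^1\otimes \mu_{Y_1})$ with $Y_1$ a weakly non-collapsed $\RCD(0,n-1)$ space, and the line corresponds to the $\setR$ factor.

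Iterating this argument yields the final structure. The cone point $y$ becomes $(0,y_1^0)\in \setR\times Y_1$ and $y_1^0$ is again a cone vertex for $Y_1$ with volume density matching $\vartheta_Y(y)$; moreover every point of $V$ sits above a point of $V_1:=\{\vartheta_{Y_1}(\cdot)=\vartheta_Y(y)\}$. Apply the same argument in $Y_1$: either $V_1=\{y_1^0\}$, and iteration halts, or two distinct points in $V_1$ produce a further line factor. Since the dimension drops by one at each step, the process stops after $k$ iterations for some $k\in \{0,\ldots,n\}$, producing an isometric measure-preserving identification $(Y,\dist,\mu,y)\cong (\setR^k\times Z,\dist_{\setR^k\times Z}, \cH^k\otimes \mu_Z, (0_k,z))$ where $(Z,\dist_Z,\mu_Z,z)$ is a weakly non-collapsed $\RCD(0,n-k)$ metric measure cone whose vertex $z$ is the unique point of $Z$ with volume density equal to $\vartheta_Y(y)$. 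This exhibits $(Y,\dist,\mu,y)$ as mm $k$-symmetric and identifies $V$ with $\setR^k\times \{z\}\cong \setR^k$ isometrically; finally, $Y$ cannot be mm $(k+1)$-symmetric, for otherwise $Z$ would split off a further line factor and $V_k:=\{\vartheta_Z=\vartheta_Y(y)\}$ would contain more than one point, contradicting the uniqueness of the vertex after $k$ steps.

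The delicate step is the third one, in particular the rigorous extraction of a bi-infinite geodesic line from two cone vertices and the bookkeeping showing that the iterated splitting preserves both the cone structure on the remaining factor and the identification of the level set $V$ with the accumulated Euclidean factor. The key technical input is Gigli's $\RCD(0,n)$ splitting theorem together with Proposition~\ref{prop:DPG}; everything else is a dimension-reduction argument.
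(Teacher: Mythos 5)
Your overall plan is sound and the first two steps (the inequality via Bishop--Gromov and the asymptotic volume ratio, and the identification of the level set $V$ as precisely the set of cone vertices via Proposition~\ref{prop:DPG}) coincide with the paper's argument. Where you diverge is in the extraction of the line factor: the paper observes that since $Y$ is a cone at $y'$ it must be isometric to its own tangent cone at $y'$, and the tangent cone of $C(Z)$ at a non-vertex point $(s,z)$ is $\R\times Z_z$ for $Z_z$ a tangent cone of $Z$, which directly exhibits the $\R$-factor. You instead try to build a bi-infinite geodesic line from the two cone vertices and invoke Gigli's splitting theorem. Both routes are legitimate, but yours, as written, has a gap at precisely the step you flag as delicate: the assertion that concatenating the ray from $y_1$ through $y_2$ with the ray from $y_2$ through $y_1$ produces a geodesic line is not automatic. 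The two rays share the segment $[y_1,y_2]$ but the union of two geodesic rays is not in general a geodesic, and a naive triangle-inequality estimate from either vertex does not give the reverse inequality $\dist(\tilde\gamma(s),\tilde\gamma(t))\geq |s-t|$ once one is beyond the shared segment on both ends. The correct way to close this gap is to notice that the ray from $y_1$ is a geodesic passing through the vertex $y_2$ at an interior parameter; in the cone structure $Y=C(Z')$ with vertex $y_2$, a geodesic through the vertex decomposes into two radial segments whose directions in $Z'$ must be at distance exactly $\pi$, and in this case the union of the two full radial rays is a geodesic line (by the explicit cone distance formula). Alternatively, just use the paper's tangent-cone comparison, which sidesteps the line construction entirely.

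For the iteration, you correctly reduce to a product $\R\times Y_1$ and claim the vertex $y$ sits over a cone vertex $y^0_1$ of $Y_1$ and that $V=\R\times V_1$; both are true but rely on the ``cone splitting'' compatibility between the $C(Z)$ structure at $y$ and the product $\R\times Y_1$, and on the fact that $\vartheta_Y(t,w)=\vartheta_{Y_1}(w)$ for product measures $\cH^1\otimes\mu_{Y_1}$. These are standard but should be stated; the paper leaves the iteration implicit as well, so you are actually more explicit than the source on this point. Finally, the parenthetical appeal to ``Corollary~\ref{cor:TgConeRCD}-type considerations'' is unnecessary: the hypothesis already places you in an $\RCD(0,n)$ space, so Gigli's splitting theorem applies directly.
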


\begin{proof}
By the Bishop-Gromov theorem for $\RCD(0,n)$ spaces, the volume ratio is non-increasing, hence we know that for any $y' \in Y$ and $r>0$,
$$\vartheta_Y(y')\geq \frac{\mu(B_r(y'))}{\omega_n r^n}\ge \inf_{s>0} \frac{\mu(B_s(y'))}{\omega_n s^n}=\lim_{s\to+\infty} \frac{\mu(B_s(y'))}{\omega_n s^n} \, \cdot$$
The Bishop-Gromov theorem classically implies that the asymptotic volume ratio $ \lim\limits_{s\to+\infty} \frac{\mu(B_s(y'))}{\omega_n s^n}$ does not depend on $y'\in Y.$ Thus
$$\vartheta_Y(y')\geq \lim_{s\to+\infty} \frac{\mu(B_s(y))}{\omega_n s^n}\, \cdot$$
Since $(Y,\dist,\mu)$ is a $n$-metric measure cone with vertex $y$, the function $s \mapsto \frac{\mu(B_s(y))}{\omega_n s^n}$ is constantly equal to $\vartheta_Y(y)$.  As a consequence,
$$\vartheta_Y(y') \geq  \lim_{s\to+\infty} \frac{\mu(B_s(y))}{\omega_n s^n} =\vartheta_Y(y).$$

Since for any $r>0$,
$$
\vartheta_Y(y') \ge  \frac{\mu(B_r(y'))}{\omega_n r^n} \ge \vartheta_Y(y),
$$
then  $\vartheta_Y(y') = \vartheta_Y(y)$  if and only if the function $r \mapsto \frac{\mu(B_r(y'))}{\omega_n r^n}$ is constantly equal to $\vartheta_Y(y)$, what occurs if and only if $Y$ is a $n$-metric measure cone at $y'$ thanks to Proposition \ref{prop:DPG}.

If $y' \neq y$, this implies that $Y$ is mm 1-symmetric along the geodesic connecting $y$ and $y'$. Indeed, since $Y$ is a metric cone at $y'$, it must be isometric to any tangent cone at $y'$. But $Y$ is a metric cone at $y$, say $(Y, \dist, \mu)=(C(Z), \dist, \mu)$: therefore, any tangent cone $Y_{y'}$ at $y'=(s,z) \neq y$ is of the form
$$(\R \times Z_z, \dist_{\R \times Z_z}, s^{n-1}dt d\nu_z,y'),$$
where $s=\dist(y,y')$, and $(Z_z, \dist_z,\nu_z)$ is a tangent cone of $Z$ at $z$.  
\end{proof}

Recall that, thanks to Corollary \ref{cor:TgConeRCD} and to Theorem \ref{thm:metriccone}, any tangent cone of a non-collapsed strong Kato limit space is a weakly non-collapsed $\RCD(0,n)$ $n$-metric measure cone. Then in our setting we directly obtain the following reformulation of Proposition \ref{prop:splitting}. 

\begin{cor}
\label{cor:upthetaSplitting}
Let $(X,\dist,\mu)$ be a non-collapsed strong Kato limit and $x \in X$.  Let $(X_x,\dist_x,\mu_x,x)$ be a tangent cone. Then $\vartheta_{X_x}(z)\geq \vartheta_{X_x}(x)$ for any $z \in X_x$, and there exists $k \in \N$ such that the level set $\{ \vartheta_{X_x}(\cdot) =\vartheta_{X_x}(x) \}$ is isometric to the Euclidean space $\R^k$ and $(X_x,\dist_x,\mu_x, x)$ is mm $k$-symmetric but not mm $(k+1)$-symmetric.
\end{cor}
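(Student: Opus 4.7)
The plan is to reduce the statement directly to Proposition \ref{prop:splitting}, using the two structural results previously established for tangent cones of non-collapsed strong Kato limits. Let $(X,\dist,\mu)$ be a non-collapsed strong Kato limit, $x\in X$, and $(X_x,\dist_x,\mu_x,x)$ a tangent cone. The strategy has essentially two steps: verify that the hypotheses of Proposition \ref{prop:splitting} are met by $(X_x,\dist_x,\mu_x)$ at the vertex $x$, then invoke that proposition.

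First, I would apply Corollary \ref{cor:TgConeRCD} in the non-collapsed strong Kato setting: it asserts that $(X_x,\dist_x,\mu_x)$ is a weakly non-collapsed $\RCD(0,n)$ space. This gives the $\RCD(0,n)$ assumption and the finiteness of volume densities needed to invoke Proposition \ref{prop:splitting}.

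Second, I would invoke Theorem \ref{thm:metriccone}(i), which states that any tangent cone of a non-collapsed strong Kato limit is a measured metric cone; combined with what was established in the course of the proof of that theorem, namely $\mu_x(B_r(x)) = \vartheta_X(x)\,\omega_n r^n$ for all $r>0$, this shows that $(X_x,\dist_x,\mu_x)$ is an $n$-metric measure cone with vertex $x$ in the sense of the Definition preceding Proposition \ref{prop:DPG}: indeed the radial scaling exponent $\alpha-1=n-1$ of the measure on $C(Z)$ is fixed by the homogeneity degree $n$ of $r\mapsto \mu_x(B_r(x))$.

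With both hypotheses verified at the vertex $x$, Proposition \ref{prop:splitting} applies verbatim and yields all three conclusions: the volume density at any point $z\in X_x$ is at least $\vartheta_{X_x}(x)$; the level set $\{\vartheta_{X_x}(\cdot)=\vartheta_{X_x}(x)\}$ is isometric to some Euclidean $\R^k$; and $(X_x,\dist_x,\mu_x,x)$ is mm $k$-symmetric but not mm $(k+1)$-symmetric. There is no real obstacle here since the work has already been done in establishing Corollary \ref{cor:TgConeRCD}, Theorem \ref{thm:metriccone}, and Proposition \ref{prop:splitting}; the only mild point to check is the matching of the definition of ``$n$-metric measure cone'' (scaling exponent $n$) with the homogeneity actually produced by Theorem \ref{thm:metriccone}, which is immediate from $\mu_x(B_r(x))=\vartheta_X(x)\,\omega_n r^n$.
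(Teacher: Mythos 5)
Your proposal is correct and is essentially identical to the paper's own argument: the corollary is obtained there as a direct reformulation of Proposition \ref{prop:splitting}, using Corollary \ref{cor:TgConeRCD} and Theorem \ref{thm:metriccone} to verify that the tangent cone is a weakly non-collapsed $\RCD(0,n)$ $n$-metric measure cone with vertex $x$. Your extra check that the scaling exponent matches, via $\mu_x(B_r(x))=\vartheta_X(x)\,\omega_n r^n$, is exactly what the proof of Theorem \ref{thm:metriccone} provides.
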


Before proving Theorem \ref{thm:stratification}, we recall the definition of $\cH_\infty^s$ from the previous section and provide a couple of classical properties. For any $s \in \R_+$ and any subset $E$ of a metric space $(X,\dist)$,
$$\cH^s_{\infty}(E):= \inf \left\{\sum_i \omega_s r_i^s \, : \, E \subseteq \bigcup_i B_{r_i}(x_i) \ \right\}.$$

\begin{lemma}\label{lemHaus}
The function $\cH^s_{\infty}$ satisfies the following properties. 
\begin{enumerate}
\item $\mbox{dim}_{\cH}(E)=\sup\{s >0 :  \cH^s_{\infty}(E)>0\}=\inf\{ s >0:  \cH^s_{\infty}(E)=0 \}$; 
\item if $\cH^s_\infty(E)>0$, then for $\cH^s$-almost every $x\in E \colon $
$$ \limsup_{r\to 0+} \frac{\cH^s_\infty(E\cap B_r(x))}{\omega_sr^s}\ge 2^{-s};$$
\item if $E$ is a countable union of sets $\{E_j\}$, then $\cH^s_\infty(E)>0$ if and only if there exists $j$ such that $\cH^s_\infty(E_j)>0$;
\item $\cH^s_\infty$ is upper semi-continuous with respect to the Gromov-Hausdorff convergence of compact metric sets.
\end{enumerate}
\end{lemma}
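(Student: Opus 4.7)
The plan is to establish the four properties one by one, each of which is a variant of a classical fact about (spherical) Hausdorff measures, with only property (2) requiring genuine work.

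For (1), the key observation is that $\cH^s(E)=0 \Longleftrightarrow \cH^s_\infty(E)=0$. One direction is immediate from $\cH^s_\infty \le \cH^s$. For the converse, given a ball cover of $E$ with total $\omega_s$-weight less than $\varepsilon$, one can subdivide any ball whose radius exceeds a fixed $\delta$ into balls of radius $\le \delta$, losing only a multiplicative constant depending on $\delta$ in the total weight. Passing to $\delta \to 0$ after choosing $\varepsilon$ small enough shows $\cH^s_\delta(E)$ vanishes for every $\delta$, hence $\cH^s(E)=0$. The two expressions claimed for $\dim_\cH(E)$ then follow from the classical definition $\dim_\cH(E)=\inf\{s : \cH^s(E)=0\}$.

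For (3), the plan is to use countable subadditivity of $\cH^s_\infty$: given $\varepsilon>0$, cover each $E_j$ by balls with total $\omega_s$-weight less than $\cH^s_\infty(E_j)+\varepsilon/2^j$; concatenating produces a cover of $\bigcup_j E_j$ of total weight $\le \sum_j \cH^s_\infty(E_j)+\varepsilon$. Combined with monotonicity $\cH^s_\infty(E_j)\le \cH^s_\infty(\bigcup E_j)$, this yields both directions of the equivalence. For (4), given $\varepsilon>0$ and a finite cover $\{B_{r_i}(x_i)\}_{i=1}^N$ of the compact $X$ with $\sum_i \omega_s r_i^s<\cH^s_\infty(X)+\varepsilon$, the $\eta_\alpha$-isometries $\Phi_\alpha:X\to X_\alpha$ with $\eta_\alpha\downarrow 0$ give a cover of $X_\alpha$ by the balls $B_{r_i+2\eta_\alpha}(\Phi_\alpha(x_i))$; sending $\alpha\to\infty$ yields $\limsup_\alpha \cH^s_\infty(X_\alpha)\le \cH^s_\infty(X)+\varepsilon$, and then $\varepsilon\to 0$.

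The hard part will be property (2), which is a Mattila-type density estimate. The plan is to argue by contradiction: fix $\lambda<2^{-s}$ and assume the set
\[
A_\lambda:=\left\{x\in E : \limsup_{r\to 0+}\frac{\cH^s_\infty(E\cap B_r(x))}{\omega_s r^s}<\lambda\right\}
\]
satisfies $\cH^s_\infty(A_\lambda)>0$. By definition, each $x\in A_\lambda$ admits arbitrarily small radii $r_x$ for which $\cH^s_\infty(E\cap B_{r_x}(x))<\lambda\omega_s r_x^s$. Applying a $5r$-covering lemma to this Vitali family, I would extract a countable disjoint subfamily $\{B_{r_i}(x_i)\}$ whose $5$-dilates cover $A_\lambda$, so that $\cH^s_\infty(A_\lambda)\le 5^s\sum_i \omega_s r_i^s$, and bound $\sum_i \omega_s r_i^s$ by concatenating the local $\lambda$-economical covers of each $E\cap B_{r_i}(x_i)$. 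The sharp constant $2^{-s}$ comes from refining this via the density-doubling step built into the definition of $\cH^s_\infty$ through radii rather than diameters: iterating the selection at finer and finer scales replaces the crude $5^s$ factor by $1$, and a careful bookkeeping of the ball-radius vs.~diameter convention produces exactly the $2^{-s}$ threshold. This is precisely \cite[Theorem 3.6]{Simon} / \cite[Theorem 6.6]{Mattila}, which the authors explicitly cite, and in a self-contained presentation I would reproduce their argument verbatim rather than reinvent it.
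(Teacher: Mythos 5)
The paper states Lemma~\ref{lemHaus} without proof, treating it as a compilation of classical facts about the spherical Hausdorff content; the surrounding text cites \cite[Theorem 3.6]{Simon} and \cite[Theorem 6.6]{Mattila} for the related density upper bound \eqref{Hausdorff}. Your global strategy — routine arguments for (1), (3), (4), and a deferral to the literature for (2) — is the right one, and your arguments for (3) and (4) are correct, but the details you supply for (1) and (2) contain genuine errors.

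In (1), the ``subdivision'' step is false: in a metric space, covering a ball of radius $r>\delta$ by balls of radius at most $\delta$ requires on the order of $(r/\delta)^{\nu}$ small balls ($\nu$ the doubling dimension), so the resulting $\omega_s$-weight is comparable to $(r/\delta)^{\nu}\omega_s\delta^s$, which is \emph{larger} than $\omega_s r^s$ whenever $\nu>s$; the factor lost depends on $r$, not only on $\delta$. Fortunately no subdivision is needed: if $\sum_i\omega_s r_i^s<\eps\le\omega_s\delta^s$, then automatically $r_i<(\eps/\omega_s)^{1/s}\le\delta$ for every $i$, so the cover is already a $\delta$-cover and $\cH^s_\delta(E)\le\eps$; letting $\eps\downarrow 0$ gives $\cH^s_\delta(E)=0$ for all $\delta$, hence $\cH^s(E)=0$. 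In (2), the references you invoke — \cite[Theorem 3.6]{Simon}, \cite[Theorem 6.6]{Mattila} — state the density lower bound with $\cH^s$ (not $\cH^s_\infty$) in the numerator and under the hypothesis $\cH^s(A)<\infty$, whereas the lemma asserts the a priori stronger $\cH^s_\infty$-version (recall $\cH^s_\infty\le\cH^s$) assuming only $\cH^s_\infty(E)>0$, which permits $\cH^s(E)=\infty$. What is actually needed is the $\cH^s$-to-$\cH^s_\infty$ comparison lemma (a Federer-type comparison theorem, also found in \cite{Mattila}), applied to the bad set $F=\{x\in E:\limsup_{r\to 0}\cH^s_\infty(E\cap B_r(x))/(\omega_s r^s)<2^{-s}\}$: monotonicity of $\cH^s_\infty$ transfers the density hypothesis from $E$ to $F$, the comparison gives $\cH^s(F)\le C\,2^{-s}\cH^s_\infty(F)<\infty$, and then $\cH^s(F)\le C\,2^{-s}\cH^s(F)$ forces $\cH^s(F)=0$. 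Your own Vitali sketch is circular: concatenating the $\lambda$-economical covers of the sets $E\cap B_{r_i}(x_i)$ yields a cover of $\bigcup_i(E\cap B_{r_i}(x_i))$ of total weight $\le\lambda\sum_i\omega_s r_i^s$, which places the quantity $\sum_i\omega_s r_i^s$ you intended to bound on the \emph{wrong} side of the inequality. The announced iteration that ``replaces the crude $5^s$ factor by $1$ and produces exactly the $2^{-s}$ threshold'' cannot be improvised from the $5r$-covering argument; the sharp constant is an output of the comparison lemma, and that is where the genuine content of part (2) lies.
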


We are now in a position to prove the existence of a well-defined stratification for non-collapsed strong Kato limits. 

\begin{proof}[Proof of Theorem \ref{thm:stratification}] The proof is divided in three cases: first the case $k=0$, then the case $k\in \{1,\ldots,n-1\}$, and eventually the case $k=n$.\\

\textbf{Case I, $k=0$}.

Our argument to prove the assertion about $S_0$ is inspired by \cite[Proposition 3.3]{White}. It suffices to prove the inclusion
\begin{equation*}
S_0\subseteq \cM:=\{x\in X, \ \vartheta_X(x)<\liminf_{y\to x} \vartheta_X(y)\}.
\end{equation*}
Indeed, $\cM$ is countable as it can be written as the union over $\ell \in \N\backslash \{0\}$ of the sets
$$\cM_\ell=\left\{x \in X \, : \, \forall y \in B_{1/\ell}(x)\setminus \{x\}, \ \vartheta_X(x)+\frac 1\ell < \vartheta_X(y) \right\}$$
which are all discrete and countable, since whenever two disjoint points $x,y$ are in $\cM_\ell$ they satisfy $\dist(x,y)\ge 1/\ell$.

To show the inclusion $S_0 \subseteq \cM$,  let us take $x\not\in \cM$. Then there exists a sequence $\{y_\ell\}_\ell \subset X$ such that $0<r_\ell:=\dist(x,y_\ell)<1/\ell$ and $\vartheta_X(x)+1/\ell \ge  \vartheta_X(y_\ell)$ for any $\ell \in  \N\backslash \{0\}$. In particular,  $y_\ell\to x$ and
\begin{equation*}
\lim_\ell \vartheta_X(y_\ell)=\vartheta_X(x).
\end{equation*}
Consider the sequence of rescalings $\{(X, r_\ell^{-1}\dist, r_\ell^{-n}\mu, x)\}_\ell$. Since $r_\ell \downarrow 0$, there exists a subsequence $\{(X,r_{\ell'}^{-1}\dist,r_{\ell'}^{-n}\mu, x)\}_{\ell'}$ which pmGH converges to a tangent cone $(X_x,\dist_x,\mu_x,x)$. Moreover, the points $\{y_{\ell'}\}_{\ell'}$ converge to some $y\in X_x$ such that $\dist_x(x,y)=1$. The lower semi-continuity of $\vartheta_X$ through pmGH convergence, together with the choice of $\{y_\ell\}_\ell$, ensures that
$$\vartheta_{X_x}(y)\le \liminf_{\ell'} \vartheta_X(y_{\ell'})=\vartheta_X(x)=\vartheta_{X_x}(x).$$ 
Thanks to Corollary \ref{cor:upthetaSplitting}, this implies that $\vartheta_{X_x}(y) = \vartheta_{X_x}(x)$ and $(X_x,\dist_x,\mu_x,x)$ is mm $1$-symmetric. Hence $d(x)\ge 1$ and $x\not\in S_0$.  \\

\textbf{Case II, $k\in \{1,\ldots,n-1\}$}.

From (1) in Lemma \ref{lemHaus},  we only need to prove that for any $s>0$, $\cH^s_\infty(S^k)>0$ implies $s \leq k$. Thus we assume 
\begin{equation}\label{eq:départ}
\cH^s_{\infty}(S^k) >0.
\end{equation}

\textbf{Step 1.} Let us write $S^k$ as a countable union of closed sets. From Remark \ref{rem-AR-ncStrongKatolim},  we know that there exists $C,\lambda>0$ such that for all $x \in X$ and all $0 < s < r \le R$,
 \begin{equation}\label{aprioriAhlfors}
ve^{-C \frac{\dist(x,o)}{R}} r^n\le  \mu(B_r(x))\le Cr^n \quad \text{and}\quad \frac{\mu(B_r(x))}{\mu(B_s(x))}\le C\left(\frac r s\right)^n\cdot
\end{equation}
 Arguing as in \cite[Proof of Theorem 10.20]{CheegerPisa}, we write $S^k$ as the countable union over $j \in \setN\backslash \{0\}$ of the closed sets
 \begin{equation*}
\begin{split}
S^{k,j}:= & \left\{x  \in X: \,\, \mathrm{D}(B_r(x), B^Z_r(z))\geq r/j, \, \, \,  \forall r \in (0,1/j),  \, \, \, \forall (Z, \dist_Z,\mu_Z,z) \in \mathrm{Adm}_{k+1} \right\},
\end{split}
\end{equation*}
where $\mathrm{Adm}_{k+1}$ is the set of mm $(k+1)$-symmetric spaces $(Z, \dist_Z,\mu_Z,z)$  satisfying \eqref{aprioriAhlfors} and $\mathrm{D}(B_r(x), B^Z_r(z))$ is the sum of $|\mu(B_r(x))-\mu_Z(B^Z_r(x))|$ and the $L^2$-transportation distance  \cite[p.~69]{sturm2006I} between the normalized metric measure spaces $(B_r(x),\dist,\mu(B_r(x))^{-1}\mu \measrestr B_r(x))$ and $(B_r^Z(x),\dist_Z,\mu_Z(B^Z_r(x))^{-1}\mu_Z \measrestr B^Z_r(x))$. Moreover, for any $j$
$$S^{k,j}=\bigcup_{N \in \setN}S^{k,j}\cap \overline{B_N(o)},$$
so (3) in Lemma \ref{lemHaus} ensures from \eqref{eq:départ} that there exists $j,N \in \N \backslash \{0\}$ such that
$$\cH^s_{\infty}(S^{k,j}\cap \overline{B_N(o)})>0.$$

\textbf{Step 2.} Let us write $S^{k,j}\cap \overline{B_N(o)}$ as a countable union of closed sets.  Take $\eps>0$. For any $x \in X$, since $\vartheta_X(x)<+\infty$,  there exists $\eta(x,\eps)>0$ such that for all $r \in (0, \eta(x,\eps)]$,
$$\left| \frac{\mu(B_r(x))}{\omega_n r^n} - \vartheta_X(x) \right| \leq \eps,$$
and we can define
$$\delta(x,\eps):=\sup\left\{r>0 : \left| \frac{\mu(B_\sigma(x))}{\omega_n \sigma^n}- \frac{\mu(B_\rho(x))}{\omega_n\rho^n} \right|\leq \eps,  \, \, \,  \forall \sigma, \rho\in (0,r] \right\} >0.$$
Then for all $c >0$ the set $A_{\eps,c} \subset X$ defined by
$$A_{\eps,c}:=\{x \in X : \delta(x,\eps)\geq c\}=\bigcap_{0 < \sigma\leq \rho \leq c}\left\{x \in X : \left| \frac{\mu(B_\sigma(x))}{\omega_n \sigma^n}- \frac{\mu(B_\rho(x))}{\omega_n \rho^n} \right|\leq \eps \right\}$$
is closed.  Hence for any $p,q \in \mathbb{Q}$ with $q<p<q+\eps$,  the set
\begin{equation*}
S_{\eps, p,q} := A_{\eps,2(p-q)} \cap \left\{ x \in S^{k,j}\cap \overline{B_N(o)} \, : \, q \leq \frac{ \mu(B_{p-q}(x))}{\omega_n (p-q)^n} \leq p \right\}
\end{equation*}
is compact since it is a closed subset of the compact set $\overline{B_N(o)}$. Observe that for any $x \in S_{\eps, p,q}$ and $\rho \in (0,2(p-q)]$ we have
\begin{equation}\label{uniformdensity} q-\eps \leq \frac{\mu(B_\rho(x))}{\omega_n \rho^n} \leq p+\eps,\end{equation}
and then $q-\eps \leq \vartheta_X(x) \leq p+\eps$ as $\rho \downarrow 0$. Finally, note that $$S^{k,j}\cap \overline{B_N(o)} = \bigcup_{\substack{p,q \in \mathbb{Q}\\q<p<q+\eps}}S_{\eps, p,q}\, .$$

\hfill

\textbf{Step 3.} Now let us consider the sequence $\{\eps_\ell:=2^{-\ell}\}_{\ell\in \N\backslash \{0\}}$. By (3) in Lemma \ref{lemHaus}, for any $\ell$ there exist $p_\ell, q_\ell \in \mathbb{Q}$ with $q_\ell < p_\ell$ such that $\cH^s_{\infty}(S_{\eps_{\ell},p_\ell,q_\ell})>0$, hence by (2) in Lemma \ref{lemHaus} there exist $x_\ell \in S_{\eps_{\ell},p_\ell,q_\ell}$ and $r_\ell>0$ small such that 
\begin{equation}
\label{eq:x_ell}
\frac{\cH^s_\infty\left(S_{\eps_{\ell},p_\ell,q_\ell}\cap B_{r_\ell}(x_\ell)\right)}{\omega_sr_\ell^s}\ge 4^{-s}.
\end{equation} 
As the pointed metric measure spaces $\{(X, r_{\ell}^{-1}\dist, r^{-n}_\ell \mu, x_\ell)\}_\ell$ all satisfy the volume estimates \eqref{aprioriAhlfors}, up to extracting a subsequence we can assume that they pmGH converge as $\ell \to +\infty$ to a pointed metric measure space $(Z,\dist_Z,\mu_Z,z)$.  Since the sets $\{S_{\eps_\ell, p_\ell, q_\ell}\}_\ell$ are compact,  up to extracting another subsequence we can assume that the compact sets $\{S_{\eps_{\ell}, p_{\ell}, q_{\ell}} \cap B_{r_{\ell}}(x_{\ell})\}_\ell$ GH converge to some compact set $K \subset B^Z_1(z)$ containing $z$.  Because of the upper semi-continuity of $\cH_\infty^s$ with respect to GH convergence (i.e.~(4) in Lemma \ref{lemHaus}) and because of \eqref{eq:x_ell}, we have $\cH^s_{\infty}(K) \geq \omega_s 4^{-s}.$ In particular,
\[
\dim_\cH K \ge s.
\]
Finally, up to extracting a further subsequence, we can assume that the bounded sequence of rational numbers $\{q_{\ell}\}$ tends to some number $Q >0$ as $\ell \to +\infty$. 

\hfill

\textbf{Step 4.} Now we take $y\in K$ and we let $y_{\ell}\in S_{\eps_{\ell}, p_{\ell}, q_{\ell}}$ for any $\ell$ be such that $y_\ell \to y$. Let $\ell$ be fixed. Take $r \in [0,2^\ell)$ and set $\rho :=r r_\ell$. With no loss of generality we can assume $ r_\ell \leq 2^{-\ell+1}(p_\ell-q_\ell)$, so that $\rho \in (0,2(p_\ell - q_\ell)]$. Then the triangle inequality and \eqref{uniformdensity} lead to
\[
\left| \frac{\mu(B_\rho(y_\ell))}{\omega_n \rho^n}-Q\right| \leq 2\eps_\ell+|q_{{\ell}}-Q|,
\]
which rewrites as
\begin{equation}\label{eq:50}
\left| \frac{\mu(B_{r r_\ell}(y_\ell))}{\omega_n r_\ell^n}-Qr^n\right| \leq r^n \left( 2\eps_\ell+|q_{{\ell}}-Q| \right).
\end{equation}
Since 
\[
\lim_{\ell \to +\infty}\frac{\mu(B_{r_{\ell}r} (y_{\ell}))}{r_{\ell}^n} = \mu_Z(B(y,r)),
\]
inequality \eqref{eq:50} yields $\mu_Z(B_r(y))=\omega_n Q r^n$ as $l \to +\infty$. 

Because of Remark \ref{rem-RCDmoreGeneral}, we know that $(Z,\dist_Z,\mu_Z)$ is a weakly non-collapsed $\RCD(0,n)$ metric measure space. In particular, its volume density $\vartheta_Z$ is well defined at all points, and thanks to Proposition \ref{prop:DPG} the previous computation shows that for any $y \in K$, $(Z,\dist_Z,\mu_Z)$ is a metric measure cone at $y$ and  $\vartheta_Z(y)=Q$ . By Proposition \ref{prop:splitting}, this means that there exists an integer $k' \ge \dim_\cH K$ such that $(Z, \dist_Z, \mu_Z,z)$ is metric measure $k'$-symmetric.  In particular,
\[
k' \ge s.
\]

\textbf{Step 5.} To conclude, let us show that $k \ge k'$. Since $(X,\dist,\mu,o)$ satisfies the volume estimates \eqref{aprioriAhlfors},  so do the rescalings $\{(X, r_{\ell}^{-1}\dist, r_{\ell}^{-n}\mu, x_\ell)\}_{\ell \in \N \backslash \{0\}}$. As $(Z, \dist_Z, \mu_Z,z)$ is the pmGH limit of these rescalings, this implies that $(Z, \dist_Z, \mu_Z,z)$ belongs to $\mathrm{Adm}_{k'}$.  Since for $\ell$ large enough we have $r_{\ell} < 1/j$ and
$$ \mathrm{D}(B_{r_\ell}(x_{\ell}), B^Z_{\ell}(z))< \frac{r_{\ell}}{j} \, ,$$
this means that $x_\ell$ does not belong to $S^{k',j}$. But $x_\ell \in S^{k,j}$. As a consequence, by the very definition of $S^{k,j}$, the integer $k'$ is necessarily smaller than $k+1$. 

\hfill

\textbf{Case III, $k=n$}. 
Lebesgue differentiation theorem holds on locally doubling spaces (see \cite[Sect.~3.4]{HKST}) so $\mu$-a.e.~$x \in X$ is a Lebesgue point of the locally integrable function $\vartheta_X$.  Thus it is enough to show that whenever $x \in X$ is a Lebesgue point of $\vartheta_X$, that is
\[
\lim\limits_{r \to 0} \fint_{B_r(x)} \vartheta_X \di \mu = \vartheta_X(x),
\]
then any tangent cone at $x$ is equal to $(\setR^n,\dist_e,\vartheta_X(x)\cH^n,0_n)$. 

Let $x$ be a Lebesgue point of $\vartheta_X$, $(X_x,\dist_x,\mu_x,x)$ be a tangent cone and $\{r_\alpha\}_\alpha \subset (0,+\infty)$ be such that $r_\alpha \downarrow 0$ and $(X,\dist_\alpha:=r_\alpha^{-1}\dist,\mu_\alpha:=r_\alpha^{-n} \mu,x) \to (X_x,\dist_x,\mu_x,x)$ in the pmGH sense.  According to Corollary \ref{cor:monoHeat}, we know that if we set
\[
\beta_X(z,t):=\frac{1}{\exp(\Phi(t))(4\pi t)^{n/2}H(t,z,z)}
\]
for any $z \in X$ and any $t$ small enough, then
\[
\vartheta_X(z)=\lim_{t \to 0} \beta_X(z,t)
\]
and $t \mapsto  \beta_X(z,t)$ is non increasing. The same is true if we define $\beta_{X_\alpha}$ (resp.~$\beta_{X_x}$) in a similar way on the rescaled space $(X,\dist_\alpha,\mu_\alpha,x)$ (resp.~on the tangent cone $(X_x,\dist_x,\mu_x,x)$)  for any $\alpha$, and we have $\beta_{X_\alpha}(\cdot,t) \to \beta_{X_x}(\cdot,t)$ uniformly on compact sets for any $t$ small enough; this implies
\begin{align*}
\fint_{B_1^{\dist_x}(x)} \beta_{X_x}(z,t) \di \mu_x(z) & = \lim_\alpha \fint_{B_1^{\dist_\alpha}(x)} \beta_{X_\alpha}(z,t) \di \mu_\alpha(z) \\
& = \lim_\alpha \fint_{B_{r_\alpha}^{\dist}(x)} \beta_{X}(z,r_\alpha^2 t) \di \mu_(z)\\
& \le \lim_\alpha \fint_{B_{r_\alpha}^{\dist}(x)} \vartheta_X(z) \di \mu_(z) = \vartheta_X(x).
\end{align*}
By monotone convergence, letting $t \downarrow 0$ gives 
\[
\fint_{B_1^{\dist_x}(x)} \vartheta_{X_x}(z) \di \mu_x(z) \le \vartheta_X(x).
\]

By the first statement in Proposition \ref{prop:splitting},
\[
\fint_{B_1^{\dist_x}(x)} \vartheta_{X_x}(z) \di \mu_x(z) \ge  \vartheta_{X_x}(x) = \vartheta_X(x),
\]
hence $\vartheta_{X_x}$ is constantly equal to $\vartheta_X(x)$ on $B^{\dist_x}_1(x)$. The second statement of Proposition \ref{prop:splitting} implies that $X_x$ is isometric to $\setR^n$ equipped with the Euclidean distance and $\mu_{X_x}$ is given by $c \cH^n$ for some $c \in (0,1]$. But since for all $r>0$ we have $\mu_{X_x}(B_r(x))=\vartheta_X(x) \omega_n r^n$, with $\cH^n(B_r(x))=\omega_n r^n$ in $\R^n$, we get $c=\vartheta_X(x)$.
\end{proof}

\section{Volume continuity}

This section is devoted to proving the following analog of volume continuity for Ricci limit spaces \cite[Theorem 0.1]{Col97}, \cite[Theorem 5.9]{CheegerColdingI}.

\begin{theorem}
\label{thm:muHaus}
Let $(X,\dist, \mu,o, \cE)$ be a non-collapsed strong Kato limit. Then $\mu$ coincides with the $n$-dimensional Hausdorff measure $\cH^n$. 
\end{theorem}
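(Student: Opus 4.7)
The strategy is to reduce the statement to the $\mu$-a.e. equality $\vartheta_X(x) = 1$. Indeed, Corollary \ref{cor:Hausdorffvsmu2} provides the inequality $\mu \le \cH^n$ and, in particular, $\mu \ll \cH^n$; Theorem \ref{thm:stratification}(iii) asserts that for $\mu$-a.e.~$x$ the unique tangent cone is $(\R^n, \dist_e, \vartheta_X(x)\cH^n, 0)$, so the volume density $\vartheta_X(x) = \lim_{r\to 0} \mu(B_r(x))/(\omega_n r^n)$ exists and captures the Radon--Nikodym density. Combining $\vartheta_X \equiv 1$ $\mu$-a.e.~with the upper Hausdorff density bound \eqref{Hausdorff} and the Lebesgue differentiation theorem on the doubling space $(X, \dist, \mu)$ then yields $d\mu/d\cH^n = 1$ $\mu$-a.e., hence $\mu = \cH^n$.

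To establish $\vartheta_X(x) = 1$ at $\mu$-a.e.~$x$, I would fix such a point $x$ at which the tangent cone is Euclidean, and select a sequence $r_\alpha \downarrow 0$ realising this convergence. On the limiting $\R^n$, coordinate functions are harmonic, $1$-Lipschitz, orthonormal in gradient and have vanishing Hessian. Solving Dirichlet problems on $B_r(x)$ with approximately affine boundary data and exploiting the Mosco convergence of Dirichlet energies (Theorem \ref{thm:int_ChE}), I would extract by a diagonal argument harmonic approximate coordinates which lift, via the Mosco--Gromov--Hausdorff convergence, to harmonic $\eps$-splitting maps $H_\alpha = (h_1^\alpha, \dots, h_n^\alpha) : B_r(x_\alpha) \to \R^n$ on approximating manifolds $(M_\alpha, g_\alpha)$, with $x_\alpha \to x$, satisfying: (a) $\sup_{B_{r/2}(x_\alpha)} |\nabla h_i^\alpha| \le 1 + \eps$, via Lemma \ref{lem:gradientEst_harmonic} together with strong $L^2$-convergence of the gradients; (b) $\fint_{B_{r/2}(x_\alpha)} \langle \nabla h_i^\alpha, \nabla h_j^\alpha\rangle\, \di\nu_{g_\alpha} = \delta_{ij} + o(1)$, as a direct consequence of the Mosco convergence; and (c) $\fint_{B_{r/2}(x_\alpha)} |\nabla^2 h_i^\alpha|^2\, \di\nu_{g_\alpha} = o(1)$, obtained from the improved Hessian bound \eqref{eq:hessest2}, whose right-hand side is controlled by the $L^1$-oscillation of $|\nabla h_i^\alpha|^2$ about its mean and vanishes in the limit since $|\nabla h_i^\alpha|^2 \to 1$ strongly in $L^1_{\loc}$.

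These analytic properties are then converted into a volume comparison. Setting $J_\alpha := \sqrt{\det(\nabla H_\alpha (\nabla H_\alpha)^T)}$, property (a) gives $J_\alpha \le (1+\eps)^n$ pointwise, while (b)--(c) together with a Poincaré--Chebyshev argument force $\nabla H_\alpha (\nabla H_\alpha)^T$ to be close to the identity outside a subset of $B_{r/2}(x_\alpha)$ of arbitrarily small relative measure. Applying the area formula to $H_\alpha$ and using that, by pmGH convergence to the Euclidean tangent cone, $H_\alpha$ is quantitatively close to an isometric embedding whose image approximates a Euclidean ball of comparable radius, one concludes $\nu_{g_\alpha}(B_{r/2}(x_\alpha)) = (1+o(1))\omega_n (r/2)^n$ as $\alpha \to \infty$ and then $r\to 0$, $\eps \to 0$. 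Passing to the pmGH limit yields $\mu(B_r(x))/(\omega_n r^n) \to 1$, that is, $\vartheta_X(x) = 1$.

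The hard part will be step (c): producing splitting maps with quantitative $L^2$-Hessian control under the bare Dynkin hypothesis \eqref{DB}. The Hessian inequality \eqref{eq:hessest2} only bounds $\|\nabla^2 h_i^\alpha\|_{L^2}^2$ by the $L^1$-oscillation of $|\nabla h_i^\alpha|^2$, so ensuring that this oscillation genuinely vanishes in the blow-up (rather than remaining merely bounded) requires the full strength of Mosco convergence to upgrade weak $L^2$-convergence of the gradients into strong $L^1_{\loc}$-convergence of the squared gradients to the constant value coming from the Euclidean tangent cone. Localising this construction and extracting a diagonal pair $(r, \alpha)$ along which all smallness estimates hold simultaneously is the technical heart of the argument, and is precisely the object of the auxiliary convergence results deferred to the Appendix.
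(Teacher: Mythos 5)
Your overall pipeline matches the paper's: reduce to the $\mu$-a.e.~equality $\vartheta_X(x)=1$ (Theorem \ref{thm:density1}), combine it with $\mu\le\cH^n$ (Corollary \ref{cor:Hausdorffvsmu2}) and density theorems, build harmonic $\eps$-splitting maps $H_\alpha$ via Mosco convergence and good cut-off functions, and control them through the gradient bounds of Lemma \ref{lem:gradientEst_harmonic}, the Lipschitz improvement of Proposition \ref{Lipoptimal}, and the Hessian estimate \eqref{eq:hessest2}. Points (a), (b), (c) in your description of the splitting map are exactly the conclusions of Theorem \ref{thm-splitting-maps}.

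The gap is in the final step, where you convert the splitting map into the volume lower bound $\nu_{g_\alpha}(B_{r/2}(x_\alpha))\ge(1-o(1))\,\omega_n(r/2)^n$. You invoke the area formula together with the claim that ``$H_\alpha$ is quantitatively close to an isometric embedding whose image approximates a Euclidean ball.'' This does not follow from the properties you have established. The area formula gives
\[
\int_{\Omega}J_\alpha\,\di\nu_{g_\alpha}=\int_{\setR^n}\#\bigl(H_\alpha^{-1}(y)\cap\Omega\bigr)\,\di\cH^n(y),
\]
and the Lipschitz estimate only bounds $J_\alpha$ from \emph{above}; to lower-bound $\nu_{g_\alpha}(\Omega)$ one must lower-bound the \emph{right}-hand side, i.e.~show that $H_\alpha$ covers a definite fraction of the Euclidean ball. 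The $\eps$-GH-isometry property yields only $\eps$-density of the image, which is far from surjectivity in the sense of measure: the image of a continuous map from a connected compact set can be $\eps$-dense in $\mathbb{B}(r/2)$ yet still have arbitrarily small Lebesgue measure. No purely metric or pointwise-analytic estimate on $H_\alpha$ rules this out; a topological ingredient is required. The paper supplies it by proving that $H_\alpha$ restricted to a suitable sub-level set $\Omega_\alpha$ of $\|H_\alpha\|^2$ is \emph{surjective} onto $\mathbb{B}(\tau_\alpha)$, via a degree argument: if not, choose an $(n-1)$-form $\gamma_\alpha$ with $\di\gamma_\alpha=\di x_1\wedge\cdots\wedge\di x_n$ supported away from a missed ball and with vanishing pull-back on $\partial\mathbb{B}(\tau_\alpha)$; Stokes' theorem then forces $\int_{\Omega_\alpha}\det(\di H_\alpha)\,\di\nu_{g_\alpha}=0$, while properties (iii) and (iv) of the splitting map, combined with the Poincar\'e inequality, force this Jacobian integral to be bounded below by a uniform positive constant, a contradiction. (The non-orientable case is reduced to the orientable one via the orientation double cover.) Without this surjectivity/degree step your argument stalls at the volume comparison, so the proposal as written has a genuine gap even though its architecture is the right one.
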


The proof of the previous is a direct consequence of the next key result, of \cite[Theorem 6.9]{Mattila} and of the fact that we already know $\mu \leq \cH^n$.

\begin{theorem}
\label{thm:density1}
Let $(X,\dist, \mu,o, \cE)$ be a non-collapsed strong Kato limit and $x \in X$ such that the set of tangent cones at $x$ is reduced to $\{(\R^n, \dist_e,\vartheta_X(x)\cH^n, x)\}$. Then $\vartheta_X(x)=1$. 
\end{theorem}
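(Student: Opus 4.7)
The plan is to construct harmonic $\eps$-splitting maps $H\colon B_r(x)\to \R^n$ at arbitrarily small scales $r\downarrow 0$ around $x$, use them to establish the sharp lower bound $\vartheta_X(x)\ge 1$, and then combine with $\vartheta_X(x)\le 1$ from Corollary \ref{cor:Hausdorffvsmu1} to conclude. First, I would fix $\eps>0$ and a sequence $r_\alpha\downarrow 0$. Since the unique tangent cone at $x$ is Euclidean, the rescalings $(X,r_\alpha^{-1}\dist,r_\alpha^{-n}\mu,x,r_\alpha^{2-n}\cE)$ converge in the pointed Mosco--Gromov--Hausdorff sense to $(\R^n,\dist_e,\vartheta_X(x)\cH^n,0,\cE_e)$; the upgrade from pmGH to pMGH uses Remark \ref{rem-RCDmoreGeneral} together with the Mosco convergence established in Section 5. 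Fixing approximate isometries $\Phi_\alpha$ realizing this convergence, the Euclidean coordinates $y_1,\dots,y_n$ pull back to Lipschitz functions on $B_{r_\alpha}(x)$ with controlled energy.

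Next, I would build approximate harmonic coordinates by solving, for each $i\in\{1,\dots,n\}$, the Dirichlet problem on $B_{r_\alpha}(x)$ with boundary datum close to $y_i\circ\Phi_\alpha$, producing harmonic functions $h_i^\alpha$. Setting $H^\alpha=(h_1^\alpha,\dots,h_n^\alpha)$, Mosco convergence of the Dirichlet energies, energy minimality and the convergence results for gradients in the Appendix force
\[
\fint_{B_{r_\alpha}(x)}\langle dh_i^\alpha,dh_j^\alpha\rangle\,d\mu\ \xrightarrow[\alpha\to\infty]{}\ \delta_{ij},
\]
in suitable rescaled form. The crucial Hessian control is then supplied by Proposition \ref{prop:HessEst}: since $h_i^\alpha$ is harmonic, the refined bound \eqref{eq:hessest2} yields
\[
\fint_{B_{r_\alpha/2}(x)}|\nabla dh_i^\alpha|^2\,d\mu\ \le\ \frac{C(n)}{r_\alpha^2}\,\fint_{B_{r_\alpha}(x)}\Bigl||dh_i^\alpha|^2-\fint_{B_{r_\alpha}(x)}|dh_i^\alpha|^2\Bigr|\,d\mu,
\]
and the right-hand side tends to $0$ because $|dh_i^\alpha|^2$ concentrates around $1$ in $L^2$. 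This is the step where the Kato-specific tools of Section 4 replace classical Ricci bounds.

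Combining the Hessian smallness, the gradient estimate of Lemma \ref{lem:gradientEst_harmonic} and the near-orthonormality of $\{dh_i^\alpha\}$, the map $H^\alpha$ becomes an $o_\alpha(1)$-splitting map, hence an $(1+o_\alpha(1))$-bi-Lipschitz embedding of $B_{r_\alpha/2}(x)$ into $\R^n$ whose image converges, under the pmGH identification, to $B^e_{r_\alpha/2}(0)$. A covering/Jacobian argument in the spirit of \cite{CheegerPisa} then produces
\[
\mu(B_{r_\alpha/2}(x))\ \ge\ (1-o_\alpha(1))\,\omega_n(r_\alpha/2)^n,
\]
so that dividing by $\omega_n(r_\alpha/2)^n$ and letting $\alpha\to\infty$ gives $\vartheta_X(x)\ge 1$.

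The main obstacle will be the last step: rigorously upgrading the $L^2$ Hessian and gradient controls into the pointwise/geometric near-isometry needed to extract the volume lower bound. In the Cheeger--Colding setting this is handled via a segment inequality coming from Ricci comparison, which is unavailable here; instead I would use the fact that $H^\alpha$ is close to the Euclidean identity under the pmGH identification together with the bi-Lipschitz estimate, and invoke a topological-degree/continuity argument on $H^\alpha$ restricted to an exceptional-set-avoiding subball. The remaining verification that the exceptional set has small measure is a direct consequence of Chebyshev applied to the Hessian bound above.
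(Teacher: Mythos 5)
Your proposal follows essentially the same route as the paper: build harmonic $\eps$-splitting maps at scales $r_\alpha\downarrow 0$ (Theorem \ref{thm-splitting-maps}), get Lipschitz, gradient and $L^2$-Hessian control from Sections 4 and 5, then use a degree-theoretic argument to obtain surjectivity onto a Euclidean ball and hence the volume lower bound that combines with Corollary \ref{cor:Hausdorffvsmu1}. The one place the paper does something slightly cleaner than your "Chebyshev-excise an exceptional set" plan: rather than removing a bad set and arguing degree on its complement, it applies Stokes' theorem on the whole sublevel set $\Omega_\alpha=\{\lvert H_\alpha\rvert\le\tau_\alpha\}$, bounds the mean Jacobian $m_\alpha$ from below via the $L^1$-control of ${}^t\!dH_\alpha\circ dH_\alpha-\mathrm{Id}$, and controls its oscillation via the Poincaré inequality together with the Hessian bound, so no pointwise bi-Lipschitz (or exceptional-set) statement is ever needed.
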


As a consequence, we also obtain the following corollary, which generalizes \cite[Theorem 9.31]{CheegerPisa} for manifolds with Ricci curvature bounded below. 

\begin{cor}
\label{cor:Hausdorffvsmu3}
Let $n\geq 1$, $T,v, \Lambda>0$ and $f: (0,T] \to \R$ a function such that
$$\int_0^T \frac{\sqrt{f(s)}}{s}\di s \leq \Lambda.$$
Then for all $\eps >0$ there exists $\delta =\delta(\eps,n,\Lambda, v,T, f)$ such that the following holds. Assume that for all $t \in (0,T]$
$$ \Kato_t(M^n,g) \leq f(t), \quad \frac{\nu_{g}(B_{\sqrt{T}}(x))}{T^{\frac n2}}\geq v,$$
and for $r \in (0, \delta \sqrt{T}]$
$$\mbox{d}_{\mbox{\tiny{GH}}}(B_r(x), \mathbb{B}(r)) \leq \delta r.$$
Then
$$1-\eps \leq \frac{\nu_g(B_r(x))}{\omega_n r^n} \leq 1+\eps.$$
\end{cor}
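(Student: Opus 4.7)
The natural approach is a contradiction-compactness argument, in the spirit of the corresponding result of Cheeger-Colding. Suppose the corollary fails. Then there exist $\eps>0$, a sequence of closed Riemannian manifolds $\{(M_\alpha^n,g_\alpha)\}$ satisfying the uniform bounds with parameters $(n,\Lambda,v,T,f)$, points $x_\alpha\in M_\alpha$, radii $r_\alpha>0$ and scales $\delta_\alpha\downarrow 0$ with $r_\alpha\le \delta_\alpha\sqrt T$, such that
\[
d_{\mathrm{GH}}(B_{r_\alpha}(x_\alpha),\mathbb{B}(r_\alpha))\le \delta_\alpha r_\alpha\qquad\text{and}\qquad \left|\frac{\nu_{g_\alpha}(B_{r_\alpha}(x_\alpha))}{\omega_n r_\alpha^n}-1\right|\ge \eps.
\]

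The plan is to rescale at scale $r_\alpha$ and pass to a limit. Set $\tilde g_\alpha:=r_\alpha^{-2}g_\alpha$ with rescaled Riemannian measure $\tilde\nu_\alpha=r_\alpha^{-n}\nu_{g_\alpha}$. By Remark~\ref{rem:scaling}, the strong uniform Kato bound is preserved (indeed improves, since $\Kato_t(M_\alpha,\tilde g_\alpha)=\Kato_{r_\alpha^2 t}(M_\alpha,g_\alpha)\le f(r_\alpha^2 t)$, and the integral bound is unchanged). The local Ahlfors regularity of Corollary~\ref{cor:IntKatoBG}, applied at the point $x_\alpha$ itself, yields the uniform non-collapsing estimate $\tilde\nu_\alpha(B_1^{\tilde g_\alpha}(x_\alpha))=r_\alpha^{-n}\nu_{g_\alpha}(B_{r_\alpha}(x_\alpha))\ge v$. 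Invoking Proposition~\ref{precompmGH}, I would extract a pmGH subsequential limit $(X,\dist,\mu,x)$, which by construction is a non-collapsed strong Kato limit in the sense of Definition~\ref{def:NC}.

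Because $d_{\mathrm{GH}}(B_1^{\tilde g_\alpha}(x_\alpha),\mathbb{B}(1))\le \delta_\alpha\to 0$, combining these GH approximations with the pmGH $\eps_\alpha$-isometries of Characterization~\ref{chara} forces the limit unit ball $B_1^X(x)$ to be isometric, as a metric space, to the Euclidean unit ball $\mathbb{B}(1)\subset \R^n$. Then I would apply the volume continuity Theorem~\ref{thm:muHaus}, already established in this section, to deduce $\mu=\mathcal H^n$ on $X$. Through the metric isometry this gives
\[
\mu(B_1^X(x))=\mathcal H^n(B_1^X(x))=\omega_n,
\]
and in particular $\mu(\partial B_1^X(x))=0$. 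Standard pmGH convergence of measures under uniform doubling then yields $\tilde\nu_\alpha(B_1^{\tilde g_\alpha}(x_\alpha))\to \mu(B_1^X(x))=\omega_n$; unwinding the rescaling, $\nu_{g_\alpha}(B_{r_\alpha}(x_\alpha))/(\omega_n r_\alpha^n)\to 1$, which contradicts the assumption and closes the proof.

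The delicate point is the final step: ensuring that the volume of the specific ball of radius exactly one passes to the limit. The argument is saved by the fact that the limit ball is Euclidean, so its boundary sphere has zero $\mathcal H^n$-measure and hence zero $\mu$-measure. If a more robust approach is preferred, one may instead apply the limiting identity to $B_{r'}^X(x)$ for $r'<1$ with $\mu(\partial B_{r'}^X(x))=0$ (a cofinal set of such radii exists by uniform doubling) and conclude by letting $r'\uparrow 1$, using that the annulus $B_1^X(x)\setminus B_{r'}^X(x)$ has measure $\omega_n(1-(r')^n)$. The rest of the compactness machinery is standard once the uniform Kato and non-collapsing hypotheses have been checked under rescaling.
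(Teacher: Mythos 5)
Your proposal is correct and follows essentially the same contradiction-compactness argument as the paper: rescale by $r_\alpha$, note that the strong Kato bound and (via the local Ahlfors regularity) the non-collapsing condition are preserved, pass to a non-collapsed strong Kato limit whose unit ball is Euclidean, and invoke Theorem \ref{thm:muHaus} ($\mu=\cH^n$) to contradict the volume defect. Your extra care about $\mu(\partial B_1(x))=0$ is a reasonable refinement of a point the paper leaves implicit (it is already guaranteed by the doubling property, cf.\ the appendix), but it does not change the route.
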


\begin{proof}
Assume by contradiction that there exist $\eps_0$ such that for all $\delta$ the conclusion of the corollary is false. Than we can consider a sequence $\delta_i$ tending to zero and manifolds $(M_i^n,g_i)$ satisfying the assumptions above, for which there exist $r_i \in (0, \delta_i \sqrt{T}]$ and $x_i \in M_i$ such that
\begin{equation}
\label{eq-GHclose}
\mbox{d}_{\mbox{\tiny{GH}}}(B(x_i,r_i), \mathbb{B}(r_i)) \leq \delta_i r_i,
\end{equation}
and 
\begin{equation}
\label{eq-contr}
\left| \frac{\nu_{g_i}(B(x_i,r_i))}{\omega_n r_i^n} - 1\right|\geq \eps_0.
\end{equation}
The re-scaled sequence $(M_i^n,r_i^{-2}g_i, r_i^{-n}v_{g_i},x_i)$ is a non-collapsing sequence satisfying the strong Kato bound \eqref{StrongKato}, with $\Kato_t(M_i,\eps_i^{-1})$ tending to zero. As a consequence, up to a sub-sequence, it converges to a pointed  metric measure space $(X,\dist, \mu,x)$. Because of \eqref{eq-GHclose} and \eqref{eq-contr} the unit ball $B_1(x)$ is isometric to the unit Euclidean ball $\mathbb{B}(1)$ and satisfies
\begin{equation}
\label{eq-contr2}
\left|\frac{\mu(B_1(x))}{\omega_n} -1 \right| \geq \eps_0.
\end{equation} 
But according to Theorem \ref{thm:muHaus}, $\mu=\cH^n$ hence  in particular $\mu$ coincides with the Lebesgue measure on $B_1(x)$, this contradicting \eqref{eq-contr}. 
\end{proof}

In the remainder of this section, we prove Theorem \ref{thm:density1}. In order to do this, we start by proving the existence of GH-isometries with the appropriate regularity properties. 

\subsection{Existence of splitting maps} One of the most powerful tools in the study of Ricci limit spaces and $\RCD$ spaces is given by $\eps$-splitting maps, see for example Definition 4.10 in \cite{CJN}. We are going to show that whenever a point $x$ in a non-collapsed strong Kato limit admits a Eudlidean tangent cone, we can construct an $\eps$-splitting map from a ball around $x$ to a Euclidean ball.  To this aim, we need an approximation result for harmonic functions defined on $\PI$ Mosco-Gromov-Hausdorff limits, which is proven in the Appendix, together with the gradient and Hessian estimates shown in Section 4. 

In the following, we denote a Euclidean ball of radius $r$ centered at $0^n$ as $\mathbb{B}(r)$. 

\begin{theorem}
\label{thm-splitting-maps}
Let $(X,\dist, \mu)$ be a strong non-collapsed Kato limit obtained from a sequence $\{(M_\alpha, g_\alpha)\}_\alpha$ and $x \in X$ a point that admits $(\R^n,\dist_e, \vartheta_X(x)\cH^n,0^n)$ as a tangent cone. Then there exist sequences $\{r_\alpha\}, \{\eps_\alpha\} \subset (0,\infty)$ tending to zero, $x_\alpha$ in $M_\alpha$ and maps $H_\alpha=(h_{1,\alpha}, \ldots, h_{n,\alpha})$, $H_\alpha : B_{r_\alpha}(x_\alpha) \rightarrow \mathbb{B}(r_\alpha)$, such that $h_{i,\alpha}$ is harmonic on $B_{r_\alpha}(x)$ for all $i =1, \ldots, n$. Moreover, the following holds.
\begin{enumerate}
\item[\emph{(i)}] $H_\alpha$ is an $(\eps_\alpha r_\alpha)$-GH isometry. 
\item[\emph{(ii)}]$H_\alpha$ is $(1+\eps_\alpha)$-Lipschitz. 
\item[\emph{(iii)}] $\displaystyle \fint_{B_{r_\alpha}(x)}|{}^t dH_\alpha \circ dH_\alpha - \mbox{Id}_n| \di \nu_{g_\alpha}\leq \eps_\alpha.$
\item[\emph{(iv)}] $\displaystyle r_\alpha^2\fint_{B_{r_\alpha}(x)}|\nabla dH_\alpha|^2 \di \nu_{g_\alpha}\leq \eps_\alpha.$ 
\item[\emph{(v)}]  $\displaystyle \lim_{\alpha}\frac{\nu_{g_\alpha}(B_{ t r_\alpha}(x))}{\omega_n(tr_\alpha)^n}=\vartheta_X(x)$ for all $t >0$.
\end{enumerate}
\end{theorem}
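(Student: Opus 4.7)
Since $(\R^n,\dist_e,\vartheta_X(x)\cH^n,0^n)$ is a tangent cone of $X$ at $x$, Corollary \ref{cor:Diritangent} provides scales $\rho_\beta\downarrow 0$ along which the rescalings $(X,\rho_\beta^{-1}\dist,\rho_\beta^{-n}\mu,x,\rho_\beta^{2-n}\cE)$ Mosco-Gromov-Hausdorff converge to $(\R^n,\dist_e,\vartheta_X(x)\cH^n,0^n,\cE_{\R^n})$. Because $(X,\dist,\mu,o,\cE)$ is itself the pointed Mosco-GH limit of $\{(M_\alpha,\dist_{g_\alpha},\nu_{g_\alpha},o_\alpha,\cE_\alpha)\}$, a diagonal extraction yields a sequence $r_\alpha\downarrow 0$ and points $x_\alpha\in M_\alpha$ with $x_\alpha\to x$ such that the rescaled Dirichlet spaces $(M_\alpha,\dist_{\tilde g_\alpha},\tilde\nu_\alpha,x_\alpha,\tilde\cE_\alpha)$, with $\tilde g_\alpha:=r_\alpha^{-2}g_\alpha$, $\tilde\nu_\alpha:=r_\alpha^{-n}\nu_{g_\alpha}$, $\tilde\cE_\alpha:=r_\alpha^{2-n}\cE_\alpha$, pointed Mosco-GH converge to $(\R^n,\dist_e,\vartheta_X(x)\cH^n,0^n,\cE_{\R^n})$. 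By Remark \ref{rem:scaling} and Remark \ref{rem-TgConesSKL}, the strong Kato bound is preserved under this rescaling, and moreover $\mbox{k}_t(M_\alpha,\tilde g_\alpha)=\mbox{k}_{r_\alpha^2 t}(M_\alpha,g_\alpha)\to 0$ for every fixed $t>0$.

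\textbf{Construction of $H_\alpha$.} Fix a large radius $R>0$. For each $i=1,\dots,n$, let $\tilde h_{i,\alpha}$ solve the Dirichlet problem on $B_R^{\tilde g_\alpha}(x_\alpha)$ with boundary data an approximation of the $i$-th Euclidean coordinate function $y_i$. Using the harmonic approximation tools of the Appendix together with the Mosco-GH convergence above, $\tilde h_{i,\alpha}$ converges to $y_i$ strongly in energy on $B_{R/2}^{\tilde g_\alpha}(x_\alpha)$; in particular
\[
\lim_{\alpha}\fint_{B_r^{\tilde g_\alpha}(x_\alpha)}|\tilde h_{i,\alpha}-y_i|^2\,\di\tilde\nu_\alpha=0,\qquad \lim_{\alpha}\fint_{B_r^{\tilde g_\alpha}(x_\alpha)}|d\tilde h_{i,\alpha}-e_i|^2\,\di\tilde\nu_\alpha=0
\]
for every $r\in(0,R/2)$. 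Rescaling back through $h_{i,\alpha}(z):=r_\alpha\,\tilde h_{i,\alpha}(z)$ yields a harmonic map $H_\alpha=(h_{1,\alpha},\dots,h_{n,\alpha})$ on a $g_\alpha$-ball around $x_\alpha$ whose radius is of order $r_\alpha$. A further diagonal extraction on $R$ and on a prescribed sequence of tolerances allows us to choose $\eps_\alpha\downarrow 0$ so that $H_\alpha$ is defined on $B_{r_\alpha}(x_\alpha)$ and satisfies the desired bounds.

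\textbf{Verification of the properties.} Item (v) is immediate from the pmGH convergence to $\R^n$ endowed with $\vartheta_X(x)\cH^n$. Item (iii) follows from the strong $L^2$ convergence $d\tilde h_{i,\alpha}\to e_i$, since after rescaling
\[
\fint_{B_{r_\alpha}(x_\alpha)}\bigl|{}^tdH_\alpha\circ dH_\alpha-\mathrm{Id}_n\bigr|\,\di\nu_{g_\alpha}\le \sum_{i,j}\fint_{B_1^{\tilde g_\alpha}(x_\alpha)}\bigl|\langle d\tilde h_{i,\alpha},d\tilde h_{j,\alpha}\rangle-\delta_{ij}\bigr|\,\di\tilde\nu_\alpha\longrightarrow 0.
\]
Item (iv) is obtained by applying the second inequality of Proposition \ref{prop:HessEst} to each $\tilde h_{i,\alpha}$: the rescaled Dynkin condition ensures $\min(r^2,T/r_\alpha^2)=r^2$ for any fixed $r$ and $\alpha$ large enough, and the strong $L^2$ convergence $|d\tilde h_{i,\alpha}|^2\to 1$ makes the oscillation on the right-hand side vanish. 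For item (i), Lemma \ref{lem:gradientEst_harmonic} provides a uniform Lipschitz bound on $\tilde h_{i,\alpha}$; combined with the $L^2$ convergence to $y_i$, a standard argument (Arzelà-Ascoli on a net of points GH-approximating $\mathbb{B}(R)$) produces uniform $C^0$ convergence, which translates into the $(\eps_\alpha r_\alpha)$-GH isometry property for $H_\alpha$.

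\textbf{The main obstacle} is the sharp $(1+\eps_\alpha)$-Lipschitz bound (ii). Lemma \ref{lem:gradientEst_harmonic} only yields a uniform bound $\sup_{B_1^{\tilde g_\alpha}(x_\alpha)}|d\tilde h_{i,\alpha}|\le c_n$, not the required refinement to values close to $1$. To overcome this, I plan to exploit Bochner's formula for the harmonic $\tilde h_{i,\alpha}$:
\[
\Delta|d\tilde h_{i,\alpha}|^2+2|\nabla d\tilde h_{i,\alpha}|^2\le 2\Ricm|d\tilde h_{i,\alpha}|^2,
\]
so that $u_\alpha:=(|d\tilde h_{i,\alpha}|^2-1)_+$ satisfies a differential inequality of the form $\Delta u_\alpha\le 2\Ricm(u_\alpha+1)$ on $\{u_\alpha>0\}$. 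The Sobolev inequality \eqref{eq:Sobeta} derived in the proof of Proposition \ref{prop:UnifPI}, combined with the vanishing $\Kato_t(M_\alpha,\tilde g_\alpha)\to 0$, makes a De~Giorgi-Nash-Moser iteration available and gives
\[
\sup_{B_{1/2}^{\tilde g_\alpha}(x_\alpha)} u_\alpha \le C\,\fint_{B_1^{\tilde g_\alpha}(x_\alpha)} u_\alpha\,\di\tilde\nu_\alpha + o(1);
\]
since the right-hand side is controlled by $\fint\bigl||d\tilde h_{i,\alpha}|^2-1\bigr|\,\di\tilde\nu_\alpha$, which tends to $0$ by the strong energy convergence, we obtain $\sup|d\tilde h_{i,\alpha}|^2\le 1+\eps_\alpha$. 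Equivalently one can run this through the semigroup inequality of Corollary \ref{cor:inForCheeger}: writing $|d\tilde h_{i,\alpha}|^2\le e^{4\Kato_t}P_t|d\tilde h_{i,\alpha}|^2$ at a suitable intermediate time $t$ and using the strong convergence of the heat semigroups (Proposition \ref{prop:equivmosco}) to replace the right-hand side by its Euclidean counterpart converging pointwise to $1$. Either path completes the proof.
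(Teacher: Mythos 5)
Your construction and the verification of (i), (iii), (iv) and (v) follow essentially the same route as the paper: rescale by $r_\alpha$, produce the $h_{i,\alpha}$ by harmonic replacement of approximations of the coordinate functions (Proposition \ref{prop:approxharmonic}), deduce the GH-isometry property from uniform convergence, the rough Lipschitz bound from Lemma \ref{lem:gradientEst_harmonic}, and (iii)--(iv) from Proposition \ref{prop:HessEst} together with the strong $L^2$ convergence of $|dh_{i,\alpha}|$ to $1$ (for the off-diagonal entries of ${}^t dH_\alpha\circ dH_\alpha$ you should argue by polarization with $x_i+x_j$, as the statement ``$d\tilde h_{i,\alpha}\to e_i$ strongly in $L^2$'' is not literally meaningful across varying spaces; this is how the paper handles it). You also correctly identify (ii) as the crux.

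The gap is in your treatment of (ii). Your second route misquotes Corollary \ref{cor:inForCheeger}: that corollary bounds $|dP_tu|^2$ by $e^{4\Kato_t}P_t(|du|^2)$, i.e.\ the gradient of the evolved function by the evolution of the gradient; it does not give $|du|^2\le e^{4\Kato_t}P_t(|du|^2)$ for a harmonic $u$, and $h_{i,\alpha}$ is in any case only defined and harmonic on a ball, so the global semigroup cannot be applied to it as written. Your first route --- Kato's inequality for $(|d\tilde h_{i,\alpha}|^2-1)_+$ followed by De Giorgi--Nash--Moser --- is a legitimate elliptic alternative in principle (the multiplicative loss in the iteration is harmless because the iterated quantity has vanishing mean), but the iteration you invoke, for $\Delta u\le Vu+V$ with $V=2\Ricm$ controlled only in the Kato sense both as potential and as source, is not available off the shelf from the paper's lemmas: Lemma \ref{lem:gradientEst_harmonic} treats only the homogeneous inequality via the gauge function $J$, and showing that the inhomogeneous term contributes only $O(\Kato_{r^2}(M_\alpha,\tilde g_\alpha))$ is exactly where the work lies; your ``$+o(1)$'' hides it. The paper proves precisely the needed refinement as Proposition \ref{Lipoptimal}, by a parabolic Cheeger--Naber argument: multiply Bochner's inequality by $H(t,y,\cdot)\chi$ with $\chi$ a good cut-off from Proposition \ref{prop:coupure}, integrate in space and time, and control the resulting error terms $I$, $II$, $III$ with the Gaussian bounds and Lemma \ref{lem:estintnableH}; the potential term is then bounded directly by $2\kappa^2\Kato_{r^2}$. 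Either carry out the Kato-class Moser iteration in full, or invoke Proposition \ref{Lipoptimal}.
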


Before proving the previous theorem, we show an improvement of the Lipschitz constant of Lipschitz harmonic functions whose gradient is suitably close to 1. The argument we use is originally due to J.~Cheeger and A.~Naber, see \cite[Lem.~3.34]{CheegerNaber15}, and it relies on the existence of good cut-off functions, Bochner formula and the appropriate estimates for the heat kernel.  

\begin{prop}\label{Lipoptimal}
Let $(M^n,g)$ be a closed Riemannian manifold and $u:B_r(x) \to \setR$ a $\kappa$-Lipschitz harmonic function, for some $\kappa\ge1$. Assume that there exists $\delta>0$ such that
\[
\Kato_{r^2}(M^n,g) \le \delta \le \frac{1}{16n} \quad \text{and} \quad \fint_{B_r(x)}\left| |du|^2-1\right|\di \nu_g\le \delta^2.
\]
Then $u \restr_{B_{r/2}(x)}$ is $1+C(n,\kappa)\delta$-Lipschitz.
\end{prop}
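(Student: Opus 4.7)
The plan is to adapt the Cheeger--Naber argument (cf.~\cite{CheegerNaber15}) to our Kato setting, combining Bochner's identity with the heat semigroup and the technical tools from Section~4. Since $u$ is harmonic, Bochner yields
$$\frac{1}{2}\Delta |du|^2 = -|\nabla du|^2 - \Ric(du,du) \le \Ricm |du|^2 \le \kappa^2 \Ricm$$
on $B_r(x)$. Coupled with the Kato bound $\Kato_{r^2}(M,g) \le \delta$, this will be the source of all the smallness.

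Concretely, I would fix $y \in B_{r/2}(x)$, set $t_0 := \tau_n r^2$ for a sufficiently small dimensional constant $\tau_n$, and invoke Proposition~\ref{prop:coupure} (with $T = r^2$, admissible by the Kato assumption) to produce a cut-off $\chi \in \cC^\infty(M)$ equal to $1$ on $B_{r/2}(x)$, supported in $B_{3r/4}(x)$, and satisfying $r|\nabla\chi| + r^2|\Delta\chi| \le C_n$. Setting $F := \chi(|du|^2 - 1)$, the Duhamel identity gives
$$|du|^2(y) - 1 \;=\; F(y) \;=\; P_{t_0} F(y) \;+\; \int_0^{t_0} P_s(\Delta F)(y)\,\di s.$$
The Gaussian upper bound on $H$ coming from the $\mathrm{PI}_{\upkappa,\upgamma}(r)$ structure of Proposition~\ref{prop:UnifPI}, together with doubling, controls $|P_{t_0}F(y)| \le C_n \fint_{B_r(x)}||du|^2 - 1|\,\di\nu_g \le C_n \delta^2$. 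Expanding $\Delta F = \chi\Delta|du|^2 + (|du|^2 - 1)\Delta\chi - 2\la\nabla\chi,\nabla|du|^2\ra$, the main term is handled by Bochner and the Kato bound:
$$\int_0^{t_0} P_s(\chi\Delta|du|^2)(y)\,\di s \le 2\kappa^2 \int_0^{t_0} P_s(\Ricm)(y)\,\di s \le 2\kappa^2 \Kato_{t_0}(M,g) \le 2\kappa^2 \delta.$$

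The hard part will be controlling the two ``cross'' terms supported in the annulus $A := B_{3r/4}(x) \setminus B_{r/2}(x)$: since $y$ may lie arbitrarily close to $A$, Gaussian decay of $H(s,y,\cdot)$ is by itself insufficient. The trick is to integrate by parts in $z$ against the heat kernel in the $\nabla|du|^2$ cross term: using the identity $\int_M \la\nabla_z H(s,y,z), \nabla\chi(z)\ra\,\di\nu_g(z) = \int_M H(s,y,z)\Delta\chi(z)\,\di\nu_g(z)$ on the closed manifold, the two cross contributions simplify after cancellation to
$$-\int_M H(s,y,\cdot)(|du|^2 - 1)\Delta\chi\,\di\nu_g + 2\int_M (|du|^2 - 1)\la\nabla_z H(s,y,\cdot),\nabla\chi\ra\,\di\nu_g,$$
which are now weighted by the small factor $|du|^2 - 1$ rather than by $|\nabla du|$. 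Combining the hypothesis $\fint_{B_r(x)}||du|^2 - 1|\di\nu_g \le \delta^2$ with the $L^1$-gradient bound $\int_M|\nabla_z H(s,y,\cdot)|\,\di\nu_g \le \sqrt{e^4 n/(2s)}$ of Lemma~\ref{lem:estintnableH} and a careful interpolation against the pointwise bound $||du|^2 - 1| \le \kappa^2 + 1$, one produces, after integrating in $s \in (0,t_0)$, a cross contribution of order $C(n,\kappa)\delta$. Summing all pieces gives $|du|^2(y) - 1 \le C(n,\kappa)\delta$, whence $|du|(y) \le 1 + C(n,\kappa)\delta$ on $B_{r/2}(x)$.
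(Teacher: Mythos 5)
Your proposal follows essentially the same route as the paper's proof: Bochner plus the Kato bound for the main term, a Duhamel/heat-kernel representation of $\chi(|du|^2-1)$ obtained by integrating by parts against $H$ so that no Hessian term appears, and Lemma \ref{lem:estintnableH} for the gradient-of-heat-kernel contribution. The treatment of $P_{t_0}F$ and the estimate $\int_0^{t_0}P_s(\chi\Delta|du|^2)(y)\,\di s\le 2\kappa^2\Kato_{t_0}(M,g)\le 2\kappa^2\delta$ are exactly as in the paper.

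There is, however, one genuine gap, and it sits precisely at the point you call ``the hard part''. You correctly observe that with your cut-off ($\chi=1$ on $B_{r/2}(x)$, $\supp\chi\subset B_{3r/4}(x)$) the point $y$ may lie arbitrarily close to the annulus $A$, but the remedy you propose --- rewriting the cross terms so that they are weighted by $|du|^2-1$ instead of $|\nabla du|$ --- does not address that difficulty; it addresses a different one. To convert the $L^1$ smallness $\fint_{B_r(x)}||du|^2-1|\,\di\nu_g\le\delta^2$ into an $O(\delta)$ bound on the annulus contributions, you need pointwise-in-$z$ bounds of the form $\int_0^{t_0}H(s,y,z)\,\di s\lesssim r^2/\nu_g(B_r(x))$ and, after the Cauchy--Schwarz step, $\int_0^{t_0}t^{-1}H(t,y,z)\,\di t\lesssim 1/\nu_g(B_r(x))$ for $z\in\supp\nabla\chi$. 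The second integral diverges at $t=0$ unless the Gaussian factor $e^{-\dist^2(y,z)/(5t)}$ with $\dist(y,z)\ge cr$ is available: the time-integrated kernel is essentially a Green function, singular on the diagonal. Without that separation the only available bounds (stochastic completeness, or $\int_M|\nabla_zH|\,\di\nu_g\lesssim s^{-1/2}$ integrated in $s$) yield contributions of size $O(\sqrt{\tau_n}\,(1+\kappa^2))$, which is not small in $\delta$. The fix is a one-line change and is what the paper does: take $\chi\equiv1$ on $B_{3r/4}(x)$ with $\supp\chi\subset B_r(x)$, so that $\dist(y,\supp\nabla\chi)\ge r/4$ for every $y\in B_{r/2}(x)$; then the Gaussian upper bound coming from Proposition \ref{prop:UnifPI} supplies exactly the kernel estimates above, and your remaining computations (Cauchy--Schwarz against $|\nabla_zH|^2/H$ together with Lemma \ref{lem:estintnableH}) close the argument, giving the three error terms of size $C\delta^2$, $C(1+\kappa^2)^{1/2}\delta$ and $C\delta^2$.
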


\proof Let $\chi\in C_c^\infty(M)$ be a cut-off function as constructed in Proposition \ref{prop:coupure} such that:
\begin{enumerate}[i)]
\item $\chi=1$ on $B_{3r/4}(x)$,
\item $\chi=0$ on $M\setminus B_r(x)$,
\item $|d \chi| \le C(n)/r$ and $|\Delta \chi| \le C(n)/r^2$ on $B_r(x) \backslash B_{3r/4}$.
\end{enumerate}
Apply Bochner's formula on $B_r(x)$ to the $\kappa$-Lipschitz harmonic function $u$ in order to get
\[
|\nabla du|^2+\frac12 \Delta \left(|du|^2-1\right) = -\Ric(\nabla u, \nabla u).
\]
Since $|\nabla du|^2\ge 0$ and $-\Ric(\nabla u, \nabla u) \le \Ricm \kappa^2$, this leads to
\[
\frac12 \Delta \left(|du|^2-1\right)\le \Ricm \kappa^2.
\]
Take $y \in B_{r/2}(x)$ and multiply the previous inequality evaluated at some $z \in B_r(x)$ by $2H(t,y,z)\chi(z)$, where $t \in [0,r^2]$, then integrate with respect to $z$ and $t$:
\begin{align*}
& \phantom{=} \phantom{=} \iint_{[0,r^2]\times B_r(x)} H(t,y,z)\chi(z) \Delta \left(|du|^2(z)-1\right)\di \nu_g(z)\di t \nonumber\\
 & \le 2 \iint_{[0,r^2]\times B_r(x)} H(t,y,z)\chi(z) \Ricm \kappa^2 \di \nu_g(z)\di t \nonumber.
\end{align*}
As immediately seen, the previous right-hand side is not greater than  $2 \Kato_{r^2}(M^n,g)\kappa^2$ which is not greater than $2 \delta \kappa^2$. Thus
\begin{align}\label{estimee1}
\iint_{[0,r^2]\times B_r(x)} H(t,y,z)\chi(z) \Delta \left(|du|^2(z)-1\right)\di \nu_g(z)\di t \le 2 \delta \kappa^2.
\end{align}
Use integration by parts to rewrite the left-hand side, with simplified notations, as follows:
\begin{align*}
\int H \, \chi\,  \Delta \left(|du|^2-1\right) & = \int \Delta( H \, \chi )\,  \left(|du|^2-1\right)\\
& = \int (\Delta H) \, \chi \,  \left(|du|^2-1\right) - 2 \int \scal{\nabla H}{\nabla \chi} \left(|du|^2-1\right)\\
& + \int H \, (\Delta\chi) \,  \left(|du|^2-1\right).
\end{align*}
Now 
\begin{align*}
& \phantom{=} \phantom{=} \iint_{[0,r^2]\times B_r(x)} \Delta_z\left(H(t,y,z)\right)\chi(z)\left(|du|^2(z)-1\right)\di \nu_g(z)\di t\\
& =-\int_{B_r(x)} \left( \int_0^{r^2}\frac{\partial H(t,y,z)}{\partial t} \di t \right) \chi(z)\left(|du|^2(z)-1\right) \di \nu_g(z)\\
&=-\int_{B_r(x)}H(r^2,y,z)\chi(z)\left(|du|^2(z)-1\right)\di \nu_g(z) + \underbrace{\chi(y)}_{=1}\left(|du|^2(y)-1\right) .
\end{align*}
Combining these three last estimates with the properties of the cut-off function $\chi$, we get
$$|du|^2(y)-1\le 2\kappa^2\delta+I(y)+II(y)+III(y)$$
where
$$I(y)=\int_{ M}H(r^2,y,z)\chi(z)\left(|du|^2(z)-1\right)\di \nu_g(z),$$
$$II(y)=\frac{C(n)}{r}\iint_{[0,r^2]\times [B_r(x)\setminus B_{3r/4}(x)]} \left|\nabla_z H(t,y,z)\right|\,\left||du|^2(z)-1\right|\di \nu_g(z) \di t,$$
$$III(y)=\frac{C(n)}{r^2}\iint_{[0,r^2]\times [B_r(x)\setminus B_{3r/4}(x)]}H(t,y,z) \left||du|^2(z)-1\right|\di \nu_g(z)\di t,$$
and we are going to establish the following estimates:
\begin{equation*}
I(y) \le C(n) \delta^2, \qquad II(y) \le C(n) \delta \sqrt{1+\kappa^2}, \qquad III(y) \le C(n) \delta^2,
\end{equation*}
which are enough to complete the proof.

We recall the upper bound for the heat kernel, with $\nu=e^2n$ and $t\le r^2, y,z\in M$:
$$H(t,y,z)\le\frac{C(n)}{\nu_g(B_r(y))}\frac{r^\nu}{t^{\frac \nu 2}} e^{-\frac{d^2(y,z)}{5t}}.$$
Moreover, the doubling condition implies for $y \in B_{r/2}(x)$
\begin{equation}
\label{eq-boundHK}
H(t,y,z)\le\frac{C(n)}{\nu_g(B_r(x))}\frac{r^\nu}{t^{\frac \nu 2}} e^{-\frac{d^2(y,z)}{5t}}.
\end{equation}
Therefore, for any $z \in B_r(x) \setminus B_{3r/4}(x)$ and $y \in B_{r/2}(x)$ we obtain
$$H(r^2,y,z)\le \frac{C(n)}{\nu_g(B_r(x))}.$$ 
Using this inequality and the assumption on $|du|$ leads to the estimate for $I(y)$:
$$I(y) \leq  \frac{C(n)}{\nu_g(B_r(x))} \int_{B_r(x)}||du|^2(z)-1|\di \nu_g(z) \leq C(n)\delta^2.$$
We now obtain the estimate for $III(y)$. Consider $z \in B_r(x) \setminus B_{3r/4}(x)$ and $y \in B_{r/2}(x)$ as above. Inequality \eqref{eq-boundHK} and the fact that 
$$\int_0^{r^2} \frac{r^\nu}{t^{\frac \nu 2}} e^{-\frac{r^2}{80t}}\di t= r^2 \int_0^{1} \frac{1}{t^{\frac \nu 2}} e^{-\frac{1}{80t}}\di t$$  imply that
$$\int_0^{r^2}H(t,y,z)\di t\le \frac{C(n)}{\nu_g(B_r(x))}\int_0^{r^2} \frac{r^\nu}{t^{\frac \nu 2}} e^{-\frac{r^2}{80t}}\di t= \frac{C(n)r^2}{\nu_g(B_r(x))},$$
and as a consequence
$$III(y)\le C(n)\fint_{B_r(x)}\left||du|^2(z)-1\right|\di \nu_g(z)\le C(n)\delta^2.$$
As for $II(y)$ we use the Cauchy-Schwarz inequality twice, first in $\di \nu_g$ and then $dt$, together with the result of Lemma \ref{lem:estintnableH}:
$$\int_M  \frac{\left|\nabla_z H(t,y,z)\right|^2}{H(t,y,z)}\di \nu_g(z)\le \frac{C(n)}{t},$$
thus we obtain
\begin{align*}
II(y) &= \frac{C(n)}{r}\int_0^{r^2}\int_{B_r(x) \setminus B_{3r/4}(x)}\frac{|\nabla_z H(t,y,z)|}{\sqrt{H(t,y,z)}} \sqrt{H(t,y,z)}  \left||du|^2(z)-1\right|\di \nu_g(z) \di t \\
& \leq \frac{C(n)}{r}\int_0^{r^2}\frac{1}{\sqrt{t}} \left( \int_{B_r(x)\setminus B_{3r/4}(x)}  H(t,y,z)\left||du|^2(z)-1\right|^2\di \nu_g(z)\right)^{\frac 12} \di t \\
& \leq C(n)\left(\int_{[0,r^2]\times (\left(B_r(x)\setminus B_{3r/4}(x)\right)} \frac{H(t,y,z)}{t} \left||du|^2(z)-1\right|^2 \di \nu_g(z) \di t \right)^{\frac{1}{2}}
\end{align*}
Using  
$$\int_0^{r^2} \frac{r^{\frac{\nu}{2}}}{t^{1+\frac{\nu}{2}}}e^{-\frac{r^2}{80t}}\di t = \int_0^{1} \frac{1}{t^{1+\frac \nu 2}} e^{-\frac{1}{80t}}\di t,$$ we have
$$\int_0^{r^2}  \frac{H(t,z,y)}{t} \di t \leq \int_0^{r^2}\frac{C(n)}{\nu_g(B_r(x))}\frac{r^{\frac{\nu}{2}}}{t^{1+\frac{\nu}{2}}}e^{-\frac{r^2}{80t}}\di t \leq \frac{C(n)}{\nu_g(B_r(x))}.$$
We then obtain
$$II(y) \leq C(n)(1+\kappa^2)^{\frac 12}\left(\fint_B \left||du|^2(z)-1\right| \di \nu_g(z) \right)^{\frac 12 }\leq C(n)(1+\kappa^2)^{\frac 12} \delta.$$
This allows us to obtain the desired bound on $|dh|$ over $B_{r/2}(x)$. 
\endproof 

We are now in a position to prove Theorem \ref{thm-splitting-maps}. 

\begin{proof}[Proof of Theorem \ref{thm-splitting-maps}]

Let $x \in X$ and assume that $(\R^n, \dist_e, \vartheta_X(x)\cH^n, 0^n)$ is a tangent cone at $x$. Then by definition of tangent cones of strong Kato limit spaces, there exist sequences $(r_\alpha)_\alpha \subset (0,+\infty)$, $r_\alpha \downarrow 0$ and $x_\alpha \in M_\alpha$  such that
$$(M_\alpha, r_\alpha^{-1}\dist_{g_\alpha}, r_\alpha^{-n}\di \nu_{g_\alpha}, x_\alpha) \longrightarrow  (\R^n, \dist_e, \vartheta_X(x)\cH^n, 0^n),$$
and property (v) holds. Denote by $\tilde g_\alpha = r_\alpha^{-2} g_\alpha$ and $\mu_\alpha=r_\alpha^{-n}\di \nu_{g_\alpha}$. Balls with respect to $\tilde g_\alpha$ are denoted by $\tilde B_s(y)$. Notice that is is enough to prove the existence of a map $H_\alpha : \tilde{B}_1(x_\alpha) \to \mathbb{B}^n(1)$ satisfying properties (i) to (iv) with respect to the re-scaled metric $\tilde g_\alpha$ and with $r_\alpha$ replaced by $1$. Then the map on $B_{r_\alpha}(x_\alpha)$ is simply obtained by re-scaling $H_\alpha$ by a factor $r_\alpha$. As a consequence, in the rest of the proof, we only work with the re-scaled manifolds $(M_\alpha, \tilde{g}_\alpha)$. 

Consider the coordinate maps $x_i: \R^n \to \R$ for all $i=1, \ldots, n$. Then $x_i$ are harmonic and we can apply Proposition \ref{prop:approxharmonic}: for all $\alpha$ there exist harmonic functions $h_{i,\alpha} : \tilde B_1(x_\alpha) \to \mathbb{B}(1)$ such that 
\begin{itemize}
\item[(i)] $h_{i,\alpha} \to x_i|_{\mathbb{B}(1)}$ uniformly; 
\item[(ii)] For all $s \leq 1$
$$\lim_{\alpha \to \infty} \int_{\tilde B_s(x_\alpha)}|d h_{i,\alpha}|_{\tilde g_\alpha}^2 \di \mu_\alpha = \int_{\mathbb{B}(s)} |dx_i|^2 \vartheta_X(x) \di \cH^n = \vartheta_X(x) \omega_n.$$
\end{itemize}
Define $$H_\alpha=(h_{1,\alpha}, \ldots, h_{n, \alpha}): \tilde{B}_1(x_\alpha) \to \mathbb{B}(1).$$
Since $H_\alpha$ converges uniformly to the identity $\mbox{Id}_n =(x_1, \ldots, x_n)$, it is not difficult to show that $H_\alpha$ is an $\eps_\alpha$-GH isometry where  $(\eps_\alpha)_\alpha \subset (0,+\infty)$ is a sequence tending to zero. In the continuation of the reasoning, we will take the freedom  of modifying this sequence tending to zero while keeping its notation. 

Since $\mu_\alpha(\tilde{B}_1(x_\alpha))$ tends to $\vartheta_X(x)\omega_n$, the second property implies that 
$$\lim_{\alpha \to \infty} \fint_{\tilde{B}_1(x_\alpha)} |d h_{i,\alpha}|_{\tilde g_\alpha}^2 \di \mu_\alpha  = 1.$$
Using the first estimate in Lemma \ref{lem:gradientEst_harmonic} we then deduce that there exists $C(n)>0$ such that
$$\sup_{\tilde{B}_{3/4}(x_\alpha)} |d h_{i,\alpha}|_{\tilde g_\alpha} \leq C(n),$$
that is $h_{i,\alpha}$ is $C(n)$-Lipschitz on $\tilde B_{1/2}(x_\alpha)$.
We can then apply Proposition \ref{prop:HessEst} and get some uniform estimates 
$$\fint_{\tilde{B}_{1/2}(x_\alpha)}|\nabla d h_{i,\alpha}^2|_{\tilde{g}_\alpha}^2  \di \mu_\alpha \leq C_n $$
Then, Remark \ref{rem:localisationcv} in the Appendix ensures that $|dh_{i,\alpha}|_{\tilde{g}_\alpha}$ tends to 1 in $L^2$. Then by additionally using the fact that $(M_\alpha, \tilde{g}_\alpha)$ is doubling, we have

$$\lim_{\alpha \to \infty} \fint_{\tilde{B}_{1/2}(x_\alpha)}||dh_{i,\alpha}|_{\tilde{g}_\alpha} -1|^2 \di \mu_\alpha = 0.$$

This, together with $h_{i,\alpha}$ being Lipschitz, implies 

$$\lim_{\alpha \to \infty} \fint _{\tilde{B}_{1/2}(x_\alpha)}||dh_{i,\alpha}|^2_{\tilde{g}_\alpha} -1| \di \mu_\alpha =0.$$

Recall that, as observed in Remark \ref{rem:tgConeKato}, a sequence of re-scaled manifolds converging to a tangent cone of a strong Kato limit is such that for all $t>0$ 
$$\lim_{\alpha \to \infty}\Kato_t(M_\alpha, \tilde{g}_\alpha)=0.$$
Modifying the sequence $(\eps_\alpha)_\alpha$ if necessary, we have 
$$\Kato_{1/2}(M_\alpha, \tilde{g}_\alpha) < \eps_\alpha, \quad \fint _{\tilde{B}_{1/2}(x_\alpha)}||dh_{i,\alpha}|_{\tilde{g}_\alpha}^2 -1| \di \mu_\alpha  < \eps_\alpha.$$

This means that the assumptions of Proposition \ref{Lipoptimal} are satisfied, therefore for $\alpha$ large enough $h_{i,\alpha}$ is $(1+\eps_\alpha)$-Lipschitz on the ball $\tilde{B}_{1/4}(x_\alpha)$. 

Applying Proposition \ref{prop:HessEst} and using again the doubling property, we also obtain 
$$\fint_{\tilde{B}_{1/4}(x_\alpha)}|\nabla d h_{i,\alpha}^2|_{\tilde{g}_\alpha}^2  \di \mu_\alpha \leq C_n \fint_{\tilde{B}_{1/2}(x_\alpha)}\left||d h_{i,\alpha}^2|_{\tilde{g}_\alpha}^2 -\fint_{\tilde{B}_1(x_\alpha)}|d h_{i,\alpha}^2|_{\tilde{g}_\alpha}^2 \di \mu_\alpha\right| \di \mu_\alpha < \eps_\alpha.$$
Here we used that $\Kato_{r_\alpha^{-2}T}(M_\alpha, \tilde{g}_\alpha) \leq 1/16n$, thus for $\alpha$ large enough we have $\min\{1/2, r_\alpha^{-2}T\}=1/2$. As a consequence, up to replacing $r_\alpha$ by $r_\alpha/4$, $h_{i, \alpha}$ is $(1+\eps_\alpha)$-Lipschitz on $\tilde{B}_1(x_\alpha)$ and satisfies 
$$ \fint_{\tilde{B}_1(x_\alpha)} ||dh_{i,\alpha}|^2_{\tilde{g}_\alpha} -1| \di \mu_\alpha < \eps_\alpha, \quad \fint_{\tilde{B}_1(x_\alpha)} |\nabla dh_{i,\alpha}|^2_{\tilde{g}_\alpha} \di \mu_\alpha < \eps_\alpha.$$
In order to obtain properties (iii) and (iv) for $H_\alpha$, one can consider the function $x_i+x_j$. Since we know that $h_{i,\alpha}+h_{j,\alpha}$ converges uniformly to $x_i+x_j$, by arguing as above we get 
$$\lim_{\alpha \to \infty}\fint_{\tilde{B}_1(x_\alpha}|\langle dh_{i,\alpha}, dh_{j,\alpha}\rangle_{\tilde{g}_\alpha} -\delta_{ij}|^2\di \mu_\alpha =0.$$
Then the same argument that we used for $h_{i,\alpha}$  finally leads to properties (iii) and (iv) for $H_\alpha$. 
\end{proof}

\begin{rem}
The same argument as above shows that if $(X,\dist,\mu)$ is a strong non-collapsed Kato limit and $x \in X$ admits an mm $k$-symmetric tangent cone, then there exist harmonic $\eps$-splitting maps from a ball around $x$ to a Euclidean ball of the same radius in $\R^k$.
\end{rem}

\subsection{Proof of Theorem \ref{thm:density1}}

Our proof of Theorem \ref{thm:density1} is inspired by the argument illustrated in \cite[Theorem 9.31]{CheegerPisa} and \cite[Theorem 1.6]{Gallot}. Both proofs are based on degree theory. 

\begin{proof}

Let $x$ be a point in $X$ that admits an Euclidean tangent cone $(\R^n, \dist_e, \vartheta_X(x) \cH^n,0^n)$. Let $r_\alpha, \eps_\alpha$ and $H_\alpha : B_{r_\alpha}(x) \to \R^n$ be as in Theorem \ref{thm-splitting-maps}. Let $\rho_\alpha : B_{r_\alpha}(x) \to \R$ be defined by $\rho_\alpha(x)=||H_\alpha(x)||^2$. Fix $\tau_\alpha \in \left( \frac 14 r_\alpha -2\eps_\alpha r_\alpha, \frac 14 r_\alpha -\eps_\alpha r_\alpha \right)$. Since $\rho_\alpha$ is smooth, by Sard's theorem $\tau_\alpha$ can be chosen so that $\tau_\alpha^2$ is a regular value of $\rho_\alpha$. Now define the compact set
$$\Omega_\alpha= \{ x \in B_{r_\alpha}(x), \ ||H_\alpha(x)|| \leq \tau_\alpha\}.$$
Since $H_\alpha$ is an $(\eps_\alpha r_\alpha)$-GH isometry, we have 
$$H_\alpha(B_{\frac{r_\alpha}{4} -3r_\alpha \eps_\alpha}(x)) \subset \mathbb{B}\left(\frac{r_\alpha}{4} -2\eps_\alpha r_\alpha\right).$$
Also, for any $x$ such that $||H_\alpha(x)|| \leq r_\alpha/4 -\eps_\alpha r_\alpha$, $x \in B_{ r_\alpha/4}(x)$. Then with our choice of interval and $\tau_\alpha$ we have
\begin{equation}
\label{eq-inclusions}
B_{ \frac{r_\alpha}{4} -3 \eps_\alpha r_\alpha}(x) \subset \Omega_\alpha \subset B_{\frac{r_\alpha}{4}}(x).
\end{equation}
We claim that if for $\alpha$ large enough, $H_\alpha : \Omega_\alpha \rightarrow \mathbb{B}(\tau_\alpha)$ is surjective, then $\vartheta_X(x) =1$. Indeed, if $H_\alpha$ is surjective, the estimates on $dH_\alpha$ and on the Lipschitz constant of $H_\alpha$ imply
$$\cH^n(\mathbb{B}(\tau_\alpha)) \leq (1+\eps_\alpha)^n \nu_{g_\alpha}(\Omega_\alpha),$$
which, together with the inclusion above leads to
$$\omega_n \left( (1-12\eps_\alpha)\frac{r_\alpha}{4} \right)^n \leq (1+\eps_\alpha)^n \nu_{g_\alpha}\left(B_{\frac{r_\alpha}{4}}\left(x\right)\right).$$
Together with (6) in the previous Theorem, this shows that $\vartheta_X(x) \geq 1$. But since $\vartheta_X$ is lower semi-continuous, we already know that $\vartheta_X(x)\leq 1$, then $\vartheta_X(x)=1$. 

In the rest of the proof we show by contradiction that $H_\alpha : \Omega_\alpha \rightarrow \mathbb{B}(\tau_\alpha)$ is surjective for $\alpha$ large enough. We first assume that $M_\alpha$ is oriented and $H_\alpha$ is not surjective. We let $\Theta_\alpha$ be the unit volume form of  $(M_\alpha,g_\alpha)$ We  Then, since $\Omega_\alpha$ is compact the set $\mathbb{B}(\tau_\alpha) \setminus H_\alpha(\Omega_\alpha)$ is open and there exists an open ball $\mathbb{B}(p,\eta) \subset \mathbb{B}(\tau_\alpha) \setminus H_\alpha(\Omega_\alpha)$. We claim that there exists an $(n-1)$-form $\gamma_\alpha$ on the set $\mathbb{B}(\tau_\alpha) \setminus \mathbb{B}(p,\eta)$  such that $\di \gamma_\alpha$ equals the $n$-volume form $\omega_\alpha = \di x_1 \wedge \ldots \wedge \di x_n$ and $\iota^* \gamma_\alpha =0$, where $\iota : \partial \mathbb{B}(p,\eta) \to \mathbb{B}(p,\eta)$ is the inclusion map. 

Now observe that $H_\alpha(\partial \Omega_\alpha) = \partial \mathbb{B}_{\tau_\alpha}$: if $x \in \partial \Omega_\alpha$, then $||H_\alpha(x)||=\tau_\alpha$ and $H_\alpha(x) \in \partial \mathbb{B}(\tau_\alpha)$  because $\tau_\alpha$ is a regular value of $\rho_\alpha= ||H_\alpha||^2$. Then Stokes' theorem implies 
$$\int_{\Omega_\alpha} H_\alpha^*(\omega_\alpha) = \int_{\Omega_\alpha} \di H_\alpha^*(\gamma_\alpha) = \int_{\partial \Omega_\alpha} H_\alpha^* \gamma_\alpha =0.$$
But we also know 
$$H_\alpha^*(\omega_\alpha) =H^*_\alpha(\di x_1 \wedge \ldots \wedge \di x_n) = \di h_{1, \alpha}\wedge \ldots \wedge \di h_{n, \alpha} = f_\alpha\Theta_\alpha,$$
where for $x \in \Omega_\alpha$, $$f_\alpha(x)=\mbox{det}(\di_x H_\alpha(e_1),\dots,\di_x H_\alpha(e_n) ), $$ where $(e_1,\dots,e_n)$ is a direct orthonormal basis of $(T_xM_\alpha,g_\alpha(x))$. Then 
$$\int_{\Omega_\alpha} f_\alpha \di \nu_{g_\alpha} = 0.$$
Denote by $B_\alpha$ the ball $B\left(x,\frac{r_\alpha}{2}\right)$. Let $x \in  B_\alpha$ and $\lambda_1, \ldots \lambda_n$ the eigenvalues of ${}^t\di_x H_\alpha \circ \di_x H_\alpha$. Then we have
$$|f_\alpha(x)|=\sqrt{\lambda_1 \cdot \ldots \cdot \lambda_n},$$
and 
$$|\lambda_i - 1| \leq |{}^t\di_x H_\alpha \circ \di_x H_\alpha - \mbox{Id}_n|:= D_\alpha.$$
Observe that property (iii) of $H_\alpha$ implies that for $\alpha$ large enough $D_\alpha < 1$. Therefore we have
$$|f_\alpha | \geq \left(1- D_\alpha\right)^{\frac n2} \geq 1-\frac{n}{2} D_\alpha.$$
As a consequence and using the doubling condition
$$\fint_{B_\alpha}|f_\alpha| \di \nu_{g_\alpha} \geq 1 - \frac{n}{2} \fint_{B\left(x,\frac{r_\alpha}{2}\right)}D_\alpha \di \nu_{g_\alpha} \geq 1 - C\fint_{B_{r_\alpha}(x)} D_\alpha \di \nu_{g_\alpha}.$$
Then for $\alpha$ large enough we have 
\begin{equation}
\label{eq-nonnul}
m_\alpha := \fint_{B_\alpha}|f_\alpha| \di \nu_{g_\alpha} \geq \frac 12.
\end{equation}
We also have for $x \in B_\alpha$
$$ |\nabla f_\alpha| \leq n(1+\eps_\alpha)^{n-1}|\nabla d H_\alpha|.$$
We then use Cauchy-Schwartz and Poincaré's inequalities to get 
$$\fint_{B_\alpha}| f_\alpha -m_\alpha|\di \nu_{g_\alpha} \left(\fint_{B_\alpha} | f_\alpha -m_\alpha|^2 \di \nu_{g_\alpha} \right)^{\frac 12}\leq Cr_\alpha \left(\fint_{B_\alpha}|\nabla f_\alpha|^2 \di \nu_{g_\alpha} \right)^{\frac{1}{2}} $$
Then using the doubling condition and property (iv) in the previous Proposition we get
$$\fint_{B_\alpha} | f_\alpha -m_\alpha|\di \nu_{g_\alpha} \leq Cr_\alpha \left(\fint_{B_\alpha}|\nabla  d H_\alpha|^2 \di \nu_{g_\alpha} \right)^{\frac{1}{2}} \leq C' \sqrt{ \eps_\alpha},$$
and for $\alpha$ large enough 
$$\fint_{B_\alpha} | f_\alpha -m_\alpha|\di \nu_{g_\alpha} \leq \frac 18.$$
Then combining this last inequality with \eqref{eq-nonnul} we obtain
\begin{equation}
\label{eq-nonnul2}
m_\alpha \geq \frac{3}{8}.
\end{equation}
We are going to contradict the previous lower bound by using the properties of $H_\alpha$ and Poincaré's inequality. 

We can write
\begin{align*}
|m_\alpha| &= \left| m_\alpha - \int_{\Omega_\alpha}f_\alpha\right| \\
& =\left| \fint_{\Omega_\alpha \times B_\alpha} (f_\alpha(x)-f_\alpha(y))\di \nu_{g_\alpha}(x) \di \nu_{g_\alpha (y)}\right| \\
& \leq \frac{\nu_{g_\alpha}(B_\alpha)}{\nu_{g_\alpha}(\Omega_\alpha)}\fint_{B_\alpha \times B_\alpha} |f_\alpha(x)-f_\alpha(y)|\di \nu_{g_\alpha}(x) \di \nu_{g_\alpha} (y) \\
&   \leq \frac{\nu_{g_\alpha}(B_\alpha)}{\nu_{g_\alpha}(\Omega_\alpha)} \left( \fint_{B_\alpha \times B_\alpha} |f_\alpha(x)-f_\alpha(y)|^2\di \nu_{g_\alpha}(x) \di \nu_{g_\alpha} (y)\right)^{\frac 12}
\end{align*}
Thanks to the first inclusion in \eqref{eq-inclusions}, for $\alpha$ large enough $B_{r_\alpha/8}(x)\subset \Omega_\alpha$. Then the ratio $\nu_{g_\alpha}(B_\alpha)/\nu_{g_\alpha}(\Omega_\alpha)$ has an upper uniform bound, since $(M_\alpha,g_\alpha)$ is doubling. Moreover
$$\fint_{B_\alpha \times B_\alpha} |f_\alpha(x)-f_\alpha(y)|^2\di \nu_{g_\alpha}(x) \di \nu_{g_\alpha} (y)=2 \fint_{B_\alpha}|f_\alpha-m_\alpha|^2\di \nu_{g_\alpha},$$
then thanks to the estimate on the hessian of $H_\alpha$ we can conclude 
$$|m_\alpha| \leq C r_\alpha^2 \fint_{B_\alpha}|\nabla dH_\alpha|^2\di \nu_{g_\alpha}\leq C' \eps_\alpha.$$
Since $\eps_\alpha $ tends to zero, this contradicts for $\alpha$ large enough inequality \eqref{eq-nonnul2}. Then if $M_\alpha$ is oriented, $H_\alpha$ is surjective from $\Omega_\alpha$ to $\mathbb{B}(\tau_\alpha)$. 

If $M_\alpha$ is not oriented, we consider the two-fold orientation covering $\hat{\pi}_\alpha : \hat{M}_\alpha to M_\alpha$ and choose $\hat x \in \hat \pi_\alpha^{-1}(x)$. We observe that $\hat M_\alpha$ endowed with the pull-back metric $\hat g_\alpha= \hat \pi_\alpha^*g_\alpha$ satisfies for all $s>0$
$$\Kato_s(\hat M_\alpha, \hat g_\alpha)=\Kato_s(M_\alpha, g_\alpha).$$
Then $(\hat M_\alpha, \hat g_\alpha)$ is PI at the same scale as $(M_\alpha,g_\alpha)$. Moreover, the map 
$$\hat H_\alpha = H_\alpha \circ \hat \pi_\alpha : B(\hat x,r_\alpha) \to \mathbb{B}(\tau_\alpha)$$ is $(1+\eps_\alpha)$-Lipschitz and satisfies properties (iii) and (iv) of the previous Proposition. Then we can apply the same argument as above and show that $\hat H_\alpha : \hat \pi_\alpha^{-1}(\Omega_\alpha) \to \mathbb{B}(\tau_\alpha)$ is surjective. It finally follows that $H_\alpha : \Omega_\alpha \to \mathbb{B}(\tau_\alpha)$ is also surjective. 
\end{proof} 

\appendix
\section*{Appendix}
\renewcommand{\thesubsection}{\Alph{subsection}}

\makeatletter
\renewcommand{\thetheorem}{\thesubsection.\arabic{theorem}}
\@addtoreset{theorem}{subsection}
\makeatother

In this appendix, we provide a proof of Theorem \ref{th:improvedKS} and of several other useful convergence results.  

For the two next subsections, we put ourselves in the following setting: we let $\{(X_\alpha, \dist_\alpha, \meas_\alpha, o_\alpha)\}_\alpha, (X,\dist,\mu,o)$ be proper geodesic pointed metric measure spaces such that
\begin{itemize}
\item
$(X_\alpha, \dist_\alpha, \meas_\alpha, o_\alpha) \stackrel{pmGH}{\longrightarrow} (X,\dist,\mu,o)$, and we use the sequences $\{R_\alpha\}$, $\{\eps_\alpha\}$ and $\{\Phi_\alpha\}$ given by Characterization \ref{chara},
\item there exists $\upkappa\ge 1$ and $ R>0$ such that the spaces $\{(X_\alpha, \dist_\alpha, \meas_\alpha)\}_\alpha$ are all $\upkappa$-doubling at scale $R$, hence so is $(X,\dist,\mu)$.
\end{itemize}

  \subsection{Approximation of functions} The following result is known by experts. It says that the space $\cC_c(X)$ is somehow the limit of the spaces $\left\{\cC_c(X_\alpha)\right\}$, in the sense that any $\varphi\in \cC_c(X)$ can be nicely approximated by functions  $\varphi_\alpha\in\cC_c(X_\alpha)$.
\begin{prop}\label{prop:approx}
For any $r>0$ and any $\alpha$ large enough,  we can build a linear map
$$\mathcal{A}_\alpha\colon \cC_c(B_r(o) )\rightarrow  \cC_c(X_\alpha)$$
such that the following holds for any $\varphi \in \cC_c(B_r(o))$.
\begin{enumerate}[i)]
\item If $\varphi\ge 0$, then $\mathcal{A}_\alpha\varphi\ge 0$ for any $\alpha$.
\item If $0\le \varphi \le L$, then $0\le\mathcal{A}_\alpha\varphi \le L$ for any $\alpha$.
\item  The convergence $\mathcal{A}_\alpha\varphi\stackrel{\cC_c}{\to}\varphi$ holds.
\item The functions $\{\mathcal{A}_\alpha\varphi \}_\alpha$ are uniformly equicontinuous.
\item There exists a constant $\bar{C}>0$ depending only on $\upkappa$ such that if $\varphi$ is $\Lambda$-Lipschitz, then $\mathcal{A}_\alpha\varphi$ is $\bar{C} \Lambda$-Lipschitz.
\end{enumerate}
\end{prop}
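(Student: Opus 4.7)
The plan is to build $\mathcal{A}_\alpha$ by a discretize-and-interpolate construction whose behavior is controlled by the $\upkappa$-doubling property of $X_\alpha$. Pick a sequence $\{\delta_\alpha\}$ tending to zero with $\delta_\alpha \ge \eps_\alpha$, and for each $\alpha$ large enough fix a maximal $\delta_\alpha$-separated set $\{y_i^\alpha\}_{i \in I_\alpha}$ in $X_\alpha$. Set
\[
\tilde\eta_i^\alpha(y) := \max\{0,\, 2\delta_\alpha - \dist_\alpha(y, y_i^\alpha)\}, \qquad S^\alpha := \sum_{i} \tilde\eta_i^\alpha, \qquad \eta_i^\alpha := \tilde\eta_i^\alpha / S^\alpha.
\]
Maximality yields $S^\alpha \ge \delta_\alpha$ everywhere, so each $\eta_i^\alpha$ is continuous with $\sum_i \eta_i^\alpha \equiv 1$. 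Doubling bounds by $N(\upkappa)$ the number of $\tilde\eta_j^\alpha$ that are nonzero at any given point, making $S^\alpha$ uniformly Lipschitz and yielding $\Lip(\eta_i^\alpha) \le C(\upkappa)/\delta_\alpha$. Define
\[
\mathcal{A}_\alpha \varphi(y) := \sum_{i \in I_\alpha} \varphi(\Phi_\alpha(y_i^\alpha))\, \eta_i^\alpha(y),
\]
with the convention $\varphi(\Phi_\alpha(y_i^\alpha)) = 0$ when $y_i^\alpha$ lies outside the domain of $\Phi_\alpha$. Since $\supp\varphi \subset B_r(o)$ and $\Phi_\alpha$ is an $\eps_\alpha$-isometry sending $o_\alpha$ to $o$, the support of $\mathcal{A}_\alpha\varphi$ is contained in $B_{r + \eps_\alpha + 2\delta_\alpha}(o_\alpha)$, so $\mathcal{A}_\alpha\varphi \in \cC_c(X_\alpha)$; linearity is clear.

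Properties (i) and (ii) are immediate from $\eta_i^\alpha \ge 0$ and $\sum_i \eta_i^\alpha \equiv 1$. For (iii), only the indices $i$ with $\dist_\alpha(y, y_i^\alpha) < 2\delta_\alpha$ contribute to $\mathcal{A}_\alpha\varphi(y)$, and for these $\dist(\Phi_\alpha(y_i^\alpha), \Phi_\alpha(y)) \le 2\delta_\alpha + \eps_\alpha \le 3\delta_\alpha$, so
\[
|\mathcal{A}_\alpha\varphi(y) - \varphi(\Phi_\alpha(y))| \le \sup_{\dist(x,x') \le 3\delta_\alpha} |\varphi(x) - \varphi(x')|,
\]
which tends to $0$ uniformly in $y$ by uniform continuity of $\varphi$; this is exactly the convergence $\mathcal{A}_\alpha\varphi \stackrel{\cC_c}{\to} \varphi$. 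Property (iv) follows from (iii) combined with the triangle inequality
\[
|\mathcal{A}_\alpha\varphi(y) - \mathcal{A}_\alpha\varphi(y')| \le 2\|\mathcal{A}_\alpha\varphi - \varphi\circ\Phi_\alpha\|_\infty + |\varphi(\Phi_\alpha(y)) - \varphi(\Phi_\alpha(y'))|,
\]
where the last term is controlled by the modulus of continuity of $\varphi$ at scale $\dist_\alpha(y,y') + \eps_\alpha$; finitely many small-$\alpha$ contributions are absorbed since each $\mathcal{A}_\alpha\varphi$ is already continuous with compact support.

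The heart of the argument is (v). For $\Lambda$-Lipschitz $\varphi$ and $\dist_\alpha(y, y') \le \delta_\alpha$, choose any $i_0$ with $\eta_{i_0}^\alpha(y) > 0$ and use $\sum_i \eta_i^\alpha \equiv 1$ to rewrite
\[
\mathcal{A}_\alpha\varphi(y) - \mathcal{A}_\alpha\varphi(y') = \sum_i \bigl(\varphi(\Phi_\alpha(y_i^\alpha)) - \varphi(\Phi_\alpha(y_{i_0}^\alpha))\bigr)\bigl(\eta_i^\alpha(y) - \eta_i^\alpha(y')\bigr).
\]
Only the $N(\upkappa)$ indices $i$ for which $y_i^\alpha$ lies within $2\delta_\alpha$ of $y$ or $y'$ contribute; for each such $i$ one has $\dist(\Phi_\alpha(y_i^\alpha), \Phi_\alpha(y_{i_0}^\alpha)) \le 6\delta_\alpha + \eps_\alpha$, so the first factor is bounded by $7\Lambda\delta_\alpha$, while the second is at most $C(\upkappa)\dist_\alpha(y, y')/\delta_\alpha$. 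The $\delta_\alpha$ cancels, leaving a bound $\bar{C}(\upkappa)\,\Lambda\,\dist_\alpha(y, y')$. The estimate extends to arbitrary $y, y'$ by chaining along a geodesic of $X_\alpha$, whose existence is guaranteed since $(X_\alpha,\dist_\alpha)$ is geodesic.

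The principal obstacle is precisely (v): the individual bumps $\eta_i^\alpha$ have Lipschitz constant of order $1/\delta_\alpha$, which diverges as $\alpha \to \infty$, so a naive estimate necessarily fails. The cancellation afforded by $\sum_i \eta_i^\alpha \equiv 1$ together with the doubling-based control on the number of overlapping bumps are exactly what make the $\delta_\alpha$ factors disappear, yielding a Lipschitz constant for $\mathcal{A}_\alpha\varphi$ depending only on $\upkappa$ and $\Lambda$.
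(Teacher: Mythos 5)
Your construction is correct and is essentially the paper's own proof: a partition of unity subordinate to a maximal separated net, the interpolation $\sum_i \varphi(\Phi_\alpha(y_i^\alpha))\eta_i^\alpha$, and the same cancellation via $\sum_i\eta_i^\alpha\equiv 1$ (subtracting a nearby value of $\varphi$) so that the $1/\delta_\alpha$ Lipschitz constants of the bumps are compensated by the $O(\delta_\alpha)$ oscillation of $\varphi$ over the overlapping supports. The only differences are cosmetic (tent functions instead of a fixed cutoff profile, separation scale $\delta_\alpha$ instead of $2\eps_\alpha$, and chaining along geodesics for (v) at large distances where the paper instead uses the trivial bound $\sum_p|\xi_p^\alpha(x)-\xi_p^\alpha(y)|\le 2$).
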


\begin{proof}

Let $r>0$. With no loss of generality we assume that $\sup_\alpha \eps_\alpha \le r/8$ and that $2r\le R_\alpha$; if this is not true, we let $\mathcal{A}_\alpha$ be the zero map for all $\alpha$ such that $\eps_\alpha > r/8$ or $2r> R_\alpha$. Let $\varphi\in \cC_c(B_r(o) )$\\

\textbf{Step 1.} [Construction of $\mathcal{A}_\alpha \varphi$] Let $\alpha$ be arbitrary. Let $\cD_\alpha\subset X_\alpha$ be a maximal $2\eps_\alpha$-separated set of points, i.e.~a maximal set such that $X_\alpha=\bigcup_{p\in \cD_\alpha} B_{2\eps_\alpha}(p)$ and for any $p, q\in \cD_\alpha$,
$$p \neq q \quad \Rightarrow \quad B_{\eps_\alpha}(p)\cap B_{\eps_\alpha}(q)=\emptyset.$$
For any $x \in X$, we set $\cV(x):=\cD_\alpha\cap B_{4\eps_\alpha}(x)$ and we point out that
\begin{equation}\label{cV}
\#\cV(x)\le \upkappa^4.
\end{equation}
Indeed we have $\cup_{p\in \cV(x)}B_{\eps_\alpha}(p)\subset B_{5\eps_\alpha}(x)$ and $\mu_\alpha\left(B_{5\eps_\alpha}(x)\right)\le \mu_\alpha\left(B_{9\eps_\alpha}(p)\right)\le \upkappa^4 \mu_\alpha\left(B_{\eps_\alpha}(p)\right)$ for any $p\in \cV(x)$.

Let us consider the $1$-Lipschitz function $\chi:[0,+\infty) \to [0,1]$ defined by 
\begin{equation}\label{coupure}
\chi(t)=
\begin{cases}
1& \mbox{ if } t \le 1, \\
2-t&  \mbox{ if } t \in [1,2], \\
0&  \mbox{ if } t \in [2,+\infty).
\end{cases}
\end{equation}
For any $p \in \cD_\alpha$,  we define $\hat{\xi}^\alpha_p,\sigma^\alpha,\xi^\alpha_p\colon X_\alpha\rightarrow \R$ by
\[
\hat{\xi}^\alpha_p(x) = \chi \left( \frac{\dist_\alpha(p,x)}{2\eps_\alpha} \right), \qquad \sigma^\alpha(x) = \sum_{p \in \cD_\alpha} \hat{\xi}_{p}^\alpha(x), \qquad \xi_p^\alpha(x) = \hat{\xi}_p^\alpha(x)/\sigma^\alpha(x),
\]
for any $x\in X$.  By construction, $1\le\sigma^\alpha\le \upkappa^4$ (the upper bound actually follows from \eqref{cV}), the function $\xi_p^\alpha$ is $(1+\upkappa^4)(2\eps_\alpha)^{-1}$-Lipschitz,  and
$$\sum_{p\in \cD_\alpha}  \xi_p^\alpha=1.$$
Then we define
$$\varphi_\alpha = \mathcal{A}_\alpha\varphi:=\sum_{p\in \cD_\alpha} \varphi\left(\Phi_\alpha(p)\right)\, \xi_p^\alpha.$$
As a linear combination of compactly supported Lipschitz functions, $\varphi_\alpha \in \Lip_c(X_\alpha)$. Moreover,  $\supp\varphi_\alpha\subset B_{r+5\eps_\alpha}(o_\alpha)$.  Linearity of the map $\mathcal{A}_\alpha$ is clear from the construction. Finally, properties ii) and iii) are trivially respected. \\

\textbf{Step 2.} [Convergence $\varphi_\alpha\stackrel{\cC_c}{\to} \varphi$]
Let us show that if $\omega_\varphi$ is the modulus of continuity of $\varphi$, defined by $\omega_\varphi(\delta):=\sup_{\dist(x,y)\le \delta} \left|\varphi(x)-\varphi(y)\right|$ for any $\delta>0$, then
\begin{equation}\label{eq:quantconv}
\|\varphi \circ \Phi_\alpha - \varphi_\alpha\|_{L^\infty(X_\alpha,\mu_\alpha)} \le \upkappa^4 \omega_\varphi(5\eps_\alpha)
\end{equation}
for any $\alpha$.  Take  $x \in X_\alpha$. Then
\begin{align*}
\left|\varphi \circ \Phi_\alpha(x) - \varphi_\alpha(x)\right| & =\left| \sum_{p \in \cD_\alpha } \left(\varphi \left(\Phi_\alpha(x)\right) - \varphi\left(\Phi_\alpha(p)\right) \right) \xi_{p}^\alpha(x)\right|\\
& \le \sum_{p\in \cV(x)} \omega_\varphi\left(\dist\left(\Phi_\alpha(x),\Phi_\alpha(p)\right)\right) \xi_{p}^\alpha(x)\\
&\le \upkappa^4\omega_\varphi\left(5\eps_\alpha\right).
\end{align*}
Hence \eqref{eq:quantconv} is proved, and consequently $\varphi_\alpha\stackrel{\cC_c}{\to} \varphi$.\\

\textbf{Step 3.} [Equicontinuity and Lipschitz estimate] Take $x,y\in X_\alpha$. Then 
\begin{align*}
\varphi_\alpha(x)-\varphi_\alpha(y)&=\sum_{p \in \cD_\alpha } \left[ \varphi\left(\Phi_\alpha(p)\right)- \varphi\left(\Phi_\alpha(x)\right)\right]\left(\xi_{p}^\alpha(x)-\xi_{p}^\alpha(y)\right)\\
&=\sum_{p \in \cV(x)\cup \cV(y) } \left[ \varphi\left(\Phi_\alpha(p)\right)- \varphi\left(\Phi_\alpha(x)\right)\right]\left(\xi_{p}^\alpha(x)-\xi_{p}^\alpha(y)\right).\end{align*}
Observe that when $p \in \cV(x)\cup \cV(y)$, then $\dist_\alpha(x,p)\le  4\eps_\alpha+\dist_\alpha(x,y)$, hence  we have $$\left| \varphi\left(\Phi_\alpha(p)\right)- \varphi\left(\Phi_\alpha(x)\right)\right|\le \omega_\varphi\left(\dist_\alpha(x,p)+\eps_\alpha\right)\le \omega_\varphi\left(5\eps_\alpha+\dist_\alpha(x,y)\right).$$
Using  $$\sum_{p \in \cD_\alpha } \left|\xi_{p}^\alpha(x)-\xi_{p}^\alpha(y)\right|\le \min\left\{2, \frac{1+\upkappa^4}{\eps_\alpha}\dist_\alpha(x,y)\right\},$$ we obtain the 
estimate
$$\left|\varphi_\alpha(x)-\varphi_\alpha(y)\right|\le\begin{cases}
\frac{(1+\upkappa^4)\omega_\varphi\left(6\eps_\alpha\right)}{\eps_\alpha}\dist_\alpha(x,y)&\text{ if }\dist_\alpha(x,y)\le \eps_\alpha,\\
2\omega_\varphi\left(6\dist_\alpha(x,y) \right)&\text{ if }\dist_\alpha(x,y)\ge \eps_\alpha.
\end{cases}$$
In particular, if $\varphi$ is $\Lambda$-Lipschitz, then $\varphi_\alpha$ is $6\left(1+\upkappa^4\right)\Lambda$ -Lipschitz. 
This estimate also implies the equicontinuity of the sequence: if $\delta\in (0,1)$ and
$\dist_\alpha(x,y)\le \delta$ then
 $$\left|\varphi_\alpha(x)-\varphi_\alpha(y)\right|\le \omega_\varphi\left(6\sqrt{\delta}\right)+2(1+\upkappa^4)\|\varphi\|_{L^\infty} \sqrt{\delta}.$$
\end{proof}

\subsection{Convergence of integrals}
In this subsection, we prove two results about convergence of integrals under pmGH convergence that are used repeatedly in this article. We recall our setting: $\{(X_\alpha, \dist_\alpha, \meas_\alpha, o_\alpha)\}_\alpha, (X,\dist,\mu,o)$ are $\upkappa$-doubling at scale $R$ proper geodesic pointed metric measure spaces such that $(X_\alpha, \dist_\alpha, \meas_\alpha, o_\alpha) \to (X,\dist,\mu,o)$ in the pmGH sense,  and we use the notations of Characterization \ref{chara}.

It is easy to prove the first convergence result.

\begin{prop}
\label{prop:cvproduit}
Let $u \in C(X), v \in L^2(X,\mu)$ and $u_\alpha \in C(X_\alpha), v_\alpha \in L^2(X_\alpha,\mu_\alpha)$ for any $\alpha$ be such that:
\begin{itemize}
\item $\sup_\alpha \|u_\alpha\|_{L^\infty}<\infty$,
\item $u_\alpha \to u$ uniformly on compact subsets,
\item $v_\alpha \to v$ strongly in $L^2$.
\end{itemize} 
Then
\[
\lim_\alpha \int_{X_\alpha} u_\alpha v_\alpha^2\di\mu_\alpha=\int_{X_\alpha} u v^2\di\mu .
\]
\end{prop}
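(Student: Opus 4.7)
The plan is to exploit the duality between weak and strong $L^2$ convergence as encoded in the characterization stated just before the proposition: since $v_\alpha \stackrel{L^2}{\to} v$ strongly, it suffices to produce a sequence of $L^2$-weakly convergent test functions whose pairing with $v_\alpha$ reproduces $\int u_\alpha v_\alpha^2 \di\mu_\alpha$. The natural choice is the sequence $u_\alpha v_\alpha$, whose weak $L^2$ limit should be $uv$, and the conclusion will then drop out immediately.

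First, I would observe that $u \in L^\infty(X,\mu)$ with $\|u\|_{L^\infty} \leq C := \sup_\alpha \|u_\alpha\|_{L^\infty}$: this follows from Proposition \ref{prop:criUC} applied to any $x \in X$ and any sequence $x_\alpha \to x$. In particular $uv \in L^2(X,\mu)$ and the sequence $\{u_\alpha v_\alpha\}$ is uniformly bounded in $L^2$, since $\|u_\alpha v_\alpha\|_{L^2} \leq C\|v_\alpha\|_{L^2}$ and strong convergence of $v_\alpha$ implies $\|v_\alpha\|_{L^2}\to\|v\|_{L^2}$.

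Next I would check that $u_\alpha v_\alpha \stackrel{L^2}{\weakto} uv$. Given any $\varphi_\alpha \stackrel{\cC_c}{\to} \varphi$ with common support contained in some $B_R(o_\alpha)$ for $\alpha$ large, the product $\varphi_\alpha u_\alpha$ lies in $\cC_c(X_\alpha)$ with support in $B_R(o_\alpha)$, and the triangle-type estimate
\[
\|\varphi_\alpha u_\alpha - (\varphi u)\circ \Phi_\alpha\|_{L^\infty(B_R(o_\alpha))} \leq \|\varphi_\alpha\|_{L^\infty}\,\|u_\alpha - u\circ \Phi_\alpha\|_{L^\infty(B_R(o_\alpha))} + C\,\|\varphi_\alpha - \varphi\circ \Phi_\alpha\|_{L^\infty(B_R(o_\alpha))}
\]
combined with uniform convergence of $u_\alpha$ to $u$ on compact subsets and the $\cC_c$-convergence $\varphi_\alpha \to \varphi$ yields $\varphi_\alpha u_\alpha \stackrel{\cC_c}{\to} \varphi u$. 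Since strong $L^2$ convergence of $v_\alpha$ implies weak $L^2$ convergence, testing weak convergence against $\varphi_\alpha u_\alpha$ gives $\int \varphi_\alpha u_\alpha v_\alpha \di\mu_\alpha \to \int \varphi u v \di\mu$, which is precisely the testing condition defining $u_\alpha v_\alpha \stackrel{L^2}{\weakto} uv$.

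Finally, applying the characterization of strong $L^2$ convergence of $v_\alpha$ to the weakly convergent sequence $u_\alpha v_\alpha$ gives
\[
\int_{X_\alpha} u_\alpha v_\alpha^2 \di\mu_\alpha = \int_{X_\alpha} v_\alpha(u_\alpha v_\alpha) \di\mu_\alpha \;\longrightarrow\; \int_X v(uv)\di\mu = \int_X u v^2 \di\mu,
\]
which is the claim. The whole argument is essentially a bookkeeping exercise in the definitions adopted in the paper, so no real obstacle is expected; the only step requiring some care is the verification $\varphi_\alpha u_\alpha \stackrel{\cC_c}{\to} \varphi u$, which is a standard uniform-product estimate on a ball of fixed radius.
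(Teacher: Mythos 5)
Your argument is correct and follows the same route as the paper's own proof: establish the weak $L^2$ convergence $u_\alpha v_\alpha \weakto uv$ by testing against $\varphi_\alpha u_\alpha \stackrel{\cC_c}{\to} \varphi u$, then invoke the strong/weak duality against $v_\alpha \stackrel{L^2}{\to} v$ to pass to the limit in $\int u_\alpha v_\alpha^2\di\mu_\alpha$. The paper is simply terser; your spelled-out bound $\|u\|_{L^\infty}\le C$ and the explicit triangle estimate for $\varphi_\alpha u_\alpha$ are the details it leaves implicit.
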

\begin{proof}
The result follows from establishing the weak convergence $u_\alpha v_\alpha\stackrel{L^2}{ \weakto }u v$.
Remark first that the hypotheses of the Proposition imply $$\sup_\alpha \|u_\alpha v_\alpha\|_{L^2}<+\infty.$$
Moreover, when $\varphi_\alpha\stackrel{\cC_c}{\to}\varphi$,  then obviously $\varphi_\alpha u_\alpha\stackrel{\cC_c}{\to}\varphi u$, and as $v_\alpha\stackrel{L^2}{ \weakto }v$ we get
$$\lim_{\alpha \to+\infty} \int_{X_\alpha}  \varphi_\alpha u_\alpha v_\alpha\di\mu_\alpha=\int_{X}  \varphi u v\di\mu .$$ \end{proof}
We make now  a few useful remarks.
The first point is that for any $r>0$:
\begin{equation}\label{cvmB}
x_\alpha\in X_\alpha\to x\in X \quad \Longrightarrow  \quad \lim\limits_{\alpha\to+\infty} \mu_{\alpha}\left(B_r(x_\alpha)\right)=\mu\left(B_r(x)\right).\end{equation}
In full generality, this convergence result holds when $\mu\left(\partial B_r(x)\right)=0$ \cite[Theorem 2.1]{billingsley2013convergence}, 
and this condition is guaranteed by the doubling condition [\ref{prop:doubling}-v)]. We also have that for any $r>0$:
 \begin{equation}\label{cvmF}
\varphi_\alpha\stackrel{\cC_c}{\to} \varphi, \, \,\,  x_\alpha\in X_\alpha\to x\in X \,\,\,\Longrightarrow  \,\,\, \lim_{\alpha\to+\infty} \int_{B_r(x_\alpha)} \varphi_\alpha\di \mu_{\alpha}= \int_{B_r(x)} \varphi\di \mu.\end{equation}
Even better, the convergence result takes place as soon as $\varphi_\alpha\in \cC( X_\alpha)$ converges uniformly on compact set to $\varphi\in \cC( X)$.
 
The above convergence results \eqref{cvmB} and \eqref{cvmF} imply, by definition, that when $r>0$, $p>1$ and
 $x_\alpha\in X_\alpha\to x\in X$, then
 \begin{equation}\label{cvLpun}\mathbf{1}_{B_r(x_\alpha)}\stackrel{L^p}{\to} \mathbf{1}_{B_r(x)}.\end{equation}
 This implies the following criterion for $L^p$ weak convergence:
 \begin{lemma}\label{lem:criterion} For $p\in(1,\infty)$, let $u_\alpha \in L^p(X_\alpha,\mu_\alpha)$ for any $\alpha$ and $u \in L^p(X,\mu)$ be given. Then $u_\alpha\stackrel{L^p}{ \weakto} u$ if and only if
\begin{equation}\label{eq:criterion}
\begin{cases} \sup_\alpha \| u_\alpha\|_{L^p}<\infty& \\
 x_\alpha \in X_\alpha \to x \in X,  \,\, r>0 \quad \Rightarrow \quad  \displaystyle \lim_\alpha \fint_{B_r(x_\alpha)} u_\alpha \di\mu_\alpha= \fint_{B_r(x)} u\di\mu.&
\end{cases}
\end{equation}
\end{lemma}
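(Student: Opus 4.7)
The plan is to prove the two implications separately. The ``only if'' direction is essentially immediate from duality: if $u_\alpha\stackrel{L^p}{\weakto}u$, then $\sup_\alpha\|u_\alpha\|_{L^p}<\infty$ by definition, and given $x_\alpha\to x$ and $r>0$, the strong $L^q$ convergence $\mathbf{1}_{B_r(x_\alpha)}\to\mathbf{1}_{B_r(x)}$ from \eqref{cvLpun} (where $q$ is the conjugate exponent of $p$) combined with the duality between weak $L^p$ and strong $L^q$ convergence recalled in the preliminaries yields $\int_{X_\alpha}\mathbf{1}_{B_r(x_\alpha)}u_\alpha\di\mu_\alpha\to\int_X\mathbf{1}_{B_r(x)}u\di\mu$. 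Dividing by $\mu_\alpha(B_r(x_\alpha))\to\mu(B_r(x))>0$ (via \eqref{cvmB}) gives the averaged convergence.

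For the ``if'' direction, I would assume the two conditions and consider an arbitrary test sequence $\varphi_\alpha\stackrel{\cC_c}{\to}\varphi$ with $\supp\varphi\subset B_R(o)$ and $\supp\varphi_\alpha\subset B_{R+1}(o_\alpha)$ for large $\alpha$; the aim is $\int\varphi_\alpha u_\alpha\di\mu_\alpha\to\int\varphi u\di\mu$. First, I would observe that uniform convergence on $B_{R+1}(o)$, together with $\mu_\alpha(B_{R+1}(o_\alpha))\to\mu(B_{R+1}(o))$, entails the strong $L^q$ convergence $\varphi_\alpha\stackrel{L^q}{\to}\varphi$. Next, by linearity the hypothesis yields, for any finite linear combination $\psi=\sum_{i=1}^N c_i\mathbf{1}_{B_{r_i}(x_i)}$ with $x_i\in B_{R+1}(o)$ and for the companion sequence $\psi_\alpha=\sum_{i=1}^N c_i\mathbf{1}_{B_{r_i}(x_{i,\alpha})}$, where $x_{i,\alpha}\to x_i$ is picked using the $\eps_\alpha$-density of $\Phi_\alpha(X_\alpha)$ in $B_{R_\alpha}(o)$,
\[
\lim_\alpha \int_{X_\alpha}\psi_\alpha u_\alpha\di\mu_\alpha=\int_X\psi u\di\mu;
\]
moreover $\psi_\alpha\stackrel{L^q}{\to}\psi$ strongly by another application of \eqref{cvLpun} and linearity.

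The crucial step is then to approximate $\varphi$ in $L^q(X,\mu)$ by such combinations $\psi$. Given $\eta>0$, the density of finite sums of characteristic functions of balls in $L^q(X,\mu)$ produces $\psi$ with $\|\psi-\varphi\|_{L^q}<\eta$. With $\psi_\alpha$ constructed as above, I would conclude via the splitting
\[
\int_{X_\alpha}\varphi_\alpha u_\alpha\di\mu_\alpha-\int_X\varphi u\di\mu = \int(\varphi_\alpha-\psi_\alpha)u_\alpha\di\mu_\alpha + \Bigl(\int\psi_\alpha u_\alpha\di\mu_\alpha-\int\psi u\di\mu\Bigr) + \int(\psi-\varphi)u\di\mu.
\]
The middle term tends to $0$ by the previous paragraph. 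Hölder's inequality bounds the outer terms by $\|\varphi_\alpha-\psi_\alpha\|_{L^q}\sup_\alpha\|u_\alpha\|_{L^p}$ and $\eta\|u\|_{L^p}$; the two strong $L^q$ convergences $\varphi_\alpha\to\varphi$ and $\psi_\alpha\to\psi$ already established imply $\|\varphi_\alpha-\psi_\alpha\|_{L^q}\to\|\varphi-\psi\|_{L^q}\le\eta$, so both outer terms can be made arbitrarily small. Letting $\eta\to 0$ completes the proof.

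The main technical point is the density of finite linear combinations of characteristic functions of balls in $L^q(X,\mu)$: one has to be careful to approximate using \emph{balls only}, since this is the shape of the test functions for which the hypothesis is available. This density is a standard consequence of the outer regularity of the Radon measure $\mu$ together with the doubling property of $(X,\dist,\mu)$ on $\supp\varphi$, using a Vitali-type covering to approximate a generic Borel set by finite unions of balls; I expect this to be the only nontrivial step.
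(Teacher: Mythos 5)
Your proof is correct, but it follows a genuinely different route from the paper's. The paper proves the converse implication by a compactness-and-uniqueness argument: since $1<p<\infty$ and $\sup_\alpha\|u_\alpha\|_{L^p}<\infty$, every subsequence has a further subsequence converging weakly in $L^p$ to some $v\in L^p(X,\mu)$; applying the already-established ``only if'' direction to this subsequence gives $\fint_{B_r(x)}v\di\mu=\lim\fint_{B_r(x_\beta)}u_\beta\di\mu_\beta$, which by hypothesis equals $\fint_{B_r(x)}u\di\mu$ for every $x\in X$ and $r>0$; the Lebesgue differentiation theorem on the doubling space $(X,\dist,\mu)$ then forces $v=u$ $\mu$-a.e., and uniqueness of all subsequential weak limits yields convergence of the full sequence. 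Your approach instead verifies the defining property of weak $L^p$ convergence directly, replacing $\varphi$ by a finite linear combination $\psi$ of ball indicators that is $L^q$-close, building the companion $\psi_\alpha$ on $X_\alpha$, and closing via H\"older and a three-term splitting. The two arguments are of comparable depth but locate the technical burden differently: the paper offloads it onto the Lebesgue differentiation theorem (stated as a known fact on doubling spaces), whereas you need the $L^q$-density of finite combinations of ball indicators, which you correctly identify as the nontrivial step and which is itself a Vitali-covering/outer-regularity consequence essentially equivalent in difficulty. Two minor points worth flagging in your write-up: you should choose the radii $r_i$ so that $\mu(\partial B_{r_i}(x_i))=0$ (always possible by \ref{prop:doubling}-v), so that \eqref{cvmB} applies when passing from averages to integrals; and the step $\|\varphi_\alpha-\psi_\alpha\|_{L^q}\to\|\varphi-\psi\|_{L^q}$ tacitly uses that strong $L^q$ convergence across varying spaces is stable under linear combination, which is true but deserves a word since the paper's definition of strong convergence is weak convergence plus norm convergence rather than a Cauchy-type condition.
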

\begin{proof} The direct implication follows from \eqref{cvLpun}.  The converse one is a consequence of the fact that if $B\subset A$ is such that $\{u_\beta\}_{\beta \in B}$ converges weakly in $L^p$ to $v$ then $\int_{B_r(x)} u\di\mu=\int_{B_r(x)} v\di\mu$ for any $x\in X$ and any $r>0$,  and so $$\fint_{B_r(x)} u\di\mu=\fint_{B_r(x)} v\di\mu.$$
By Lebesgue differentiation theorem (true on any doubling space), this implies that $u=v$ $\mu$-a.e.
\end{proof}

Our second convergence result is the following.

\begin{prop}\label{prop:convdom} 
  Let $u \in \cC(X)$ and $u_\alpha \in \cC(X_\alpha)$ for any $\alpha$ be such that
\begin{itemize}
\item $u_\alpha \to u$ uniformly on compact subsets,
\item there exists $C, \beta>0$ such that for any $\alpha$ and $\mu_\alpha$-a.e.~$x \in X_\alpha$,
\begin{equation}\label{Cb}|u_\alpha(x)|\le C\, e^{-\beta \dist^2_\alpha(o_\alpha,x)}.\end{equation}
\end{itemize} 
Then the functions $u_\alpha$ and $u$ are $L^p$-integrable for any $p\ge 1$ and
\begin{enumerate}
\item\label{1} $\int_{X_\alpha} u_\alpha \di \mu_\alpha \to \int_Xu \di \mu$,
\item\label{2} $u_\alpha \to u$ strongly in $L^p$ when $p>1$.
\end{enumerate}
\end{prop}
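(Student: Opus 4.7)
Here is how I would approach this.

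\textbf{First, I would establish integrability and propagate the Gaussian bound to $u$.} Since $u_\alpha\to u$ uniformly on compact sets and each $u_\alpha$ satisfies \eqref{Cb}, taking for any $x\in X$ a sequence $x_\alpha\in X_\alpha$ with $x_\alpha\to x$ yields $|u(x)|\le C\,e^{-\beta\dist^2(o,x)}$. The doubling hypothesis at scale $R$ combined with Proposition \ref{prop:doubling} iii) gives, for any $\alpha$ and any $\rho\ge R$,
\[
\mu_\alpha(B_\rho(o_\alpha))\le e^{\lambda\rho/R}\mu_\alpha(B_R(o_\alpha)),
\]
for a constant $\lambda$ depending only on $\upkappa$. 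By \eqref{cvmB}, $\mu_\alpha(B_R(o_\alpha))\to \mu(B_R(o))$, hence $M:=\sup_\alpha \mu_\alpha(B_R(o_\alpha))<\infty$. Decomposing $X_\alpha$ into annuli $B_{k+1}(o_\alpha)\setminus B_k(o_\alpha)$ for $k\in\setN$, the Gaussian bound \eqref{Cb} (raised to the $p$-th power) gives
\[
\int_{X_\alpha}|u_\alpha|^p\di\mu_\alpha\le C^p M\sum_{k\ge 0}e^{-p\beta k^2+\lambda(k+1)/R},
\]
which is finite and independent of $\alpha$; the same estimate with $\alpha$ replaced by the limit space holds for $u$.

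\textbf{Second, to prove (1), I would split the integral at a scale $\rho$ and pass to the limit.} Write
\[
\int_{X_\alpha}u_\alpha\di\mu_\alpha=\int_{B_\rho(o_\alpha)}u_\alpha\di\mu_\alpha+\int_{X_\alpha\setminus B_\rho(o_\alpha)}u_\alpha\di\mu_\alpha.
\]
The tail term is controlled using the same annular decomposition as above by
\[
C M\sum_{k\ge\lfloor\rho\rfloor}e^{-\beta k^2+\lambda(k+1)/R},
\]
which converges to $0$ as $\rho\to\infty$, uniformly in $\alpha$, and the same holds for the tail of $\int_X u\di\mu$. On the compact annulus $\overline{B_\rho(o_\alpha)}$ the uniform convergence on compact sets allows us to apply \eqref{cvmF} (after truncating $u_\alpha$ by a cut-off equal to $1$ on $B_\rho$ and supported slightly beyond), so that $\int_{B_\rho(o_\alpha)}u_\alpha\di\mu_\alpha\to\int_{B_\rho(o)}u\di\mu$ for each fixed $\rho>0$. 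Sending $\rho\to\infty$ after $\alpha\to\infty$ proves the first assertion.

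\textbf{Third, for (2) I would combine norm convergence with weak convergence.} Since $|u_\alpha|^p\to|u|^p$ uniformly on compact sets and $|u_\alpha|^p\le C^p e^{-p\beta \dist_\alpha^2(o_\alpha,\cdot)}$, part (1) applied to $|u_\alpha|^p$ yields $\|u_\alpha\|_{L^p}\to\|u\|_{L^p}$. For weak convergence in $L^p$, I would invoke the criterion of Lemma \ref{lem:criterion}: the uniform bound on $\|u_\alpha\|_{L^p}$ follows from Step~1, and given $x_\alpha\to x$ and $r>0$, applying (1) to the sequence $u_\alpha\mathbf{1}_{B_r(x_\alpha)}$ (where the indicator acts like a cut-off, using uniform convergence of $u_\alpha$ to $u$ on the compact set $\overline{B_{r+1}(x)}$ and \eqref{cvmF}) gives $\int_{B_r(x_\alpha)}u_\alpha\di\mu_\alpha\to\int_{B_r(x)}u\di\mu$. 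This establishes $u_\alpha\stackrel{L^p}{\weakto}u$, and combined with the norm convergence it yields $u_\alpha\stackrel{L^p}{\to}u$.

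\textbf{Main obstacle.} The delicate point is the uniformity of the tail estimate, which requires the doubling condition at scale $R$ to imply an $\alpha$-independent exponential bound on $\mu_\alpha(B_\rho(o_\alpha))$ for large $\rho$; this is where the convergence $\mu_\alpha(B_R(o_\alpha))\to\mu(B_R(o))$ plays its role, allowing the Gaussian decay of $u_\alpha$ to dominate the at-most-exponential volume growth uniformly in $\alpha$.
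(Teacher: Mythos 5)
Your proof is correct and follows essentially the same route as the paper's: a uniform-in-$\alpha$ Gaussian tail estimate (which you re-derive via an annular decomposition instead of invoking Lemma \ref{lemma:estimeeVol} directly), convergence of the integrals on each compact ball, and then weak $L^p$ convergence together with norm convergence to conclude strong convergence. The only cosmetic difference is that you unpack Lemma \ref{lemma:estimeeVol} into an explicit annular sum, while the paper cites its conclusion \eqref{queueI} as a black box.
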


For the proof of this proposition,  we use the following lemma which is a consequence of the ideas of the proof of \eqref{eq:majTheta}.

\begin{lemma}
\label{lemma:estimeeVol}
Let $(X,\dist,\mu)$ be $\upkappa$-doubling at scale $R$. Then for any $c>0$ there exists $A>0$ depending only on $c$, $\upkappa$ and $R$ such that for any $o \in X$,
\begin{equation}\label{doub1}
\int_X e^{-c \dist^2(o,x)}\di \mu(x) \le A \mu(B_{R}(o_\alpha)).
\end{equation}
Moreover, there exists $\upbeta:(0,+\infty) \to (0,+\infty)$ depending only on $c$, $C$ and $R$ such that $\upbeta(\rho)\to 0$ when $\rho \to +\infty$ and for any $\rho>0$,
\begin{equation}\label{doub2}
\int_{X\backslash B_{\rho}(o)} e^{-c\dist^2(o,x)}\di \mu(x) \le \upbeta(\rho)\mu(B_{R}(o)).
\end{equation}
\end{lemma}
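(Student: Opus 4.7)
The plan is to reduce both estimates to a layer-cake identity and then exploit the doubling-at-scale-$R$ property through Proposition \ref{prop:doubling}-iii), which provides the polynomial-type control $\mu(B_r(o)) \le e^{\lambda r/R}\mu(B_R(o))$ valid for every $r \ge R$, with $\lambda$ depending only on $\upkappa$.

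First I would establish the Cavalieri-type identity
\[
\int_X e^{-c\dist^2(o,x)} \di\mu(x) = \int_0^{+\infty} 2cr e^{-cr^2}\mu(B_r(o)) \di r,
\]
which follows from $e^{-c\dist^2(o,x)} = \int_{\dist(o,x)}^{+\infty} 2cre^{-cr^2}\di r$ and Fubini. I split the integral on the right at $r=R$. For $0 \le r \le R$ one simply bounds $\mu(B_r(o)) \le \mu(B_R(o))$, producing the contribution $(1-e^{-cR^2})\mu(B_R(o)) \le \mu(B_R(o))$. For $r \ge R$, Proposition \ref{prop:doubling}-iii) yields
\[
\int_R^{+\infty} 2cre^{-cr^2}\mu(B_r(o)) \di r \le \mu(B_R(o))\int_R^{+\infty} 2cre^{-cr^2}e^{\lambda r/R}\di r,
\]
and the remaining integral is a finite constant $A_1(c,\upkappa,R)$ since $-cr^2 + \lambda r/R \to -\infty$. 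Setting $A := 1+A_1$ establishes \eqref{doub1}.

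For the second estimate, the simplest route is to split the exponent in half: on $X \setminus B_\rho(o)$ we have $e^{-c\dist^2(o,x)} \le e^{-c\rho^2/2}\,e^{-(c/2)\dist^2(o,x)}$, whence
\[
\int_{X\setminus B_\rho(o)} e^{-c\dist^2(o,x)}\di\mu(x) \le e^{-c\rho^2/2} \int_X e^{-(c/2)\dist^2(o,x)} \di\mu(x) \le e^{-c\rho^2/2}\,A(c/2)\mu(B_R(o)),
\]
where in the last step I invoke the first estimate with $c$ replaced by $c/2$. Thus $\upbeta(\rho) := A(c/2) e^{-c\rho^2/2}$ depends only on $c$, $\upkappa$ and $R$, and manifestly tends to $0$ as $\rho\to +\infty$, giving \eqref{doub2}.

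The only mildly delicate point is ensuring that the constants depend only on $c,\upkappa,R$; this is automatic since $\lambda$ in the doubling-iii) estimate depends only on $\upkappa$, and the integral $\int_R^{+\infty} 2cre^{-cr^2+\lambda r/R}\di r$ depends only on $c$, $\lambda$ and $R$. No further structural assumption on $X$ is used.
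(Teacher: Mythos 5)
Your proof is correct and follows essentially the same route as the paper: the Cavalieri identity $\int_X e^{-c\dist^2(o,\cdot)}\,\di\mu = \int_0^\infty 2cre^{-cr^2}\mu(B_r(o))\,\di r$ combined with the large-scale doubling bound $\mu(B_r(o)) \le e^{\lambda r/R}\mu(B_R(o))$ for $r\ge R$ (Proposition \ref{prop:doubling}-iii, which you cite correctly — the paper itself writes ``ii)'' there, a typo). The only divergence is in the second estimate: the paper simply observes that the Gaussian-type tail integral $\int_\rho^{\infty} 2cre^{-cr^2+\lambda r/R}\,\di r$ tends to $0$ as $\rho\to\infty$ and reads off $\upbeta(\rho)$ directly, whereas you use the exponent-halving trick $e^{-c\dist^2}\le e^{-c\rho^2/2}e^{-(c/2)\dist^2}$ and invoke part one with parameter $c/2$; both are valid and yield explicit $\upbeta$'s, so this is a presentational variant rather than a different argument.
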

\begin{proof}
We have
$$\int_X e^{-c \dist^2(o,x)}\di \mu(x)\le \mu(B_R(o))+ \int_{X\backslash B_{R}(o)} e^{-c\dist^2(o,x)}\di \mu(x).$$
Moreover, using Cavalieri's formula and Proposition \ref{prop:doubling}-ii) we get that for any $\rho\ge R$,
\begin{align*}
\int_{X\backslash B_{\rho}(o)} e^{-c\dist^2(o,x)}\di \mu(x)&=\int_\rho^{+\infty} 2cre^{-cr^2} \left(\mu(B_{r}(o)\right) \di r\\
&\le  \mu\left(B_{R}(o)\right)\int_\rho^{+\infty} 2cre^{-cr^2+\lambda\frac{r}{R} }\di r.
\end{align*}\end{proof}

We can now prove Proposition \ref{prop:convdom}.

\begin{proof}[Proof of Proposition \ref{prop:convdom}]

As a consequence of the previous lemma, for any $p\ge 1$, we get
\begin{equation}\label{queueI}\int_{X_\alpha\backslash B_{\rho}(o_\alpha)}|u_\alpha|^p\di\mu_\alpha\le  \upbeta(\rho)\mu_\alpha(B_{R}(o_\alpha))\end{equation}
where $\upbeta$ depend only on $p,\beta,\upkappa$. The discussion above implies that:
$$u_\alpha\stackrel{L^p_{loc}}{\to}  u.$$
With the estimate \eqref{queueI}, we get that that the sequence $\{\|u_\alpha\|_{L^p}\}$ is bounded hence  $u_\alpha\stackrel{L^p}{\weakto}  u.$ But the estimate  \eqref{queueI} implies the validity of the intervention on limits :
$$\lim_{\rho\to+\infty}\lim_\alpha \int_{ B_{\rho}(o_\alpha)}|u_\alpha|^p\di\mu_\alpha=\lim_\alpha \lim_{\rho\to+\infty}\int_{ B_{\rho}(o_\alpha)}|u_\alpha|^p\di\mu_\alpha;$$ that is to say 
$$\lim_\alpha \int_{ X_\alpha}|u_\alpha|^p\di\mu_\alpha=\int_{ X}|u|^p\di\mu.$$ Thus
$u_\alpha\stackrel{L^p}{\to}  u.$ 
The statement  Proposition \ref{prop:convdom}-\eqref{1} is proven in the same way.

\end{proof}
\begin{rem} When the function $u_\alpha$ are only assumed to be measurable, the conclusion  Proposition \ref{prop:convdom}-\eqref{1} holds assuming $u_\alpha\stackrel{L^2_{loc}}{\weakto}  u$ in place of the uniform convergence on compact sets. Indeed this hypothesis implies that for any $R>0$:
$$\lim_{\alpha} \int_{B_R(o_\alpha)} u_\alpha\di\mu_\alpha= \int_{B_R(o)} u\di\mu.$$
And the proof of  Proposition \ref{prop:convdom} can be applied. But we won't need this refinement here.
\end{rem}

\subsection{Heat kernel characterization of $\mathrm{PI}$-Dirichlet spaces.}

In this subsection, we provide a set of conditions on the heat kernel of a metric Dirichlet space $(X,\dist,\mu,\cE)$ for him to be regular, strongly local and with $\dist_\cE$ being a distance bi-Lipschitz equivalent to $\dist$.  We use this result in the next subsection to prove Theorem  \ref{th:improvedKS}. Wde let $R>0$ be fixed throughout this subsection.

\subsubsection{Heat kernel bound} We need an important statement about regular, strongly local Dirichlet spaces. It is the combination of several well-known theorems \cite{Grigoryan92,Saloff-Coste,sturm1996analysis}. If $(X,\dist,\mu,\cE)$ is a metric measure space equipped with a Dirichlet form with associated operator $L$, we call local solution of the heat equation any function $u$ satisfying $(\partial_t + L)u=0$ in the sense of \cite{sturm1994analysis} (see also \cite[Def.~2.3]{CT19}).

\begin{theorem}
\label{thm:eq}
Let $(X,\cT,\mu,\cE)$ be a regular, strongly local Dirichlet space with $\dist_\cE$ being a distance compatible with $\cT$.
Then the following are equivalent:
\begin{itemize}
\item[(c1)] $(X,\dist_\cE, \mu, \cE)$ is a $\mathrm{PI}_{\upkappa,\upgamma}(R)$ Dirichlet space,
\item[(c2)] $\cE$ admits a heat kernel $H$ satisfying Gaussian bounds: there exists $\upbeta>0$ such that
\begin{equation}
\frac{\upbeta^{-1}}{\mu(B_{\sqrt{t}}(x))} e^{-\upbeta\frac{\dist_\cE^2(x,y)}{ t}}
\le H(t,x,y)\le \frac{\upbeta}{\mu(B_{\sqrt{t}}(x))} e^{-\frac{\dist_\cE^2(x,y)}{\upbeta t}}
\end{equation}
for all $x,y\in X$ and $t\in (0,R^2]$,
\item[(c3)] the local solutions of the heat equation satisfy a uniform Hölder regularity estimate: there exist constants $\alpha \in (0,1]$, $A >0$ such that if $B$ is a ball of radius $r \leq R$ and $u:(0,r^2)\times 2B \rightarrow(0,\infty)$ is a local solution of the heat equation then for any $s,t\in \left(r^2/4,3r^2/4\right)$ and $x,y\in B$,
\begin{equation}\label{locHold}
\left|u(s,x)-u(t,y)\right|\le \frac{A}{r^\upalpha}\left( \sqrt{|t-s|}+\dist_\cE(x,y)\right)^\upalpha \sup_{(0,r^2)\times B} |u|.
\end{equation}
\end{itemize}
\end{theorem}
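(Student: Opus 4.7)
The strategy is to establish the cyclic chain of implications
\[
(c1) \Longrightarrow (c2) \Longrightarrow (c3) \Longrightarrow (c1),
\]
each link being a classical result of Grigor'yan, Saloff-Coste, and Sturm, adapted to the scale-dependent setting of the statement. The unifying intermediate notion is the scale-invariant parabolic Harnack inequality PHI$(R)$: there exist $\upeta \in (0,1)$ and $C>0$ such that, for any ball $B$ of radius $r \le R$ and any non-negative local solution $u$ of the heat equation on $(0, r^2) \times 2B$,
\[
\sup_{Q_{-}} u \le C \inf_{Q_{+}} u,
\]
where $Q_{-}$ and $Q_{+}$ are appropriate sub-cylinders. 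Both (c2) and (c3) are known to be equivalent to PHI$(R)$, while the Saloff-Coste/Sturm theorem equates (c1) to PHI$(R)$ as well; so the whole work reduces to assembling and quoting the right references.

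For (c1) $\Rightarrow$ (c2), I will invoke Sturm's theorem from \cite{sturm1996analysis}: on a regular, strongly local Dirichlet space where $\dist_\cE$ is a distance inducing the topology, the conjunction of $R$-scale doubling and $R$-scale-invariant Poincaré inequality is equivalent to PHI$(R)$; and from PHI$(R)$ one obtains the two-sided Gaussian bounds of (c2) via Moser iteration and the standard Aronson-type comparison argument, with constant $\upbeta$ depending only on $\upkappa$ and $\upgamma$. For (c2) $\Rightarrow$ (c3), the Gaussian upper and lower bounds in (c2) imply PHI$(R)$ by a direct argument (integrating the heat kernel against a localized bump and comparing); Moser's oscillation lemma then yields the uniform Hölder estimate \eqref{locHold} with $\upalpha$ and $A$ depending only on $\upbeta$.

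The remaining step, (c3) $\Rightarrow$ (c1), is the most delicate and is the main obstacle in terms of references: one must extract doubling and Poincaré from the sole qualitative regularity \eqref{locHold}. Here I rely on Sturm's framework: the Hölder estimate for local heat solutions in particular applies to truncated heat kernels, which, combined with the spectral synthesis valid for any regular Dirichlet form, forces PHI$(R)$. Once PHI$(R)$ is available, the $R$-scale doubling at scale $R$ follows by plugging characteristic functions into PHI$(R)$ applied to the heat kernel, and the scale-invariant Poincaré inequality is then a consequence of the second Sturm equivalence. All the constants are controlled by $\upalpha, A$ and $R$ alone.

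The only serious care needed is that of \textbf{scale}: the literature usually states the equivalences at scale $R = +\infty$, while here $R$ is finite. The scale-dependent form is however present in \cite{sturm1996analysis} and \cite{GriRev}; the adjustment amounts to localizing the Moser iteration to balls of radius $\le R$ and keeping track of the fact that for $t \in (0,R^2]$ the space-time cylinders involved sit at the right scale. Once this bookkeeping is done, the three equivalences hold with constants depending only on $\upkappa, \upgamma$ (resp.~$\upbeta$, resp.~$\upalpha, A$).
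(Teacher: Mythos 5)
The paper does not actually prove this theorem: it states it as ``the combination of several well-known theorems'' and cites \cite{Grigoryan92,Saloff-Coste,sturm1996analysis}. So there is no argument in the text to compare against, and the assessment is of your sketch on its own terms.

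Your overall strategy is the right one and matches what those references do: route every equivalence through the scale-invariant parabolic Harnack inequality $\mathrm{PHI}(R)$. The implications $(c1)\Rightarrow(c2)$ (Sturm's $(\mathrm{VD})+(\mathrm{PI})\Leftrightarrow\mathrm{PHI}$, then Aronson/Moser $\mathrm{PHI}\Rightarrow$ Gaussian bounds) and $(c2)\Rightarrow(c3)$ (Gaussian bounds $\Rightarrow\mathrm{PHI}\Rightarrow$ oscillation decay $\Rightarrow$ Hölder) are correctly outlined, and your remark that the bookkeeping needed is just keeping the Moser iteration localized to balls of radius $\le R$ is accurate.

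The gap is in $(c3)\Rightarrow(c1)$, which you flag as delicate but do not actually close. The invocation of ``truncated heat kernels'' combined with ``spectral synthesis'' as a mechanism forcing $\mathrm{PHI}(R)$ is not an argument: spectral synthesis is not what is used here, and the phrase ``plugging characteristic functions into $\mathrm{PHI}(R)$'' to extract doubling is imprecise. The genuine obstruction is that the Hölder estimate \eqref{locHold} only controls the \emph{oscillation} of $u$ in the inner cylinder by $\sup_{(0,r^2)\times B}|u|$; applying it directly to two points, one in $Q_-$ and one in $Q_+$, gives $|\sup_{Q_-}u-\inf_{Q_+}u|\le C\sup_{(0,r^2)\times B}u$, which is vacuous. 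To make \eqref{locHold} imply $\mathrm{PHI}$ one must apply it to the heat kernel itself $(s,y)\mapsto H(s,x,y)$, restricted to cylinders away from the $s=0$ singularity, combine the resulting near-diagonal comparability $H(t,x,y)\approx H(t,x,x)$ for $\dist_\cE(x,y)\lesssim\sqrt t$ with stochastic completeness $\int H(t,x,\cdot)\,\di\mu=1$ to produce the on-diagonal and volume estimates, and then bootstrap to two-sided Gaussian bounds and hence to $\mathrm{PHI}$ and to $(\mathrm{VD})+(\mathrm{PI})$. As written, your step from $(c3)$ to $\mathrm{PHI}(R)$ does not carry out any of this; it just asserts the conclusion. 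Until that chain is made precise, the cycle is not closed.
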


As a corollary, the heat kernel of a regular, strongly local $\mathrm{PI}$ Dirichlet space satisfies the following properties:

\begin{prop}\label{prop:HKPI}
Let $(X,\dist_\cE,\mu,\cE)$ be a regular, strongly local,  $\mathrm{PI}_{\upkappa,\upgamma}(R)$ Dirichlet space for some $\upkappa \ge 1$ and $\upgamma>0$.  Let $\dist$ be a distance on $X$ bi-Lipschitz equivalent to $\dist_\cE$.  Then $\cE$ admits a heat kernel $H$ such that the following holds:
\begin{itemize}
\item[(a)] $H$ is stochastically complete \eqref{eq:stocom},
\item[(b)] there exists $\upbeta\ge1$ such that the following Gaussian double-sided bounds hold:
\begin{equation}\label{eq:gaussian}
\frac{\upbeta^{-1}}{\mu(B_{\sqrt{t}}(x))} e^{-\upbeta\frac{\dist^2(x,y)}{ t}}
\le H(t,x,y)\le \frac{\upbeta}{\mu(B_{\sqrt{t}}(x))} e^{-\frac{\dist^2(x,y)}{\upbeta t}}
\end{equation}
for all $x,y\in X$ and $t\in (0,R^2]$,
\item[(c)] there exists $\upalpha \in (0,1)$ and $A>0$ such that for any $x,y,z \in X$ and $s,t\in (0,R)$ such that $|t-s|\leq t/4$ and $\dist(y,z) \leq \sqrt{t}$,
\begin{equation}
\label{eq:Rem(c)}
|H(s,x,z)-H(t,x,y)|\le A \left( \frac{\sqrt{|t-s|}+\dist(y,z)}{\sqrt{t}} \right)^\upalpha\,H(t,x,y).
\end{equation}
\end{itemize}
\end{prop}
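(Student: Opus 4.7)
The plan is to prove the three assertions in turn, building on Theorem \ref{thm:eq} and classical parabolic theory for Dirichlet spaces.

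First, (a) is immediate: the stochastic completeness of any $\mathrm{PI}(R)$ Dirichlet space is one of the facts recalled in Subsection 2.3, due to Grigor'yan and Sturm.

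Second, for (b), I would apply the implication (c1) $\Rightarrow$ (c2) of Theorem \ref{thm:eq} to obtain two-sided Gaussian bounds in terms of $\dist_\cE$, and then transfer them to $\dist$. Write $c_1 \dist \le \dist_\cE \le c_2 \dist$. The exponential factors transfer by absorbing $c_1^2$ and $c_2^2$ into a new constant, and the prefactor $\mu(B^{\dist_\cE}_{\sqrt{t}}(x))$ is comparable to $\mu(B^{\dist}_{\sqrt{t}}(x))$ up to multiplicative constants depending only on $\upkappa$, $c_1$, $c_2$: indeed, bi-Lipschitz equivalence yields the inclusions $B^{\dist}_{\sqrt{t}/c_2}(x) \subset B^{\dist_\cE}_{\sqrt{t}}(x) \subset B^{\dist}_{\sqrt{t}/c_1}(x)$, and doubling at scale $R$ controls the ratio $\mu(B^{\dist}_{\sqrt{t}/c_1}(x))/\mu(B^{\dist}_{\sqrt{t}/c_2}(x))$. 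Collecting all constants into a new $\upbeta$ yields \eqref{eq:gaussian}.

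Third, for (c), I would combine the parabolic Hölder estimate (c3) of Theorem \ref{thm:eq} with the parabolic Harnack inequality. Fix $x \in X$ and consider the positive caloric function $v(\tau,w) := H(\tau,x,w)$, which is a local solution of the heat equation on any cylinder $(a,b)\times\Omega$ with $a>0$. After shifting time by $t/4$ to avoid the singular time $\tau=0$, I apply (c3) on a parabolic cylinder of the form $(t/4, t/4+r^2)\times B^{\dist_\cE}_{r}(y)$ with $r=C\sqrt{t}$ and $C$ large enough that, under the hypotheses $|t-s|\le t/4$ and $\dist(y,z)\le\sqrt{t}$, both times $s,t$ lie in the inner time interval and both spatial points $y,z$ lie in the inner ball (using the bi-Lipschitz equivalence to compare $\dist$-balls and $\dist_\cE$-balls). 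This yields
\[
|H(s,x,z)-H(t,x,y)| \le \frac{A}{t^{\upalpha/2}}\bigl(\sqrt{|t-s|}+\dist(y,z)\bigr)^{\upalpha}\sup_{Q}H(\cdot,x,\cdot),
\]
where $Q$ is an appropriate parabolic cylinder bounded away from $\tau=0$.

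The remaining and principal task is to replace $\sup_{Q}H(\cdot,x,\cdot)$ by a constant multiple of $H(t,x,y)$, and this is the main obstacle of the proof. For this I would invoke the parabolic Harnack inequality for $\mathrm{PI}$ Dirichlet spaces from the Sturm--Saloff-Coste theory \cite{sturm1996analysis}: applied to the non-negative caloric function $v$, it gives $\sup_{Q}v\le C' v(t,y) = C' H(t,x,y)$, so that substitution in the Hölder estimate above produces \eqref{eq:Rem(c)} after relabeling constants. This last step genuinely requires Harnack and cannot be extracted from the pure Hölder statement (c3) alone, precisely because the sup appearing on the right of (c3) cannot be controlled by a single value of $v$ without an additional parabolic comparison principle.
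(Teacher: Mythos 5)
Parts (a) and (b) of your argument are fine and follow the intended route: (a) is the stochastic completeness of $\mathrm{PI}$ spaces recalled in Section 2, and (b) is the implication (c1) $\Rightarrow$ (c2) of Theorem \ref{thm:eq} combined with the bi-Lipschitz comparison of balls and the doubling property (with the minor caveat that for radii $\sqrt{t}/c_1$ possibly exceeding $R$ one also invokes Proposition \ref{prop:doubling}(iii)).

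The gap is in your treatment of (c), at the step where you invoke the parabolic Harnack inequality to get $\sup_{Q} H(\cdot,x,\cdot) \le C'\,H(t,x,y)$. The Harnack inequality of Sturm and Saloff-Coste bounds the supremum of a non-negative caloric function over an \emph{earlier} time slab by its infimum over a \emph{later} one. Here the cylinder $Q$ on which you must apply the H\"older estimate (c3) necessarily contains times up to $5t/4>t$ (since $s$ may exceed $t$) as well as points $w$ with $\dist(x,w)<\dist(x,y)$, so the pair $(t,y)$ cannot be placed in a time slab strictly later than $Q$, and no chaining repairs this. More seriously, the inequality you want is simply false with a uniform constant: for the Gauss--Weierstrass kernel on $\setR$, taking $w=y-\sqrt{t}\,\tfrac{y-x}{|y-x|}\in B_{\sqrt{t}}(y)$ gives
\begin{equation*}
\frac{H(t,x,w)}{H(t,x,y)}=\exp\Big(\tfrac{2L-1}{4}\Big)\longrightarrow +\infty, \qquad L:=\frac{|x-y|}{\sqrt{t}}\to+\infty,
\end{equation*}
so $\sup_Q H(\cdot,x,\cdot)/H(t,x,y)$ is unbounded; the same computation (with $s=t$, $\dist(y,z)=\delta$ fixed and $\dist(x,y)\to\infty$) shows that the relative estimate \eqref{eq:Rem(c)} itself cannot hold as literally written, with any finite $A$, even on $\setR^n$. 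What the combination of (c3) with the upper bound in \eqref{eq:gaussian} does yield — and what is the classical statement in the Saloff-Coste/Sturm theory — is \eqref{eq:Rem(c)} with the right-hand side $H(t,x,y)$ replaced by its Gaussian majorant $\upbeta\,\mu(B_{\sqrt{t}}(x))^{-1}e^{-\dist^2(x,y)/(\upbeta t)}$ (equivalently, by the supremum of $H(\cdot,x,\cdot)$ over the cylinder). That weaker form is all that is used downstream in the paper (for the local H\"older estimate on compact sets in the proof of Theorem \ref{th:improvedKS}, and for the continuity of $e^{-tL}u$ in the proof of Theorem \ref{propo:hk2DS}), so the correct conclusion of your argument is the majorized version of (c), not the literal one.
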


The next theorem is our key statement to establish Theorem \ref{th:improvedKS}. 

\begin{theorem}
\label{propo:hk2DS} 
Let $(X,\dist, \mu,\cE)$ be a metric Dirichlet space such that $(X,\dist)$ is geodesic and for which a heat kernel $H$ exists and satisfies $(a)$, $(b)$ and $(c)$ in the previous proposition \ref{prop:HKPI}. Then $(X,\dist_\cE \mu,\cE)$ is a $\PI_{\upkappa,\upgamma}(R)$ Dirichlet space for some $\upkappa \ge 1$ and $\upgamma>0$ depending only on the constants from $(b)$ and $(c)$, and the distance $\dist$ is bi-Lipschitz equivalent to the intrinsic distance $\dist_{\cE}$. 
\end{theorem}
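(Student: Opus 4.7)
The strategy is first to extract volume doubling from the two-sided Gaussian bounds, then to use this to establish that the intrinsic pseudo-distance $\dist_\cE$ is actually bi-Lipschitz equivalent to the given geodesic distance $\dist$, and finally to invoke the characterization in Theorem \ref{thm:eq} to read off the $\PI$ property.

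\textbf{Step 1: Volume doubling at scale $R$ for $(X,\dist,\mu)$.} Using Chapman-Kolmogorov \eqref{eq:ChapmanKolmogorov} and the lower Gaussian bound in (b), for $t \in (0, R^2/2]$ and $x \in X$:
\[
H(2t,x,x) = \int_X H(t,x,y)^2 \di\mu(y) \ge \int_{B_{\sqrt t}(x)} H(t,x,y)^2 \di\mu(y) \ge \frac{e^{-2\upbeta}}{\upbeta^2\, \mu(B_{\sqrt t}(x))}.
\]
Combined with the upper Gaussian bound $H(2t,x,x) \le \upbeta/\mu(B_{\sqrt{2t}}(x))$, this yields $\mu(B_{\sqrt 2\, r}(x)) \le C\,\mu(B_r(x))$ for $r \le R$, hence $\upkappa$-doubling at scale $R$ (with $\upkappa$ depending only on $\upbeta$). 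All the classical consequences listed in Proposition \ref{prop:doubling} then apply to $(X,\dist,\mu)$. In particular, there are $\upnu, c > 0$ such that for $0<s<R$,
\[
c\,\mu(B_R(x))\left(\tfrac{s}{R}\right)^{\upnu} \le \mu(B_s(x)) \le \mu(B_R(x)),
\]
so in particular $t\log \mu(B_{\sqrt t}(x)) \to 0$ as $t \to 0$ locally uniformly in $x$.

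\textbf{Step 2: Bi-Lipschitz equivalence $\dist \simeq \dist_\cE$.} The Hölder regularity (c) together with stochastic completeness (a) and the Gaussian bounds (b) places us in the setting of the Varadhan-type formula of \cite{varadhan}, so that
\[
\dist_\cE^2(x,y) = -4\lim_{t\to 0+} t\log H(t,x,y) \quad \text{for all } x,y\in X.
\]
Taking logarithms in (b) gives
\[
\frac{\dist^2(x,y)}{\upbeta} - t\log\mu(B_{\sqrt t}(x)) - t\log\upbeta \;\le\; -t\log H(t,x,y) \;\le\; \upbeta\,\dist^2(x,y) - t\log\mu(B_{\sqrt t}(x)) + t\log\upbeta.
\]
Step 1 ensures that $t\log\mu(B_{\sqrt t}(x))\to 0$ as $t\to 0+$, hence letting $t\to 0+$ gives
\[
\tfrac{4}{\upbeta}\,\dist^2(x,y) \;\le\; \dist_\cE^2(x,y) \;\le\; 4\upbeta\,\dist^2(x,y).
\]
Consequently $\dist_\cE$ is a genuine distance on $X$, bi-Lipschitz equivalent to $\dist$, and it induces the same topology. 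This is the most delicate step: one has to make sure that the use of Varadhan's formula is justified, which relies precisely on the Hölder regularity (c) together with (a) and (b).

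\textbf{Step 3: Transport the Gaussian bounds to $\dist_\cE$ and conclude $\PI$.} The bi-Lipschitz equivalence of Step 2 immediately turns the Gaussian estimates (b) (and the Hölder estimate (c)) into Gaussian-type bounds with respect to $\dist_\cE$, possibly with an updated constant $\upbeta'$, and also transfers the $\upkappa$-doubling property at scale $R$ from $(X,\dist,\mu)$ to $(X,\dist_\cE,\mu)$. Since $(X,\dist)$ is geodesic, so is $(X,\dist_\cE)$ up to bi-Lipschitz equivalence; combined with completeness (which follows from properness of metric balls in our doubling setting via Hopf-Rinow), we can now apply the implication $(c2)\Rightarrow(c1)$ of Theorem \ref{thm:eq} to deduce that $(X,\dist_\cE,\mu,\cE)$ is $\PI_{\upkappa,\upgamma}(R)$, with $\upkappa,\upgamma$ depending only on the constants appearing in (b) and (c).

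\textbf{Main obstacle.} The decisive step is Step 2: passing from the heat kernel bounds with respect to the \emph{a priori} metric $\dist$ to a comparison with the \emph{intrinsic} distance $\dist_\cE$ defined via the carré du champ. It is crucial that the doubling obtained in Step 1 be strong enough to kill the logarithm of the volume, and that the Hölder continuity (c) be enough to make Varadhan's asymptotic rigorous. Once this equivalence is in hand, Step 3 is essentially a direct appeal to the known characterization of $\PI$ Dirichlet spaces.
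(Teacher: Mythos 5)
The central gap in your proposal is that you never establish that $\cE$ is \emph{regular} and \emph{strongly local}. These are explicit hypotheses of Theorem \ref{thm:eq}, which you invoke in Step 3 via the implication $(c2)\Rightarrow(c1)$. In Theorem \ref{propo:hk2DS}, however, we are only given a metric Dirichlet space — $\cE$ is closed and Markovian, but regularity and strong locality are precisely what must be extracted from the heat kernel assumptions (a), (b), (c). This is where the bulk of the work in the paper's proof lies: a Besov-space characterization of $\cD(\cE)$ is proved first (showing $\cD(\cE)=B_{2,\infty}(X)$, with norms comparable), from which one deduces $\Lip_c(X)\subset\cD(\cE)$ and hence density of $\cC_c(X)\cap\cD(\cE)$ in $\cC_c(X)$; the Hölder regularity (c) then shows $e^{-tL}$ maps $L^2$ into $\cC_0(X)$, giving density in $\cD(\cE)$ and hence regularity; strong locality is then shown directly from the Besov characterization and a localization argument. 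None of this is addressed in your proposal, and without it the appeal to Theorem \ref{thm:eq} is illegitimate.

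There is a secondary issue in Step 2. You invoke ``the Varadhan-type formula of \cite{varadhan}'' to identify $\dist_\cE^2(x,y)=-4\lim_{t\to 0}t\log H(t,x,y)$, but in the paper this formula (Proposition \ref{est:Gaussian14}) is stated for $\mathrm{PI}$ Dirichlet spaces — the very property you are trying to prove — and its proof elsewhere in the literature also requires at minimum regularity and strong locality. The paper's proof of $\dist \simeq \dist_\cE$ deliberately avoids Varadhan: the lower bound $\dist_\cE\ge C\dist$ comes from the Besov characterization of the carré du champ applied to explicit cut-off Lipschitz functions $z\mapsto\chi(\dist(x,z)/2r)\dist(x,z)$, and the upper bound $\dist_\cE\le\sqrt{\upbeta/2}\,\dist$ comes from an integral comparison argument (via the exponential functional $e^{\xi_a}$ and the Gaussian lower bound) rather than from any small-time asymptotic. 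Your Step 1 (doubling from the Gaussian bounds via Chapman--Kolmogorov) is sound and matches a step inside the paper's proof, but the scheme ``doubling $\Rightarrow$ Varadhan $\Rightarrow$ bi-Lipschitz $\Rightarrow$ invoke $(c2)\Rightarrow(c1)$'' is not self-contained: it presupposes structural properties of $\cE$ that are exactly the content of the theorem.
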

\subsubsection{Domain characterization}
In order to prove Theorem \ref{propo:hk2DS}, we start by showing the following crucial proposition. It is a generalization of a similar result of A. Grigor'yan, J. Hu and K-S. Lau \cite[Theorem 4.2]{GriHuLau} (see also \cite[Corollary 4.2]{Grigoryan10} and references therein) where the mesure is additionally assumed uniformly Ahlfors regular.

\begin{prop}
Under the assumptions of Theorem \ref{propo:hk2DS} , the domain of $\cE$ coincides with the Besov space $B_{2,\infty}(X)$, consisting of the functions $u\in L^{2}(X,\mu)$ such that 
$$N(u)^2:=\limsup_{r \rightarrow 0^+} \frac{1}{r^2} \int_X \fint_{B_{r}(x)} (u(x)-u(y))^2\di\mu(y)\di\mu(x) < \infty.$$
Moreover, there is a constant $C$ depending only on $\upbeta$ such that for any $u\in \cD(\cE)$,
$$\frac{1}{C} N(u)^2\le \cE(u)\le C N(u)^2.$$
\end{prop}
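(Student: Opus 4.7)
The plan is to establish the equivalence through the auxiliary bilinear quantity
$$E_t(u) := \frac{1}{t}\int_X\int_X H(t,x,y)(u(x)-u(y))^2\, d\mu(y)\, d\mu(x),$$
which, by symmetry of $H$ and the stochastic completeness property $(a)$, equals $\frac{2}{t}\bigl(\|u\|_{L^2}^2 - \langle P_t u, u\rangle\bigr)$. Writing $u = \int_0^\infty dE_\lambda u$ in the spectral resolution of $L$, one has $E_t(u) = 2\int_0^\infty \tfrac{1-e^{-t\lambda}}{t}\,d\|E_\lambda u\|^2$, and since $\lambda \mapsto (1-e^{-t\lambda})/t$ is nonincreasing in $t$ and converges monotonically up to $\lambda$ as $t\downarrow 0$, monotone convergence yields that $t\mapsto E_t(u)$ is nonincreasing and $\lim_{t\downarrow 0}E_t(u)=2\cE(u) \in[0,+\infty]$. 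Moreover, $u\in\cD(\cE)$ if and only if $\sup_{t>0}E_t(u)<+\infty$. Noting also that doubling of $\mu$ at scale $R$ follows from the Gaussian bounds $(b)$ (compare $H(t,x,x)$ at scales $t$ and $t/2$ via Chapman--Kolmogorov), the proof reduces to establishing the two-sided equivalence of $N(u)^2$ with $\limsup_{t\to 0}E_t(u)$.

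The inequality $N(u)^2 \le C\,\cE(u)$ is immediate from the lower Gaussian bound in $(b)$: for $y\in B_r(x)$ with $r\le R$, $H(r^2,x,y)\ge \upbeta^{-1}e^{-\upbeta}/\mu(B_r(x))$, hence
$$\frac{1}{r^2}\int_X \fint_{B_r(x)}(u(x)-u(y))^2\,d\mu(y)\,d\mu(x) \le \upbeta e^{\upbeta}\, E_{r^2}(u) \le 2\upbeta e^\upbeta\,\cE(u),$$
and passing to the $\limsup$ as $r\to 0$ yields the bound.

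The reverse inequality $\cE(u)\le C\,N(u)^2$ is the main obstacle. The plan is to exploit the upper Gaussian bound in $(b)$ via the dyadic decomposition $A_0(x,r):=B_r(x)$ and $A_k(x,r):=B_{(k+1)r}(x)\setminus B_{kr}(x)$ for $k\ge 1$. On $A_k$ one has $e^{-\dist^2(x,y)/(\upbeta t)}\le e^{-k^2/\upbeta}$ when $t=r^2$, and by doubling there exists $\nu>0$ depending only on $\upbeta$ such that $\mu(B_{(k+1)r}(x))\le C(k+1)^\nu\mu(B_r(x))$ for all $r\le R$. Writing $\psi(s):=s^{-2}\int_X \fint_{B_s(x)}(u(x)-u(y))^2 d\mu(y)\,d\mu(x)$ and summing over $k$ gives
$$E_{r^2}(u) \le C\sum_{k=0}^\infty (k+1)^{\nu+2} e^{-k^2/\upbeta}\,\psi((k+1)r).$$
The crude bound $(u(x)-u(y))^2\le 2(u(x)^2+u(y)^2)$ combined with doubling yields $\psi(s)\le C'\|u\|_{L^2}^2/s^2$, so $\psi$ is uniformly bounded on any interval $(0,s_0]$ whenever $N(u)<\infty$; together with $\sum_k(k+1)^{\nu+2}e^{-k^2/\upbeta}<\infty$ this legitimates passing the $\limsup$ under the series by dominated convergence, producing
$$\limsup_{r\to 0}E_{r^2}(u)\le C\left(\sum_{k=0}^\infty (k+1)^{\nu+2} e^{-k^2/\upbeta}\right) N(u)^2 =: C_0\, N(u)^2.$$
Consequently, if $N(u)^2<+\infty$ then $\sup_{t>0}E_t(u)<+\infty$, so $u\in\cD(\cE)$ by the spectral characterization, and $2\cE(u)=\lim_t E_t(u)\le C_0 N(u)^2$. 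This establishes both the identification $\cD(\cE)=B_{2,\infty}(X)$ and the norm equivalence with constants depending only on $\upbeta$.
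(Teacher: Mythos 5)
Your approach is correct in outline but takes a genuinely different route from the paper. For the hard inequality $\cE(u)\le C\,N(u)^2$, the paper does not decompose into dyadic annuli: it splits the integral defining $\cE_t(u)$ once, at $\dist(x,y)=\lambda\sqrt{t}$, calls $I_\lambda(t)$ the near-diagonal part, and then uses the self-improving estimate $H(t,x,y)\le C(\tau/t)^{\nu/2}e^{-\dist^2(x,y)(1/\upbeta t - \upbeta/\tau)}H(\tau,x,y)$ (a parabolic-Harnack-type comparison of the kernel at two time scales) with $\tau=\lambda t$. Since $\cE_{\lambda t}(u)\le\cE_t(u)$ by monotonicity of $t\mapsto\cE_t(u)$, the tail of $\cE_t(u)$ is absorbed as a fraction of $\cE_t(u)$ itself: $\cE_t(u)\le I_\lambda(t)+\tfrac12\cE_t(u)$ once $\lambda$ is chosen large enough, depending only on $\upbeta$. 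This yields $\cE_t(u)\le 2I_\lambda(t)$ directly, with no infinite sum to control. Your dyadic decomposition gives the same endpoint but is more work, and it buys nothing here since the constants are of the same quality.

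There is one concrete gap in your argument that you should close. You use the estimate $\mu(B_{(k+1)r}(x))\le C(k+1)^\nu\mu(B_r(x))$ \emph{for all $k$ and all $r\le R$}, citing doubling. But the doubling constant $\upkappa$ is only at scale $R$, so the polynomial bound $(k+1)^\nu$ holds only when $(k+1)r\le R$; once $(k+1)r>R$ the correct volume growth estimate is exponential, of the form $\exp(\lambda(k+1)r/R)$ (see Proposition 1.2(iii) of the paper). This does not sink the argument — the Gaussian factor $e^{-k^2/\upbeta}$ still beats the exponential correction — but as written your bound for the $k$-th term is false when $(k+1)r>R$, and the range of such $k$ is nonempty for every $r>0$. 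You need to split the sum at $k\approx R/r$ and treat the two ranges separately. A related imprecision: you justify the uniform boundedness of $\psi$ on $(0,s_0]$ by citing the crude bound $\psi(s)\le C'\|u\|_{L^2}^2/s^2$, but that bound blows up as $s\to 0$. The correct argument is that $\limsup_{s\to 0}\psi(s)=N(u)^2<\infty$ supplies boundedness near $0$, while the crude $C'/s^2$ bound supplies it away from $0$; you should state the split explicitly rather than attributing the whole conclusion to the crude bound.
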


\begin{proof}
For any function $u \in L^{2}(X,\mu)$, define the decreasing function $t \mapsto \cE_t(u)$ where for any $t>0$,
$$\cE_t(u):=\frac{1}{t}\langle u-e^{-tL}u,u \rangle = \int_{X \times X}H(t,x,y)(u(x)-u(y))^2 \frac{\di\mu(x) \di\mu(y)}{2t}\, \cdot$$
We first observe that, because of assumption (a), a function $u$ belongs to $\cD(\cE)$ if and only if $\sup_t\cE_t(u) < \infty$. Moreover, if $u \in \cD(\cE)$, then $\cE(u)=\lim_{t \rightarrow 0^+}\cE_t(u).$ This is explained in \cite[Sect.~2.2]{Grigoryan10}, for instance.

\medskip

\noindent \textbf{Step 1.} We begin with showing the easiest inclusion, namely $\cD(\cE) \subset B_{2, \infty}(X)$. Take $u \in \cD(\cE)$. For $t>0$, set
$$I(t):=\int_{\{(x,y)\in X \times X : \dist(x,y) \leq \sqrt{t}\}} H(t,x,y)(u(x)-u(y))^2 \frac{\di\mu(x)\di\mu(y)}{2t} \, ,$$
and observe that $I(t) \leq \cE_t(u)\leq \cE(u)$. The lower bound for the heat kernel given by assumption $(b)$ implies 
\begin{align*}
I(t) & \geq \int_{\{(x,y)\in X \times X: \dist(x,y) \leq \sqrt{t}\}}\frac{\upbeta^{-1}}{\mu(B_{\sqrt{t}}(x))}e^{-\frac{\upbeta\dist^2(x,y)}{t}}(u(x)-u(y))^2 \frac{d\mu(x)d\mu(y)}{2t} \\
& \geq \frac{\upbeta^{-1}e^{-\upbeta}}{2t}\int_X \fint_{B_{\sqrt{t}}(x)} (u(x)-u(y))^2 d\mu(x)d\mu(y),
\end{align*}
hence letting $t$ tend to $0$ shows that $N(u)^2 \leq 2\upbeta e^{\upbeta}\cE(u)$.

\medskip

\noindent  \textbf{Step 2.} In order to prove the converse inclusion, we need some volume estimates. Our assumptions imply that the measure $\mu$ is doubling at scale $R$. Indeed for all $x \in X$ and $r \leq R$, thanks to assumptions (a) and (b) we have
$$\upbeta e^{-4\upbeta}\frac{\mu(B_{2r}(x))}{\mu(B_{r}(x))}\le \int_{B_{2r}(x)}H(r^2,x,y)d\mu(y)\le \int_{X}H(t,x,y)d\mu(y)= 1,$$ 
and therefore
$$\mu(B_{2r}(x))\le \beta e^{4\upbeta} \mu(B_{r}(x)).$$
Because of the doubling condition at scale $R$, we obtain for $s \leq r \leq 2R$, $x \in X$
\begin{equation*}
\label{eq:Ia}
\mu(B_{r}(x)) \leq C\left(\frac{r}{s} \right)^{\nu}\mu(B_{s}(x)),
\end{equation*}
where $\nu$ and $C$ depends only on $\upbeta$ (see Proposition \ref{prop:doubling}). 
The Gaussian estimate of the heat kernel \eqref{eq:gaussian} implies that if $0<t\le\tau\le R^2$ then 
\begin{equation}\label{Harnackl}
H(t,x,y)\le H(\tau,x,y)\, C\left(\frac{\tau}{t}\right)^{\nu/2} e^{ -\dist^2(x,y)\left(\frac{1}{\upbeta t}-\frac{\upbeta}{\tau}\right)}
\end{equation}

We introduce $\Omega_r=\{(x,y)\in X \times X \, :\, \dist(x,y) \geq r\}$ and 
$$I_\lambda(t):=\int_{ (X \times X)\setminus \Omega_{\lambda \sqrt{t} } } H(t,x,y)(u(x)-u(y))^2 \frac{\di\mu(x)\di\mu(y)}{2t}.$$
The same reasoning as in Step 1 implies that for $\lambda\ge 1$ and $t>0$ such that $\lambda\sqrt{ t}<R$ tehn
\begin{equation}
I_\lambda(t)\le \frac{C\lambda^\nu}{2t} \int_X \fint_{B_{\lambda\sqrt{ t}}(x)} (u(x)-u(y))^2\di\mu(y)\di\mu(x).
\end{equation}
Using the estimate \eqref{Harnackl} with $\tau=\lambda t$ and assuming $\lambda\ge 1$ and $\lambda^2 t\le R^2$, we estimate:
\begin{align*}\cE_t(u)-I_\lambda(t)
&\le C\lambda^{\nu/2} e^{-\lambda^2 t\left(\frac{1}{\upbeta t}-\frac{\upbeta}{\lambda t}\right)}  \int_{\Omega_{ \lambda \sqrt{t}}} H(\lambda t,x,y)(u(x)-u(y))^2 \frac{\di\mu(x)\di\mu(y)}{2t}\\
&\le C\lambda^{\nu/2+1} e^{-\lambda^2 \left(\frac{1}{\upbeta }-\frac{\upbeta}{\lambda }\right)}\cE_{\lambda t}(u)\\
&\le C\lambda^{\nu/2+1} e^{-\lambda^2 \left(\frac{1}{\upbeta }-\frac{\upbeta}{\lambda }\right)}\cE_{ t}(u),
\end{align*}
where we have used that $t\mapsto \cE_t(u)$ is non increasing. If we choose $\lambda=\lambda(\upbeta)$ sufficiently large so that 
$C\lambda^{\nu+1} e^{-\lambda^2 \left(\frac{1}{\upbeta }-\frac{\upbeta}{\lambda }\right)}\le \frac 12$, then we get
$$\cE_t(u)\le \frac{2C\lambda^\nu}{2t} \int_X \fint_{B_{\lambda\sqrt{ t}}(x)} (u(x)-u(y))^2\di\mu(y)\di\mu(x).$$
Hence the result.
\end{proof}
\begin{rem}\label{rem:gene}
The above reasoning implies that if $U\colon X\times X\rightarrow \R_+$ is a non negative integrable function such that the limit 
$\lim_{t\to 0+} \int_{X \times X} H(t,x,y)C(x,y)\frac{\di\mu(x)\di\mu(y)}{2t}$ exist and is finite  then
$$\lim_{t\to 0+} \int_{X \times X} H(t,x,y)C(x,y)\frac{\di\mu(x)\di\mu(y)}{2t}\le C(\upbeta) \limsup_{r \rightarrow 0^+} \int_X \fint_{B_{r}(x)} C(x,y) \frac{\di\mu(y)\di\mu(x)}{r^2}.$$
\end{rem}
We are now in position to prove Theorem \ref{propo:hk2DS}.

\begin{proof}[Proof of Theorem \ref{propo:hk2DS}]

\hfill

\textbf{Regularity.} We start by showing that $(X,\dist, \mu, \cE)$ is regular, that is we prove that the space $\cC_c(X)\cap \cD(\cE)$ is dense in $(\cC_c(X), ||\cdot||_{\infty})$ and in $(\cD(\cE), |\cdot|_{\cD(\cE)})$. 

Observe that $\mbox{Lip}_c(X)$ is contained in $\cD(\cE)$. Indeed, for any $u \in \mbox{Lip}_c(X)$ there exists $\Lambda$ such that for all $x,y \in X$
$$|u(x)-u(y)|^2 \leq \Lambda \dist(x,y),$$
and there exists $\rho >0$ such that the support of $u$ is included in the ball $B_{\rho}(o)$. Therefore for any $r>0$ and $x \in X$ we have
\[
e_r(x):= \fint_{B_r(x)} \left(u(x)-u(y)\right)^2\,\di \mu(y) \leq \Lambda^2r^2
\]
and moreover $e_r(x)=0$ if $x \notin B_{\rho+1}(o)$. As a consequence, for any $u \in \mbox{Lip}_c(X)$, there exists $\rho$ such that 
$$N(u)^2 \leq \mu(B_{\rho+1}(o))\Lambda^2,$$
thus $\mbox{Lip}_c(X) \subset B_{2,\infty}(X)$ and by the previous theorem $\mbox{Lip}_c(X) \subset \cD(\cE)$. Since $\mbox{Lip}_c(X)$ is dense in $(\cC_c(X),||\cdot||_{\infty})$, this implies that $\cC_c(X) \cap \cD(\cE)$ is also dense in $(\cC_c(X),||\cdot||_{\infty})$. 

In order to prove that $\cC_c(X) \cap \cD(\cE)$ is dense in $\cD(\eps)$, we follow the same argument as in the proof of \cite[Prop.~3.8]{CT19} i.e.~we show that if $t \in (0,R^2)$ and $u \in L^2(X,\mu)$, then $f=e^{-tL}u$ belongs to $\cC_0(X)$. To see that $f$ tends to zero at infinity, notice that the upper bound for the heat kernel implies
$$|f(x)|\le \upbeta \frac{1}{\mu(B_{\sqrt{t}}(x))} e^{-\frac{\dist^2(x,\mathrm{supp} u)}{\upbeta t} }\int_X |u|\di\mu$$
for any $x \in X$; from Proposition \ref{prop:doubling}-i), if $o \in \mbox{supp} \, u$ we obtain
$$|f(x)|\le  \frac{C}{\mu(B_{\sqrt{t}}(o))} e^{-\frac{\dist^2(x,\mathrm{supp} u)}{\upbeta t}+\lambda\frac{\dist(o,x)}{\sqrt{t}}}\int_X |u|\di\mu,$$
therefore $f$ is bounded and tends to zero as $\dist(o,\cdot)$ goes to infinity. 

As for the continuity of $f$, assumption (c) ensures that for any $x,x' \in X$ such that $\dist(x,x') \leq \sqrt{t}$ we have
$$|f(x)-f(x')|\le A \left(\frac{\dist(x,x')}{\sqrt{t}}\right)^\upalpha (e^{-tL}|u|)(x).$$
Since $e^{-tL}|u|$ is also bounded, this shows that $f$ is continuous. 

\medskip

\textbf{Strong locality.} We aim to prove that if $u,v \in \cD(\cE)$ have compact supports and if $u$ is constant in a neighbourhood of $\mbox{supp}(v)$, then $\cE(u,v)=0$. Assume that both $u$ and $v$ are supported in $B_{\rho}(o)$ and denote by $K$ the support of $v$. There exist $\eta >0$ and $c \in \setR$ such that if $\dist(x, K) \leq \eta$, then $u(x)=c$. Let us introduce $K^r=\bigcup_{x \in K}B_{r}(x)$. Then for any $r \leq \eta$, $u$ is constantly equal to $c$ on $K^r$. 

As in the previous theorem, we can define
$$\cE_t(u,v)= \int_{X \times X} H(t,x,y) (u(x)-u(y))(v(x)-v(y))\frac{\di\mu(x)\di\mu(y)}{2t},$$
and we have $\cE(u,v)=\lim_{t \rightarrow 0^+}\cE_t(u,v)$. From Remark \ref{rem:gene}, there exists a constant $C>0$ such that
$$|\cE(u,v)| \leq C \limsup_{r\rightarrow 0^+} \frac{1}{r^2}\int_X\left(\fint_{B_{r}(x)}\left|u(x)-u(y)\right|\left|v(x)-v(y)\right|\, \di\mu(y)\right)\di\mu(x).$$

Now observe that for any $r>0$, if $x \notin K^r$ and $y \in B_{r}(x)$, the triangle inequality ensures that $\dist(y,K)>0$, thus both $v(x)$ and $v(y)$ are equal to zero. We are then left with considering
$$\limsup_{r\rightarrow 0^+} \frac{1}{r^2}\int_{K^r}\left(\fint_{B_{r}(x)}\left|u(x)-u(y)\right|\left|v(x)-v(y)\right|\, \di\mu(y)\right)\di\mu(x).$$
But the same arguing implies that when $r \leq \eta/2$, $x \in K^r$ and $y \in B_{r}(x)$, then $y \in K^{2r}$;  as a consequence both $x$ and $y$ belong to $K^{\eta}$, so we have $u(x)=u(y)=c$. Finally, for $r \leq \eta/2$ we get

$$\int_X\left(\fint_{B_{r}(x)}\left|u(x)-u(y)\right|\left|v(x)-v(y)\right|\, \di\mu(y)\right)d\mu(x) = 0.$$

This ensures that $\cE(u,v)=0$ and thus $(X,\dist, \mu,\cE)$ is strongly local.
\medskip

\textbf{Equivalence between the distance and the intrinsic distance.} Let us begin with proving the existence of $C>0$ such that
\[
\dist_\cE \ge C \dist.
\]
Again from Remark \ref{rem:gene}, there exists a constant $C$ such that for any $u\in \cD(\cE)$ and  $\phi \in \cC_c(X) \cap\cD(\cE)$ with $\phi\ge 0$ then
$$\int_X \phi\di\Gamma(u)\le C \limsup_{r\rightarrow 0^+} \frac{1}{r^2}\int_X \phi(x)\left(\fint_{B_{r}(x)}\left(u(x)-u(y)\right)^2\, \di\mu(y)\right)\di\mu(x).$$
If $u\in \Lip_c(X)$ then
$$\int_X \phi\di\Gamma(u)\le C \Lip(u)^2 \int_X \phi\di\mu.$$
Hence $$\di\Gamma(u)\le C \Lip(u)^2\di\mu.$$
Take $x,y\in X$ and set $r:=\dist(x,y)$ and 
$$u(z):=\chi\left(\frac{\dist(x,z)}{2r}\right)\, \dist(x,z)$$ for any $z\in Z$, where $\chi$ is defined as in \eqref{coupure}. Then $u\in \Lip_c(X)$ and $u(y)-u(x)=\dist(x,y)$. Moreover, $\Lip(u)\le 3$. Thus, testing $u/(9C)$ in the definition of $\dist_\cE$, we get
$$\dist_\cE(x,y) \ge (3\sqrt{C})^{-1} \dist(x,y).$$

Now let us prove
\[
\dist_\cE \le \sqrt{\upbeta/2} \dist.
\]
We act as in the proof of \cite[Prop.~3.9]{CT19}. We consider a bounded function $v \in \cD_{loc}(\cE)\cap C(X)$ such that $\Gamma(v) \le \mu$. For any $a \ge 0$, $t \in (0,R^2)$ and $x \in X$ we set $\xi_a(x,t):=av(x)-a^2t/2$. Take $x,y \in X$ and assume with no loss of generality that $v(y)-v(x)>0$.  From \cite[Claim 3.10]{CT19} applied to $f=\mathrm{1}_{B_{\sqrt{t}}(y)}$, one gets
\[
\int_{B_{\sqrt{t}}(x)} \left( \int_{B_{\sqrt{t}}(y)} H(t,z_1,z_2) \di \mu(z_2)\right)^2 e^{\xi_a(z_1,t)} \di \mu(z_1) \le \int_{B_{\sqrt{t}}(y)} e^{av} \di \mu
\]
which leads to
{\small\begin{equation}\label{eq:eq}
 \mu(B_{\sqrt{t}}(x))\mu(B_{\sqrt{t}}(y)) \exp\left(\, a \delta_t(x,y) - a^2\frac{t}{2}\right)\, \inf_{B_{\sqrt{t}}(x))\times B_{\sqrt{t}}(y)} H(t,\cdot,\cdot)^2\le 1,
\end{equation}}
where we define $$\delta_t(x,y):=\inf_{B_{\sqrt{t}}(y)}v-\sup_{B_{\sqrt{t}}(x)}v.$$
Observe that
\[
\sup_{B_{\sqrt{t}}(x)\times B_{\sqrt{t}}(y)} \dist(\cdot,\cdot) \le \dist(x,y)+2\sqrt{t},
\]
so that the Gaussian lower bound in \eqref{eq:gaussian}
yields, for any $(z_1,z_2) \in B_{\sqrt{t}}(x)\times B_{\sqrt{t}}(y)$,
\[
H(t,z_1,z_2) \ge \frac{\upbeta^{-1}}{\mu(B_{\sqrt{t}}(z_1))} \exp\left(-\upbeta\frac{ (\dist(x,y)+2\sqrt{t})^2}{4t}\right);
\]
the doubling condition implies $\mu(B_{\sqrt{t}}(z_1)) \le \mu(B_{2\sqrt{t}}(x) \le \upkappa \mu(B_{\sqrt{t}}(x))$,  then we get
\[
\left( \inf_{B_{\sqrt{t}}(x))\times B_{\sqrt{t}}(y)} H(t,\cdot,\cdot)^2\right) \ge \frac{(\upbeta \upkappa)^{-2}}{\mu(B_{\sqrt{t}}(x)^2} \,\exp\left(-\upbeta\frac{ (\dist(x,y)+2\sqrt{t})^2}{2t}\right).
\]
The continuity of $v$ yields $\lim_{t\to 0+} \delta_t(x,y)=v(y)-v(x)$,  hence we can take $t$ small enough to ensure $\delta_t(x,y)>0$ and choose $a=\delta_t(x,y)/t$. Then \eqref{eq:eq} implies
$$\frac{\mu(B_{\sqrt{t}}(y))}{\mu(B_{\sqrt{t}}(x))} \exp\left(-\upbeta \frac{\left(\dist(x,y)+2\sqrt{t}\right)^2}{2t}+\frac{\delta^2_t(x,y)}{2t}\right) \le (\upbeta \upkappa)^2.$$
Thanks to Proposition \ref{prop:doubling}-i), this leads to
$$ \exp\left(-\lambda \frac{\dist(x,y)}{\sqrt{t}}-\upbeta \frac{\left(\dist(x,y)+2\sqrt{t}\right)^2}{2t}+\frac{\delta^2_t(x,y)}{2t}\right) \le C(\upbeta \upkappa)^2.$$
Letting $t \to 0$, we get $$(v(y)-v(x))^2\le \frac{\upbeta}{2} \dist(x,y)^2.$$
Since $v$ is arbitrary,  we finally obtain
$$\dist_\cE(x,y)\le \sqrt{\upbeta/2}\ \dist(x,y).$$

\end{proof}

\subsection{Proof of  Theorem \ref{th:improvedKS}.}
\begin{proof}
We assume that the spaces $\{(X_\alpha,\dist_\alpha=\dist_{\cE_\alpha},\mu_\alpha,o_\alpha,\cE_\alpha)\}_{\alpha \in A}$ are $\mathrm{PI}_{\upkappa,\upgamma}(R)$
 Dirichlet spaces and that for any $\alpha$,
 \begin {equation}\label{noncolla}
 \eta^{-1}\le \mu_\alpha\left(B_R(o_\alpha)\right)\le \eta.
  \end{equation}
 The existence of $(X,\dist,\mu,o)$ and a subsequence $B \subset A$ such that 
 $$(X_\beta,\dist_\beta,\mu_\beta,o_\beta)\stackrel{pmGH}{\longrightarrow} (X,\dist,\mu,o)$$  follow from Proposition \ref{precompmGH}. Moreover $(X,\dist)$ is complete and geodesic, and $(X,\dist,\mu)$ is $\upkappa$-doubling at scale $R$.
 
Furthermore, Proposition \ref{prop:HKPI} ensures that any $\cE_\beta$ admits a stochastically complete heat kernel $H_\beta$ satisfying the Gaussian bounds \eqref{eq:gaussian} and the estimate \eqref{eq:Rem(c)} with constants $\upbeta$, $\upalpha$ and $A$ depending only on $\upkappa$ and $\upgamma$.  Let $t\in \left(0,R^2\right)$ and $\rho>0$. By Proposition \ref{prop:doubling}-i), we get that for any $x,y\in B_\rho(o_\beta)$,
$$ H_\beta(t,x,y)\le \frac{C e^{\lambda\frac{\rho}{\sqrt{t}}}}{\mu_\beta\left(B_{\sqrt{t}}(o_\beta)\right)} \, ,$$
from which the doubling condition and the non-collapsing condition \eqref{noncolla} yield
the uniform estimate :
\begin{equation}\label{HeatL}
H_\beta(t,x,y)\le C \left(\frac{R}{\sqrt{t}}\right)^\nu \exp\left(\lambda\frac{\rho}{\sqrt{t}}\right)\, \eta,\end{equation}
where $\nu,C,\lambda$ depend only on $\upkappa,\upgamma$.
Hence for any $t\in \left(0,R^2\right)$ and $\rho>0$, there is a  constant $\Lambda$ depending only on $t,\rho,\upkappa,\upgamma,R,\eta$ such that for $x,x',y,y'\in B_\rho(o_\beta)$, we get the H\"older estimate:
\begin{equation}\label{eq:localHölder} |H_\beta(t,x,y)-H_\beta(t,x',y')| \le \Lambda \min\left\{ t^{\frac{\upalpha}{2}}\,;\,[\dist(x,x')+\dist(y,y')]^\upalpha\right\}.\end{equation}
Thanks to this local Hölder continuity estimate and the uniform estimate \eqref{HeatL}, the Arzelà-Ascoli  theorem with respect to pGH convergence (see e.g.~\cite[Prop.~27.20]{Villani}) implies that, up to extracting another subsequence, the functions $H_\beta(t,\cdot,\cdot)$ converge uniformly on compact sets to some function $H(t,\cdot,\cdot)\in \cC(X\times X)$, where $t>0$ is fixed from now on. A priori this subsequence may depend on $t$, but for the moment $t\in \left(0,R^2\right)$ is fixed.

Let $S\colon L_c^2(X,\mu)\rightarrow L_{loc}^\infty(X,\mu) $ be the integral operator on defined by setting
\[
Su(x):=\int_X u(y)H(t,x,y)\di \mu(y)
\]
for any $u \in L_c^2(X,\mu)$ and $x \in X$.

We claim that $S$ has a bounded linear extension $S\colon L^2(X,\mu)\rightarrow L^2(X,\mu)$.
Firstly, thanks to the uniform convergence on compact sets $H_\beta(t,\cdot,\cdot)\to H(t,\cdot,\cdot)$, the symmetry with respect to the two space variables of $H_\beta$ transfers to $H$. Moreover, $\mathrm{PI}$ Dirichlet spaces are stochastically complete hence for any $x\in X_\beta$:
$$\int_{X_\beta} H_\beta(t,x,y)\di \mu_\beta(y)=1.$$
 Using the uniform Gaussian estimate \eqref{eq:gaussian} and Proposition \ref{prop:convdom}, we have similarly $$\int_{X} H(t,x,y)\di \mu(y)=1,$$ for any $x\in X$. The Schur test implies that for any $p\in [0,+\infty]$, $S$ extends to a bounded operator $S\colon L^p(X,\mu)\rightarrow L^p(X,\mu)$  with operator norm satisfying:
\begin{equation}\label{contract} \left\|S\right\|_{L^p\to L^p}\le 1.\end{equation}
 The symmetry with respect to the two space variables of $H$ implies that $$S\colon L^2(X,\mu)\rightarrow L^2(X,\mu)$$ is self-adjoint.  Hence there exists a non-negative self-adjoint operator $L$ with dense domain $\cD(L) \subset L^2(X,\mu)$ such that $$S=e^{-tL}.$$
 Moreover we have: $f\ge 0\Rightarrow Sf\ge 0$.  
 
Let us show now the strong convergence of bounded operators 
\begin{equation}\label{eq:foronet}
e^{-tL_\beta} \to e^{-tL}.
\end{equation}
The operator are all self-adjoint hence it is enough to show the weak convergence of bounded operators and
this amounts to showing that if $u_\beta\stackrel{L^2}{\weakto}u$, then  $e^{-tL_\beta}u_\beta \stackrel{L^2}{\weakto} e^{-tL}u$. Note that $\sup_\beta \|u_\beta\|_{L^2}<+\infty$. Since the operators $e^{-tL_\beta}$ have all an operator norm less than  $1$, then $$\sup_\beta\|e^{-tL_\beta}u_\beta\|_{L^2} <+\infty.$$ 
In particular, the functions $e^{-tL_\beta}u_\beta$ are bounded in $L^2$. Now take $x_\beta \to x$. The uniform Gaussian estimate \eqref{eq:gaussian} and Proposition \ref{prop:convdom} ensures that the functions $f_\beta=H_\beta(t,x_\beta,\cdot)$ converge strongly in $L^2$ to the functions $f=H(t,x,\cdot)$. Then
$$\langle f_\beta,u_\beta\rangle_{L^2(X_\beta,\di\mu_\beta)} \to \langle f,u\rangle_{L^2(X,\di\mu)},$$ that is to say
\[
e^{-tL_\beta}u_\beta(x_\beta) = \int_{X_\beta} H_\beta(t,x_\beta,y) u _\beta(y) \di \mu_\beta(y)\to\int_{X} H(t,x,y) u (y) \di \mu(y) = e^{-tL}u(x).
\]
The same argument can be used with $f_{\beta,r}(y)=\int_{B_r(x_\beta)}H_\beta(t,z,y)\di\mu_\beta(z)$ and $f_{r}(y)=\int_{B_r(x)}H(t,z,y)\di\mu(z)$ for any $r>0$, hence Lemma \ref{lem:criterion} eventually implies
$$e^{-tL_\beta}u_\beta\stackrel{L^2}{\weakto} e^{-tL}u.$$

Using now \cite[Theorem 2.1]{KuwaeShioya}, we get that for any $\tau>0$,  
the strong convergence of bounded operators 
$$
e^{-\tau L_\beta} \to e^{-\tau L}.$$
But the above argumentation shows that for any $\tau\in (0,R^2)$, the function $H_\beta(\tau,\cdot,\cdot)$ has a unique limit (for the uniform convergence on compact set of $X\times X$) and this limit is the Schwartz kernel of the operator $e^{-\tau L}$. Moreover
$$0\le f\le 1\Longrightarrow 0\le e^{-\tau L}f\le 1,$$ hence the quadratic form \[
\cE(u):= \int_X (Lu)u \di \mu
\]
 define a Dirichlet form $\cE$ on $(X,\dist,\mu)$.

From Proposition \ref{prop:equivmosco}, the strong convergence \eqref{eq:foronet} implies the Mosco convergence $\cE_\beta \to \cE$. As a consequence, the functions $H_\beta : (0,R^2] \times X_\beta \times X_\beta$ uniformly converge on compact sets to the heat kernel $\tilde{H}$ of $\cE$ restricted to $(0,R^2] \times X_\beta \times X_\beta$. Then the dominated convergence theorem ensures that $\tilde{H}$ satisfy the assumptions $(a)$, $(b)$ and $(c)$ in Theorem \ref{propo:hk2DS}, thus $(X,\dist,\mu,\cE)$ is regular, strongly local, and with intrinsic distance $\dist_\cE$ bi-Lipschitz equivalent to $\dist$ so that $(X,\dist,\mu,\cE)$ is a $\mathrm{PI}_{\upkappa,\upgamma'}(R)$ Dirichlet space.
 
 It remains to show that  $\dist\le \dist_\cE$.  According to \eqref{eq:optimalgaussian}, the heat kernel of $\cE_\beta$ satisfies the uniform upper Gaussian estimate
$$H_\beta(t,x,y)\le \frac{C}{\mu_\beta (B_R(x))} \frac{R^\nu}{t^{\frac \nu 2}} \left(1+\frac{\dist_{\cE_\beta}^2(x,y)}{t}\right)^{\nu+1} \ \exp\left(-\frac{\dist_{\cE_\beta}^2(x,y)}{4t}\right)$$ valid for any $x,y\in X_\beta$ and $t\in \left(0,R^2\right)$. By uniform convergence on compact set $H_\beta\to H$ and $\dist_{\cE_\beta}\to \dist$, we get the same estimate for the heat kernel of $\cE$:
$$H(t,x,y)\le \frac{C}{\mu (B_R(x))} \frac{R^\nu}{t^{\frac \nu 2}} \left(1+\frac{\dist^2(x,y)}{t}\right)^{\nu+1} \ \exp\left(-\frac{\dist^2(x,y)}{4t}\right).$$
From there it is easy to conclude that $\dist\le \dist_\cE$ thanks to Varadhan's formula \eqref{eq:varadhan}.
\end{proof}

\subsection{Further convergence results} 

In this last subsection we assume that $(X,\dist,\mu,\cE,o)$ is a $\mathrm{PI}_{\upkappa,\upgamma}(R)$ Dirichlet space that is a pointed Mosco-Gromov-Hausdorff limit of a sequence $\left\{(X_\alpha,\dist_\alpha,\mu_\alpha,\cE_\alpha,o_\alpha)\right\}_\alpha$ of  $\mathrm{PI}_{\upkappa,\upgamma}(R)$ Dirichlet spaces, and we use the notations $\{\eps_\alpha\}$, $\{R_\alpha\}$, $\{\Phi_\alpha\}$ of Characterization \ref{chara}.

We begin with a technical result.

\begin{prop}\label{prop:compact}
Let $\{u_\alpha\}$ be such that $u_\alpha \in L^2(B_r(o_\alpha),\mu_\alpha)$ for any $\alpha$, for some $r>0$. Assume that:
\begin{enumerate}
\item there exists $u \in L^2(B_r(o),\mu)$ such that $u_\alpha \stackrel{L^2}{\weakto} u$
\item $\sup_\alpha \int_{B_r(o_\alpha)} \di \Gamma_{\alpha}(u_\alpha) < +\infty$.
\end{enumerate}
Then
\begin{equation}\label{eq:compact}
\lim_\alpha \int_{B_r(o_\alpha)} u_\alpha^2\di\mu_\alpha= \int_{B_r(o) }u^2\di\mu.
\end{equation}
\end{prop}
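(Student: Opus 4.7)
Proposition \ref{prop:compact} is a Rellich--Kondrachov type compactness statement for varying PI Dirichlet spaces: weak $L^2$ convergence combined with a uniform bound on the carré du champ measure over $B_r$ upgrades to strong $L^2$ convergence, in the sense of the norm equality \eqref{eq:compact}. Weak lower semicontinuity already provides $\|u\|^2_{L^2(B_r(o))} \le \liminf_\alpha \|u_\alpha\|^2_{L^2(B_r(o_\alpha))}$, so the real task is to produce the matching $\limsup$ inequality.

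My approach combines heat semigroup smoothing with Mosco convergence. For a small parameter $\delta>0$, I would first use Proposition \ref{prop:approx} to construct Lipschitz cutoffs $\chi_\alpha$ on $X_\alpha$ satisfying $\chi_\alpha = 1$ on $B_{r-2\delta}(o_\alpha)$, $\supp \chi_\alpha \subset B_{r-\delta}(o_\alpha)$, $\Lip_{\dist_\alpha} \chi_\alpha \le C/\delta$, and $\chi_\alpha \to \chi$ uniformly, where $\chi$ is an analogous cutoff on $X$. Set $\tilde u_\alpha := \chi_\alpha u_\alpha$, extended by zero, and $\tilde u := \chi u$. The Leibniz inequality \eqref{eq:Leibniz} combined with the assumed energy bound and the uniform $L^2$-boundedness of $u_\alpha$ gives $\sup_\alpha [\cE_\alpha(\tilde u_\alpha) + \|\tilde u_\alpha\|^2_{L^2}] < \infty$, and uniform convergence of $\chi_\alpha$ combined with the weak $L^2$ convergence of $u_\alpha$ yields $\tilde u_\alpha \stackrel{L^2}{\weakto} \tilde u$ in the varying-space sense.

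The heat semigroup then turns this weak convergence into strong convergence. By the functional calculus bound \eqref{eq:functcal}, $\|\tilde u_\alpha - P_s^\alpha \tilde u_\alpha\|^2_{L^2} \le s\, \cE_\alpha(\tilde u_\alpha) \le C_\delta s$ uniformly in $\alpha$. For each fixed $s>0$, the uniform convergence $H_\alpha(s,\cdot,\cdot) \to H(s,\cdot,\cdot)$ on compact sets from Theorem \ref{th:improvedKS}, combined with the uniform Gaussian upper bound \eqref{eq:gaussian} and Proposition \ref{prop:convdom}, yields the strong $L^2$ convergence $P_s^\alpha \tilde u_\alpha \to P_s \tilde u$: the integral kernel effectively acts as a compact operator, regularizing weak convergence into strong convergence. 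Combining this with the triangle inequality
\[
\|\tilde u_\alpha\|_{L^2} \le \|\tilde u_\alpha - P_s^\alpha \tilde u_\alpha\|_{L^2} + \|P_s^\alpha \tilde u_\alpha\|_{L^2},
\]
taking $\limsup_\alpha$ and then $s \to 0$ (using $P_s \tilde u \to \tilde u$ in $L^2$) produces $\limsup_\alpha \|\tilde u_\alpha\|_{L^2} \le \|\tilde u\|_{L^2}$, hence the strong convergence $\tilde u_\alpha \to \tilde u$ in $L^2$ for each fixed $\delta$.

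Finally, the cutoff must be removed by letting $\delta \to 0$. Since $\tilde u_\alpha^2 \le u_\alpha^2 \mathbf{1}_{B_r(o_\alpha)}$ with equality on $B_{r-2\delta}(o_\alpha)$, the discrepancy $\|u_\alpha\|^2_{L^2(B_r(o_\alpha))} - \|\tilde u_\alpha\|^2_{L^2}$ is controlled by $\int_{B_r(o_\alpha)\setminus B_{r-2\delta}(o_\alpha)} u_\alpha^2\, d\mu_\alpha$. The main obstacle is showing this annular contribution goes to zero as $\delta \to 0$, uniformly in $\alpha$. I would handle this by invoking equi-integrability of $\{u_\alpha^2\}$: the uniform Sobolev--Poincaré inequality available on $\PI_{\upkappa,\upgamma}(R)$ Dirichlet spaces, applied to $\tilde u_\alpha$, gives a uniform $L^p$-bound for some $p > 2$, and Hölder's inequality combined with the doubling-based annulus estimate Proposition \ref{prop:doubling}-(v) then forces the excess to vanish uniformly as $\delta \to 0$. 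Letting $\alpha \to \infty$ first and $\delta \to 0$ afterwards yields the desired $\limsup$ bound and concludes the proof.
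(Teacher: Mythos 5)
Your argument is correct in outline but takes a genuinely different route from the paper's. The paper mollifies $u_\alpha$ by ball averages, $u_{\alpha,\eps}(x)=\fint_{B_\eps(x)}u_\alpha\,d\mu_\alpha$: the pseudo-Poincar\'e inequality gives $\|u_\alpha-u_{\alpha,\eps}\|_{L^2(B_s)}\le C\eps$ uniformly in $\alpha$, while for fixed $\eps$ the doubling property makes $\{u_{\alpha,\eps}\}_\alpha$ equicontinuous on a slightly smaller ball, so Arzel\`a--Ascoli upgrades weak to strong convergence; this uses only the metric-measure PI structure and never touches the Mosco machinery or the heat kernel. You replace the ball mollifier by the heat semigroup $P_s^\alpha$, relying on the spectral estimate $\|\tilde u_\alpha-P_s^\alpha\tilde u_\alpha\|_{L^2}^2\lesssim s\,\cE_\alpha(\tilde u_\alpha)$ to quantify smoothing, and on the uniform convergence of heat kernels (available through the Mosco-Gromov-Hausdorff setting, via \eqref{eq:gaussian} and Proposition~\ref{prop:convdom}) to show that $P_s^\alpha$ sends weak $L^2$ convergence to strong. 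This is valid in the appendix's hypotheses, though it invokes strictly more structure than the paper's more elementary route. Both arguments then discharge the boundary layer $B_r\setminus B_{r-2\delta}$ in exactly the same way: a uniform local Sobolev inequality gives an $L^p$ bound for some $p>2$, and Proposition~\ref{prop:doubling}-(v) makes the annular contribution vanish uniformly.

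There is one concrete slip in your final step: you propose to apply the Sobolev inequality to $\tilde u_\alpha=\chi_\alpha u_\alpha$, but $\tilde u_\alpha$ vanishes outside $B_{r-\delta}(o_\alpha)$, so a uniform $L^p$ bound for $\tilde u_\alpha$ says nothing about $\int_{B_r\setminus B_{r-2\delta}}u_\alpha^2\,d\mu_\alpha$, which is precisely the excess you need to control. The fix is to apply the local Sobolev inequality directly to $u_\alpha$ on $B_r(o_\alpha)$ (as the paper does), using the assumed uniform bounds on $\|u_\alpha\|_{L^2(B_r)}$ and on $\int_{B_r}d\Gamma_\alpha(u_\alpha)$. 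A cosmetic remark: the cutoffs $\chi_\alpha$ need not come from Proposition~\ref{prop:approx}; setting $\chi_\alpha=\xi(\dist_\alpha(o_\alpha,\cdot))$ for a fixed Lipschitz profile $\xi$ gives exactly the stated support and Lipschitz properties without the modulus-of-continuity overhead.
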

\begin{proof}We first prove that for any $s<r$,
\begin{equation}\label{eq:cvL2s}\lim_\alpha \int_{B_s(o_\alpha)} u_\alpha^2\di\mu_\alpha= \int_{B_s(o) }u^2\di\mu.\end{equation}
For $\eps<(r-s)/4$, we introduce $$u_{\alpha,\eps}(x)=\int_{B_\eps(x)} u_\alpha\di\mu_\alpha.$$
The Poincaré inequality implies the Pseudo Poincaré inequality \cite[Lemme in page 301]{CoulhonSC1993}:
$$\left\| u_\alpha-u_{\alpha,\eps}\right\|_{L^2(B_s(o_\alpha))}\le C\eps$$
and
$$\left\| u-u_{\eps}\right\|_{L^2(B_s(o))}\le C\eps,$$
where $C$ depends only on the doubling constant, the Poincaré constant and of $\sup_\alpha \int_{B_r(o_\alpha)} \di \Gamma_{\alpha}(u_\alpha)$.
The H\"older inequality and the doubling property [\ref{prop:doubling}-v] imply that for fixed $\eps>0$, the sequence $\{ u_{\alpha,\eps}\}_\alpha$ is equicontinuous on $B_{(s+r)/2}(o_\alpha)$, hence $ u_{\alpha,\eps}\to u_\eps$ uniformly in $B_{s}(o_\alpha)$. Hence we get the strong convergence in $L^2(B_s(o))$ and the convergence \eqref{eq:cvL2s}.

Since the spaces are uniformly PI, they satisfy a same local Sobolev inequality \cite[Th.~2.6]{sturm1996analysis}  meaning that there exists $C>0$, $\nu>2$ and $\delta \in (0,1)$ independant on $\alpha$ such that
\[
 \left(\int_{B_{r}(o_\alpha)} |\psi|^{\frac{2\nu}{\nu-2}}\di\mu_\alpha\right)^{1-2/\nu}\le C\left(\int_{B_{r}(o_\alpha)}d\Gamma_\alpha(\psi)+\int_{B_{r}(o_\alpha)} |\psi|^2\di\mu_\alpha\right)
\]
for any $\psi\in \cD(\cE_\alpha)$.
In particular, we get the a priori bound
$$\sup_{\alpha}\|u_\alpha\|_{L^{\frac{2\nu}{\nu-2}}B_r(o_\alpha)}\le C.$$
With Hölder's inequality and the doubling property [\ref{prop:doubling}-v], this yields
\begin{align*}
\left| \int_{B_s(o_\alpha)} u_\alpha^2\di\mu_\alpha- \int_{B_r(o_\alpha)} u_\alpha^2\di\mu_\alpha\right| & = \left| \int_{B_r(o_\alpha)} u_\alpha^2(\mathbf{1}_{B_s(o_\alpha)}-1)\di\mu_\alpha\right|\\
& \le  \left(\int_{B_{r}(o_\alpha)} |\psi|^{\frac{2\nu}{\nu-2}}\di\mu_\alpha\right)^{1-\frac{2}{\nu}} \mu(B_r(o_\alpha)\backslash B_s(o_\alpha))^{\frac{2}{\nu}}\\
& \le C (r-s)^{2\delta/\nu}
\end{align*}
which justifies the intervention
$$\lim_{s\to r} \lim_\alpha \int_{B_s(o_\alpha)} u_\alpha^2\di\mu_\alpha= \lim_\alpha \lim_{s\to r}\int_{B_s(o_\alpha)} u_\alpha^2\di\mu_\alpha.$$
\end{proof}
\subsubsection{Convergence of the core $\cC_c\cap \cD(\cE)$}
The next result indicates that in a certain sense the space $\cC_c(X)\cap \cD(\cE)$ is the limit of the spaces $\cC_c(X_\alpha)\cap \cD(\cE_\alpha)$.
\begin{prop}\label{prop:cvCore} Let $\varphi\in \cC_c(X)\cap \cD(\cE)$. Then there exists $\{\varphi_\alpha\}$, with $\varphi_\alpha \in \cC_c(X_\alpha)\cap \cD(\cE_\alpha)$ for any $\alpha $, such that $$\varphi_\alpha\stackrel{\cC_c}{\to} \varphi\text{ and } \varphi_\alpha\stackrel{\mE}{\to} \varphi.$$
Moreover if $\varphi$ is non negative then each $\varphi_\alpha$ can be chosen to be also non negative.
\end{prop}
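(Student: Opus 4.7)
The plan is to obtain $\varphi_\alpha$ by a three-step construction: invoke Mosco convergence to get strong energy approximants, regularize these via the heat semigroup to gain continuity, and localize via a Lipschitz cutoff transferred through Proposition \ref{prop:approx} to gain compact support.

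First, by Mosco convergence pick $\tilde\varphi_\alpha \in \cD(\cE_\alpha)$ with $\tilde\varphi_\alpha \stackrel{\mE}{\to} \varphi$; if $\varphi \ge 0$ replace $\tilde\varphi_\alpha$ by $(\tilde\varphi_\alpha)_+$, which remains a recovery sequence by the Markov property of $\cE_\alpha$. For $t>0$ set $u_\alpha^{(t)} := P_{t}^\alpha \tilde\varphi_\alpha$. The contractivity of $P_t^\alpha$ in energy, Proposition \ref{prop:equivmosco}, and the Mosco lower semicontinuity yield
$$\cE(P_t\varphi) \le \liminf_\alpha \cE_\alpha(u_\alpha^{(t)}) \le \limsup_\alpha \cE_\alpha(u_\alpha^{(t)}) \le \lim_\alpha \cE_\alpha(\tilde\varphi_\alpha) = \cE(\varphi),$$
and since $\cE(P_t\varphi)\to \cE(\varphi)$ as $t\to 0^+$ by the spectral theorem, a diagonal extraction gives $t_\alpha \downarrow 0$ such that $u_\alpha := u_\alpha^{(t_\alpha)} \stackrel{\mE}{\to} \varphi$ strongly. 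Using the local Hölder estimate \eqref{locHold} applied to the heat-equation solution $(s,x) \mapsto P_s^\alpha \tilde\varphi_\alpha$ together with the $L^\infty$-bound \eqref{HeatL} on $P_s^\alpha \tilde\varphi_\alpha$ coming from the Gaussian upper bound of Proposition \ref{prop:HKPI}, the family $\{u_\alpha\}$ is locally equicontinuous; combined with the $L^2$-convergence to $\varphi$, Arzelà--Ascoli forces $u_\alpha \to \varphi$ uniformly on compact sets.

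Second, fix $R_0 > 0$ with $\supp\varphi \subset B_{R_0}(o)$ and a cutoff $\chi \in \Lip_c(X,\dist_\cE)$ with $\chi \equiv 1$ on $B_{R_0+1}(o)$, $\chi \equiv 0$ outside $B_{R_0+2}(o)$, $0\le\chi\le 1$. Set $\chi_\alpha := \mathcal{A}_\alpha \chi \in \Lip_c(X_\alpha)$: by Proposition \ref{prop:approx} these are uniformly Lipschitz, valued in $[0,1]$, with $\chi_\alpha \stackrel{\cC_c}{\to} \chi$ and $\chi_\alpha \equiv 1$ on $B_{R_0+1/2}(o_\alpha)$ for $\alpha$ large. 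Define $\varphi_\alpha := \chi_\alpha u_\alpha$. Then $\varphi_\alpha \in \cC_c(X_\alpha)$, non-negativity is preserved, and $\varphi_\alpha \stackrel{\cC_c}{\to} \varphi$ by uniform convergence of the factors combined with $\chi \equiv 1$ on a neighborhood of $\supp\varphi$. Membership in $\cD(\cE_\alpha)$ and the energy convergence both follow from the Leibniz expansion
$$\di\Gamma_\alpha(\varphi_\alpha) = \chi_\alpha^2 \di\Gamma_\alpha(u_\alpha) + 2\chi_\alpha u_\alpha \di\Gamma_\alpha(\chi_\alpha, u_\alpha) + u_\alpha^2 \di\Gamma_\alpha(\chi_\alpha):$$
the leading term $\int \chi_\alpha^2 \di\Gamma_\alpha(u_\alpha)$ converges to $\int \chi^2 \di\Gamma(\varphi) = \cE(\varphi)$ by localization of the strong energy convergence $u_\alpha \stackrel{\mE}{\to}\varphi$; the last term vanishes in the limit because $\di\Gamma_\alpha(\chi_\alpha)$, dominated by $\bar{C}^2 \Lip(\chi)^2 \di\mu_\alpha$ via \eqref{EnergyLip}, is supported in $B_{R_0+2}(o_\alpha)\setminus B_{R_0+1/2}(o_\alpha)$ where $u_\alpha \to 0$ in $L^2$ by Proposition \ref{prop:compact} applied to $u_\alpha$ restricted to that annulus (using that $\varphi$ vanishes there); and the cross-term is controlled by Cauchy--Schwarz against the other two.

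The main obstacle is the uniform convergence $u_\alpha \to \varphi$ on compacta. Strong energy convergence follows cleanly from the sandwich argument, but the Hölder estimate \eqref{locHold} features constants depending on $\sup_s \|P_s^\alpha \tilde\varphi_\alpha\|_\infty$ for $s$ close to $t_\alpha$, which can blow up as $t_\alpha\to 0$ since $\tilde\varphi_\alpha$ is only in $L^2$. Matching the decay rate of $t_\alpha$ against the blow-up $t^{-\nu/2}$ of the heat-kernel $L^\infty$-bound \eqref{HeatL} is the delicate technical point that dictates the choice of diagonal extraction and must be performed on balls enlarging $\supp\varphi$ by a fixed amount so that the uniform doubling and non-collapsing assumptions feed into uniform Hölder bounds.
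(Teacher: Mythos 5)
Your overall architecture (approximate in energy, smooth by the heat semigroup, then cut off) parallels the paper's, but the starting point is wrong in a way that makes the central step irreparable within your framework, and you have in fact put your finger on the problem without solving it. An abstract Mosco recovery sequence $\tilde\varphi_\alpha$ provides no uniform $L^\infty$ bound and, more seriously, no uniform modulus of continuity. Truncating to $[-\|\varphi\|_\infty,\|\varphi\|_\infty]$ is legitimate (the Markov property keeps it a recovery sequence) and gives the $L^\infty$ bound, but the local H\"older estimate \eqref{locHold} applied on a parabolic cylinder of time-scale $t_\alpha$ forces a spatial scale $r\sim\sqrt{t_\alpha}$, so the H\"older seminorm of $P_{t_\alpha}^\alpha\tilde\varphi_\alpha$ grows like $t_\alpha^{-\upalpha/2}$. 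There is no diagonal choice of $t_\alpha\downarrow 0$ which simultaneously makes $\cE_\alpha(P_{t_\alpha}^\alpha\tilde\varphi_\alpha)$ close to $\cE(\varphi)$ and keeps the family equicontinuous: equicontinuity is simply not available at the level of the input, and $P_{t_\alpha}^\alpha$ with $t_\alpha\to0$ does not create it. Also note that the non-collapsing assumption you invoke at the end is not part of the hypotheses of this proposition (it is stated for general $\mathrm{PI}$ limits, Appendix setting), so it cannot be used to rescue the estimate.

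The paper resolves this by never invoking parabolic regularity for the smoothed sequence. It starts instead from the explicitly constructed approximants $f_\alpha:=\mathcal{A}_\alpha\varphi$ of Proposition \ref{prop:approx}, which by construction are uniformly compactly supported, uniformly bounded, and \emph{uniformly equicontinuous} with modulus $\omega$ depending only on $\upkappa$ and the modulus of continuity of $\varphi$. It then shows, using only stochastic completeness and the modulus $\omega$, that $\sup_\alpha\|P_\eps^\alpha f_\alpha-f_\alpha\|_{L^\infty}\to 0$ as $\eps\to0$; since $f_\alpha\stackrel{\cC_c}{\to}\varphi$, this delivers the uniform convergence $P_{\eps_\alpha}^\alpha f_\alpha\to\varphi$ on compacta for any $\eps_\alpha\downarrow0$, independently of the rate. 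The energy convergence then comes from the Mosco diagonal argument: for each fixed $\eps$, $f_\alpha\stackrel{L^2}{\to}\varphi$ implies $P_\eps^\alpha f_\alpha\stackrel{\mE}{\to}P_\eps\varphi$ by Proposition \ref{prop:equivmosco}, and $P_\eps\varphi\stackrel{\mE}{\to}\varphi$ as $\eps\to0$. So the crucial ingredient you are missing is the detour through Proposition \ref{prop:approx} to manufacture a uniformly equicontinuous starting sequence; once one has that, the heat-semigroup perturbation is controlled by an elementary integral estimate rather than by parabolic H\"older theory. A secondary but real advantage of the same detour is the uniform exponential decay of $P_\eps^\alpha f_\alpha$ outside $B_{2R}(o_\alpha)$ (Remark \ref{rem:decroissance}), which the paper uses to kill the tail term $\int\psi_\alpha^2\di\Gamma_\alpha(\chi_\alpha)$; your argument for the corresponding term goes through the localized $L^2$ compactness of Proposition \ref{prop:compact}, which is also workable, but only once $u_\alpha\stackrel{\mE}{\to}\varphi$ is secured.
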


\begin{proof} \textbf{Step 1.} We construct $\psi_\alpha\in\cC_0(X_\alpha)\cap \cD(\cE_\alpha)$ such that $\psi_\alpha\to \varphi$ uniformly on compact sets and such that $\psi_\alpha\stackrel{\mE}{\to} \varphi.$

Proposition \ref{prop:approx} allows us to build $f_\alpha\in \cC_c(X_\alpha)$ such that  $f_\alpha\stackrel{\cC_c}{\to} \varphi$. Moreover we know that the sequence $\{f_\alpha\}$ is uniformly equicontinuous: there is $\omega\colon \R_+\rightarrow \R_+$ non decreasing, bounded and satisfying $ \omega(\delta) \to0$ when $\delta \to 0$ such that for any $\alpha$ 
$$\left| f_\alpha(x)- f_\alpha(y)\right|\le \omega\left(\dist_\alpha(x,y)\right)$$
for any $\alpha$  and any $x, y \in X_\alpha$. In addition, if it turns out that $\varphi$ is non negative, so is $f_\alpha$.
As $f_\alpha\stackrel{\cC_c}{\to} \varphi$, we also have $f_\alpha\stackrel{L^2}{\to} \varphi$ and for any $\eps>0$:
$P^\alpha_\eps f_\alpha\stackrel{\mE}{\to} P_\eps\varphi$. And $\varphi$ being in $\cD(\cE)$ we get that 
$$\cD(\cE)-\lim_{\eps\to 0} P_\eps\varphi=\varphi.$$

Let us show now that if $\eps_\alpha\downarrow 0$ then we have $P^\alpha_{\eps_\alpha} f_\alpha\to \varphi$ uniformly.
To do so, it is sufficient to demonstrate that
$$\lim_{\eps\to 0} \sup_\alpha \left\| P^\alpha_{\eps} f_\alpha-f_\alpha\right\|_{L^\infty}=0.$$
Using the stochastic completeness, we know that for any $x\in X_\alpha$:
$$P^\alpha_{\eps} f_\alpha(x)-f_\alpha(x)=\int_{X_\alpha} H_\alpha(\eps,x,y)\left(f_\alpha(y)-f_\alpha(x)\right)\,\di\mu_\alpha(y).$$

Hence for any $\kappa>0$, we have

\begin{align*}
\left| P^\alpha_{\eps} f_\alpha(x)-f_\alpha(x)\right|&\le \int_{X_\alpha} H_\alpha(\eps,x,y)\ \omega\left(\dist_\alpha(x,y)\right)\di\mu_\alpha(y)\\
&\le  \omega\left(\kappa\sqrt{\eps}\right)+\|\omega\|_{L^\infty} \int_{X_\alpha\setminus B_{\kappa\sqrt{\eps}}(x)} H_\alpha(\eps,x,y)\,\di\mu_\alpha(y)\\
&\le \omega\left(\kappa\sqrt{\eps}\right)+\frac{\|\omega\|_{L^\infty} \upgamma}{\mu\left( B_{\sqrt{\eps}}(x)\right)}\int_{X_\alpha\setminus B_{\kappa\sqrt{\eps}}(x)} e^{-\frac{\dist_\alpha^2(x,y)}{\upgamma \eps}}\,\di\mu_\alpha(y)\\
&\omega\left(\kappa\sqrt{\eps}\right)+\frac{\|\omega\|_{L^\infty} \upgamma}{\mu\left( B_{\sqrt{\eps}}(x)\right)}\int_{\kappa \sqrt{\eps}}^\infty e^{-\frac{r^2}{\upgamma \eps}}\, \frac{2r}{\upgamma \eps}\, \mu\left( B_{r}(x)\right)dr
\end{align*}
We use the doubling condition [\ref{prop:doubling}-iii)] to deduce that if $r\ge \kappa\sqrt{\eps}>R>\sqrt{\eps}$ then
$$\mu\left( B_{r}(x)\right)\le e^{\lambda \frac{r}{\sqrt{\eps}}} \mu\left( B_{\sqrt{\eps}}(x)\right),$$
Hence if $\kappa\sqrt{\eps}>R>\sqrt{\eps}$ then
\begin{align*}\left| P^\alpha_{\eps} f_\alpha(x)-f_\alpha(x)\right|&\le \omega\left(\kappa\sqrt{\eps}\right)+\|\omega\|_{L^\infty} \int_{\kappa \sqrt{\eps}}^\infty e^{-\frac{r^2}{\upgamma \eps}+\lambda \frac{r}{\sqrt{\eps}} }\, \frac{2r}{ \eps}\, dr\\
&\le \omega\left(\kappa\sqrt{\eps}\right)+\|\omega\|_{L^\infty} \int_{\kappa }^\infty e^{-\frac{r^2}{\upgamma }+\lambda r }\, 2r\, dr\\
 &\le \omega\left(\kappa\sqrt{\eps}\right)+C(\lambda,\upgamma) e^{-\frac{\kappa^2}{2\upgamma}} \|\omega\|_{L^\infty} .\end{align*}
 We then choose $\kappa=\eps^{-\frac 14}$ and we get for $\eps$ small enough:
 $$\left\| P^\alpha_{\eps} f_\alpha-f_\alpha\right\|_{L^\infty}\le  \omega\left(\eps^{\frac 14}\right)+C(\lambda,\upgamma) e^{-\frac{1}{2\upgamma\sqrt{\eps}}} \|\omega\|_{L^\infty}.$$
 \begin{rem}\label{rem:decroissance} The same estimate leads to the following decay estimate for $P^\alpha_{\eps} f_\alpha$. Assume that $R>0$ and $L$ are such that $\supp f_\alpha\subset B_{R}(o_\alpha)$ and that $\| f_\alpha\|_{L^\infty}\le L$.
 Then for $x\in X_\alpha\setminus B_{2R}(o_\alpha)$:
 $$\left|P^\alpha_{\eps} f_\alpha(x)\right|\le C L\,  e^{-\frac{\dist_\alpha^2(o_\alpha,x)}{4\upgamma\eps}+\lambda\frac{R}{\sqrt{\eps}} }.$$
 \end{rem}
 
 To build $\psi_\alpha$ we use U. Mosco's argument for the proof of the implication $2)\Rightarrow 1)$ in Proposition \ref{prop:equivmosco}.
 We find a decreasing sequence $\eta_\ell\downarrow 0$ and a increasing sequence $\alpha_\ell\uparrow+\infty$ such that 
 $$0<\eps\le \eta_\ell\Longrightarrow  \left| \| P_\eps \varphi\|_{L^2}^2- \|  \varphi\|_{L^2}^2 \right|+  \left| \cE\left( P_\eps \varphi\right) -\cE\left(  \varphi\right) \right|\le 2^{-\ell}$$
 $$\alpha\ge \alpha_\ell  \Longrightarrow  \left| \| P^\alpha_{\eta_\ell} f_\alpha\|_{L^2}^2- \|  P_{\eta_\ell}\varphi\|_{L^2}^2 \right|+  \left| \cE_\alpha\left( P^\alpha_{\eta_\ell} f_\alpha \right) -\cE\left(  P_{\eta_\ell} \varphi\right) \right|\le 2^{-\ell}$$

Then if $\alpha \in [\alpha_\ell,\alpha_{\ell+1})$, we define $\eps_\alpha=\eta_\ell$  and $\delta_\alpha=2^{1-\ell}$ and we let
$$\psi_\alpha=P^\alpha_{\eps_\alpha} f_\alpha.$$
Then we have $\lim_\alpha \delta_\alpha=0$ and  $\psi_\alpha\to \varphi$ uniformly and
$$ \left| \| \psi_\alpha\|_{L^2}^2- \|  \varphi\|_{L^2}^2 \right|+  \left| \cE_\alpha\left(\psi_\alpha \right) -\cE\left(  \varphi\right) \right|\le \delta_\alpha.$$
We necessarily have $\psi_\alpha\stackrel{L^2}{\weakto}\varphi$ and the above estimate implies the strong convergence  
$\psi_\alpha\stackrel{\mE}{\to}\varphi$.

\textbf{Step 2.} We modify each $\psi_\alpha$ with appropriate cut-off functions. 
Let $R>0$ such that that for any $\alpha$:  $\supp f_\alpha\subset B_R(o_\alpha)$ and 
 $\supp \varphi\subset B_R(o)$ and let $L$ such that for any $\alpha$
$$\|f_\alpha\|_{L^\infty}\le L.$$
We let $\chi_\alpha\colon X_\alpha\rightarrow \R$ be defined by
$$\chi_\alpha(x)=\xi(\dist_\alpha(o_\alpha,x)/2R).$$
where $\xi$ is defined by \eqref{coupure}.
And we let 
$$\varphi_\alpha=\chi_\alpha\,\psi_\alpha.$$

It is easy to check that $\varphi_\alpha\stackrel{\cC_c}{\to} \varphi$. In order to verify that $\varphi_\alpha\stackrel{\mE}{\to}\varphi$, we need to check that $$\lim_{\alpha\to\infty} \cE_\alpha\left( \,(1-\chi_\alpha)\psi_\alpha\right)=0.$$
The chain rule implies that 
\begin{equation}\label{{decomp1}} \cE_\alpha\left( \,(1-\chi_\alpha)\psi_\alpha\right)=\cE_\alpha\left(\psi_\alpha,(1-\chi_\alpha)^2\psi_\alpha\right)+\int_{X_\alpha} \psi_\alpha^2\di\Gamma_\alpha(\chi_\alpha).\end{equation}
We have $\psi_\alpha\stackrel{\mE}{\to}\varphi$ and  $(1-\chi_\alpha)^2\psi_\alpha\stackrel{\mE}{\weakto} 0$ hence the first term in the right hand side of \eqref{{decomp1}} tends to $0$ when $\alpha\to\infty$.
The function $\chi_\alpha$ are uniformly $(1/2r)$-Lipschitz hence by \eqref{EnergyLip} there is a constant $C$ independant of $\alpha$ such that 
$$\int_{X_\alpha} \psi_\alpha^2\di\Gamma_\alpha(\chi_\alpha)\le C \mu_\alpha\left(B_{4R}(o_\alpha)\right) \sup_{B_{4R}(o_\alpha) \setminus B_{2R}(o_\alpha)}| \psi_\alpha|^2 .$$
Using the Remark \eqref{rem:decroissance},we can conclude that 
$$\lim_{\alpha\to\infty}\int_{X_\alpha} \psi_\alpha^2\di\Gamma_\alpha(\chi_\alpha)=0.$$
\end{proof}
\subsubsection{Energy convergence and convergence of the \textit{carré du champ}}We can now easily deduced the following convergence result for the \textit{carré du champ} under convergence in energy.
\begin{prop}\label{prop:cvenergyI} Assume that $\varphi \in \cC(X) \cap \cD(\cE)$, $u \in \cD_{loc}(\cE)$, and $\varphi_\alpha \in \cC(X_\alpha) \cap \cD(\cE_\alpha)$, $u_\alpha \in \cD_{loc}(\cE_\alpha)$ for any $\alpha$, are such that $\varphi_\alpha\stackrel{\cC_c}{\to} \varphi$, $\varphi_\alpha\stackrel{\mE}{\to} \varphi$, $u_\alpha\stackrel{\mE_{loc}}{\to} u$ and $\sup_\alpha \|u_\alpha\|_{L^\infty}<\infty$. Then
\begin{equation}\label{eq:cvenergyI}
\lim_{\alpha\to\infty} \int_{X_\alpha} \varphi_\alpha\di\Gamma(u_\alpha)=\int_{X} \varphi\di\Gamma(u).
\end{equation}
Moreover if for each $\rho>0$ there is some $p>1$ such that 
$$\sup_{\alpha}\int_{B_\rho(o_\alpha)}\left|\frac{\di\Gamma(u_\alpha)}{\di\mu_\alpha}\right|^p\di\mu_\alpha<\infty$$ then for each $\rho>0:$
\begin{equation}\label{eq:cvenergyII}
\lim_{\alpha\to\infty} \int_{B_\rho(o_\alpha)}\di\Gamma(u_\alpha)=\int_{B_\rho(o)} \di\Gamma(u).
\end{equation}
\end{prop}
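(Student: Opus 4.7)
The plan is to reduce \eqref{eq:cvenergyI} to a global Mosco convergence problem via a cutoff, apply the chain-rule identity
\[
\int_X \varphi\,\di\Gamma(v)=\cE(v,v\varphi)-\tfrac{1}{2}\cE(v^{2},\varphi),\qquad v\in\cD(\cE)\cap L^{\infty},\ \varphi\in \cC_{c}(X)\cap\cD(\cE),
\]
(which follows from \eqref{eq:chainrulesquare} and the Leibniz rule for $\Gamma$), and pass to the limit term by term using the standard Mosco principle: if $f_\alpha \stackrel{\mE}{\longrightarrow}f$ strongly and $g_\alpha \stackrel{\mE}{\weakto} g$ weakly, then $\cE_\alpha(f_\alpha,g_\alpha)\to\cE(f,g)$. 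For (ii), sandwich $\mathbf{1}_{B_\rho(o)}$ between continuous cutoffs and control the remainder by H\"older plus the annular doubling estimate Proposition \ref{prop:doubling}-v).

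First I would fix $R>0$ with $\supp\varphi \subset B_R(o)$, so that $\supp\varphi_\alpha \subset B_{R+1}(o_\alpha)$ for $\alpha$ large. Using Proposition \ref{prop:cvCore} combined with Lipschitz distance cutoffs based on $\dist_\alpha(\cdot,o_\alpha)$ and the Markov truncation $t\mapsto t_{0}^{1}$, pick $\chi \in \cC_c(X)\cap\cD(\cE)$ with $\chi \equiv 1$ on $B_{R+2}(o)$ and $\chi_\alpha \in \cC_c(X_\alpha)\cap\cD(\cE_\alpha)$ with $\chi_\alpha\equiv 1$ on $B_{R+2}(o_\alpha)$, $\chi_\alpha \stackrel{\cC_c}{\longrightarrow} \chi$, $\chi_\alpha \stackrel{\mE}{\longrightarrow}\chi$ strongly. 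Set $v_\alpha := \chi_\alpha u_\alpha \in \cD(\cE_\alpha)$ and $v := \chi u \in \cD(\cE)$. The $\mE_{\loc}$ strong convergence of $u_\alpha$, the uniform $L^\infty$ bound, and the Leibniz expansion of $\Gamma(\chi_\alpha u_\alpha)$ over the fixed ball $B_{R+3}$ then give $v_\alpha \stackrel{\mE}{\longrightarrow} v$ strongly. Since $\chi_\alpha \equiv 1$ on $\supp\varphi_\alpha$, strong locality of $\Gamma$ together with the chain-rule identity yield
\[
\int_{X_\alpha}\varphi_\alpha\,\di\Gamma(u_\alpha)=\int_{X_\alpha}\varphi_\alpha\,\di\Gamma(v_\alpha)=\cE_\alpha(v_\alpha,v_\alpha\varphi_\alpha)-\tfrac{1}{2}\cE_\alpha(v_\alpha^2,\varphi_\alpha).
\]

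For the second term, $\varphi_\alpha \stackrel{\mE}{\longrightarrow}\varphi$ strongly by assumption, while $v_\alpha^2 \to v^2$ in $L^2$ (from uniform boundedness plus strong $L^2$ convergence of $v_\alpha$) and $\cE_\alpha(v_\alpha^2)\le 4\|v_\alpha\|_\infty^2 \cE_\alpha(v_\alpha)$ is bounded, so $v_\alpha^2 \stackrel{\mE}{\weakto} v^2$ and the Mosco principle gives $\cE_\alpha(v_\alpha^2,\varphi_\alpha) \to \cE(v^2,\varphi)$. For the first term, $v_\alpha \stackrel{\mE}{\longrightarrow} v$ is already strong; on the other hand, $v_\alpha\varphi_\alpha \to v\varphi$ in $L^2$ and $\cE_\alpha(v_\alpha\varphi_\alpha)\le 2\|v_\alpha\|_\infty^2 \cE_\alpha(\varphi_\alpha)+2\|\varphi_\alpha\|_\infty^2 \cE_\alpha(v_\alpha)$ is bounded, so $v_\alpha\varphi_\alpha \stackrel{\mE}{\weakto} v\varphi$ and $\cE_\alpha(v_\alpha,v_\alpha\varphi_\alpha) \to \cE(v,v\varphi)$. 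Reassembling the identity on $(X,\mu,\cE)$ yields \eqref{eq:cvenergyI}.

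For (ii), build for each $\eps \in (0,\rho)$ Lipschitz cutoffs $\psi^\pm_{\eps,\alpha}$ on $X_\alpha$ (and their limits $\psi^\pm_\eps$ on $X$) from $\dist_\alpha(\cdot,o_\alpha)$ and the function \eqref{coupure}, satisfying $\mathbf{1}_{B_{\rho-\eps}(o_\alpha)} \le \psi^-_{\eps,\alpha} \le \mathbf{1}_{B_\rho(o_\alpha)} \le \psi^+_{\eps,\alpha} \le \mathbf{1}_{B_{\rho+\eps}(o_\alpha)}$ and $\psi^\pm_{\eps,\alpha} \stackrel{\mE}{\longrightarrow} \psi^\pm_\eps$ (invoking Proposition \ref{prop:cvCore}). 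Applying (i) sandwiches $\int_{B_\rho(o_\alpha)}\di\Gamma(u_\alpha)$ between two quantities both converging to $\int_X\psi^\pm_\eps\,\di\Gamma(u)$; their gap is bounded by
\[
\Bigl(\int_{B_{\rho+\eps}(o_\alpha)}\Bigl|\tfrac{\di\Gamma(u_\alpha)}{\di\mu_\alpha}\Bigr|^{p}\,\di\mu_\alpha\Bigr)^{1/p} \mu_\alpha\bigl(B_{\rho+\eps}(o_\alpha)\setminus B_{\rho-\eps}(o_\alpha)\bigr)^{1/p'} \le C\eps^{\delta/p'},
\]
uniformly in $\alpha$, via H\"older together with the $L^p$ hypothesis and Proposition \ref{prop:doubling}-v). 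Sending $\alpha\to\infty$ then $\eps\to 0$ concludes. The most delicate step is the strong energy convergence $v_\alpha \stackrel{\mE}{\longrightarrow} v$: the Leibniz expansion of $\Gamma(\chi_\alpha u_\alpha)$ produces cross terms $\int \chi_\alpha u_\alpha\,\di\Gamma(\chi_\alpha,u_\alpha)$ and $\int u_\alpha^2\,\di\Gamma(\chi_\alpha)$ whose control crucially uses the $\PI$ structure (through Propositions \ref{prop:approx} and \ref{prop:cvCore}) to produce energy-convergent cutoffs that are identically $1$ on a neighborhood of $\supp\varphi_\alpha$; once this is in place, the remainder is clean strong$\times$weak Mosco bookkeeping.
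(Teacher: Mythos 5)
Your overall strategy matches the paper's exactly: both reduce to the bilinear identity
\[
\int_{X_\alpha}\varphi_\alpha\,\di\Gamma(w_\alpha)=\cE_\alpha(\varphi_\alpha w_\alpha,w_\alpha)-\tfrac12\cE_\alpha(\varphi_\alpha,w_\alpha^2)
\]
and then apply the strong$\times$weak Mosco principle for bilinear pairings to each term, and both handle \eqref{eq:cvenergyII} by sandwiching $\mathbf{1}_{B_\rho}$ with approximating cutoffs and using H\"older together with Proposition \ref{prop:doubling}-v). The paper, however, only writes out the case $u_\alpha\stackrel{\mE}{\to}u$ globally and explicitly declares the localization ``immediate but cumbersome''; you attempt to supply exactly that missing localization by replacing $u_\alpha$ with $v_\alpha=\chi_\alpha u_\alpha$.

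The genuine gap is in your assertion that $v_\alpha=\chi_\alpha u_\alpha\stackrel{\mE}{\to}\chi u$ strongly ``follows from the $\mE_{\loc}$ strong convergence of $u_\alpha$, the uniform $L^\infty$ bound, and the Leibniz expansion.'' Expanding via Leibniz,
\[
\cE_\alpha(\chi_\alpha u_\alpha)=\int \chi_\alpha^2\,\di\Gamma(u_\alpha)+2\int\chi_\alpha u_\alpha\,\di\Gamma(\chi_\alpha,u_\alpha)+\int u_\alpha^2\,\di\Gamma(\chi_\alpha),
\]
and showing the first term converges to $\int\chi^2\,\di\Gamma(u)$ is precisely an instance of the conclusion \eqref{eq:cvenergyI} that you are trying to prove (with $\chi_\alpha^2$ playing the role of $\varphi_\alpha$). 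As written, the argument is circular: you use a strong-energy convergence of the cutoff product to prove a statement about convergence of $\int\varphi_\alpha\,\di\Gamma(u_\alpha)$, while that very strong convergence already encodes such a statement. You flag this as ``the most delicate step,'' but merely invoking Propositions \ref{prop:approx} and \ref{prop:cvCore} does not break the circle, since those only control the cutoffs $\chi_\alpha$ and say nothing about $\int\chi_\alpha^2\,\di\Gamma(u_\alpha)$. What is actually needed to localize cleanly is to observe that one only requires $\chi_\alpha u_\alpha\stackrel{\mE}{\weakto}\chi u$ \emph{weakly} (which follows from strong $L^2_{\loc}$ convergence plus the bound on $\int_{B_\rho}\di\Gamma(u_\alpha)$), and that the strong factor in each bilinear pairing should be $\varphi_\alpha$, not $v_\alpha$ --- i.e.\ one should manipulate the identity so that the strong-energy convergence one invokes is the hypothesized one on $\varphi_\alpha$, with $u_\alpha$ (or its cutoff) appearing only in the weak slot. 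Your version places $v_\alpha$ in the strong slot of $\cE_\alpha(v_\alpha,v_\alpha\varphi_\alpha)$, which is what generates the unproved requirement.
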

\begin{proof} We give a proof only in case $u_\alpha\stackrel{\mE}{\to} u$, the demonstration in the stated case being identical up to a few immediate but cumbersome justifications.

To prove \eqref{eq:cvenergyI} we use the definition of the carré du champ,
\begin{equation}\label{defcarré} \int_{X_\alpha} \varphi_\alpha\di\Gamma(u_\alpha)=\cE_\alpha(\varphi_\alpha \,u_\alpha,u_\alpha)-\frac 12 \cE_\alpha\left(\varphi_\alpha,u_\alpha^2\right),\end{equation}
together with the following observation: the chain rule implies that the sequences $\{\varphi_\alpha u_\alpha\}_\alpha$ and $\{u_\alpha^2\}_\alpha$ are bounded in energy and thus have weak limit in $\cE$. However when $\psi_\alpha\stackrel{\mE}{\to} \psi$ we get $\psi_\alpha\varphi_\alpha\stackrel{\mE}{\to} \psi\varphi$ and then
$\int_{X_\alpha}\psi_\alpha\varphi_\alpha u_\alpha\di\mu_\alpha=\int_{X}\psi\varphi u\di\mu$, so that 
$$\varphi_\alpha u_\alpha\stackrel{\mE}{\weakto} \varphi u.$$
Moreover, when $\psi_\alpha\stackrel{\mE}{\to} \psi$ we get $\psi_\alpha u_\alpha\stackrel{\L^2}{\to} \psi u$
 and then $\int_{X_\alpha}\psi_\alpha u_\alpha^2 \di\mu_\alpha=\int_{X}\psi u^2\di\mu$, so that 
$$ u^2_\alpha\stackrel{\mE}{\weakto} u^2.$$
Thus \eqref{defcarré} converges to $\int_X \varphi \di \Gamma(u)$.

To prove \eqref{eq:cvenergyII}, take $\eps>0$.  Acting as in the proof of Proposition \ref{prop:compact}, with H\"older's inequality and the doubling condition  [\ref{prop:doubling}-v] we can find $\tau\in (0,\rho)$ such that for any $\alpha$,
$$\left|\int_{B_{\rho-\tau}(o_\alpha)}\di\Gamma(u_\alpha) -\int_{B_\rho(o_\alpha)}\di\Gamma(u_\alpha)\right|\le \frac{\eps}{3}.$$
Moreover, by regularity of the Radon measure $\Gamma(u)$, we an assum that
$$\left|\int_{B_{\rho-\tau}(o)}\di\Gamma(u) -\int_{B_\rho(o)}\di\Gamma(u)\right|\le \frac{\eps}{3}.$$
We set
$$\varphi(x):=\begin{cases} 1&\text{ if } x\in B_{\rho-\tau}(o),\\
(2\rho-\tau-2\dist(o,x))/\tau&\text{ if } x\in B_{\rho-\tau/2}(o)\setminus B_{\rho-\tau}(o),\\
0&\text{ if } x\not\in B_{\rho-\tau/2}(o).\end{cases}$$
We have
\begin{equation}\label{eq:est1}\left|\int_{B_{\rho}(o)}\di\Gamma(u) -\int_{X}\varphi\di\Gamma(u)\right|\le \frac \eps 3\, \cdot\end{equation}
Thanks to Proposition \ref{prop:cvCore}, we can choose $\varphi_\alpha\in \cC_c(X_\alpha)\cap \cD(\cE_\alpha)$ non-negative for any $\alpha$ such that $\varphi_\alpha\stackrel{\cC_c}{\to} \varphi$ and $\varphi_\alpha\stackrel{\mE}{\to} \varphi$. Then there is some sequence $\delta_\alpha\downarrow 0$ such that 
\begin{itemize}
\item $|\varphi_\alpha-1|\le \delta_\alpha$ on $B_{\rho-\tau}(o_\alpha)$,
\item $\varphi_\alpha\le 1+\delta_\alpha$,
\item  $\varphi_\alpha\le \delta_\alpha$ outside $B_{\rho}(o_\alpha)$.
\end{itemize}
We easily get 
\begin{equation}\label{eq:est2}\left|\int_{B_{\rho}(o_\alpha)}\di\Gamma(u_\alpha) -\int_{X_\alpha}\varphi_\alpha\di\Gamma(u_\alpha)\right|\le \delta_\alpha\cE_\alpha(u_\alpha)+(1+\delta_\alpha)\frac \eps 3.\end{equation}
Using \eqref{eq:est1} and  \eqref{eq:est2}, we find $\underline{\alpha}$ such that 
$$\alpha\ge \underline{\alpha}\Longrightarrow \left|\int_{B_{\rho}(o_\alpha)}\di\Gamma(u_\alpha) -\int_{B_{\rho}(o)}\di\Gamma(u) \right| <\eps.$$
\end{proof}
The above argument also implies that the lower semi-continuity of the Carré du champ under weak convergence in energy.

\begin{prop}\label{prop:convfaibleenergie}
In the setting of the previous proposition, we also have $$\liminf_{\alpha\to\infty} \int_{X_\alpha} \varphi_\alpha\di\Gamma(u_\alpha)\ge\int_{X} \varphi\di\Gamma(u),$$
and for any $\rho>0$,
$$\liminf_{\alpha\to\infty}\int_{B_\rho(o_\alpha)}\di\Gamma(u_\alpha)\ge \int_{B_\rho(o)} \di\Gamma(u).$$
\end{prop}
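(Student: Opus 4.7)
The plan is to exploit the variational characterization
\[
\int_X \varphi\,\di\Gamma(u)=\sup_{v\in \cC_c(X)\cap \cD(\cE)}\left\{\, 2\int_X \varphi\,\di\Gamma(u,v)-\int_X \varphi\,\di\Gamma(v)\,\right\},
\]
valid for $\varphi\ge 0$ (implicit in the lower semi-continuity statement): the inequality $\le$ follows from $\int\varphi\,\di\Gamma(u-v)\ge 0$, while equality is attained in the limit by first cutting $u$ off to $\tilde u=\eta u\in\cD(\cE)$ with $\eta\in\cC_c(X)\cap \cD(\cE)$ equal to $1$ on an open neighborhood of $\supp\varphi$ (so that locality of $\Gamma$ gives $\Gamma(u)=\Gamma(\tilde u)$ there and thus $\int\varphi\,\di\Gamma(u)=\int\varphi\,\di\Gamma(\tilde u)$), and then approximating $\tilde u$ in energy by $\cC_c(X)\cap\cD(\cE)$ functions using regularity of $\cE$.

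For each fixed $v\in \cC_c(X)\cap \cD(\cE)$, Proposition \ref{prop:cvCore} supplies $v_\alpha\in \cC_c(X_\alpha)\cap \cD(\cE_\alpha)$ with $v_\alpha\stackrel{\cC_c}{\to}v$ and $v_\alpha\stackrel{\mE}{\to} v$ strongly, and allows us to impose $\varphi_\alpha\ge 0$. The nonnegativity of $\varphi_\alpha$ together with $\int\varphi_\alpha\,\di\Gamma(u_\alpha-v_\alpha)\ge 0$ then yields
\[
\int_{X_\alpha} \varphi_\alpha\,\di\Gamma(u_\alpha)\ge 2\int_{X_\alpha}\varphi_\alpha\,\di\Gamma(u_\alpha,v_\alpha)-\int_{X_\alpha}\varphi_\alpha\,\di\Gamma(v_\alpha),
\]
where the second summand on the right converges to $\int_X \varphi\,\di\Gamma(v)$ by Proposition \ref{prop:cvenergyI}. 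For the cross term I use the polarization
\[
2\int \varphi_\alpha\,\di\Gamma(u_\alpha,v_\alpha)=\cE_\alpha(\varphi_\alpha v_\alpha,u_\alpha)+\cE_\alpha(\varphi_\alpha u_\alpha,v_\alpha)-\cE_\alpha(\varphi_\alpha,u_\alpha v_\alpha),
\]
and check that each of the three bilinear energy pairings converges. The Leibniz rule combined with the uniform $L^\infty$ and energy bounds on $\varphi_\alpha, v_\alpha$ and on $u_\alpha$ gives strong energy convergence $\varphi_\alpha v_\alpha\stackrel{\mE}{\to}\varphi v$ and weak energy convergence $\varphi_\alpha u_\alpha\stackrel{\mE}{\weakto}\varphi u$ and $u_\alpha v_\alpha\stackrel{\mE}{\weakto} uv$; locality of $\Gamma$ restricts each pairing to the uniform compact support of the strongly convergent factor, where local weak energy convergence of $u_\alpha$ is sufficient. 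Each term is thus the $L^2$-inner product of a strongly convergent sequence of $\sqrt{1+L_\alpha}$-images with a weakly convergent one, and therefore converges to its natural limit, giving convergence of the cross term to $\int_X\varphi\,\di\Gamma(u,v)$. Taking $\liminf_\alpha$ and then the supremum over $v$ proves the first assertion.

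The second assertion reduces to the first: fix $\rho>0$ and for $k\in \N\setminus\{0\}$ pick $\varphi_k\in \cC_c(X)\cap \cD(\cE)$ with $0\le \varphi_k\le 1$, $\varphi_k\equiv 1$ on $B_{\rho-1/k}(o)$ and $\supp \varphi_k\subset B_\rho(o)$, together with compatible $\varphi_{k,\alpha}\in \cC_c(X_\alpha)\cap \cD(\cE_\alpha)$ satisfying $0\le \varphi_{k,\alpha}\le 1$ and $\supp \varphi_{k,\alpha}\subset B_\rho(o_\alpha)$ eventually, so that $\int \varphi_{k,\alpha}\,\di\Gamma(u_\alpha)\le \int_{B_\rho(o_\alpha)}\di\Gamma(u_\alpha)$; apply the first part for each $k$ and let $k\to\infty$, using monotone convergence $\int \varphi_k\,\di\Gamma(u)\uparrow \int_{B_\rho(o)}\di\Gamma(u)$ on the open ball. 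The main obstacle is the weak-energy convergence of the products $\varphi_\alpha u_\alpha$ and $u_\alpha v_\alpha$ entering the polarization, which requires combining the Leibniz inequality with the uniform $L^\infty$-bound on $u_\alpha$ and the uniform energy bounds on $v_\alpha$ and $\varphi_\alpha$, together with the observation that the uniform compact support of $v_\alpha$ and $\varphi_\alpha$ makes the local weak energy convergence of $u_\alpha$ sufficient to pass to the limit in each bilinear pairing.
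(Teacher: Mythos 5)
Your proposal follows essentially the same route as the paper: reduce the liminf inequality to convergence of the cross term $\int\varphi_\alpha\,\di\Gamma(u_\alpha,v_\alpha)$ for strongly convergent $v_\alpha$ (what the paper dismisses as ``classical to conclude'' is exactly the variational characterization you make explicit), and then prove that convergence by polarization. The one place where your argument is too thin is the claim that ``the Leibniz rule combined with the uniform $L^\infty$ and energy bounds \ldots gives strong energy convergence $\varphi_\alpha v_\alpha\stackrel{\mE}{\to}\varphi v$.'' The Leibniz inequality~\eqref{eq:Leibniz} only gives a uniform bound on $\cE_\alpha(\varphi_\alpha v_\alpha)$, hence weak energy compactness with the obvious identification of the limit; it does \emph{not} by itself yield convergence of the energy norms, which is what strong energy convergence requires. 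This is precisely the nontrivial point the paper handles: one must expand
\[
\cE_\alpha(\varphi_\alpha v_\alpha)=\cE_\alpha\bigl(\varphi_\alpha,\,v_\alpha^2\varphi_\alpha\bigr)+\int_{X_\alpha}\varphi_\alpha^2\,\di\Gamma(v_\alpha),
\]
send the last term to $\int_X\varphi^2\,\di\Gamma(v)$ by Proposition~\ref{prop:cvenergyI}, and pair the strongly convergent $\varphi_\alpha$ with the weakly convergent $v_\alpha^2\varphi_\alpha$ in the first term. With that step supplied, the rest of your argument (the polarization bookkeeping, the nonnegativity trick $\int\varphi_\alpha\,\di\Gamma(u_\alpha-v_\alpha)\ge 0$, and the monotone exhaustion of $B_\rho(o)$ for the second assertion) is sound and matches the paper.
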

\begin{proof} We also only prove the result under the stronger hypothesis $u_\alpha\stackrel{\mE}{\weakto}  u$.

Once we have shown that when  $v_\alpha\stackrel{\mE}{\to} v$  and $\sup_\alpha \|v_\alpha\|_{L^\infty}<\infty$, then
$$\lim_{\alpha\to\infty} \int_{X_\alpha} \varphi_\alpha\di\Gamma(u_\alpha,v_\alpha)=\int_{X} \varphi\di\Gamma(u,v),$$
it is classical to conclude.
We use the formula
$$2\int_{X_\alpha} \varphi_\alpha\di\Gamma(u_\alpha,v_\alpha)=\cE_\alpha(\varphi_\alpha \,u_\alpha,v_\alpha)+\cE_\alpha(\varphi_\alpha \,v_\alpha,u_\alpha)-\cE_\alpha\left(\varphi_\alpha,u_\alpha v_\alpha\right).$$ 

The above arguments show that the first and the last term in the right hand side are converging to the right data. For the second term, we need to show that $\varphi_\alpha \,v_\alpha\stackrel{\mE}{\to}  \varphi \,v$. We have strong convergence in $L^2$ and 
$$\cE_\alpha(\varphi_\alpha \,v_\alpha)=\cE_\alpha\left(\varphi_\alpha ,\,v^2_\alpha\varphi_\alpha\right)+\int_{X_\alpha} \varphi_\alpha^2\di\Gamma(v_\alpha).$$
Using Proposition \ref{prop:cvenergyI}, we deduce that 
$$\lim_{\alpha} \int_{X_\alpha} \varphi_\alpha^2\di\Gamma(v_\alpha)=\int_{X} \varphi^2\di\Gamma(v),$$ and as $\varphi_\alpha \stackrel{\mE}{\to} \varphi$ and $v^2_\alpha\varphi_\alpha\stackrel{\mE}{\weakto}  
v^2_\alpha\varphi$, so that $\lim_{\alpha}\cE_\alpha\left(\varphi_\alpha ,\,v^2_\alpha\varphi_\alpha\right)=\cE\left(\varphi ,\,v^2\varphi\right)$ so that 
$$\lim_{\alpha}\cE_\alpha(\varphi_\alpha \,v_\alpha)=\cE(\varphi \,v)$$ and $\varphi_\alpha \,v_\alpha\stackrel{\mE}{\to}  \varphi \,v$ and $\lim_{\alpha}\cE_\alpha(\varphi_\alpha \,v_\alpha,u_\alpha)=\cE_\alpha(\varphi \,v,u).$
\end{proof}

\begin{rem}\label{rem:localisationcv} The above result can be localized, meaning that if we assume that functions $u_\alpha\in \cD(B_\rho(o_\alpha),\cE_\alpha)$ satisfy
\begin{itemize}
\item $u_\alpha\stackrel{\mE}{\to} u$,
\item $\sup_{\alpha} \|u_\alpha\|_{L^\infty}<\infty$ ,
\item $\sup_{\alpha}\int_{B_\rho(o_\alpha)}\left|\frac{\di\Gamma(u_\alpha)}{\di\mu_\alpha}\right|^p\di\mu_\alpha<\infty$ for some $p>1$, 
\end{itemize} then
$$\lim_{\alpha\to\infty} \int_{B_\rho(o_\alpha)}\di\Gamma(u_\alpha)=\int_{B_\rho(o)} \di\Gamma(u).$$
\end{rem}
Using Proposition \ref{prop:compact}, we also get the following strong convergence result for the energy measure density.
\begin{prop}\label{prop:strongW12} Assume that functions $u_\alpha\in \cD(B_\rho(o_\alpha),\cE_\alpha)$ satisfy
\begin{itemize}
\item $u_\alpha\stackrel{\mE}{\to} u$,
\item $\sup_{\alpha} \|u_\alpha\|_{L^\infty}<\infty$ ,
\item $\sup_\alpha \int_{B_\rho(o_\alpha)}\di\Gamma(\rho_\alpha)<\infty$,  where $\rho_\alpha=\left|\frac{\di\Gamma(u_\alpha)}{\di\mu_\alpha}\right|^{\frac 12}$ for any $\alpha$,
\end{itemize}
then $$\left|\frac{\di\Gamma(u_\alpha)}{\di\mu_\alpha}\right|^{\frac 12}\stackrel{L^2}{\to} \left|\frac{\di\Gamma(u)}{\di\mu}\right|^{\frac 12}.$$
\end{prop}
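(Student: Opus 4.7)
The plan is to combine weak $L^2$-compactness with the Rellich-type compactness of Proposition \ref{prop:compact} to extract a strongly $L^2$-convergent subsequence of $\{\rho_\alpha\}$, and then identify its limit with $\rho$ by comparing the strong $L^2$-convergence against the weak convergence of the carré du champ measures.

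First, since $u_\alpha\stackrel{\mE}{\to}u$ gives $\cE_\alpha(u_\alpha)\to\cE(u)$, I immediately obtain the norm convergence $\|\rho_\alpha\|_{L^2(B_\rho(o_\alpha),\mu_\alpha)}^2 \to \|\rho\|_{L^2(B_\rho(o),\mu)}^2$ together with a uniform $L^2$-bound on $\{\rho_\alpha\}$. By weak $L^2$-compactness, some subsequence (still denoted $\rho_\alpha$) satisfies $\rho_\alpha \stackrel{L^2}{\weakto} \tilde\rho$ for some non-negative $\tilde\rho\in L^2(B_\rho(o),\mu)$. The third hypothesis $\sup_\alpha\int_{B_\rho(o_\alpha)}\di\Gamma(\rho_\alpha)<\infty$ lets me apply Proposition \ref{prop:compact} to the sequence $\{\rho_\alpha\}$ itself, promoting the weak convergence to the strong convergence $\rho_\alpha\stackrel{L^2}{\to}\tilde\rho$.

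The identification $\tilde\rho=\rho$ is obtained by evaluating one quantity in two ways. Given $\varphi\in\cC_c(B_\rho(o))\cap\cD(\cE)$, Proposition \ref{prop:cvCore} produces approximants $\varphi_\alpha\stackrel{\cC_c}{\to}\varphi$ with $\varphi_\alpha\stackrel{\mE}{\to}\varphi$, which can be taken uniformly bounded in $L^\infty$. On the one hand, Proposition \ref{prop:cvenergyI} applied to $\{u_\alpha\}$ yields
\[
\int\varphi_\alpha\,\rho_\alpha^2\,\di\mu_\alpha = \int\varphi_\alpha\,\di\Gamma(u_\alpha) \longrightarrow \int\varphi\,\di\Gamma(u) = \int\varphi\,\rho^2\,\di\mu.
\]
On the other hand, uniform boundedness of $\varphi_\alpha$ together with $\rho_\alpha\stackrel{L^2}{\weakto}\tilde\rho$ yields $\varphi_\alpha\rho_\alpha\stackrel{L^2}{\weakto}\varphi\tilde\rho$, so the duality between weak and strong $L^2$-convergence applied to the pairing $\int(\varphi_\alpha\rho_\alpha)\cdot\rho_\alpha\,\di\mu_\alpha$ gives $\int\varphi_\alpha\,\rho_\alpha^2\,\di\mu_\alpha\to\int\varphi\,\tilde\rho^2\,\di\mu$. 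Comparing the two limits forces $\tilde\rho^2=\rho^2$ $\mu$-a.e., hence $\tilde\rho=\rho$ by non-negativity. Uniqueness of the subsequential weak limit promotes this to weak $L^2$-convergence of the full sequence, which combined with the norm convergence from the first step produces the desired strong convergence $\rho_\alpha\stackrel{L^2}{\to}\rho$.

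The main difficulty is the identification step: Proposition \ref{prop:compact} delivers strong convergence of $\rho_\alpha$ to some abstract limit $\tilde\rho$, while the only information linking the $\rho_\alpha$'s directly to the target $\rho$ is encoded in the quadratic measures $\rho_\alpha^2\,\di\mu_\alpha$. Bridging these two requires the bilinear trick $\rho_\alpha^2=\rho_\alpha\cdot\rho_\alpha$ together with the duality between weak and strong $L^2$-convergence across varying spaces, which allows us to pass the quadratic integral to its correct limit. Everything else is a soft compactness and uniqueness argument.
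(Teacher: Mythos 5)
Your proof is correct. The first half — extracting a strongly $L^2$-convergent subsequence of $\{\rho_\alpha\}$ by combining weak $L^2$-compactness with Proposition \ref{prop:compact} applied to $\{\rho_\alpha\}$ itself (using the uniform bound on $\int\di\Gamma(\rho_\alpha)$) — is exactly what the paper does. Where you diverge is the identification of the limit $\tilde\rho$ with $\left|\di\Gamma(u)/\di\mu\right|^{1/2}$. The paper tests against indicator functions of balls $B_r(x_\alpha)$ with $x_\alpha\to x$: it first upgrades the energy bound to a uniform $L^{2p}$ bound on $\rho_\alpha$ via the Sobolev inequality (as in the proof of Proposition \ref{prop:compact}), invokes Remark \ref{rem:localisationcv} to get $\int_{B_r(x_\alpha)}\di\Gamma(u_\alpha)\to\int_{B_r(x)}\di\Gamma(u)$, compares with $\int_{B_r(x_\alpha)}\rho_\alpha^2\di\mu_\alpha\to\int_{B_r(x)}\tilde\rho^2\di\mu$, and concludes by the Lebesgue differentiation theorem. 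You instead test against $\varphi\in\cC_c(B_\rho(o))\cap\cD(\cE)$ via Proposition \ref{prop:cvCore}, use Proposition \ref{prop:cvenergyI} for one side of the identity and the weak--strong duality (equivalently, Proposition \ref{prop:cvproduit} applied to $\varphi_\alpha$ and $\rho_\alpha$) for the other. Your route is slightly more economical in that it bypasses the $L^{2p}$ reverse-H\"older bound and the localization remark, at the cost of invoking Proposition \ref{prop:cvenergyI} (which needs the $L^\infty$ bound on $u_\alpha$ — available here as a hypothesis) and of spelling out the subsequence/uniqueness argument to recover convergence of the full sequence, a point the paper leaves implicit. Both identifications ultimately rest on the same fact that $\int\varphi_\alpha\di\Gamma(u_\alpha)$ converges to $\int\varphi\di\Gamma(u)$ for a sufficiently rich class of test objects, so the two proofs are close cousins rather than genuinely independent arguments, but your variant is sound as written.
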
 
\begin{proof}
Indeed Proposition \ref{prop:compact} implies that up to extracting a  subsequence we can assume that $\rho_\alpha\stackrel{L^2}{\to} f$ and we want to show that $f=\left|\frac{\di\Gamma(u)}{\di\mu}\right|^{\frac 12}$b$\mu$a.e.~on $B_\rho(o)$. Following the proof of  Proposition \ref{prop:compact} (using the Sobolev inequality), we have some $p>1$ such that $\sup_\alpha \|\rho_\alpha\|_{L^{2p}}<\infty$, hence Remark \ref{rem:localisationcv} implies that if $x_\beta\in B_\rho(o_\beta)\to x\in B_\rho(o)$, for any $r>0$ such that $r+\dist(o,x)<\rho$ we have
$$\lim_{\alpha\to\infty} \int_{B_r(x_\alpha)}\di\Gamma(u_\alpha)=\int_{B_r(x)} \di\Gamma(u).$$
But the strong $L^2$-convergence also yields that 
$$\lim_{\alpha\to\infty} \int_{B_r(x_\alpha)}\di\Gamma(u_\alpha)=\lim_{\alpha\to\infty} \int_{B_r(x_\alpha)}\rho^2_\alpha\di\mu_\alpha= \int_{B_r(x)}f^2\di\mu.$$
Hence for any $x\in B_\rho(o)$ and  $r>0$ such that $r+\dist(o,x)<\rho$ :
$$\int_{B_r(x)}f^2\di\mu=\int_{B_r(x)} \di\Gamma(u).$$
By Lebesgue differentiation theorem (that hold true on any doubling space), this implies that $f^2=\frac{\di\Gamma(u)}{\di\mu}$ $\mu$-a.e.
\end{proof}

\subsubsection{Convergence of harmonic functions}
\begin{prop}\label{prop:cvW22} Let $u_\alpha\colon B_{\rho}(o_\alpha)\rightarrow \R$ such that 
$$\sup_{\alpha} \left( \|u_\alpha\|_{L^\infty}+  \|L_\alpha u_\alpha\|_{L^2} \right)<\infty.$$
Then there is a sub-sequence $B\subset A$ and $u\colon B_{\rho}(o)\rightarrow \R$ in $\cD_{loc}(B_\rho(o),\cE)$ such that $Lu\in L^2$ and
$$u_\beta\stackrel{L^2}{\to} u\text{ and }L_\beta u_\beta\stackrel{L^2_{loc}}{\weakto} Lu.$$
Moreover, if $\varphi_\beta\in \cC_c( B_{\rho}(o_\beta))\cap \cD(\cE_\beta)$ and $\varphi \in \cC_c( B_{\rho}(o))\cap \cD(\cE)$ are such that $\varphi_\beta\stackrel{\cC_c}{\to} \varphi$ and $\varphi_\beta\stackrel{\mE}{\to} \varphi,$ then
$$\lim_{\beta\to\infty} \int_{X_\beta} \varphi_\beta\di\Gamma(u_\beta)=\int_{X} \varphi\di\Gamma(u).$$\end{prop}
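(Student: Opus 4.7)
The strategy is to exploit $\|L_\alpha u_\alpha\|_{L^2}\le C$ to produce, by integration by parts, a uniform \emph{local} energy bound on the $u_\alpha$; then to extract a strongly $L^2$-convergent subsequence together with a weak $L^2_{\loc}$-limit of $L_\beta u_\beta$; to identify this limit with $Lu$; and finally to pass the $\Gamma$-integrals to the limit via a $\Gamma$-calculus identity.

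First, for any $\rho'\in(0,\rho)$, I would establish $\sup_\alpha\int_{B_{\rho'}(o_\alpha)}\di\Gamma(u_\alpha)<+\infty$. Using Proposition~\ref{prop:approx}, pick cut-offs $\chi_\alpha\in\Lip_c(B_\rho(o_\alpha))\cap\cD(\cE_\alpha)$ with $\chi_\alpha\equiv1$ on $B_{\rho'}(o_\alpha)$, $0\le\chi_\alpha\le1$ and $\di\Gamma(\chi_\alpha)/\di\mu_\alpha\le C(\rho-\rho')^{-2}$ $\mu_\alpha$-a.e. The Leibniz rule $\di\Gamma(\chi_\alpha^2 u_\alpha,u_\alpha)=\chi_\alpha^2\di\Gamma(u_\alpha)+2\chi_\alpha u_\alpha\di\Gamma(\chi_\alpha,u_\alpha)$ together with $\cE_\alpha(\chi_\alpha^2 u_\alpha,u_\alpha)=\int\chi_\alpha^2 u_\alpha L_\alpha u_\alpha\di\mu_\alpha$ yields
\[
E_\alpha:=\int_{X_\alpha}\chi_\alpha^2\di\Gamma(u_\alpha)=\int_{X_\alpha}\chi_\alpha^2 u_\alpha L_\alpha u_\alpha\di\mu_\alpha-2\int_{X_\alpha}\chi_\alpha u_\alpha\di\Gamma(\chi_\alpha,u_\alpha),
\]
and Cauchy--Schwarz together with the uniform bounds on $\|u_\alpha\|_{L^\infty}$, $\|L_\alpha u_\alpha\|_{L^2}$ and $\di\Gamma(\chi_\alpha)$ gives a quadratic inequality $E_\alpha\le C_1+C_2\sqrt{E_\alpha}$, whence the claim. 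Combining this with the uniform $L^\infty$-bound and Proposition~\ref{prop:compact}, a diagonal argument extracts a subsequence $B\subset A$ and $u\in L^\infty(B_\rho(o),\mu)\cap\cD_{\loc}(B_\rho(o),\cE)$ such that $u_\beta\to u$ strongly in $L^2$ on every $B_{\rho'}(o)$ with $\rho'<\rho$, and weakly in energy there. Passing to a further subsequence, weak $L^2_{\loc}$-compactness gives $L_\beta u_\beta\stackrel{L^2_{\loc}}{\weakto}h$ for some $h\in L^2_{\loc}(B_\rho(o),\mu)$.

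To identify $h=Lu$, fix $\varphi\in\cC_c(B_\rho(o))\cap\cD(\cE)$ and, by Proposition~\ref{prop:cvCore}, select $\varphi_\beta\in\cC_c(B_\rho(o_\beta))\cap\cD(\cE_\beta)$ with $\varphi_\beta\stackrel{\cC_c}{\to}\varphi$ and $\varphi_\beta\stackrel{\mE}{\to}\varphi$. The weak-strong pairing of $L_\beta u_\beta$ with $\varphi_\beta$, combined with the weak energy convergence of $u_\beta$ and strong energy convergence of $\varphi_\beta$, yields
\[
\int_X\varphi h\di\mu=\lim_\beta\int_{X_\beta}\varphi_\beta L_\beta u_\beta\di\mu_\beta=\lim_\beta\cE_\beta(\varphi_\beta,u_\beta)=\cE(\varphi,u),
\]
so $u$ lies in the local domain of $L$ on $B_\rho(o)$ and $Lu=h$. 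For the last statement, use the identity (coming from the Leibniz rule $\di\Gamma(\varphi_\alpha u_\alpha,u_\alpha)=\varphi_\alpha\di\Gamma(u_\alpha)+\tfrac12\di\Gamma(\varphi_\alpha,u_\alpha^2)$ and the definition of $L_\alpha$)
\[
\int_{X_\alpha}\varphi_\alpha\di\Gamma(u_\alpha)=\int_{X_\alpha}\varphi_\alpha u_\alpha L_\alpha u_\alpha\di\mu_\alpha-\tfrac12\cE_\alpha(\varphi_\alpha,u_\alpha^2).
\]
The first term converges to $\int\varphi u\,Lu\di\mu$ because $\varphi_\alpha u_\alpha\to\varphi u$ strongly in $L^2$ and $L_\alpha u_\alpha\stackrel{L^2_{\loc}}{\weakto}Lu$. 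For the second, the chain rule $\di\Gamma(u_\alpha^2)=4u_\alpha^2\di\Gamma(u_\alpha)$ together with the $L^\infty$ and local energy bounds already proved imply $u_\alpha^2\stackrel{\mE}{\weakto}u^2$ locally, so $\cE_\alpha(\varphi_\alpha,u_\alpha^2)\to\cE(\varphi,u^2)$ by $\varphi_\alpha\stackrel{\mE}{\to}\varphi$. The limit thus equals $\int\varphi u\,Lu\di\mu-\tfrac12\cE(\varphi,u^2)=\int_X\varphi\di\Gamma(u)$ by the same identity applied to the limit.

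The main obstacle is that the $u_\alpha$ are only defined on the open balls $B_\rho(o_\alpha)$, with no boundary condition specified; every integration by parts and every passage to the limit must therefore be localized through the cut-offs $\chi_\alpha$. That this localization can be carried out uniformly in $\alpha$ is precisely what the uniform $\PI$-structure guarantees, via Proposition~\ref{prop:approx} for the good cut-offs and Proposition~\ref{prop:compact} for the $L^2_{\loc}$ strong compactness.
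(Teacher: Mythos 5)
Your proof is correct and follows essentially the same path as the paper's: localize via cut-offs, integrate by parts against $\chi_\alpha^2 u_\alpha$ to get a uniform local energy bound, extract limits (strong $L^2_{\loc}$ by Proposition~\ref{prop:compact}, weak $L^2_{\loc}$ for $L_\beta u_\beta$), identify $h=Lu$ by weak-strong pairing, and pass the $\Gamma$-integrals to the limit using $\int\varphi\,\di\Gamma(u)=\int\varphi u\,Lu\,\di\mu-\tfrac12\cE(\varphi,u^2)$ together with $u_\beta^2\stackrel{\mE_{\loc}}{\weakto}u^2$. The only stylistic difference is in the derivation of the uniform local energy bound: you absorb the cross term $2\int\chi_\alpha u_\alpha\,\di\Gamma(\chi_\alpha,u_\alpha)$ by Cauchy--Schwarz, giving $E_\alpha\le C_1+C_2\sqrt{E_\alpha}$, whereas the paper writes the identity
\[
\int_{B_\rho}\di\Gamma(\chi_\beta u_\beta)=\int_{B_\rho}u_\beta^2\,\di\Gamma(\chi_\beta)+\cE_\beta(\chi_\beta^2u_\beta,u_\beta)=\int_{B_\rho}u_\beta^2\,\di\Gamma(\chi_\beta)+\int_{B_\rho}u_\beta\chi_\beta^2\,L_\beta u_\beta\,\di\mu_\beta
\]
and bounds $\int_{B_r}\di\Gamma(u_\beta)\le\int_{B_\rho}\di\Gamma(\chi_\beta u_\beta)$ directly, avoiding the absorption step. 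Both yield the same quantitative bound; the paper's route is marginally shorter, your quadratic inequality is equally rigorous.
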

\begin{rem} Looking at the proof of Proposition \ref{prop:cvCore}, we remark that for any $\varphi \in \cC_c( B_{\rho}(o))\cap \cD(\cE)$, we can find a sequence $\varphi_\alpha\in \cC_c( B_{\rho}(o_\alpha))\cap \cD(\cE_\alpha)$ such that $\varphi_\alpha\stackrel{\cC_c}{\to} \varphi$ and $\varphi_\alpha\stackrel{\mE}{\to} \varphi.$
\end{rem}
\begin{proof} We can find a sub-sequence $B\subset A$, $u\in L^2(B_{\rho}(o))\cap L^\infty(B_{\rho}(o))$  and $f\in \in L^2(B_{\rho}(o))$ such that 
$$u_\beta\stackrel{L^2\cap L^4}{\weakto} u\text{ and } f_\beta:=L_\beta u_\beta\weakto f.$$

For any $r<\rho$, we consider the function $\chi_\beta(x)=\xi\left(2\frac{\dist_\beta(o_\beta,x)-\frac{3r-\rho}{2}}{\rho-r}\right)$ where $\xi$ is defined by \eqref{coupure}, this function has the following properties:
\begin{itemize}
\item $\chi_\beta=1$ on $B_r(o_\beta)$,
\item $\chi_\beta=0$ outside $B_{(\rho+r)/2}(o_\beta)$,
\item $\chi_\beta$ is $2/(\rho-r)$-Lipschitz.
\end{itemize} 
We have the estimates
$$
\int_{B_r(o_\beta)} \di\Gamma(u_\beta)\le \int_{B_\rho(o_\beta)} \di\Gamma(\chi_\beta u_\beta)$$ but
\begin{align*}\int_{B_\rho(o_\beta)} \di\Gamma(\chi_\beta u_\beta)&=\int_{B_\rho(o_\beta)}u_\beta^2 \di\Gamma(\chi_\beta)+\cE_\beta(\chi_\beta^2 u_\beta,u_\beta)\\
&=\int_{B_\rho(o_\beta)}u_\beta^2 \di\Gamma(\chi_\beta)+\int_{B_\rho(o_\beta)}u_\beta\chi_\beta^2 f_\beta\di\mu_\beta).\end{align*}
Using  the comparison \eqref{EnergyLip}, we get 
$$
\int_{B_r(o_\beta)} \di\Gamma(u_\beta)\le \frac{4}{(\rho-r)^2} \|u_\beta\|^2_{L^\infty} \mu_\beta\left(B_\rho(o_\beta)\right)+\|u_\beta\|_{L^2}\|f_\beta\|_{L^2}.$$
Hence $u_\beta$ is locally bounded in $\cD_{loc}(\cE_{\alpha})$ and $u\in\cD_{loc}(\cE)$ and
$u_\beta \stackrel{\mE_{loc}}{\weakto} u$ and also $u^2_\beta \stackrel{\mE _{loc}}{\weakto} u^2$.
The formula
$$ \int_{B_\rho(o_\beta)} \varphi_\beta\di\Gamma(u_\beta)=\int_{B_\rho(o_\beta)} \varphi_\beta u_\beta f_\beta\di\mu_\beta-\frac12 \cE_\beta\left(\varphi_\beta,u^2_\beta\right)$$ and the fact that from Proposition \ref{prop:compact}, we have $u_\beta \stackrel{L^2_{loc}}{\to} u$ implies the claimed result of convergence.
\end{proof}

In case where we have a sequence of harmonic functions, this result can be slightly improved.
\begin{prop}\label{prop:cvharmonic} Let $\{h_\alpha\}$ be such that $h_\alpha\colon B_\rho(o_\alpha)\rightarrow \R$ is $L_\alpha$-harmonic for any $\alpha$ and
$$\sup_\alpha \|h_\alpha\|_{L^\infty}<\infty,$$
Then there is a subsequence $B\subset A$ and a harmonic function $h\colon B_\rho(o)\rightarrow \R$ such that 
\begin{enumerate}[i)]
\item $h_\beta\stackrel{L^2}{\to} h$
\item For each $r<\rho$: $\left.h_\beta\right|_{B_r(o_\beta)}\to \left.h\right|_{B_r(o)}$ uniformly.
\item For each $r<\rho$: $\lim_{\beta\to\infty} \int_{B_r(o_\beta)} \di\Gamma(h_\beta)=\int_{B_r(o)} \di\Gamma(h)$.
\end{enumerate}
\end{prop}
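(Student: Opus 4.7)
My starting point would be to apply Proposition~\ref{prop:cvW22} with $f_\beta := L_\beta h_\beta \equiv 0$. This immediately produces the subsequence $B \subset A$, a limit $h \in \cD_{loc}(B_\rho(o), \cE)$ satisfying $Lh = 0$ (as the weak $L^2_{loc}$ limit of zero), and the strong convergence $h_\beta \stackrel{L^2}{\to} h$, which is exactly statement (i). At the same time, Proposition~\ref{prop:cvW22} provides the test-function convergence
\[
\int_{X_\beta} \varphi_\beta\, d\Gamma(h_\beta) \longrightarrow \int_X \varphi\, d\Gamma(h)
\]
whenever $\varphi_\beta \stackrel{\cC_c}{\to} \varphi$ and $\varphi_\beta \stackrel{\mE}{\to} \varphi$ with $\varphi_\beta \in \cC_c(B_\rho(o_\beta)) \cap \cD(\cE_\beta)$; this will be the main tool for (iii).

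For (ii), the key input is the elliptic De Giorgi--Nash--Moser estimate for $\mathrm{PI}$ Dirichlet spaces (the stationary counterpart of Theorem~\ref{thm:eq}(c3)): bounded $L_\beta$-harmonic functions on $B_\rho(o_\beta)$ are uniformly H\"older continuous on compact subsets, with constants depending only on $\upkappa$, $\upgamma$, $R$, $r$, $\rho$, and $\sup_\beta \|h_\beta\|_{L^\infty}$. Combined with the Arzel\`a--Ascoli theorem in the pGH setting (cf.~\cite[Prop.~27.20]{Villani}), up to extracting one more subsequence we may assume $h_\beta \to \tilde h$ uniformly on each $B_r(o_\beta)$ with $r<\rho$, for some $\tilde h$ continuous on $B_\rho(o)$. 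Uniqueness of the $L^2$ limit forces $\tilde h = h$ $\mu$-a.e., which identifies $h$ with its continuous representative and gives (ii).

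For (iii), I would exploit the Caccioppoli-type identity furnished by harmonicity: for any nonnegative $\chi_\beta \in \cC_c(B_\rho(o_\beta)) \cap \cD(\cE_\beta)$, expanding $d\Gamma(\chi_\beta h_\beta)$ via the Leibniz rule and using $\cE_\beta(h_\beta, \chi_\beta^2 h_\beta) = 0$ (since $L_\beta h_\beta = 0$ and $\chi_\beta^2 h_\beta \in \cC_c \cap \cD(\cE_\beta)$) yields
\[
\cE_\beta(\chi_\beta h_\beta) = \int_{X_\beta} h_\beta^2 \, d\Gamma(\chi_\beta).
\]
Picking $\chi \in \cC_c(B_\rho(o)) \cap \cD(\cE)$ with $\chi \equiv 1$ on a neighborhood of $\overline{B_r(o)}$ and an approximation $\chi_\beta \stackrel{\cC_c}{\to} \chi$, $\chi_\beta \stackrel{\mE}{\to} \chi$ given by Proposition~\ref{prop:cvCore}, the uniform convergence from (ii) implies $h_\beta^2 \to h^2$ uniformly on $\supp \chi_\beta$, so Proposition~\ref{prop:cvenergyI} applies to give $\int h_\beta^2\, d\Gamma(\chi_\beta) \to \int h^2\, d\Gamma(\chi) = \cE(\chi h)$, the last equality being the same identity on the limit space. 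Together with the $L^2$ convergence $\chi_\beta h_\beta \to \chi h$, this upgrades to the strong energy convergence $\chi_\beta h_\beta \stackrel{\mE}{\to} \chi h$.

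The last step, and what I expect to be the main obstacle, is passing from test-function convergence to convergence on arbitrary balls for \emph{every} $r<\rho$ rather than just generic $r$. For this I would invoke the Meyers/Gehring self-improvement of the Caccioppoli inequality on $\mathrm{PI}$ Dirichlet spaces: there exists $\delta>0$ depending only on $\upkappa,\upgamma$ such that the density $|d h_\beta|^2 := d\Gamma(h_\beta)/d\mu_\beta$ satisfies a uniform $L^{1+\delta}_{loc}$ bound on compact subsets of $B_\rho(o_\beta)$. This supplies the hypothesis of Remark~\ref{rem:localisationcv} applied to the strongly energy-convergent sequence $\chi_\beta h_\beta \stackrel{\mE}{\to} \chi h$, and sandwiching $\mathbf{1}_{B_r(o_\beta)}$ between cutoffs of the form $\chi_\beta$ (with $\chi \equiv 1$ on $B_r(o)$ and supported in $B_{r+\eta}(o)$, and conversely) then yields
\[
\lim_{\beta \to \infty} \int_{B_r(o_\beta)} d\Gamma(h_\beta) = \int_{B_r(o)} d\Gamma(h)
\]
for every $r<\rho$, since the uniform $L^{1+\delta}$ bound rules out concentration of $\Gamma(h_\beta)$ or $\Gamma(h)$ on thin annuli and on spheres.
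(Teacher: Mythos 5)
Your outline for (i) and (ii) mirrors the paper exactly: Proposition~\ref{prop:cvW22} with $f_\beta \equiv 0$ gives the $L^2$-convergent subsequence and the harmonic limit, while the elliptic De Giorgi--Nash--Moser / uniform Harnack estimate on $\mathrm{PI}$ Dirichlet spaces combined with Arzel\`a--Ascoli in the pGH setting gives uniform convergence on compacta.

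For (iii) you take a genuinely different route, and it contains a real gap. The crucial step is the claim that ``Proposition~\ref{prop:cvenergyI} applies to give $\int h_\beta^2 \, d\Gamma(\chi_\beta) \to \int h^2\, d\Gamma(\chi)$.'' To invoke that proposition with $u_\alpha = \chi_\alpha$ you must put $\varphi_\alpha = h_\alpha^2$ in the role of the test function, and the hypotheses required are $\varphi_\alpha \stackrel{\cC_c}{\to} \varphi$ \emph{and} $\varphi_\alpha \stackrel{\mE}{\to} \varphi$. Neither holds here: $h_\alpha^2$ is only defined on $B_\rho(o_\alpha)$ (so is not even a compactly supported continuous function on $X_\alpha$), and strong energy convergence $h_\alpha^2 \stackrel{\mE}{\to} h^2$ is essentially what you are trying to prove in the first place — using it as an input makes the argument circular. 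Uniform convergence of $h_\beta^2$ on $\supp \chi_\beta$ does not give energy convergence. The correct statement $\int h_\beta^2\, d\Gamma(\chi_\beta) \to \int h^2\, d\Gamma(\chi)$ is nevertheless true, but proving it requires an extra density step: approximate $h^2$ uniformly by a compactly supported Lipschitz function $g$ (which lies in $\cD(\cE)$ on a $\mathrm{PI}$ space), take $g_\beta$ from Proposition~\ref{prop:cvCore} so that $g_\beta \stackrel{\cC_c,\mE}{\to} g$, apply Proposition~\ref{prop:cvenergyI} to $\int g_\beta\, d\Gamma(\chi_\beta)$, and control the two remainders $\int (h_\beta^2 - g_\beta)\, d\Gamma(\chi_\beta)$ and $\int(g-h^2)\, d\Gamma(\chi)$ by the uniform mass bound $\Gamma(\chi_\beta)(X_\beta) = \cE_\beta(\chi_\beta) \to \cE(\chi)$ and the sup-norm closeness coming from (ii).

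More structurally, the paper's proof of (iii) avoids the Caccioppoli detour and the need for strong energy convergence of $\chi_\beta h_\beta$ altogether. Proposition~\ref{prop:cvW22} already delivers the test-function convergence $\int \varphi_\beta\, d\Gamma(h_\beta) \to \int \varphi\, d\Gamma(h)$ for $\varphi_\beta \stackrel{\cC_c,\mE}{\to} \varphi$. The uniform reverse H\"older inequality for the energy density of harmonic functions (this is precisely the Gehring/Meyers self-improvement you mention, via the $L^{2-\eps}$ Poincar\'e inequality of Keith--Zhong) then gives a uniform $L^p$ bound on $d\Gamma(h_\beta)/d\mu_\beta$, and one concludes by the boundary-layer estimate already carried out in the proof of \eqref{eq:cvenergyII} in Proposition~\ref{prop:cvenergyI}: the annulus $B_r \setminus B_{r-\tau}$ has uniformly small $\mu_\beta$-measure by Proposition~\ref{prop:doubling}(v), and H\"older's inequality with the $L^p$ density bound shows $\Gamma(h_\beta)$ carries uniformly small mass there. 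This route uses the same key ingredient you identified (Gehring's lemma) but avoids passing through the strong energy convergence $\chi_\beta h_\beta \stackrel{\mE}{\to} \chi h$, which is where your gap sits.
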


\begin{proof}
Proposition \ref{prop:cvW22} implies the existence of a subsequence $B\subset A$  and of a harmonic function $h\colon B_\rho(o)\rightarrow \R$ such that we have the strong convergence in $L^2$.
The uniform convergence follows from the fact that each $(X_\beta,\dist_\beta,\mu_\beta,\cE_\beta)$ satisfies uniform Parabolic/Elliptic Harnack inequality and hence uniform local H\"older estimate for harmonic function (\cite[Lemma 2.3.2]{SaloffCoste} or \cite[Lemma 5.2]{GriTelcs}. In our cases, there is a constant $\theta\in (0,1)$ and $C$ such that for any $\alpha$ and $x,y\in B_r(o_\alpha)$
$$\left|h_\alpha(x)-h_\alpha(y)\right|\le C \left(\frac{\dist_\alpha(x,y)}{\rho-r}\right)^\theta  \|h_\alpha\|_{L^\infty}.$$
The last point is a consequence of a uniform reverse H\"older inequality for the energy density of harmonic function. There is some $p>1$ and some constant $C$ such that if $B\subset X_\alpha$ is a ball of radius $r(B)\le R$ and $f\colon B\rightarrow \R$ is harmonic then
$$\left(\fint_{\frac 12 B} \left|\frac{\di\Gamma(f)}{\di\mu_\alpha}\right|^p\di\mu_\alpha\right)^{\frac 1p}\le C \fint_{ B} \di\Gamma(f).$$
This is explained in \cite[subsection 2.1]{AuscherCoulhon}, it relies on a self-improvement of the $L^2$-Poincaré inequality to a  $L^{2-\eps}$-Poincaré inequality \cite{KeithZhong} and of the Gehring lemma 
\cite[Chapter V]{Giaquinta}.
\end{proof} 

\subsubsection{Approximation of harmonic functions}

Let us conclude with an approximation result for harmonic functions.

\begin{prop}\label{prop:approxharmonic} Let $h\colon B_\rho(o)\rightarrow \R$ be a harmonic function and let $r<\rho$.  Then there exists $\{h_\alpha\}$, with $h_\alpha\colon B_r(o_\alpha)\rightarrow \R$ harmonic for any $\alpha$, such that 
\begin{enumerate}[i)]
\item $h_\alpha\to \left.h\right|_{B_r(o)}$ uniformly on compact sets,
\item $\int_{B_s(o_\alpha)} \di\Gamma(h_\alpha) \to \int_{B_s(o)} \di\Gamma(h)$ for any $s\le r$.
\end{enumerate}
\end{prop}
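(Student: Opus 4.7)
My plan is to construct $h_\alpha$ by solving an approximate Dirichlet problem on a slightly larger ball, then appeal to the harmonic-convergence Proposition \ref{prop:cvharmonic} and identify the limit. Fix auxiliary radii $r<r_1<r_2<\rho$ and a Lipschitz cut-off $\chi:X\to[0,1]$ with $\chi=1$ on $B_{r_1}(o)$ and $\supp\chi\subset B_{r_2}(o)$. Since $h\in\cD_{loc}(\cE)$ is bounded on $\overline{B_{r_2}(o)}$, the function $\psi:=\chi h$ lies in $\cC_c(X)\cap\cD(\cE)$. Proposition~\ref{prop:cvCore} provides $\psi_\alpha\in\cC_c(X_\alpha)\cap\cD(\cE_\alpha)$ with $\psi_\alpha\stackrel{\cC_c}{\to}\psi$ and $\psi_\alpha\stackrel{\mE}{\to}\psi$; by the Markov property of $\cE_\alpha$ we may truncate at level $\|h\|_{L^\infty(B_{r_2}(o))}$ while preserving both convergences. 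Define $h_\alpha$ as the unique minimizer of $\cE_\alpha$ among $w\in\cD(\cE_\alpha)$ satisfying $w-\psi_\alpha\in\cD_0(B_{r_1}(o_\alpha),\cE_\alpha)$. This $h_\alpha$ is $\cE_\alpha$-harmonic on $B_{r_1}(o_\alpha)$, and a truncation-in-the-variational-problem argument yields $\|h_\alpha\|_{L^\infty}\le\|\psi_\alpha\|_{L^\infty}\le C$.

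Since $\sup_\alpha\|h_\alpha\|_{L^\infty}<\infty$, Proposition~\ref{prop:cvharmonic} extracts a subsequence along which $h_\alpha$ converges uniformly on compact subsets of $B_{r_1}(o)$ to a bounded $\cE$-harmonic function $\tilde h$, together with the energy convergence $\int_{B_t(o_\alpha)}\di\Gamma(h_\alpha)\to\int_{B_t(o)}\di\Gamma(\tilde h)$ for every $t<r_1$; in particular for every $t\le r$. Thus both (i) and (ii) of the proposition will follow as soon as we show $\tilde h=h$ on $B_{r_1}(o)$.

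The identification is the crux. Because $\chi=1$ on $B_{r_1}(o)$, we have $h-\psi=0$ on $B_{r_1}(o)$, so $h|_{B_{r_1}(o)}$ is itself the minimizer of $\cE(w)$ under the constraint $w-\psi\in\cD_0(B_{r_1}(o),\cE)$; by uniqueness of that Dirichlet problem, it suffices to prove $\tilde h-\psi\in\cD_0(B_{r_1}(o),\cE)$. Set $v_\alpha:=h_\alpha-\psi_\alpha\in\cD_0(B_{r_1}(o_\alpha),\cE_\alpha)$. From $\cE_\alpha(h_\alpha)\le\cE_\alpha(\psi_\alpha)$ (minimization) and the uniform $L^\infty$-bound, $\sup_\alpha(\cE_\alpha(v_\alpha)+\|v_\alpha\|_{L^\infty})<\infty$. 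Proposition~\ref{prop:compact} applied locally (combined with the uniform convergence of $h_\alpha$) then gives $v_\alpha\stackrel{L^2_{loc}}{\to}v:=\tilde h-\psi$ and $v\in\cD_{loc}(\cE)$ with the weak-in-energy convergence. Since $v_\alpha$ vanishes $\mu_\alpha$-a.e. outside $B_{r_1}(o_\alpha)$, the strong $L^2_{loc}$ convergence forces $v=0$ $\mu$-a.e. outside $\overline{B_{r_1}(o)}$. In a $\mathrm{PI}$ Dirichlet space the space $\cD_0(B_{r'_1}(o),\cE)$ (for any $r'_1$ slightly larger than $r_1$) coincides with the $|\cdot|_\cE$-closure of $\Lip_c(B_{r'_1}(o))$, and multiplication by a cut-off equal to $1$ on $\overline{B_{r_1}(o)}$ and supported in $B_{r'_1}(o)$ leaves $v$ unchanged while producing an approximating sequence in $\Lip_c(B_{r'_1}(o))\cap\cD(\cE)$ via Proposition~\ref{prop:approx}; thus $v\in\cD_0(B_{r'_1}(o),\cE)$. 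Letting $r'_1\downarrow r_1$ gives $v\in\cD_0(B_{r_1}(o),\cE)$ as needed, hence $\tilde h=h$ on $B_{r_1}(o)\supset B_r(o)$.

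\textbf{Main obstacle.} The delicate point is upgrading the a.e.~vanishing of $v$ outside $\overline{B_{r_1}(o)}$ to genuine membership in $\cD_0(B_{r_1}(o),\cE)$, which is really a statement of Mosco-type convergence for Dirichlet forms equipped with a Dirichlet boundary condition under pointed Mosco-Gromov-Hausdorff convergence of $\mathrm{PI}$-spaces. In our doubling–Poincar\'e context this is handled by cutting off and using Proposition~\ref{prop:cvCore} together with Proposition~\ref{prop:approx} on a slightly enlarged ball, but one must be careful that the cut-off does not destroy the vanishing of $v$; that is why working on $B_{r_1}(o_\alpha)$ with $r<r_1$ rather than directly on $B_r(o_\alpha)$ is essential.
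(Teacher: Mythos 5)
Your proposal follows essentially the same strategy as the paper's proof: cut off $h$ on a slightly larger ball, approximate the cut-off function by $\varphi_\alpha\in\cC_c(X_\alpha)\cap\cD(\cE_\alpha)$ via Proposition~\ref{prop:cvCore}, take harmonic replacements $h_\alpha$ on an intermediate ball, extract a limit via Proposition~\ref{prop:cvharmonic}, and identify it with $h$ by the variational characterization of the Dirichlet problem (since $\varphi$, being equal to $h$ on the intermediate ball, is its own harmonic replacement there). The one place where you go beyond the paper is the identification step: the paper passes directly from ``$f=\varphi$ a.e.\ outside $B_{r+\delta}(o)$ and $\cE(f)\le\cE(\varphi)$'' to ``$f=\varphi$'' via the variational characterization, leaving implicit that the limit lies in the correct competitor class; you make the $\cD_0$-membership issue explicit and sketch how to resolve it with cut-offs on a slightly larger ball. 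That is a useful clarification, though note that the final step ``letting $r_1'\downarrow r_1$ gives $v\in\cD_0(B_{r_1}(o),\cE)$'' is itself nontrivial in general (it amounts to showing $\bigcap_{r'>r_1}\cD_0(B_{r'}(o),\cE)\subset\cD_0(B_{r_1}(o),\cE)$, which requires $\partial B_{r_1}(o)$ to be negligible for capacity — something that does hold for $\mathrm{PI}$ spaces but should be cited or argued). An alternative that sidesteps this is to use continuity of harmonic functions up to the boundary (from uniform Hölder estimates) together with the maximum principle to conclude $v\equiv 0$ on $B_{r_1}(o)$ directly.
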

\begin{proof}
Set $\delta:=(\rho-r)/4$ and $$\xi(x):=\chi\left(1+\frac{\dist(o,x)-(r+2\delta)}{\delta}\right)$$ for any $x \in X$, where $\chi$ is as in \eqref{coupure}.  Set $$\varphi:=\xi h\in \cC_c(X)\cap \cD(\cE).$$ 
Let $\{\varphi_\alpha\}$ be given by Proposition \ref{prop:cvCore}, i.e.~$ \varphi_\alpha \in  \cC_c(X_\alpha)\cap \cD(\cE_\alpha)$ for any $\alpha$ and $\varphi_\alpha\stackrel{\cC_c}{\to} \varphi$,  $\varphi_\alpha\stackrel{\mE}{\to} \varphi.$ For any $\alpha$, let $h_\alpha$ be the harmonic replacement of $\varphi_\alpha$ on $B_{r+\delta}(o_\alpha)$ that is to say $h_\alpha \in \cD(\cE_\alpha)$ is the unique solution of
$$\begin{cases} L_\alpha h_\alpha=0&\text{ on } B_{r+\delta}(o_\alpha),\\
h_\alpha=\varphi_\alpha&\text{ outside } B_{r+\delta}(o_\alpha),\end{cases}$$
which is characterized by 
$$\cE_\alpha(h_\alpha)=\inf\left\{ \cE_\alpha(f)\, :\, f\in \cD(\cE_\alpha)\text{ and } f=\varphi_\alpha\text{ outside } B_{r+\delta}(o_\alpha)\right\}.$$
In particular $\cE_\alpha(h_\alpha)\le \cE_\alpha(\varphi_\alpha)$ for any $\alpha$, hence we can find a subsequence $B\subset A$ and $f\in \cD(\cE)$ such that $h_\beta\stackrel{\mE}{\weakto} f$. The lower semi-continuity of the energy implies
$$\cE(f)\le \cE(\varphi),$$
and we have $f=\varphi$ on $X\backslash B_{r+\delta}(o)$.
However, $\varphi$ is its own harmonic replacement on $B_{r+\delta}(o)$,  hence the variational characterization of the harmonic replacement implies $f=\varphi$. Thus $h_\alpha\stackrel{\mE}{\weakto} \varphi$ and the result is then a consequence of Proposition \ref{prop:cvharmonic}.
\end{proof}

\bibliographystyle{alpha} 
\bibliography{KatoLimits.bib}

\end{document}